\theoremstyle{plain}
\newtheorem{proposition}{Proposition}[chapter]
\newtheorem{theorem}[proposition]{Theorem}
\newtheorem{lemma}[proposition]{Lemma}
\newtheorem{corollary}[proposition]{Corollary}
\newtheorem{fact}[proposition]{Fact}
\theoremstyle{definition}
\newtheorem{definition}[proposition]{Definition}
\newtheorem{assumption}[proposition]{Assumption}
\newtheorem{notation}[proposition]{Assumption}
\newtheorem{conjecture}[proposition]{Conjecture}
\newtheorem{example}[proposition]{Example}
\theoremstyle{remark}
\newtheorem{remark}[proposition]{Remark}
\newtheorem{claim}{Claim}[proposition]
\newtheorem{case}{Case}[proposition]
\newtheorem{subcase}{Subcase}[case]
\newcommand{\sS}{{\mathbf{S}}}    
\newcommand{\Q}{{\mathbf{Q}}}     
\newcommand{\B}{{\mathfrak{B}}}   
\newcommand{\I}{{\mathcal{I}}}    
\renewcommand{\L}{{\mathcal{L}}}  
\renewcommand{\O}{{\mathcal{O}}}  
\newcommand{\T}{{\mathcal{T}}}    
\newcommand{\E}{{\mathcal{E}}}    
\newcommand{\N}{{\mathcal{N}}}    
\newcommand{\CC}{{\mathbb{C}}}    
\newcommand{\FF}{\mathbb{F}}      
\newcommand{\GG}{{\mathbb{G}}}    
\newcommand{\ZZ}{{\mathbb{Z}}}    
\newcommand{\PP}{{\mathbb{P}}}    
\newcommand{\Sec}{{\mathrm{Sec}}} 
\newcommand{\Bl}{{\mathrm{Bl}}}   
\newcommand{\reg}{{\mathrm{reg}}}     
\newcommand{\sing}{{\mathrm{sing}}}   
\newcommand{\Pic}{{\mathrm{Pic}}}     
\newcommand{\Hilb}{{\mathrm{Hilb}}}   
\newcommand{\G}{{\mathcal{G}}}        
\newcommand{\Graph}{{\mathrm{Graph}}} 
\newcommand{\rk}{{\mathrm{rank}}}       
\newcommand{\Rmnum}[1]{\expandafter\@slowromancap\romannumeral #1@}
\def\cleardoublepage{\clearpage\if@twoside \ifodd\c@page\else
    \hbox{}
    \thispagestyle{plain}
    \newpage
    \if@twocolumn\hbox{}\newpage\fi\fi\fi}
\makeatother \clearpage{\pagestyle{plain}\cleardoublepage}
\renewcommand{\chaptermark}[1]{\markboth{#1}{}}
\begin{document}

\frontmatter 
\begin{titlepage}
\ifpdf
\begin{tikzpicture}[remember picture,overlay]
\node [opacity=0.05,scale=2.0] at (current page.center) {\includegraphics{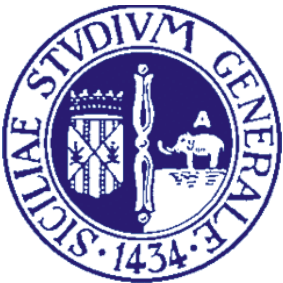}};
\end{tikzpicture} 
\else
\fi 
\begin{center}
\line(1,0){418}
\begin{huge}
\textbf{UNIVERSIT\`A DEGLI STUDI DI CATANIA\\}
\end{huge}
\begin{LARGE}
\textsl{Facolt\`a di Scienze Matematiche, Fisiche e Naturali\\}
\end{LARGE}
\begin{LARGE}
\textsl{Dipartimento di Matematica e Informatica\\}
\end{LARGE}
\line(1,0){418}
\end{center}
\begin{center}
\vspace{2.0cm}
\begin{LARGE}
\begin{center}
\textbf{Giovanni Staglian\`o}
\end{center}
\end{LARGE}
\vspace{1.0cm}
\begin{Huge} 
\begin{center}
\textbf{ON SPECIAL QUADRATIC BIRATIONAL TRANSFORMATIONS OF A PROJECTIVE SPACE} 
\end{center}
\end{Huge} 
\vspace{0.5cm}
\begin{LARGE}
\textsc{Tesi di Dottorato di Ricerca in Matematica (\Rmnum{23} Ciclo)\\}
\end{LARGE}
\vspace{2.5cm}
\begin{LARGE}
\begin{center} 
\begin{tabular}{@{}l} 
\textsc{Coordinatore:}\\ 
\textbf{Prof. Alfonso Villani}
\end{tabular}
\hfill
\begin{tabular}{l@{}} 
\textsc{Supervisore:}\\ 
\textbf{Prof. Francesco Russo}
\end{tabular}
\end{center}
\end{LARGE}
\vspace{0.7cm}
\line(1,0){418}
\vspace{0.3cm}
\begin{LARGE}
\textsc{Dicembre 2012}
\end{LARGE}
\line(1,0){418}
\end{center}
\end{titlepage}

\thispagestyle{empty}
    \null\vspace{\stretch {1}}
        \begin{flushright}
              {\itshape  To Maria Luisa}
        \end{flushright}
\vspace{\stretch{2}}\null

\cleardoublepage\thispagestyle{empty}
\begin{center}\textsc{Ph.D. Thesis in Mathematics (\Rmnum{23} Cycle) \\} \end{center}
\begin{center}\textbf{\Large ON SPECIAL QUADRATIC BIRATIONAL TRANSFORMATIONS OF A PROJECTIVE SPACE \\} \end{center}
\begin{center}\textbf{ Giovanni Staglian\`o \\} \end{center}

\begin{enumerate}
\item[] \textsc{Supervisor:}  
\begin{enumerate}
\item[] \textbf{Prof. Francesco Russo} \dotfill 
\end{enumerate}
\item[] \textsc{Coordinator:}
\begin{enumerate}
\item[] \textbf{Prof. Alfonso Villani} \dotfill 
\end{enumerate}
\end{enumerate}

\newenvironment{abstract} 
    {\null\vfill
\begin{center}\scshape\abstractname\end{center}
    }{\vfill\null}
\begin{abstract}
A birational map from a projective space onto a 
not too much singular projective variety
with a single irreducible non-singular base locus scheme 
(\emph{special birational transformation}) is a rare enough phenomenon 
to allow meaningful and concise classification results.

We shall concentrate on transformations defined by quadratic equations onto some 
varieties (especially 
projective hypersurfaces of small degree), where quite surprisingly the base 
loci are interesting projective manifolds appearing in other contexts;
for example, 
exceptions for adjunction theory, small degree or small codimensional manifolds, 
Severi or more generally homogeneous varieties.

In particular, we shall classify:
\begin{itemize} 
\item quadro-quadric transformations into a quadric hypersurface;
\item quadro-cubic transformations into a del Pezzo variety;
\item transformations whose base locus (scheme) has dimension at most three.
\end{itemize}
\end{abstract}

\newenvironment{acknowledgements} 
    {\clearpage\thispagestyle{empty}\null\vfill\begin{center} 
    \itshape Acknowledgements\end{center}} 
    {\vfill\null}

\begin{acknowledgements}
I would like to thank my supervisor, Prof. Francesco Russo,
for his continued guidance and support throughout the duration of my Ph.D.
He taught me a lot with his mathematical knowledge and skills, 
and gave me many valuable suggestions and 
kind help in the elucidation of difficulties.
\end{acknowledgements}

\tableofcontents
\chapter*{Introduction}
\addcontentsline{toc}{chapter}{Introduction} 
\chaptermark{Introduction}

Consider, on a complex projective space $\PP^n$, a
fixed component free 
 sublinear system 
$\sigma\subset|\O_{\PP^n}(d_0)|$, 
of dimension $N\geq n$, such that 
the associated rational map $\varphi=\varphi_{\sigma}:\PP^n\dashrightarrow\PP^N$ is birational onto its image
and moreover such that the image is not too much singular. 
Understanding all such linear systems 
(or the corresponding birational transformations) is 
clearly a too ambitious goal. Already for $N=n$ the Cremona group of all these 
transformations is a very complicated object. 
From now on, assume $d_0=2$ and 
denote by $d$ the degree of the linear system giving the inverse to $\varphi$.
We then say that the transformation $\varphi$ is of type $(2,d)$.
Moreover, assume that $\varphi$ is \emph{special},
i.e. its base locus scheme (also called center) 
is smooth and connected.
The first interesting case is when $N=n$,
i.e. that of special quadratic Cremona transformations. 
\section*{Special Cremona transformations}
The first general results were obtained by B. Crauder and S. Katz  
 in \cite{crauder-katz-1989} 
(see also \cite{crauder-katz-1991} and \cite{katz-cubo-cubic}), by classifying
all special Cremona transformations 
whose base locus has dimension at most two. 
In particular, they obtained that the base locus of a
quadratic transformation of this kind (which is nondegenerate in $\PP^n$)
is one of the following: 
\begin{itemize}\renewcommand{\labelitemi}{\checkmark}
\item a quintic elliptic curve in $\PP^4$;
\item the Veronese surface $\nu_2(\PP^2)$ in $\PP^5$;
\item a septic elliptic scroll in lines embedded in $\PP^6$;
\item the plane blown-up at eight points and embedded in $\PP^6$ 
       as an octic surface.
\end{itemize}
The second general result was obtained
by L. Ein and N. Shepherd-Barron in \cite{ein-shepherdbarron}:
the base locus of a special Cremona transformation of type $(2,2)$ 
is a Severi variety. Moreover F. L. Zak (see \cite{lazarsfeld-vandeven}) 
had shown that there are just four Severi varieties:
\begin{itemize} 
\item the Veronese surface $\nu_2(\PP^2)$ in $\PP^5$; 
\item the Segre embedding of $\PP^2\times\PP^2$ in $\PP^8$; 
\item the Pl{\"{u}}cker embedding of  $\GG(1,5)$ in $\PP^{14}$;
\item the $16$-dimensional Cartan variety $E_6$ in $\PP^{26}$. 
\end{itemize}
The next step was taken by F. Russo in \cite{russo-qel1}, 
who observed that base loci of 
special Cremona transformations of type $(2,d)$ are of a very peculiar type, the 
so called \emph{quadratic entry locus} varieties. 
He classified some cases (for example with $n$ odd, or of type $(2,3)$) and 
suggested that such base loci are subject to 
very strong restrictions.

On special quadratic Cremona transformations whose base locus has dimension three, 
K. Hulek, S. Katz and F. O. Schreyer in \cite{hulek-katz-schreyer} 
provided an example, the only example so far known.
Later, M. Mella and F. Russo in the unpublished paper \cite{mella-russo-baselocusleq3}, 
collected a series of ideas and remarks
on the study of these transformations. 
One of these ideas was to apply the Castelnuovo theory to the general
 zero-dimensional linear section of the base locus.
This idea is central to the present thesis.

\section*{The next case}
The first main goal  
of the thesis is to deal with the ``next case'', $N=n+1$, 
under the assumption that the image of the special transformation of type $(2,d)$ is a 
sufficiently regular hypersurface.
 Under such hypotheses, 
we are still able to show that the base locus is a quadratic entry locus variety; 
moreover, we compute the dimension and the secant defect of the base locus
in terms of the other numerical invariants: $n$, $d$ and the degree of 
the hypersurface image.

The first and easiest  example of such a  transformation is 
the inverse of a stereographic projection of a quadric.
This example can be characterized in various ways. 
For example,  
it is the only case in which 
the base locus is degenerate.

In this direction, the main results of the thesis are three:
\begin{itemize}\renewcommand{\labelitemi}{\maltese}
 \item Complete classification when the type is $(2,2)$ and the image is a quadric: 
the base locus is a hyperplane section of a Severi variety.
 \item  Complete classification when the type is $(2,3)$ and the image is a cubic: 
the base locus is  a 
three-dimensional quadric blown-up at five points 
and embedded in $\PP^8$.
 \item (Almost) complete classification when the dimension of the base locus 
       is at most three.
\end{itemize}
On the first item, the existence of examples is clear and 
the difficulty is to prove that they are the only ones.
This is done in several steps, 
in order to prove that the inverse transformation is still special.
In the third item, we wrote ``(Almost) complete classification''
because in one case we do not know if it really exists.
However, we are able to say that
the case exists if one proves that 
a linearly normal scroll in lines over the Hirzebruch surface $\FF_1$, embedded in $\PP^8$ 
as a variety of degree $11$ and sectional genus $5$ (whose 
existence has been established by A. Alzati and G. M. Besana in \cite{alzati-fania-ruled}) 
is also cut out by quadrics.
On the second item, by using some results of P. Ionescu and F. Russo in 
\cite{russo-qel1} and \cite{ionescu-russo-conicconnected},
we deduce that  the base locus is three-dimensional and so we apply the 
previously obtained classification.
Next, we also compute the possible numerical invariants 
for transformations of type $(2,2)$ into 
a cubic and a quartic hypersurface. 
\section*{Towards the general case}
The second main goal of the thesis is to deal with a  more general case, that is 
when there are no restrictions on $N$, 
but however the image $\overline{\varphi(\PP^n)}\subseteq\PP^N$
of the transformation  is a sufficiently regular variety.
We study these transformations either
keeping the dimension of the base locus small 
or   fixing $d$ and
another numerical invariant,  the \emph{coindex} of the image.

Recall that in \cite{semple}, J. G. Semple 
 constructed transformations 
$\PP^{2m-2}\dashrightarrow\PP^{\binom{m+1}{2}-1}$ of type $(2,2)$ 
(resp. type $(2,1)$) having 
as image the Grassmannian $\GG(1,m)\subset \PP^{\binom{m+1}{2}-1}$ 
and having as base locus a nondegenerate (resp. degenerate) rational 
normal scroll. 
Further, F. Russo and A. Simis in  \cite{russo-simis},  
have characterized these examples 
as the only special birational transformations 
of type $(2,2)$ (resp. type $(2,1)$) into 
the Grassmannian of lines in projective space.
 
Note that in the Semple's examples, the image of the transformation is smooth. 
However, the smoothness of the image 
is a very rare phenomenon and
therefore, in order not to exclude relevant cases, 
we only require that the image is ``sufficiently regular''.
On the other hand, this assumption on the image is reasonable to restrict 
the classification in a confined meaningful list.

By applying techniques and results 
obtained for the case $N=n+1$, 
we extend some of our results.
More precisely we obtain:
\begin{itemize}\renewcommand{\labelitemi}{\maltese}
 \item Classification when the type is $(2,3)$ and the image is a ``del Pezzo variety'': the image  
is a cubic hypersurface or the base locus is either
 a scroll in lines over a quadric surface
or a quadric surface fibration over a line.   
 \item Classification of all transformations when the dimension of the base locus 
       is at most three: there are (at most) $33$ types of such transformations.
\end{itemize}
Here, we determine all possible cases mainly by
 applying the M. Mella and F. Russo's idea aforementioned 
and the classification of smooth varieties of low degree, 
but the main difficulty is to exhibit examples.
Even in the simplest cases, 
the number of calculations is so huge that 
 the use of a computer algebra system
is indispensable.
In some cases we are able to say that there is a transformation
just as we wish, 
except for the fact that we do not know
 if the image satisfies all our assumptions. 

We point out that, as a consequence,  
one can  
deduce that 
there are (at most) two types of special quadratic Cremona transformations 
having three-dimensional base locus: we have $n=N=8$ and the base locus is either
\begin{itemize}\renewcommand{\labelitemi}{\checkmark}
 \item the projection from a point of a Fano variety (here 
we have the K. Hulek, S. Katz and F. O. Schreyer's example) or
 \item a scroll in lines over a surface (here we do not know examples).
\end{itemize}
From F. Russo's results in \cite{russo-qel1} it follows that 
these transformations are the only special Cremona transformations 
of type $(2,5)$. 

In an appendix,  we show how the techniques used  may be adapted to the case 
 in which the dimension of the base locus is greater than three,
focusing to the case of dimension four. We also classify these transformations, 
but with $N$ large.
It does appear clear that the complexity of the objects increases 
with the decreasing of $N-n$.  
\section*{Two-sides principle}
From the thesis it follows a two-sides principle:
on one hand, base loci of such special birational transformations are very particular 
(they are manifolds satisfying some strong numerical and geometric conditions)
and further, 
at least where we were able to construct them, 
their properties may be described quite precisely. Conversely, 
all these special manifolds seem to 
appear as base loci of conveniently chosen transformations.

\section*{Organization of the thesis} 
In the first chapter we recall basic facts on geometric objects as 
the secant variety and the tangential projections of a projective variety.

The second chapter is dedicated to presenting 
some well-known classes of projective varieties.
In particular, we treat the class of quadratic entry locus varieties, 
largely studied by P. Ionescu and F. Russo.  

In the third chapter, we outline some well-known extensions 
of Castelnuovo's results. These are applied to study 
 transformations
having base locus of small dimension.

The fourth chapter is
an introduction to the subject of the thesis. We reinterpret 
the well-known example of the stereographic projection and 
treat the case of transformations of $\PP^n$ into a quadric, with 
$n\leq4$ and having reduced base loci.

 The fifth and sixth chapter are the main part of the thesis. 
There we prove the main results 
 on special quadratic birational transformations aforementioned.
Much of these two chapters is also contained in the papers \cite{note} and \cite{note2}.

\mainmatter
\chapter{Basic tools}\label{sec: basic tools}
For much of the material in this chapter, 
the main references are \cite{zak-tangent} and \cite{russo-specialvarieties}.
Unless otherwise specified, 
we shall keep the following: 
\begin{notation}\label{notation: X irred nondegen}
$X\subset\PP^n$ is an irreducible nondegenerate (i.e. not contained in a hyperplane)
$r$-dimensional projective complex algebraic variety.
\end{notation}
\section{Secant and tangent loci}
\subsection{Secant variety}
Let $X\subset\PP^n$ be as in Assumption \ref{notation: X irred nondegen} and  let
$Y\subset\PP^n$ be an irreducible projective variety.
We put 
 $$
 \Delta_{Y}=\{(y,x)\in Y\times X : y=x\},
 $$
 $$
 S^0_{Y,X}=\{(y,x,z)\in (Y\times X\setminus \Delta_Y)\times\PP^n: z\in\langle y,x\rangle\},
 $$ 
and denote by $S_{Y,X}$ the closure of $S^0_{Y,X}$ in $Y\times X\times\PP^n$. 
So we obtain the diagram:
\begin{equation}\label{eq: diagram secant variety}
\xymatrix{& &          S_{Y,X} \ar[dl]_{\pi_{Y,X}} \ar[dr]^{\pi_{\PP^n}}\\ 
          & Y\times X \ar[dl]_{p_{Y}} \ar[dr]^{p_{X}} &        & \PP^n \\
           Y  && X}
\end{equation}
\begin{definition}
The \emph{join} of $Y$ and $X$, $S(Y,X)$, is defined as the 
scheme-theoretic image of $\pi_{\PP^n}$, i.e.
$$
S(Y,X)=\pi_{\PP^n}(S_{Y,X})=\overline{\bigcup_{(y,x)\in Y\times X\setminus \Delta_Y}\langle y, x \rangle}.
$$
\end{definition}
Note that $S_{Y,X}$ is an irreducible variety of dimension 
 $\dim(Y)+\dim(X)+1$, hence $S(Y,X)$ is also irreducible of dimension 
\begin{equation}
 \dim(S(Y,X))\leq \dim(Y)+\dim(X)+1.
\end{equation}
An essential tool for studying joins of varieties is the 
so-called Terracini Lemma 
(see also \cite[Theorem~1.3.1]{russo-specialvarieties}):
\begin{theorem}[Terracini Lemma]\label{prop: terracini lemma}
Let $Y,X\subset\PP^n$ be as above. 
There exists a  nonempty open  subset  $U$ of $S(Y,X)$ such that 
for any  $p\in U$, $(y,x)\in Y\times X\setminus \Delta_Y$, 
$p\in\langle y,x\rangle$, we have
$$
T_p(S(Y,X))=\langle T_y(Y), T_x(X) \rangle.
$$
\end{theorem}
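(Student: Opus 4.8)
The plan is to realize the join as the image of an explicit parametrization and to compute the tangent space of the image by differentiating, using that over $\CC$ generic smoothness (Sard's theorem) controls the differential of a dominant morphism. Throughout I would work at smooth points of $Y$ and $X$, where the embedded tangent spaces $T_y(Y)$ and $T_x(X)$ are defined; the crucial observation is that the quantifier $(y,x)\in Y\times X\setminus\Delta_Y$ singles out the honest-secant locus $S^0_{Y,X}$, which is precisely the open, \emph{smooth} part of the incidence variety, and this is what makes the argument clean.

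First I would establish the inclusion $\langle T_y(Y),T_x(X)\rangle\subseteq T_p(S(Y,X))$ for every smooth pair with $p\in\langle y,x\rangle\setminus\{y,x\}$. Working in the affine cone over $\PP^n$, choose local parametrizations $y=y(s)$, $x=x(t)$ of $Y,X$ near $y,x$ and write a point of $S^0_{Y,X}$ as $(y(s),x(t),\lambda y(s)+\mu x(t))$. The composite $g=\pi_{\PP^n}|_{S^0_{Y,X}}$ is then $[\lambda y(s)+\mu x(t)]$, and its differential at our point has cone-level image spanned by $y$, $x$, the vectors $\partial y/\partial s_i$ and the vectors $\partial x/\partial t_j$; this is exactly $\widehat{T}_y(Y)+\widehat{T}_x(X)$, whose projectivization is $\langle T_y(Y),T_x(X)\rangle$ (here $\lambda\mu\neq 0$ because $p\notin\{y,x\}$). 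Since $g$ factors through $S(Y,X)$, the image of $dg$ lies in $T_p(S(Y,X))$, which gives the inclusion; note this holds for \emph{all} admissible pairs, with no genericity.

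For the reverse inclusion I would invoke generic smoothness. Restricting to the smooth loci of $Y$ and $X$, the honest-secant incidence $S^0_{Y,X}$ is a $\PP^1$-bundle over the smooth variety $(Y\times X)\setminus\Delta_Y$, hence is itself smooth, and $g\colon S^0_{Y,X}\to S(Y,X)$ is dominant. Because the ground field is $\CC$, generic smoothness yields a dense open $V\subseteq S(Y,X)$ over which $g$ is a smooth morphism; I then set $U=V\cap (S(Y,X))_{\mathrm{sm}}\setminus(Y\cup X)$, which is still dense open. The decisive point is that smoothness of the morphism $g$ over $V$ forces $dg_v$ to be surjective onto $T_p(S(Y,X))$ at \emph{every} point $v$ of the fibre over any $p\in V$, not merely at one general point of it. Combined with the computation $\mathrm{Image}(dg_v)=\langle T_y(Y),T_x(X)\rangle$ from the previous step, this yields $T_p(S(Y,X))=\langle T_y(Y),T_x(X)\rangle$ simultaneously for every pair $(y,x)$ lying over $p$.

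The step I expect to be the main obstacle is precisely this upgrade from ``a general pair over a general point'' to ``all pairs over a general point,'' which is exactly where the secant-defective situation would otherwise cause trouble: a naive dimension count on an exceptional locus need not beat $\dim S(Y,X)$ once the fibres of $\pi_{\PP^n}$ are positive-dimensional. The resolution is to notice that the theorem ranges only over genuine secants, i.e.\ over $S^0_{Y,X}$, which is already smooth, so that generic smoothness of $g$ applies directly and delivers fibrewise surjectivity of the differential; no control of the boundary $S_{Y,X}\setminus S^0_{Y,X}$ (the tangential limits, where the defect tends to concentrate) is needed. I would finally remark that the characteristic-zero hypothesis is essential here, since generic smoothness — and indeed the conclusion of the lemma itself — can fail in positive characteristic.
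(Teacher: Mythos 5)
Your overall strategy (the everywhere-valid inclusion $\langle T_y(Y),T_x(X)\rangle\subseteq T_p(S(Y,X))$ plus generic smoothness in characteristic zero to reverse it over a dense open set) is the standard route, and the paper itself offers no proof to compare with: it defers to \cite[Theorem~1.3.1]{russo-specialvarieties}. Your argument is complete when $Y$ and $X$ are smooth, but there is a genuine gap against the statement as written, which quantifies over \emph{all} pairs $(y,x)\in Y\times X\setminus\Delta_Y$ with $p\in\langle y,x\rangle$, with no regularity assumption on $y$ or $x$. (In this thesis $T_x(X)$ is the embedded Zariski tangent space at an arbitrary point --- that is how it is used in the tangent-star section --- and the lemma is later applied to varieties not yet known to be smooth, e.g. in Proposition \ref{prop: nondegenerate tang projection} and inside the proof of Theorem \ref{prop: classification of type 2-2 into quadric}.) The step your proof leans on, namely that $S^0_{Y,X}$ ``is precisely the open, \emph{smooth} part of the incidence variety,'' is false when $Y$ or $X$ is singular: $S^0_{Y,X}$ is a $\PP^1$-bundle over $(Y\times X)\setminus\Delta_Y$ and inherits every singularity of $Y\times X$. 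By restricting at the outset to smooth points of $Y$ and $X$ you have silently weakened the theorem, and the singular pairs cannot be discarded by shrinking $U$: already for $X$ a smooth conic and $Y$ a nodal cubic in $\PP^2$ (so $X$ is nondegenerate, as in Assumption \ref{notation: X irred nondegen}) one has $S(Y,X)=\PP^2$, and \emph{every} point $p$ lies on a line $\langle y_0,x\rangle$ through the node $y_0$ of $Y$ and a point $x\in X$; the asserted equality at such pairs holds only because $T_{y_0}(Y)=\PP^2$, a phenomenon your argument never sees.

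The missing idea is to make generic smoothness reach the singular pairs by factoring through a resolution. Work with affine cones and the addition map $\phi:\hat{Y}\times\hat{X}\to\hat{S}$, $(u,v)\mapsto u+v$: its differential at \emph{any} pair $(u,v)$, smooth or not, has image $T_u\hat{Y}+T_v\hat{X}$ inside $T_{u+v}\hat{S}$, which gives the easy inclusion for all pairs with Zariski tangent spaces (your local-parametrization computation needs smoothness; this formulation does not). Now choose a resolution of singularities $\mu:Z\to\hat{Y}\times\hat{X}$ and apply generic smoothness to the composite $\psi=\phi\circ\mu$: there is a dense open $\hat{V}\subseteq\hat{S}$ such that $d\psi_z$ is surjective for every $z\in\psi^{-1}(\hat{V})$. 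Given any pair $(u,v)$ with $u+v\in\hat{V}$, surjectivity of $\mu$ provides $z\in\mu^{-1}(u,v)$, and the factorization $d\psi_z=d\phi_{(u,v)}\circ d\mu_z$ forces $T_{u+v}\hat{S}=\mathrm{Im}(d\psi_z)\subseteq T_u\hat{Y}+T_v\hat{X}$; combined with the easy inclusion this is the desired equality, for \emph{every} pair lying over $\hat{V}$. Since all the loci involved are invariant under the $\CC^{\ast}$-action, this descends to a dense open $U\subseteq S(Y,X)$ as in the statement. With this replacement of your smoothness claim for $S^0_{Y,X}$, the rest of your argument (the differential computation, fibrewise surjectivity, and the characteristic-zero caveat) stands.
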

\begin{definition}
In the case when $Y=X$, we put $\Sec(X):=S(X,X)$ 
and call it  \emph{secant variety} of $X$.
A line  $l$  is said to be a
 \emph{secant} of $X$ if the length of the scheme $X\cap l$
is at least $2$; in these terms, 
$\Sec(X)$ is the union of all secant lines of $X$.
The \emph{secant defect} of $X$ is defined as the nonnegative integer
\begin{equation}
\delta(X)=2\dim(X)+1-\dim(\Sec(X))
\end{equation}
and $X$ is called \emph{secant defective} if $\delta(X)>0$.
\end{definition}
\begin{remark}
The definition of secant variety can be given in a more general context. Indeed
for any locally closed subscheme $B\subset\PP^n$ we can consider 
the Hilbert scheme 
 $\Hilb_2(B)$ of its $0$-dimensional subschemes of length $2$.
For each $Z\in \Hilb_2(B)$ the evaluation map  
$H^0(B,\mathcal{O}_B(1))\rightarrow H^0(Z, \mathcal{O}_Z(1))$
is obviously surjective, therefore its kernel 
 defines a secant line $\langle Z\rangle\subset\PP^n$.
This defines a $\PP^1$-bundle over $\Hilb_2(B)$
$$
\mathcal{S}(B):=\{(Z,p)\in \Hilb_2(B)\times \PP^n : p\in \langle Z\rangle\}\longrightarrow \Hilb_2(B)
$$
and the \emph{secant scheme} of $B$, $\Sec(B)$, is defined as 
the scheme-theoretic image of the projection 
 $\mathcal{S}(B)\rightarrow\PP^n$.
\end{remark}
\subsection{Contact loci} 
Consider the diagram  (\ref{eq: diagram secant variety}) with  $Y=X$, defining 
the secant variety   $\Sec(X)$,
and let $p\in\Sec(X)\setminus X$ be a point.   Put 
\begin{eqnarray*}
L_p(X)&:=&\pi_{\PP^n}(\pi_{X,X}^{-1}(\pi_{X,X}(\pi_{\PP^n}^{-1}(p)))) \\
      & =&\bigcup\{l: l \mbox{ secant line of } X \mbox{ through }p\},\\
\end{eqnarray*}
\begin{eqnarray*}
\Sigma_p(X)&:=& \pi_X(\pi_{\PP^n}^{-1}(p))=L_p(X)\cap X \\
          &=& \overline{\{x\in X: \exists x'\in X \mbox{ with } x\neq x' \mbox{ and } p\in \langle x,x' \rangle \}},
\end{eqnarray*}
where $\pi_X=p_X\circ\pi_{X,X}$.
\begin{definition}
$\Sigma_p(X)$ is called the \emph{entry locus} 
of $X$ with respect to  $p$.
\end{definition}
\begin{figure}[htbp]
\begin{center}
\includegraphics[width=0.55\textwidth]{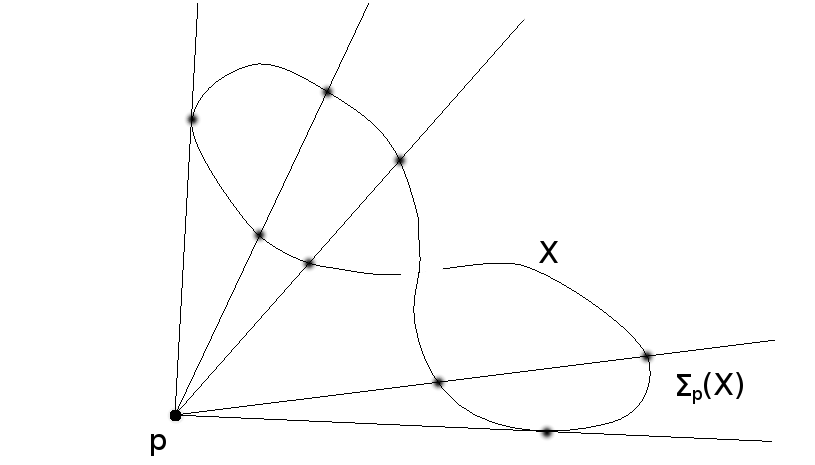}
\end{center}
\caption{Entry locus of $X$.}
\label{fig: entry locus}   
\end{figure}
Note that  $L_p(X)$ is a cone over
 $\Sigma_p(X)$ of vertex $p$, hence  
 $\dim(L_p(X))=\dim(\Sigma_p(X))+1$. 
If $p\in\Sec(X)\setminus X$ is a general point, then 
$\Sigma_p(X)$ is reduced and equidimensional of dimension  $\delta(X)$.
\begin{definition}\label{def tangential contact locus}
The \emph{tangential contact locus} of  $X$
with respect to the general point $p$ is defined as 
$$
\Gamma_{p}(X):=\overline{\{x\in \reg(X): T_x(X)\subseteq T_p(\Sec(X)) \} }.
$$
\end{definition}
By Terracini Lemma it follows that 
$
\Sigma_{p}(X)\subseteq \Gamma_p(X)
$
and it is possible to prove that for  $x,x'\in X$ 
general points and  $p\in\langle x,x' \rangle$ general point, the 
irreducible components of  
 $\Gamma_p(X)$ through $x$ and $x'$ are uniquely determined and 
have the same dimension, independent of $p$. 
Denoting this dimension by  $\gamma=\gamma(X)$, we have
$\delta(X)\leq\gamma(X)$. See also \cite[Definition~2.3.3]{russo-specialvarieties} 
and \cite[Definition~3.4]{chiantini-ciliberto}.
\subsection{\texorpdfstring{$J$}{J}-tangency}
Let $Y\subseteq X$ be a subvariety ($X$ as in Assumption \ref{notation: X irred nondegen}) and consider 
the diagram  
(\ref{eq: diagram secant variety}).
\begin{definition}
For a point $y\in Y$, the  \emph{(relative) tangent star} to $X$ 
with respect to $Y$ at $y$,   is defined as
$$
T^{\ast}_y(Y,X):=\pi_{\PP^n}(\pi_{Y,X}^{-1}((y,y))),
$$
and put
$$
T^{\ast}(Y,X):=\bigcup_{y\in Y} T^{\ast}_y(Y,X).
$$
\end{definition}
We have $T^{\ast}_y(Y,X)\subseteq T_y(X)$ and, if $y\in Y\setminus \sing(X)$, 
then $T^{\ast}_y(Y,X)= T_y(X)$;
in particular, if $Y\subseteq X\setminus\sing(X)$, then
$$
T^{\ast}(Y,X)=T(Y,X):=\bigcup_{y\in Y}T_y(X).
$$
\begin{theorem}[\cite{zak-tangent}]\label{prop: zak theorem}
  It holds one of the following two conditions:
\begin{enumerate}
 \item $\dim(T^{\ast}(Y,X))=\dim(S(Y,X))-1=\dim(Y)+\dim(X)$;
 \item $T^{\ast}(Y,X)=S(Y,X)$.
\end{enumerate}
\end{theorem}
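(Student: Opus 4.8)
The plan is to extract everything from the two projections of the incidence variety $S_{Y,X}$ in diagram (\ref{eq: diagram secant variety}), and to convert the stated alternative into a single question about a general fibre of $\pi_{\PP^n}$. Write $m=\dim(Y)$ and $r=\dim(X)$, so that $S_{Y,X}$ is irreducible of dimension $m+r+1$ and $\pi_{Y,X}\colon S_{Y,X}\to Y\times X$ is dominant with one-dimensional general fibre (the secant line $\langle y,x\rangle$). The key reformulation is that, since a union commutes with images and $\pi_{\PP^n}$ is proper,
$$
T^{\ast}(Y,X)=\pi_{\PP^n}\bigl(\pi_{Y,X}^{-1}(\Delta_Y)\bigr)=:\pi_{\PP^n}(W)
$$
is closed and contained in the irreducible $S(Y,X)=\pi_{\PP^n}(S_{Y,X})$. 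Because a point $p$ together with a pair $(y,x)$ determines the point $(y,x,p)\in S_{Y,X}$ uniquely, $\pi_{Y,X}$ maps the fibre $\Phi_p:=\pi_{\PP^n}^{-1}(p)$ isomorphically onto the locus of ``entry pairs'' $E_p:=\{(y,x)\in Y\times X:\ p\in\langle y,x\rangle\}$, and membership becomes the clean criterion $p\in T^{\ast}(Y,X)\iff\Phi_p\cap W\neq\emptyset\iff E_p\cap\Delta_Y\neq\emptyset$.

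This criterion produces the dichotomy. Either the general fibre $\Phi_p$ meets $W$: then the general point of the irreducible variety $S(Y,X)$ lies in the closed set $T^{\ast}(Y,X)$, whence $T^{\ast}(Y,X)=S(Y,X)$ and we are in alternative $(2)$. Or the general $\Phi_p$ is disjoint from $W$, i.e. the general $E_p$ avoids the diagonal, and I aim for alternative $(1)$. In this second branch $T^{\ast}(Y,X)\subsetneq S(Y,X)$. Two dimension counts are cheap here: first, $T^{\ast}(Y,X)=\bigcup_{y\in Y}T^{\ast}_y(Y,X)$ is swept out by the cones $T^{\ast}_y(Y,X)\subseteq T_y(X)$ of dimension at most $r$ over the $m$-dimensional base $Y$, giving $\dim T^{\ast}(Y,X)\le m+r$; second, using the fact recorded after the definition of the tangent star that $T^{\ast}_y(Y,X)=T_y(X)$ for $y\in Y\setminus\sing(X)$, the general fibre of $\pi_{Y,X}$ over $\Delta_Y$ has dimension $r$, so $\dim W=m+r$.

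It remains, in the second branch, to establish the two equalities packaged in alternative $(1)$: that the join is non-defective, $\dim S(Y,X)=m+r+1$, and that $\pi_{\PP^n}|_W$ is generically finite, so that $\dim T^{\ast}(Y,X)=\dim W=m+r=\dim S(Y,X)-1$. Equivalently, the whole content is the codimension-one statement: \emph{if} $T^{\ast}(Y,X)\neq S(Y,X)$, \emph{then} $T^{\ast}(Y,X)$ is a divisor in $S(Y,X)$ and the join is non-defective. This is the crux, and the essential tool is Terracini's Lemma (Theorem~\ref{prop: terracini lemma}): for general $p\in\langle y,x\rangle$ one has $T_p(S(Y,X))=\langle T_y(Y),T_x(X)\rangle$, so defectivity is exactly the assertion that the general tangent spaces $T_y(Y)$ and $T_x(X)$ already meet. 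The main obstacle is to propagate this ``static'' tangency at a general secant pair into a tangency along the diagonal, thereby forcing $E_p\cap\Delta_Y\neq\emptyset$ and collapsing us back into alternative $(2)$: concretely, I would degenerate the secant pair by letting $x$ tend to $y$ inside $X$, control the limiting secant line so that the excess intersection $T_y(Y)\cap T_x(X)$ is realized as a point of $T^{\ast}_y(Y,X)$ lying over $p$, and organize this through the contact loci $\Sigma_p(X)\subseteq\Gamma_p(X)$ and the constancy of the dimension of the components of $\Gamma_p(X)$ recalled after Definition~\ref{def tangential contact locus}. Once non-defectivity is secured, $\pi_{\PP^n}$ is generically finite, the general $E_p$ is a reduced point off $\Delta_Y$, the same parameter count rules out positive-dimensional fibres of $\pi_{\PP^n}|_W$, and $\dim T^{\ast}(Y,X)=m+r=\dim S(Y,X)-1$ completes alternative $(1)$.
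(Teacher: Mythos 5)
First, a point of reference: the paper itself gives no proof of this statement --- it is quoted from Zak's book \cite{zak-tangent}, and only the Corollary on tangencies is derived from it in the text. So your attempt has to be measured against the known proof. Your framework is sound: the incidence variety $S_{Y,X}$, the set $W=\pi_{Y,X}^{-1}(\Delta_Y)$ with $T^{\ast}(Y,X)=\pi_{\PP^n}(W)$ closed in the irreducible $S(Y,X)$, the tautological split according to whether the general fibre of $\pi_{\PP^n}$ meets $W$, and the bounds $\dim(W)\leq m+r$, $\dim(T^{\ast}(Y,X))\leq m+r$ (with $m=\dim(Y)$, $r=\dim(X)$) are all correct, and they dispose of alternative $(2)$ in the first branch. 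But everything you yourself call ``the crux'' is left unproved, and that is where the entire content of Zak's theorem lies. In the branch $T^{\ast}(Y,X)\neq S(Y,X)$ you must show the join is non-defective, and your plan --- Terracini plus ``degenerate the secant pair by letting $x$ tend to $y$ \ldots so that the excess intersection $T_y(Y)\cap T_x(X)$ is realized as a point of $T^{\ast}_y(Y,X)$ lying over $p$'' --- is a statement of intent, not an argument, and it does not obviously work: when $p\notin T^{\ast}(Y,X)$ the fibre $\Phi_p$ is a \emph{projective} subvariety of $(Y\times X)\setminus\Delta_Y$, and a positive-dimensional projective family of entry pairs has no a priori reason to degenerate onto the diagonal; Terracini's Lemma is an infinitesimal statement at general points of the join and cannot force such a degeneration. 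The known proofs use a genuinely different mechanism: either Zak's intersection-theoretic argument --- take a curve in $\Phi_p$, sweep out the cone with vertex $p$ by the corresponding secant lines $\langle y_t,x_t\rangle$, pass to the ruled surface normalizing this cone, and observe that the vertex section (which has negative self-intersection, being contracted to $p$), the section through the $y_t$ and the section through the $x_t$ would be three pairwise disjoint sections, which is numerically impossible on a ruled surface over a curve --- or, alternatively, the Fulton--Hansen connectedness theorem. Your proposal never produces this contradiction.

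There is a second, independent gap at the end. Even granting non-defectivity in that branch, you claim that ``the same parameter count rules out positive-dimensional fibres of $\pi_{\PP^n}|_W$,'' hence $\dim(T^{\ast}(Y,X))=m+r$. This is a non sequitur: generic finiteness of $\pi_{\PP^n}$ on $S_{Y,X}$ controls fibres over \emph{general} points of $S(Y,X)$, whereas $W$ maps into $T^{\ast}(Y,X)$, a proper closed subset consisting precisely of non-general points, over which the fibres of $\pi_{\PP^n}$ may a priori be positive-dimensional; nothing prevents every $(m+r)$-dimensional component of $W$ from being collapsed. Proving the lower bound $\dim(T^{\ast}(Y,X))\geq m+r$ when $T^{\ast}(Y,X)\neq S(Y,X)$ is the second nontrivial half of Zak's theorem and requires its own degeneration argument, not a parameter count. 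In summary, your write-up correctly reduces the theorem to its two hard assertions and then proves neither of them.
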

\begin{definition}\label{def: J-tangency} 
 If $L\subset\PP^n$ is a linear space, we say that 
$L$ is tangent to $X$ along $Y$ if $L\supseteq T(Y,X)$;
we say that $L$ is $J$-tangent to $X$ along $Y$ if $L\supseteq T^{\ast}(Y,X)$.
\end{definition}
\begin{corollary}[Zak's Theorem on tangencies, \cite{zak-tangent}]\label{prop: theorem on tangencies}
 Notation as above. If $L\subset\PP^n$ is a linear space
 $J$-tangent to $X$ along $Y$, then
$$
\dim(Y)+\dim(X)\leq \dim(L).
$$
\end{corollary}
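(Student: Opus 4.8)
The plan is to read the inequality straight off the dichotomy of Theorem~\ref{prop: zak theorem}. The hypothesis that $L$ is $J$-tangent to $X$ along $Y$ is, by Definition~\ref{def: J-tangency}, nothing but the containment $T^{\ast}(Y,X)\subseteq L$; hence $\dim(L)\geq\dim(T^{\ast}(Y,X))$, and the whole problem reduces to bounding $\dim(T^{\ast}(Y,X))$ from below by $\dim(Y)+\dim(X)$.

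First I would apply Theorem~\ref{prop: zak theorem} to the pair $Y\subseteq X$, which leaves exactly two possibilities. In the first, $\dim(T^{\ast}(Y,X))=\dim(Y)+\dim(X)$; combined with the containment above this gives at once $\dim(L)\geq\dim(T^{\ast}(Y,X))=\dim(Y)+\dim(X)$, which is the assertion. So in this alternative there is nothing more to prove.

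The remaining task is to rule out the second alternative, $T^{\ast}(Y,X)=S(Y,X)$, for a genuine (proper) tangent space $L$. The key observation is that the join always contains $X$: fixing a point $x\in X$ and letting $y$ vary in $Y$, every line $\langle y,x\rangle$ passes through $x$, so $x\in S(Y,X)$, and passing to the closure yields $X\subseteq S(Y,X)$. In the second alternative we would therefore have $X\subseteq S(Y,X)=T^{\ast}(Y,X)\subseteq L$; but $X$ is nondegenerate by Assumption~\ref{notation: X irred nondegen}, so the only linear space containing it is $\PP^n$ itself, forcing $L=\PP^n$. Consequently, as long as $L$ is a proper linear subspace only the first alternative can occur, and the desired bound holds.

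The serious content is entirely absorbed into Theorem~\ref{prop: zak theorem}; once that dichotomy is granted the corollary is immediate, so I do not expect a real obstacle here. The single point demanding care is precisely this second alternative, where the nondegeneracy of $X$ is exactly the ingredient that excludes it. I would also flag, as in Zak's original formulation, that the statement is to be understood for a proper $L$: when $L=\PP^n$ the $J$-tangency hypothesis is automatically satisfied and the numerical conclusion can genuinely fail (for instance $Y=X$ a nondegenerate surface in $\PP^3$, where $T^{\ast}(X,X)=\Sec(X)=\PP^3$), so properness of $L$ is the implicit hypothesis under which the argument above is conclusive.
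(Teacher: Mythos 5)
Your proof is correct and follows essentially the same route as the paper's: both invoke the dichotomy of Theorem~\ref{prop: zak theorem} and use $X\subseteq S(Y,X)$ together with the nondegeneracy of $X$ to exclude the alternative $T^{\ast}(Y,X)=S(Y,X)$ when $L$ is a proper linear subspace. The only differences are cosmetic: the paper first reduces without loss of generality to $Y$ irreducible (so that the theorem on tangent stars applies directly), and it leaves the properness of $L$ implicit in the notation $L\subset\PP^n$, a point you correctly make explicit.
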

\begin{proof} See also \cite[\Rmnum{1} Corollary~1.8]{zak-tangent} 
and \cite[Theorem~2.2.3]{russo-specialvarieties}.
 Without loss of generality we can suppose that $Y$ is irreducible.  
By hypothesis $T^{\ast}(Y,X)\subseteq L$, but 
$S(Y,X)\nsubseteq L$ (because $X\subseteq S(Y,X)$ and $X$ is nondegenerate). 
Thus, by Theorem \ref{prop: zak theorem}, we have
$\dim(Y)+\dim(X)=\dim(T^{\ast}(Y,X))\leq \dim(L)$.
\end{proof}
\subsection{Gauss map}
Recall that the Gauss map of $X$ is the rational map  
$\G_{X} :X\dashrightarrow \GG(r,n)$,
which sends the point  $x\in\reg(X)$  
to the tangent space  $T_x(X)\in \GG(r,n)$.
For a general point  $x\in\reg(X)$, 
the closure of the fiber 
of $\G_X$ at $\G_X(x)$ is
$$
\overline{\G_X^{-1}(\G_X(x))}=\overline{\{y\in \reg(X): T_y(X)=T_x(X)\}}.
$$
\begin{theorem}\label{prop: linear fiber gauss}
 For $x\in X$ general point, 
 $\overline{\G_X^{-1}(\G_X(x))}$ is a 
 linear space. 
\end{theorem}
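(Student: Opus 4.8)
The plan is to first confine the whole fibre to a single linear space and then to promote ``contained in a linear space'' to ``equal to a linear space'' by exploiting the fact that the tangent space is, by construction, constant along the fibre. So I would fix a general point $x\in\reg(X)$, set $L:=T_x(X)\in\GG(r,n)$ and write $F:=\overline{\G_X^{-1}(\G_X(x))}$, with $\dim F=:f$. By definition every regular point $y\in F$ satisfies $T_y(X)=L$; since a point always lies on its own embedded tangent space, $y\in T_y(X)=L$, and taking closures gives $F\subseteq L$. Thus $F$ is an $f$-dimensional subvariety of the $r$-plane $L$, and the only thing left is to show that $F$ is itself linear.

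To linearize the bookkeeping I would pass to affine cones. Let $\widehat{X}\subseteq\mathbb{A}^{n+1}$ be the affine cone over $X$ and let $\widehat{L},\widehat{F}$ be the cones over $L,F$. Because $\widehat{X}$ is a cone, its (affine, vector-space) tangent space at a point $\widehat{y}\neq 0$ lying over $y\in\reg(X)$ is the $(r+1)$-dimensional subspace $\widehat{L}$, independent of the chosen lift; hence the Gauss map of $\widehat{X}$ is constant, equal to $\widehat{L}$, along $\widehat{F}$, and it suffices to prove that $\widehat{F}$ is a linear subspace of $\mathbb{A}^{n+1}$. At a general smooth point $\widehat{y}_0\in\widehat{F}$ one has $T_{\widehat{y}_0}(\widehat{F})\subseteq T_{\widehat{y}_0}(\widehat{X})=\widehat{L}$.

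The heart of the argument is to show that the constancy of the tangent space forces $\widehat{F}$ to be an open subset of the linear space $T_{\widehat{y}_0}(\widehat{F})$. I would phrase this through the second fundamental form $II$ of $\widehat{X}$ at $\widehat{y}_0$: the relation $T_{\widehat{y}}(\widehat{X})\equiv\widehat{L}$ along $\widehat{F}$ says exactly that the tangent directions to $\widehat{F}$ lie in the kernel of $II$, i.e.\ they are relative-nullity directions. Choosing such a direction $v\in T_{\widehat{y}_0}(\widehat{F})$ and differentiating, along the line $\widehat{y}_0+tv$, the two conditions that the moving point stays in $\widehat{X}$ and that its tangent space stays equal to $\widehat{L}$, one checks in characteristic zero that the whole line remains inside $\widehat{X}$ with constant tangent space $\widehat{L}$, hence lies in $\widehat{F}$. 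As $v$ ranges over $T_{\widehat{y}_0}(\widehat{F})$ these lines sweep out all of $T_{\widehat{y}_0}(\widehat{F})$, so the component of $\widehat{F}$ through $\widehat{y}_0$ coincides with it; dehomogenizing shows that $F$ is a dense open subset of a linear $\PP^f\subseteq L$. Alternatively one may simply invoke the fact, established in \cite{zak-tangent}, that over a field of characteristic zero the leaves of the relative-nullity foliation are linear.

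The main obstacle is precisely this last step, namely passing from ``all tangent spaces of $F$ are contained in the common plane $L$'' to ``$F$ is linear''. This is where characteristic zero is indispensable: in positive characteristic the Gauss map can be inseparable and its fibres need not be linear. The genuine technical content is the infinitesimal computation showing that the second fundamental form vanishes along the relative-nullity directions, equivalently that the secant line joining two general points of the fibre stays in $X$ with unchanged tangent space; the reductions in the first two paragraphs are formal by comparison.
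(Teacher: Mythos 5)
Your proposal cannot be compared line by line with an argument in the paper, because the paper offers none: immediately after the statement it refers the reader to Chapter I, \S 2 of \cite{zak-tangent} for ``details and proof''. The relevant comparison is therefore with the standard argument given there (and in Griffiths--Harris), which is indeed the one you outline: containment of the fibre in the common tangent space, passage to affine cones, identification of the differential of the Gauss map with the second fundamental form, and linearity of the relative-nullity leaves in characteristic zero. Your two reductions ($F\subseteq L$ because every point lies on its own embedded tangent space, and $T_{\widehat{y}_0}(\widehat{F})\subseteq\ker II$ because the Gauss map of the cone is constant along $\widehat{F}$) are correct and are exactly the standard preliminaries.

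The soft spot is the step you yourself flag as the heart. As phrased --- ``differentiating, along the line $\widehat{y}_0+tv$, the two conditions that the moving point stays in $\widehat{X}$ and that its tangent space stays equal to $\widehat{L}$'' --- the argument is circular: one may only differentiate those conditions along curves already known to lie in $\widehat{F}$, whereas membership of the line in $\widehat{X}$ is precisely what is to be proved. The honest versions of this step are either (a) an integrability computation (of Codazzi type, or via moving frames) showing that the nullity distribution has totally geodesic, hence linear, leaves --- this is the Griffiths--Harris/Zak route --- or (b) the reflexivity argument: in characteristic zero the contact locus of a general tangent hyperplane is a linear space, and the Gauss fibre is an intersection of such contact loci. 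Since you explicitly offer the citation of Zak's nullity result as an alternative, your proposal is complete at exactly the same level as the paper itself, namely modulo \cite{zak-tangent}; but as a self-contained proof it stops one genuine computation short. A further detail glossed over (by most references as well): your argument linearizes the irreducible component of the fibre through $\widehat{y}_0$, while the statement concerns the closure of the whole fibre $\overline{\G_X^{-1}(\G_X(x))}$, so one still needs the connectedness of the general fibre.
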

\begin{corollary}\label{prop: birational gauss}
 If $X$ is smooth, then  $\G_X:X \rightarrow \G_X(X)$ is a birational
morphism.
\end{corollary}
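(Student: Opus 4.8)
The plan is to deduce birationality from the observation that a general fibre of $\G_X$ is simultaneously a linear space and tangentially rigid, which together force it to collapse to a single point. First I would note that, since $X$ is smooth, $\reg(X)=X$, so $\G_X$ is defined at every point of $X$ and is therefore a genuine morphism rather than merely a rational map; only the generic injectivity remains to be established.

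Next, fix a general point $x\in X$ and set $F:=\overline{\G_X^{-1}(\G_X(x))}$. By Theorem \ref{prop: linear fiber gauss}, $F$ is a linear subspace of $\PP^n$ contained in $X$. By the very definition of the fibre, the embedded tangent space is constant along $F$: for a general $y\in F$ one has $T_y(X)=T_x(X)=:\Lambda$, and since $X$ is smooth, $\dim(\Lambda)=r$. Passing to closures this gives $T(F,X)=\bigcup_{y\in F}T_y(X)=\Lambda$, that is, the $r$-dimensional linear space $\Lambda$ is tangent to $X$ along $F$. Because $X$ is smooth we have $F\subseteq X\setminus\sing(X)$, so tangency and $J$-tangency coincide here (Definition \ref{def: J-tangency}).

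I would then apply Zak's theorem on tangencies (Corollary \ref{prop: theorem on tangencies}) with $Y=F$ and $L=\Lambda$. It yields $\dim(F)+\dim(X)\leq\dim(\Lambda)$, i.e. $\dim(F)+r\leq r$, whence $\dim(F)\leq 0$. A zero-dimensional linear space is a single reduced point, so the general fibre of $\G_X$ is reduced to one point. In characteristic zero this means $\G_X$ is generically injective, hence of degree one onto its image, and therefore birational onto $\G_X(X)$.

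The only genuinely delicate point is the tangency step: one must check that the equality $T_y(X)=\Lambda$ persists for all $y$ in a dense open subset of $F$, and not merely at the chosen point $x$, so that $\Lambda$ is tangent to $X$ along the whole of $F$. This is precisely what the description of the Gauss fibre guarantees, since the tangent space is by construction constant on the fibre. Everything else is formal; the substantive inputs are the linearity of the fibre from Theorem \ref{prop: linear fiber gauss} and the dimension estimate of Zak's theorem on tangencies.
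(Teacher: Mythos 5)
Your proof is correct and is exactly the route the paper intends: it cites precisely Theorem \ref{prop: linear fiber gauss} (linearity of the general Gauss fibre) and Zak's theorem on tangencies (Corollary \ref{prop: theorem on tangencies}) as the two ingredients, and you combine them in just that way, taking $Y=F$ and $L=T_x(X)$ to force $\dim(F)=0$. The ``delicate point'' you flag is in fact automatic: since $X$ is smooth, $\G_X$ is a morphism, so the fibre is already closed and the tangent space equals $T_x(X)$ at every point of $F$ by continuity.
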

For details and proof of Theorem \ref{prop: linear fiber gauss},
we refer the reader to \cite[\Rmnum{1} \S  2]{zak-tangent}.
Corollary \ref{prop: birational gauss} 
is instead a straightforward application of
 Corollary \ref{prop: theorem on tangencies} 
and Theorem \ref{prop: linear fiber gauss}.
\section{Tangential projections}
For a general point 
 $x\in X$  consider the projection of  
 $X$ from the tangent space  $T_x(X)\simeq \PP^r$ onto a linear space   
$\PP^{n-r-1}$ skew to $T_x(X)$,
$$
\tau_{x,X}:X\subset\PP^n\dashrightarrow \overline{\tau_{x,X}(X)}=W_{x,X}\subseteq\PP^{n-r-1}.
$$
From Terracini Lemma it follows (see also \cite[Proposition~1.3.8]{russo-specialvarieties}):
\begin{proposition}\label{prop: nondegenerate tang projection}
$W_{x,X}\subseteq\PP^{n-r-1}$ is an irreducible nondegenerate variety 
 of dimension $r-\delta(X)$. 
\end{proposition}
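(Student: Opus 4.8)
The plan is to establish the three assertions—irreducibility, nondegeneracy, and the dimension formula—separately, with the dimension count being the only substantive point. Throughout I write $\Lambda:=T_x(X)\simeq\PP^r$ for the center of projection, and $\pi:\PP^n\dashrightarrow\PP^{n-r-1}$ for the linear projection from $\Lambda$, so that $\tau_{x,X}=\pi|_X$.

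Irreducibility is immediate: $\tau_{x,X}$ is a rational map defined on a dense open subset of the irreducible variety $X$, so its image $W_{x,X}$, being the closure of the image of an irreducible set, is irreducible. For nondegeneracy I would argue by contradiction. The projection $\pi$ induces a bijection between hyperplanes $H\subset\PP^{n-r-1}$ and hyperplanes of $\PP^n$ containing $\Lambda$, sending $H$ to $\overline{\pi^{-1}(H)}$. If $W_{x,X}$ were contained in some hyperplane $H$, then $X$ would lie in the hyperplane $\overline{\pi^{-1}(H)}$ of $\PP^n$, contradicting the nondegeneracy of $X$ assumed in Assumption~\ref{notation: X irred nondegen}.

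For the dimension I would compute the rank of the differential of $\tau_{x,X}$ at a general point $y\in X$ and invoke generic smoothness (we work over $\CC$). Since $\tau_{x,X}$ is the restriction to $X$ of the linear projection $\pi$ from $\Lambda$, its differential at $y$ is the projection of $T_y(X)$ from $\Lambda$, so the tangent space to $W_{x,X}$ at the general point $\tau_{x,X}(y)$ is $\pi(T_y(X))$, of dimension $\dim\langle T_x(X),T_y(X)\rangle-r-1$. The crucial input is the Terracini Lemma (Theorem~\ref{prop: terracini lemma}): for $x,y\in X$ general, a general point $p\in\langle x,y\rangle$ is a general point of $\Sec(X)$, whence $T_p(\Sec(X))=\langle T_x(X),T_y(X)\rangle$ and therefore $\dim\langle T_x(X),T_y(X)\rangle=\dim(\Sec(X))=2r+1-\delta(X)$. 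Substituting yields $\dim(W_{x,X})=(2r+1-\delta(X))-r-1=r-\delta(X)$, as claimed.

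The step requiring the most care is the last one: one must ensure that, for the fixed general point $x$ and the varying general point $y$, the auxiliary point $p\in\langle x,y\rangle$ genuinely lands in the nonempty open subset $U\subseteq\Sec(X)$ furnished by the Terracini Lemma, so that $T_p(\Sec(X))$ is indeed computed by $\langle T_x(X),T_y(X)\rangle$. This holds because the secant lines spanned by general pairs of points of $X$ dominate $\Sec(X)$, so $p$ is a general point of the secant variety; but it is precisely this genericity bookkeeping, rather than the purely formal dimension count that follows it, that I expect to be the main obstacle.
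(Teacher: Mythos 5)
Your proof is correct and follows exactly the route the paper intends: the paper derives this proposition directly from the Terracini Lemma (Theorem \ref{prop: terracini lemma}), citing \cite[Proposition~1.3.8]{russo-specialvarieties} for details, and your argument—projecting $T_y(X)$ from $T_x(X)$ and computing $\dim\langle T_x(X),T_y(X)\rangle=\dim(\Sec(X))=2r+1-\delta(X)$ via Terracini at a general point of a general secant line—is precisely that standard argument, with the genericity bookkeeping you flag handled correctly.
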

In \S  \ref{sec: second fund form} 
we introduce
a useful concept for studying
tangential projections of 
secant defective varieties having secant variety that doesn't fill up 
the whole ambient space.
\subsection{Second fundamental form}\label{sec: second fund form} 
Consider the blow-up
 of $X$ at the general point  $x$,
$
\pi_x:\Bl_x(X)\rightarrow X
$,
and denote by 
 $E=E_X=E_{x,X}=\PP^{r-1}$ its exceptional divisor and by 
 $H$ a divisor of the linear system $|\pi_{x}^{\ast}(\O_X(1))|$.
Since $X$  is not linear, it is defined the rational map
$$
\phi: E{\dashrightarrow} W_{x,X}\subseteq\PP^{n-r-1}.
$$
\begin{definition} The linear system associated to 
 $\phi$, i.e. $|H-2E|_{|E}\subseteq |-2E|_{|E}=|\O_{\PP^{r-1}}(2)|$, is called 
\emph{second fundamental form}  of $X$ (at the point $x$) 
and it is denoted with $|II_{x,X}|$.
\end{definition}
Of course, $\dim(|II_{x,X}|)\leq n-r-1$ and 
the image of $\phi=\phi_{|II_{x,X}|}$ is contained in $W_{x,X}$.
\begin{proposition}\label{prop: surjective second fundamental form}
If $X$ is smooth,
secant defective  
and $\Sec(X)\subsetneq\PP^n$, then 
$\overline{\phi_{|II_{x,X}|}(E)}=W_{x,X}$ and
 in particular $\dim(|II_{x,X}|)=n-r-1$.
\end{proposition}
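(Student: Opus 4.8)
The plan is as follows. Since it is already observed in the text that $\overline{\phi_{|II_{x,X}|}(E)}\subseteq W_{x,X}$, and since by Proposition~\ref{prop: nondegenerate tang projection} the variety $W_{x,X}$ is irreducible of dimension $r-\delta(X)$, it suffices to prove the reverse inclusion, for which in turn it is enough to check that $\dim\overline{\phi_{|II_{x,X}|}(E)}\geq r-\delta(X)$. The key observation is that $\phi=\phi_{|II_{x,X}|}$ is nothing but the ``leading term'' of the tangential projection $\tau_{x,X}$ at the general point $x$. Indeed, the hyperplanes of $\PP^n$ containing $T_x(X)$ are exactly those whose sections of $\O_X(1)$ vanish to order at least $2$ at $x$; hence, on $\Bl_x(X)$, the map $\tau_{x,X}\circ\pi_x$ is resolved by (the moving part of) the linear system $|H-2E|$, its image is $W_{x,X}$, and its restriction to $E$ is by definition $|II_{x,X}|=|H-2E|_{|E}$. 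Writing $\widetilde{\tau}\colon\Bl_x(X)\dashrightarrow W_{x,X}$ for this resolution, we thus have $\widetilde{\tau}|_E=\phi$ and $\overline{\phi(E)}=\widetilde{\tau}(E)\subseteq W_{x,X}$.

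To compute $\dim\overline{\phi(E)}$ I would pass to a local model. Fix affine coordinates with $x$ at the origin and $T_x(X)=\CC^r\times\{0\}\subset\CC^{r}\times\CC^{n-r}$, so that near $x$ the variety $X$ is the graph $u\mapsto\bigl(u,\tfrac12\,II_{x,X}(u,u)+O(|u|^3)\bigr)$, with $II_{x,X}\colon S^2\CC^r\to\CC^{n-r}$ the second fundamental form. Then projection from $T_x(X)$ reads $\tau_{x,X}(u)=[\,II_{x,X}(u,u)+O(|u|^3)\,]$, while $\phi([u])=[\,II_{x,X}(u,u)\,]$. Rescaling $u\mapsto s\,u$ and letting $s\to0$, the images $\overline{\tau_{x,X}(B_s)}$ of small balls $B_s\subset X$ around $x$ degenerate to $\overline{\phi(E)}$. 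Since $x$ is general and $\tau_{x,X}$ is dominant onto $W_{x,X}$, each $\overline{\tau_{x,X}(B_s)}$ has dimension equal to the generic rank $r-\delta(X)$ of $\tau_{x,X}$; as this dimension is preserved under the degeneration and the limit is supported on the irreducible variety $\overline{\phi(E)}$, we obtain $\dim\overline{\phi(E)}=r-\delta(X)$. Combined with the first paragraph this yields $\overline{\phi(E)}=W_{x,X}$.

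For the final assertion, Proposition~\ref{prop: nondegenerate tang projection} tells us that $W_{x,X}$ is nondegenerate in $\PP^{n-r-1}$; since $\overline{\phi(E)}=W_{x,X}$ now spans $\PP^{n-r-1}$, the $n-r$ quadrics cutting out $\phi$ must be linearly independent, that is $\dim|II_{x,X}|=n-r-1$. As for the role of the hypotheses: secant-defectiveness is exactly what makes $r-\delta(X)\leq r-1=\dim(E)$, so that it is numerically possible for $E$ to dominate $W_{x,X}$; the assumption $\Sec(X)\subsetneq\PP^n$ places us in the genuinely defective regime in which $W_{x,X}$ is a proper nondegenerate subvariety of $\PP^{n-r-1}$; and smoothness guarantees both the clean local graph model and that $II_{x,X}\neq0$, so that $\phi$ is actually defined.

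The step I expect to be the main obstacle is the degeneration argument of the second paragraph, namely the claim that passing from $\tau_{x,X}$ to its quadratic leading part $\phi$ does not drop the dimension of the image. Making this rigorous requires either a genuine flat-limit argument for the image cycles $\overline{\tau_{x,X}(B_s)}$, or a direct computation that the generic rank of $d\phi$ coincides with that of $d\tau_{x,X}$; the latter can be carried out by identifying, via Terracini's Lemma applied to $\Sec(X)$, the differential of $\phi$ with the variation of the tangent spaces $T_{x'}(X)$ measured by $II_{x,X}$.
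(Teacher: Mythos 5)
First, note that the paper contains no proof of this proposition: it delegates it entirely to \cite[Proposition~2.3.2]{russo-specialvarieties}. So your attempt has to be judged against the standard argument, and unfortunately it has a genuine gap, located exactly where you yourself flag it. For every $s>0$ the ball $B_s$ meets the (classically dense) Zariski-open locus where $d\tau_{x,X}$ has rank $r-\delta(X)$, so the Zariski closure $\overline{\tau_{x,X}(B_s)}$ is not merely of dimension $r-\delta(X)$ --- it is all of $W_{x,X}$, for \emph{every} $s$. Your family of image closures is therefore the constant family $\{W_{x,X}\}$, whose limit as $s\to 0$ is $W_{x,X}$ itself, not $\overline{\phi(E)}$. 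What survives the degeneration is only the general fact that the image of the limit map is contained in the limit of the images, i.e. $\overline{\phi(E)}\subseteq W_{x,X}$ --- the easy inclusion you already had. The assertion that ``the dimension is preserved and the limit is supported on $\overline{\phi(E)}$'' is thus not a lemma you can appeal to; it is a restatement of the inclusion $W_{x,X}\subseteq\overline{\phi(E)}$ that you are trying to prove. The image of a limit of maps can be strictly smaller than the limit of their images, and here that is precisely the point at issue.

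That this cannot be repaired without new input is shown by the fact that your argument never actually uses secant defectiveness: $\delta(X)>0$ appears in your write-up only as a remark that the equality is ``numerically possible.'' But the conclusion is \emph{false} when $\delta(X)=0$: for a smooth nondegenerate curve $X\subset\PP^4$ one has $\Sec(X)\subsetneq\PP^4$ and $X$ smooth, yet $E$ is a single point, so $\overline{\phi(E)}$ is a point while $W_{x,X}\subset\PP^2$ is a curve; more generally $\dim\overline{\phi(E)}\leq r-1<r-\delta(X)$ whenever $\delta(X)=0$. Hence any correct proof must make essential use of $\delta(X)>0$, and the mechanism is geometric, not infinitesimal: since the map $\tau_{x,X}$ is dominant with $\dim W_{x,X}=r-\delta(X)$, its general fiber through $x'$ has dimension $\delta(X)\geq 1$ and lies inside $X\cap\langle T_x(X),x'\rangle$, in which $T_x(X)$ is a \emph{hyperplane}; a positive-dimensional projective variety must meet that hyperplane, and a closer analysis shows the fiber closure passes through the blown-up point $x$ itself. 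Consequently the strict transform of a general fiber meets $E$, and the resolved map --- whose restriction to $E$ is $\phi$ --- sends that intersection point to the given general point of $W_{x,X}$, which yields $W_{x,X}\subseteq\overline{\phi(E)}$. Your closing suggestion (comparing the generic ranks of $d\phi$ and $d\tau_{x,X}$ via Terracini) gestures in the right direction, but as stated it is again equivalent to the proposition rather than a proof of it.
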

See \cite[Proposition~2.3.2]{russo-specialvarieties} for a proof 
of Proposition \ref{prop: surjective second fundamental form}. 
In the following we shall denote by 
 $B_{x,X}=\mathrm{Bs}(|II_{x,X}|)$ the base locus of the second fundamental form.
\subsection{Hilbert scheme of lines passing through a point}
Let $X\subseteq\PP^n$ be a closed subscheme, 
$x\in X$.
\begin{definition}\label{def: hilbert scheme} 
The \emph{Hilbert scheme of lines contained in $X$ and 
passing through $x$} is a scheme (over $\CC$ as always), $\L_{x,X}$, 
together with a closed subscheme $\chi_{x,X}\hookrightarrow X\times \L_{x,X}$ 
(called \emph{universal family}) such that the fibers of the projection
$\pi_{x,X}:\chi_{x,X}\rightarrow\L_{x,X} $ 
(which are naturally immersed in $X\subseteq\PP^n$) 
are lines passing through $x$. 
Moreover $(\L_{x,X},\chi_{x,X})$ 
must satisfy the following universal property:
for each scheme $\mathcal{Y}$ and for each closed subscheme 
$Z\hookrightarrow X\times \mathcal{Y}$, having the property that 
the fibers of the projection 
 $Z\rightarrow \mathcal{Y}$ 
are lines passing through $x$, there exists a unique morphism  
 $f=f_{(\mathcal{Y},Z)}:\mathcal{Y}\rightarrow \L_{x,X}$ 
such that we have  the diagram of fibred products:
\begin{equation}\label{diagram hilbert scheme of lines}
\xymatrix{
Z \ar@{^{(}->}[r] \ar[d] & X\times \mathcal{Y} \ar[d] \ar[r] &\mathcal{Y} \ar[d]^{f} \\
\chi_{x,X} \ar@{^{(}->}[r] & X\times \L_{x,X} \ar[r] & \L_{x,X}
}
\end{equation}
\end{definition}
\begin{figure}[htbp]
\begin{center}
\includegraphics[width=0.55\textwidth]{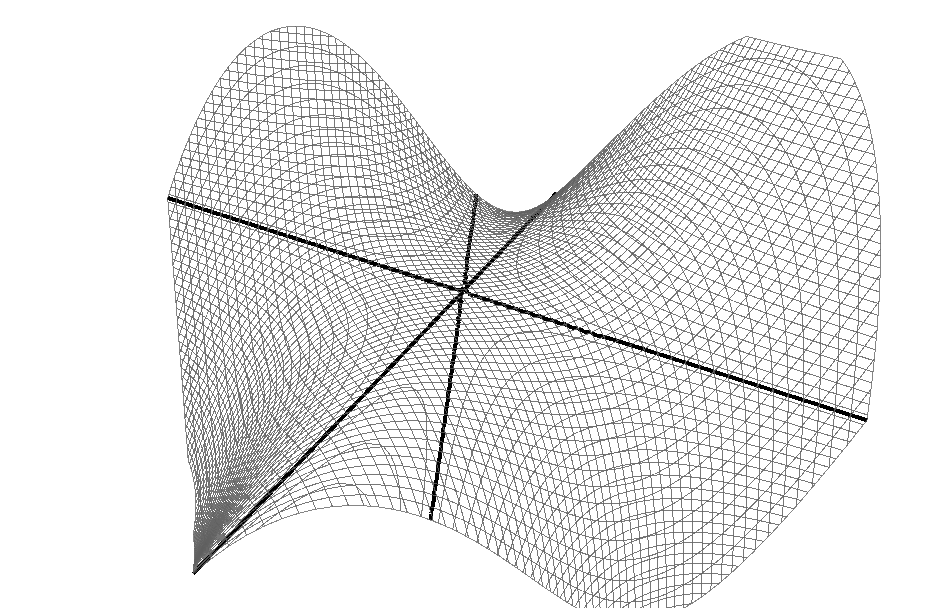} 
\end{center}
\caption{Hilbert scheme of lines through a point.}
\label{fig: LxX}   
\end{figure}
As a consequence of the general result in \cite[\Rmnum{1} Theorem~1.4]{Kollar}
we obtain that   $\L_{x,X}$ exists and it is a projective scheme.
Moreover, if $x\in\reg(X)$, there exists a natural 
closed embedding of $\L_{x,X}$ into the exceptional divisor 
 $E_{x,X}$ (see \cite{russo-linesonvarieties}).

If $l$ is a line contained in $X$ and 
passing through $x$, we denote by 
 $[l]$ the corresponding element of  $\L_{x,X}$ (i.e. $l=\pi_{x,X}^{-1}([l])$).
Applying \cite[Proposition~4.14]{debarre} and 
\cite[\Rmnum{1} Definition~2.6, Theorem~2.8]{Kollar} 
and arguing as in \cite[Proposition~1.1(1)]{russo-linesonvarieties}, 
one can show the following:
\begin{proposition}\label{prop: sing LxX}
Let $X\subseteq\PP^n$ be a generically reduced closed subscheme,
$x\in X$ a general point.  
Then 
$$
\sing(\L_{x,X})\subseteq S_{x,X}:=\{[l]\in \L_{x,X}: l\cap\sing(X)\neq\emptyset \}.
$$
\end{proposition}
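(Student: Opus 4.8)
The plan is to show that at every $[l]\in\L_{x,X}$ with $l\cap\sing(X)=\emptyset$ the scheme $\L_{x,X}$ is smooth; since the complement of such points is exactly $S_{x,X}$, this gives the inclusion. First I would reduce to the smooth locus. Because $X$ is generically reduced, $\reg(X)$ is dense and open, so a general point $x$ lies in $\reg(X)$; and for $[l]\notin S_{x,X}$ we have $l\cap\sing(X)=\emptyset$, i.e.\ $l\subseteq\reg(X)$. Hence $X$ is smooth in a neighbourhood of $l$, and the normal bundle $\N_{l/X}$ is a genuine vector bundle of rank $r-1$ on $l\cong\PP^1$, where $r=\dim X$.

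Next I would invoke the deformation theory of the Hilbert scheme of lines through a fixed point, which is the content of \cite[\Rmnum{1} Definition~2.6, Theorem~2.8]{Kollar} and \cite[Proposition~4.14]{debarre}, transcribed to $\L_{x,X}$ exactly as in \cite[Proposition~1.1(1)]{russo-linesonvarieties}. Since keeping the line through the fixed point $x\in l$ amounts to requiring a normal section to vanish at one point of $l$, the Zariski tangent space and the obstruction space of $\L_{x,X}$ at $[l]$ are
$$
T_{[l]}\L_{x,X}=H^0(l,\N_{l/X}(-1)),\qquad \text{obstructions in } H^1(l,\N_{l/X}(-1)).
$$
Therefore $\L_{x,X}$ is smooth at $[l]$ as soon as $H^1(l,\N_{l/X}(-1))=0$. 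Writing the splitting $\N_{l/X}\cong\bigoplus_{i=1}^{r-1}\O_{\PP^1}(a_i)$ on $l\cong\PP^1$, this vanishing holds precisely when $a_i\geq 0$ for all $i$, i.e.\ when $l$ is a \emph{free} line.

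So the crux is to prove that, for $x$ general, every line $l\ni x$ contained in $\reg(X)$ is free. I would argue this through the standard free-curve machinery in characteristic zero. Consider the universal family $\mathcal{U}^0\to F^0$ of lines contained in $\reg(X)$, together with the evaluation $q\colon\mathcal{U}^0\to\reg(X)$, and discard the components of $F^0$ whose lines do not cover $X$ (these contribute no member through a general $x$). On the dominating components, generic smoothness — where characteristic zero is essential — makes $q$ a smooth morphism over a dense open $X^0\subseteq X$; for $x\in X^0$ the differential of $q$ is then surjective at every point of $q^{-1}(x)$, which translates into $\N_{l/X}$ being globally generated at the point $x\in l$, hence — being a sum of line bundles on $\PP^1$ — globally generated, so $a_i\geq 0$ for all $i$. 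This yields freeness of every such $l$, hence $H^1(l,\N_{l/X}(-1))=0$ and $[l]\notin\sing(\L_{x,X})$.

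The main obstacle is precisely this freeness step. The delicate point is to ensure surjectivity of the differential of the evaluation map at \emph{all} preimages of a general $x$, not merely at the generic one, so that \emph{every} line through $x$ is free; this forces one to handle the possibly singular total space $\mathcal{U}^0$ and to control the image of its singular locus, and it is here that the hypotheses ``$x$ general'' and ``characteristic zero'' are genuinely used. By contrast, the reduction to $\reg(X)$ and the identification of the tangent and obstruction spaces are routine once the cited deformation theory is in hand.
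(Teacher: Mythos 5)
Your route is the same as the paper's: the proposition is proved there exactly by combining Koll\'ar's deformation theory of the Hilbert scheme (transcribed to $\L_{x,X}$ as in Russo's Proposition~1.1(1)), which gives $T_{[l]}\L_{x,X}=H^0(l,\N_{l,X}(-1))$ and obstructions in $H^1(l,\N_{l,X}(-1))$ for $l\subseteq\reg(X)$, with the characteristic-zero fact that every rational curve through a general point of a smooth quasi-projective variety is free. Your steps (reduction to $l\subseteq\reg(X)$ using generic reducedness, the tangent/obstruction identification, and the vanishing criterion $H^1(\N_{l,X}(-1))=0$ when all splitting degrees are $\geq 0$) reproduce this faithfully. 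One mis-attribution: Debarre's Proposition~4.14 is not part of the deformation-theoretic package; it \emph{is} the freeness statement you set out to re-prove in your fourth paragraph, and the paper cites it precisely to dispatch what you call the crux.

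The genuine soft spot is inside that re-proof. Generic smoothness requires the source of the morphism to be smooth, while the universal family $\mathcal{U}^0\to F^0$ (equivalently the Fano scheme $F^0$ itself) may be singular, or even non-reduced; so the assertion that generic smoothness ``makes $q$ a smooth morphism over a dense open $X^0$'' is not justified as stated. Moreover your proposed remedy, to ``control the image of the singular locus of $\mathcal{U}^0$'', cannot work in general: that singular locus may dominate $X$ (nothing prevents every line of $X$ from being parametrized by a singular point of $F^0$), so no choice of general $x$ avoids its image. The standard repair is different: for each component of $(\mathcal{U}^0)_{\mathrm{red}}$ whose lines dominate $X$, take a resolution of singularities $\mu:\mathcal{U}'\to(\mathcal{U}^0)_{\mathrm{red}}$ (characteristic zero again) and apply generic smoothness to $q'=q\circ\mu$. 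For $x$ in the resulting dense open set and any line $l\ni x$ with $l\cap\sing(X)=\emptyset$, the point $([l],x)$ has a preimage $u'\in\mathcal{U}'$, and surjectivity of $dq'_{u'}$ forces surjectivity of $dq_{([l],x)}$; since the image of $dq_{([l],x)}$ lies in the preimage of $\mathrm{ev}_x\bigl(T_{[l]}F^0\bigr)$ under $T_xX\to(\N_{l,X})_x$, and $T_{[l]}F^0=H^0(\N_{l,X})$, this yields surjectivity of the evaluation $H^0(\N_{l,X})\to(\N_{l,X})_x$. A direct sum of line bundles on $\PP^1$ generated at one point is globally generated, so all $a_i\geq0$, $H^1(\N_{l,X}(-1))=0$, and $[l]\notin\sing(\L_{x,X})$. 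With this correction (or, more economically, by quoting Debarre's Proposition~4.14 for the freeness step, as the paper does), your proof is complete and coincides with the paper's.
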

In the following, let $X$ be as in Assumption \ref{notation: X irred nondegen}.
Note that, in general $\L_{x,X}$ is neither irreducible nor reduced.
However,
 from Proposition \ref{prop: sing LxX}, it follows that if 
$X$ is smooth and  $x\in X$ is a general point, then $\L_{x,X}$ is smooth; 
moreover, for $[l]\in\L_{x,X}$, we have 
\begin{equation}\label{eq: dim LxX} 
 \dim_{[l]}(\L_{x,X})=H^0(\PP^1,\N_{l,X}(-1))=H^0(\PP^1,{\T_X}|_l(-1))-2= -K_X\cdot l -2.
\end{equation}
The relation between $\L_{x,X}$ and $B_{x,X}$ is given by the following:
\begin{proposition}\label{prop: relation between hilbert scheme and second funfamental form}
Let $X\subset\PP^n$ be as in Assumption \ref{notation: X irred nondegen} and let $x\in\reg(X)$. 
Then there exists 
a natural closed embedding of $\L_{x,X}$ into $B_{x,X}$, which is an isomorphism if 
$X$ is defined by  quadratic forms. 
\end{proposition}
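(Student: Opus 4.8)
The plan is to realize both $\L_{x,X}$ and $B_{x,X}$ as closed subschemes of the exceptional divisor $E=E_{x,X}\cong\PP^{r-1}=\PP(T_x(X))$ and to compare their defining ideals there. For $B_{x,X}$ this is immediate, since by definition $B_{x,X}=\mathrm{Bs}(|II_{x,X}|)$ is cut out in $E$ by the quadrics of the second fundamental form. For $\L_{x,X}$ I would use the closed embedding $\L_{x,X}\hookrightarrow E$ recalled above, which sends a line $[l]$ through $x$ to its tangent direction at $x$; concretely this identifies $\L_{x,X}$ with the Fano scheme of lines of $X$ through $x$, viewed inside the $\PP^{n-1}$ of lines through $x$. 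With these identifications the statement becomes a containment (respectively equality) of ideals in $E$.

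First I would treat the case where $X$ is cut out by quadratic forms, say $X=V(G_1,\dots,G_m)$ with each $G_i$ a quadric. Parametrizing a line through $x$ by $sx+ty$ and using $G_i(x)=0$ I get
$$
G_i(sx+ty)=st\,B_i(x,y)+t^2\,G_i(y),
$$
where $B_i$ is the polarization of $G_i$. Hence the line $\langle x,y\rangle$ lies in $X$ exactly when $B_i(x,y)=0$ and $G_i(y)=0$ for all $i$. The linear equations $B_i(x,\cdot)=0$ cut out $\PP(T_x(X))=E\cong\PP^{r-1}$ inside the $\PP^{n-1}$ of directions (here I use that $x$ is a smooth point), and, restricted to $E$, the quadrics $G_i$ are precisely the components of $|II_{x,X}|$. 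Therefore the Fano scheme of lines through $x$ equals $V(G_i|_E)=B_{x,X}$ scheme-theoretically, which is exactly the desired isomorphism $\L_{x,X}\cong B_{x,X}$.

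For a general $X$ I would pass to local coordinates, writing $X$ near $x$ as the graph $w_{r+j}=f_j(w_1,\dots,w_r)$ with $f_j=\sum_{k\ge 2}f_j^{(k)}$, where $f_j^{(k)}$ is the homogeneous part of degree $k$ and $f_j^{(2)}$ is the $j$-th component of $II_{x,X}$. A direction $[a]\in E$ spans a line contained in $X$ precisely when $f_j^{(k)}(a)=0$ for all $j$ and all $k\ge 2$ (a line meeting $X$ at $x$ to infinite order is contained in $X$, since otherwise $l\cap X$ would be finite); tracking the scheme structure this shows that the ideal of $\L_{x,X}$ in $E$ is generated by all the $f_j^{(k)}$, while the ideal of $B_{x,X}$ is generated by the quadratic parts $f_j^{(2)}$ alone. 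Since the latter generators occur among the former, $I_{B_{x,X}}\subseteq I_{\L_{x,X}}$, so the closed immersion $\L_{x,X}\hookrightarrow E$ factors through $B_{x,X}$ and yields the natural closed embedding; when $X$ is defined by quadrics there are no higher-order terms and the two ideals coincide, recovering the isomorphism.

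The delicate point, and the one I would spend most care on, is not the set-theoretic content but the scheme structures: I must make sure that the natural scheme structure on $\L_{x,X}$ coming from its universal property (Definition \ref{def: hilbert scheme}) agrees with the one given by the Fano-scheme equations above, and that the map $\L_{x,X}\hookrightarrow E$ is genuinely a closed immersion. This is where I would invoke \cite[Proposition~4.14]{debarre}, \cite[\Rmnum{1} Theorem~2.8]{Kollar} and the normal-bundle computation in \eqref{eq: dim LxX}, arguing as in \cite[Proposition~1.1]{russo-linesonvarieties}; everything else is the elementary ideal bookkeeping sketched above.
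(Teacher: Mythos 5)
Your overall strategy is correct, and it is in substance the same as the proof the paper relies on: the paper does not prove this proposition itself but defers entirely to \cite[Corollary~1.6]{russo-linesonvarieties}, whose argument is exactly your combination of (i) the closed embedding $\L_{x,X}\hookrightarrow E_{x,X}$, (ii) the identification of the scheme structure of $\L_{x,X}$ with the equations obtained by expanding (local or global) equations of $X$ along lines through $x$, and (iii) the identification of the degree-two coefficients with a spanning set of $|II_{x,X}|$. Deferring the comparison of the Hilbert-scheme structure with these Fano-type equations to Koll\'ar and to Russo's Proposition~1.1 is legitimate; that is precisely where the paper (via Russo) places the same burden. Two small assertions in your second paragraph deserve a line each: that $V(B_i(x,\cdot))=T_x(X)$ and that the $G_i|_E$ span $|II_{x,X}|$ both use that the quadrics cut out $X$ \emph{scheme-theoretically} at the smooth point $x$, so that the differentials $dG_i|_x$ span the conormal space; one then computes $G_i|_E=-\sum_j (dG_i|_x)_j\, f_j^{(2)}$ in your local coordinates.

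One clause, however, is false as stated: at the end of your third paragraph you re-derive the quadratic case by claiming that ``when $X$ is defined by quadrics there are no higher-order terms.'' Being cut out by quadrics does not kill the higher-order terms of the local graph. For the twisted cubic $\{[1:t:t^2:t^3]\}\subset\PP^3$, which is scheme-theoretically defined by quadrics, at $x=[1:0:0:0]$ one has $w_2=w_1^2$ and $w_3=w_1^3$, so $f_2^{(3)}=w_1^3\neq0$. What is true (and what your second paragraph actually proves) is weaker: when $X$ is defined by quadrics, the restrictions $G_i|_E$ already generate the ideal of $\L_{x,X}$ in $E$ and span $|II_{x,X}|$, so the higher forms $f_j^{(k)}$, $k\geq3$, lie in the ideal generated by the $f_j^{(2)}$ (as with $w_1^3\in(w_1^2)$ above) without having to vanish. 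Since the quadratic case is handled correctly and independently by the global polarization argument, this slip does not invalidate the proof, but the sentence should be corrected and the isomorphism should be attributed to that paragraph rather than to the local expansion.
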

For the proof of Proposition \ref{prop: relation between hilbert scheme and second funfamental form}, 
we refer the reader to  \cite[Corollary~1.6]{russo-linesonvarieties}.

\chapter{Some special varieties}\label{sec: some special varieties}
In this chapter 
 we introduce some well-known classes of varieties 
which play an important role in the rest of the thesis.
Unless otherwise specified,
we shall keep Assumption \ref{notation: X irred nondegen}.
\section{Fano varieties}
\begin{definition}
The variety  $X$ is called a \emph{Fano variety} if 
it is smooth and its anticanonical bundle
$-K_X$ is ample; in such case 
the \emph{index} of $X$ is
$$
i(X):=\mathrm{sup}\{m\in\ZZ: -K_X\sim mL \mbox{ for some ample divisor }L \},
$$ 
while the \emph{coindex} of $X$ is
$$
c(X):=\dim(X)+1-i(X).
$$
\end{definition}
\begin{remark}
If $X$ is a Fano variety, since $X$ is smooth, 
the group $\Pic(X)$ is torsion free and therefore there exists
a  unique (ample) divisor $L$ such that $-K_X\sim i(X)L$.
\end{remark}
We are interested in the case in which $-K_X\sim i(X)H_X$, 
where $H_X$ is a hyperplane section of $X\subset\PP^n$.
We also say that a Fano variety $X\subset\PP^n$ is 
 \emph{of the first species} if $\Pic(X)\simeq \ZZ\langle \O_X(1) \rangle$.
\begin{remark}\label{remark: second betti number fano}
If $X\subset\PP^n$ is a Fano variety, from 
Kodaira Vanishing Theorem and Serre Duality
(see e.g. \cite[\Rmnum{3} Corollary~7.7, Remark~7.15]{hartshorne-ag}, 
\cite[Theorem~0.4.11, Corollary~0.4.13]{fujita-polarizedvarieties}) it follows 
$H^i(X,\O_X)=0$ for $i\geq 1$. Hence
from the cohomology sequence 
$$
\cdots \rightarrow H^1(X,\mathcal{O}_{X})\rightarrow \underbrace{H^1(X,\mathcal{O}_{X}^{\ast})}_{\simeq\Pic(X)}\rightarrow H^2(X,\ZZ)\rightarrow H^2(X,\mathcal{O}_{X})\rightarrow\cdots
$$
of the exponential sheaf sequence
(see \cite[p.~37]{griffiths-harris} or \cite[p.~446]{hartshorne-ag})
 we see that the rank of the Picard group
  $\Pic(X)$ equals the second Betti number $b_2(X)$.
Moreover note that, from Barth-Larsen Theorem \cite{larsen-coomology},
 each Fano variety is of the first species if $2r-n\geq 2$. 
\end{remark}
\begin{example} 
Simple examples of Fano varieties are: projective spaces;
 smooth complete intersections in $\PP^n$ defined by equations of degrees  
 $d_1,\ldots,d_s$ with $d_1+\cdots + d_s\leq n$; 
finite products of Fano varieties.
\end{example}
\begin{theorem}[Kobayashi-Ochiai \cite{Kobayashi-Ochiai}]
If $X$ is a Fano variety, then $c(X)\geq0$ and
\begin{enumerate}
 \item if $c(X)=0$, then $X$ is a projective space;
 \item if $c(X)=1$, then $X$ is a quadric hypersurface.
\end{enumerate}
\end{theorem}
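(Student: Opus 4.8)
The plan is to prove the sharper pointed statement, from which the assertion about $c(X)$ follows: for a projective manifold $X$ of dimension $n$ and an ample divisor $L$ with $-K_X\sim rL$, one has $r\le n+1$, with equality $r=n+1$ forcing $(X,L)\cong(\PP^n,\O_{\PP^n}(1))$ and $r=n$ forcing $X$ to be a quadric hypersurface with $L$ its hyperplane class. The Remark above guarantees that the primitive such $L$ exists, and $c(X)=0$ (resp. $c(X)=1$) translates into $-K_X\sim(n+1)L$ (resp. $-K_X\sim nL$) for that $L$, so the pointed statement delivers exactly the theorem. I would argue by induction on $n$, the base case $n=1$ being the elementary computation that an ample $-K_X$ forces $X\cong\PP^1$, whence $-K_X\sim\O(2)$, $r\,\deg(L)=2$, and the two solutions $(r,L)=(2,\O(1))$ and $(r,L)=(1,\O(2))$ give $\PP^1$ and a plane conic.

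The engine is adjunction on a general member $D\in|L|$. Assuming $|L|$ is base-point-free, Bertini gives a smooth $D$, which is moreover connected since it is an ample divisor (for $n\ge2$); then $K_D\sim(K_X+L)|_D\sim(1-r)(L|_D)$, so $D$ is a Fano manifold of dimension $n-1$ with $-K_D\sim(r-1)(L|_D)$ and $L|_D$ ample. The inductive bound applied to the pair $(D,L|_D)$ gives $r-1\le(n-1)+1=n$, i.e. $r\le n+1$ and $c(X)\ge0$. An alternative, cleaner route to just the inequality is Mori's bound: through a general point of the Fano $X$ there passes a rational curve $C$ with $-K_X\cdot C\le n+1$, and since $-K_X\cdot C=r\,(L\cdot C)\ge r$ we again get $r\le n+1$.

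For the equality cases I feed the extremal $r$ back through $(D,L|_D)$. If $r=n+1$, then $-K_D\sim n\,(L|_D)=\bigl((n-1)+1\bigr)(L|_D)$, so by induction $(D,L|_D)\cong(\PP^{n-1},\O(1))$; if $r=n$, then $-K_D\sim(n-1)(L|_D)$, so by induction $D$ is a quadric $Q^{n-1}$ with $L|_D$ its hyperplane class. In both cases I reconstruct $X$ once $L$ is known to be very ample: embedding $X$ by $L$, the degree is $L^n=L^{n-1}\cdot D=(L|_D)^{n-1}=\deg(D)$, which equals $1$ in the first case and $2$ in the second. A nondegenerate variety of degree $1$ is a linear space, so $X\cong\PP^n$; one of degree $2$ is a quadric hypersurface, so $X$ is the quadric $Q^n$.

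The main obstacle is the positivity input that I have twice taken for granted: that $L$ itself (not merely the large ample multiple $-K_X=rL$) is base-point-free enough to admit a smooth connected $D\in|L|$ to start the induction, and very ample enough to carry out the degree-based reconstruction. Kodaira vanishing controls $H^i(X,K_X+mL)=H^i(X,(m-r)L)$ and hence the Hilbert function of $L$, but upgrading this cohomological control into base-point-freeness and very ampleness of $L$ in the two borderline regimes $r=n,n+1$ --- where the adjoint bundle $K_X+L$ or $K_X+2L$ becomes nef or trivial and the classical spannedness criteria are at their limit --- is the delicate technical core, and is precisely where the extremal hypothesis on $r$ must be used in an essential way rather than through the mere inequality.
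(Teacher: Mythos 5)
The paper offers no proof of this statement: it is quoted as a classical theorem with a citation to Kobayashi--Ochiai, so there is no in-paper argument to measure yours against, and the proposal must stand on its own.

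On its own terms, your skeleton is the classical one and the numerology is right: Kodaira vanishing plus Serre duality force $h^0(X,L)=n+1$ and $L^n=1$ when $r=n+1$, and $h^0(X,L)=n+2$ and $L^n=2$ when $r=n$, and granting a smooth connected $D\in|L|$ the adjunction induction and the degree-$1$/degree-$2$ reconstruction both go through. But the step you defer in your last paragraph is not a technical loose end --- it is the actual content of the theorem. Every part of your induction passes through a smooth member of $|L|$, and every part of your reconstruction passes through $L$ being very ample (or at least base-point-free, so that $\phi_{|L|}$ is a finite morphism of degree $L^n$ onto a normal image and Zariski's Main Theorem applies). For the primitive $L$ none of this follows from what you have: Bertini gives smoothness only away from the base locus, and Kodaira vanishing controls dimensions of cohomology groups, not base loci. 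Nor can you get irreducibility or reducedness of $D$ from $\Pic(X)\simeq\ZZ\langle L\rangle$, since that is not a hypothesis (e.g. $\PP^1\times\PP^1$ with $L=\O(1,1)$ is an instance of the $r=n$ case) and is in fact a consequence of the theorem. Establishing that a general member of $|L|$ is irreducible, reduced and regular enough to restart the induction --- the existence of a ``regular ladder'' in Fujita's terminology, equivalently the analysis of polarized pairs of $\Delta$-genus $0$ and $1$ --- is precisely what Kobayashi--Ochiai and Fujita prove, by a section-by-section argument that uses the extremal value of $r$ in an essential way. As written, your proof is therefore circular exactly where it matters: it assumes the linear system $|L|$ behaves as if the classification were already known. (Your alternative route to the inequality $r\le n+1$ via Mori's existence of rational curves of anticanonical degree $\le n+1$ through a general point is fine as a black-box citation, but it gives only the inequality; pushing it to the equality cases would require the much later and much harder Cho--Miyaoka--Shepherd-Barron characterization of $\PP^n$, not the elementary argument you sketch.)
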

Below we treat Fano varieties of coindex $2$ and $3$.
\subsection{Del Pezzo varieties}
Let $X\subset\PP^n$ be as in Assumption \ref{notation: X irred nondegen} 
and further suppose that it is smooth and linearly normal.
Denote by $r$, $\lambda$, $g$ and $\Delta_X$, 
respectively, the dimension, the degree, the sectional genus and the 
$\Delta$-genus of $X$ (the latter defined as $\Delta_X=r+\lambda-n-1$).
\begin{proposition}[\cite{fujita-polarizedvarieties}]\label{prop: definition del pezzo varieties}
The following are equivalent:
\begin{itemize}
 \item $\Delta_X=g=1$;
 \item $K_X\in|\O_X(1-r)|$.
\end{itemize}
\end{proposition}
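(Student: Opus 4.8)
The plan is to prove the two implications separately, treating $K_X\in|\O_X(1-r)|\Rightarrow\Delta_X=g=1$ as the routine one and concentrating the real work on the converse. For the routine direction, write $H=H_X$ and assume $-K_X\sim(r-1)H$. The sectional genus satisfies $2g-2=(K_X+(r-1)H)\cdot H^{r-1}$, and the hypothesis makes the right-hand side vanish, so $g=1$ immediately. To get $\Delta_X=1$ I would pass to a general one-dimensional linear section $C=X\cap\PP^{\,n-r+1}$, a smooth irreducible nondegenerate curve of degree $\lambda$. Iterated adjunction gives $K_C=(K_X+(r-1)H)|_C=\O_C$, so $C$ is an elliptic normal curve. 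Since $-K_X$ is ample we have $H^i(X,\O_X)=0$ for $i\ge1$ by Kodaira vanishing (as in Remark \ref{remark: second betti number fano}), hence the restriction maps $H^0(\O_{X_i}(1))\to H^0(\O_{X_{i+1}}(1))$ along a general ladder are surjective; this forces $\Delta$ to be constant down the ladder, so $\Delta_X=\Delta_C$. Finally $\Delta_C=1+\lambda-(\lambda-1)-1=1$, because an elliptic normal curve of degree $\lambda$ spans exactly $\PP^{\lambda-1}$.

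For the converse, assume $\Delta_X=g=1$. The hypothesis $g=1$ says precisely that the general smooth curve section $C$ is elliptic. Writing $A:=K_X+(r-1)H$ for the adjoint class, adjunction gives $A|_C=K_C=\O_C$, and in particular $A\cdot H^{r-1}=0$. The first task is to control the $\Delta$-genus along a general ladder $X=X_r\supset X_{r-1}\supset\cdots\supset X_1=C$. The exact sequence $0\to\O_{X_i}\to\O_{X_i}(1)\to\O_{X_{i-1}}(1)\to0$ shows $\Delta_{X_{i-1}}=\Delta_{X_i}-\dim\mathrm{coker}$, so $\Delta$ can only drop under a general section; combining this with Fujita's non-negativity and the fact that an elliptic curve is never of minimal degree (so $\Delta_C\ge1$) yields the chain $1=\Delta_X\ge\Delta_{X_{r-1}}\ge\cdots\ge\Delta_C\ge1$. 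Hence every $\Delta_{X_i}=1$, each restriction map is surjective, and every rung is again smooth, linearly normal, of $\Delta$-genus $1$ and sectional genus $1$.

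I would then reduce to a two-dimensional rung $S=X_2$. By adjunction $A|_S=(K_X+(r-2)H)|_S+H|_S=K_S+H_S$, so it suffices to prove $K_S+H_S\sim0$, i.e. that $S$ is a del Pezzo surface embedded anticanonically. This relation then propagates back to $X$ by the Grothendieck--Lefschetz theorem, which makes $\Pic(X_i)\to\Pic(X_{i-1})$ injective as long as $\dim X_i\ge3$; applied rung by rung from dimension $r$ down to dimension two it gives $\Pic(X)\hookrightarrow\Pic(S)$, whence $A|_S=0$ forces $A=0$ and $K_X\sim(1-r)H$. (The cases $r=1$ and $r=2$ are the elliptic curve and the surface itself, handled directly.) The surface statement is the classical characterization of del Pezzo surfaces: from $(K_S+H_S)\cdot H_S=2g-2=0$ and the Hodge index theorem, $K_S+H_S$ is either numerically trivial or of negative self-intersection, and the constraint $\Delta_S=1$ — equivalently $H_S^2=\deg S$ and $h^0(\O_S(1))=H_S^2+1$ — together with Riemann--Roch rules out the negative case and yields $-K_S\sim H_S$.

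The main obstacle is exactly this surface base case together with the numerical-to-linear upgrade: everything above the surface is formal (adjunction, the $\Delta$-genus ladder, Lefschetz), but showing that a linearly normal surface with $\Delta=g=1$ must be anticanonically embedded requires the structural surface analysis — Hodge index, Riemann--Roch, and the classification of del Pezzo surfaces. This is where I would either carry out the case analysis or, more economically, invoke Fujita's theory of polarized varieties of $\Delta$-genus one.
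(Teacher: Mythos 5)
You should know first that the paper contains no proof of this proposition: it is quoted directly from Fujita's book \cite{fujita-polarizedvarieties}, so there is no internal argument to compare yours against, and your stated fallback (invoking Fujita's theory of $\Delta$-genus-one polarized varieties) is exactly what the paper does. Judged on its own terms, your scaffolding is sound: the direction $K_X\in|\O_X(1-r)|\Rightarrow\Delta_X=g=1$ is complete (with the minor remark that Kodaira vanishing must be applied to every rung $X_i$ of the ladder, not just to $X$; this is harmless since adjunction gives $-K_{X_i}=(i-1)H|_{X_i}$ ample), and in the converse the monotonicity of $\Delta$ down the ladder, the resulting surjectivity of all restriction maps, and the Grothendieck--Lefschetz propagation of $A|_S\sim 0$ back up to $X$ are all correct.

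The genuine gap is the surface base case, which you rightly call the crux but whose sketch does not close. ``Hodge index plus Riemann--Roch'' cannot rule out $(K_S+H_S)^2<0$ from the data you actually use: an elliptic scroll over an elliptic curve has $g=1$, $(K_S+H_S)\cdot H_S=0$ and $(K_S+H_S)^2=-\lambda<0$, so those numerical hypotheses are perfectly consistent with the negative case; and Riemann--Roch for $\O_S(1)$ merely returns $h^1(\O_S(1))=q(S)$ (once $p_g=0$ is noted), which is vacuous unless you first prove $q(S)=0$ --- and that is precisely where $\Delta_S=1$ must enter, since the scroll has $\Delta=2$ and $q=1$. A concrete way to close it: by your ladder, $C\subset\PP^{\lambda-1}$ is an elliptic normal curve, hence projectively normal, so $H^0(\O_S(k))\to H^0(\O_C(k))$ is surjective for all $k\geq1$; the long exact sequences of $0\to\O_S(k-1)\to\O_S(k)\to\O_C(k)\to0$ then give injections $H^1(\O_S)\hookrightarrow H^1(\O_S(1))\hookrightarrow H^1(\O_S(2))\hookrightarrow\cdots$, and Serre vanishing forces $q(S)=0$. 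Since $K_S\cdot H_S=-\lambda<0$ also $p_g(S)=0$, so $\chi(\O_S)=1$; Riemann--Roch gives $\chi(K_S+H_S)=\chi(\O_S)=1$ while $h^2(K_S+H_S)=h^0(-H_S)=0$, so $K_S+H_S$ is effective, and an effective divisor orthogonal to the ample class $H_S$ is the zero divisor. This yields $K_S+H_S\sim0$ as a linear equivalence --- which is what your Lefschetz step actually needs: even if your Hodge-index argument had worked, it would only produce numerical triviality, and you would still owe the upgrade to linear equivalence before $\Pic(X)\hookrightarrow\Pic(S)$ can be applied.
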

If one of the equivalent conditions of Proposition 
\ref{prop: definition del pezzo varieties} holds, then $X$ is called a \emph{del Pezzo variety}.
A del Pezzo surface, different from  $\PP^1\times\PP^1$, 
can be obtained from $\PP^2$ by blowing up several points successively;
these points are not infinitely near and no three of them are collinear
and no six of them lie on a conic (see \cite[\Rmnum{1} \S  8]{fujita-polarizedvarieties}).
Del Pezzo varieties of higher dimension are classified by the following:
\begin{theorem}[\cite{fujita-polarizedvarieties}]\label{prop: classification del pezzo varieties} Let $X\subset\PP^n$ be 
a del Pezzo variety  as above,
with $r\geq 3$. Then $X$ is
projectively equivalent to one of the following:
\begin{description}
 \item[($\lambda=3$)] a cubic hypersurface;
 \item[($\lambda=4$)] a complete intersection of two quadric hypersurfaces;
 \item[($\lambda=5$)] a linear section of the Grassmannian $\GG(1,4)\subset\PP^9$;
 \item[($\lambda=6$)] $\PP^1\times\PP^1\times\PP^1$ or $\PP^2\times\PP^2$ 
or $\PP(\T_{\PP^2})$ for the tangent bundle $\T_{\PP^2}$ of $\PP^2$; 
 \item[($\lambda=7$)] the blowing-up of $\PP^3$ at a point;
 \item[($\lambda=8$)] the Veronese immersion $\nu_2(\PP^3)\subset\PP^9$.
\end{description}
\end{theorem}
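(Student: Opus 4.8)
The plan is to reduce to the surface case by repeatedly cutting with general hyperplanes, and then to climb back up, deciding at each degree which higher-dimensional del Pezzo varieties restrict to a given del Pezzo surface. First I would record the numerical consequences of the definition. By Proposition \ref{prop: definition del pezzo varieties} the hypothesis $\Delta_X=g=1$ is equivalent to $K_X\sim(1-r)H_X$, i.e. $-K_X\sim(r-1)H_X$, so $X$ is a Fano variety of index $r-1$ and coindex $2$. From $\Delta_X=r+\lambda-n-1=1$ I get $n=r+\lambda-2$, hence the codimension of $X\subset\PP^n$ is exactly $\lambda-2$; since $X$ is nondegenerate and not linear this already forces $\lambda\geq3$.

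Next comes the reduction. Let $X'=X\cap H\subset\PP^{n-1}$ be a general hyperplane section. By Bertini $X'$ is smooth and irreducible of dimension $r-1$, and by adjunction $K_{X'}=(K_X+H)|_{X'}=(2-r)H_{X'}=(1-(r-1))H_{X'}$, so $X'$ again satisfies the canonical condition of Proposition \ref{prop: definition del pezzo varieties}; the degree $\lambda$ and the sectional genus are unchanged. Linear normality of $X'$ follows from that of $X$ together with the surjectivity of $H^0(X,\O_X(1))\to H^0(X',\O_{X'}(1))$, which holds because $H^1(X,\O_X)=0$ (Remark \ref{remark: second betti number fano}) kills the obstruction in the sequence $0\to\O_X\to\O_X(1)\to\O_{X'}(1)\to0$. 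Iterating, I obtain a ladder $X=X_r\supset X_{r-1}\supset\cdots\supset X_2=S$ terminating in a del Pezzo surface $S$ of the \emph{same} degree $\lambda$. Since $H_S=-K_S$ is very ample, $S$ is one of the classical del Pezzo surfaces ($\PP^1\times\PP^1$ or $\PP^2$ blown up at $k\leq6$ general points), and $3\leq\lambda=K_S^2\leq9$. The extremal value $\lambda=9$ (where $S=\nu_3(\PP^2)$) cannot lift to $r\geq3$, the largest-degree del Pezzo threefold being $\nu_2(\PP^3)$ of degree $8$; this pins the degree to $3\leq\lambda\leq8$.

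I would then argue by cases on $\lambda$, using that $X$ is projectively normal and subcanonical (hence arithmetically Gorenstein) to control its homogeneous ideal in low codimension:
\begin{itemize}
\item $\lambda=3$: codimension $1$, so $X$ is a cubic hypersurface.
\item $\lambda=4$: codimension $2$ Gorenstein, hence a complete intersection of two quadrics (degree $2\cdot2=4$).
\item $\lambda=5$: codimension $3$ Gorenstein; by the Buchsbaum--Eisenbud structure theorem the ideal is generated by the $4\times4$ Pfaffians of a skew $5\times5$ matrix of linear forms, exhibiting $X$ as a linear section of $\GG(1,4)\subset\PP^9$.
\item $\lambda\in\{6,7,8\}$: the dimension is now bounded ($r\leq4$ for $\lambda=6$, $r=3$ for $\lambda=7,8$), and the varieties are identified directly by their projective geometry as $\PP^1\times\PP^1\times\PP^1$, $\PP^2\times\PP^2$, $\PP(\T_{\PP^2})$ (degree $6$), the blow-up of $\PP^3$ at a point (degree $7$), and $\nu_2(\PP^3)$ (degree $8$).
\end{itemize}

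The hard part is the interaction between the two directions of the argument: the \emph{extension} (lifting) problem of determining, for a fixed del Pezzo surface, exactly which higher-dimensional del Pezzo varieties restrict to it — equivalently, bounding the maximal dimension attainable in each degree and ruling out $\lambda=9$ for $r\geq3$ — and, in degrees $5$ through $8$, producing the intrinsic identification of $X$ (the Pfaffian/Grassmannian description in degree $5$, and the explicit non--complete-intersection models in degrees $6,7,8$). These identifications do not follow from a dimension or degree count alone and constitute the main obstacle.
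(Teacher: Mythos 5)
The paper does not prove this theorem at all: it is quoted verbatim from Fujita's book \cite{fujita-polarizedvarieties} as a known classification, so there is no internal proof to compare against; your proposal has to be judged against the standard argument in the literature. Your skeleton — descend to a del Pezzo surface by general hyperplane sections, check via adjunction and $H^1(X,\O_X)=0$ that the del Pezzo condition and the degree persist, then climb back up degree by degree — is indeed the shape of Fujita's ladder method, and the numerical reductions (coindex $2$, $\mathrm{codim}=\lambda-2$, $\lambda\geq 3$, the surface at the bottom anticanonically embedded with $3\leq\lambda\leq 9$) are correct.

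The genuine gap is the climb back up, and at one point your argument is circular: you rule out $\lambda=9$ for $r\geq 3$ ``the largest-degree del Pezzo threefold being $\nu_2(\PP^3)$ of degree $8$'' — but that is precisely (part of) the statement being proven, so it cannot be invoked. What is actually needed there, and likewise for your unproved dimension bounds ($r\leq 4$ when $\lambda=6$, $r=3$ when $\lambda=7,8$), is a \emph{non-extendability} theorem: one must show that $\nu_3(\PP^2)\subset\PP^9$, $\PP^2\times\PP^2\subset\PP^8$, $\Bl_p(\PP^3)\subset\PP^8$ and $\nu_2(\PP^3)\subset\PP^9$ are not hyperplane sections of any smooth nondegenerate variety of one dimension more. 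Such results (due to Scorza and Fujita, and recoverable from Zak-type theorems on tangencies or from the analysis of the graded ring in Fujita's own proof) are the real content of the classification in degrees $\geq 6$, and your sketch defers exactly them. Two further steps are asserted rather than proved: the arithmetic Gorenstein property you use in codimensions $2$ and $3$ presupposes projective normality of $X$, which is a theorem about del Pezzo varieties (the paper's definition only assumes linear normality), and in the $\lambda=5$ case the Buchsbaum--Eisenbud structure theorem only yields Pfaffians of a skew matrix — that its entries are \emph{linear} forms (hence that $X$ is a linear section of $\GG(1,4)$) requires computing the Betti numbers of the resolution. Since you yourself flag the extension problem and the degree-$5$--$8$ identifications as ``the main obstacle,'' what you have is a correct plan whose missing steps are exactly where the theorem lives, not a proof.
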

\subsection{Mukai varieties}
Let $X\subset\PP^n$ be as in Assumption \ref{notation: X irred nondegen} 
and further suppose that it is
 smooth and linearly normal.
As above, denote by $r$, $\lambda$ and $g$, respectively, dimension, 
degree and sectional genus of $X$. 
$X$ is called a \emph{Mukai variety} 
if one of the following equivalent conditions holds:
\begin{itemize}
 \item $X$ has a smooth curve section $C$ 
embedded by the canonical linear system $|K_C|$;
 \item $K_X\in|\O_X(2-r)|$.
 \end{itemize}
Note that for a Mukai variety $X$, by sectional genus formula we get the relation:  
\begin{equation}
 \lambda=2\,\left(g-1 \right).
\end{equation}
\begin{theorem}[\cite{mukai-biregularclassification}]\label{prop: classification first species mukai varieties}
The following varieties 
(denoted by $X$ and embedded in $\langle X\rangle\simeq\PP^n$) 
are examples of
 Mukai varieties of the first species (where $g,r,n$ are indicated by side):
\begin{description}
 \item[($g=6$, $r=6$, $n=10$)] A smooth intersection $X$ of 
a quadric hypersurface in $\PP^{10}$ with the 
cone over $\GG(1,4)\subset\PP^9$.
 \item[($g=7$, $r=10$, $n=15$)] The spinorial variety $X=S^{10}\subset\PP^{15}$.
\item[($g=8$, $r=8$, $n=14$)] The Grassmannian $X=\GG(1,5)\subset\PP^{14}$.
\item[($g=9$, $r=6$, $n=13$)] Let $U$ be a $6$-dimensional 
vector space (over $\CC$) and $F$ be a nondegenerate skew-symmetric bilinear form 
 on $U$. All $3$-dimensional subspaces $W$ of $U$
with $F(W,W)=0$ form a smooth variety $X\subset\GG(2,5)\subset\PP^{19}$.
\item[($g=10$, $r=5$, $n=13$)] Let $U$ be a $7$-dimensional vector space and 
$F$ be a nondegenerate skew-symmetric $4$-linear form on $U$. 
All $5$-dimensional subspaces 
$W$ of $U$ with $F(W,W,W,W)=0$ form  
a smooth variety $X\subset\GG(4,6)\subset\PP^{20}$.
\item[($g=12$, $r=3$, $n=13$)] Let $U$ be a $7$-dimensional vector space and 
$F_1, F_2, F_3$ be three linearly independent skew-symmetric bilinear forms on $U$.
All $3$-dimensional subspaces $W$ of $U$ with 
$F_1(W,W)=F_2(W,W)=F_3(W,W)=0$ form
a subvariety $X$ of 
$\GG(2,6)\simeq G(3,U)\subset\PP^{\ast}({\bigwedge}^3 U)\simeq\PP^{34}$;
if the subspace 
$F_1\wedge U^{\vee}+F_2\wedge U^{\vee}+F_3\wedge U^{\vee}$ of $\bigwedge^3 U^{\vee}$ 
contains no non-zero vectors of the form 
$f_1\wedge f_2\wedge f_3$ with $f_1,f_2,f_3\in U^{\vee}$,
then 
$X\subset\PP^{34}$ is a $3$-dimensional smooth variety.
\end{description}
Moreover, each Mukai variety of the first species 
 of dimension $\geq 3$ and sectional genus  $\geq 6$,
 is projectively equivalent to a linear section 
of a variety $X$ constructed as above.
\end{theorem}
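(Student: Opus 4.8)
The plan is to split the statement into its two halves: first, to verify that each of the listed varieties is indeed a Mukai variety of the first species; second---and here lies the substance of Mukai's theorem---to prove that these models exhaust all such varieties in dimension $\geq 3$ and sectional genus $\geq 6$.

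For the first half I would argue case by case. The defining condition $K_X\in|\O_X(2-r)|$ is checked by computing the canonical bundle directly. For the homogeneous models (the spinor variety $S^{10}$, the Grassmannian $\GG(1,5)$, and the symplectic, orthogonal and exceptional Grassmannian-type varieties), and for the quadric section of the cone over $\GG(1,4)$ in the case $g=6$, this is a representation-theoretic computation: the canonical class of a rational homogeneous space is read off from the sum of the positive roots outside the Levi factor, and one verifies that it equals the stated multiple of the Plücker polarization. The first-species condition $\Pic(X)\simeq\ZZ\langle\O_X(1)\rangle$ is automatic for these spaces (they have Picard number one) and, by the Barth--Larsen bound recorded in Remark \ref{remark: second betti number fano}, holds whenever $2r-n\geq 2$. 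The relation $\lambda=2(g-1)$ then follows from the adjunction formula on a general curve section.

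For the classification half, the key structural observation is that Mukai varieties are stable under taking general hyperplane sections, with the sectional genus unchanged. Indeed, if $H\subset X$ is a smooth hyperplane section of dimension $r-1$, adjunction gives $K_H=(K_X+\O_X(1))|_H=\O_H(3-r)=\O_H(2-(r-1))$, so $H$ is again a Mukai variety and its curve sections coincide with those of $X$. Iterating, a surface section is a polarized K3 surface and a curve section is a canonical curve, both of genus $g$. Since the homogeneous models have fixed dimension, realizing $X$ as a linear section of a model $\widetilde{X}$ reduces to realizing its lowest-dimensional sections inside the corresponding sections of $\widetilde{X}$ and then extending; the hypothesis $g\geq 6$ is precisely what forces the presence of such a homogeneous model, rather than a complete-intersection description of del Pezzo type (which is what occurs for $g\leq 5$).

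The heart of the argument, and the step I expect to be the main obstacle, is Mukai's vector bundle construction on the K3 (or canonical curve) section. On a polarized K3 surface $S$ of genus $g$ one produces a distinguished stable vector bundle $\E$ whose Mukai vector $v$ satisfies $\langle v,v\rangle=-2$, so that $\E$ is rigid; the numerics force $\rk(\E)$ and $h^0(S,\E)$ to take precisely the values matching one of the homogeneous models. Global generation of $\E$ then yields a morphism from $S$ into a Grassmannian of quotients of $H^0(S,\E)$, and the crucial claims are that this morphism is a closed embedding and that its image is cut out, under the Plücker embedding, by a linear subspace together with the defining equations of the relevant homogeneous variety, thereby exhibiting $S$ (and hence $X$) as a linear section of $\widetilde{X}$. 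Establishing the existence, stability and rigidity of $\E$, and controlling the defining equations of its image, rests on Brill--Noether--Petri theory for the canonical curve section together with the deformation theory of sheaves on K3 surfaces; this is the technically deepest input, and the range $g\geq 6$ is exactly where the required rigid bundle exists with the predicted invariants.
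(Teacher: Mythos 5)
The first thing to note is that the paper contains no proof of this statement: it is quoted as a known result of Mukai from \cite{mukai-biregularclassification}, with only the remark that Mukai's hypothesis of a smooth divisor in $|\O_X(1)|$ was later removed in \cite{mella-mukai}. So there is no internal argument to compare yours with; your proposal can only be measured against Mukai's published proof, whose broad strategy it does faithfully mirror (adjunction down to K3 and canonical-curve sections, a rigid stable bundle with Mukai vector of square $-2$, re-embedding into a homogeneous model, and recognition of the variety as a linear section of that model).

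Measured that way, what you have written is a roadmap rather than a proof, and the gaps sit exactly at the load-bearing points. First, the steps you yourself flag as ``the main obstacle'' --- existence, stability and rigidity of $\E$, the claim that the induced map to the Grassmannian is a closed embedding, and the control of the equations of the image --- are stated as goals, not established; and even granting all of them for the surface section $S$, you still have to lift the bundle and the embedding from $S$ back up to $X$ itself, an extension step that is a genuine and separate part of Mukai's argument and is absent from your sketch. Second, your proposal contains no completeness argument: nothing in it excludes a first-species Mukai variety of dimension $\geq 3$ with $g=11$ or $g\geq 13$, yet this finiteness is an essential half of the theorem (it does come out of the rigid-bundle numerics, but that has to be proved, not asserted). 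Third, the verification half has case-by-case slips: the $g=12$ variety is not homogeneous (it is the zero locus of three sections of $\bigwedge^2\mathcal{U}^{\vee}$ on $G(3,U)$, $\mathcal{U}$ the tautological subbundle, so its canonical class comes from adjunction for zero loci, not from a sum of positive roots, and its Picard rank one is a separate nontrivial fact about these threefolds); the $g=6$ case is a quadric section of a singular cone and likewise needs adjunction; and the Barth--Larsen criterion $2r-n\geq 2$ that you invoke for the first-species property covers only $g=6,7,8$ --- for $g=9$, $10$, $12$ one has $2r-n<0$, so that route simply does not apply there.
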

\begin{remark}
A Mukai surface is of type $K3$. 
A Mukai variety of sectional genus $<6$ is a complete intersection 
of hypersurfaces.
\end{remark}
We refer the reader to \cite{mukai-biregularclassification} and \cite{mori-mukai} 
for the classification of Mukai varieties with $b_2\geq2$.
Finally we observe that in  \cite{mukai-biregularclassification} the classification is obtained
under the assumption of the existence of a smooth divisor in $|\O_{X}(1)|$,
a condition which is clearly satisfied in our case; in any event, this restriction 
has been removed in \cite{mella-mukai}.
\section{\texorpdfstring{$QEL$}{QEL}/\texorpdfstring{$LQEL$}{LQEL}/\texorpdfstring{$CC$}{CC}-varieties}
In the following definition, we consider varieties having the simplest entry locus.
\begin{definition}[\cite{russo-qel1,ionescu-russo-conicconnected,zak-tangent}]\label{def: LQEL QEL CC} 
Let $X\subset\PP^n$ be as in Assumption \ref{notation: X irred nondegen}.
\begin{enumerate}
\item $X$ is called a \emph{quadratic entry locus variety of type} $\delta=\delta(X)$,
briefly a 
$QEL$-variety, if the general entry locus  
 $\Sigma_p(X)$  is a quadric hypersurface (of dimension $\delta$).
\item $X$ is called a \emph{locally quadratic entry locus variety of type} $\delta$,
briefly an  $LQEL$-variety, if each irreducible component of the 
general entry locus $\Sigma_p(X)$ is a quadric hypersurface; 
equivalently, $X$ is an $LQEL$-variety of type $\delta$ if through two general points 
there passes a quadric hypersurface of dimension $\delta$ contained in $X$.
\item $X$ is called a \emph{conic-connected variety}, briefly a $CC$-variety, 
if through two general points of $X$ there passes an irreducible conic contained in $X$.
\end{enumerate}
\end{definition}
Of course, we have
$$
\mbox{$X$ is a $QEL$-variety}\ \Longrightarrow \ \mbox{$X$ is an $LQEL$-variety} \ \stackrel{\mathrm{if}\ \delta>0}{\Longrightarrow} \mbox{$X$ is a $CC$-variety} 
$$
but the inverse implications are not true in general.
\begin{proposition}\label{prop: first properties LQEL}
 Let $X$ be smooth. Then
\begin{enumerate}
 \item if $X'\subset\PP^{m}$ (with $m\leq n$) is an isomorphic
projection of  $X\subset\PP^n$, then
$\delta(X')=\delta(X)$ and $X$ is an $LQEL$-variety (resp. $CC$-variety)
if and only if  $X'$ is an $LQEL$-variety (resp. $CC$-variety);
 \item if $X$ is a $QEL$-variety (resp. $LQEL$-variety, $CC$-variety) of type 
 $\delta(X)\geq1$ and if 
$\widetilde{X}\subset\PP^{n-1}$ is a general hyperplane section  
of $X\subset\PP^n$, 
then $\widetilde{X}$ is a $QEL$-variety (resp. $LQEL$-variety, $CC$-variety)  
of type
 $\delta(\widetilde{X})=\delta(X)-1$.
\end{enumerate}
\end{proposition}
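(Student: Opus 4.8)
The plan is to treat the two assertions separately, reducing part (1) to projection from a single point and basing part (2) on a clean description of the entry locus of a general hyperplane section.

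For part (1) I would first reduce to the case of the projection $\pi_c$ from a single point $c\notin\Sec(X)$, since any isomorphic projection $X\to X'$ factors as a chain of such point projections with centres avoiding the successively updated secant varieties (if $L\cap\Sec(X)=\emptyset$ one checks that each intermediate centre and secant variety stay disjoint). The image $X'=\pi_c(X)$ is abstractly isomorphic to $X$ and of the same dimension $r$. The crucial observation is that a secant line of $X'$ is exactly the $\pi_c$-image of a secant line of $X$, so $\Sec(X')=\pi_c(\Sec(X))$; moreover, since $c\notin\Sec(X)$, no line through $c$ lies in $\Sec(X)$, whence $\pi_c|_{\Sec(X)}$ has finite fibres and $\dim\Sec(X')=\dim\Sec(X)$. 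Together with $\dim X'=\dim X$ this gives $\delta(X')=\delta(X)$. For the $LQEL$ and $CC$ parts I would use the ``two general points'' characterisations: if $Q\subseteq X$ is a conic, respectively a $\delta$-dimensional quadric, through two general points, then its span satisfies $\langle Q\rangle=\Sec(Q)\subseteq\Sec(X)\not\ni c$, so $\pi_c$ restricts to a linear isomorphism on $\langle Q\rangle$ and sends $Q$ to a conic, respectively a $\delta$-dimensional quadric, in $X'$ through the two (again general) image points. The converse implications follow symmetrically, because $\Sec(Q)\subseteq\Sec(X)\not\ni c$ in either direction makes $\pi_c|_Q$ a degree-preserving isomorphic projection.

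For part (2) the heart of the argument is the identity
$$\Sigma_{\tilde p}(\widetilde X)=\Sigma_{\tilde p}(X)\cap H,$$
taken for $x_1,x_2\in\widetilde X$ general and $\tilde p\in\langle x_1,x_2\rangle$ general. The inclusion $\subseteq$ is immediate. For $\supseteq$, if $x\in\Sigma_{\tilde p}(X)\cap H$ there is $x'\in X$ with $\tilde p\in\langle x,x'\rangle$; but the line $\langle x,x'\rangle$ contains the two points $x,\tilde p\in H$, hence lies in $H$, forcing $x'\in X\cap H=\widetilde X$ and $x\in\Sigma_{\tilde p}(\widetilde X)$. Since $x_1,x_2$ are general in $\widetilde X$, hence general in $X$, and $H$ is general, the point $\tilde p$ is a general point of $\Sec(X)$ lying on $H$, and $\Sigma_{\tilde p}(X)\cap H$ is a general hyperplane section of the general entry locus $\Sigma_{\tilde p}(X)$, which has dimension $\delta(X)$. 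This yields at once $\dim\Sigma_{\tilde p}(\widetilde X)=\delta(X)-1$, hence $\delta(\widetilde X)=\delta(X)-1$; and, as a general hyperplane section of a quadric of dimension $\delta(X)\geq1$ (component by component) is again a quadric of dimension $\delta(X)-1$, both the $QEL$ and $LQEL$ properties descend with the type dropping by one.

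The main obstacle, and the case I expect to require genuinely new input, is the conic-connected one in part (2): the entry-locus identity still gives $\delta(\widetilde X)=\delta(X)-1$, but conic-connectedness is not an entry-locus condition, and a conic of $X$ through two points of $\widetilde X$ meets $H$ exactly in those two points, so it does not lie in $\widetilde X$. To produce conics inside $\widetilde X$ I would fix a general $x_1\in\widetilde X$ and count the conics of $X$ through $x_1$ whose plane is contained in $H$; imposing $\langle C\rangle\subseteq H$ costs at most two conditions on the family of conics through $x_1$, so the surviving conics still sweep out $\widetilde X$ provided that family has dimension at least $r$, equivalently that there is a positive-dimensional family of conics through two general points of $X$. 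This is precisely where the hypothesis must bite: for $\delta(X)=1$ the conclusion genuinely fails (for the Veronese surface $\nu_2(\PP^2)$, which is conic-connected with $\delta=1$, the general section is a rational normal quartic, containing no conic and hence not conic-connected), so the delicate point is to show that $\delta(\widetilde X)\geq1$ forces the conic family through a general point to have dimension at least $r$ and that the count of conditions for $\langle C\rangle\subseteq H$ is not an overcount on a conic-connected variety.
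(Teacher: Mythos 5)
The paper does not actually prove this proposition: it is quoted as known, the arguments being those of \cite{russo-qel1} (hyperplane sections and isomorphic projections of $LQEL$-varieties) and \cite{ionescu-russo-conicconnected}, so your proposal can only be measured against those standard proofs. Measured that way, your part (1) is correct and is essentially the standard argument: reduction to projection from a single point $c\notin\Sec(X)$, the equality $\Sec(X')=\pi_c(\Sec(X))$ with finite fibres, and transport of conics and $\delta$-dimensional quadrics through two general points in both directions, using $\Sec(Q)=\langle Q\rangle\subseteq\Sec(X)$ and preservation of degree under projection from a point off $Q$. (Your use of the two-general-points characterisation is also the right one; it explains why part (1) is asserted for $LQEL$ and $CC$ but not for $QEL$, which projection destroys: the projected Veronese surface in $\PP^4$ has a three-component entry locus.) Your part (2) for $QEL$/$LQEL$ is likewise the standard proof via $\Sigma_{\tilde p}(\widetilde X)=\Sigma_{\tilde p}(X)\cap H$; the only step to make honest is the inclusion $\supseteq$ at the level of closures (a point of $\Sigma_{\tilde p}(X)\cap H$ need not lie on an honest secant of $\widetilde X$), which is settled by viewing $x_1,x_2$ as general points of $X$ and $H$ as a general hyperplane containing $\langle x_1,x_2\rangle$, so that each component of $\Sigma_{\tilde p}(X)\cap H$ meets the dense locus of genuine secant points; note also that $\delta(\widetilde X)\geq\delta(X)-1$ follows independently from $\Sec(\widetilde X)\subseteq\Sec(X)\cap H$.

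For the $CC$ clause of part (2), stop searching for the missing ingredient: there is none, because the clause is false as literally stated, and your Veronese example already proves it. Moreover the failure is confined neither to $\delta=1$ nor to sections that are curves, so no hypothesis of the form $\delta(\widetilde X)\geq 1$ can rescue your proposed conic-counting repair. Indeed $\nu_2(\PP^r)\subset\PP^{r(r+3)/2}$ is $CC$ with $\delta=1$ for every $r$, while its general hyperplane section $\nu_2(Q^{r-1})$ is not $CC$ (its conics are exactly the images of lines of $Q^{r-1}$, and no line of a quadric joins two general points); worse, a smooth cubic threefold $X\subset\PP^4$ is $CC$ (it is a Fano variety of the first species with $i(X)=2$, hence $CC$ by the converse recalled before Theorem \ref{prop: classification CC-varieties} and \cite{ionescu-russo-qel2}) with $\delta(X)=3$, yet its general hyperplane section, a smooth cubic surface, carries only finitely many conics through a general point (one residual to each of the $27$ lines) and so is not $CC$, even though it has $\delta=2$. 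Conic-connectedness simply does not descend to hyperplane sections; the only true statement in this direction is that if $X$ is $LQEL$ of type $\delta\geq2$, then $\widetilde X$ is $LQEL$ of type $\delta-1\geq1$ and hence $CC$. So the $CC$ case should be retained only in part (1) — which appears to be the only form in which the paper invokes it, namely the reduction of the classification of $CC$-varieties to the linearly normal case before Theorem \ref{prop: classification CC-varieties}.
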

Now, for $X\subset\PP^n$ as in Assumption \ref{notation: X irred nondegen} and with $\Sec(X)\subsetneq\PP^n$, 
we consider the general tangential projection 
$\tau_{x,X}:X \dashrightarrow W_{x,X}\subset\PP^{n-r-1}$  and  
we define the non-negative integer
 $\widetilde{\gamma}(X)$
as the dimension of the general fiber of the Gauss map 
$$
\G_{W_{x,X}}:W_{x,X}\dashrightarrow\GG(r-\delta(X), n-r-1).
$$
From \cite[Lemma~2.3.4]{russo-specialvarieties} we obtain the relation
\begin{equation}\label{eq: gamma-delta}
\widetilde{\gamma}(X)=\gamma(X)-\delta(X).
\end{equation}
Theorem \ref{prop: scorza lemma} 
provides a sufficient condition for $X$ to be
 an $LQEL$-variety.
For its proof, see \cite[Theorem~2.3.5]{russo-specialvarieties}.
\begin{theorem}[Scorza Lemma]\label{prop: scorza lemma} 
Let $X\subset\PP^n$ be as in Assumption \ref{notation: X irred nondegen}, with $\Sec(X)\subsetneq\PP^n$ 
and $\delta(X)\geq 1$ and let $x,y\in X$ ($x\neq y$) be general points. 
If $\widetilde{\gamma}(X)=0$, then
\begin{enumerate}
 \item The irreducible component of the closure of fiber of the rational map 
 $\tau_{x,X}:X\dashrightarrow W_{x,X}$ passing through
 $y$ is either an irreducible quadric hypersurface of dimension $\delta(X)$ or a linear 
space of dimension  $\delta(X)$, the last case occurring only for singular varieties.
\item There exists on  $X\subset\PP^n$
a $2(r-\delta(X))$-dimensional family $\mathcal{Q}$ of 
quadric hypersurfaces of dimension  $\delta(X)$ such that 
there exists a unique member  $Q_{x,y}\in\mathcal{Q}$ passing through the general 
points $x,y$.  Furthermore,   $Q_{x,y}$ is smooth at the points  
$x,y$ and it consists of the irreducible components of $\Sigma_{p}(X)$ 
passing through $x,y$, where $p\in\langle x,y\rangle$ is general. 
\item If $X$ is smooth, then a general member of $\mathcal{Q}$ is smooth.
\end{enumerate}
\end{theorem}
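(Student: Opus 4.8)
The plan is to read off all three statements from the geometry of the general tangential projection $\tau_{x,X}\colon X\dashrightarrow W_{x,X}\subseteq\PP^{n-r-1}$, with the hypothesis $\widetilde{\gamma}(X)=0$ serving precisely to force its fibres to be as small as the entry locus. Fix a general point $x\in X$ and write $\delta=\delta(X)$, $\gamma=\gamma(X)$. By Proposition \ref{prop: nondegenerate tang projection} the map $\tau_{x,X}$ is dominant onto the nondegenerate variety $W_{x,X}$ of dimension $r-\delta$, so its general fibre has dimension $\delta$. For a general $y\in X$ I choose $p\in\langle x,y\rangle$ general; by Terracini's Lemma (Theorem \ref{prop: terracini lemma}) one has $T_p(\Sec(X))=\langle T_x(X),T_y(X)\rangle$, and hence $T_{\tau_{x,X}(y)}(W_{x,X})$ is the image of $T_p(\Sec(X))$ under the linear projection from $T_x(X)$.

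The key step is to identify the fibre $F$ of $\tau_{x,X}$ through $y$. Tracing tangent spaces backwards, a point $z$ lies over the Gauss fibre of $W_{x,X}$ through $\tau_{x,X}(y)$ exactly when $\langle T_x(X),T_z(X)\rangle=T_p(\Sec(X))$, i.e.\ when $T_z(X)\subseteq T_p(\Sec(X))$; thus $\tau_{x,X}^{-1}$ of that Gauss fibre equals the contact locus $\Gamma_p(X)$. Counting dimensions along $\tau_{x,X}$ recovers $\dim\Gamma_p(X)=\widetilde{\gamma}(X)+\delta=\gamma$, in agreement with (\ref{eq: gamma-delta}). Now the hypothesis enters: since $\widetilde{\gamma}(X)=0$, by Theorem \ref{prop: linear fiber gauss} the general Gauss fibre of $W_{x,X}$ is a single reduced point, so $\tau_{x,X}^{-1}$ of a point is this entire locus and $F=\Gamma_p(X)$. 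Because $\Sigma_p(X)\subseteq\Gamma_p(X)$ with both equidimensional of dimension $\delta$, the components of $\Sigma_p(X)$ through $x$ and $y$ are components of $F$; I name the one through $x,y$ by $Q_{x,y}$.

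Next I would establish the quadric structure (part 1). Being contracted by $\tau_{x,X}$, the locus $F$ lies in the $(r+1)$-plane $\Lambda=\langle T_x(X),y\rangle$, so $Q_{x,y}\subseteq\Lambda\cap X$. Since $p\in\Sec(X)\setminus X$ is general, the general secant line through $p$ meets $X$ in a length-$2$ scheme, so projection from $p$ realises $Q_{x,y}$ as a two-to-one cover of its image; combined with the quadratic nature of the second fundamental form (Proposition \ref{prop: surjective second fundamental form}), which cuts out the contracted fibres on the tangent directions at $x$, this pins down $\deg Q_{x,y}=2$ and exhibits $Q_{x,y}$ as a $\delta$-dimensional quadric, nondegenerate in a $\PP^{\delta+1}$. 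The degenerate alternative, a linear $\PP^{\delta}$ (the two-to-one cover splitting), is what occurs only for singular $X$. Smoothness of $Q_{x,y}$ at $x,y$ follows because $x,y\in\reg(X)$ and $\Sigma_p(X)$ is reduced there. For part 2 I organise these quadrics into the family $\mathcal{Q}$ and count on the incidence variety $\{(x,Q):x\in Q\in\mathcal{Q}\}$: over a general $x$ the members through $x$ are the fibres of $\tau_{x,X}$, parametrised by $W_{x,X}$ of dimension $r-\delta$, giving incidence dimension $2r-\delta$, whence $\dim\mathcal{Q}=2r-\delta-\delta=2(r-\delta)$; uniqueness of $Q_{x,y}$ is immediate since $y$ selects a single fibre of $\tau_{x,X}$. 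For part 3, when $X$ is smooth a general member of $\mathcal{Q}$ is a general fibre of the dominant map $\tau_{x,X}$ out of a smooth variety, hence smooth by generic smoothness in characteristic zero, and being an irreducible quadric smooth at a point it is a smooth quadric hypersurface.

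The main obstacle I anticipate is the fibre identification together with the precise control of $Q_{x,y}$: turning the infinitesimal (Gauss and Terracini) equality of tangent spaces into the scheme-theoretic identity $F=\Gamma_p(X)$, and then forcing $\deg Q_{x,y}=2$ and $\langle Q_{x,y}\rangle=\PP^{\delta+1}$ rather than something larger. Separating the genuine quadric from the linear-space degeneration, and handling this for possibly singular $X$, is the other delicate point, and is exactly where the second fundamental form of Proposition \ref{prop: surjective second fundamental form} is indispensable.
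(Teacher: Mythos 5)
The paper never proves this statement: it is quoted as a known result, with the proof deferred to \cite[Theorem~2.3.5]{russo-specialvarieties}, so your sketch has to be measured against that argument. Your opening moves do follow the known line: using $\widetilde{\gamma}(X)=0$ together with Theorem \ref{prop: linear fiber gauss} to identify the closure of the fibre of $\tau_{x,X}$ through $y$ with the tangential contact locus $\Gamma_p(X)$, and the incidence-variety count giving $\dim\mathcal{Q}=2(r-\delta)$, are both sound (modulo the caveat that your equivalence ``$\langle T_x(X),T_z(X)\rangle=T_p(\Sec(X))$ iff $T_z(X)\subseteq T_p(\Sec(X))$'' uses Terracini at the pair $(x,z)$ with $z$ constrained to a fibre, not general in $X$ -- exactly the point the contact-locus machinery behind (\ref{eq: gamma-delta}) is there to handle). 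The genuine gap is the heart of the theorem: you never prove $Q_{x,y}$ is a quadric. The only linear containment you establish is $F\subseteq\langle T_x(X),y\rangle$, an $(r+1)$-plane; the assertion that $Q_{x,y}$ is ``nondegenerate in a $\PP^{\delta+1}$'' appears only as part of your conclusion. Without that $(\delta+1)$-plane the degree argument cannot close: a generically two-to-one projection from $p$ gives $\deg Q_{x,y}=2\deg\overline{\pi_p(Q_{x,y})}$, so you would still need the image to be a \emph{linear} $\PP^{\delta}$, which is never shown; nor is the exact two-to-one-ness (rather than higher multiplicity, or partners lying on other components of $\Sigma_p(X)$) established, and the appeal to Proposition \ref{prop: surjective second fundamental form} is a gesture, not an argument. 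What actually fills this hole in the cited proof is a different mechanism: by Terracini, $T_q(\Sec(X))$ is constant equal to $T_p(\Sec(X))$ for $q$ general on the join of the components of $\Gamma_p(X)$ through $x$ and $y$, so that join lies in a single fibre of the Gauss map of the \emph{secant variety}; by Theorem \ref{prop: linear fiber gauss} (applied to $\Sec(X)$, not to $W_{x,X}$) that fibre is a linear space, of dimension $\delta+1$ when $\widetilde{\gamma}(X)=0$, and a trisecant-lemma argument then pins the degree of the hypersurface $Q_{x,y}$ inside it at $2$. Nothing in your sketch substitutes for these steps.

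Two further points would also fail as written. For part 3, generic smoothness cannot be invoked the way you do: $Q_{x,y}$ is the \emph{closure} of a fibre of the rational map $\tau_{x,X}$, and by part 2 it passes through $x$ itself, i.e.\ through the indeterminacy locus $T_x(X)\cap X$; smoothness of general fibres of a resolution says nothing about the closure at such points. Even playing $x$ and $y$ off each other leaves the locus $T_x(X)\cap T_y(X)\cap X$ uncontrolled, and this is generally nonempty since $\dim(T_x(X)\cap T_y(X))=\delta-1\geq0$. Finally, the dichotomy of part 1 -- irreducible quadric versus a linear space, the latter only for singular $X$ -- is simply asserted (``the two-to-one cover splitting''); it requires an argument relating a rank-degeneration of $Q_{x,y}$ to singular points of $X$, which you do not supply.
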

Theorems \ref{prop: main qel1} and \ref{prop: divisibility theorem} 
provide strong 
numerical and geometric constraints
to  smooth 
$LQEL$-variety of type $\delta\geq 3$. 
\begin{theorem}[\cite{russo-qel1}]\label{prop: main qel1}
 Let $X\subset\PP^n$ be a smooth $LQEL$-variety of type $\delta(X)\geq3$. Then
\begin{enumerate}
 \item $X$ is a Fano variety of the first species of index  
$i(X)=\frac{\dim(X)+\delta(X)}{2}$.
 \item If $x\in X$ is a general point, 
then $\L_{x,X}\subset\PP^{\dim(X)-1}$ is a smooth irreducible $QEL$-variety 
of dimension 
$\dim(\L_{x,X})= i(X)-2$ and type $\delta(\L_{x,X})=\delta(X)-2$.
\end{enumerate}
\end{theorem}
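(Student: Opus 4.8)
The plan is to read off all the stated invariants from the covering family of entry-locus quadrics and from the way this family is inherited by the Hilbert scheme of lines $\L_{x,X}$ through a general point. Write $r=\dim(X)$ and $\delta=\delta(X)$. Since $\delta\geq3\geq1$, by Definition \ref{def: LQEL QEL CC} two general points of $X$ lie on an irreducible quadric of dimension $\delta$; in particular $X$ is covered by such quadrics, hence by lines, and is uniruled. The first step is to record that $\widetilde{\gamma}(X)=0$: by Terracini's Lemma (Theorem \ref{prop: terracini lemma}) the entry-locus quadric is contained in the tangential contact locus, and for an $LQEL$-variety the two coincide, so $\gamma(X)=\delta(X)$ and \eqref{eq: gamma-delta} gives $\widetilde{\gamma}(X)=0$. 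This lets me invoke the Scorza Lemma (Theorem \ref{prop: scorza lemma}), which supplies a $2(r-\delta)$-dimensional family $\mathcal{Q}$ of $\delta$-dimensional quadrics, with a unique member $Q_{x,y}$ through two general points, smooth at those points, and with smooth general member.

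Next I would fix a general $x\in X$ and study $\L_{x,X}\subset\PP^{r-1}$, realized inside the exceptional divisor $E_{x,X}$. Smoothness is immediate from Proposition \ref{prop: sing LxX}: as $X$ is smooth, $S_{x,X}=\emptyset$, so $\L_{x,X}$ is smooth. The geometric core is a local computation on the quadrics: each smooth $Q\in\mathcal{Q}$ with $x\in Q$ is smooth at $x$, and the lines of $Q$ through $x$ form a smooth quadric of dimension $\delta-2$ lying in the linear space $\PP(T_x Q)\subset\PP^{r-1}$; these cover $\L_{x,X}$ as $Q$ ranges over the members of $\mathcal{Q}$ through $x$. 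I would then show that through two general points of $\L_{x,X}$ there passes one such quadric, so that $\L_{x,X}$ is an $LQEL$-variety of type $\delta-2$. Because $\delta-2\geq1$, a $(\delta-2)$-dimensional smooth quadric is irreducible, so the general entry locus of $\L_{x,X}$ is a single irreducible quadric and $\L_{x,X}$ is genuinely a $QEL$-variety of type $\delta-2$; irreducibility of $\L_{x,X}$ itself follows from its smoothness together with connectedness (a general pair of points lies on a connected quadric, and the covering family of lines is equidimensional).

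It remains to pin down the numerical invariants. For a general line $l$ through $x$, formula \eqref{eq: dim LxX} gives $\dim(\L_{x,X})=-K_X\cdot l-2$, so it suffices to compute $-K_X\cdot l$. Restricting to a general quadric $Q\in\mathcal{Q}$ containing $l$ and applying adjunction for $Q\subset X$, one gets $-K_X\cdot l=\delta+c_1(\N_{Q/X})\cdot l$, and the degree of $\N_{Q/X}$ along $l$ is controlled by the dimension $2(r-\delta)$ of the deformation family $\mathcal{Q}$; this forces $-K_X\cdot l=(r+\delta)/2$ and hence $\dim(\L_{x,X})=(r+\delta)/2-2$. Since $-K_X\cdot l\geq2>0$ on the covering family, with Picard number one this makes $-K_X$ ample, so $X$ is Fano with $-K_X\sim\frac{r+\delta}{2}\,\O_X(1)$ and index $i(X)=(r+\delta)/2$, giving $\dim(\L_{x,X})=i(X)-2$. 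That $\Pic(X)=\ZZ\langle\O_X(1)\rangle$ (first species) I would deduce from a Barth--Lefschetz/Zak-type argument (cf. Remark \ref{remark: second betti number fano} and Corollary \ref{prop: theorem on tangencies}), using the irreducibility of the single family of lines through a general point just established.

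The main obstacle is concentrated in the last two steps. First, the exact index: proving $c_1(\N_{Q/X})\cdot l=(r-\delta)/2$ — equivalently that a general line through $x$ lies on precisely an $(r-\delta)/2$-dimensional subfamily of $\mathcal{Q}$ — needs genuine control of the normal bundle of the quadrics (or, alternatively, of the second fundamental form via Propositions \ref{prop: surjective second fundamental form} and \ref{prop: relation between hilbert scheme and second funfamental form}), since a naive dimension count yields only inequalities. Second, upgrading $\L_{x,X}$ from $LQEL$ to $QEL$ (a single irreducible entry quadric through a general pair, not merely a union) and establishing that $X$ is of the first species are exactly where the hypothesis $\delta\geq3$ is essential: it is $\delta-2\geq1$ that makes the entry quadrics of $\L_{x,X}$ irreducible, and the resulting irreducibility and connectedness are what drive the Picard-number-one conclusion.
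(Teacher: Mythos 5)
A preliminary remark: the thesis itself does not prove this theorem --- it is imported with a citation from \cite{russo-qel1} --- so your sketch has to stand on its own; in outline it does follow the architecture of Russo's actual proof (smoothness of $\L_{x,X}$ via Proposition \ref{prop: sing LxX}, the dimension formula (\ref{eq: dim LxX}), the $(\delta-2)$-dimensional quadrics of lines inside $\L_{x,X}$ cut out by the entry-locus quadrics, and $\delta\geq3$ as the source of irreducibility). The first genuine gap is your opening move. You set $\widetilde{\gamma}(X)=0$ by asserting that for an $LQEL$-variety the entry locus and the tangential contact locus ``coincide''; Terracini only gives $\Sigma_p(X)\subseteq\Gamma_p(X)$, so this is a substantive claim you never prove. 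Worse, the tools you then invoke are not available in the stated generality: in this thesis both $\widetilde{\gamma}$ and the Scorza Lemma (Theorem \ref{prop: scorza lemma}) are defined and stated only under the hypothesis $\Sec(X)\subsetneq\PP^n$, which is not among the hypotheses of the theorem, and which fails for examples the theorem must cover, e.g. $\GG(1,4)\subset\PP^9$ and $S^{10}\subset\PP^{15}$. What you actually need at this stage --- existence of the quadrics $Q_{x,y}$ --- comes for free from Definition \ref{def: LQEL QEL CC}; what does not come for free is their uniqueness, which your later steps use and which also requires a separate argument.

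The more serious problem is that the two statements constituting the theorem are exactly the ones you defer: (i) the equality $-K_X\cdot l=(\dim(X)+\delta)/2$, equivalently $c_1(\N_{Q/X})\cdot l=(\dim(X)-\delta)/2$, and (ii) the upgrade of $\L_{x,X}$ from ``covered by $(\delta-2)$-quadrics'' to a $QEL$-variety, together with $\Pic(X)=\ZZ\langle\O_X(1)\rangle$. Calling these ``the main obstacle'' and describing what a proof would need is a diagnosis, not an argument, so the proposal is incomplete precisely at its core. For the record, (i) does not require any control of $\N_{Q/X}$: every conic $C$ through the general pair $x,y$ lies in $\Sigma_p(X)$ for $p\in\langle x,y\rangle$ general (a line in the plane $\langle C\rangle$ through $p$ meets $C$ twice, so $C\subseteq\Sigma_p(X)$), hence $C\subseteq Q_{x,y}$; therefore the family of conics through $x,y$ has dimension exactly $\delta-1$, and unobstructed deformation theory of rational curves through two general points gives $\delta-1=-K_X\cdot C-\dim(X)-1$, i.e. $-K_X\cdot C=\dim(X)+\delta$, after which the statement for lines is read off inside $Q_{x,y}$. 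Your covering claim (every line through $x$ lies in some entry-locus quadric through $x$) is likewise asserted, not proved: a point $y\in l$ is not a general point of $X$ paired with $x$, so this needs a limit or dimension argument. Finally, Barth--Larsen (Remark \ref{remark: second betti number fano}) cannot yield ``first species'' here, since it needs $2\dim(X)-n\geq2$, which is not known a priori when $\Sec(X)\subsetneq\PP^n$; the correct route is the one Russo takes: irreducibility of the family of lines through a general point plus conic-connectedness force the Picard number to be $1$, and $H\cdot l=1$ then makes the hyperplane class the ample generator.
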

As a direct consequence of Theorem \ref{prop: main qel1}, one obtains the following:
\begin{theorem}[Divisibility Theorem, \cite{russo-qel1}]\label{prop: divisibility theorem}
If $X\subset\PP^n$ is a smooth $LQEL$-variety of type $\delta(X)>0$, then
$$
\dim(X)\equiv \delta(X)\ \mathrm{mod}\ 2^{\lfloor (\delta(X)-1)/2 \rfloor}.
$$ 
\end{theorem}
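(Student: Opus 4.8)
The plan is to argue by induction on the type $\delta=\delta(X)$, descending in steps of two, with Theorem \ref{prop: main qel1} supplying the only geometric input; everything beyond that reduces to elementary modular arithmetic. Throughout write $r=\dim(X)$. First I would settle the two base cases, which cover both parity classes: for $\delta=1$ and $\delta=2$ the exponent $\lfloor(\delta-1)/2\rfloor$ equals $0$, so the modulus is $2^0=1$ and the asserted congruence holds vacuously.

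For the inductive step, assume $\delta\geq 3$ and that the statement is known for type $\delta-2$. Here I would invoke Theorem \ref{prop: main qel1}. Part (1) tells us $i(X)=\frac{r+\delta}{2}$ is an integer, so in particular $r\equiv\delta\pmod 2$; part (2) produces, for a general point $x$, a smooth irreducible $QEL$-variety $\L_{x,X}$ — in particular a smooth $LQEL$-variety — of dimension $r'=i(X)-2=\frac{r+\delta}{2}-2$ and type $\delta'=\delta-2$. Since $\delta\geq 3$ we have $\delta'\geq 1>0$, so the inductive hypothesis applies to $\L_{x,X}$ and gives
$$r'\equiv\delta'\pmod{2^{\lfloor(\delta'-1)/2\rfloor}},\qquad\text{i.e.}\qquad \frac{r+\delta}{2}-2\equiv\delta-2\pmod{2^{\lfloor(\delta-3)/2\rfloor}}.$$

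Cancelling $-2$ and subtracting $\delta$ recasts this as $\frac{r-\delta}{2}\equiv 0\pmod{2^{\lfloor(\delta-3)/2\rfloor}}$, where $\frac{r-\delta}{2}$ is a genuine integer by the parity remark above; multiplying by $2$ yields $r-\delta\equiv 0\pmod{2^{\lfloor(\delta-3)/2\rfloor+1}}$. The only identity left to check is the floor bookkeeping $\lfloor(\delta-3)/2\rfloor+1=\lfloor(\delta-1)/2\rfloor$, which holds since $\lfloor(\delta-3)/2\rfloor+1=\lfloor((\delta-3)/2)+1\rfloor=\lfloor(\delta-1)/2\rfloor$. This closes the induction. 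I do not anticipate a genuine obstacle: the substantive geometry is entirely absorbed into Theorem \ref{prop: main qel1}, and the single point demanding care is simply that the inductive hypothesis be legitimately applicable at each stage — namely that $\L_{x,X}$ is again a \emph{smooth} $LQEL$-variety of strictly positive type — which is exactly what part (2) of that theorem guarantees.
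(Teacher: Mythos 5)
Your proof is correct and follows exactly the route the paper intends: the paper presents the Divisibility Theorem as a direct consequence of Theorem \ref{prop: main qel1}, and your descending induction on $\delta$ via the variety $\L_{x,X}$ (of dimension $i(X)-2$ and type $\delta-2$) is precisely how that consequence is obtained, with the parity of $r-\delta$ coming from part (1) and the floor identity handled correctly.
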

Note that for a variety with secant defect  $\delta=0$, being an 
 $LQEL$-variety imposes no restriction;
differently, being 
a $QEL$-variety is fairly unique. 
Smooth $QEL$-varieties $X\subset\PP^n$ of type $\delta=0$ and with $\Sec(X)=\PP^n$ are 
also called \emph{varieties with one apparent double point}, briefly $OADP$-varieties 
(see \cite{ciliberto-mella-russo}).
We have the following:
\begin{theorem}[\cite{ionescu-russo-qel2,ciliberto-mella-russo}]\label{prop: birational tangential projection when delta is 0} If 
$X\subset\PP^n$ is a smooth $QEL$-variety of type $\delta=0$, then
the general tangential projection
 $\tau_{x,X}:X\dashrightarrow W_{x,X}$ is birational.
\end{theorem}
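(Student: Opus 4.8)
The plan is to reduce the statement to a fibre count and then to identify the general fibre with (a component of) the entry locus. First I would observe that, since $\delta(X)=0$, Proposition~\ref{prop: nondegenerate tang projection} gives $\dim(W_{x,X})=r-\delta(X)=r=\dim(X)$, so $\tau_{x,X}$ is dominant and generically finite. As everything is over $\CC$, the map is separable, and hence $\tau_{x,X}$ is birational if and only if its general fibre is a single reduced point. Thus the whole problem becomes: for $x,y\in X$ general, show that $\tau_{x,X}^{-1}(\tau_{x,X}(y))=\{y\}$.

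Next I would describe the fibre geometrically. For $y'\in X$ one has $\tau_{x,X}(y')=\tau_{x,X}(y)$ precisely when the line $\langle y,y'\rangle$ meets the centre of projection $T_x(X)$, i.e. when $y'$ lies in $X\cap\langle T_x(X),y\rangle$ off the tangent directions. The strategy is to recognise this fibre as the part of the entry locus through $y$, mimicking the mechanism behind the Scorza Lemma (Theorem~\ref{prop: scorza lemma}): for the general pair $x,y$ the entry-locus quadric $\Sigma_p(X)$ (with $p\in\langle x,y\rangle$ general) is contracted by $\tau_{x,X}$ because its linear span is tangentially aligned with $T_x(X)$ at $x$. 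When $\delta(X)=0$ this ``quadric'' is the reduced pair $\{x,y\}$, its span is the secant line $\langle x,y\rangle$, and for general $y$ (so that $y\notin T_x(X)$) this line meets $T_x(X)$ only at $x$; hence the contribution of $\Sigma_p(X)$ to the fibre through $y$ is exactly $\{y\}$. So the heart of the matter is to exclude fibre points $y''$ not accounted for by this entry-locus pair.

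The crux is therefore the following. Suppose $y''\neq y$ with $\tau_{x,X}(y'')=\tau_{x,X}(y)$; then $\langle y,y''\rangle$ is a secant line of $X$ meeting $T_x(X)$ at some point $q$. I would invoke that $X$ is a $QEL$-variety of type $0$ (Definition~\ref{def: LQEL QEL CC}), so that a general point of $\Sec(X)$ lies on a \emph{unique} secant line and the corresponding entry locus is a reduced pair of points. Combining this uniqueness with Terracini's Lemma (Theorem~\ref{prop: terracini lemma}), which gives $T_{p}(\Sec(X))=\langle T_y(X),T_{y''}(X)\rangle$ for general $p\in\langle y,y''\rangle$, and with the incidence $q\in T_x(X)$, the aim is to exploit the generality of $x$ to force $q=x$, whence $y''$ lies with $y$ on a secant through $x$ and so $y''\in\Sigma_p(X)=\{x,y\}$, contradicting $y''\neq y$ (and $y''\neq x$). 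Once such extra fibre points are excluded, the general fibre is $\{y\}$, so $\deg(\tau_{x,X})=1$ and $\tau_{x,X}$ is birational.

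I expect this last step to be the main obstacle: controlling how the general tangent space $T_x(X)$ can meet the ruling of secant lines of $X$, and in particular checking that the secant $\langle y,y''\rangle$ arising from a fibre point is general enough for the $QEL$-type-$0$ uniqueness to apply, so that an intersection $q\neq x$ would genuinely produce a second secant line through a general point of $\Sec(X)$ and contradict $\delta(X)=0$. The natural tools are the uniqueness of the secant line guaranteed by the hypothesis, supplemented if necessary by Zak's Theorem on tangencies (Corollary~\ref{prop: theorem on tangencies}) to bound tangency of linear spaces along the relevant loci. In the special case $\Sec(X)=\PP^n$, where $X$ is an $OADP$-variety, the uniqueness of the secant line through a general point of $\PP^n$ is immediate and the argument simplifies accordingly.
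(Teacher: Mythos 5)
A preliminary remark: the paper does not prove this theorem at all — it is imported verbatim from \cite{ionescu-russo-qel2} and \cite{ciliberto-mella-russo} — so your proposal has to stand on its own, and it does not. The reduction you perform is correct but routine: by Proposition \ref{prop: nondegenerate tang projection} and $\delta(X)=0$ one has $\dim(W_{x,X})=r$, so $\tau_{x,X}$ is generically finite and, in characteristic zero, birational exactly when its general fibre is a single point; and a point $y''\neq y$ in the fibre through a general $y$ corresponds to a secant line $\langle y,y''\rangle$ meeting $T_x(X)$ at some point $q$. Everything after that is a statement of intent, not an argument, and you say so yourself (``I expect this last step to be the main obstacle'').

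The gap is not just that the step is unfinished; the mechanism you propose for it — ``exploit the generality of $x$ to force $q=x$'' — cannot work. The point $y''$ is not chosen freely: it is \emph{defined} as a fibre point, so the secant $\langle y,y''\rangle$ is already constrained to meet $T_x(X)$, and nothing whatsoever controls \emph{where} inside $T_x(X)$ it does so; generality of $x$ in $X$ (or of the pair $(x,y)$) imposes no such incidence. The real difficulty, which your sketch never engages, is that every point of $T_x(X)$ lies on $\Sec(X)$ (tangent lines are limits of secants), but no point of $T_x(X)$ is a \emph{general} point of $\Sec(X)$; consequently the defining property of a $QEL$-variety of type $0$ — uniqueness and reducedness of the entry locus through a general point of $\Sec(X)$ — says nothing about secant lines through $q$, which may a priori be plentiful and may sweep out large entry loci. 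For the same reason your closing claim that the $OADP$ case is ``immediate'' fails: even when $\Sec(X)=\PP^n$, the point $q$ lies in the proper linear subspace $T_x(X)$, so uniqueness of the secant through a general point of $\PP^n$ is just as inapplicable. This is precisely why the quoted references need tools of a different nature to rule out extra fibre points — essentially, converting the type-$0$ hypothesis into birationality of the (unordered) secant family onto $\Sec(X)$ and then using Zariski's Main Theorem to control the secants through \emph{every} point of $T_x(X)$ (whose fibre already contains the tangent direction $\langle x,q\rangle$), together with an analysis of the loci where these fibres become positive-dimensional. Neither generality of $x$ nor Zak's theorem on tangencies, as invoked in your sketch, substitutes for this; so what you have is an accurate reformulation of the problem rather than a proof.
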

\subsection{\texorpdfstring{$LQEL$}{LQEL}-varieties of higher type}
The first part of Proposition \ref{prop: LQEL with delta=r and delta=r-1} 
is an application of
the definition of  $LQEL$-variety (see \cite[Proposition~1.3]{russo-qel1}), 
while the second part follows from 
 Theorem \ref{prop: main qel1} (see \cite[Proposition~3.4]{russo-qel1}).
\begin{proposition}\label{prop: LQEL with delta=r and delta=r-1} 
Let $X\subset\PP^n$ be an $LQEL$-variety of type $\delta$ and  dimension $r\geq1$.
 \begin{enumerate}
  \item $\delta=r$ if and only if 
$X$ is a quadric hypersurface of $\PP^n$.
  \item If $X$ is smooth, then
$\delta=r-1$ if and only if $X$ is one of the following: 
           \begin{enumerate}
            \item $\PP^1\times\PP^2\subset\PP^5$ or one of its hyperplane sections;
            \item the Veronese surface $\nu_2(\PP^2)\subset\PP^5$ or one of its isomorphic projections in $\PP^4$.
           \end{enumerate}
 \end{enumerate}
\end{proposition}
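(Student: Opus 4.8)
The plan is to treat the two parts separately, deriving the first directly from the definition of $LQEL$-variety and the second from the index constraint of Theorem \ref{prop: main qel1}. For Part (1), the ``if'' direction is immediate: if $X\subset\PP^n$ is a quadric hypersurface then $r=n-1$, every line through a general point of $\PP^n$ is secant, so $\Sec(X)=\PP^n$ and $\delta(X)=2r+1-\dim(\Sec(X))=2(n-1)+1-n=n-1=r$, the general entry locus being $X$ itself. For the converse, assume $X$ is $LQEL$ of type $\delta=r$. By Definition \ref{def: LQEL QEL CC}, through two general points of $X$ there passes a quadric hypersurface $Q\subseteq X$ of dimension $\delta=r=\dim(X)$. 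Since $X$ is irreducible of dimension $r$ (Assumption \ref{notation: X irred nondegen}) and $Q$ is a closed subvariety of the same dimension, $Q=X$; as $Q$ spans a $\PP^{r+1}$ and $X$ is nondegenerate, we get $n=r+1$ and $X$ is a quadric hypersurface of $\PP^n$.

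For the ``if'' direction of Part (2) one checks directly that each listed variety is a smooth $LQEL$-variety of type $\delta=r-1$: for $\nu_2(\PP^2)\subset\PP^5$ the secant variety is the cubic hypersurface of rank-$\le2$ symmetric $3\times3$ matrices, so $\dim(\Sec)=4$ and $\delta=2\cdot2+1-4=1=r-1$; for $\PP^1\times\PP^2\subset\PP^5$ (rank-$\le1$ matrices) one has $\Sec=\PP^5$, so $\delta=2\cdot3+1-5=2=r-1$, and the entry loci are readily seen to be quadrics. By Proposition \ref{prop: first properties LQEL}(1) an isomorphic projection preserves both $\delta$ and the $LQEL$ property, and by Proposition \ref{prop: first properties LQEL}(2) a general hyperplane section decreases $r$ and $\delta$ each by one (preserving smoothness by Bertini); hence the stated projections and hyperplane sections are again $LQEL$ of type $\delta=r-1$.

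For the ``only if'' direction, assume $\delta=r-1\ge1$, so $r\ge2$. The key reduction is a parity obstruction: if $r\ge4$ then $\delta=r-1\ge3$, so Theorem \ref{prop: main qel1}(1) would force the index $i(X)=(r+\delta)/2=(2r-1)/2$, which is not an integer, a contradiction. Hence $r\le3$, and it remains to classify the cases $r=2,\ \delta=1$ and $r=3,\ \delta=2$. For $r=2$ one uses the classification of smooth conic-connected (here, secant-defective) $LQEL$ surfaces: through two general points there is a conic contained in $X$, and a Severi-type analysis in $\PP^5$ together with the description of secant-defective surfaces in $\PP^4$ yields exactly the Veronese surface, its projection to $\PP^4$, and the cubic scroll (a hyperplane section of $\PP^1\times\PP^2$). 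For $r=3,\ \delta=2$, a general hyperplane section $\widetilde X$ is, by Proposition \ref{prop: first properties LQEL}(2), a smooth $LQEL$ surface of type $1$, hence one of the surfaces just found; one then recognizes $X$ from the geometry of $\widetilde X$ (ruling out the Veronese as a section of a smooth nondegenerate threefold and matching the scroll structure) to conclude $X=\PP^1\times\PP^2$.

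The main obstacle is the low-dimensional base case: establishing the surface classification for $r=2,\ \delta=1$, which requires distinguishing the genuine $LQEL$ surfaces among all secant-defective surfaces in $\PP^4$, and, for $r=3$, reconstructing the threefold from its surface hyperplane sections. The parity argument cleanly removes all $r\ge4$, but the remaining geometric recognition is where the real work lies. Here one may alternatively exploit that, since $r-\delta=1$, the general tangential projection $\tau_{x,X}$ maps $X$ onto a \emph{curve} $W_{x,X}\subset\PP^{n-r-1}$ by Proposition \ref{prop: nondegenerate tang projection}, so that $X$ is swept out by a one-parameter family of $(r-1)$-dimensional quadrics via Theorem \ref{prop: scorza lemma}, strongly constraining its structure and pinning down the admissible varieties.
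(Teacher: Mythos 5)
Your skeleton coincides with the paper's own indication of proof: part (1) is read off from the definition of $LQEL$-variety, and part (2) hinges on Theorem \ref{prop: main qel1}, since $\delta=r-1\geq 3$ would give the non-integral index $i(X)=(r+\delta)/2=(2r-1)/2$, whence $r\leq 3$ (the paper records exactly these two steps and defers the rest to \cite{russo-qel1}). Part (1), the ``if'' verifications, and this parity reduction are correct as you state them. What remains after the reduction, however, is the actual classification in dimensions $2$ and $3$, and there your write-up has concrete defects. For $r=2$: ``the description of secant-defective surfaces in $\PP^4$'' discriminates nothing, because \emph{every} smooth nondegenerate surface in $\PP^4$ has $\delta=1$ (the quartic del Pezzo surface and the quintic elliptic scroll included), while almost none of them is $LQEL$; the input you actually need is the conic-connected/$LQEL$ classification, i.e.\ Theorem \ref{prop: classification CC-varieties} or Theorem \ref{prop: LQEL of type delta=r/2} combined with Proposition \ref{prop: first properties LQEL}, after which this case does close.

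The case $r=3$, $\delta=2$ is a genuine gap. (i) Conic-connectedness is strictly weaker than the $LQEL$ hypothesis here: the smooth complete intersection of two quadrics in $\PP^5$ is conic-connected (a Fano threefold of the first species of index $2$) and has $\delta=2$, yet for a general pencil it contains no quadric surface at all --- a quadric surface inside it would span a $\PP^3$, which is forced to lie in a rank-$\leq 4$ member of the pencil --- so it is not $LQEL$; any argument that merely ``recognizes'' $X$ among conic-connected threefolds must explicitly eliminate this competitor via the entry loci, and your sketch never does. (ii) You cannot shortcut via Theorem \ref{prop: LQEL of higher type}: within this paper, the proof of that theorem invokes precisely the present proposition for $(r,\delta)=(3,2)$, so the appeal would be circular. (iii) Your alternative route via Theorem \ref{prop: scorza lemma} does not apply: by the inequalities recalled at the beginning of \S \ref{sec: severi varieties}, a smooth threefold with $\Sec(X)\subsetneq\PP^n$ has $\delta\leq 1$, so $\delta=2$ forces $\Sec(X)=\PP^n$ (and $n=5$), whereas the Scorza Lemma requires $\Sec(X)\subsetneq\PP^n$. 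Your hyperplane-section route can be completed, but only by adding the classical non-extendability statements (the Veronese surface and its projection to $\PP^4$ occur as hyperplane sections only of cones) and the identification of the unique smooth threefold of minimal degree $3$ in $\PP^5$ as $\PP^1\times\PP^2$; these facts are true and classical, but nontrivial, absent from the paper, and cannot be left as parenthetical gestures.
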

Theorem \ref{prop: LQEL of higher type} is an application of Theorems  
\ref{prop: divisibility theorem}, \ref{prop: classification del pezzo varieties} and \ref{prop: classification first species mukai varieties}.
\begin{theorem}[\cite{russo-qel1}]\label{prop: LQEL of higher type}
 Let $X\subset\PP^n$ be a smooth 
  $r$-dimensional 
 $LQEL$-variety  of type $\delta$.
If $\frac{r}{2}<\delta< r$, then $X$ is projectively 
equivalent to one of the following:
\begin{enumerate}
 \item the Segre $3$-fold $\PP^1\times\PP^2\subset\PP^5$;
 \item a hyperplane section of the Grassmannian $\GG(1,4)\subset\PP^9$;
 \item the Grassmannian $\GG(1,4)\subset\PP^9$;
 \item a hyperplane section of the spinorial variety $S^{10}\subset\PP^{15}$;
 \item the spinorial variety $S^{10}\subset\PP^{15}$.
\end{enumerate}
\end{theorem}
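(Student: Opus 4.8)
The strategy is to convert the hypothesis into rigid numerical data via Theorem \ref{prop: main qel1}, to cut the possibilities down to finitely many with the Divisibility Theorem \ref{prop: divisibility theorem}, and then to recognize $X$ inside the classification lists of del Pezzo and Mukai varieties. First note that the range $\frac{r}{2}<\delta<r$ forces $\delta\geq 2$: indeed $\delta\leq 1$ together with $\delta>\frac{r}{2}$ would give $r\leq 1$, incompatible with $\delta<r$. If $\delta=2$, then $\frac{r}{2}<2<r$ gives $r=3$, hence $\delta=r-1$, and Proposition \ref{prop: LQEL with delta=r and delta=r-1}(2) leaves only the smooth three-dimensional option $\PP^1\times\PP^2\subset\PP^5$ (its hyperplane sections and the Veronese alternatives being surfaces), which is case (1). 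So from now on I assume $\delta\geq 3$.

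With $\delta\geq 3$, Theorem \ref{prop: main qel1}(1) applies: $X$ is a Fano variety of the first species with index $i(X)=\frac{r+\delta}{2}$. In particular $r\equiv\delta\pmod 2$, so that $k:=r-\delta$ is a positive even integer, and the coindex is
$$
c(X)=r+1-i(X)=\frac{r-\delta}{2}+1=\frac{k}{2}+1 .
$$
Moreover $\delta>\frac{r}{2}$ is equivalent to $k<\delta$, so $0<k<\delta$.

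Now I apply the Divisibility Theorem \ref{prop: divisibility theorem}, which gives $2^{\lfloor(\delta-1)/2\rfloor}\mid k$. Combined with $0<k<\delta$ this yields $2^{\lfloor(\delta-1)/2\rfloor}\leq k\leq\delta-1$; the inequality $2^{\lfloor(\delta-1)/2\rfloor}\leq\delta-1$ already fails for every $\delta\geq 7$, so $\delta\in\{3,4,5,6\}$. Running through these four values and keeping $k$ even with $2^{\lfloor(\delta-1)/2\rfloor}\leq k<\delta$ leaves exactly
$$
(r,\delta)\in\{(5,3),\,(6,4),\,(9,5),\,(10,6)\},
$$
with $k=2$ (so $c(X)=2$, a del Pezzo variety) in the first two cases and $k=4$ (so $c(X)=3$, a Mukai variety) in the last two.

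It remains to match each case against the classifications, using throughout the crude bound $\dim\Sec(X)=2r+1-\delta\leq n$. In the del Pezzo cases I would invoke Theorem \ref{prop: classification del pezzo varieties}: being of the first species (Picard number one) excludes $\PP^1\times\PP^1\times\PP^1$, $\PP^2\times\PP^2$, $\PP(\T_{\PP^2})$ and the blow-ups, and the dimension excludes $\nu_2(\PP^3)$, while the cubic hypersurface (in $\PP^{r+1}$) and the complete intersection of two quadrics (in $\PP^{r+2}$) are ruled out because $2r+1-\delta$ exceeds $r+1$ and $r+2$ respectively in both surviving cases. This leaves a linear section of $\GG(1,4)\subset\PP^9$: a hyperplane section for $(r,\delta)=(5,3)$ (case (2)) and the whole Grassmannian for $(r,\delta)=(6,4)$ (case (3)). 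In the Mukai cases I would invoke Theorem \ref{prop: classification first species mukai varieties}: the same ambient bound discards the complete-intersection (genus $<6$) Mukai varieties, and among the first-species building blocks only the spinor variety $S^{10}\subset\PP^{15}$ has dimension $\geq 9$, forcing $X=S^{10}$ for $(r,\delta)=(10,6)$ (case (5)) and $X$ a hyperplane section of $S^{10}$ for $(r,\delta)=(9,5)$ (case (4)); Proposition \ref{prop: first properties LQEL}(2) then confirms that $\delta$ drops correctly under these hyperplane sections. I expect the delicate step to be the numerical reduction of the third paragraph --- extracting from the Divisibility Theorem precisely the four admissible pairs --- together with the bookkeeping needed to verify that the quoted classification lists are genuinely exhausted in the relevant dimensions; once the four cases are isolated and first-species is in hand, the identification itself is short.
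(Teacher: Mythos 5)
Your proof is correct and takes essentially the same route as the paper's: Theorem \ref{prop: main qel1} plus the Divisibility Theorem \ref{prop: divisibility theorem} to isolate $(r,\delta)\in\{(3,2),(5,3),(6,4),(9,5),(10,6)\}$, Proposition \ref{prop: LQEL with delta=r and delta=r-1} for the pair $(3,2)$, and Theorems \ref{prop: classification del pezzo varieties} and \ref{prop: classification first species mukai varieties} for the remaining four. Your write-up simply makes explicit two steps the paper's sketch leaves implicit, namely the bound $2^{\lfloor(\delta-1)/2\rfloor}\leq\delta-1$ forcing $\delta\leq 6$, and the secant-dimension estimate $\dim(\Sec(X))=2r+1-\delta$ used to discard the hypersurface and complete-intersection entries of the classification lists.
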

\begin{proof} 
The proof is located in 
 \cite[page~609]{russo-qel1}, but we sketch it here.
 We have
$$
0< \frac{r-\delta}{2^{\lfloor (\delta-1)/2 \rfloor}}< \frac{2\delta-\delta}{2^{\lfloor (\delta-1)/2 \rfloor}}< 1,\quad \mbox{if } \delta\geq 9.
$$
Thus, by Theorem \ref{prop: divisibility theorem}, 
we obtain $\delta\leq 8$ and hence $r\leq 15$.
Moreover, by Theorems \ref{prop: divisibility theorem} and
\ref{prop: main qel1} we obtain either that
$(r,\delta)=(3,2)$  
or $X$ is a Fano variety of the first species of coindex $c$ such that
$(r,\delta,c)\in\{ (5,3,2),\ (6,4,2),\ (9,5,3),\ (10,6,3) \}$.
In the first case we conclude from Proposition \ref{prop: LQEL with delta=r and delta=r-1};
in the second case we conclude from
 Theorems \ref{prop: classification del pezzo varieties} and \ref{prop: classification first species mukai varieties}.
\end{proof}
Theorem \ref{prop: LQEL of type delta=r/2} 
is contained in \cite[Corollary~3.2]{russo-qel1}.
It also classifies the so-called Severi varieties 
 that we will treat in more detail in \S  \ref{sec: severi varieties}.
\begin{theorem}[\cite{russo-qel1}]\label{prop: LQEL of type delta=r/2}
 Let $X\subset\PP^n$ be a smooth linearly normal $r$-dimensional 
 $LQEL$-variety  of type $\delta=r/2$.
Then $X$ is projectively 
equivalent to one of the following:
\begin{enumerate}
\item the cubic scroll $\PP_{\PP^1}(\O(1)\oplus\O(2))\subset\PP^4$;
\item the Veronese surface $\nu_2(\PP^2)\subset\PP^5$;
\item the Segre $4$-fold $\PP^1\times\PP^3\subset\PP^7$;
\item a $4$-dimensional linear section of $\GG(1,4)\subset\PP^9$;
\item the Segre $4$-fold $\PP^2\times\PP^2\subset\PP^8$;
\item an $8$-dimensional linear section of $S^{10}\subset\PP^{15}$;
\item the Grassmannian $\GG(1,5)\subset\PP^{14}$;
\item the Cartan variety $E_6\subset\PP^{26}$;
\item a $16$-dimensional Fano variety $X\subset\PP^{25}$ of coindex $5$,
 with $\Sec(X)=\PP^{25}$ and such that $B_{x,X}\subset\PP^{15}$ 
is the union of a spinorial variety $S^{10}\subset\PP^{15}$ with 
 $L_p(X)\simeq \PP^7$, $p\in\PP^{15}\setminus S^{10}$.
\end{enumerate}
\end{theorem}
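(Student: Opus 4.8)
The plan is to imitate, at one level higher, the argument already displayed for Theorem \ref{prop: LQEL of higher type}: first cut the numerical possibilities down to a finite list with the Divisibility Theorem, then identify $X$ in each surviving case using the structural result Theorem \ref{prop: main qel1} and the classifications of del Pezzo and Mukai varieties, bootstrapping through the Hilbert scheme of lines $\L_{x,X}$. Concretely, I would substitute $\dim(X)=r=2\delta$ into Theorem \ref{prop: divisibility theorem}: the congruence $2\delta\equiv\delta\pmod{2^{\lfloor(\delta-1)/2\rfloor}}$ becomes $2^{\lfloor(\delta-1)/2\rfloor}\mid\delta$. For $\delta\geq 9$ one has $2^{\lfloor(\delta-1)/2\rfloor}>\delta$, so the divisibility fails, and a direct check of $1\leq\delta\leq 8$ leaves only $\delta\in\{1,2,4,8\}$, i.e. $(r,\delta)\in\{(2,1),(4,2),(8,4),(16,8)\}$. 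This step is essentially free and is what makes the whole classification finite.

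Next I would dispose of the two ``low'' cases. For $(r,\delta)=(2,1)$ we have $\delta=r-1$, so Proposition \ref{prop: LQEL with delta=r and delta=r-1}(2), combined with the linear normality hypothesis (which rules out the non-linearly-normal projection of the Veronese into $\PP^4$), gives exactly items (1) and (2). The case $(r,\delta)=(4,2)$ is the genuine base of the induction and the first place I expect real difficulty: since $\delta=2<3$, Theorem \ref{prop: main qel1} does not apply, so I cannot reduce to a Fano classification. I would instead argue through the general tangential projection $\tau_{x,X}$ and the second fundamental form, splitting according to whether $\Sec(X)=\PP^n$ or $\Sec(X)\subsetneq\PP^n$; in the filling case small bounds on $n$ should isolate the Segre $\PP^1\times\PP^3$ and a $4$-dimensional linear section of $\GG(1,4)$ (items (3), (4)), while in the non-filling case Zak's theorem on tangencies (Corollary \ref{prop: theorem on tangencies}) forces the Severi bound and hence $X=\PP^2\times\PP^2\subset\PP^8$ (item (5)). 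Pinning down these specific smooth $4$-folds is the calculation-heavy part of this case.

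Finally I would treat the two ``high'' cases $\delta\in\{4,8\}$, where $\delta\geq 3$ lets Theorem \ref{prop: main qel1} do the heavy lifting: $X$ is a Fano variety of the first species with $i(X)=(r+\delta)/2=3\delta/2$ and coindex $c(X)=r+1-i(X)=\delta/2+1$, and $\L_{x,X}$ is a smooth irreducible $QEL$-variety of dimension $i(X)-2$ and type $\delta-2$. For $\delta=4$ the coindex is $3$, so $X$ is a Mukai variety, and $\L_{x,X}$ has $(\dim,\delta)=(4,2)$, i.e. it is one of the $4$-folds already classified in the base case; intersecting this constraint with Theorem \ref{prop: classification first species mukai varieties} leaves precisely $\GG(1,5)\subset\PP^{14}$ and its companion $8$-dimensional linear section of $S^{10}$, items (6) and (7). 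For $\delta=8$ the coindex is $5$, which lies beyond the del Pezzo/Mukai range, and this is the second and harder obstacle, since no ready-made list applies. Here $\L_{x,X}$ has $(\dim,\delta)=(10,6)$, so it satisfies $r/2<\delta<r$ and Theorem \ref{prop: LQEL of higher type} forces $\L_{x,X}\cong S^{10}\subset\PP^{15}$, the spinorial variety. The remaining work is to exploit this rigidity of the family of lines, transferred to the base locus of the second fundamental form through Proposition \ref{prop: relation between hilbert scheme and second funfamental form}, to reconstruct $X$: one obtains the Cartan variety $E_6\subset\PP^{26}$ when $\Sec(X)\subsetneq\PP^n$ and the exotic coindex-$5$ Fano of item (9), with $B_{x,X}$ the union of $S^{10}$ and an $L_p(X)\simeq\PP^7$, when $\Sec(X)=\PP^{25}$. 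I expect the single hardest point to be controlling this coindex-$5$ case by hand, since it falls outside the standard Fano classifications and must be handled directly via the spinorial $\L_{x,X}$ and the second fundamental form.
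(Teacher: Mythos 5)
Your proposal is correct in outline, and it reconstructs essentially the proof of the source this theorem is quoted from: the paper itself gives no proof of Theorem \ref{prop: LQEL of type delta=r/2} (it is cited as \cite[Corollary~3.2]{russo-qel1}), and its sketch of the companion Theorem \ref{prop: LQEL of higher type} uses exactly your skeleton --- the Divisibility Theorem to reduce to finitely many pairs $(r,\delta)$, then Theorem \ref{prop: main qel1} together with the del Pezzo/Mukai classifications, with $\L_{x,X}$ as the inductive ingredient. Your numerology is right ($r=2\delta$ turns the congruence into $2^{\lfloor(\delta-1)/2\rfloor}\mid\delta$, hence $\delta\in\{1,2,4,8\}$), the case $\delta=1$ is indeed Proposition \ref{prop: LQEL with delta=r and delta=r-1}(2) plus linear normality, the case $\delta=4$ (coindex $3$, Mukai, with the complete-intersection possibilities excluded because their secant defects are $6,7,8$) gives items (6)--(7), and for $\delta=8$ the identification $\L_{x,X}\cong S^{10}$ via Theorem \ref{prop: LQEL of higher type} (which needs smoothness but not linear normality of $\L_{x,X}$) is the correct key step. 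The two places you leave soft can both be closed with results already quoted in the paper, and more cleanly than you suggest. For $(r,\delta)=(4,2)$ you do not need an ad hoc tangential-projection argument: since $\delta\geq1$, $X$ is conic-connected, so Theorem \ref{prop: classification CC-varieties} applies directly; in the filling case $n=2r+1-\delta=7$ is forced (not merely "small"), the explicit list then gives $\PP^1\times\PP^3\subset\PP^7$, and the Fano branch (index $\geq(r+1)/2$, hence index $3$, i.e.\ a del Pezzo fourfold, necessarily of degree $5$, Theorem \ref{prop: classification del pezzo varieties}) gives the $4$-dimensional section of $\GG(1,4)$; in the non-filling case inequality (\ref{eq: inequality zak}) gives $n\geq8$, linear normality pins $n=8$ (an isomorphic projection to $\PP^8$ would be $\PP^2\times\PP^2$, whose hyperplane bundle has only $9$ sections), so $X$ is a Severi variety and Theorem \ref{prop: classification severi varieties} yields item (5). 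For $\delta=8$ with $\Sec(X)=\PP^{25}$, the description of $B_{x,X}$ in item (9) is where the content lies: $|II_{x,X}|$ is a subsystem of dimension at most $n-r-1=8$ of the $\PP^9$ of quadrics through $S^{10}=\L_{x,X}\subset\PP^{15}$, and by Example \ref{example: d=4 Delta=2} those quadrics map $\PP^{15}$ onto an $8$-dimensional quadric with general fibers $\PP^7$; the base locus of a codimension-one subsystem is therefore $S^{10}$ together with the closure of one such fiber, which is exactly the asserted $S^{10}\cup\PP^7$. So your plan is sound; turning it into a proof amounts to replacing your two heuristic steps by these citations.
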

\subsection{\texorpdfstring{$CC$}{CC}-varieties}
From Proposition \ref{prop: first properties LQEL},
the classification of the smooth $CC$-varieties is reduced to the case of
linearly normal varieties. 
In this case, Theorem \ref{prop: classification CC-varieties} 
reduces the classification to the study of Fano 
varieties having large index and Picard group $\ZZ$. Note that, conversely, 
such Fano varieties are $CC$-varieties (see \cite{ionescu-russo-qel2}).
\begin{theorem}[\cite{ionescu-russo-conicconnected}]\label{prop: classification CC-varieties} Let $X\subset\PP^n$ be 
a  smooth linearly normal $CC$-variety 
of dimension $r$. Then either $X\subset\PP^n$
is a Fano variety of the first species of index  
$i(X)\geq\frac{r+1}{2}$, or $X$ is projectively equivalent to one
of the following:
\begin{enumerate}
 \item The Veronese embedding $\nu_2(\PP^r)\subset\PP^{\frac{r(r+3)}{2}}$.
 \item The projection of $\nu_2(\PP^r)$ from the linear space 
 $\langle \nu_2(\PP^s)\rangle$,
where $\PP^s\subset\PP^r$ is a linear subspace; 
equivalently $X\simeq\Bl_{\PP^s}(\PP^r)$
embedded in $\PP^n$ 
by the linear system of quadric hypersurfaces of
 $\PP^r$ passing through $\PP^s$;
alternatively $X\simeq\PP_{\PP^t}(\E)$ with 
$\E\simeq\O_{\PP^t}(1)^{\oplus r-t}\oplus\O_{\PP^t}(2)$, $t=1,2,\ldots,r-1$,
embedded by $|\O_{\PP(\E)}(1)|$. Here 
$n=\frac{r(r+3)}{2}  -\binom{s+2}{2}
$ and $s$ is an integer such that $0\leq s \leq r-2$.
\item A hyperplane section of the Segre embedding
$\PP^a\times\PP^b\subset\PP^{n+1}$. Here 
$r\geq 3$ and $n=ab+a+b-1$, where $a\geq 2$ and $b\geq 2$ are 
such that $a+b=r+1$.
\item The Segre embedding $\PP^a\times\PP^b\subset\PP^{ab+a+b}$, 
where $a,b$ are positive integers such that $a+b=r$.
\end{enumerate}
\end{theorem}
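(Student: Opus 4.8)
The plan is to study $X$ through its covering family of conics and to use the general tangential projection to split a short list of degenerate cases off from the generic one. By Proposition~\ref{prop: first properties LQEL}(1) both conic-connectedness and the whole classification are preserved under isomorphic projection, so I assume $X$ linearly normal as in the statement. Fix general points $x,y\in X$ and an irreducible conic $C=C_{x,y}\subset X$ through them; letting $x,y$ vary produces an irreducible covering family $\mathcal{C}$ of conics. The key elementary observation is that each conic $C'\in\mathcal{C}$ through $x$ is smooth at $x$ with tangent line inside $T_x(X)$, hence meets $T_x(X)$ with multiplicity $2$ at $x$ and is therefore contracted by the tangential projection $\tau_{x,X}\colon X\dashrightarrow W_{x,X}\subset\PP^{n-r-1}$ of Proposition~\ref{prop: nondegenerate tang projection}. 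Thus the positive-dimensional fibres of $\tau_{x,X}$ carry the conics of $\mathcal{C}$ through $x$, and conic-connectedness forces these fibres to dominate $X$: this is the structural engine of the proof.

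Next I would show that $X$ is a Fano manifold with $\rho(X)=\mathrm{rank}\,\Pic(X)\le 2$. The covering conic family makes $X$ rationally connected and, via bend-and-break, forces $-K_X$ to be positive on these curves, so $X$ is Fano; the inequality $\rho(X)\le 2$ is the crucial structural step, reflecting that a manifold joined by conics through two general points admits at most two independent extremal contractions. A standard dimension count for the conics of $\mathcal{C}$ through two general points of the $r$-fold $X$ gives $-K_X\cdot C\ge r+1$. When $X$ is of the \emph{first species}, $\Pic(X)=\ZZ\langle\O_X(1)\rangle$, one has $-K_X\sim i(X)\,\O_X(1)$ and $\O_X(1)\cdot C=2$, whence $2\,i(X)\ge r+1$, i.e. $i(X)\ge\frac{r+1}{2}$; this is the first alternative of the statement.

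It remains to classify the manifolds that are not of the first species, according to $\rho(X)$. If $\rho(X)=1$ with $\O_X(1)$ not a generator of $\Pic(X)$, then $\O_X(1)=2A$ for the ample generator $A$ (higher multiples being excluded by the conic count); then $X$ contains no line, the morphism defined by $A$ sends each conic of $\mathcal{C}$ to a line, and conic-connectedness of the image forces the $A$-model to be $\PP^r$. Hence $X=\nu_2(\PP^r)$ or an isomorphic projection of it from $\langle\nu_2(\PP^s)\rangle$, i.e. cases (1) and (2), the range of $s$ and the value of $n$ being read off from the linear system of quadrics of $\PP^r$ through $\PP^s$ (again via Proposition~\ref{prop: first properties LQEL}(1)). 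If $\rho(X)=2$, I would analyse the two extremal contractions: the conic family together with the family of lines through a general point --- whose Hilbert scheme $\L_{x,X}$ equals the base locus $B_{x,X}$ of the second fundamental form by Proposition~\ref{prop: relation between hilbert scheme and second funfamental form}, since $X$ is cut out by quadrics --- pin the two rays down to projective-bundle and scroll structures, and the bidegree $2$ of the connecting conics forces them to assemble into a Segre structure. One then identifies $X$ with $\PP^a\times\PP^b$ (case (4)) or with a hyperplane section of such a Segre embedding (case (3)), matching $r,a,b,n$ by adjunction.

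The hard part is the $\rho(X)=2$ analysis: recovering the precise product or hyperplane-section structure from the abstract two-ray data and proving that no other Fano manifold of Picard number $2$ can occur. This requires controlling both extremal contractions simultaneously through the identification $\L_{x,X}\cong B_{x,X}$ and the numerical constraints on quadric entry loci coming from Theorems~\ref{prop: main qel1} and~\ref{prop: divisibility theorem}, followed by a careful bookkeeping of $(a,b,s,n)$ to verify the list is exhaustive. By comparison the divisibility step, $\O_X(1)=2A\Rightarrow X\cong\PP^r$, is clean, though it still needs higher divisibility and non-$\PP^r$ $A$-models to be ruled out by the same conic dimension count.
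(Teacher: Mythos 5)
The first thing to note is that the thesis itself contains no proof of this statement: it is quoted from \cite{ionescu-russo-conicconnected}, so your attempt can only be measured against the original Ionescu--Russo argument, whose broad strategy (covering family of conics, deformation-theoretic degree bound, Fano-ness with $\rho(X)\le 2$, then a two-ray analysis) you have correctly identified. The easy parts of your sketch are sound: the bound $-K_X\cdot C\geq r+1$ for a connecting conic is standard, and it yields $i(X)\geq\frac{r+1}{2}$ in the first-species case; likewise, when $\rho(X)=1$ and $\O_X(1)=2A$, the conic count gives index $\geq r+1$ with respect to $A$, and Kobayashi--Ochiai forces $X\simeq\PP^r$, hence $X=\nu_2(\PP^r)$ by linear normality.

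However, the two statements that carry essentially all the content of the theorem are asserted rather than proved. First, ``$X$ is Fano with $\rho(X)\le 2$'' is supported only by the phrase about ``at most two independent extremal contractions'', which is a restatement of the claim, not an argument; the actual proof rests on the fact that $N_1(X)$ is generated by the numerical classes of components of degenerate members of the connecting family, and that a conic can degenerate only into (at most) two lines --- nothing in your sketch substitutes for this. Second, the entire $\rho(X)=2$ branch, which must produce cases (2), (3) and (4), is explicitly deferred, and the tools you propose for it are not available under the hypotheses: Proposition \ref{prop: relation between hilbert scheme and second funfamental form} requires $X$ to be cut out by quadrics, which is not assumed here, and Theorems \ref{prop: main qel1} and \ref{prop: divisibility theorem} require $X$ to be an $LQEL$-variety, whereas $CC$ is strictly weaker (the paper stresses that the implication $LQEL\Rightarrow CC$ cannot be reversed). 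There is also a misplacement in your case division: the projection of $\nu_2(\PP^r)$ from $\langle\nu_2(\PP^s)\rangle$ is an \emph{internal} projection, not an isomorphic one (the center meets the variety), and the resulting variety is $\Bl_{\PP^s}(\PP^r)$, which has Picard number $2$; so case (2) belongs to the $\rho(X)=2$ branch, while your $\rho(X)=1$ analysis can only produce $\nu_2(\PP^r)$ and your $\rho(X)=2$ discussion mentions only the Segre cases. As written, case (2) would be lost entirely, and the classification would be incomplete even granting the two unproved structural claims.
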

From Theorem \ref{prop: classification CC-varieties} it follows that 
the smooth  $CC$-varieties are Fano varieties 
and have second Betti number  $b_2\leq 2$; moreover 
those with $b_2=2$ 
are $LQEL$-varieties (see \cite[Corollary~2.3]{ionescu-russo-conicconnected}).
\section{Severi varieties}\label{sec: severi varieties}
Let $X\subset\PP^n$ be as in Assumption \ref{notation: X irred nondegen} and further suppose 
that   $\Sec(X)\subsetneq\PP^n$.
From \cite[Theorem~3.1.6]{russo-specialvarieties} (see also 
\cite[\Rmnum{2} Corollary~2.11]{zak-tangent}) it follows that
\begin{equation}
 n-1\geq \dim(\Sec(X))\geq \frac{3}{2}r+\frac{1-\dim(\sing(X))}{2}+\frac{\widetilde{\gamma}(X)}{2}  \geq \frac{3}{2}r+\frac{1-\dim(\sing(X))}{2}  ;
\end{equation}
in particular, if $X$ is also smooth, then
\begin{equation}\label{eq: inequality zak}
n \geq \frac{3}{2}r+2.
\end{equation}
\begin{definition}
A smooth variety $X$ as above  is called a \emph{Severi variety} 
if the inequality (\ref{eq: inequality zak}) 
is actually an equality. 
\end{definition}
Proposition \ref{prop: properties severi varieties} is contained in
\cite[\Rmnum{4} Theorem~2.4]{zak-tangent}.  
\begin{proposition}[\cite{zak-tangent}]\label{prop: properties severi varieties}
Let $X\subset \PP^n$ be a  Severi variety.
\begin{enumerate}
 \item $\Sec(X)$ is a normal hypersurface and $\mathrm{sing}(\Sec(X))=X$. 
 \item For each $p\in \Sec(X)\setminus X$, $\Sigma_p(X)$ 
is a smooth quadric hypersurface of dimension $r/2$.
 \item For each $p\in \Sec(X)\setminus X$, the linear projection  
$\pi_{L}:X\dashrightarrow\PP^r$
from $L=\langle \Sigma_p(X) \rangle$ is birational and it is 
an isomorphism outside  $T_p(\Sec(X))\cap X$.
\end{enumerate}
\end{proposition}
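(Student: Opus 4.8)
The plan is to derive the entire statement from the fact that a Severi variety realizes the equality in (\ref{eq: inequality zak}), feeding the resulting numerology into the Scorza Lemma (Theorem \ref{prop: scorza lemma}) and Zak's theorem on tangencies (Corollary \ref{prop: theorem on tangencies}). First I would extract the invariants. Since $X$ is smooth, $\dim(\sing(X))=-1$, and the sharper bound recalled above reads $n-1\ge\dim(\Sec(X))\ge\frac{3}{2}r+1+\frac{\widetilde{\gamma}(X)}{2}\ge\frac{3}{2}r+1$, whose two ends coincide precisely when $n=\frac{3}{2}r+2$. Hence for a Severi variety every inequality is an equality, forcing at once: $\dim(\Sec(X))=n-1$, so $\Sec(X)$ is a hypersurface (half of item (1)); $\widetilde{\gamma}(X)=0$; and, since $\dim(\Sec(X))=2r+1-\delta(X)$, the value $\delta(X)=r/2$. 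Because $\delta(X)=r/2\ge1$, $\widetilde{\gamma}(X)=0$ and $\Sec(X)\subsetneq\PP^n$, the Scorza Lemma now applies and shows that for a general $p\in\Sec(X)$ the entry locus $\Sigma_p(X)$ is an irreducible quadric of dimension $r/2$, smooth because $X$ is smooth; in particular $X$ is a $QEL$-variety of type $r/2$.

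The crux is to promote this generic description to \emph{every} $p\in\Sec(X)\setminus X$, i.e. to show that the secant map $\pi_{\PP^n}\colon S_{X,X}\to\Sec(X)$ is, away from $X$, a fibration in smooth $(r/2)$-dimensional quadrics. For the upper bound on $\dim\Sigma_p(X)$ I would use that, at a point where $\Sec(X)$ is smooth, Terracini's Lemma gives $\Sigma_p(X)\subseteq\Gamma_p(X)$, so the tangent hyperplane $T_p(\Sec(X))$ is $J$-tangent to $X$ along $\Sigma_p(X)$; Corollary \ref{prop: theorem on tangencies} then yields $\dim\Sigma_p(X)\le (n-1)-r=r/2$. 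The reverse bound $\dim\Sigma_p(X)\ge r/2$ is upper semicontinuity of fibre dimension for $\pi_{\PP^n}$. Thus $\dim\Sigma_p(X)=r/2$ wherever $\Sec(X)$ is smooth, and a separate argument excludes the degenerate quadrics (reducible, or singular), since a singular entry quadric would produce either a singular point of $X$ or, through the second fundamental form, a drop of $\dim(\Sec(X))$. This simultaneously gives item (2) for every $p$ and the statement that $\Sec(X)$ is smooth at every point off $X$.

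It remains, for item (1), to prove $X\subseteq\sing(\Sec(X))$ and normality. The inclusion is a general feature of defective secant varieties: a general point of $\Sec(X)$ sits on a secant whose two contact points both lie on $X$, so the entry locus through a point $x\in X$ is positive-dimensional and the projectivized tangent cone of $\Sec(X)$ at $x$ contains the non-linear secant cone of $X$ at $x$; hence $\Sec(X)$ has multiplicity $\ge 2$ along $X$ and cannot be smooth there. Combined with the previous paragraph this gives $\sing(\Sec(X))=X$. Normality is then immediate from Serre's criterion: a hypersurface is Cohen--Macaulay, hence $S_2$, while $\sing(\Sec(X))=X$ has codimension $(n-1)-r=r/2+1\ge 2$ in $\Sec(X)$, so $R_1$ holds as well.

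Finally, for item (3) I note that $\Sigma_p(X)$ spans $L=\langle\Sigma_p(X)\rangle\simeq\PP^{r/2+1}$, whence $\pi_L$ maps $X$ into $\PP^{\,n-(r/2+1)-1}=\PP^{r}$; as the image is nondegenerate of dimension at most $r=\dim(X)$, proving generic injectivity already forces the image to be all of $\PP^r$ and $\pi_L$ to be birational. Two points of $X$ are identified by $\pi_L$ exactly when their secant line meets $L=\Sec(\Sigma_p(X))$, and the $J$-tangency of $T_p(\Sec(X))$ along the entry locus confines such pairs to $T_p(\Sec(X))\cap X$; outside this set $\pi_L$ is injective with injective differential, hence an isomorphism, which is the asserted refinement. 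The main obstacle is the general-to-special passage of the second paragraph: the Scorza Lemma and Terracini's Lemma deliver only the generic picture, and the mild interdependence between ``$\Sec(X)$ is smooth off $X$'' and ``the entry quadric stays smooth of dimension $r/2$'' must be untangled by a finer, pointwise analysis of the tangencies and of the second fundamental form --- this is the technical heart of Zak's classification.
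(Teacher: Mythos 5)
The paper itself contains no proof of this proposition: it is stated as a quotation of \cite[\Rmnum{4} Theorem~2.4]{zak-tangent}, so your attempt can only be measured against Zak's argument. Your opening reduction is correct and is the standard one: equality in (\ref{eq: inequality zak}) forces equality throughout the preceding chain of inequalities, hence $\dim(\Sec(X))=n-1$, $\widetilde{\gamma}(X)=0$ and $\delta(X)=r/2$, and Theorem \ref{prop: scorza lemma} then describes the entry locus of a \emph{general} point of $\Sec(X)$ as a smooth quadric of dimension $r/2$. The argument that $X\subseteq\sing(\Sec(X))$ (the cone $S(x,X)\subseteq\Sec(X)$ is a nonlinear cone with vertex $x$, so the tangent cone of the hypersurface $\Sec(X)$ at $x$ is not a hyperplane) and the deduction of normality from Serre's criterion, once $\sing(\Sec(X))=X$ is known, are also sound.

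The genuine gap is the passage from ``general $p$'' to ``each $p\in\Sec(X)\setminus X$'', which is not a technicality to be untangled at the end but is the entire content of the theorem, and your treatment of it is circular. The upper bound $\dim\Sigma_p(X)\le\dim(L)-r=r/2$ obtained from Corollary \ref{prop: theorem on tangencies} requires $L=T_p(\Sec(X))$ to be a hyperplane, i.e.\ it requires $p$ to be a \emph{smooth} point of $\Sec(X)$; consequently this argument cannot ``simultaneously give'' the statement that $\Sec(X)$ is smooth at every point off $X$ --- that statement (item (1)) is an input to your reasoning, never an output, and no independent proof of it is offered. Likewise, at a smooth point $p$ the equality $\dim\Sigma_p(X)=r/2$ does not by itself make $\Sigma_p(X)$ a quadric, let alone a smooth irreducible one: limits of smooth quadrics can be reducible, nonreduced or singular, and your one-clause dismissal (``a separate argument excludes the degenerate quadrics'') is exactly where Zak's delicate analysis of tangencies and of the second fundamental form lives. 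The same applies to item (3): that pairs of points identified by $\pi_L$ are confined to $T_p(\Sec(X))\cap X$, and that $\pi_L$ is an isomorphism (not merely injective) off that set, is asserted by a vague appeal to $J$-tangency with no argument. What you have actually established is the numerology plus the generic picture already contained in Theorem \ref{prop: scorza lemma}; the three pointwise statements that constitute the proposition are deferred back to \cite{zak-tangent}, as your final sentence concedes.
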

\subsection{Classification}
Severi varieties are  completely classified by
Theorem \ref{prop: classification severi varieties};
for its proof see
\cite{lazarsfeld-vandeven} or \cite[\Rmnum{4} Theorem~4.7]{zak-tangent} 
(see also \cite[\S 3.3]{russo-specialvarieties}).
\begin{theorem}\label{prop: classification severi varieties} 
Each Severi variety $X$ is
projectively equivalent to one of the following four varieties:
\begin{enumerate}
 \item the Veronese surface $\nu_2(\PP^2)\subset\PP^5$;
 \item the Segre variety $\PP^2\times\PP^2\subset\PP^8$;
 \item the Grassmannian $\GG(1,5)\subset\PP^{14}$;
 \item the Cartan variety $E_6\subset\PP^{26}$.
\end{enumerate}
All these varieties are homogeneous, rational and are defined by 
quadratic equations.
\end{theorem}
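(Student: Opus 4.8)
The plan is to identify a Severi variety as a smooth, linearly normal $LQEL$-variety of type $\delta=r/2$ and then to extract the classification from Theorem~\ref{prop: LQEL of type delta=r/2}, discarding those entries of its list that violate the numerical equality defining a Severi variety.

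First I would record the relevant invariants. By Proposition~\ref{prop: properties severi varieties}, $\Sec(X)$ is a hypersurface and, for a general $p\in\Sec(X)\setminus X$, the entry locus $\Sigma_p(X)$ is a \emph{smooth} quadric of dimension $r/2$. Since $\dim(\Sec(X))=n-1=\frac{3}{2}r+1$, the secant defect is $\delta(X)=2r+1-\dim(\Sec(X))=r/2$; in particular $r$ is even and $\delta(X)=r/2\geq1$. As the general entry locus is an irreducible quadric, $X$ is a smooth $QEL$-variety, hence an $LQEL$-variety, of type $\delta=r/2$.

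The only hypothesis of Theorem~\ref{prop: LQEL of type delta=r/2} that is not immediate is linear normality, and verifying it is the one step requiring a real argument (everything else being a numerical selection from an already-established list). Suppose $X\subset\PP^n$ were not linearly normal; then it is the isomorphic projection of its linearly normal model $X'\subset\PP^{n'}$, where $X'\simeq X$ is embedded by the complete system $|\O_X(1)|$ and $n'>n$. By Proposition~\ref{prop: first properties LQEL}(1), $X'$ is again a smooth $LQEL$-variety with $\delta(X')=\delta(X)=r/2$, so Theorem~\ref{prop: LQEL of type delta=r/2} applies to $X'$. A glance at its nine items shows that each is embedded (linearly normally) in a projective space of dimension at most $\frac{3}{2}r+2$; hence $n'\leq\frac{3}{2}r+2=n$, contradicting $n'>n$. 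Therefore $X$ is linearly normal.

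Now Theorem~\ref{prop: LQEL of type delta=r/2} applies to $X$ itself and yields nine possibilities, from which I would retain only those meeting the defining equality $n=\frac{3}{2}r+2$. The cubic scroll in $\PP^4$, the Segre $\PP^1\times\PP^3\subset\PP^7$, the $4$-dimensional linear section of $\GG(1,4)$ in $\PP^7$, the $8$-dimensional linear section of $S^{10}$ in $\PP^{13}$, and the $16$-dimensional coindex-$5$ Fano variety in $\PP^{25}$ (which in addition has $\Sec(X)=\PP^{25}$) all have $n<\frac{3}{2}r+2$ and are discarded. The four survivors $\nu_2(\PP^2)\subset\PP^5$, $\PP^2\times\PP^2\subset\PP^8$, $\GG(1,5)\subset\PP^{14}$ and $E_6\subset\PP^{26}$ satisfy $n=\frac{3}{2}r+2$ and are precisely the asserted Severi varieties; their homogeneity, rationality, and definition by quadrics can then be checked directly, case by case. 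Thus the genuinely hard content is not in this deduction but is packaged inside Theorem~\ref{prop: LQEL of type delta=r/2} and, through it, in the Divisibility Theorem~\ref{prop: divisibility theorem} together with the classifications of del Pezzo and Mukai varieties.
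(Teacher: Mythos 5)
Your proof is correct, but it takes a genuinely different route from the one the paper endorses: the paper never proves Theorem \ref{prop: classification severi varieties} internally, referring instead to the original arguments of Zak and Lazarsfeld--Van de Ven, which proceed by a direct study of tangencies and entry loci. What you do is make precise the remark the paper places just before Theorem \ref{prop: LQEL of type delta=r/2}, namely that the classification of smooth linearly normal $LQEL$-varieties of type $\delta=r/2$ \emph{also} classifies Severi varieties. Your reduction is sound: Proposition \ref{prop: properties severi varieties} gives that the general entry locus is a quadric of dimension $r/2$, and $\dim(\Sec(X))=n-1=\frac{3}{2}r+1$ gives $\delta(X)=r/2$, so $X$ is a smooth $QEL$-variety of type $r/2$; the defining equality $n=\frac{3}{2}r+2$ then filters the nine items of Theorem \ref{prop: LQEL of type delta=r/2} down to exactly the four Severi varieties. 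The one step needing real care, linear normality, you handle with a neat self-contained argument: the linearly normal model $X'\subset\PP^{n'}$ is again a smooth $LQEL$-variety of type $r/2$ by Proposition \ref{prop: first properties LQEL}, and since every entry of the list of Theorem \ref{prop: LQEL of type delta=r/2} spans a projective space of dimension at most $\frac{3}{2}r+2=n$, one gets $n'\leq n$, contradicting $n'>n$. This is non-circular within the paper's logical architecture, because Theorem \ref{prop: LQEL of type delta=r/2} is imported from Russo's work, whose proof rests on the Divisibility Theorem \ref{prop: divisibility theorem} and the del Pezzo/Mukai classifications rather than on Zak's theorem. As for what each approach buys: the cited route is self-contained and produces the structural facts of Proposition \ref{prop: properties severi varieties} along the way, whereas your route is much shorter, transfers all the difficulty into Theorem \ref{prop: LQEL of type delta=r/2}, and recovers linear normality of Severi varieties as a by-product (in Zak's treatment this is a separate theorem). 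The closing assertions (homogeneity, rationality, definition by quadrics) do indeed require a case-by-case check, which the paper carries out explicitly in \S \ref{sec: equations severi varieties}.
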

There exists another interesting
description of Severi varieties, given in 
Theorem \ref{prop: classification quadro-quadric special cremona}.
First recall that a quadro-quadric Cremona transformations 
is a  birational transformations of $\PP^n$ defined by quadratic forms
and having inverse of the same kind; we also say that a Cremona transformation 
is special if its base locus is smooth and connected. 
Then we have the following:
\begin{theorem}[\cite{ein-shepherdbarron}, see also \cite{pirio-russo}]\label{prop: classification quadro-quadric special cremona}
Let $\varphi:\PP^n\dashrightarrow\PP^n$ be a special Cremona transformation
and let $\B$ be its base locus.
$\varphi$ is a quadro-quadric Cremona transformation if and only if 
 $\B$ is a Severi variety.
Moreover, in this case
 $\varphi$ is an involution (i.e. $\varphi^{-1}=\varphi$).
\end{theorem}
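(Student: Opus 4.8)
The plan is to prove the two implications separately and then the involution claim, leaning on the classification of Severi varieties and their basic geometry.

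For the backward direction, suppose the base locus $\B$ is a Severi variety. By the classification (Theorem \ref{prop: classification severi varieties}) there are only four cases, and in each the variety is defined by quadratic equations; moreover, by Proposition \ref{prop: properties severi varieties}, the secant variety $\Sec(\B)$ is a normal hypersurface with $\sing(\Sec(\B))=\B$, and one checks it is cut out by a single cubic form $J$. I would exhibit $\varphi$ as the polar (gradient) map $p\mapsto(\partial_0 J(p):\cdots:\partial_n J(p))$ of this cubic: it is given by $n+1$ quadrics vanishing exactly along $\sing(\Sec(\B))=\B$, so its base locus is $\B$. In each of the four cases $J$ is the norm cubic of a rank-three simple Jordan algebra (symmetric, generic, and skew-symmetric $3\times 3$ matrices, and the Albert algebra), $\varphi$ is the associated adjugate map, and the identity $\mathrm{adj}(\mathrm{adj}(M))=J(M)\,M$ shows simultaneously that $\varphi$ is birational with quadratic inverse --- hence quadro-quadric --- and that $\varphi^{-1}=\varphi$, i.e. that $\varphi$ is an involution. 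Part (3) of Proposition \ref{prop: properties severi varieties}, asserting that the projection from $\langle\Sigma_p(\B)\rangle$ is birational, can alternatively be used to verify birationality uniformly.

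For the forward direction, assume $\varphi$ is quadro-quadric with smooth connected base locus $\B=\mathrm{Bs}(\varphi)$, which is then scheme-theoretically cut out by the $n+1$ defining quadrics $F_0,\dots,F_n$; I may assume $\B$ nondegenerate, since the degenerate case produces only the stereographic projection, which is of type $(2,1)$. The first key step is that $\varphi$ contracts every secant line of $\B$: if $\ell$ meets $\B$ in a length-two subscheme, then each $F_i$ restricts to a quadratic form on $\ell\simeq\PP^1$ vanishing along that subscheme, so all the restrictions are proportional and $\varphi(\ell\setminus\B)$ is a single point. Consequently $\Sec(\B)$ lies in the locus contracted by $\varphi$. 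Writing $\varphi^{-1}$ with quadratic coordinates $G_0,\dots,G_n$ and using $\varphi^{-1}\circ\varphi=\mathrm{id}$, the quartics $G_j(F_0,\dots,F_n)$ are each proportional to $x_j$ times a common cubic $J$; thus the contracted locus is the cubic hypersurface $\{J=0\}\supseteq\Sec(\B)$, and in particular $\Sec(\B)\subsetneq\PP^n$. Since $\B$ is smooth, Zak's inequality (\ref{eq: inequality zak}) then yields $n\geq\frac{3}{2}r+2$, and it remains to prove the reverse inequality.

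To force equality I would analyze the symmetric resolution. Blowing up $\B$ gives $\sigma:Y=\Bl_{\B}(\PP^n)\to\PP^n$ with $H=\sigma^{\ast}\O_{\PP^n}(1)$ and exceptional divisor $E$, and $\varphi\circ\sigma$ is defined by $|2H-E|$; since the inverse is also quadratic, $\varphi\circ\sigma$ is a morphism $\psi:Y\to\PP^n$ realizing $Y$ as the blow-up of $\PP^n$ along the smooth base locus $\B'$ of $\varphi^{-1}$, with exceptional divisor $E'$. In $\Pic(Y)$ the relations $H'=2H-E$ and $H=2H'-E'$ give $E'=3H-2E$, which recovers that $\sigma(E')=\Sec(\B)$ is the cubic $\{J=0\}$ and identifies the fibres of the contraction $\Sec(\B)\dashrightarrow\B'$ with the entry-locus cones $L_p(\B)$. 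From this symmetric picture I would show that the general entry locus $\Sigma_p(\B)$ is a smooth quadric, so that $\B$ is a $QEL$-variety, and that the secant defect equals $\delta(\B)=r/2$; equivalently $\dim\Sec(\B)=\frac{3}{2}r+1$, i.e. $n=\frac{3}{2}r+2$, which makes $\B$ a Severi variety by definition.

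The hard part is precisely this last step: upgrading the inequality $n\geq\frac{3}{2}r+2$ to an equality. Everything preceding it is essentially formal, but pinning down the secant defect requires genuinely using that \emph{both} $\varphi$ and $\varphi^{-1}$ are quadratic --- through the Picard relation $E'=3H-2E$ and the resulting constraint that $3H-2E$ be the exceptional divisor of a smooth blow-up. Translating this into the equality $\delta(\B)=r/2$ (for instance by showing that the linear projection from $\langle\Sigma_p(\B)\rangle$ must be birational onto $\PP^r$, forcing $\dim\langle\Sigma_p(\B)\rangle=r/2+1$ and hence $\delta(\B)=r/2$) is the crux, and is where the Ein--Shepherd-Barron argument does its real work.
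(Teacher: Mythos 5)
The thesis itself gives no proof of this theorem, it is quoted from \cite{ein-shepherdbarron} (see also \cite{pirio-russo}), so your proposal has to stand on its own. Your backward direction is essentially sound: by Theorem \ref{prop: classification severi varieties} there are only four Severi varieties, each cut out by quadrics with $\Sec(\B)$ a cubic hypersurface whose singular locus is $\B$, and identifying $\varphi$ with the gradient (adjugate) map of that cubic, the Jordan-algebra identity $\mathrm{adj}(\mathrm{adj}(M))=J(M)\,M$ does yield a quadro-quadric involution; to get that this map \emph{is} the special Cremona transformation attached to $\B$ you should also note that the $n+1$ partials of $J$ are independent and that $h^0(\PP^n,\I_{\B}(2))=n+1$ in all four cases, so the gradient quadrics span the whole system $|\I_{\B}(2)|$ and the scheme-theoretic base locus is $\B$. (Small slip: for $\GG(1,5)\subset\PP^{14}$ the relevant algebra is that of $3\times3$ Hermitian quaternionic matrices, whose norm is the Pfaffian of a $6\times6$ skew-symmetric matrix, not ``$3\times3$ skew-symmetric'' ones.) This is exactly the Pirio--Russo point of view.

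The forward direction, however, contains a genuine gap, and you flag it yourself: Zak's inequality (\ref{eq: inequality zak}) only gives $n\geq\frac{3}{2}r+2$, and the reverse inequality --- which \emph{is} the theorem in this direction --- is deferred to a sketch (``I would show that $\delta(\B)=r/2$''). Worse, the sketch already assumes what is not available: you take the base locus $\B'$ of $\varphi^{-1}$ to be smooth and $\Bl_{\B}(\PP^n)$ to be the blow-up of $\PP^n$ along $\B'$, but $\varphi^{-1}$ is not assumed special, so neither fact is known a priori; establishing enough of this structure (irreducibility of the contracted divisor $E'$, and that $\pi'$ is \emph{generically} the blow-up along $(\B')_{\mathrm{red}}$, via Theorem~1.1 and Proposition~1.3 of \cite{ein-shepherdbarron}) is precisely the nonformal core of the argument. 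Once that structure is secured, Zak's inequality is not needed at all: one compares the canonical class of the resolution in the two bases of its Picard group, exactly as this thesis does in Proposition \ref{prop: dimension formula} for the hypersurface case. Concretely, with $H'=2H-E$ and $E'=3H-2E$ one has
$$
K_X\sim -(n+1)H+(n-r-1)E \sim -(n+1)H'+(n-r'-1)E',
$$
and equating coefficients of $H$ and $E$ gives $r=r'=(2n-4)/3$, i.e. $n=\frac{3}{2}r+2$ directly. Combined with your (correct) observation that $\Sec(\B)$ lies in the contracted cubic, hence $\Sec(\B)\subsetneq\PP^n$, and with nondegeneracy of $\B$, this makes $\B$ a Severi variety by definition and closes the gap your proposal leaves open.
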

\subsection{\texorpdfstring{$R_1$}{R1}-varieties}
Let $X\subset\PP^n$ be as in Assumption \ref{notation: X irred nondegen} and further suppose that 
 $\Sec(X)\subsetneq\PP^n$.
\begin{definition}
We say that  $X$ enjoys the \emph{$R_1$-property} 
(or briefly that $X$ is a \emph{$R_1$-variety})
if for the general point   $p\in\Sec(X)$ and the general 
hyperplane  $H\in(\PP^n)^{\ast}$ containing $T_p(\Sec(X))$
   it  holds that 
 $H$ is $J$-tangent to $X$ along $\Gamma_p(X)$ (see Definitions \ref{def tangential contact locus} and \ref{def: J-tangency}).
\end{definition}
Proposition \ref{prop: criterion R1 property} provides a simple criterion 
for the  $R_1$-property; for its proof see \cite[Proposition~5.13]{chiantini-ciliberto}.
\begin{proposition}\label{prop: criterion R1 property}
 If $\widetilde{\gamma}(X)=0$ 
and $X$ is different from a cone,\footnote{Note that $X$ is different from a cone 
if and only if $\bigcap_{x\in X}T_x(X)=\emptyset$, 
see  \cite[Proposition~1.2.6]{russo-specialvarieties}.} 
then $X$ is a $R_1$-variety.
\end{proposition}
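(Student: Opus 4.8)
The plan is to unwind the definition of the $R_1$-property into a single inclusion of linear spaces and then to control the tangent star by the dichotomy of Theorem~\ref{prop: zak theorem}. Write $\Gamma:=\Gamma_p(X)$ and fix a general point $p\in\Sec(X)$, say $p\in\langle x,y\rangle$ with $x,y\in X$ general; by Terracini's Lemma (Theorem~\ref{prop: terracini lemma}) we have $T_p(\Sec(X))=\langle T_x(X),T_y(X)\rangle$. Since every hyperplane containing the linear space $T_p(\Sec(X))$ automatically contains $T^{\ast}(\Gamma,X)$ once $T^{\ast}(\Gamma,X)$ itself is contained in $T_p(\Sec(X))$, it suffices to prove the single inclusion
$$
T^{\ast}(\Gamma,X)\subseteq T_p(\Sec(X)).
$$
The general $H\supseteq T_p(\Sec(X))$ is then $J$-tangent to $X$ along $\Gamma$, which is the assertion.

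First I would treat the ordinary relative tangent locus. By the very definition of $\Gamma_p(X)$, each point of the dense open subset $\Gamma\cap\reg(X)$ is a smooth point of $X$ whose embedded tangent space lies in $T_p(\Sec(X))$; hence, setting $T(\Gamma,X):=\overline{\bigcup_{z\in\Gamma\cap\reg(X)}T_z(X)}$, we get $T(\Gamma,X)\subseteq T_p(\Sec(X))$, and, because $T^{\ast}_z(\Gamma,X)=T_z(X)$ at smooth points, also $T(\Gamma,X)\subseteq T^{\ast}(\Gamma,X)$. The hypothesis $\widetilde{\gamma}(X)=0$ enters through relation~(\ref{eq: gamma-delta}), which gives $\gamma(X)=\delta(X)$ and hence $\dim(\Gamma)=\delta(X)$; by the Scorza Lemma (Theorem~\ref{prop: scorza lemma}) $\Gamma$ is the quadric $Q_{x,y}$ of dimension $\delta(X)$ through $x$ and $y$. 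The tangent spaces of $X$ are not constant along $\Gamma$, for otherwise $\Gamma$ would lie in a fibre of the Gauss map, which is linear by Theorem~\ref{prop: linear fiber gauss}, whereas $Q_{x,y}$ is a quadric; using in addition that $X$ is not a cone, i.e. $\bigcap_{z\in X}T_z(X)=\emptyset$, one checks that these tangent spaces are in general position, so that $\dim\overline{T(\Gamma,X)}=r+\delta(X)$ (and this number is automatically $\le\dim T_p(\Sec(X))=2r+1-\delta(X)$, since $\overline{T(\Gamma,X)}$ already sits inside $T_p(\Sec(X))$).

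The remaining, and main, point is to pass from $T(\Gamma,X)$ to the tangent star $T^{\ast}(\Gamma,X)$; the two can differ only over the points of $\Gamma\cap\sing(X)$, where limiting secant directions may leave $T_p(\Sec(X))$. I would apply Theorem~\ref{prop: zak theorem} to the pair $\Gamma\subseteq X$. Its second alternative $T^{\ast}(\Gamma,X)=S(\Gamma,X)$ must be excluded: there the tangent star locus would contain the join $S(\Gamma,X)\supseteq X$, and since the contributions from $\Gamma\cap\reg(X)$ all lie in $T_p(\Sec(X))\not\supseteq X$, a dense subset of $X$ would have to be swept out by the tangent stars $T^{\ast}_z(\Gamma,X)$ attached to the proper subvariety $\Gamma\cap\sing(X)$ of dimension $\le\delta(X)-1$. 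In the first alternative $\dim T^{\ast}(\Gamma,X)=\dim(\Gamma)+\dim(X)=\delta(X)+r=\dim\overline{T(\Gamma,X)}$, and the same type of estimate shows that no component of $T^{\ast}(\Gamma,X)$ beyond $\overline{T(\Gamma,X)}$ can arise over $\Gamma\cap\sing(X)$; hence $T^{\ast}(\Gamma,X)=\overline{T(\Gamma,X)}\subseteq T_p(\Sec(X))$, which is the desired inclusion.

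I expect the crux of the whole argument, and the exact place where the non-cone hypothesis is indispensable, to be this control of the tangent stars at the points of $\Gamma\cap\sing(X)$: one must bound $\dim\bigl(\bigcup_{z\in\Gamma\cap\sing(X)}T^{\ast}_z(\Gamma,X)\bigr)$ strictly below $r$ (so it cannot cover a dense subset of $X$) and below $\delta(X)+r$ (so it creates no extra top-dimensional component). I would obtain these bounds from Zak's Theorem on tangencies (Corollary~\ref{prop: theorem on tangencies}) together with $\bigcap_{z\in X}T_z(X)=\emptyset$, which prevents the tangent stars from degenerating in the way they would for a cone. For smooth $X$ all of this is vacuous, since then $T^{\ast}(\Gamma,X)=T(\Gamma,X)\subseteq T_p(\Sec(X))$ follows immediately from the definition of $\Gamma_p(X)$, and the content of the proposition lies entirely in the singular case.
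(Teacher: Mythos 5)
Your reduction of the $R_1$-property to the single inclusion $T^{\ast}(\Gamma_p(X),X)\subseteq T_p(\Sec(X))$ is correct (containment in the general hyperplane through $T_p(\Sec(X))$ is a linear, hence closed, condition), and the smooth-locus part is fine: for $z\in\Gamma_p(X)\cap\reg(X)$ one has $T^{\ast}_z(\Gamma_p(X),X)=T_z(X)\subseteq T_p(\Sec(X))$ by definition, so for smooth $X$ the statement is indeed immediate. Note that the paper itself does not prove this proposition but quotes it from Chiantini--Ciliberto, so your attempt has to stand on its own --- and it does not, because the step you yourself call ``the crux'' is never carried out. Theorem \ref{prop: zak theorem} only tells you the \emph{dimension} of $T^{\ast}(\Gamma,X)$ (or identifies it with the join $S(\Gamma,X)$); it says nothing about \emph{where} the components of $T^{\ast}(\Gamma,X)$ supported over $\Gamma\cap\sing(X)$ sit, and that is exactly what is at stake. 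The bounds you promise --- that $\bigcup_{z\in\Gamma\cap\sing(X)}T^{\ast}_z(\Gamma,X)$ has dimension $<r$, and that it contributes no component beyond $\overline{T(\Gamma,X)}$ --- do not follow from the tools you invoke. Corollary \ref{prop: theorem on tangencies} gives a \emph{lower} bound on the dimension of a linear space that is $J$-tangent along a subvariety; it produces no upper bound on a union of tangent stars. A relative tangent star at a singular point is constrained only to lie in $\Sec(X)$, so it may have dimension up to $\dim(\Sec(X))=2r+1-\delta$, and the naive count for the union over a base of dimension $\delta-1$ gives $2r$, far above $r$: to do better you would need to confine these stars to $T_p(\Sec(X))$, which is precisely the assertion being proved, so the argument is close to circular at its key point. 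Moreover, neither hypothesis is ever genuinely used: the non-cone assumption and $\widetilde{\gamma}(X)=0$ are named, but no concrete deduction is drawn from either (in the smooth case, where your argument is complete, the proposition holds with no hypotheses at all, which should be a warning sign).

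There is also a secondary gap in the setup. You identify $\Gamma:=\Gamma_p(X)$ with the Scorza quadric $Q_{x,y}$, but Definition \ref{def tangential contact locus} and the discussion following it only control the components of $\Gamma_p(X)$ \emph{through the two general points} $x,y$; the contact locus may have further components, of unknown dimension, elsewhere. Since the $R_1$-property demands $J$-tangency along all of $\Gamma_p(X)$, even your estimate $\dim(\Gamma\cap\sing(X))\leq\delta(X)-1$ is unjustified as stated. A correct proof (as in the reference the paper cites) has to make the hypothesis $\widetilde{\gamma}(X)=0$ do real work --- via the generically finite Gauss map of the general tangential projection and a degeneration argument for the limits of secant lines defining the tangent stars --- rather than only fixing the dimension of the contact locus.
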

The importance of the $R_1$-property appears clear in the following:
\begin{theorem}[\cite{chiantini-ciliberto}]\label{prop: R1 varieties and severi varieties}
 If $X\subset\PP^n$ is a $R_1$-variety with $n=\frac{3}{2}r+2$, 
then $X$ is smooth (i.e. $X$ is a Severi variety).
\end{theorem}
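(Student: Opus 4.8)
The plan is to convert the two hypotheses — the $R_1$-property and the extremal dimension $n=\frac{3}{2}r+2$ — into a tight pincer on the secant defect $\delta=\delta(X)$ and the tangential contact dimension $\gamma=\gamma(X)$, read off that these are the Severi numerics, and then use the $R_1$-property \emph{structurally} to exclude singular points (the dimension counts alone will not suffice). First I would extract the numerical constraints. Since $\Sec(X)\subsetneq\PP^n$ we have $\dim(\Sec(X))\le n-1=\frac{3}{2}r+1$, and from $\dim(\Sec(X))=2r+1-\delta$ this gives $\delta\ge r/2$. In the opposite direction, the $R_1$-property produces a hyperplane $H\supseteq T_p(\Sec(X))$ that is $J$-tangent to $X$ along $\Gamma_p(X)$ (Definitions~\ref{def tangential contact locus} and \ref{def: J-tangency}); applying Zak's Theorem on tangencies (Corollary~\ref{prop: theorem on tangencies}) to an irreducible component of $\Gamma_p(X)$ of dimension $\gamma$ yields $\gamma+r\le\dim(H)=\frac{3}{2}r+1$, hence $\gamma\le r/2+1$. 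Since $\delta\le\gamma$ by Terracini (Theorem~\ref{prop: terracini lemma}) and $\widetilde{\gamma}(X)=\gamma-\delta$ by (\ref{eq: gamma-delta}), I obtain
\[
\tfrac{r}{2}\le\delta\le\gamma\le\tfrac{r}{2}+1,\qquad 0\le\widetilde{\gamma}(X)\le 1 .
\]
In particular $r$ is even (otherwise $n$ is not an integer), and if $X$ were smooth then (\ref{eq: inequality zak}) would be forced to be an equality, so $X$ would be a Severi variety; thus the entire content of the theorem is that $X$ cannot be singular.

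Next I would fix the geometry of the contact locus. The aim is to land in the genuine Severi configuration $\delta=\gamma=r/2$, $\widetilde{\gamma}(X)=0$, discarding the remaining configurations $(\delta,\gamma)=(r/2,\,r/2+1)$ and $(r/2+1,\,r/2+1)$ allowed by the pincer. For these I would use the finer inequality of the Severi section, which rearranges to $\widetilde{\gamma}(X)\le 1+\dim(\sing(X))$ and, in the two exceptional configurations, actually forces $\dim(\sing(X))\ge 1$; combined with the eventual conclusion $\sing(X)=\emptyset$ this closes them off, while the analysis of the general tangential projection $\tau_{x,X}\colon X\dashrightarrow W_{x,X}\subseteq\PP^{n-r-1}$ pins $\widetilde{\gamma}(X)=0$. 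Once $\widetilde{\gamma}(X)=0$ and $\delta\ge 1$, the Scorza Lemma (Theorem~\ref{prop: scorza lemma}) applies: through two general points there passes a unique quadric $Q_{x,y}$ of dimension $\delta=r/2$, smooth at those two points, equal to the union of the components of $\Sigma_p(X)$ through them; these quadrics form a $2(r-\delta)$-dimensional family $\mathcal{Q}$ that sweeps out $X$. This exhibits $X$ as a $QEL$-variety of type $r/2$ whose entry loci are exactly the Severi quadrics.

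The main obstacle, and the step I expect to be hardest, is the exclusion of singular points, which must genuinely invoke the $R_1$-property and the tangent-star machinery rather than dimension counts. Suppose $z\in\sing(X)$. The $R_1$-property forces the whole tangent star $T^{\ast}(\Gamma_p(X),X)$ — including the stars $T^{\ast}_y(\Gamma_p(X),X)$ at points $y\in\sing(X)\cap\Gamma_p(X)$ — to lie in the single hyperplane $H$; since $X\not\subseteq H$, the dichotomy of Theorem~\ref{prop: zak theorem} places us in case $(1)$, so $\dim(T^{\ast}(\Gamma_p(X),X))=\gamma+r$ \emph{exactly}. I would then argue that a true singular point $z$ is incompatible with this equality: the limiting secant lines through $z$ enlarge the tangent star along the contact locus beyond $\gamma+r$, or equivalently force $\Sec(X)$ to drop below the value $2r+1-\delta$ fixed above — either way contradicting the $J$-tangency equality or the determined value of $\delta$. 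Phrased through the family $\mathcal{Q}$, the same input shows that the general member of $\mathcal{Q}$ is a \emph{smooth} quadric and that these smooth quadrics cover $X$, so every point of $X$ is smooth. With $\sing(X)=\emptyset$, inequality (\ref{eq: inequality zak}) becomes an equality and $X$ is by definition a Severi variety. The delicate, and decisive, point is the quantitative control of the tangent star at a putative singular point sitting inside the extremal hyperplane $H$ — this is precisely where the extremality $n=\frac{3}{2}r+2$ leaves no room to spare.
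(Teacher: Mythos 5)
Your attempt cannot be compared with an internal argument, because the paper does not prove this statement at all: it is imported from \cite{chiantini-ciliberto} and used as a black box (e.g.\ in the remark following the proof of Theorem \ref{prop: classification of type 2-2 into quadric}). Judged on its own, your proposal opens correctly but has two genuine gaps. The opening pincer is fine: $\delta\ge r/2$ from $\dim(\Sec(X))=2r+1-\delta\le n-1$, and $\gamma\le r/2+1$ from applying Corollary \ref{prop: theorem on tangencies} to the hyperplane furnished by the $R_1$-property, whence $r/2\le\delta\le\gamma\le r/2+1$ and $\widetilde{\gamma}(X)\le 1$; and you are right that, once smoothness is known, equality in (\ref{eq: inequality zak}) makes $X$ a Severi variety by definition. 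The first gap is that your elimination of the configurations $(\delta,\gamma)=(r/2,r/2+1)$ and $(r/2+1,r/2+1)$ is circular: you discard them because the refined inequality of \S \ref{sec: severi varieties} forces $\sing(X)\neq\emptyset$ there (incidentally, in the case $(r/2,r/2+1)$ it only yields $\dim(\sing(X))\ge 0$, not $\ge 1$), ``combined with the eventual conclusion $\sing(X)=\emptyset$'' --- but that conclusion is the theorem being proved, and your own route to it (Scorza Lemma, then the family $\mathcal{Q}$) presupposes $\widetilde{\gamma}(X)=0$, i.e.\ presupposes the elimination. The accompanying claim that ``the analysis of the general tangential projection pins $\widetilde{\gamma}(X)=0$'' is an assertion, not an argument; no property of $\tau_{x,X}$ is actually invoked.

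The second gap is the decisive one, and you half-concede it: the exclusion of singular points is never carried out. ``I would then argue that the limiting secant lines through $z$ enlarge the tangent star beyond $\gamma+r$'' names no mechanism, and none is available for free: the entire point of working with tangent stars and $J$-tangency (Theorem \ref{prop: zak theorem}, Corollary \ref{prop: theorem on tangencies}) is that the bound $\dim(T^{\ast}(\Gamma_p(X),X))\le \gamma+r$ and the resulting dichotomy remain valid for \emph{singular} $X$, so equality there is perfectly compatible with $z\in\sing(X)$ and yields no contradiction by itself. Your fallback formulation fails twice over: part (3) of the Scorza Lemma (Theorem \ref{prop: scorza lemma}), which you need to produce smooth members of $\mathcal{Q}$, carries ``$X$ smooth'' as a hypothesis --- circular again --- and even granting a covering of $X$ by smooth quadrics, a variety covered by smooth subvarieties need not be smooth (every cone is covered by lines through its vertex). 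Removing the smoothness hypothesis from Zak's Severi-variety setup is exactly the hard content of the theorem of \cite{chiantini-ciliberto}, and your proposal contains no workable idea for that step.
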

\subsection{Equations of Severi varieties}\label{sec: equations severi varieties}
Here we obtain the equations of the Severi varieties 
using a known construction (see \cite[\Rmnum{4} Theorem~4.5]{zak-tangent}).
As usual, we denote by $x_0,\ldots,x_n$ 
homogeneous coordinates on $\PP^n$. 
\subsubsection{\texorpdfstring{$\PP^2\hookrightarrow\PP^5$}{}} 
The Veronese surface
 $\PP^2\hookrightarrow\PP^5$ can be 
defined by the  $2\times2$ minors of the matrix (see \cite{harris-firstcourse}):
$$\begin{pmatrix} x_0 & x_3 & x_4\\  x_3 & x_1 & x_5\\  x_4 & x_5 & x_2 \end{pmatrix}$$
Its dimension is $2$ and its degree is $4$.
 \subsubsection{\texorpdfstring{$\PP^2\times\PP^2\hookrightarrow\PP^8$}{}}  
The Segre embedding $\PP^2\times\PP^2\hookrightarrow\PP^8$ can be
defined by the  $2\times2$ minors of the matrix:
$$
\begin{pmatrix} x_0 & x_1 & x_2\\  x_3 & x_4 & x_5\\  x_6 & x_7 & x_8 \end{pmatrix}
$$
i.e. by the quadrics:
\begin{equation}\label{eq: P2xP2inP8}
\begin{array}{ccc}
 x_5x_7-x_4x_8, & 
x_2x_7-x_1x_8, &
x_5x_6-x_3x_8, \\
x_4x_6-x_3x_7, &
x_2x_6-x_0x_8, & 
x_1x_6-x_0x_7, \\
x_2x_4-x_1x_5, & 
x_2x_3-x_0x_5, &
x_1x_3-x_0x_4. 
\end{array}
\end{equation}
Its dimension is $4$ and its degree is  $6$.
 \subsubsection{\texorpdfstring{$\mathbb{G}(1,5)\hookrightarrow\PP^{14}$}{}} 
Consider the Segre embedding
 $\PP^1\times\PP^3\hookrightarrow\PP^7$. 
This variety is defined by the 
 $2\times2$ minors of the matrix:
\begin{equation}\label{eq: equations seg1_3}
\begin{pmatrix} {x}_{0} & {x}_{1} & {x}_{2} & {x}_{3}\\  {x}_{4} & {x}_{5} & {x}_{6} & {x}_{7} \end{pmatrix}
\end{equation}
Take the hyperplane $\PP^7=V(x_8)$ of $\PP^8$ and consider 
the composition 
$
\PP^1\times\PP^3\hookrightarrow\PP^7\hookrightarrow\PP^8 
$.
The image is defined by the $6$ minors of the matrix (\ref{eq: equations seg1_3})
plus the $9$ monomials
 $x_ix_8$,   for $i=0,\ldots,8$, and  these $15$ quadrics define 
a rational map 
 $t_{\PP^{1}\times\PP^{3}}:\PP^{8}\dashrightarrow\PP^{14}$. 
The image of $t_{\PP^{1}\times\PP^{3}}$
is the Grassmannian $\mathbb{G}(1,5)$ and it is defined by the $15$ quadrics:
\begin{equation}
\begin{array}{cc} x_{4} x_{6} - x_{2} x_{7} + x_{0} x_{9},& x_{4} x_{10} - x_{2} x_{11} + x_{0} x_{13},\\ x_{8} x_{11} - x_{7} x_{12} + x_{3} x_{14},& x_{8} x_{10} - x_{6} x_{12} + x_{1} x_{14},\\ - x_{3} x_{6} + x_{1} x_{7} - x_{0} x_{8},& - x_{3} x_{10} + x_{1} x_{11} - x_{0} x_{12},\\ - x_{2} x_{3} + x_{1} x_{4} - x_{0} x_{5},& - x_{5} x_{6} + x_{2} x_{8} - x_{1} x_{9},\\ - x_{7} x_{10} + x_{6} x_{11} - x_{0} x_{14},& x_{5} x_{7} - x_{4} x_{8} + x_{3} x_{9},\\ x_{5} x_{10} - x_{2} x_{12} + x_{1} x_{13},& x_{9} x_{11} - x_{7} x_{13} + x_{4} x_{14},\\ x_{5} x_{11} - x_{4} x_{12} + x_{3} x_{13},& - x_{9} x_{12} + x_{8} x_{13} - x_{5} x_{14},\\ x_{9} x_{10} - x_{6} x_{13} + x_{2} x_{14}. 
\end{array}
\end{equation}
The dimension and the degree of  $\mathbb{G}(1,5)$
respectively are $8$ and $14$.
 \subsubsection{\texorpdfstring{$E_6\hookrightarrow\PP^{26}$}{}}  
Consider the Segre embedding
$\PP^1\times\PP^2\hookrightarrow\PP^5$. 
This variety is defined by the 
 $3$ quadrics:
\begin{equation}\label{eq: equations seg1_2}
\begin{array}{ccc}- x_{1} x_{3} + x_{0} x_{4},& - x_{2} x_{3} + x_{0} x_{5}, &- x_{2} x_{4} + x_{1} x_{5}. \end{array}
\end{equation}
Take the hyperplane
 $\PP^5=V(x_6)$ of $\PP^6$ and consider the composition 
$
\PP^1\times\PP^2\hookrightarrow\PP^5\hookrightarrow\PP^6 
$.
The image is defined by the $3$ quadrics  (\ref{eq: equations seg1_2}) plus 
the $7$ monomials
 $x_ix_6$,   for $i=0,\ldots,6$, and these $10$ quadrics
  define a rational map $t_{\PP^1\times\PP^2}:\PP^6\dashrightarrow\PP^9$.     
The image of  $t_{\PP^1\times\PP^2}$ is the Grassmannian $\mathbb{G}(1,4)$ and 
it is defined by the $5$ quadrics:
\begin{equation}\label{eq: equations G14}
\begin{array}{ccc} x_{4} x_{6} - x_{3} x_{7} + x_{0} x_{9},& - x_{5} x_{7} + x_{4} x_{8} - x_{2} x_{9},& x_{2} x_{6} - x_{1} x_{7} + x_{0} x_{8},\\ x_{2} x_{3} - x_{1} x_{4} + x_{0} x_{5},& x_{5} x_{6} - x_{3} x_{8} + x_{1} x_{9}.\end{array}
\end{equation}
The dimension and the degree of
 $\mathbb{G}(1,4)$ respectively are $6$ and $5$.
Now take the hyperplane $\PP^9=V(x_{10})$ of $\PP^{10}$ and consider the composition 
$
\mathbb{G}(1,4)\hookrightarrow\PP^{9}\hookrightarrow\PP^{10} 
$.
The image is defined by the $5$ quadrics (\ref{eq: equations G14}) plus the $11$ monomials
 $x_ix_{10}$,  for $i=0,\ldots,10$, 
and these $16$ quadrics define a rational map 
 $t_{\mathbb{G}(1,4)}:\PP^{10}\dashrightarrow\PP^{15}$. 
The image of $t_{\mathbb{G}(1,4)}$ is the spinorial variety
 $S^{10}$ and it is defined by the 
 $10$ quadrics:
\begin{equation}\label{eq: equations S10}
\begin{array}{cc} x_{7} x_{8} - x_{6} x_{9} + x_{5} x_{10} - x_{3} x_{15}, &x_{1} x_{6} + x_{4} x_{7} - x_{2} x_{10} + x_{3} x_{13},\\ - x_{4} x_{5} + x_{0} x_{6} - x_{2} x_{8} + x_{3} x_{11},& x_{9} x_{11} - x_{8} x_{12} + x_{5} x_{14} - x_{0} x_{15}, \\x_{10} x_{12} - x_{9} x_{13} + x_{7} x_{14} + x_{1} x_{15},& - x_{1} x_{11} - x_{4} x_{12} + x_{0} x_{13} - x_{2} x_{14},\\ x_{10} x_{11} - x_{8} x_{13} + x_{6} x_{14} - x_{4} x_{15},& x_{1} x_{5} + x_{0} x_{7} - x_{2} x_{9} + x_{3} x_{12}, \\- x_{7} x_{11} + x_{6} x_{12} - x_{5} x_{13} + x_{2} x_{15}, & x_{1} x_{8} + x_{4} x_{9} - x_{0} x_{10} + x_{3} x_{14}.\end{array}
\end{equation}
The dimension and the degree of  $S^{10}$
respectively are $10$ and $12$. Finally,
 take the hyperplane 
$\PP^{15}=V(x_{16})$ of $\PP^{16}$ and consider the composition
$
S^{10}\hookrightarrow\PP^{15}\hookrightarrow\PP^{16} 
$.
The image is defined by the $10$ quadrics  (\ref{eq: equations S10}) plus 
the $17$ monomials
 $x_ix_{16}$,  for $i=0,\ldots,16$, and these $27$ quadrics 
define a rational map  $t_{S^{10}}:\PP^{16}\dashrightarrow\PP^{26}$. 
The image of  $t_{S^{10}}$ is the Cartan variety $E_6$ and it is 
defined by the $27$ quadrics: 
\begin{equation}
\begin{array}{cc} x_{3} x_{10} - x_{15} x_{17} + x_{14} x_{18} - x_{12} x_{21} + x_{11} x_{26},& x_{7} x_{22} + x_{5} x_{23} + x_{9} x_{24} + x_{2} x_{25} - x_{1} x_{26},\\
 x_{2} x_{16} - x_{1} x_{18} + x_{9} x_{20} + x_{0} x_{22} - x_{6} x_{23},& x_{2} x_{10} + x_{21} x_{22} - x_{15} x_{23} + x_{18} x_{24} - x_{20} x_{26},\\ 
- x_{2} x_{13} + x_{1} x_{15} + x_{5} x_{20} + x_{8} x_{22} + x_{6} x_{24},& x_{1} x_{10} - x_{19} x_{22} - x_{13} x_{23} + x_{16} x_{24} - x_{20} x_{25},\\
 - x_{7} x_{16} + x_{1} x_{17} - x_{9} x_{19} + x_{4} x_{23} + x_{0} x_{25},& x_{2} x_{17} - x_{7} x_{18} + x_{9} x_{21} - x_{3} x_{23} + x_{0} x_{26},\\
 x_{1} x_{11} + x_{0} x_{13} + x_{8} x_{16} + x_{6} x_{19} - x_{4} x_{20},& x_{2} x_{12} - x_{9} x_{15} - x_{5} x_{18} + x_{3} x_{22} - x_{6} x_{26},\\
 - x_{7} x_{12} + x_{9} x_{14} + x_{5} x_{17} + x_{3} x_{25} + x_{4} x_{26},& - x_{1} x_{12} + x_{9} x_{13} + x_{5} x_{16} + x_{4} x_{22} + x_{6} x_{25},\\
 - x_{5} x_{11} - x_{8} x_{12} - x_{3} x_{13} + x_{6} x_{14} - x_{4} x_{15},& x_{2} x_{19} - x_{7} x_{20} + x_{1} x_{21} + x_{8} x_{23} + x_{0} x_{24},\\
 x_{7} x_{10} - x_{14} x_{23} + x_{17} x_{24} - x_{21} x_{25} - x_{19} x_{26},& x_{0} x_{10} - x_{18} x_{19} + x_{17} x_{20} - x_{16} x_{21} + x_{11} x_{23},\\
 x_{8} x_{10} + x_{15} x_{19} - x_{14} x_{20} + x_{13} x_{21} - x_{11} x_{24},& x_{1} x_{3} + x_{2} x_{4} - x_{0} x_{5} - x_{6} x_{7} + x_{8} x_{9},\\
 - x_{7} x_{13} + x_{1} x_{14} + x_{5} x_{19} + x_{4} x_{24} - x_{8} x_{25},& x_{2} x_{11} + x_{0} x_{15} + x_{8} x_{18} + x_{3} x_{20} - x_{6} x_{21},\\
 x_{5} x_{10} + x_{14} x_{22} + x_{12} x_{24} + x_{15} x_{25} - x_{13} x_{26},& x_{6} x_{10} - x_{15} x_{16} + x_{13} x_{18} - x_{12} x_{20} + x_{11} x_{22},\\
 x_{9} x_{11} + x_{0} x_{12} - x_{3} x_{16} + x_{6} x_{17} - x_{4} x_{18},& x_{7} x_{11} + x_{0} x_{14} + x_{8} x_{17} + x_{3} x_{19} - x_{4} x_{21},\\
 x_{9} x_{10} - x_{17} x_{22} - x_{12} x_{23} - x_{18} x_{25} + x_{16} x_{26},& - x_{2} x_{14} + x_{7} x_{15} + x_{5} x_{21} + x_{3} x_{24} + x_{8} x_{26},\\
 - x_{4} x_{10} + x_{14} x_{16} - x_{13} x_{17} + x_{12} x_{19} + x_{11} x_{25} . \end{array}
\end{equation}
The dimension and the degree of  $E_6$ respectively are $16$ and $78$.
\subsection{\texorpdfstring{$\L_{x,X}$}{LxX} of Severi varieties}
\begin{proposition}
Table \ref{tab: hilbert scheme severi varieties} describes
 the Hilbert scheme $\L_{x,X}$ of lines passing through a 
point  $x$ of a Severi variety $X$. 
\begin{table}[htbp]
\begin{center}
\begin{tabular}{|c|c|}
\hline
$X$ & $\L_{x,X}$\\
\hline
\hline
$\nu_2(\PP^2)\subset\PP^5$ & $\emptyset$\\
 \hline
$\PP^2\times\PP^2\subset\PP^8$ & $\PP^1\sqcup\PP^1\subset \PP^3$\\
 \hline
$\mathbb{G}(1,5)\subset\PP^{14}$ & $\PP^1\times\PP^3\subset\PP^7$\\
 \hline
$E_6\subset\PP^{26}$ & $S^{10}\subset\PP^{15}$\\
\hline
\end{tabular}
\end{center}
\caption{$\L_{x,X}$ of Severi varieties.}
\label{tab: hilbert scheme severi varieties}
\end{table}
\end{proposition}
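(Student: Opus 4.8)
The plan is to treat the four Severi varieties according to the size of their secant defect, recalling from Theorem \ref{prop: classification severi varieties} and Theorem \ref{prop: LQEL of type delta=r/2} that each is a smooth $QEL$-variety with $\delta(X)=r/2$, so that $(r,\delta)=(2,1),(4,2),(8,4),(16,8)$ respectively, and that each is cut out by quadrics. Because of this last fact, Proposition \ref{prop: relation between hilbert scheme and second funfamental form} identifies $\L_{x,X}$ with the base locus $B_{x,X}$ of the second fundamental form inside the exceptional divisor $E_{x,X}=\PP^{r-1}$, which already explains the ambient spaces $\PP^1,\PP^3,\PP^7,\PP^{15}$ occurring in the four answers. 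It is the embedded structure, not merely the abstract variety, that I want to pin down.

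For the two cases of small defect I would argue directly. The Veronese surface $\nu_2(\PP^2)$ contains no line, since a line on it would be the image of a degree-one curve under a map given by quadrics; hence $\L_{x,X}=\emptyset$. For $\PP^2\times\PP^2$ the lines through a general point $x=(p,q)$ are exactly those of the two rulings, namely the pencils $\{p\}\times(\text{lines through }q)$ and $(\text{lines through }p)\times\{q\}$, each isomorphic to $\PP^1$; the two families are disjoint in $\L_{x,X}$, so $\L_{x,X}=\PP^1\sqcup\PP^1$. Since $T_x(\PP^2\times\PP^2)=T_p(\PP^2)\oplus T_q(\PP^2)$, the two pencils of tangent directions span complementary lines in $E_{x,X}=\PP(T_x)=\PP^3$, yielding the claimed $\PP^1\sqcup\PP^1\subset\PP^3$ as two skew lines. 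As a consistency check, formula \eqref{eq: dim LxX} gives $\dim_{[l]}\L_{x,X}=-K_X\cdot l-2=1$.

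For $\GG(1,5)$ I would proceed directly via Schubert geometry: a line of $\GG(1,5)$ through the point $[L_0]$ is a pencil $\{[\ell]:p\in\ell\subset H\}$ determined by a flag $p\in L_0\subset H$ with $\dim H=2$, so the parameter space is the product of $p\in L_0\cong\PP^1$ with the $\PP^3$ of planes $H$ containing $L_0$; a tangent-space computation in Plücker coordinates identifies the induced embedding into $E_{x,X}=\PP^7$ with the Segre embedding, giving $\L_{x,X}=\PP^1\times\PP^3\subset\PP^7$. This matches the invariants forced by Theorem \ref{prop: main qel1} (applicable since $\delta=4\geq3$): $\L_{x,X}$ is a smooth $QEL$-variety of dimension $i(X)-2=4$ and type $\delta-2=2$, exactly those of $\PP^1\times\PP^3\subset\PP^7$. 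The remaining case $E_6$, where a hands-on description is awkward, I would settle purely by classification: Theorem \ref{prop: main qel1} (now with $\delta=8$) shows $\L_{x,X}$ is a smooth irreducible nondegenerate $QEL$-variety of dimension $i(E_6)-2=10$ and type $\delta-2=6$ inside $E_{x,X}=\PP^{15}$; since $5<6<10$, Theorem \ref{prop: LQEL of higher type} applies, and its list has a unique $10$-dimensional member, the spinorial variety $S^{10}\subset\PP^{15}$, whence $\L_{x,X}\cong S^{10}\subset\PP^{15}$.

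The main obstacle throughout is the passage from abstract invariants to the genuine embedded variety. For $\GG(1,5)$, the classification-only route leaves two $4$-dimensional type-$2$ candidates in $\PP^7$ (a linear section of $\GG(1,4)$ besides $\PP^1\times\PP^3$), which a degree computation distinguishes ($4$ versus $5$); this is why I prefer the direct Plücker computation, whose only difficulty is the explicit tangent calculation. For $E_6$ the delicate points are nondegeneracy in $\PP^{15}$ and that the projective equivalence supplied by Theorem \ref{prop: LQEL of higher type} indeed produces the standard $S^{10}$; here the identification $\L_{x,X}=B_{x,X}$ together with Proposition \ref{prop: surjective second fundamental form}, which forces $\dim|II_{x,X}|=n-r-1=9$ and hence the expected $10$ defining quadrics, provides the matching embedded data and closes the argument.
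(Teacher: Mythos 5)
Your proof is correct, but it takes a genuinely different route from the paper's. The paper argues uniformly for all four cases: in \S\ref{sec: equations severi varieties} each Severi variety $X\subset\PP^n$ was constructed as the image of a quadratic map $t_Y:\PP^r\dashrightarrow\PP^n$ of type $(2,1)$ whose base locus $Y$ (respectively $\emptyset$, $\PP^1\sqcup\PP^1$, $\PP^1\times\PP^3$, $S^{10}$) lies in a hyperplane $\PP^{r-1}\subset\PP^r$, and Lemma \ref{prop: hilbert scheme image birational map of type 2 1} --- identifying lines through a general $x=t_Y(p)$ with lines through $p$ that meet $Y$ --- gives $\L_{x,X}\simeq Y\subset\PP^{r-1}$ at once; homogeneity of Severi varieties (Theorem \ref{prop: classification severi varieties}) then upgrades ``general point'' to ``every point''. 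You instead work case by case: elementary geometry of lines for $\nu_2(\PP^2)$ and $\PP^2\times\PP^2$, the flag/rank-one-homomorphism description for $\GG(1,5)$, and pure classification (Theorems \ref{prop: main qel1} and \ref{prop: LQEL of higher type}) for $E_6$. Your route buys independence from the explicit coordinate constructions --- in particular the $E_6$ case, where a hands-on description is unpleasant, follows from the $LQEL$ machinery alone --- and your remark that the classification list for $(r,\delta)=(4,2)$ contains two candidates in $\PP^7$, separated only by degree, is a genuine point justifying the direct computation for $\GG(1,5)$. The paper's route buys uniformity and makes visible the recursive structure (each $\L_{x,X}$ is exactly the base locus used to build $X$ one step earlier), and it yields the embedded copy of $Y$ with no further work. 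One small patch to make in your argument: Theorem \ref{prop: main qel1} is stated only for a \emph{general} point $x$, whereas the table concerns an arbitrary point, so in the $E_6$ case you should also invoke homogeneity to conclude for every $x$; your first three cases, being argued directly on the homogeneous models, already hold at every point.
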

\begin{proof}
Consider the rational maps above defined 
\begin{equation}\label{eq: birational maps above defined}
 t_{\PP^{1}\times\PP^{3}}:\PP^{8}\dashrightarrow\PP^{14},\quad
t_{S^{10}}:\PP^{16}\dashrightarrow\PP^{26}
\end{equation}
and similarly define
\begin{equation}\label{eq: birational maps similarly defined}
t_{\emptyset}:\PP^2\dashrightarrow\PP^{5},\quad t_{\PP^1\sqcup\PP^1}:\PP^4\dashrightarrow\PP^{8}.
\end{equation}
The rational maps in (\ref{eq: birational maps above defined}) and (\ref{eq: birational maps similarly defined})
are obviously birational onto them images and 
they have inverses defined by linear forms.
The rest of the proof follows from 
Lemma \ref{prop: hilbert scheme image birational map of type 2 1} and
the homogeneity of Severi varieties.
\end{proof}
\begin{lemma}\label{prop: hilbert scheme image birational map of type 2 1}
 Let $Y\subset H=\PP^{r-1}\subset\PP^r$ be the 
base locus of a quadratic birational map 
$t:\PP^r\dashrightarrow \overline{t(\PP^r)}=X\subset\PP^n$ whose inverse 
is defined by linear forms. Let $x\in X$ be a general point 
and assume that  $Y$ is reduced.  Then
$$
\L_{x,X}\simeq Y \subset \PP^{r-1}.
$$
\end{lemma}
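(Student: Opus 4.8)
The plan is to use the linear inverse $t^{-1}$ to identify the lines of $X$ through a general point with the lines of $\PP^r$ through a general point that meet $Y$, and then to observe that the latter are parametrized by $Y$ itself via projection onto $H$.

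First I would fix notation and record the basic degree computation. Since $t^{-1}$ is given by linear forms it is the restriction to $X$ of a linear projection $\pi\colon\PP^n\dashrightarrow\PP^r$, and for $x\in X$ general the point $p:=t^{-1}(x)$ is a general point of $\PP^r$; in particular $p\notin H$ and $p\notin Y$. Writing $t=(q_0,\dots,q_n)$ with the $q_i$ quadratic and $\mathrm{Bs}(t)=Y$, for a line $\ell\not\subseteq Y$ the restrictions $q_i|_\ell$ are sections of $\O_{\PP^1}(2)$ all vanishing along $\ell\cap Y$; hence $\overline{t(\ell)}$ has degree $2-\mathrm{length}(\ell\cap Y)$, and $\overline{t(\ell)}$ is a line exactly when $\ell$ meets $Y$ in a single reduced point.

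Next I would set up the bijection on points. For $y\in Y$ let $\ell_y:=\langle p,y\rangle$. Since $p\notin H$ and $Y\subseteq H$, the line $\ell_y$ is transverse to $H$ and meets it only at $y$, so $\ell_y\cap Y=\ell_y\cap H\cap Y=\{y\}$ has length one for every $y$ (this uses only $Y\subseteq H$, not the singularities of $Y$); by the degree computation $L_y:=\overline{t(\ell_y)}$ is a line, and it passes through $x=t(p)$. Conversely, for a line $L\subseteq X$ through $x$ the image $\ell:=\overline{t^{-1}(L)}$ is a line through $p$, for it cannot be contracted (that would force $p\in Y\subseteq H$); as $L=\overline{t(\ell)}$ has degree one, $\ell$ meets $Y$ in one reduced point, namely $\ell\cap H\in Y$, so $\ell=\ell_y$ with $y=\ell\cap H$. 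Thus $y\mapsto[L_y]$ and $[L]\mapsto\overline{t^{-1}(L)}\cap H$ are inverse bijections between $Y$ and $\L_{x,X}$.

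Finally, to upgrade this to an isomorphism of schemes I would appeal to the universal property in Definition \ref{def: hilbert scheme}. The cone over $Y$ with vertex $p$ realizes $\{\ell_y\}_{y\in Y}$ as a flat family of lines; pushing it forward by $t$, which is a morphism on each $\ell_y\setminus\{y\}$ and extends over $\ell_y$ because its image is a line, produces a closed subscheme $Z\subseteq X\times Y$ with fibres the lines $L_y\ni x$, hence a morphism $\alpha\colon Y\to\L_{x,X}$. Applying $\pi=t^{-1}$ to the universal family $\chi_{x,X}$ and intersecting with $H$ gives a morphism $\beta\colon\L_{x,X}\to Y$. Now $\alpha\circ\beta$ classifies the universal family itself, since $\overline{t(\langle p,\overline{t^{-1}(L)}\cap H\rangle)}=L$, so $\alpha\circ\beta=\mathrm{id}_{\L_{x,X}}$ by the uniqueness clause of the universal property; and $\beta\circ\alpha\colon Y\to Y$ is the identity on closed points, hence equals $\mathrm{id}_Y$ precisely because $Y$ is reduced. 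I expect the genuine work to be the verification that $\alpha$ and $\beta$ are morphisms — equivalently, that the two families are flat with the stated fibres — while the reducedness of $Y$ enters exactly, and only, to force $\beta\circ\alpha=\mathrm{id}_Y$ and thereby complete the isomorphism.
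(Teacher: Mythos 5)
Your proposal is correct and follows essentially the same route as the paper's proof: fix the general point $p=t^{-1}(x)$, identify $Y$ (via the spans $\langle p,y\rangle$, i.e. the identification $H\simeq\L_{p,\PP^r}$) with the lines of $X$ through $x$, obtain the morphism $Y\to\L_{x,X}$ from the universal property applied to the tautological family, and invert it using the linearity of $t^{-1}$. Your extra details — the degree count $2-\mathrm{length}(\ell\cap Y)$ showing that images of lines through $p$ meeting $Y$ are lines, and the explicit role of the reducedness of $Y$ in promoting the pointwise inverse to an equality of morphisms — only make explicit what the paper's terser argument leaves implicit.
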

\begin{proof}
 Let $p\in \PP^r\setminus H$ be in the open set 
 where $t$ is an isomorphism and put   $x=t(p)$. 
Consider the isomorphism 
 $H\stackrel{\simeq}{\longrightarrow}\L_{p,\PP^r}$ 
 which sends the
 point $q\in H$ to the line $[\langle q,p\rangle]\in\L_{p,\PP^r}$
and denote by $\mathcal{Y}$ the image of $Y$ in $\L_{p,\PP^r}$. 
$\mathcal{Y}$ consists of all lines of
$\PP^r$ passing through $p$ and intersecting $Y$, so 
 the images of these lines via
$t$ are lines contained in $X$. 
Consider the closed subscheme 
$$
Z=\{(z,[l]): z\in t(l)\} \hookrightarrow X\times \mathcal{Y}.
$$
By universal property of
 $\L_{x,X}$, 
there exists a unique  morphism 
 $f:\mathcal{Y}\rightarrow\L_{x,X}$ such that we have 
the diagram (\ref{diagram hilbert scheme of lines}) of fibred products.     
By the hypothesis on $t^{-1}$ and  choice of the point $p$,  
we have a well-defined morphism 
$$
[l]\in\L_{x,X}\longmapsto t^{-1}(l)\in \mathcal{Y} ,
$$
which is obviously the inverse of $f$.     
\end{proof}

\chapter{Outline on Castelnuovo theory}\label{cap: castelnuovo theory}
This chapter is devoted to outline some well-known extensions of Castelnuovo's results 
and conjectures about them.
These facts will be useful when we study the 
special quadratic birational transformations
having base locus of small dimension, 
see Chap. \ref{sec: transformations into a hypersurface} \S \ref{sec: small dimension of B}, 
Chap. \ref{chapter: transformations whose base locus has dimension at most three}
and App. \ref{app: towards the case of dimension four}.
The main references are:
\cite{harris-curves}, 
\cite{ciliberto-hilbertfunctions},
\cite{eisenbud-green-harris} and
\cite{petrakiev}
(see also the classical work \cite{castelnuovo-curve}).
\paragraph{Notation and conventions}
As always, we shall work over $\CC$. Recall that,
for any projective $\CC$-scheme $\Lambda\subset\PP^N$ the \emph{Hilbert function} 
of $\Lambda$ is
$$
h_{\Lambda}(t)=\dim_{\CC}\left( \mathrm{Im}\left\{ \rho_{t}(\Lambda): H^0(\O_{\PP^N}(t))\rightarrow H^0(\O_{\Lambda}(t)) \right\}  \right) , 
$$
where $t\in\mathbb{N}$ and $\rho_{t}(\Lambda)$ is the natural restriction map. 
So that for example $h_{\Lambda}(2)$ is the number of conditions imposed by $\Lambda$ on quadrics.
A finite set of points $\Lambda\subset\PP^N$ is in \emph{uniform position} if 
for any subset $\Lambda'\subseteq \Lambda$ of order $\lambda'$ and for any $t\in \mathbb{N}$, 
it is $h_{\Lambda'}(t)=\min\{\lambda',h_{\Lambda}(t)\}$.
If a set of points $\Lambda\subset\PP^N$ is in uniform position, 
then it is also in \emph{general position}, which means that any set of at most $N+1$
points in $\Lambda$ is linearly independent.
If  $\Lambda\subset\PP^{N}$ 
is obtained by taking a general hyperplane section 
of a nondegenerate irreducible curve $C\subset\PP^{N+1}$,
then $\Lambda$ satisfies a stronger property than being in uniform position, 
namely $\Lambda$ is in \emph{symmetric position} (see \cite{petrakiev}).
\section{Castelnuovo's bound}
Let $C\subset\PP^{N+1}$, $N\geq 2$, be an irreducible nondegenerate projective curve of 
degree $\lambda$ and arithmetic genus $g$ and let $\Lambda\subset\PP^{N}$ be a general 
hyperplane section of $C$. 
As it is well-known, there is an upper bound, the so-called \emph{Castelnuovo's bound},
for $g$  as a function of $\lambda$ and $N$. Precisely one has 
the following:
\begin{proposition}[Castelnuovo's bound]\label{prop: castelnuovo bound} Let notation be 
as above. Then
\begin{equation}\label{eq: castelnuovo bound}
 g\leq \pi_{0}(\lambda,N)=\binom{q_{0}}{2}N+q_{0} r_{0} ,
\end{equation}
where
$$
\lambda-1=q_0 N+r_0, \quad 0\leq r_0<N .
$$
\end{proposition}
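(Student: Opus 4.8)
The plan is to reduce the statement about the curve $C\subset\PP^{N+1}$ to a purely numerical statement about its general hyperplane section $\Lambda\subset\PP^N$, and then to bound the genus by controlling how many conditions $\Lambda$ imposes on hypersurfaces of each degree. The key input is that $\Lambda$ is a set of $\lambda$ points in \emph{uniform position} (in fact in \emph{general position}), as recalled in the Notation paragraph: this is exactly what a general hyperplane section of a nondegenerate irreducible curve provides. I would first recall the standard cohomological comparison: for the ideal sheaf $\I_C$, twisting the sequence relating $C$ to its hyperplane section and taking the long exact sequence in cohomology shows that the \emph{failure} of $\Lambda$ to impose independent conditions on forms of degree $t$ is governed by the differences $h^1(\I_C(t))-h^1(\I_C(t-1))$. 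Summing these differences telescopes, and one obtains the fundamental inequality
\begin{equation}\label{eq: genus from conditions}
 g\leq \sum_{t\geq 1}\bigl(\lambda-h_{\Lambda}(t)\bigr),
\end{equation}
where $h_{\Lambda}(t)$ is the Hilbert function of $\Lambda$ introduced above, i.e. the number of conditions $\Lambda$ imposes on degree-$t$ forms. Thus everything is reduced to producing a good lower bound for $h_{\Lambda}(t)$.

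Next I would establish the crucial lower bound on the Hilbert function of points in uniform position, namely
\begin{equation}\label{eq: uniform position growth}
 h_{\Lambda}(t)\geq\min\bigl\{\lambda,\ tN+1\bigr\}.
\end{equation}
The idea is that for a nondegenerate set of points in uniform position the Hilbert function grows as fast as possible until it saturates at $\lambda$: each time one increases the degree by one, as long as not all points are yet accounted for, the general position hypothesis forces at least $N$ new independent conditions (one picks $N$ further points and uses hyperplanes through chosen subsets to separate them, invoking uniform position to guarantee the expected rank). This step is essentially the Castelnuovo "general position lemma" argument; uniform position is precisely the hypothesis that makes the inductive separation of points work at every stage.

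With \eqref{eq: uniform position growth} in hand, I would substitute into \eqref{eq: genus from conditions}. Writing $\lambda-1=q_0 N+r_0$ with $0\leq r_0<N$, the terms $\lambda-h_{\Lambda}(t)$ vanish once $tN+1\geq\lambda$, i.e. for $t\geq q_0$ (with a boundary contribution from the partial step $r_0$), and for the earlier values of $t$ one gets $\lambda-(tN+1)$. Summing the arithmetic progression $\sum_{t=1}^{q_0-1}\bigl(\lambda-1-tN\bigr)$ together with the residual term $r_0 q_0$ yields exactly $\binom{q_0}{2}N+q_0 r_0=\pi_0(\lambda,N)$, which is the desired bound \eqref{eq: castelnuovo bound}.

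The main obstacle is the proof of \eqref{eq: uniform position growth}: everything after it is bookkeeping, but that inequality is the genuine content. The delicate point is verifying that at each degree step one really gains the full $N$ independent conditions rather than fewer; this is where nondegeneracy and the uniform (general) position of a general hyperplane section of an irreducible nondegenerate curve are indispensable, and where one must argue carefully that the separating hypersurfaces of degree $t$ exist — in characteristic zero over $\CC$ this is guaranteed, which is why we work over $\CC$ throughout.
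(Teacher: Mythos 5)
Your proposal is correct and follows essentially the same route as the paper's proof: reduce to the inequality $g\leq \sum_{t\geq 1}(\lambda-h_{\Lambda}(t))$, establish the growth bound $h_{\Lambda}(t)\geq\min\{\lambda,\,tN+1\}$ from the uniform position of $\Lambda$, and then sum the arithmetic progression (the paper derives that growth bound from the superadditivity $h_{\Lambda}(t+s)\geq\min\{\lambda,\,h_{\Lambda}(t)+h_{\Lambda}(s)-1\}$ rather than your direct point-separation sketch, but this is the same classical fact). Note only a slip in your final bookkeeping: the correct grouping is $\sum_{t=1}^{q_{0}}(\lambda-1-tN)=\binom{q_{0}}{2}N+q_{0}r_{0}$, i.e.\ the boundary term at $t=q_{0}$ contributes $r_{0}$, not $r_{0}q_{0}$, though your stated total is right.
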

\begin{proof} We briefly review the idea of the proof.
It is a standard fact (see e.g. \cite[Lemma~1.1]{ciliberto-hilbertfunctions}) 
that for 
$t\in\mathbb{N}$ one has:
\begin{equation}\label{eq: ciliberto lemma 1.1}
 h_{\Lambda}(t)\leq h_{C}(t)-h_{C}(t-1).
\end{equation}
Now, the Hilbert polynomial of $C$ is $P_C(t)=\lambda t -g+1$. So, by 
summing (\ref{eq: ciliberto lemma 1.1}) over all $t\geq1$, 
we get (see e.g. \cite[Lemma~1.4]{ciliberto-hilbertfunctions})
\begin{equation}\label{eq: ciliberto lemma 1.4}
 g\leq \sum_{t=1}^{\infty}\left( \lambda-h_{\Lambda}(t) \right) .
\end{equation}
Moreover, an easy consequence of the uniform position property of $\Lambda$ 
is that 
\begin{equation}\label{eq: ciliberto lemma 1.5}
 h_{\Lambda}(t+s)\geq \min\left\{ \lambda, h_{\Lambda}(t)+h_{\Lambda}(s)-1  \right\},\quad \forall  t,s\in\mathbb{N} ,
\end{equation}
from which it follows that 
\begin{equation}\label{eq: ciliberto 1.6}
 h_{\Lambda}(t)\geq \min\left\{ \lambda, t N+1 \right\}, \quad \forall t\in\mathbb{N}.
\end{equation}
Then it is
\begin{equation}\label{eq: ciliberto 1.7}
\begin{array}{ll}
 h_{\Lambda}(t) \geq  t N+1, &   1\leq t\leq  q_0  \\
 h_{\Lambda}(t) =\lambda , &   t\geq q_0+1 
\end{array}
\end{equation}
and  (\ref{eq: castelnuovo bound}) follows by using (\ref{eq: ciliberto lemma 1.4}).
\end{proof}
More generally, (\ref{eq: ciliberto 1.6}) 
holds for a set of points $\Lambda$ in general position.
For $t=2$, this follows from the elementary Lemma \ref{prop: castelnuovo argument}; 
the general case follows from a similar argument.
\begin{lemma}[Usual Castelnuovo's argument]\label{prop: castelnuovo argument}  
 If $\Lambda\subset\PP^{N}$ is a set 
 of $\lambda\leq 2N+1$ points in general position, then
$\Lambda$ imposes independent conditions to the quadrics of $\PP^N$, i.e. 
$h_{\Lambda}(2)=\lambda$.
\end{lemma}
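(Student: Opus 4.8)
The plan is to show that the restriction map $\rho_2(\Lambda)\colon H^0(\O_{\PP^N}(2))\to H^0(\O_{\Lambda}(2))\simeq\CC^{\lambda}$ is surjective, which is exactly the assertion $h_{\Lambda}(2)=\lambda$. It suffices to produce, for each point $P_0\in\Lambda$, a quadric $Q$ vanishing on $\Lambda\setminus\{P_0\}$ but not at $P_0$: the images of such quadrics under $\rho_2(\Lambda)$ are supported on a single coordinate, hence span $\CC^{\lambda}$, giving surjectivity. So I would fix an arbitrary $P_0\in\Lambda$ and build such a $Q$ as a union of two hyperplanes.

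First I would exploit the hypothesis $\lambda\le 2N+1$, which gives $\lambda-1\le 2N$, to partition the remaining points $\Lambda\setminus\{P_0\}$ into two subsets $A$ and $B$ with $|A|\le N$ and $|B|\le N$. The crucial observation is then that each of these subsets, enlarged by $P_0$, is a set of at most $N+1$ points in general position, hence linearly independent; in particular $P_0\notin\langle A\rangle$ and $P_0\notin\langle B\rangle$. Since $\langle A\rangle$ has dimension at most $N-1$ and avoids $P_0$, I can choose a hyperplane $H_A=V(\ell_A)$ containing $A$ but not $P_0$, and likewise a hyperplane $H_B=V(\ell_B)$ containing $B$ but not $P_0$.

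Setting $Q=V(\ell_A\,\ell_B)$ completes the construction: $Q$ contains $A\cup B=\Lambda\setminus\{P_0\}$, while $\ell_A(P_0)\,\ell_B(P_0)\neq 0$ shows $P_0\notin Q$. As $P_0$ ranges over $\Lambda$, this yields $\lambda$ quadrics whose images under $\rho_2(\Lambda)$ are, up to nonzero scalars, the standard basis vectors of $\CC^{\lambda}$; hence $\rho_2(\Lambda)$ is surjective and $h_{\Lambda}(2)=\lambda$.

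The only genuinely delicate points — and the two places where the hypotheses are indispensable — are the splitting into two groups of size at most $N$ and the non-containment $P_0\notin\langle A\rangle$. The splitting needs precisely $\lambda-1\le 2N$, which explains why the bound $\lambda\le 2N+1$ is sharp for this elementary argument; and the non-containment is exactly the linear independence of at most $N+1$ points in general position. Everything else reduces to routine linear algebra: extending the linear span of at most $N$ points to a hyperplane that avoids a prescribed point lying outside that span.
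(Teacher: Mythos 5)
Your proof is correct and follows essentially the same route as the paper: both separate each point $P_0$ from the rest by a reducible quadric that is the union of two hyperplanes, each spanned by (at most) $N$ of the remaining points, using general position to ensure $P_0$ lies off both hyperplanes. The only cosmetic difference is that the paper first reduces to $\lambda=2N+1$ and takes the hyperplanes to be the exact spans $\langle p_1,\ldots,p_N\rangle$ and $\langle p_{N+1},\ldots,p_{2N}\rangle$, whereas you handle arbitrary $\lambda\leq 2N+1$ directly by extending the spans to hyperplanes avoiding $P_0$.
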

\begin{proof}
We can assume $\lambda=2N+1$.     
Put $\Lambda=\{p_0,\ldots,p_{2N}\}$ 
and consider the hyperplanes  $H_1=\langle p_1,\ldots,p_N\rangle$ and
 $H_2=\langle p_{N+1},\ldots,p_{2N}\rangle$.     
Since the points are in general position,
the quadric  
 $H_1\cup H_2$ contains the points  $p_1,\ldots,p_{2N}$,  but not  $p_0$.     
This proves the exactness of the sequence 
$0\rightarrow H^0(\PP^N,\I_{\Lambda,\PP^N}(2))\rightarrow 
H^0(\PP^N, \O_{\PP^N}(2))\rightarrow
 H^0(\Lambda, \O_{\Lambda}(2))=\bigoplus_{i=0}^{2N}\CC\rightarrow 0 $,
from which the assertion follows.
\end{proof} 
The following well-known result  describes $\Lambda$  
in the case when $h_{\Lambda}(2)$
is minimal, assuming that $\lambda$ is not too small. 
\begin{proposition}[Castelnuovo Lemma]\label{prop: castelnuovo lemma} Let $\Lambda\subset\PP^N$ be 
a set of $\lambda\geq 2N+3$ points in  general position. 
 If  $h_{\Lambda}(2)=2N+1$, then $\Lambda$ lies on 
a rational normal curve of degree $N$, cut out by all quadrics containing $\Lambda$.
\end{proposition}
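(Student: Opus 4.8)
The plan is to prove the statement by induction on $N$, projecting from one of the points, after first recording two easy reductions. By (\ref{eq: ciliberto 1.6}) any $\lambda\geq 2N+3$ points in general position satisfy $h_\Lambda(2)\geq 2N+1$, so the hypothesis says that $\Lambda$ imposes the \emph{least possible} number of conditions on quadrics; the rational normal curve of degree $N$ is the model case, since it too satisfies $h(2)=2N+1$. By Lemma \ref{prop: castelnuovo argument} any $2N+1$ of the points impose independent conditions, hence every quadric through a fixed such subset already contains all of $\Lambda$; thus $H^0(\I_\Lambda(2))$ depends only on these $2N+1$ points. Once we know that $\Lambda$ lies on a rational normal curve $C$ of degree $N$, the final clause is immediate: picking $2N+1$ points of $\Lambda$ on $C$, any quadric through them meets $C$ in $2N+1>2\deg(C)$ points and hence contains $C$, so $H^0(\I_\Lambda(2))=H^0(\I_C(2))$ and, as $C$ is cut out by quadrics, $C$ is the base locus of the quadrics through $\Lambda$. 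So it suffices to produce $C\supseteq\Lambda$.

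For $N=2$ the hypothesis $h_\Lambda(2)=5$ means $\Lambda$ lies on a unique conic, which general position (no three collinear) forces to be smooth, i.e.\ a rational normal curve of degree $2$. For the inductive step fix $p\in\Lambda$ and let $\pi_p:\PP^N\dashrightarrow\PP^{N-1}$ be the projection from $p$, with image $\Lambda'=\pi_p(\Lambda\setminus\{p\})$, a set of $\lambda-1\geq 2(N-1)+4$ points. That $\Lambda'$ is in general position in $\PP^{N-1}$ is straightforward: $N$ dependent points of $\Lambda'$ would, together with $p$, give $N+1$ dependent points of $\Lambda$. The key numerical link is that quadrics through $\Lambda'$ in $\PP^{N-1}$ correspond exactly to quadrics through $\Lambda$ that are cones with vertex $p$; writing $\phi:H^0(\I_\Lambda(2))\to\CC^{N}$ for the linear map recording the tangent hyperplane at $p$ (the coefficients of $x_0x_i$), its kernel is the space of such cones, and a direct count gives $h_{\Lambda'}(2)=h_\Lambda(2)-(N+1)+\mathrm{rank}(\phi)$.

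To apply the inductive hypothesis I need $h_{\Lambda'}(2)=2(N-1)+1$, equivalently $\mathrm{rank}(\phi)=N-1$, i.e.\ the tangent hyperplanes at $p$ of all quadrics through $\Lambda$ share a common line through $p$. The naive estimate only gives $\mathrm{rank}(\phi)\leq N$, i.e.\ $h_{\Lambda'}(2)\leq 2(N-1)+2$; the competing possibility $h_{\Lambda'}(2)=2(N-1)+2$ is genuine downstairs (it is realised by points on an elliptic normal curve of degree $N$ in $\PP^{N-1}$) and must be excluded using the \emph{upstairs} geometry, where $\Lambda$ lies on the minimal number of quadrics. This exclusion --- showing that the abundance of quadrics through $\Lambda$ in $\PP^N$ forces the common tangent direction at $p$ --- is the main obstacle of the proof.

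Once $\mathrm{rank}(\phi)=N-1$ is established, by induction $\Lambda'$ lies on a rational normal curve $C'\subset\PP^{N-1}$ of degree $N-1$. Its cone $S=\overline{\pi_p^{-1}(C')}$ with vertex $p$ is a surface of degree $N-1$ containing $\Lambda$, and the common tangent direction at $p$ detected by $\mathrm{rank}(\phi)=N-1$ singles out, among the curves on $S$ mapping isomorphically onto $C'$, the rational normal curve $C$ of degree $N$ through $p$ with the prescribed tangent; the remaining (non-cone) quadrics through $\Lambda$ cut $S$ precisely along such curves and force $\Lambda\subset C$. Together with the reductions above this yields that $\Lambda$ lies on a rational normal curve of degree $N$ cut out by the quadrics through it. Throughout, I would invoke the classical facts that a rational normal curve of degree $N$ is projectively normal and cut out by quadrics, and that $N+3$ points in general position determine a unique such curve.
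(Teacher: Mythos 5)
Your overall strategy (induction on $N$ via projection from a point of $\Lambda$) is a legitimate known route to Castelnuovo's Lemma, essentially the one used by Eisenbud--Harris for schemes in linearly general position, and your bookkeeping is correct: the projected set $\Lambda'$ is in general position, quadric cones with vertex $p$ through $\Lambda$ correspond to quadrics through $\Lambda'$, the count $h_{\Lambda'}(2)=h_\Lambda(2)-(N+1)+\mathrm{rank}(\phi)$ is right, and so are the base case $N=2$ and the reduction of the final clause to the inclusion $\Lambda\subseteq C$ (both spaces of quadrics have dimension $\binom{N}{2}$, hence coincide and cut out $C$). Note that the paper itself gives no proof of this proposition --- it quotes it as a classical result --- so the only possible comparison is with the standard proofs in the literature.

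However, your proposal is not a proof: the step you yourself label ``the main obstacle'' --- ruling out $\mathrm{rank}(\phi)=N$, i.e. $h_{\Lambda'}(2)=2(N-1)+2$ --- is exactly the mathematical content of the lemma, and you give no argument for it. The dichotomy $\mathrm{rank}(\phi)\in\{N-1,N\}$ follows formally from general position, but no formal count can decide which case occurs: as you observe, the bad value $2(N-1)+2$ is genuinely achieved by points in general position on an elliptic normal curve of degree $N$ in $\PP^{N-1}$, and indeed projecting points of an elliptic normal curve of degree $N+1$ in $\PP^N$ (which impose $2N+2$ conditions) from one of them produces precisely such a configuration. So the exclusion must convert the hypothesis $h_\Lambda(2)=2N+1$ (rather than $2N+2$) into geometric information at $p$ --- for instance, that the base scheme of the quadrics through $\Lambda$ has positive-dimensional Zariski tangent space at $p$ --- and this is where every complete proof does its real work; nothing in your text supplies it. Secondarily, even granting $\mathrm{rank}(\phi)=N-1$, your lifting step is only asserted: from $\Lambda\subset S$ (the cone over $C'$) one must still analyse how a non-cone quadric $Q$ through $\Lambda$ meets $S$ --- a curve of degree $2N-2$ whose components are rulings (each containing at most one point of $\Lambda\setminus\{p\}$, by general position) and sections of the cone --- and show that a single degree-$N$ section carries all of $\Lambda$; ``cut $S$ precisely along such curves'' is not yet an argument. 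As it stands, the proposal is a correct reduction of the lemma to its hardest step, together with a sketch, not a proof.
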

By Proposition \ref{prop: castelnuovo lemma} is obtained the description
of curves $C\subset\PP^{N+1}$ that attain the maximal genus, the 
so-called \emph{Castelnuovo curves}.
\section{Refinements of Castelnuovo's bound}
One question raised by Fano in \cite{fano} is the following:
\emph{to get, under suitable conditions on $C$, better estimates for $h_{\Lambda}$ that 
(\ref{eq: ciliberto 1.7}), in order to obtain, using (\ref{eq: ciliberto lemma 1.4}), 
better bounds for $g$.} 
If $\lambda\geq 2N+3$ and $h_{\Lambda}(2)=2N+1$ then, 
by Proposition \ref{prop: castelnuovo lemma}, no better estimates 
for $h_{\Lambda}$ can be found.
So Fano's idea was to assume
\begin{equation}\label{eq: ciliberto 2.1}
 h_{\Lambda}(2)\geq 2N+1+ \vartheta,\quad 0<\vartheta\leq \lambda-2N-1
\end{equation}
and to estimate $h_{\Lambda}$, and then $g$, under this hypothesis.
The first result in this direction is 
the following: 
\begin{proposition}[Fano]\label{prop: refinement castelnuovo bound by fano}
 If (\ref{eq: ciliberto 2.1}) holds, then $g\leq \pi_{0}(\lambda-\vartheta,N)+\vartheta$ .
\end{proposition}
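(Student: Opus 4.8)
The plan is to sharpen, under hypothesis (\ref{eq: ciliberto 2.1}), the lower bounds for the Hilbert function $h_{\Lambda}(t)$ recorded in (\ref{eq: ciliberto 1.6}), and then to substitute the sharpened estimates into the genus inequality (\ref{eq: ciliberto lemma 1.4}). Fano's underlying idea is that the $\vartheta$ \emph{extra} conditions imposed on quadrics propagate to every higher degree through the uniform position inequality (\ref{eq: ciliberto lemma 1.5}), so that the whole profile of $h_{\Lambda}$ is pushed up by $\vartheta$ from degree $2$ onwards.

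First I would record that, since $\Lambda$ spans $\PP^N$, one has $h_{\Lambda}(1)=N+1$, and that (\ref{eq: ciliberto lemma 1.5}) with $s=1$ reads $h_{\Lambda}(t+1)\geq\min\{\lambda,\,h_{\Lambda}(t)+N\}$. Starting now from the seed $h_{\Lambda}(2)\geq 2N+1+\vartheta$ instead of the bare Castelnuovo value $2N+1$, a straightforward induction on $t$ yields
$$ h_{\Lambda}(t)\geq\min\{\lambda,\;tN+1+\vartheta\},\qquad t\geq 2. $$
Indeed, assuming the bound at $t$, one gets $h_{\Lambda}(t+1)\geq\min\{\lambda,\,h_{\Lambda}(t)+N\}\geq\min\{\lambda,\,(t+1)N+1+\vartheta\}$, the two cases $h_{\Lambda}(t)=\lambda$ and $h_{\Lambda}(t)\geq tN+1+\vartheta$ being checked separately.

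Next I would carry out the bookkeeping in (\ref{eq: ciliberto lemma 1.4}). Isolating the term $t=1$, where only $h_{\Lambda}(1)=N+1$ is available, I obtain
$$ g\leq(\lambda-N-1)+\sum_{t\geq 2}\max\{0,\;\lambda-\vartheta-1-tN\}. $$
Writing $\mu=\lambda-\vartheta$, the same elementary computation that identifies $\pi_{0}(\lambda,N)$ with $\sum_{t\geq 1}\max\{0,\lambda-1-tN\}$ in the proof of Proposition \ref{prop: castelnuovo bound} gives $\sum_{t\geq 1}\max\{0,\mu-1-tN\}=\pi_{0}(\mu,N)$. Hence the tail equals $\pi_{0}(\mu,N)-\max\{0,\mu-1-N\}$, and substituting back, the linear terms cancel to produce exactly $g\leq\pi_{0}(\lambda-\vartheta,N)+\vartheta$.

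The only genuine subtlety, and where I would be careful, is the boundary term $\max\{0,\mu-1-N\}$: the cancellation requires it to equal $\mu-1-N$ rather than $0$, i.e. $\lambda-\vartheta\geq N+1$. This is guaranteed by the standing constraint $\vartheta\leq\lambda-2N-1$ in (\ref{eq: ciliberto 2.1}), which forces $\lambda-\vartheta\geq 2N+1$; the same constraint also makes the hypothesis non-vacuous, since it is precisely $2N+1+\vartheta\leq\lambda$. No deeper difficulty arises: the argument is nothing more than the Castelnuovo summation run with the improved seed value at $t=2$, so the main effort is the routine manipulation of the truncated sums rather than any new geometric input.
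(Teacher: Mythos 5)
Your proof is correct and follows essentially the same route as the paper's: you sharpen the lower bound (\ref{eq: ciliberto 1.6}) to $h_{\Lambda}(t)\geq\min\{\lambda,\,tN+1+\vartheta\}$ for $t\geq 2$ using the uniform position inequality (\ref{eq: ciliberto lemma 1.5}) seeded by the hypothesis (\ref{eq: ciliberto 2.1}), and then sum via (\ref{eq: ciliberto lemma 1.4}); the paper does the same, except it pairs degrees to get the intermediate bound $tN+1+\lfloor t/2\rfloor\vartheta$ before weakening to $tN+1+\vartheta$, whereas you induct with $s=1$ directly. Your explicit treatment of the $t=1$ term (where only $h_{\Lambda}(1)=N+1$ is available), which is exactly what produces the $+\vartheta$ in the final bound, is a point the paper's one-line conclusion leaves implicit, so your bookkeeping is if anything more complete.
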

\begin{proof}
See also \cite[Theorem~2.3]{ciliberto-hilbertfunctions}. Let $t\in\mathbb{N}$ and 
let $h=\lfloor n/2\rfloor$. By using (\ref{eq: ciliberto lemma 1.5}) and 
the hypothesis (\ref{eq: ciliberto 2.1}) we get 
$$
h_{\Lambda}(t)\geq \min \left\{ \lambda, tN+1+h\vartheta \right\}
\geq \min \left\{\lambda, tN+1+\vartheta \right\}.
$$
So, by (\ref{eq: ciliberto lemma 1.4}) we deduce the assertion.
\end{proof}
\begin{figure}[htb]   
\centering
\includegraphics[width=0.7\textwidth]{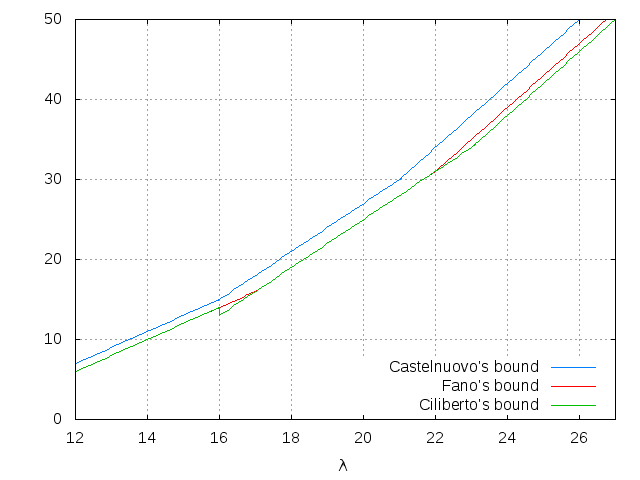} 
\caption{Comparison of upper bounds for $g$, when $N=5$ and $\vartheta=1$.}
\label{fig: comparisonupperbounds}
\end{figure}
Fano's bound of Proposition \ref{prop: refinement castelnuovo bound by fano} 
is too rough; 
in fact it
can be attained only for $\lambda\leq 4N+1+\vartheta$.
A more accurate result is the following 
(see \cite[Theorem~2.5]{ciliberto-hilbertfunctions}):
\begin{theorem}[Ciliberto]\label{prop: refinement castelnuovo bound by ciliberto}
 If (\ref{eq: ciliberto 2.1}) holds, then
$$
g\leq \theta(\lambda, N)= \left\{ 
\begin{array}{ll} 
\mu_{\vartheta}^2 (2N+\vartheta)-\mu_{\vartheta}N+2\mu_{\vartheta}\eta_{\vartheta},  & \mbox{if }   \eta_{\vartheta} < N, \\
\mu_{\vartheta}^2 (2N+\vartheta)-\mu_{\vartheta}N+2\mu_{\vartheta}\eta_{\vartheta} + (\eta_{\vartheta}-N),  & \mbox{if }   \eta_{\vartheta}\geq N ,
\end{array}    
\right.
$$
where 
$$
\lambda-1=\mu_{\vartheta}(2N+\vartheta)+\eta_{\vartheta}, \quad 0\leq \eta_{\vartheta}< 2N+\vartheta .
$$
\end{theorem}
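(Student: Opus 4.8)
The plan is to sharpen Fano's argument (Proposition~\ref{prop: refinement castelnuovo bound by fano}) by extracting the full strength of the subadditivity inequality (\ref{eq: ciliberto lemma 1.5}) instead of discarding it after a single step. First I would establish, by iterating (\ref{eq: ciliberto lemma 1.5}) starting from $h_{\Lambda}(1)\geq N+1$ (the points span $\PP^N$) and from the hypothesis (\ref{eq: ciliberto 2.1}), the refined lower bound
$$
h_{\Lambda}(t)\geq \min\left\{\lambda,\ tN+1+\lfloor t/2\rfloor\,\vartheta\right\},\qquad t\in\mathbb{N}.
$$
Writing $t=2h$ one gets $h_{\Lambda}(2h)\geq h\,h_{\Lambda}(2)-(h-1)$ by induction, and then $h_{\Lambda}(2h+1)\geq h_{\Lambda}(2h)+h_{\Lambda}(1)-1$ yields the odd case. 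This is exactly where Fano was wasteful: he replaced the factor $\lfloor t/2\rfloor$ by $1$, whereas retaining the linear growth in $t$ is what produces the sharper estimate.

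Setting $M=2N+\vartheta$ and recalling $\lambda-1=\mu_{\vartheta}M+\eta_{\vartheta}$ with $0\leq\eta_{\vartheta}<M$, the refined bound reads $h_{\Lambda}(2h)\geq\min\{\lambda,\,hM+1\}$ and $h_{\Lambda}(2h+1)\geq\min\{\lambda,\,hM+N+1\}$. Hence the deficiencies $\lambda-h_{\Lambda}(t)$ are controlled by the non-negative parts $[(\mu_{\vartheta}-h)M+\eta_{\vartheta}]_{+}$ for even $t=2h$ and $[(\mu_{\vartheta}-h)M+\eta_{\vartheta}-N]_{+}$ for odd $t=2h+1$, where $[x]_{+}=\max\{x,0\}$. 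Substituting into the genus inequality (\ref{eq: ciliberto lemma 1.4}) and splitting over the parity of $t$ yields
$$
g\leq \sum_{h\geq 1}\bigl[(\mu_{\vartheta}-h)M+\eta_{\vartheta}\bigr]_{+}+\sum_{h\geq 0}\bigl[(\mu_{\vartheta}-h)M+\eta_{\vartheta}-N\bigr]_{+}.
$$

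The remaining task is to evaluate these two arithmetic sums, and this is where the case distinction on $\eta_{\vartheta}$ arises. The even sum is effectively over $h=1,\dots,\mu_{\vartheta}$ and, re-indexing by $j=\mu_{\vartheta}-h$, equals $M\binom{\mu_{\vartheta}}{2}+\mu_{\vartheta}\eta_{\vartheta}$ in both cases. For the odd sum the summand at $h=\mu_{\vartheta}$ is $\eta_{\vartheta}-N$: if $\eta_{\vartheta}<N$ the effective range is $h=0,\dots,\mu_{\vartheta}-1$, while if $\eta_{\vartheta}\geq N$ it extends to $h=\mu_{\vartheta}$, contributing the extra term $\eta_{\vartheta}-N$. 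Summing the resulting arithmetic progressions and substituting $M=2N+\vartheta$, both cases collapse to the claimed formula for $\theta(\lambda,N)$. The genuinely delicate point is this last bookkeeping: one must pin down exactly the ranges of $h$ on which the truncated summands are strictly positive and check that truncating $h_{\Lambda}$ at $\lambda$ corresponds precisely to these positive parts; granting that, the final simplification is routine.
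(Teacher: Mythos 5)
Your argument is correct, and it is worth noting that the paper itself contains no proof of this statement: Theorem~\ref{prop: refinement castelnuovo bound by ciliberto} is quoted from \cite[Theorem~2.5]{ciliberto-hilbertfunctions}, so your write-up supplies an argument where the thesis only gives a citation. What you do is the natural continuation of the paper's own proofs of Proposition~\ref{prop: castelnuovo bound} and Proposition~\ref{prop: refinement castelnuovo bound by fano}: bound $h_\Lambda$ from below by iterating (\ref{eq: ciliberto lemma 1.5}) starting from $h_\Lambda(1)=N+1$ and the hypothesis (\ref{eq: ciliberto 2.1}), then sum deficiencies via (\ref{eq: ciliberto lemma 1.4}). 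Your key bound $h_{\Lambda}(t)\geq\min\{\lambda,\,tN+1+\lfloor t/2\rfloor\vartheta\}$ is valid (one small stylistic point: the intermediate inequality $h_{\Lambda}(2h)\geq h\,h_{\Lambda}(2)-(h-1)$ should carry the truncation at $\lambda$ throughout the induction, since once $h_\Lambda$ reaches $\lambda$ it stabilizes; your displayed statement with the $\min$ is the correct form), and it is exactly the point where Fano's argument is wasteful, as you say. The bookkeeping also checks out: with $M=2N+\vartheta$ and $\lambda-1=\mu_\vartheta M+\eta_\vartheta$, the even-$t$ deficiencies sum to $M\binom{\mu_\vartheta}{2}+\mu_\vartheta\eta_\vartheta$, the odd-$t$ deficiencies to $M\binom{\mu_\vartheta+1}{2}+\mu_\vartheta(\eta_\vartheta-N)$ plus the extra term $\eta_\vartheta-N$ precisely when $\eta_\vartheta\geq N$ (the $h=\mu_\vartheta$ summand), and since $\binom{\mu_\vartheta}{2}+\binom{\mu_\vartheta+1}{2}=\mu_\vartheta^2$ the two totals are exactly the two branches of $\theta(\lambda,N)$. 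A reassuring consistency check, which you could add, is that at $\vartheta=0$ your formula collapses to Castelnuovo's bound $\pi_0(\lambda,N)$ of Proposition~\ref{prop: castelnuovo bound}, with $q_0=2\mu_0$ in the first case and $q_0=2\mu_0+1$ in the second.
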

\section{Eisenbud-Harris Conjecture}    
By extending                             
 Castelnuovo's lemma, one is naturally 
lead to make the following:
\begin{conjecture}[Eisenbud, Harris]\label{conjecture: eisenbud harris}
 Let $\Lambda\subset\PP^N$ be a finite set of $\lambda$ points in uniform position and 
let $\vartheta$ such that $0\leq \vartheta\leq N-3$.
If $\lambda\geq 2N+3+2\vartheta$, 
 and $h_{\Lambda}(2)=2N+1+\vartheta$, then 
$\Lambda$ lies on a curve $D\subset\PP^N$ of degree at most $N+\vartheta$.
\end{conjecture}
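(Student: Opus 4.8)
The plan is to argue by induction on $\vartheta$, taking Castelnuovo's Lemma (Proposition \ref{prop: castelnuovo lemma}) as the base case $\vartheta=0$, where the curve $D$ is the rational normal curve of degree $N$ cut out by all quadrics through $\Lambda$. Throughout, the guiding principle is that $\Lambda$ must be recovered inside the base locus of its own quadrics. So first I would study the space of quadrics $V:=H^0(\PP^N,\I_{\Lambda}(2))$, of dimension $\dim V=\binom{N+2}{2}-(2N+1+\vartheta)=\binom{N}{2}-\vartheta$, and its scheme-theoretic base locus $W:=\bigcap_{Q\in V}Q$. Since $\Lambda$ is nondegenerate (uniform position with $\lambda\geq 2N+3$) and $W$ is defined by $V$, one has $(\I_W)_2=V$, hence $W$ is a nondegenerate subscheme with $h_W(2)=h_{\Lambda}(2)=2N+1+\vartheta$. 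The conjecture is then equivalent to the assertion that the one-dimensional part of $W$ is a curve $D$ of degree $\leq N+\vartheta$ through $\Lambda$.

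The first substantial step is to show $\dim W\leq 1$, and here the hypothesis $0\leq\vartheta\leq N-3$ enters decisively. Among nondegenerate varieties of dimension $m$ in $\PP^N$, those of minimal degree impose the fewest conditions on quadrics, and a nondegenerate surface therefore lies on at most $\binom{N-1}{2}$ quadrics. If $W$ carried a nondegenerate two-dimensional component $T$, then $V\subseteq(\I_T)_2$ would force $\binom{N}{2}-\vartheta\leq\binom{N-1}{2}$, that is $\vartheta\geq N-1$, contradicting $\vartheta\leq N-3$ (indeed $\binom{N}{2}-\vartheta\geq\binom{N-1}{2}+2$). Degenerate higher-dimensional components lie in a hyperplane and, by uniform position, can contain only few of the $\lambda$ points, so they may be discarded. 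This step is essentially a dimension count and I expect it to be the routine half; the uniform position estimates \eqref{eq: ciliberto lemma 1.5} and \eqref{eq: ciliberto 1.6}, together with $\lambda\geq 2N+3+2\vartheta$, guarantee that $\Lambda$ is genuinely spread along a one-dimensional $W$ rather than concentrated on a linear piece.

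The crux is the degree bound $\deg D\leq N+\vartheta$. Once $\dim W\leq 1$, I would reduce to curve-case Castelnuovo theory: the component $D$ carrying $\Lambda$ is a nondegenerate curve, cut out by quadrics, with $h_D(2)=2N+1+\vartheta$ and $2N+1+\vartheta\leq 3N-2$. Classical Castelnuovo theory shows that a nondegenerate curve imposing so few conditions on quadrics must lie on a surface $S$ of minimal degree $N-1$ in $\PP^N$, on which it is a divisor whose class is constrained by $h_D(2)$; computing the Hilbert function of such a divisor then pins $\deg D$ down to at most $N+\vartheta$. Inductively, projecting $\Lambda$ from one of its points into $\PP^{N-1}$ and tracking the descent of $h_{\Lambda}(2)$ (so that the excess $\vartheta$ is preserved or decreases) would reduce the surface-and-degree analysis to the lower-dimensional instance already established.

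The main obstacle, and the reason this remains a conjecture rather than a theorem, is that the rigidity exploited in the base case evaporates for $\vartheta>0$. The rational normal curve through $N+3$ general points is unique and scheme-theoretically cut out by its quadrics, which is exactly what makes the propagation argument in Castelnuovo's Lemma succeed; by contrast, curves of degree $N+\vartheta$ move in positive-dimensional families and need not be cut out by quadrics. Consequently the locus $W$ may a priori be reducible or carry embedded components, and producing the surface $S$ of minimal degree on which $D$ lives is the delicate point: for a genuine hyperplane section of an irreducible curve one has the stronger symmetric position (see \cite{petrakiev}) and the minimal-degree surface lifts from the curve, whereas uniform position alone provides no such surface a priori. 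Controlling the reducibility and degeneracy of $W$, and extracting the single irreducible curve of the predicted degree through $\Lambda$, using only the uniform position hypothesis, is precisely where I expect the real difficulty to lie.
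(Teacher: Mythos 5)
The statement you set out to prove is presented in the paper as an open \emph{conjecture} (Eisenbud--Harris), and the paper contains no proof of it: only the case $\vartheta=0$ (Castelnuovo's Lemma, Proposition \ref{prop: castelnuovo lemma}), the case $\vartheta=1$ (Theorem \ref{prop: extension castelnuovo lemma by fano harris}), and the cases $\vartheta=2,3$ under the strictly stronger hypothesis of \emph{symmetric} position (Theorem \ref{prop: petrakiev}) are known. So there is no paper proof to compare against, and your text is, as you yourself concede in the closing paragraph, a strategy sketch rather than a proof. Within the sketch, the numerology you do carry out is sound: $\dim V=\binom{N+2}{2}-(2N+1+\vartheta)=\binom{N}{2}-\vartheta$ is correct, and the comparison $\binom{N}{2}-\vartheta\geq\binom{N-1}{2}+2$ under $\vartheta\leq N-3$ correctly excludes a nondegenerate surface component of $W$; this is the same arithmetic that makes the hypothesis $\vartheta\leq N-3$ necessary (compare the del Pezzo counterexample at $\vartheta=N-2$ remarked after the conjecture in the paper).

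The genuine gap is that you never establish that $W$ has a one-dimensional component containing $\Lambda$ at all, and this lower bound --- not the upper bound $\dim W\leq 1$ --- is the entire content of the conjecture. If the quadrics through $\Lambda$ cut out a zero-dimensional scheme, the conjecture fails, and ruling this out is exactly the subject of the companion Conjecture \ref{conjecture: on upper bound of the degree}: in the relevant range its first row predicts $\lambda\leq 2N+2\vartheta+2$ for points in uniform position cut out by quadrics with zero-dimensional intersection, which is why the hypothesis $\lambda\geq 2N+3+2\vartheta$ appears --- but that bound is itself open for $\vartheta\geq 2$, so your sentence asserting that the uniform position estimates ``guarantee that $\Lambda$ is genuinely spread along a one-dimensional $W$'' assumes precisely the missing theorem. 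The degree step has the same defect: your appeal to ``classical Castelnuovo theory'' to place $D$ on a surface of minimal degree $N-1$ is exactly the step for which Petrakiev needs symmetric position (a minimal-degree surface lifted from the curve $C$ whose hyperplane section is $\Lambda$); under uniform position alone no such surface is known to exist, and your final paragraph admits this, contradicting the earlier claim. Finally, the proposed induction by inner projection from a point of $\Lambda$ is unsubstantiated: uniform position is not known to be preserved under projection from one of the points, and the drop of $h_{\Lambda}(2)$ under such a projection is not controlled the way your reduction requires. In short, you have correctly located where the difficulty lives --- which is worth something --- but supplied no idea that closes it, and the easy half you do prove ($\dim W\leq 1$) is not what the conjecture needs.
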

\begin{remark}
If the curve $D$ exists, then 
 $D$ is unique  and  it is 
a component of $\mathrm{Bs}(|\I_{\Lambda}(2)|)$ (see e.g. \cite{petrakiev}).
\end{remark}
\begin{remark}
 The assumption $\vartheta\leq N-3$ is necessary. For example, if $\vartheta=N-2$,
then one could take $\Lambda$ to be a complete intersection of a del Pezzo surface 
of degree $N+1$ and two general quadrics to produce a counterexample.  
\end{remark}
Of course, the case $\vartheta=0$ of Conjecture \ref{conjecture: eisenbud harris} 
is  
Castelnuovo's lemma;
the case $\vartheta=1$ is 
the following theorem due to Harris, see \cite{harris-curves}
(the result has been already know to Fano in \cite{fano}).
\begin{theorem}[Fano, Harris]\label{prop: extension castelnuovo lemma by fano harris}
 Let $\Lambda\subset\PP^{N}$, $N\geq3$, be a finite set of $\lambda\geq 2N+5$ points in 
uniform position such that $h_{\Lambda}(2)=2N+2$. 
Then $\Lambda$ lies on an elliptic normal 
curve $D$ of degree $N+1$ in $\PP^{N}$, cut out by all quadrics containing $\Lambda$.
\end{theorem}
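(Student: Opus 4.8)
The plan is to prove the statement by induction on $N$, in exact parallel with the proof of Castelnuovo's Lemma (Proposition \ref{prop: castelnuovo lemma}), which is the case $\vartheta=0$. The guiding observation is that an elliptic normal curve $D$ of degree $N+1$ in $\PP^N$ is projectively normal with $h^0(\O_D(2))=2(N+1)=2N+2$, so $h_D(2)=2N+2$ matches $h_\Lambda(2)$ exactly; this both explains why the statement should hold and tells me that the quadrics through $\Lambda$ ought to cut out precisely such a $D$, which will appear as the one-dimensional part of $\mathrm{Bs}(|\I_\Lambda(2)|)$. Throughout I would use that uniform position forces general position, hence $\Lambda$ is nondegenerate and $h_\Lambda(1)=N+1$.

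For the base case $N=3$ one has $h_\Lambda(2)=8$, so the quadrics through $\Lambda$ form a pencil (the space of quadratic forms has dimension $\binom{5}{2}=10$); its base locus is a complete intersection of two quadrics, a curve of degree $4=N+1$ and arithmetic genus $1$, and uniform position together with $\lambda\geq 11$ forces it to be an irreducible elliptic quartic containing $\Lambda$. For the inductive step I would project from a point $p\in\Lambda$, obtaining $\bar\Lambda\subset\PP^{N-1}$ of $\lambda-1$ points, and show first that $\bar\Lambda$ is again in uniform position and second that $h_{\bar\Lambda}(2)=2(N-1)+2$, i.e. that the Hilbert value drops by exactly $2$ under the projection (this rests on the correspondence between quadrics through $\Lambda$ and quadric cones with vertex $p$). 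The inductive hypothesis then produces an elliptic normal curve $\bar D$ of degree $N$ through $\bar\Lambda$ in $\PP^{N-1}$.

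Next I would lift $\bar D$ to $\PP^N$. The cone $S$ over $\bar D$ with vertex $p$ is a surface of degree $N$ containing $\Lambda$, and the residual quadrics through $\Lambda$ (those not cones over $p$) cut out on $S$ a curve $D$ meeting each ruling once, in the class ``hyperplane plus a ruling''; a divisor-class computation on $S$ gives $\deg D=N+1$ and, since such a section maps isomorphically to $\bar D$, genus $1$. To confirm that this is the right curve I would exclude the nearby numerical possibilities directly from $h_\Lambda(2)=2N+2$: a curve of degree $N$ is ruled out because, by the converse direction of Castelnuovo's Lemma (Proposition \ref{prop: castelnuovo lemma}), a rational normal curve would force $h_\Lambda(2)=2N+1$, while a rational curve of degree $N+1$ is ruled out because it would give $h^0(\O_D(2))=2N+3$, hence $h_\Lambda(2)=2N+3$. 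Thus $D$ is an elliptic normal curve of degree $N+1$, and since elliptic normal curves of degree $\geq 4$ are cut out by quadrics, the quadrics through $\Lambda$ cut out exactly $D$; uniqueness follows as in the remark after Conjecture \ref{conjecture: eisenbud harris}.

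The hard part will be the projection/lifting machinery rather than the final identification. Two points are delicate. First, verifying that $\bar\Lambda$ inherits uniform position and that the Hilbert function drops by exactly $2$ requires the full strength of the uniform position principle (through the monodromy it encodes) together with a careful analysis of how quadrics through $\Lambda$ specialize to cones with vertex $p$; only then can the inductive hypothesis be invoked. Second, the lifting itself---showing that the residual quadrics cut out on $S$ an \emph{irreducible, reduced} curve of the expected class, rather than a reducible or degenerate configuration---is where uniform position must again be used to keep $\Lambda$ from splitting across components. As with Fano's refinement of the plain Castelnuovo bound (\ref{eq: castelnuovo bound}) and the estimates in (\ref{eq: ciliberto lemma 1.5})--(\ref{eq: ciliberto 1.6}), the numerology $\lambda\geq 2N+5$ is precisely what makes these estimates bite, and checking that it genuinely suffices is the crux of the argument.
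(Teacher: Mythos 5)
First, a framing remark: the thesis does not prove this theorem at all --- it is quoted from \cite{harris-curves} (with priority attributed to Fano \cite{fano}), and in the sequel it is used only through the existence and the degree of the curve $D$ (e.g.\ in Table \ref{table: our knowledge on upper bound of the degree}), never through its smoothness. So there is no proof in the paper to compare yours against; your sketch must stand on its own, and as it stands it does not.

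The fatal gap is in the inductive step, and it is quantitative, not merely ``delicate''. Quadrics of $\PP^{N-1}$ through $\bar{\Lambda}=\pi_p(\Lambda\setminus\{p\})$ correspond exactly to the quadrics through $\Lambda$ that are singular at $p$ (cones with vertex $p$). A quadric through $\Lambda$ already vanishes at $p$, so singularity at $p$ costs at most $N$ further linear conditions (Euler's relation kills one of the $N+1$ partials). Writing $q=h^0(\PP^N,\I_{\Lambda}(2))=\binom{N+2}{2}-(2N+2)$, this gives $h^0(\I_{\bar{\Lambda},\PP^{N-1}}(2))\geq q-N$, i.e.
\[
h_{\bar{\Lambda}}(2)\;\leq\;\binom{N+1}{2}-(q-N)\;=\;2N+1\;=\;2(N-1)+3,
\]
which is one \emph{worse} than the value $2(N-1)+2$ needed to invoke the inductive hypothesis in $\PP^{N-1}$. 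The missing unit cannot be recovered formally: one extra quadric cone with vertex $p$ exists precisely when the curve $D$ does (if $D$ exists, the cones over its projection $\bar{D}$ supply it), so the induction assumes what it is meant to prove. The defect is also structural: even a strengthened induction covering all of $h_\Lambda(2)\leq 2N+2$ degrades $\vartheta$ by one at each projection, landing after one step in the $\vartheta=2$ range of Conjecture \ref{conjecture: eisenbud harris}, which is open under uniform position and known only under the stronger symmetric position hypothesis (Theorem \ref{prop: petrakiev}, \cite{petrakiev}). Finally, uniform position of $\bar{\Lambda}$, which you flag but do not establish, is not automatic: projection from a member of $\Lambda$ trivially preserves \emph{general} position, but preservation of the Hilbert-function condition for all subsets in all degrees is an unsupported claim. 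Since these two assertions are the entire content of the inductive step, the proposal is a plan, not a proof.

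A second, independent problem: uniform position cannot force smoothness, so even your base case cannot end with an ``elliptic normal curve'' in the strict sense. Let $C_0\subset\PP^3$ be an irreducible complete intersection of two quadrics with one node (arithmetic genus $1$, geometric genus $0$); then $\lambda\geq 11$ general points of $C_0$ are in uniform position with $h_\Lambda(2)=8=2N+2$, yet they lie on no smooth elliptic quartic, since any such curve would be cut out by the same pencil of quadrics and would therefore equal $C_0$. Your subset arguments do correctly eliminate reducible and non-reduced base loci (a line plus a twisted cubic, say, is killed by the Hilbert function of the $\geq 9$ points forced onto the cubic), but nothing in the hypotheses distinguishes a smooth elliptic quartic from a nodal one, because finite sets of smooth points on the two curves have identical Hilbert behaviour; your trichotomy at the end (rational normal curve of degree $N$, elliptic of degree $N+1$, rational of degree $N+1$) omits exactly this arithmetic-genus-$1$ singular case. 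The conclusion actually reachable --- and all the thesis ever uses --- is a projectively normal curve of degree $N+1$ and arithmetic genus $1$. Your final exclusions are otherwise fine in spirit, provided you first make the B\'ezout step explicit: since $\lambda\geq 2N+5>2(N+1)$, every quadric through $\Lambda$ must contain any curve of degree $\leq N+1$ through $\Lambda$, which is what lets you compare $h_\Lambda(2)$ with $h_D(2)$ at all.
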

Cases $\vartheta=2$ and $\vartheta=3$ 
of Conjecture \ref{conjecture: eisenbud harris} have been 
proved by Petrakiev in \cite{petrakiev},
under the stronger assumption that
  $\Lambda$ is a set of points 
in symmetric position (and hence also under the assumption 
that $\Lambda\subset\PP^N$ is a general hyperplane 
section of a nondegenerate irreducible curve $C\subset\PP^{N+1}$).
\begin{theorem}[Petrakiev]\label{prop: petrakiev}
 Let $\Lambda\subset \PP^N$ be a finite set of $\lambda$ points in symmetric position.
\begin{enumerate}[(a)]
 \item\label{part: petrakiev vartheta=2} If $N\geq 4$, $h_{\Lambda}(2)=2N+3$ and $\lambda\geq 2N+7$,
       then $\Lambda$ lies on a curve $D$ of degree $\leq N+2$.
 \item\label{part: petrakiev vartheta=3} If $N\geq 6$, $h_{\Lambda}(2)=2N+4$ and $\lambda\geq 2N+9$,
       then $\Lambda$ lies on a curve $D$ of degree $\leq N+3$.
\end{enumerate}
\end{theorem}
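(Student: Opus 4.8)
The plan is to prove both parts by induction on $N$, keeping $\vartheta$ fixed ($\vartheta=2$ in part (\ref{part: petrakiev vartheta=2}), $\vartheta=3$ in part (\ref{part: petrakiev vartheta=3})), with the inductive step carried out by a general inner projection. Throughout I would exploit the geometric meaning of the symmetric position hypothesis: $\Lambda$ is the general hyperplane section of a nondegenerate irreducible curve $C\subset\PP^{N+1}$. For the candidate curve $D$ I take the one-dimensional part of $\mathrm{Bs}(|\I_{\Lambda}(2)|)$; this choice is forced by the Remark following Conjecture \ref{conjecture: eisenbud harris}, and it is the very mechanism underlying Castelnuovo's Lemma (Proposition \ref{prop: castelnuovo lemma}, the case $\vartheta=0$) and the Fano--Harris Theorem (Theorem \ref{prop: extension castelnuovo lemma by fano harris}, the case $\vartheta=1$), the latter serving as a building block.

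For the inductive step I would fix a general point $p\in\Lambda$ and project from it, setting $\overline{\Lambda}=\pi_{p}(\Lambda\setminus\{p\})\subset\PP^{N-1}$. Since $p$ lies on $C$, the inner projection $\overline{C}\subset\PP^{N}$ of $C$ from $p$ is again nondegenerate and irreducible, of degree $\deg C-1$, and $\overline{\Lambda}$ is its general hyperplane section; hence $\overline{\Lambda}$ is again in symmetric position, now in $\PP^{N-1}$. The numerical heart of the step is the claim that the excess over the Castelnuovo value is preserved, i.e. $h_{\overline{\Lambda}}(2)=2(N-1)+1+\vartheta$ (equivalently, that the number of conditions imposed on quadrics drops by exactly two). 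Granting this, the hypotheses hold one dimension lower: $\overline{\Lambda}$ has $\lambda-1\ge 2(N-1)+3+2\vartheta$ points (immediate from $\lambda\ge 2N+3+2\vartheta$), so the inductive hypothesis yields a curve $\overline{D}\subset\PP^{N-1}$ through $\overline{\Lambda}$ of degree $\le (N-1)+\vartheta$.

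It then remains to lift $\overline{D}$ to a curve $D\subset\PP^{N}$ through $\Lambda$ of degree $\le N+\vartheta$. I would take $D$ to be the one-dimensional component of $\mathrm{Bs}(|\I_{\Lambda}(2)|)$ lying over $\overline{D}$ under $\pi_{p}$: as $p$ is general, $D$ is smooth at $p$, so projection from $p$ drops its degree by exactly one and $\deg D=\deg\overline{D}+1\le N+\vartheta$. Checking that this component really contains all of $\Lambda$ and is cut out by the quadrics, with no extraneous higher-dimensional base locus surviving, is where the surplus points $\lambda\ge 2N+3+2\vartheta$ are spent, together with the bootstrapping of $h_{\Lambda}(t)$ up to the Hilbert polynomial of a degree-$(N+\vartheta)$ curve via the sub-additivity (\ref{eq: ciliberto lemma 1.5}) and (\ref{eq: ciliberto 1.6}), exactly as in the proof of Proposition \ref{prop: refinement castelnuovo bound by fano}.

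The two places where the difficulty concentrates, and where symmetric position (not merely the uniform position of the open Conjecture \ref{conjecture: eisenbud harris}) is indispensable, are the preservation of the excess $\vartheta$ under projection and the base cases $N=4$ for $\vartheta=2$ and $N=6$ for $\vartheta=3$, where no further projection is available and the curve must be exhibited by a direct analysis of the quadrics through $\Lambda$ in low-dimensional projective space. I expect the base cases to be the genuine obstacle: the thresholds $N\ge4$ and $N\ge6$ are precisely what keep them under control, and their breakdown for $\vartheta\ge4$ is the reason this method does not reach the full conjecture.
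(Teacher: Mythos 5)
The paper gives no proof of this theorem: it is quoted from \cite{petrakiev}, where it is established by an entirely different, algebraic route (generic initial ideals/Gr\"obner bases applied to points in symmetric position), not by induction on $N$ via inner projections. Your proposal must therefore stand on its own, and as written it does not, because each of its load-bearing steps is missing or unjustified. First, you read the hypothesis backwards: the paper and \cite{petrakiev} assert that a general hyperplane section of a nondegenerate irreducible curve is in symmetric position; the converse --- that every symmetric-position set arises as such a section, which is what your construction of $C\subset\PP^{N+1}$ requires --- is neither claimed nor known, so from the very first line you are proving a different (weaker) statement. Second, the ``numerical heart'' $h_{\overline{\Lambda}}(2)=2(N-1)+1+\vartheta$ cannot simply be granted, and the danger is in the opposite direction from the one you discuss. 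Quadrics of $\PP^{N-1}$ through $\overline{\Lambda}$ correspond to quadrics of $\PP^{N}$ through $\Lambda$ that are singular at $p$, and since singularity at a point of $\Lambda$ imposes at most $N$ further linear conditions on $H^{0}(\I_{\Lambda}(2))$, the only inequality that comes for free is $h_{\overline{\Lambda}}(2)\leq h_{\Lambda}(2)-1$: the excess over the Castelnuovo value $2(N-1)+1$ may a priori \emph{increase} by one under projection. If it does, the induction collapses, since no statement (proved or reachable by this scheme) is available for excess $\vartheta+1$; ruling this out is essentially as hard as the theorem itself.

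Third, the lifting step conceals the classical difficulty of the whole subject. From $\overline{D}\supset\overline{\Lambda}$ the only object you get upstairs for free is the cone $S(p,\overline{D})$, a \emph{surface} of degree $\leq N-1+\vartheta$ through $\Lambda$; your candidate $D$, ``the one-dimensional component of $\mathrm{Bs}(|\I_{\Lambda}(2)|)$ lying over $\overline{D}$'', is not shown to exist, nor to contain all of $\Lambda$, nor to project birationally onto $\overline{D}$ (which is what your count $\deg D=\deg\overline{D}+1$ uses); cutting the cone with the remaining quadrics only bounds the degree by $2(N-1+\vartheta)$. This lifting problem is precisely why Theorem \ref{prop: extension castelnuovo lemma by ciliberto} is weaker than part (a), and why the $\vartheta\leq 1$ cases (Proposition \ref{prop: castelnuovo lemma}, Theorem \ref{prop: extension castelnuovo lemma by fano harris}) do not iterate formally. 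Finally, the base cases $N=4$ and $N=6$ are left entirely open and you yourself call them ``the genuine obstacle''. What actually survives of the argument is the routine bookkeeping ($\lambda-1\geq 2(N-1)+3+2\vartheta$, and, modulo a standard monodromy/incidence argument, that the projected set is again a general hyperplane section of the projected curve). In short, this is a program, not a proof.
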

A results of same type as Theorem \ref{prop: petrakiev}(\ref{part: petrakiev vartheta=2})
was already known and due to 
Ciliberto (see \cite[Theorem~3.8]{ciliberto-hilbertfunctions}),
but it is in general weaker than 
Theorem \ref{prop: petrakiev}(\ref{part: petrakiev vartheta=2}).
The precise statement is as follows:
\begin{theorem}[Ciliberto]\label{prop: extension castelnuovo lemma by ciliberto}  Let 
$\Lambda\subset\PP^{N}$, $N\geq5$, be 
a general hyperplane section of a nondegenerate irreducible 
curve $C\subset\PP^{N+1}$.
 If $h_{\Lambda}(2)=2N+3$ and $\lambda\geq \frac{8}{3}(N+1)$, then 
$\Lambda$ lies on an irreducible curve of degree $\leq N+2$, 
cut out by all quadrics containing $\Lambda$.
\end{theorem}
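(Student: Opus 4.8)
The plan is to construct the required curve as the common zero locus of the quadrics through $\Lambda$ and to bound its degree by controlling the Hilbert function $h_\Lambda$. Since $\Lambda$ is a general hyperplane section of a nondegenerate irreducible curve, it is in symmetric (hence uniform) position and spans $\PP^N$, so that $h_\Lambda(0)=1$, $h_\Lambda(1)=N+1$, and by hypothesis $h_\Lambda(2)=2N+3$; the first differences of $h_\Lambda$ therefore begin $1,N,N+2$. This is exactly the numerical pattern produced by the hyperplane section of a curve lying on an irreducible curve $D\subset\PP^N$ of degree $N+2$, in perfect analogy with the cases $h_\Lambda(2)=2N+1$ and $h_\Lambda(2)=2N+2$ treated by Castelnuovo's Lemma \ref{prop: castelnuovo lemma} (rational normal curve of degree $N$) and by the Fano--Harris Theorem \ref{prop: extension castelnuovo lemma by fano harris} (elliptic normal curve of degree $N+1$). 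Accordingly I would set $D:=\mathrm{Bs}(|\I_{\Lambda}(2)|)$ and aim to prove that $D$ has a unique $1$-dimensional component, that this component is irreducible of degree $\leq N+2$, that it contains $\Lambda$, and that the quadrics cut it out scheme-theoretically.

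First I would record the lower bounds on $h_\Lambda$ coming from uniform position, namely (\ref{eq: ciliberto 1.6}) and the subadditivity (\ref{eq: ciliberto lemma 1.5}); these guarantee that the growth of $h_\Lambda$ is at least that of a degree $N+2$ curve. The decisive and harder input is a matching upper bound, and this is exactly where the symmetric position of $\Lambda$ and the hypothesis $\lambda\geq\frac{8}{3}(N+1)$ are used. Quantitatively, in the notation of Ciliberto's refined bound \ref{prop: refinement castelnuovo bound by ciliberto} with $\vartheta=2$, the inequality $\lambda\geq\frac{8}{3}(N+1)$ is what places the configuration in the range where the curve-detecting estimates are sharp, and in particular it forces $h_\Lambda(2)$ and $h_\Lambda(3)$ to be strictly less than $\lambda$, so that these values reflect the presence of an underlying curve rather than the finiteness of $\Lambda$. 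The key intermediate estimate I would establish is the upper bound $h_\Lambda(3)\leq 3N+5$ (equivalently, that the base locus of $|\I_\Lambda(2)|$ has degree at most $N+2$), obtained by a Castelnuovo-type analysis of the quadrics through carefully chosen subconfigurations of $\Lambda$ as in Lemma \ref{prop: castelnuovo argument}, exploiting the constraints that symmetric position imposes beyond mere uniform position.

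With the value $h_\Lambda(3)$ pinned down, the concluding step is a uniform-position, monodromy-type argument of exactly the kind underlying Castelnuovo's Lemma \ref{prop: castelnuovo lemma} and the Fano--Harris Theorem \ref{prop: extension castelnuovo lemma by fano harris}: one shows that $\mathrm{Bs}(|\I_\Lambda(2)|)$ is one-dimensional, that its degree equals the stable value $N+2$ of the first difference of $h_\Lambda$, and that it is an irreducible curve $D\supseteq\Lambda$ cut out by the quadrics. The uniqueness and irreducibility of $D$ then follow, as in the remark following Conjecture \ref{conjecture: eisenbud harris}, from the fact that $D$ is forced to be a component of the base locus of $|\I_\Lambda(2)|$.

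The main obstacle is precisely the upper bound of the middle step: ruling out that the quadrics through $\Lambda$ have a base locus of dimension $\geq 2$, or a one-dimensional base locus of degree $>N+2$. For $\vartheta\leq 1$ the excess $h_\Lambda(2)-(2N+1)$ is so small that the base locus is forced to be a rational or elliptic normal curve almost by a dimension count on the space of quadrics; for $\vartheta=2$ these degenerate possibilities are genuinely compatible with uniform position alone, and excluding them is what requires both the stronger symmetric-position hypothesis and the comparatively large lower bound $\lambda\geq\frac{8}{3}(N+1)$ on the number of points. This is also the source of the gap with Petrakiev's later Theorem \ref{prop: petrakiev}, which lowers the requirement to $\lambda\geq 2N+7$ at the cost of a substantially more delicate analysis.
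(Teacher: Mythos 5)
The paper itself offers no proof of this statement: it is stated as a known result with a citation to Ciliberto (\cite[Theorem~3.8]{ciliberto-hilbertfunctions}), so your proposal has to stand as a free-standing proof, and as such it has a genuine gap. The entire mathematical content of the theorem is concentrated in what you call the ``key intermediate estimate'' --- that $\mathrm{Bs}(|\I_{\Lambda}(2)|)$ is one-dimensional, irreducible, of degree at most $N+2$, and contains $\Lambda$ --- and you never prove it; you only announce that it would follow from ``a Castelnuovo-type analysis of the quadrics through carefully chosen subconfigurations of $\Lambda$ as in Lemma \ref{prop: castelnuovo argument}, exploiting the constraints that symmetric position imposes beyond mere uniform position.'' But Lemma \ref{prop: castelnuovo argument} only says that at most $2N+1$ points in general position impose independent conditions on quadrics; it gives no control on the dimension, the degree, or the irreducibility of the base locus of a linear system of quadrics, and you indicate no mechanism by which symmetric or uniform position would actually enter. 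Excluding a two-dimensional base locus, or a one-dimensional one of degree greater than $N+2$, is exactly where Castelnuovo's Lemma (Proposition \ref{prop: castelnuovo lemma}) and the Fano--Harris theorem (Theorem \ref{prop: extension castelnuovo lemma by fano harris}) need their delicate monodromy and projection arguments, and where the case $\vartheta=2$ requires genuinely new work; deferring that step to an unspecified analysis leaves the theorem unproved.

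Moreover, the numerical claims meant to motivate that step are partly false. You assert that $\lambda\geq\frac{8}{3}(N+1)$ ``forces $h_{\Lambda}(2)$ and $h_{\Lambda}(3)$ to be strictly less than $\lambda$.'' For $h_{\Lambda}(2)$ this is correct, but not for $h_{\Lambda}(3)$: by (\ref{eq: ciliberto lemma 1.5}) one has $h_{\Lambda}(3)\geq\min\{\lambda,\ h_{\Lambda}(2)+h_{\Lambda}(1)-1\}=\min\{\lambda,\ 3N+3\}$, and since $\frac{8}{3}(N+1)\leq 3(N+1)=3N+3$ for every $N$, the whole range $\frac{8}{3}(N+1)\leq\lambda\leq 3N+3$ (nonempty for $N\geq 5$) is compatible with the hypotheses of the theorem while subadditivity \emph{forces} $h_{\Lambda}(3)=\lambda$ there --- the opposite of what you claim. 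Similarly, the asserted ``equivalence'' between the bound $h_{\Lambda}(3)\leq 3N+5$ and the statement that the base locus of $|\I_{\Lambda}(2)|$ has degree at most $N+2$ is not an equivalence and is nowhere justified: a Hilbert-function bound on $\Lambda$ does not by itself produce a curve through $\Lambda$, which is precisely the implication the theorem demands. In short, the architecture of your plan (mirror the Castelnuovo/Fano--Harris strategy and control the base locus of the quadrics) is the standard and reasonable one, but as written the proof is missing its core step and rests on incorrect supporting estimates.
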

\section{Bound on the degree of zero-dimensional quadratic schemes} 
Let $\Lambda\subset\PP^N$ be a finite set of $\lambda$ points in uniform position
and furthermore 
we hypothesize that $\Lambda$ is cut out by quadrics. 
Following \cite{eisenbud-green-harris}, we can ask: 
given $h_{\Lambda}(2)$, what is the largest possible $\lambda$?
In other words, 
\emph{
What is the largest number $\lambda(h)$ of points of intersection of a linear 
system of quadrics of codimension $h$ in the space of all quadrics in $\PP^N$,
given that the intersection of those quadrics is zero-dimensional?}
In these terms, we may summarize the state    of 
our knowledge in  Table \ref{table: our knowledge on upper bound of the degree} and,
\begin{table}[htbp]
\begin{center}
\begin{tabular}{|c|c||c|}
\hline
 $h_{\Lambda}(2)$ & $\lambda\leq$ & Proof \\
\hline
\hline
 $N+1$ & $N+1$ & elementary \\
 \hline
 $N+2$ & $N+2$ & elementary \\
 \hline
 $\vdots$ &$\vdots$ &$\vdots$ \\
 \hline
 $2N-1$ & $2N-1$ & elementary \\
\hline
 $2N$ & $2N$ & elementary \\
\hline
 $2N+1$ & $2N+2$ & Proposition \ref{prop: castelnuovo lemma} \\
\hline
 $2N+2$ & $2N+4$ & Theorem \ref{prop: extension castelnuovo lemma by fano harris}  \\
\hline
 $2N+3$ & $2N+6$ & \begin{tabular}{c} Theorem \ref{prop: petrakiev}(\ref{part: petrakiev vartheta=2}) (if $N\geq4$ and \\ the points are in symmetric position) \end{tabular} \\
\hline
 $2N+4$ & $2N+8$ & \begin{tabular}{c}  Theorem \ref{prop: petrakiev}(\ref{part: petrakiev vartheta=3}) (if $N\geq6$ and \\ the points are in symmetric position) \end{tabular} \\
 \hline
\end{tabular}
\end{center}
\caption{State of our knowledge.}
\label{table: our knowledge on upper bound of the degree}
\end{table}
by Conjecture \ref{conjecture: eisenbud harris}, we can also extend the pattern, 
conjecturing an upper bound on the largest $\lambda$,
for $2N+5\leq h_{\Lambda}(2)\leq 3N-2$. 
From the reasons given in \cite{eisenbud-green-harris}, this conjectured upper bound 
 can be further extended. In order to present this, 
let $m=\binom{N+2}{2}-h_{\Lambda}(2)$ be the number of independent 
quadrics containing $\Lambda\subset\PP^N$ and observe that,  
given $N$, any number $m\geq N+1$ can be uniquely written in the form
$$
m=(N+1)+\binom{b}{2}+c,\quad b>c\geq 0 .
$$
With this notation, 
we make the following:
\begin{conjecture}[Eisenbud, Green, Harris]\label{conjecture: on upper bound of the degree}
 If $\Lambda$ is any nondegenerate collection of $\lambda$ points in uniform position 
in $\PP^N$ lying on $m$ independent quadrics whose intersection is zero-dimensional,
then
$$
\lambda\leq (2b-c+1)2^{N-b-1}.
$$
\end{conjecture}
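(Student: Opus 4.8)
The plan is to read the statement as an extremal problem and to prove it by identifying the degree-maximizing configuration and computing its degree. Fix $N$ and the number $m$ of independent quadrics through $\Lambda$, equivalently fix $h_{\Lambda}(2)=\binom{N+2}{2}-m$; among all zero-dimensional $\Lambda\subset\PP^N$ in uniform position that are cut out by quadrics and have this value of $m$, I would determine the one of largest degree $\lambda$ and check that its degree equals $(2b-c+1)2^{N-b-1}$, for $m=(N+1)+\binom{b}{2}+c$. Since $\Lambda$ is zero-dimensional in $\PP^N$, its saturated ideal $\I_{\Lambda}\subset S=\CC[x_0,\ldots,x_N]$ has height $N$, so a general choice of $N$ quadrics in $(\I_{\Lambda})_2$ forms a regular sequence cutting out a complete intersection of degree $2^N$ containing $\Lambda$; this already gives the crude bound $\lambda\leq 2^N$ and fixes the starting point of the refinement.

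First I would pass to the one-dimensional complete intersection ring $A=S/(q_0,\ldots,q_{N-1})$ defined by such a regular sequence, which has degree $2^N$; the homogeneous coordinate ring $S/\I_{\Lambda}$ is then a further quadric-quotient of $A$ of degree $\lambda$, the image of $(\I_{\Lambda})_2$ in $A$ having dimension $m-N=1+\binom{b}{2}+c$. The core of the argument is a Macaulay-type comparison: I would bound the degree of this quotient by the degree of the corresponding \emph{lex-plus-powers} ideal, namely the ideal generated by $x_0^2,\ldots,x_{N-1}^2$ together with the lex-segment of $1+\binom{b}{2}+c$ additional quadrics. In the monomial model $S/(x_0^2,\ldots,x_{N-1}^2)$ a basis is given by the square-free monomials in $x_0,\ldots,x_{N-1}$ times powers of the free variable $x_N$, and removing the lex-segment of quadrics deletes a controlled staircase of these. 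A direct count of the survivors returns precisely $(2b-c+1)2^{N-b-1}$: the lex-segment exhausts the square-free monomials in the first $b+1$ coordinates, leaving the complete-intersection factor $2^{N-b-1}$ from the remaining free square directions, while the linear factor $2b-c+1$ accounts for the top layers, with $c$ recording the partially filled final layer. The uniform position hypothesis is exactly what legitimizes the passage to this generic monomial model, just as it underlies the estimates (\ref{eq: ciliberto 1.6})--(\ref{eq: ciliberto 1.7}) and the Castelnuovo argument of Lemma \ref{prop: castelnuovo argument}.

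As a consistency check I would match the boundary of the range against the already established entries of Table \ref{table: our knowledge on upper bound of the degree}. For $m=\binom{N}{2}$, that is $h_{\Lambda}(2)=2N+1$, the decomposition gives $b=N-2$, $c=N-4$ and the formula returns $\lambda\leq 2N+2$, recovering the Castelnuovo Lemma bound of Proposition \ref{prop: castelnuovo lemma}; for $m=\binom{N}{2}-1$, that is $h_{\Lambda}(2)=2N+2$, it gives $b=N-2$, $c=N-5$ and returns $\lambda\leq 2N+4$, recovering the Fano--Harris bound of Theorem \ref{prop: extension castelnuovo lemma by fano harris}. In this way the statement dovetails with the Eisenbud--Harris picture (Conjecture \ref{conjecture: eisenbud harris}), under which a set imposing few conditions on quadrics either lies on a low-degree curve, contradicting zero-dimensionality, or else has few points.

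The hard part will be the Macaulay-type comparison itself. Reducing an \emph{arbitrary} regular sequence of quadrics to the monomial sequence $(x_0^2,\ldots,x_{N-1}^2)$ is precisely the degree-two instance of the Eisenbud--Green--Harris (lex-plus-powers) conjecture, which is open in general; for the monomial regular sequence the needed inequality is the Clements--Lindstr\"om theorem, and this, together with the low-codimension cases that already support the proven parts of Conjecture \ref{conjecture: eisenbud harris}, is what one can establish unconditionally. The cases left open are exactly those for which no such reduction to the monomial model is presently available, which is why the bound is recorded here only conjecturally; the realistic outcome of this plan is a proof conditional on Eisenbud--Green--Harris, unconditional in the ranges where that conjecture is known.
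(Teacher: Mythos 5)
You have set out to prove a statement that the paper itself records only as a conjecture: Conjecture \ref{conjecture: on upper bound of the degree} is open, and the paper offers no proof of it. All the paper does is note which special cases are known: $\vartheta=0$ and $\vartheta=1$ (i.e.\ the first row of Table \ref{table: conjecture explained} at its extremes) follow from Castelnuovo's Lemma (Proposition \ref{prop: castelnuovo lemma}) and the Fano--Harris theorem (Theorem \ref{prop: extension castelnuovo lemma by fano harris}); $\vartheta=2,3$ follow from Petrakiev's Theorem \ref{prop: petrakiev} only under the stronger symmetric-position hypothesis; and the case $m=N+1$, $N\leq 6$ was proved in the Eisenbud--Green--Harris paper itself. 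So there is no ``paper's own proof'' to match, and any complete blind proof would be a new result settling an open problem.

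Your plan, to its credit, identifies exactly the right combinatorial source of the bound: in the monomial model $S/(x_0^2,\ldots,x_{N-1}^2)$ the lex-segment count does return $(2b-c+1)2^{N-b-1}$, your boundary checks ($b=N-2$, $c=N-4$ giving $2N+2$, and $b=N-2$, $c=N-5$ giving $2N+4$) are correct and agree with Table \ref{table: our knowledge on upper bound of the degree}, and your extremal configuration matches the paper's sharpness remark (a variety of degree $2b-c+1$ and dimension $N-b-1$ cut by $N-b-1$ further quadrics). The genuine gap is the step you call the ``Macaulay-type comparison'': replacing an arbitrary regular sequence of quadrics in $(\I_{\Lambda})_2$ by the monomial sequence $(x_0^2,\ldots,x_{N-1}^2)$ while controlling Hilbert functions is precisely the degree-two Eisenbud--Green--Harris lex-plus-powers conjecture, i.e.\ it is (essentially) the statement being proved. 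Invoking it makes the argument circular, not conditional on something independent; Clements--Lindstr\"om only covers the case where the regular sequence already consists of pure powers, and no flat degeneration from a general ideal of quadrics to that model is available. Moreover, uniform position does not ``legitimize the passage to the generic monomial model'': it is a statement about Hilbert functions of subsets of $\Lambda$, and it is not known to force any initial-ideal or lex-plus-powers structure on $\I_{\Lambda}$. So what your plan can deliver unconditionally is only what the paper already lists as known, and the honest conclusion of your last paragraph --- that the result remains conjectural outside those ranges --- is the correct status of the statement.
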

\begin{remark}
The bound of Conjecture \ref{conjecture: on upper bound of the degree}, if indeed it holds, 
is sharp: for $m$ quadrics, we can take $\Lambda$ the intersection of 
$N-b-1$ quadrics with a linearly normal variety of degree $2b-c+1$ and dimension 
$N-b-1$ in $\PP^N$ (for example, the divisor residual to 
$c+1$ planes in the intersection of a rational normal $(N-b)$-fold scroll in $\PP^N$ 
with a quadric).
\end{remark}
Conjecture \ref{conjecture: on upper bound of the degree} is 
 explained in Table \ref{table: conjecture explained}, where
 $h_{\Lambda}(2)=2N+1+\vartheta$, i.e.  
$m=N(N-1)/2-\vartheta$. 
\begin{table}[htbp]
\begin{center}
\begin{tabular}{|c|c|c|c|}
\hline
 $\vartheta$ & $b$ & $c$ & $\lambda\leq$ \\
\hline
\hline
$-2< \vartheta \leq N-4$ & $N-2$ & $N-\vartheta-4$ & $2N+2\vartheta+2$ \\
 \hline
$ N-4< \vartheta \leq 2N-7$ & $N-3$ & $2N-\vartheta-7$ & $4\vartheta+8$ \\
 \hline
$ 2N-7< \vartheta \leq 3N-11$ & $N-4$ & $3N-\vartheta-11$ & $-8N+8\vartheta+32$ \\
 \hline
$ 3N-11< \vartheta \leq 4N-16$ & $N-5$ & $4N-\vartheta-16$ & $-32N+16\vartheta+112$ \\
 \hline
 $\vdots$ &$\vdots$ &$\vdots$ &$\vdots$\\
 \hline
\end{tabular}
\end{center}
\caption{Conjectured upper bound on $\lambda$.}
\label{table: conjecture explained}
\end{table}
Note that the first row of this table 
is a consequence of Conjecture \ref{conjecture: eisenbud harris}.
So, for $\vartheta=0$ and $\vartheta=1$, 
Conjecture \ref{conjecture: on upper bound of the degree} follows, respectively, 
from Proposition \ref{prop: castelnuovo lemma} and 
Theorem \ref{prop: extension castelnuovo lemma by fano harris}. 
For $\vartheta=2$ 
and $\vartheta=3$, 
it remains open in its generality, but
it follows from Theorem \ref{prop: petrakiev}
 under the hypothesis that the points are in symmetric position.
In \cite{eisenbud-green-harris} another special case  
has been proved,
precisely the case in which $m=N+1$ and $N\leq6$.

\chapter{Introduction to quadratic birational transformations of a projective space into a quadric}
In this chapter we begin the study of quadratic birational transformations 
$\varphi:\PP^n\dashrightarrow\overline{\varphi(\PP^n)} =\sS\subset\PP^{n+1}$,
with particular regard to the case in which
$\sS$ is a quadric hypersurface.
\section{Transformations of type \texorpdfstring{$(2,1)$}{(2,1)}}\label{sec: type 2-1}
\begin{definition}
A birational transformation 
 $\varphi:\PP^n\dashrightarrow\overline{\varphi(\PP^n)} =\sS\subset\PP^{n+1}$ 
is said to be of type  $(2,d)$ if it is quadratic 
(i.e. defined by a linear system of quadrics without fixed component) 
and $\varphi^{-1}$ can be defined by a linear system contained in $|\O_{\sS}(d)|$,
with $d$ minimal with this property.
More generally, $\varphi$ is said to be of type 
$(d_1,d_2)$ if $\varphi$ (resp. $\varphi^{-1}$) 
is defined by a linear system contained in $|\O_{\PP^n}(d_1)|$ 
(resp. $|\O_{\sS}(d_2)|$), with $d_1$ and $d_2$ minimal.
$\varphi$ is said to be \emph{special} if its base locus  is smooth and connected.
\end{definition}
\begin{lemma}\label{prop: cohomology I2B} 
Let $\varphi:\PP^n\dashrightarrow\sS\subset\PP^{n+1}$ be 
a quadratic birational transformation, 
with base locus $\B$ and image a normal nonlinear hypersurface $\sS$. 
Then  $h^0(\PP^{n}, \I_{\B}(2))=n+2$. 
\end{lemma}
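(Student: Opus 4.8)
The plan is to prove the two inequalities $h^0(\PP^n,\I_{\B}(2))\geq n+2$ and $h^0(\PP^n,\I_{\B}(2))\leq n+2$ separately. For the lower bound, write $\varphi=[F_0:\cdots:F_{n+1}]$ with $F_i\in H^0(\PP^n,\O_{\PP^n}(2))$; by definition of the base locus each $F_i$ vanishes on $\B$, so $\langle F_0,\ldots,F_{n+1}\rangle\subseteq H^0(\PP^n,\I_{\B}(2))$. Since $\sS$ is a nonlinear hypersurface it is nondegenerate in $\PP^{n+1}$, hence the $F_i$ are linearly independent and $h^0(\PP^n,\I_{\B}(2))\geq n+2$. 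I record here that $h^0(\sS,\O_{\sS}(1))=n+2$ as well: from the ideal sequence $0\to\O_{\PP^{n+1}}(1-\deg\sS)\to\O_{\PP^{n+1}}(1)\to\O_{\sS}(1)\to 0$ and $\deg\sS\geq 2$ one gets that $H^0(\PP^{n+1},\O(1))\to H^0(\sS,\O_{\sS}(1))$ is an isomorphism. The pullback $\varphi^{\ast}\colon H^0(\sS,\O_{\sS}(1))\hookrightarrow H^0(\PP^n,\I_{\B}(2))$ has image exactly $\langle F_0,\ldots,F_{n+1}\rangle$, so the content of the lemma is that $\varphi$ is defined by the \emph{complete} system of quadrics through $\B$.

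For the upper bound I would argue by contradiction, and this is where normality of $\sS$ must enter. Suppose there is a quadric $G\in H^0(\PP^n,\I_{\B}(2))$ with $G\notin\langle F_0,\ldots,F_{n+1}\rangle$. Then $\Phi'=[F_0:\cdots:F_{n+1}:G]$ defines a rational map $\PP^n\dashrightarrow V'=\overline{\Phi'(\PP^n)}\subseteq\PP^{n+2}$, and by the linear independence of $G$ and the $F_i$ the variety $V'$ is nondegenerate, hence $\operatorname{codim}V'=2$. The projection $\pi$ from the point $p=[0:\cdots:0:1]$ satisfies $\pi\circ\Phi'=\varphi$; since $\varphi$ is birational this forces $\Phi'$ to be birational onto $V'$ (so $\dim V'=n$) and $\pi|_{V'}\colon V'\dashrightarrow\sS$ to be birational as well.

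The heart of the proof is that such a birational projection of the codimension-two variety $V'$ cannot reach a normal hypersurface. Assuming first that $p\notin V'$, the projection $\pi|_{V'}\colon V'\to\sS$ is a \emph{finite} birational morphism onto the normal variety $\sS$, hence an isomorphism by Zariski's Main Theorem. Since $\pi$ is linear, $(\pi|_{V'})^{\ast}\O_{\sS}(1)=\O_{V'}(1)$, so this isomorphism would give $h^0(V',\O_{V'}(1))=h^0(\sS,\O_{\sS}(1))=n+2$; but $V'$ is nondegenerate in $\PP^{n+2}$, whence $h^0(V',\O_{V'}(1))\geq n+3$, a contradiction. Therefore no such $G$ exists and $h^0(\PP^n,\I_{\B}(2))=n+2$.

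The step I expect to be the main obstacle is exactly the finiteness hypothesis $p\notin V'$, which can fail when $G$ vanishes along $\B$ to lower order than the system $\langle F_0,\ldots,F_{n+1}\rangle$. To cover this case I would either pass to the graph $Y'=\overline{\Graph(\Phi')}$, on which the induced map $Y'\to\sS$ is a birational morphism and $q_{\ast}\O_{Y'}=\O_{\sS}$ by normality, or argue through secant geometry: if $\Sec(V')\subsetneq\PP^{n+2}$ then $\delta(V')=2n+1-\dim\Sec(V')\geq n=\dim V'$, so by Proposition \ref{prop: LQEL with delta=r and delta=r-1} the conic-connected variety $V'$ would be a quadric hypersurface, contradicting $\operatorname{codim}V'=2$; hence $\Sec(V')=\PP^{n+2}$, the centre $p$ lies on an $(n-1)$-dimensional family of secant lines of $V'$, and collapsing these under $\pi$ produces a codimension-one double locus on $\sS$, again violating normality (Serre's $R_1$ criterion). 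Either way the extra quadric is excluded; controlling this degenerate position of the centre $p$ is the only genuinely delicate point, everything else being formal.
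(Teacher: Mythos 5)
Your lower bound and your Case 1 ($p\notin V'$) are correct: a finite birational morphism onto the normal variety $\sS$ is an isomorphism, and the resulting count $n+3\leq h^0(V',\O_{V'}(1))=h^0(\sS,\O_{\sS}(1))=n+2$ is a genuine contradiction. The gap is exactly where you place it, namely Case 2 ($p\in V'$), and neither of your proposed repairs closes it. Sketch (b) is unsound. First, Proposition \ref{prop: LQEL with delta=r and delta=r-1} is a statement about $LQEL$-varieties, and you have not shown that $V'$ is one (conic-connectedness, even if you proved it, is weaker). More fatally, the concluding mechanism --- ``secants through $p$ collapse to a codimension-one double locus, violating $R_1$'' --- is false as a general principle: the cubic scroll $\PP_{\PP^1}(\O(1)\oplus\O(2))\subset\PP^4$ is nondegenerate of codimension two, has secant variety equal to all of $\PP^4$, and its projection from a suitable point of itself is \emph{birational onto a smooth quadric surface} in $\PP^3$. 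So the abstract configuration of Case 2 (nondegenerate codimension-two $V'$, $p\in V'$, projection birational onto a normal hypersurface) carries no contradiction whatsoever; any correct argument must use that $G$ vanishes on the whole base \emph{scheme} $\B$, which sketch (b) never does. (In the scroll example, the ``extra'' quadric $x_0^2$ fails to vanish at one of the two base points of the composite map $\PP^2\dashrightarrow\PP^3$ --- that is precisely why the Lemma survives there.)

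Sketch (a) points toward the right argument but omits all of its substance. Since $G\in H^0(\PP^n,\I_{\B}(2))$, the ideal sheaves generated by $(F_0,\ldots,F_{n+1})$ and by $(F_0,\ldots,F_{n+1},G)$ both equal $\I_{\B}$; hence $\overline{\Graph(\Phi')}=\overline{\Graph(\varphi)}=\Bl_{\B}(\PP^n)=:X$, and on $X$ both maps are defined by subsystems of the \emph{same} line bundle
$$
M\ =\ \pi^{\ast}\O_{\PP^n}(2)\otimes\O_X(-E)\ =\ {\pi'}^{\ast}\O_{\sS}(1),
$$
where $\pi:X\to\PP^n$, $E$ is the exceptional divisor and $\pi':X\to\sS$ resolves $\varphi$. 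In particular your ``induced map $Y'\to\sS$'' really is a morphism, which is not automatic from what you wrote. Now $\pi^{\ast}F_0,\ldots,\pi^{\ast}F_{n+1},\pi^{\ast}G$ are $n+3$ independent sections of $M$, while Zariski's Main Theorem ($\sS$ normal, so ${\pi'}_{\ast}\O_X=\O_{\sS}$), the projection formula, and the linear normality of a hypersurface of degree $\geq 2$ give $h^0(X,M)=h^0(\sS,\O_{\sS}(1))=n+2$ --- contradiction. Once this is written down, your case distinction and the contradiction format both become superfluous: this is exactly the paper's proof, which directly exhibits the chain $V\hookrightarrow H^0(\PP^n,\I_{\B}(2))\hookrightarrow H^0(X,M)\simeq H^0(\sS,\O_{\sS}(1))\simeq H^0(\PP^{n+1},\O_{\PP^{n+1}}(1))$ and concludes by comparing the dimensions $n+2$ at the two ends.
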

\begin{proof} 
 Resolve the indeterminacies of $\varphi$ with the diagram 
\begin{equation}\label{eq: diagram resolving map} 
\xymatrix{ & X \ar[dl]_{\pi} \ar[dr]^{\pi'}\\ \PP^n\ar@{-->}[rr]^{\varphi}& & \sS } 
\end{equation}
where $\pi:X=\Bl_{\B}(\PP^n)\rightarrow\PP^n$ is the blow-up of
 $\PP^n$ along $\B$,   $E$ the exceptional divisor,
 $\pi'=\varphi\circ\pi$.     
By  Zariski's Main Theorem (\cite[\Rmnum{3} Corollary~11.4]{hartshorne-ag} or \cite[\Rmnum{3} \S  9]{mumford})
we have ${\pi'}_{\ast}(\O_{X})=\O_{\sS}$ and by  projection formula 
 \cite[\Rmnum{2} Exercise~5.1]{hartshorne-ag} it follows
${\pi'}_{\ast}({\pi'}^{\ast}(\O_{\sS}(1)))=\O_{\sS}(1)$.
Now, putting $V\subseteq H^0(\PP^n,\O_{\PP^n}(2))$ 
the linear vector space associated to the linear system $\sigma$ defining $\varphi$,
 we have the natural inclusions
\begin{eqnarray*}
 V & \hookrightarrow & H^0(\PP^n,\I_{\B}(2)) 
 \hookrightarrow   H^0(X,\pi^{\ast}(\O_{\PP^n}(2))\otimes\pi^{-1}\I_{\B}\cdot \O_{X}) \\
&\stackrel{\simeq}{\rightarrow} & H^0(X,\pi^{\ast}(\O_{\PP^n}(2))\otimes\O_{X}(-E))
 \stackrel{\simeq}{\rightarrow}  H^0(X,{\pi'}^{\ast}(\O_{\sS}(1))) \\
 & \stackrel{\simeq}{\rightarrow} & H^0(\sS,\O_{\sS}(1)) \stackrel{\simeq}{\rightarrow}  H^0(\PP^{n+1},\O_{\PP^{n+1}}(1)).
\end{eqnarray*}
All these inclusions are isomorphisms, since $\dim(V)=n+2=h^0(\PP^{n+1},\O_{\PP^{n+1}}(1))$.
\end{proof}
\begin{proposition}\label{prop: type 2-1} 
Let $\varphi,\B,\sS$ be as in Lemma \ref{prop: cohomology I2B}.
The following conditions are equivalent:
\begin{enumerate}
 \item\label{item: 1, 2-1} $h^0(\PP^n,\I_{\B}(1))\neq0$; 
 \item\label{item: 2, 2-1} $\B$ is a  quadric of codimension $2$;
 \item\label{item: 3, 2-1} $\B$ is a complete intersection;
 \item\label{item: 4, 2-1} $\varphi$ is of type $(2,1)$.
\end{enumerate}
If one of the previous conditions is satisfied, then 
$\sS$ is a quadric and
 $\rk(\B)=\rk(\sS)-2$.     
\end{proposition}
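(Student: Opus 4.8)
The plan is to prove the cyclic chain $(\ref{item: 1, 2-1})\Rightarrow(\ref{item: 2, 2-1})\Rightarrow(\ref{item: 3, 2-1})\Rightarrow(\ref{item: 1, 2-1})$ among the three ``intrinsic'' conditions on $\B$, and then to splice in the fourth by establishing $(\ref{item: 2, 2-1})\Rightarrow(\ref{item: 4, 2-1})\Rightarrow(\ref{item: 1, 2-1})$; the two supplementary assertions (that $\sS$ is a quadric and that $\rk(\B)=\rk(\sS)-2$) will fall out of this last, geometric, step. The only quantitative input I would use throughout is the equality $h^0(\PP^n,\I_{\B}(2))=n+2$ furnished by Lemma \ref{prop: cohomology I2B}.

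For $(\ref{item: 1, 2-1})\Rightarrow(\ref{item: 2, 2-1})$, suppose a linear form $\ell$ vanishes on $\B$. Then the $n+1$ independent quadrics $\ell x_0,\ldots,\ell x_n$ lie in $H^0(\I_{\B}(2))$, so by the count there is exactly one further independent quadric $Q$ in that space, and $Q\notin(\ell)$. Since $V(\ell x_0,\ldots,\ell x_n)=V(\ell)$ scheme-theoretically in $\PP^n$, the base scheme is $\B=V(\ell)\cap V(Q)$, a quadric of codimension two. The step $(\ref{item: 2, 2-1})\Rightarrow(\ref{item: 3, 2-1})$ is immediate. For $(\ref{item: 3, 2-1})\Rightarrow(\ref{item: 1, 2-1})$ I would first note that $\B$ is cut out by the quadrics of its own defining system; hence if $\B=V(f_1,\ldots,f_c)$ is a complete intersection, its generators must have degree $\le 2$, for a generator of degree $\ge 3$ cannot be recovered from the quadrics through $\B$ and dropping it would strictly enlarge the zero locus, contradicting that $\B$ is cut out by quadrics. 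Writing $\B$ as the complete intersection of $a$ linear forms and $b$ quadrics, a short computation of $H^0(\I_{\B}(2))$ shows that $a=0$ would force $h^0=b=\mathrm{codim}(\B)\le n$, against $h^0=n+2$; therefore $a\ge 1$, $\B$ is degenerate, and $(\ref{item: 1, 2-1})$ holds.

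The geometric heart is $(\ref{item: 2, 2-1})\Rightarrow(\ref{item: 4, 2-1})$. With $\B=V(\ell,Q)$ one has $H^0(\I_{\B}(2))=\langle \ell x_0,\ldots,\ell x_n,Q\rangle$, so the transformation is $\varphi=[\ell x_0:\cdots:\ell x_n:Q]$; setting $y_i=\ell x_i$ and $y_{n+1}=Q$ one checks the single relation $Q(y_0,\ldots,y_n)=y_{n+1}\,\ell(y_0,\ldots,y_n)$, so $\sS$ is a quadric and $\varphi$ is the inverse of the projection of $\sS$ from the point $[0:\cdots:0:1]\in\sS$. That projection is linear, whence $\varphi$ is of type $(2,1)$. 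The closing implication $(\ref{item: 4, 2-1})\Rightarrow(\ref{item: 1, 2-1})$ runs in reverse: a linear inverse is the restriction to $\sS$ of a projection of $\PP^{n+1}$ from a point $p$, and birationality onto a nonlinear hypersurface forces $p\in\sS$ and $\deg\sS=2$; the base scheme of such an inverse stereographic projection is visibly contained in a hyperplane, producing a nonzero element of $H^0(\I_{\B}(1))$. Finally, normalizing $\ell=x_0$, the quadric reads $\sS=V(y_0 y_{n+1}-Q)$ in the coordinates of $\PP^{n+1}$; absorbing the $y_0$-terms of $Q$ into a coordinate change on $y_{n+1}$ reduces the equation to $y_0 y_{n+1}'-Q'(y_1,\ldots,y_n)$, so that $\rk(\sS)=2+\rk(Q')=2+\rk(\B)$, the form $Q'$ being the restriction of $Q$ to $V(x_0)$ that cuts out $\B$.

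I expect the main obstacle to be $(\ref{item: 3, 2-1})\Rightarrow(\ref{item: 1, 2-1})$: the passage from the abstract hypothesis that $\B$ is a complete intersection to control of the degrees of its generators genuinely uses that $\B$ is cut out by quadrics \emph{scheme-theoretically}, not merely set-theoretically, in order to exclude generators of degree $\ge 3$. The identification with the inverse stereographic projection in $(\ref{item: 2, 2-1})\Rightarrow(\ref{item: 4, 2-1})$ is the conceptual crux but becomes transparent once the linear system is written as $\langle \ell x_i, Q\rangle$; the rank equality is then a routine normal-form computation that nonetheless must be tracked with care.
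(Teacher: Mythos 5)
Most of your proposal coincides with the paper's own proof: your $(\ref{item: 1, 2-1})\Rightarrow(\ref{item: 2, 2-1})$ is the same dimension count with the $n+1$ quadrics $\ell x_0,\ldots,\ell x_n$ plus one extra quadric $Q$; your $(\ref{item: 3, 2-1})\Rightarrow(\ref{item: 1, 2-1})$ is the same complete-intersection argument (generators of degree $\leq 2$ because $\B$ is cut out by quadrics, then $h^0(\PP^n,\I_{\B}(2))=\mathrm{codim}(\B)\leq n<n+2$ if no generator is linear); and your $(\ref{item: 2, 2-1})\Rightarrow(\ref{item: 4, 2-1})$, together with the rank identity $\rk(\sS)=\rk(\B)+2$, is the paper's explicit stereographic computation, done without normalizing $Q$ to a sum of squares. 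All of that is correct.

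The genuine gap is in $(\ref{item: 4, 2-1})\Rightarrow(\ref{item: 1, 2-1})$. You correctly identify $\varphi^{-1}$ with the restriction to $\sS$ of a projection $\pi_p$ from a point $p$, and correctly deduce $p\in\sS$; but you then assert that ``birationality onto a nonlinear hypersurface forces $\deg\sS=2$.'' That inference is false: the projection of a degree-$\Delta$ hypersurface from a point of multiplicity $m$ is birational if and only if $\Delta=m+1$, so birationality alone is compatible with every $\Delta$. For instance, projecting an irreducible cubic hypersurface from an ordinary double point is birational with linear inverse, and the base locus of that inverse is $V(A_2,A_3)$ (tangent cone and cubic term), which is in general nondegenerate; this situation is excluded by your hypotheses only because that inverse is defined by cubics, i.e.\ precisely by the quadraticity of $\varphi$ --- a hypothesis your argument for this step never invokes. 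The repair is what the paper does: from $\pi_p\circ\varphi=\mathrm{id}_{\PP^n}$, writing $\varphi=[F_0:\cdots:F_{n+1}]$ one gets $[F_0:\cdots:F_n]=[x_0:\cdots:x_n]$, hence $F_i=f\,x_i$ for a single form $f$, which has degree $1$ because the $F_i$ are quadrics; then $\B=V(f,F_{n+1})$ is degenerate, giving $(\ref{item: 1, 2-1})$ (indeed $(\ref{item: 2, 2-1})$) directly, and $\deg\sS=2$ follows afterwards exactly as in your $(\ref{item: 2, 2-1})\Rightarrow(\ref{item: 4, 2-1})$ computation. Without this use of quadraticity your final implication does not close the equivalence for condition $(\ref{item: 4, 2-1})$.
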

\begin{figure}[htbp]
\centering
\includegraphics[width=0.55\textwidth]{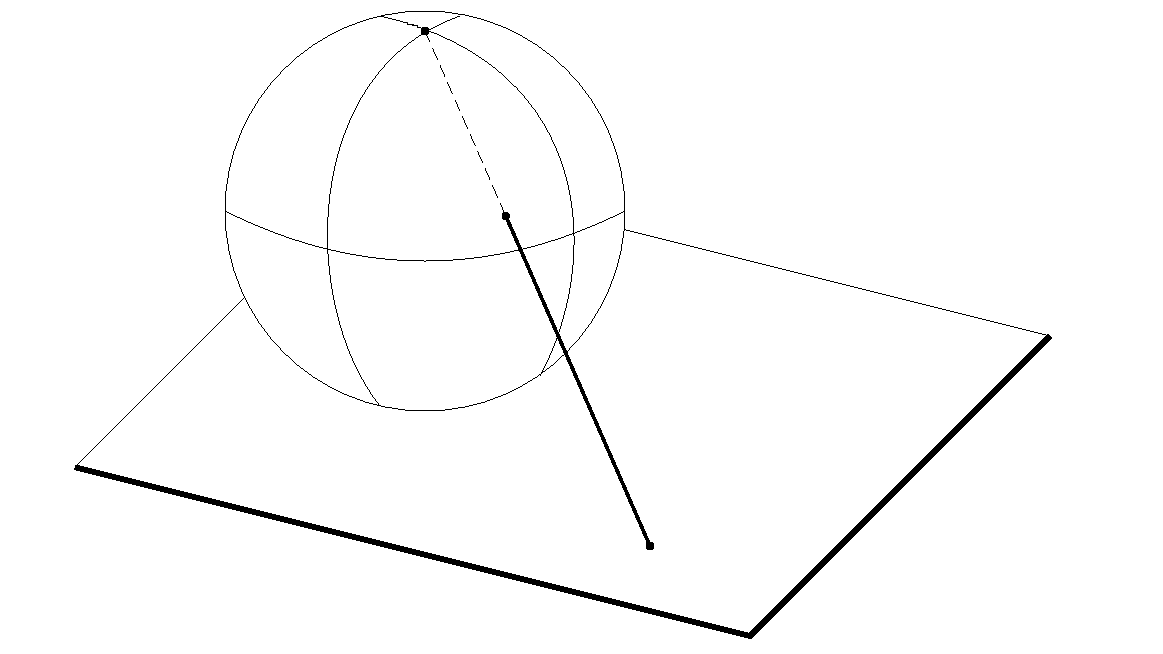}
\caption{Stereographic projection.}
\label{fig: stereographic}   
\end{figure}
\begin{proof} 
$(\ref{item: 1, 2-1}\Rightarrow\ref{item: 2, 2-1}\mbox{ and }\ref{item: 3, 2-1})$.
 If $f\in H^0(\PP^n,\I_{\B}(1))$ is a nonzero linear form, 
then the 
$n+1$
quadrics
 $x_0f,\ldots,x_n f$  
generate a subspace of codimension $1$
 of $H^0(\PP^n,\I_{\B}(2))$,   by Lemma \ref{prop: cohomology I2B}. 
Thus
there exists a 
quadric $F$ such that
$H^0(\PP^n,\I_{\B}(2))=\langle F, x_0f,\ldots,x_nf\rangle$
and $\B=V(F,f)\subset V(f)\subset\PP^{n}$.

$(\ref{item: 2, 2-1}\Rightarrow\ref{item: 1, 2-1}\mbox{ and }\ref{item: 4, 2-1})$. 
$\B$ is necessarily degenerate and,
modulo a change of coordinates
on $\PP^{n}$, 
we can suppose
 $\B=V(x_0^2+\cdots+x_s^2, x_n)$,   with $s\leq n-1$.
Hence 
$$
\varphi([x_0,\ldots,x_n])=
[x_0x_n,\ldots,x_n^2,x_0^2+\cdots+x_s^2]=[y_0,\ldots,y_{n+1}],
$$
$$
\sS=V(y_0^2+\cdots+y_s^2-y_{n}y_{n+1}),\quad \varphi^{-1}([y_0,\ldots,y_{n+1}])=[y_0,\ldots,y_{n}]. 
$$

$(\ref{item: 4, 2-1}\Rightarrow\ref{item: 2, 2-1})$.     We can suppose that
 $\varphi^{-1}$ is the projection from the point
 $[0,\ldots,0,1]$.     Thus, 
if $\varphi=(F_0,\ldots,F_{n+1})$,   then $F_0,\ldots,F_n$ have
a common factor
 $f$ and  $\B=V(F_{n+1},f)$.

$(\ref{item: 3, 2-1}\Rightarrow\ref{item: 1, 2-1})$.  If $h^0(\PP^n,\I_{\B}(1))=0$ and $\B$ is 
a complete intersection,  then every minimal system of generators of the ideal of
 $\B$ consists of forms of degree $2$, but then  
$h^0(\PP^n,\I_{\B}(2))=n-\dim(\B)<n+2$,   
absurd.    
\end{proof}

\section{Transformations of \texorpdfstring{$\PP^3$}{P3} and \texorpdfstring{$\PP^4$}{P4}}\label{sec: nonspecial case}
In this section 
we shall keep the following: 
\begin{notation}\label{notation: nonspecial case}
 Let $\varphi:\PP^n\dashrightarrow\overline{\varphi(\PP^n)}=\Q\subset\PP^{n+1}$ be
a quadratic birational transformation into 
an irreducible (hence normal, 
by \cite[\Rmnum{1} Exercise~5.12, \Rmnum{2} Exercise~6.5]{hartshorne-ag}) quadric 
 hypersurface $\Q$ and
moreover suppose that  its base locus $\emptyset\neq\B\subset\PP^n$ is reduced. 
\end{notation}
\begin{lemma}\label{prop: degenerate component}
Let  $X\subseteq\B$ be a degenerate irreducible component  of $\B$.
\begin{enumerate}
\item If $\mathrm{codim}_{\PP^n}(X)=2$   then $\deg(X)\leq2$ and, 
if $\deg(X)=2$ then $X=\B$ and $\B$ is a quadric.
\item If $\mathrm{codim}_{\PP^n}(X)=3$   then $\deg(X)\leq4$ and, 
if $\deg(X)=4$ then  we have 
 $h^0(\PP^n,\I_{X}(2))=h^0(\PP^n,\I_{\B}(2))+1$.
\end{enumerate}
\end{lemma}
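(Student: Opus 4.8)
The plan is to reduce everything to counting quadrics, using the identity $h^0(\PP^n,\I_{\B}(2))=n+2$ from Lemma \ref{prop: cohomology I2B} together with the inclusion $H^0(\PP^n,\I_{\B}(2))\subseteq H^0(\PP^n,\I_{X}(2))$ coming from $X\subseteq\B$. After choosing coordinates so that the hyperplane $H=\PP^{n-1}$ containing the degenerate component $X$ is $V(x_n)$, restriction to $H$ yields the exact sequence
\begin{equation*}
0\rightarrow x_n\cdot H^0(\PP^n,\O_{\PP^n}(1))\rightarrow H^0(\PP^n,\I_{X}(2))\rightarrow H^0(H,\I_{X,H}(2))\rightarrow 0,
\end{equation*}
whose kernel has dimension $n+1$ (every quadric divisible by $x_n$ vanishes on $X$) and whose surjectivity is clear by lifting. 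Hence $h^0(\PP^n,\I_{X}(2))=(n+1)+h^0(H,\I_{X,H}(2))$, and the inequality $h^0(\PP^n,\I_{X}(2))\geq n+2$ becomes $h^0(H,\I_{X,H}(2))\geq1$. The whole argument then turns on analysing the quadrics of $H$ through $X$. I will also use, via Lemma \ref{prop: cohomology I2B}, that the system defining $\varphi$ is all of $H^0(\PP^n,\I_{\B}(2))$, so that $\B$ is the common zero locus of these $n+2$ quadrics.

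For part (1), $X$ is an irreducible hypersurface inside $H$, say $X=V(x_n,h)$ with $h$ irreducible of degree $\deg(X)$. Any quadric of $H$ vanishing on $X$ is divisible by $h$, so it can exist only if $\deg(X)\leq2$; since $h^0(H,\I_{X,H}(2))\geq1$, this already forces $\deg(X)\leq2$. If $\deg(X)=2$ then $h^0(H,\I_{X,H}(2))=1$, hence $h^0(\PP^n,\I_{X}(2))=n+2=h^0(\PP^n,\I_{\B}(2))$ and the two spaces of quadrics coincide; computing the common zero locus of $x_n\cdot H^0(\O_{\PP^n}(1))$ together with the single extra quadric gives exactly $X$, so $\B=X$ is a quadric of codimension $2$.

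For part (2), $X$ has codimension $2$ in $H$. The first point is to upgrade $h^0(H,\I_{X,H}(2))\geq1$ to $\geq2$: if there were a unique quadric $Q_1$ of $H$ through $X$, the same coincidence $H^0(\I_{X}(2))=H^0(\I_{\B}(2))$ as above would force $\B$ to be the codimension-$2$ hypersurface $V(x_n,Q_1)$ of $H$, which cannot have the codimension-$3$ set $X$ as an irreducible component. With two quadrics $Q_1,Q_2$ of $H$ through $X$ in hand, the generic situation is that they can be chosen without a common component; then $Q_1\cap Q_2$ is a complete intersection of pure dimension $\dim(X)$ and degree $4$ containing the irreducible $X$, so $\deg(X)\leq4$, with equality forcing $X=Q_1\cap Q_2$. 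In that extremal case $X$ is a complete intersection of two quadrics, whose homogeneous ideal carries exactly a two-dimensional space of quadrics, giving $h^0(H,\I_{X,H}(2))=2$ and therefore $h^0(\PP^n,\I_{X}(2))=n+3=h^0(\PP^n,\I_{\B}(2))+1$, as claimed.

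The hard part will be the degenerate configuration in which every quadric of $H$ through $X$ shares a common linear factor $H'$, since there the proper-intersection argument breaks down. I would dispose of it by noting that then every quadric of $\PP^n$ through $X$ vanishes on $H'$, hence so does the whole system $H^0(\PP^n,\I_{\B}(2))$, forcing $\B\supseteq H'$; as $H'$ has codimension $2$ and contains $X$, this again contradicts $X$ being a codimension-$3$ component of $\B$ — unless $X$ is itself the codimension-$2$ linear space cut out by the common zeros of the linear factors, in which case $\deg(X)=1$. Thus this branch is either impossible or yields degree $1$, and never interferes with the extremal degree-$4$ analysis.
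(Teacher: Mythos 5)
Your main line is the same as the paper's: both proofs rest on the count $h^0(\PP^n,\I_{\B}(2))=n+2$, on the identity $h^0(\PP^n,\I_{X}(2))=(n+1)+h^0(H,\I_{X,H}(2))$ obtained by splitting off the quadrics divisible by $x_n$, on excluding $h^0(H,\I_{X,H}(2))=1$ in the codimension-$3$ case (since then the two spaces of quadrics would coincide and $\B$ would be a quadric of codimension $2$, which cannot have a codimension-$3$ irreducible component), and finally on B\'ezout applied to one quadric, respectively to two quadrics, of $H$ through $X$. Part (1) and the non-degenerate branch of part (2) are correct and essentially identical to the paper's argument.

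The divergence is your treatment of the common-factor degeneration, which the paper passes over in silence; your instinct to address it is right, but as written it has a genuine gap. First, the negation of ``there exist $Q_1,Q_2$ through $X$ with no common component'' is only that every \emph{pair} of such quadrics shares a component; passing from this to a single linear form $L$ dividing all of $H^0(H,\I_{X,H}(2))$ — which is what you actually use, since it is what makes ``every quadric of $\PP^n$ through $X$ vanishes on $H'$'' true and hence gives $\B\supseteq H'$ — requires a short extra argument. Second, and more seriously, your assertion that $H'$ \emph{contains} $X$ is unjustified, and it is precisely what can fail: from $X\subseteq V(LM)=V(L)\cup V(M)$, irreducibility of $X$ gives only the dichotomy $X\subseteq V(L)$ or $X\subseteq V(M)$. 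The repaired argument reads: if $X\subseteq V(L)$, then $X\subsetneq H'\subseteq \B$ with $H'$ irreducible, contradicting the maximality of $X$ as an irreducible component of $\B$; if $X\not\subseteq V(L)$, then $X\subseteq V(M)$ for every cofactor $M$, so $X$ lies in the linear space cut out by $x_n$ and all the cofactors, which has codimension $\geq 3$ in $\PP^n$, forcing $X$ to equal it and $\deg(X)=1$. Your ``unless'' clause gestures at this second branch, but misidentifies the space: it has codimension $3$ in $\PP^n$, not $2$, and is cut out by the cofactors $M$, not by the common factor $L$. With these two repairs the degenerate case yields $\deg(X)=1$, so it never interferes with the extremal case $\deg(X)=4$, and your proof closes — at which point it is actually more complete than the one printed in the paper, whose complete-intersection step tacitly assumes the two quadrics have no common component.
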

\begin{proof}
We can choose coordinates $x_0,\ldots,x_n$ on $\PP^n$ such that
$X\subset V(x_n)\subset\PP^n$ 
and we consider the 
restriction 
map
$u:H^0(\PP^n,\I_{\B}(2)) \rightarrow H^0(\PP^{n-1},\I_{X}(2))$,
defined  by
$ 
u(F(x_0,\ldots,x_{n}))=F(x_0,\ldots,x_{n-1},0)$.  
 Note  that if $F\in H^0(\PP^n,\I_{\B}(2))$,  then
$
F\in \langle u(F) \rangle \oplus \langle x_0x_n,\ldots,x_{n-1}x_{n},x_n^2 \rangle
$
and, in particular, 
 $H^0(\PP^n,\I_{\B}(2))\subseteq 
\mathrm{Im}(u)\oplus \langle x_0x_n,\ldots,x_{n-1}x_{n},x_n^2 \rangle$.
Thus $\dim(\mathrm{Im}(u))\geq 1$ and, 
if $\dim(\mathrm{Im}(u))=1$ then   $\B$ is a quadric hypersurface in  $\PP^{n-1}$.     
Now suppose $\mathrm{codim}_{\PP^n}(X)=2$.     
From the above, there exists $\bar{F}\in H^0(\PP^{n-1},\I_{X}(2))$ and
$X$ has to be an irreducible component of $V(\bar{F})$.     
It follows that 
$\deg(X)\leq \deg(\bar{F})=2$ and, if $\deg(X)=2$ then  
$X=V(\bar{F})$ and 
$h^0(\PP^{n-1},\I_{X}(2))=\dim(\mathrm{Im}(u))= 1$.
 Suppose $\mathrm{codim}_{\PP^n}(X)=3$. 
 From the above, there exist  
 $\bar{F},\bar{F'}\in H^0(\PP^{n-1},\I_{X}(2))$ which are linearly independent
 and $X$ 
has to be contained in the complete intersection 
$V(\bar{F},\bar{F'})$.     
It follows  that 
 $\deg(X)\leq4$ and, if $\deg(X)=4$ then 
we have
 $h^0(\PP^{n-1},\I_{X}(2))= \dim(\mathrm{Im}(u))= 2$.
\end{proof}
\begin{proposition}\label{prop: P3}
If $n=3$,  
then either
\begin{enumerate}[(i)]
 \item\label{item: 1, P3} $\varphi$ is of type $(2,1)$,   or
 \item\label{item: 2, P3} $\varphi$ is of type $(2,2)$,   $\rk(\Q)=4$
 and $\B$ is the union of
a line  $r$ with two points $p_1,p_2$   such that
 $\langle p_1, p_2\rangle\cap r=\emptyset$. 
\end{enumerate}
\end{proposition}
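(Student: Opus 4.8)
The plan is to split the argument according to whether $\B$ is degenerate. By Lemma~\ref{prop: cohomology I2B} we have $h^0(\PP^3,\I_{\B}(2))=n+2=5$, and since $\varphi$ is quadratic (its defining system has no fixed component) no component of $\B$ is a surface, so $\dim(\B)\leq 1$. If $h^0(\PP^3,\I_{\B}(1))\neq 0$, then Proposition~\ref{prop: type 2-1} immediately puts us in case (\ref{item: 1, P3}). From now on I would assume $\B$ nondegenerate and work towards case (\ref{item: 2, P3}).

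The next step is to locate the one-dimensional part of $\B$. No irreducible component can be a nondegenerate curve $C$: such a curve has degree $\geq 3$, hence $h^0(\PP^3,\I_{C}(2))\leq 3$, which would force $h^0(\PP^3,\I_{\B}(2))\leq 3<5$. By Lemma~\ref{prop: degenerate component}(1) every degenerate curve component has degree $\leq 2$, and a degree-$2$ component would make $\B$ a conic, which is degenerate; thus every one-dimensional component of $\B$ is a line. Moreover $\B$ can contain at most one line: two skew lines already give $h^0(\PP^3,\I(2))=4<5$, while two coplanar lines form a degenerate conic. Since the isolated points of $\B$ lie off this line, we are reduced to configurations $r\cup\{p_1,\dots,p_\beta\}$ or to purely zero-dimensional $\B$.

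To fix the numerics I would resolve $\varphi$ as in Lemma~\ref{prop: cohomology I2B}, with $\pi:X=\Bl_{\B}(\PP^3)\to\PP^3$, exceptional divisor $E$, so that $\pi'^{\ast}\O_{\Q}(1)=2H-E$ and, $\varphi$ being birational, $\deg(\Q)=(2H-E)^3=2$. Splitting $E$ into the part over the line $r$ and the parts $E_{0,i}$ over the points $p_1,\dots,p_\beta$, the standard blow-up intersection numbers give $(2H-E)^3=4-\beta$ when a line is present; equating to $2$ yields $\beta=2$. The purely zero-dimensional alternative is excluded the same way: if $\dim(\B)=0$ then $(2H-E)^3=8-\deg(\B)$, while $h_{\B}(1)=4$ (nondegeneracy) and $h_{\B}(2)=10-5=5$ force $\deg(\B)=5$, whence $\deg(\Q)=3\neq 2$. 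Therefore $\B=r\cup\{p_1,p_2\}$, and nondegeneracy is exactly the statement that $r$ and $\langle p_1,p_2\rangle$ are skew, i.e. $\langle p_1,p_2\rangle\cap r=\emptyset$.

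Finally, all such configurations are projectively equivalent, so I would verify the remaining assertions on the normal form $r=V(x_0,x_1)$, $p_1=[1:0:0:0]$, $p_2=[0:1:0:0]$, for which $H^0(\PP^3,\I_{\B}(2))=\langle x_0x_1,x_0x_2,x_0x_3,x_1x_2,x_1x_3\rangle$ and $\varphi=[x_0x_1:x_0x_2:x_0x_3:x_1x_2:x_1x_3]$. One reads off the image $\Q=V(y_1y_4-y_2y_3)$, a quadric with $\rk(\Q)=4$, and checks that $\varphi^{-1}=[y_0y_1:y_0y_3:y_1y_3:y_2y_3]$ has degree $2$ (so $\varphi^{-1}\circ\varphi$ equals multiplication by $x_0x_1x_2$); since $\B$ is nondegenerate, $\varphi$ is not of type $(2,1)$, so its type is exactly $(2,2)$. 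The hard part will be the degree bookkeeping in the third step: one must ensure that the resolved system $|2H-E|$ is base-point free, so that $\deg(\Q)$ genuinely equals $(2H-E)^3$, and one must rule out the zero-dimensional case cleanly — this is where the Hilbert-function computation $\deg(\B)=5$, together with the fact that three collinear base points would force a line into every quadric (contradicting $\dim(\B)=0$), does the essential work.
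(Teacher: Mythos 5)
Your argument is correct and lands on the same normal form, but it runs on a different engine than the paper's proof. Both start from $h^0(\PP^3,\I_{\B}(2))=5$ (Lemma~\ref{prop: cohomology I2B}), both kill nondegenerate curve components by the same quadric count (a nondegenerate space curve lies on at most $3$ quadrics), and both reduce the one-dimensional part to a single line via Lemma~\ref{prop: degenerate component}. Where you diverge is the final counting: the paper stays with linear algebra on conditions --- a line imposes $3$ conditions and a point $1$, so the only configurations compatible with $5$ conditions are a (possibly reducible) conic, which is case~(\ref{item: 1, P3}) by Proposition~\ref{prop: type 2-1}, or a line plus two points as in~(\ref{item: 2, P3}) --- whereas you compute on the resolution: $(2H-E)^3=\deg(\Q)\cdot\deg(\pi')=2$, so $4-\beta=2$ forces $\beta=2$, and $8-\mu=2$ governs the zero-dimensional alternative. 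Your route has the merit of treating the purely zero-dimensional case explicitly, which the paper's proof leaves implicit; and the blow-up computation is legitimate here, since $\B$ is the reduced base scheme and a disjoint union of a line and points is smooth, so $|2H-E|$ is indeed free on $\Bl_{\B}(\PP^3)$.

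Two steps deserve reinforcement. First, the assertion that $h_{\B}(1)=4$ and $h_{\B}(2)=5$ force $\deg(\B)=5$ is not immediate: what must actually be excluded is $\mu=6$ points imposing only $5$ conditions, since $8-6=2$ is perfectly consistent with $\deg(\Q)=2$. Ruling out three collinear base points is necessary but not sufficient, and Lemma~\ref{prop: castelnuovo argument} does not apply verbatim because its general-position hypothesis also forbids four coplanar points. The patch is elementary: six points spanning $\PP^3$ with no three collinear always impose independent conditions on quadrics --- whether four or five of them are coplanar, one separates any chosen point from the remaining five by a union of two planes, and six points on a conic are coplanar, hence excluded by nondegeneracy --- so $h^0$ would be $4$, not $5$. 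Second, your dismissal of two coplanar lines quotes Lemma~\ref{prop: degenerate component}, but that lemma is stated for \emph{irreducible} components, and each line has degree $1$; the clean fix is that $h^0(\PP^3,\I_{l_1\cup l_2}(2))=5$ forces $H^0(\I_{\B}(2))=H^0(\I_{l_1\cup l_2}(2))$, whose base locus is exactly $l_1\cup l_2$, so $\B$ would equal the two lines and be degenerate, against your standing assumption. With these two repairs your proof is complete.
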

\begin{figure}[htbp]
\begin{center}
\includegraphics[width=0.55\textwidth]{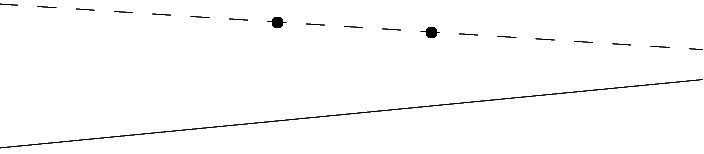}
\end{center}
\caption{Base locus when $n=3$.}
\label{fig: fig_base_loci_n=3}   
\end{figure}
\begin{proof}
Since $\mathrm{codim}_{\PP^3}(\B)\geq2$,   $\B$ 
has the following decomposition into irreducible components:
$$
\B=\bigcup_{j}C_j\cup\bigcup_{i}p_i ,
$$ 
where $C_j$ and $p_i$ are respectively curves and points, with $\deg(C_j)\leq3$,
by  B\'ezout's Theorem.
If one of $C_j$ is nondegenerate, 
then $\deg(C_j)=3$ and
 $C_j$ is the twisted cubic curve, by \cite[Proposition~18.9]{harris-firstcourse}. 
This would produce the absurd result that 
$5=h^0(\PP^3,\I_{\B,\PP^3}(2))\leq h^0(\PP^3,\I_{C_j,\PP^3}(2))=3$ and hence 
we have that every $C_j$ is degenerate.
By Lemma \ref{prop: degenerate component}, 
using the fact that a line
imposes $3$ conditions to the quadrics, 
it follows that 
 $\B$ is a (irreducible or not) conic
 or as asserted in  (\ref{item: 2, P3}).   
In the latter case, 
modulo a change of coordinates,
we can suppose
 $r=V(x_2,x_3)$,   $p_1=[0,0,1,0]$,   $p_2=[0,0,0,1]$ and  we get 
\begin{equation}
 \B=V(x_0x_2,\ x_0x_3,\ x_1x_2,\ x_1x_3,\ x_2x_3),\quad
 \Q= V(y_0y_3-y_1y_2).
\end{equation}
\end{proof}
\begin{lemma}\label{prop: isolated point}
 If $n\geq3$ and $\B$ has an isolated point, then  $\rk(\Q)\leq4$.
\end{lemma}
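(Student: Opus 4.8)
The plan is to turn the condition $\varphi(\PP^n)\subseteq\Q$ into a polynomial identity among the defining quadrics, and then read off from its lowest-order part at the isolated base point a large isotropic subspace for the quadratic form of $\Q$, which bounds its rank. First I would fix a basis $F_0,\ldots,F_{n+1}$ of $H^0(\PP^n,\I_{\B}(2))$, which has dimension $n+2$ by Lemma \ref{prop: cohomology I2B}, so that $\varphi=(F_0:\cdots:F_{n+1})$ and $\Q=V(G)$ for a single quadratic form $G=\sum_{i,j}a_{ij}y_iy_j$ with symmetric matrix $A=(a_{ij})$ and $\rk(\Q)=\rk(A)$. Since the image lies on $\Q$, one has the identity $G(F_0,\ldots,F_{n+1})=\sum_{i,j}a_{ij}F_iF_j\equiv 0$ in $\CC[x_0,\ldots,x_n]$.

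Next I would normalize coordinates so that the isolated point is $p=[0:\cdots:0:1]$. As every $F_i$ vanishes at $p$, each can be written $F_i=x_n\ell_i+q_i$ with $\ell_i$ linear and $q_i$ quadratic in $x_0,\ldots,x_{n-1}$. Grouping $\sum a_{ij}F_iF_j$ by powers of $x_n$ (note $\ell_i,q_i$ are free of $x_n$), the coefficient of $x_n^2$ is $\sum_{i,j}a_{ij}\ell_i\ell_j$, which must therefore vanish identically. Writing $\ell_i=\sum_k c_{ik}x_k$ and letting $C=(c_{ik})$ be the $(n+2)\times n$ coefficient matrix, this is exactly $C^{\mathsf T}AC=0$, i.e. $A$ restricts to zero on the subspace $W=\mathrm{Im}(C)\subseteq\CC^{n+2}$.

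The decisive step is to show that $W$ has dimension $n$. Here I would invoke the hypotheses: since $\B$ is reduced and $p$ is isolated, the local ring $\O_{\B,p}$ is a field, so the Zariski tangent space $T_p(\B)=\bigcap_i\ker(dF_i|_p)$ is zero; equivalently the differentials $\ell_0,\ldots,\ell_{n+1}$ span the full $n$-dimensional space $\langle x_0,\ldots,x_{n-1}\rangle$, whence $\rk(C)=n$ and $\dim W=n$. Thus $W$ is an $n$-dimensional isotropic subspace for the form of rank $\rho:=\rk(A)$ on $\CC^{n+2}$. Since a symmetric form of rank $\rho$ admits isotropic subspaces of dimension at most $(n+2)-\lceil\rho/2\rceil$ (the radical has dimension $(n+2)-\rho$ and a maximal isotropic in the nondegenerate part has dimension $\lfloor\rho/2\rfloor$), we get $n\le(n+2)-\lceil\rho/2\rceil$, hence $\lceil\rho/2\rceil\le 2$ and $\rk(\Q)=\rho\le 4$.

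I expect the main obstacle to be precisely the spanning step: one must argue cleanly that a reduced isolated point forces the linear parts $\ell_i$ to span, rather than merely the set-theoretic isolation of $p$ (which alone would only constrain the tangent cone, cut out by the $\ell_i$ together with the $q_i$, and would give a weaker bound $\rho\le 2(n+2-\dim W)$). The reducedness assumption in the standing notation is exactly what rules this out and pins $\dim W=n$; everything else is the formal extraction of the $x_n^2$-coefficient and the standard bound on isotropic subspaces.
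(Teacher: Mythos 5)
Your proof is correct and is essentially the paper's own argument written out in coordinates: the paper blows up $p$ and observes that the exceptional divisor $E\simeq\PP^{n-1}$ maps into $\Q$ by the linear parts $\ell_i$, this map being a \emph{linear morphism} precisely because the reduced isolated point has zero Zariski tangent space, so $\Q$ contains a $\PP^{n-1}$ and hence $\rk(\Q)\leq 4$ by the standard bound on linear spaces in quadrics. Your $n$-dimensional isotropic subspace $W=\mathrm{Im}(C)$ is exactly the affine cone over that $\PP^{n-1}$, and your spanning step (tangent space zero at a reduced isolated point, forcing $\rk(C)=n$) is the same key point, so the two proofs coincide up to the geometric versus matrix-theoretic packaging.
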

\begin{proof}
Let $p$ be an isolated (reduced) point of $\B$ and 
consider  the diagram
$$
\xymatrix{
\PP^{n-1}\simeq E \ar@{^{(}->}[r] \ar@{-->}@/^2.8pc/[rrd]^{\bar{\varphi}} &\Bl_{p}(\PP^{n}) \ar[d]^{\pi} \ar@{-->}[rd]\\
&\PP^{n} \ar@{-->}[r]^{\varphi} & \Q\subset\PP^{n+1}
}
$$
By the hypothesis on $p$, 
  $\bar{\varphi}$ is a linear morphism 
and so the quadric  $\Q=\Q^{n}$ contains a $\PP^{n-1}$.
This, since $n\geq3$,   implies $\rk(\Q)\leq 4$.
\end{proof}
\begin{proposition}\label{prop: P4}
If $n=4$ and
 $\rk(\Q)\geq 5$,   then either
\begin{enumerate}[(i)]
 \item\label{item: 1, P4} $\varphi$ is of type $(2,1)$,   or
 \item\label{item: 2, P4} $\varphi$ is of type $(2,2)$,   
$\Q$ is smooth and $\B$ 
is one of the following (see Figure \ref{fig: fig_base_loci_n=4}):
\begin{enumerate} 
 \item the rational normal quartic curve, 
 \item the union of the twisted cubic curve  in a hyperplane  $H\subset\PP^4$ 
       with  a line not contained  in $H$ and intersecting the twisted curve,
 \item the union of an irreducible conic with two skew lines that intersect it,
 \item the union of three skew lines with another line that intersects them.
\end{enumerate}
\end{enumerate}
\end{proposition}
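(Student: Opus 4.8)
The plan is to imitate the proof of Proposition~\ref{prop: P3}, now exploiting the Castelnuovo-type estimates of the previous chapter. I would first dispose of the easy alternative: if $h^0(\PP^4,\I_{\B}(1))\neq 0$ then by Proposition~\ref{prop: type 2-1} the transformation is of type $(2,1)$ and we are in case (\ref{item: 1, P4}). So assume from now on that $\B$ is nondegenerate, i.e. $\varphi$ is not of type $(2,1)$; then $h_{\B}(1)=5$ and, by Lemma~\ref{prop: cohomology I2B}, $h^0(\PP^4,\I_{\B}(2))=6$, that is $h_{\B}(2)=9$. Moreover, since $\rk(\Q)\geq 5>4$, Lemma~\ref{prop: isolated point} rules out isolated points, so every irreducible component of $\B$ has dimension $1$ or $2$.

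The second step is to prove that $\B$ is in fact a connected curve of degree $4$. A two-dimensional component, if degenerate, is a quadric surface equal to the whole of $\B$ by Lemma~\ref{prop: degenerate component}, which lands back in type $(2,1)$; the nondegenerate and planar two-dimensional cases I would exclude by comparing $h_{\B}(2)=9$ with the number of conditions forced by such a surface, using (\ref{eq: ciliberto lemma 1.1}) on a general hyperplane section. Once $\B$ is a curve, let $\Lambda=\B\cap H$ be a general hyperplane section, a set of $\lambda=\deg(\B)$ points in $H=\PP^3$. Then (\ref{eq: ciliberto lemma 1.1}) gives $h_{\Lambda}(2)\leq h_{\B}(2)-h_{\B}(1)=9-5=4$, while for points in general position (\ref{eq: ciliberto 1.6}) gives $h_{\Lambda}(2)\geq\min\{\lambda,7\}$; hence $\lambda\leq 4$. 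On the other hand a general hyperplane section of a connected nondegenerate curve spans $H$, so $\lambda\geq 4$ as soon as $\B$ is connected, and thus $\lambda=4$.

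With $\deg(\B)=4$ in hand, I would read off the structure of $\B$ from a cohomological inequality: since $9=h_{\B}(2)\leq h^0(\O_{\B}(2))=9-p_a(\B)+h^1(\O_{\B}(2))$, the curve has arithmetic genus $0$ (the line bundle $\O_{\B}(2)$ being nonspecial), so $\B$ is a \emph{tree of smooth rational curves} of total degree $4$ spanning $\PP^4$. Enumerating such trees by the degree partition of $4$ then yields exactly the four configurations: a single component gives the rational normal quartic (a); a $3+1$ splitting gives a twisted cubic in a hyperplane together with a transversal line, namely (b); a $2+1+1$ splitting gives an (irreducible) conic with two skew transversal lines, namely (c); and $1+1+1+1$ gives three skew lines with a further transversal line, namely (d). In each case the remaining verification is to exhibit explicit coordinates, as in Proposition~\ref{prop: P3}, realising $\varphi$, checking that $\Q$ is smooth, and that $\varphi^{-1}$ has degree $2$ (so that the type is indeed $(2,2)$).

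The hard part, I expect, is twofold. First, the exclusion of two-dimensional base loci is delicate, since a nondegenerate surface in $\PP^4$ can a priori impose as few as $9$ conditions on quadrics; the condition count must be run carefully, presumably together with Lemma~\ref{prop: degenerate component}. Second, and more seriously, several degree-$4$ trees (for instance three pairwise skew lines \emph{without} a transversal, which already have degree $3$; two conics; or a ``chain'' conic--line--line) satisfy all the cohomological constraints yet must be discarded because they fail to give a birational map onto a quadric of rank $\geq 5$. Here the hypothesis $\rk(\Q)\geq 5$ is exactly what does the work: I would show that disconnectedness, or the wrong incidence pattern among the components, forces the image to contain a linear space and hence forces $\rk(\Q)\leq 4$, just as an isolated point does in Lemma~\ref{prop: isolated point}. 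Establishing this rank dichotomy, and with it the connectedness of $\B$ together with the smoothness of $\Q$ in the four surviving cases, is the real content of the statement.
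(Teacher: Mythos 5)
Your skeleton (eliminate the $(2,1)$ case via Proposition~\ref{prop: type 2-1}, eliminate isolated points via Lemma~\ref{prop: isolated point}, bound the degree, then enumerate configurations) is close in spirit to the paper's componentwise case analysis, but the steps you lean on have genuine gaps. The first is the degree bound: you invoke (\ref{eq: ciliberto 1.6}) to get $h_{\Lambda}(2)\geq\min\{\lambda,7\}$, but that estimate requires $\Lambda$ to be in general position, and for a hyperplane section this is guaranteed (by uniform position) only when the curve is \emph{irreducible} and nondegenerate. Here $\B$ is typically reducible --- three of the four target configurations are --- and the section of a reducible curve need not be in general position: a component contained in a hyperplane contributes coplanar points, a planar component contributes collinear points. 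The paper avoids exactly this pitfall by running Castelnuovo's argument (Lemma~\ref{prop: castelnuovo argument}) only on an irreducible nondegenerate component of degree $\geq 5$, and by disposing of degenerate components with Lemma~\ref{prop: degenerate component}; your global bound on all of $\B$ would need a separate ad hoc justification in precisely the reducible and degenerate cases.

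The second, more serious, gap is that the reduction to ``connected tree of rational curves of degree $4$'' and the subsequent enumeration do not close the proof. Connectedness of $\B$ is nowhere established, yet you use it twice: for $\lambda\geq 4$ and for $p_a(\B)\geq 0$ (a disconnected degree-$4$ curve such as two disjoint conics has $p_a=-1$ and passes every cohomological test you impose); your proposed mechanism ``disconnectedness forces $\rk(\Q)\leq 4$ as in Lemma~\ref{prop: isolated point}'' is not proved, and it is not how the paper excludes such cases (for instance $\B=\mathrm{plane}\sqcup\mathrm{line}$ is excluded by birationality of $\varphi$, and a plane union a conic by the existence of a line meeting $\B$ in exactly $3$ points). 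Moreover, your enumeration by partitions of $4$ omits $2+2$: two conics meeting at one point form a nondegenerate connected tree of degree $4$ and genus $0$, and it is precisely the hypothesis $\rk(\Q)\geq 5$ that kills it (in the paper's proof this subcase yields $\rk(\Q)=4$); likewise, within each partition only specific incidence patterns survive (a chain of four lines is also a tree of type $1+1+1+1$). You do flag these exclusions as ``the real content,'' but that concedes the point: everything in the statement that actually uses $\rk(\Q)\geq 5$ beyond Lemma~\ref{prop: isolated point}, together with the smoothness of $\Q$ and the verification of type $(2,2)$, is deferred rather than proven. Finally, the nonspeciality of $\O_{\B}(2)$, which your genus argument needs, is asserted without proof; it can be checked for reduced connected degree-$4$ curves (using that a line meeting $\B$ in three points would be forced into the base locus), but that argument is missing.
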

\begin{figure}[htbp]
\centering
\includegraphics[width=\textwidth]{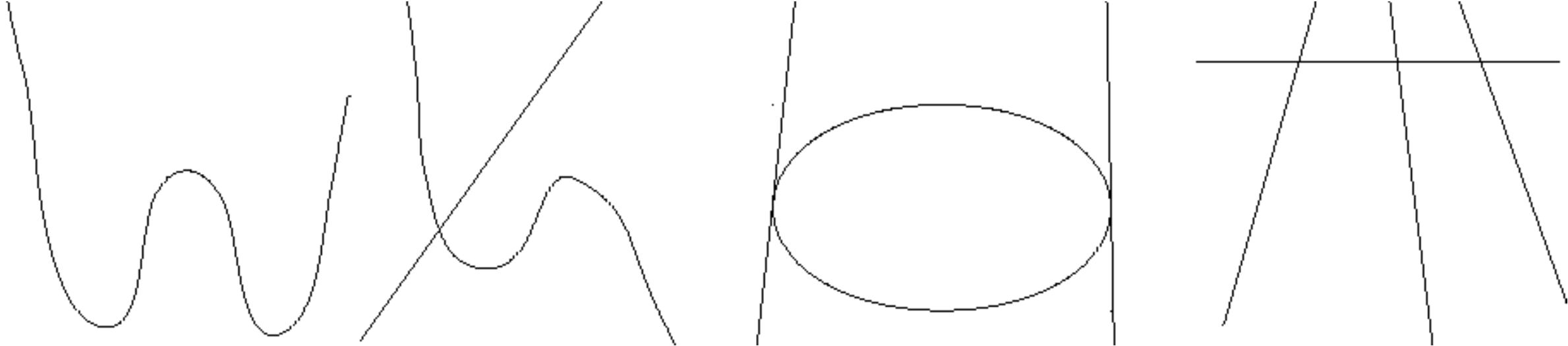}
\caption{Base loci when $n=4$.}
\label{fig: fig_base_loci_n=4}   
\end{figure}
\begin{proof}
By Lemma \ref{prop: isolated point} and since $\mathrm{codim}_{\PP^4}(\B)\geq2$,   $\B$
has the following decomposition into irreducible components:
$$
\B=\bigcup_{i}S_i\cup\bigcup_{j}C_j ,
$$ 
where $S_i$ and $C_j$ are respectively surfaces and curves. 
Let $S$ be one of $S_i$ and let $C$ be one of $C_j$.
We discuss all possible cases.
\begin{case}[$\deg(S)\geq 4$] This case is impossible 
by    B\'ezout's Theorem.
\end{case}
 \begin{case}[$\deg(S)=3$,   $S$ nondegenerate] 
Cutting $S$ with a general hyperplane $\PP^3\subset\PP^4$, we obtain 
the twisted cubic curve $\Gamma\subset\PP^3$.
Hence, from the exact sequence 
$0\rightarrow \I_{S,\PP^{4}}(-1)\rightarrow
 \I_{S,\PP^{4}}\rightarrow \I_{\Gamma, \PP^3} \rightarrow 0$, 
 we deduce the contradiction
$
6=h^0(\PP^{4},\I_{\B,\PP^{4}}(2))\leq h^0(\PP^{4},\I_{S,\PP^{4}}(2))\leq h^0(\PP^3,\I_{\Gamma, \PP^3}(2) )=3
$.
\end{case}
 \begin{case}[$\deg(S)\geq 2$,   $S$ degenerate] By 
Lemma \ref{prop: degenerate component} we have $\deg(S)=2$ and $S=\B$ is a quadric.
\end{case}
\begin{case}[$\deg(C)\geq 5$,   $C$ nondegenerate] Take a general 
hyperplane
 $\PP^3\subset\PP^4$
and put $\Lambda=\PP^3\cap C$. $\Lambda$ is a set of $\lambda\geq5$ points of $\PP^3$ 
in general position and therefore, by Lemma \ref{prop: castelnuovo argument}, 
we get the contradiction
$$
6=h^0(\PP^{4},\I_{\B}(2))\leq h^0(\PP^4,\I_{C}(2))\leq
h^0(\PP^3,\I_{\Lambda}(2))\leq
\left\{\begin{array}{ll}  10-\lambda\leq5, 
& \mbox{ if } \lambda\leq7 \\  10-7=3, & \mbox{ if } \lambda\geq7 . \end{array}   \right.
$$
\end{case}
 \begin{case}[$\deg(C)\geq5$,   $C$ degenerate]  
This is impossible by  Lemma \ref{prop: degenerate component}.
\end{case}
 \begin{case}[$\deg(C)=4$,   $C$ nondegenerate] By 
\cite[Proposition~18.9]{harris-firstcourse}, $C$ is the rational normal quartic curve  
and one of its parameterizations 
is $[s,t]\in\PP^1\mapsto [s^4,s^3t,s^2t^2,st^3,t^4]\in\PP^{4}$.
We have 
$h^0(\PP^4,\I_{C,\PP^4}(2))=6$ and hence $C=\B$ and    
\begin{eqnarray}
\B&=& V(x_2^2-x_1x_3,\ x_2x_3-x_1x_4,\ x_0x_4-x_1x_3,\ x_3^2-x_2x_4,\ x_0x_2-x_1^2,\ x_0x_3-x_1x_2), \\
\Q&=& V(y_0y_2-y_1y_5+y_3y_4).
\end{eqnarray}
\end{case}
\begin{case}[$\deg(C)=4$,   $C$ degenerate] This case is impossible 
by 
 Lemma \ref{prop: degenerate component} and Proposition \ref{prop: type 2-1}.
\end{case}
\begin{case}[$\deg(C)=3$,   $\langle C \rangle=\PP^3$] 
Modulo a change of coordinates, 
$C$ is the twisted cubic curve parameterized by
$[s,t]\in\PP^1\mapsto [s^3,s^2t,st^2,t^3,0]\in V(x_4)$  
and, 
by the reduction obtained, since
$ h^0(\PP^4,\I_{C}(2))=h^0(\PP^4,\I_{\B}(2))+2$,
it follows
$\B=C\cup r$, 
where $r$ is a line that intersects $C$ in a single point transversely.
We can choose the intersection point to be $p=[1,0,0,0,0]$
and then we have
$r=\{ [s+q_0t,q_1t,q_2t,q_3t,q_4t]: [s,t]\in\PP^1\}$,
for some  $q=[q_0,q_1,q_2,q_3,q_4]\in\PP^4$.     
By Proposition \ref{prop: type 2-1} we have $q_4\neq0$ and 
only for simplicity of notation we take
$q=[0,0,0,0,1]$,   hence $r=V(x_1,x_2,x_3)$.  So we obtain 
\begin{eqnarray}
\B&=&V( x_1^2-x_0x_2,\  x_3x_4,\  x_0x_3-x_1x_2,\  x_2x_4,\   x_1x_3-x_2^2,\  x_1x_4),\\
\Q&=& V(-y_4y_5+y_2y_3+y_0y_1).
\end{eqnarray}
\end{case}
\begin{case}[$\deg(C)=3$,   $\langle C \rangle=\PP^2$] It is impossible 
because otherwise 
$\B$ would contain
the entire plane spanned by $C$.
\end{case}
\begin{case}[$\deg(S)=1$] Since 
planes, conics and lines impose to the quadrics 
 respectively  $6$, $5$ and $3$ conditions, 
we have two subcases:
\begin{subcase}[$\B=S\cup\Gamma$,   $\Gamma$ conic, $\#(\Gamma\cap S)=2$] This
 is impossible
because otherwise there exists a line cutting $\B$ in exactly $3$ points.  
\end{subcase}
\begin{subcase}[$\B=S\sqcup l$,   $l$ line] This contradicts 
the birationality of $\varphi$. 
\end{subcase}
\end{case}
\begin{case}[$\deg(C)=2$,   $C$ irreducible] Let $C', l, l'$ be 
respectively 
eventual conic and eventual lines contained in $\B$.
We discuss the subcases:  
 \begin{subcase}[$\B=C\cup C'$,    $\#(C\cap C')=1$] In this case, 
we would have that  $\rk(\Q)=4$, against the  hypothesis.
 \end{subcase}
\begin{subcase}[$\B=C\cup C'\cup l$,    $\#(C\cap C')=2$,   $\#(C\cap l)=\#(C'\cap l)=1$] 
 $\B \supseteq C\cup C'\cup l$   implies that $\B$ is a quadric of codimension $2$.
\end{subcase}
\begin{subcase}[$\B=C\cup l\cup l'$,   $\#(l\cap l')=0$,   $\#(C\cap l)=\#(C\cap l')=1$] 
This case is really possible and, 
modulo a change of coordinates, 
we have
\begin{eqnarray}
 \B&=&V(x_2^2+x_0x_1,\ x_3x_4,\ x_1x_3,\ x_2x_4,\ x_2x_3,\ x_0x_4),\\
 \Q&=&V(-y_2y_5-y_3y_4+y_0y_1).
\end{eqnarray}
\end{subcase}
\end{case}
\begin{case}[$\deg(C)=2$,   $C$ reducible] By
  the reduction obtained,
$\B$ contains  two other skew lines, 
each of which intersects $C$ at a single point. 
This case is really possible and, 
modulo a change of coordinates, 
we have
\begin{eqnarray}
 \B&=&V(x_1x_2,\ x_3x_4,\ x_0x_3,\ x_2x_4,\ x_2x_3,\ x_0x_4-x_1x_4),\\
\Q&=&V(-y_4y_5+y_2y_3-y_0y_1).
\end{eqnarray}
\end{case}
\end{proof}
Example \ref{example: extra} suggests that a possible generalization
of  Proposition \ref{prop: P4}
 to the case where $\B$ in nonreduced and $rk(\Q)< 5$ 
may not be trivial.
\begin{example}\label{example: extra}
The rational map $\varphi:\PP^4\dashrightarrow\PP^5$ defined by 
$$\varphi([x_0,x_1,x_2,x_3,x_4])=
[x_0^2, -x_0x_1, -x_0x_2, x_1^2-x_0x_3, 2x_1x_2-x_0x_4, x_2^2] , $$
is birational 
into  its image, which is the quadric of rank $3$,
$\Q=V(y_0y_5-y_2^2)$.
If  $\pi:\PP^5\dashrightarrow \PP^4$ is the projection 
from the point  $[0,0,0,0,0,1]$, then the composition
 $\pi\circ\varphi:\PP^4\dashrightarrow \PP^4$ is an involution.
The base locus $\B$ of $\varphi$ is everywhere nonreduced, 
$(\B)_{\mathrm{red}}=V(x_0,x_1,x_2)$ and 
$P_{\B}(t)=4t+1$.
\end{example}

\chapter{On special quadratic birational transformations of a projective space into a hypersurface}\label{sec: transformations into a hypersurface}
   Consider a special birational transformation
   $\varphi:\PP^n\dashrightarrow\sS\subset\PP^{n+1}$ of type $(2,d)$
   from a complex projective space
   into a nonlinear and sufficiently regular hypersurface  $\sS$.
 The blow-up
 $\pi:X=\Bl_{\B}(\PP^n)\rightarrow\PP^n$ of $\PP^n$ along the base locus 
 $\B$
resolves the indeterminacies of the transformation
 $\varphi$. 
So,
comparing the two ways in which it is possible to write
 the canonical class
$K_X$, with respect to $\pi$ and $\pi':=\varphi\circ\pi$, 
we get 
a formula expressing  the dimension of $\B$ as a function of $n$, $d$ and $\deg(\sS)$,
see  Proposition \ref{prop: dimension formula}.
The primary advantage in dealing with the case of quadratic transformations
is that $\B$ is 
a $QEL$-variety (see Definition \ref{def: LQEL QEL CC}). 
Therefore it is possible to apply the main results  of the theory of  
 $QEL$-varieties; in particular,
 the Divisibility Theorem (Theorem \ref{prop: divisibility theorem}), 
together with the
formula on the dimension 
of $\B$, 
drastically reduces the set of quadruples 
 $(\dim(\B),n,d,\deg(\sS))$ for which  such a
 $\varphi$ exists.

    The $\varphi$ of type $(2,1)$   
     have been described in  Proposition \ref{prop: type 2-1} as
    the only transformations whose base locus is a quadric of codimension $2$; 
    in particular,  modulo projective transformations,  there is only one example.
With regard to the
 $\varphi$'s of type $(2,2)$ into a quadric hypersurface $\Q$,
it is known that a special Cremona 
 transformation 
$\PP^{n+1}\dashrightarrow\PP^{n+1}$ of type $(2,2)$ 
has as base locus a Severi variety (see 
Theorem \ref{prop: classification quadro-quadric special cremona})  and moreover,
modulo projective transformations, 
there exist only four Severi varieties
 (see Theorem \ref{prop: classification severi varieties}). 
Now, if we restrict these transformations 
to a general hyperplane $\PP^{n}\subset\PP^{n+1}$,
we clearly obtain 
special transformations 
$\varphi:\PP^n\dashrightarrow\Q\subset\PP^{n+1}$ of type $(2,2)$ 
(see Example \ref{example: d=2 Delta=2}).  
In Theorem \ref{prop: classification of type 2-2 into quadric} 
we  prove 
 that all examples of $\varphi$ of type $(2,2)$ into a quadric $\Q$ arise in this way;
in particular, their  base loci
are hyperplane sections of Severi varieties. 
Regarding special transformations of type $(2,2)$ into a cubic and quartic hypersurface
 we are able  to determine some  invariants of the base locus  as
the Hilbert polynomial and the Hilbert scheme of lines passing through a point
(Propositions \ref{prop: invariants d=2 Delta=3} and \ref{prop: invariants d=2 Delta=4}).

Another approach to  the study  
of all $\varphi$ of type $(2,d)$ is their classification according to
 the dimension of the base locus. 
In Table
 \ref{tab: cases with dimension leq 3}  
(which is constructed 
via Propositions \ref{prop: 2-fold in P6} and \ref{prop: 3-fold in P8}), 
we provide a list of all possible base loci when the dimension is at most $3$,
although in one case we do not know if it really exists.
As a consequence, in Corollary \ref{prop: classification type 2-3 into cubic}, we obtain
that a special transformation  
 $\varphi$ of type $(2,3)$ into a cubic hypersurface
 $\sS$ has as base locus  
the blow-up $\Bl_{\{p_1,\ldots,p_5\}}(Q)$ of $5$ points $p_1,\ldots,p_5$ 
in a smooth quadric $Q\subset\PP^4$, embedded in   
$\PP^8$ by the linear system $|2 H_{\PP^4}|_{Q}-p_1-\cdots-p_5|$ 
(see also Example \ref{example: d=3 Delta=3}).
 
 Throughout the chapter, unless otherwise specified, we shall keep the following:
\begin{notation}\label{notation: factorial hypersurface}
 Let $n\geq3$ and 
$\varphi:\PP^{n}\dashrightarrow\overline{\varphi(\PP^n)}=\sS\subset\PP^{n+1}$ be
a special birational transformation  of type $(2,d)$, with $d\geq 2$ and 
with $\sS$ a factorial hypersurface of degree $\Delta\geq 2$
(in particular, when $\Delta=2$, it is enough to require 
 $\rk(\sS)\geq 5$).  
Observe that
 $\Pic(\sS)=\ZZ\langle\O_{\sS}(1)\rangle$ (see \cite[\Rmnum{4} Corollary~3.2]{hartshorne-ample})
and $\omega_{\reg(\sS)}\simeq \O_{\reg(\sS)}(\Delta-n-2)$.
Moreover, denote by  $\B\subset\PP^n$ and $\B'\subset\sS\subset\PP^{n+1}$ 
respectively the base locus  of $\varphi$ and $\varphi^{-1}$
and assume\footnote{See Example  \ref{example: B2=singSred-3fold}
 for an explicit example of  special quadratic birational transformation
for which (\ref{eq: hypothesis on sing locus hypersurface}) is not satisfied.} 
\begin{equation}\label{eq: hypothesis on sing locus hypersurface}
(\B')_{\mathrm{red}}\neq (\sing(\sS))_{\mathrm{red}}.
\end{equation}
Put $r=\dim(\B)$, $r'=\dim(\B')$, $\delta=\delta(\B)$ the secant defect, 
$\lambda=\deg(\B)$, $g=g(\B)$ the sectional genus, $P_{\B}(t)$ the Hilbert polynomial,
$i(\B)$ and $c(\B)$ respectively index and coindex (when $\B$ is a Fano variety).
\end{notation}

\section{Properties of the base locus}\label{sec: properties base locus} 
\begin{proposition}\label{prop: dimension formula}  \hspace{1pt}
\begin{enumerate} 
\item $\B$ is a
$QEL$-variety of dimension and  type given by
\begin{displaymath}
r=\frac{d\,n-\Delta-3\,d+3}{2\,d-1},\quad
\delta =\frac{n-2\,\Delta-2\,d+4}{2\,d-1}.
\end{displaymath}
 \item $\B'$ is irreducible, generically reduced, of dimension
\begin{displaymath}
r'=\frac{2\left( d\,n-n+\Delta-d-1\right) }{2\,d-1}.
\end{displaymath}
\end{enumerate}
\end{proposition}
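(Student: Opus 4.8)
The plan is to resolve $\varphi$ with a single blow-up, as in diagram~(\ref{eq: diagram resolving map}), and then read off the invariants by computing the canonical class of the resolution through each of the two projections; this is the comparison of canonical classes announced at the opening of the chapter. First I would set $\pi\colon X=\Bl_{\B}(\PP^n)\to\PP^n$, which is smooth because $\B$ is smooth, and put $H=\pi^{\ast}\O_{\PP^n}(1)$ together with the exceptional divisor $E$, so that $\Pic(X)=\ZZ H\oplus\ZZ E$. Since $\varphi$ is given by a subsystem of $|\I_{\B}(2)|$ and $\B$ is smooth, the composite $\pi'=\varphi\circ\pi\colon X\to\sS$ is a morphism with $\pi'^{\ast}\O_{\sS}(1)=2H-E$. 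As $\varphi^{-1}$ is of type $(2,d)$, it is defined by forms of degree $d$ on $\sS$ vanishing on $\B'$ whose pullback has moving part $|H|$; writing $H=d(2H-E)-m\,E'$ for the prime $\pi'$-exceptional divisor $E'=aH+bE$ and matching coefficients gives $ma=2d-1$, $mb=-d$, whence $m=1$ (since $\gcd(2d-1,d)=1$) and $E'=(2d-1)H-dE$. Geometrically $E'$ is the strict transform of $\Sec(\B)$, contracted by $\pi'$ onto $\B'$.

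Next I would write $K_X$ in two ways. The blow-up formula gives $K_X=-(n+1)H+(n-r-1)E$, the coefficient of $E$ being $\mathrm{codim}_{\PP^n}(\B)-1=n-r-1$. On the other side, factoriality of $\sS$ gives $\Pic(\sS)=\ZZ\langle\O_{\sS}(1)\rangle$ and $K_{\sS}=(\Delta-n-2)\O_{\sS}(1)$ (Notation~\ref{notation: factorial hypersurface}), so $\pi'^{\ast}K_{\sS}=(\Delta-n-2)(2H-E)$, and since $\pi'$ is birational with single exceptional divisor $E'$ over the center $\B'$ one has $K_X=\pi'^{\ast}K_{\sS}+(n-r'-1)E'$. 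Equating the two expressions, $K_X-\pi'^{\ast}K_{\sS}=(n+3-2\Delta)H+(\Delta-r-3)E$ must equal $(n-r'-1)\big((2d-1)H-dE\big)$; matching the coefficients of $H$ and of $E$ yields the linear system $(n-r'-1)(2d-1)=n+3-2\Delta$ and $d(n-r'-1)=r+3-\Delta$, and solving it produces exactly the asserted values of $r$ and $r'$.

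It then remains to identify $\B$ as a $QEL$-variety and to compute $\delta$. A general secant line of $\B$ is contracted by $\varphi$, because the defining quadrics vanish at its two points of $\B$; hence $E'$ is contracted by $\pi'$, its image $\pi(E')$ is a hypersurface swept out by secant lines, and by irreducibility $\Sec(\B)=\pi(E')$ has dimension $n-1$. Therefore $\delta(\B)=2r+1-\dim\Sec(\B)=2r+2-n$, which simplifies to the stated $\frac{n-2\Delta-2d+4}{2d-1}$. The general entry locus $\Sigma_p(\B)$ is the fibre of this contraction meeting $\B$, cut out on its linear span by the quadrics through $\B$, hence a quadric, so $\B$ is a $QEL$-variety of type $\delta$ in the sense of Definition~\ref{def: LQEL QEL CC}. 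Finally $\B'=\pi'(E')$ is irreducible as the image of the irreducible $E'$, and generically reduced, the degree-$d$ forms cutting it with multiplicity one along $E'$.

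I expect the main obstacle to be the justification of the discrepancy coefficient $n-r'-1$ in $K_X=\pi'^{\ast}K_{\sS}+(n-r'-1)E'$: the hypersurface $\sS$ is only factorial, not smooth, so $\pi'$ is not literally the blow-up of a smooth center. The point is that, by the hypothesis $(\B')_{\mathrm{red}}\neq(\sing(\sS))_{\mathrm{red}}$ of~(\ref{eq: hypothesis on sing locus hypersurface}), the center $\B'$ meets $\reg(\sS)$ in a dense open subset, so over the generic point of $\B'$ the morphism $\pi'$ looks like the blow-up of a smooth codimension-$(n-r')$ subvariety and the classical discrepancy applies; factoriality is what makes $K_{\sS}$ a genuine integral multiple of $\O_{\sS}(1)$ and guarantees that $E'$ is the only exceptional divisor of $\pi'$. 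Checking that $E$ itself is not $\pi'$-contracted and that $\B'$ is generically reduced are the remaining technical verifications.
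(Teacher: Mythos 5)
Your overall skeleton is the same as the paper's: blow up $\B$, use $H'\sim 2H-E$, $E'\sim(2d-1)H-dE$ (your gcd argument for multiplicity one is fine and is essentially how the paper gets $E'=(E')_{\mathrm{red}}$), and compare the two expressions of $K_X$. The coefficient matching is arithmetically correct; in fact, note that the formula for $r$ alone does not even require knowing the discrepancy: writing $K_X-\pi'^{\ast}K_{\sS}=a\,E'$ gives two coefficient equations in the single unknown $a$, and their compatibility is exactly $r=(dn-\Delta-3d+3)/(2d-1)$ (this is why the paper can recover $r$ even without hypothesis (\ref{eq: hypothesis on sing locus hypersurface}), cf.\ Remark \ref{remark: dimension formula without hypothesis}). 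The first genuine gap is in the step you yourself flag: to get $r'$ you need $a=n-r'-1$, and your justification --- that over the generic point of $\B'$ the morphism ``looks like the blow-up of a smooth center, so the classical discrepancy applies'' --- assumes precisely what must be proved. For an exceptional prime divisor over a smooth variety with center of codimension $c$ one only knows $a\geq c-1$ in general (discrepancies can be arbitrarily large); equality holds exactly when the valuation is the blow-up valuation of the center. That $\pi'$ is, over a dense open subset of $(\B')_{\mathrm{red}}$, the blow-up of its (there smooth) center is a nontrivial theorem --- it is Theorem~1.1 of \cite{ein-shepherdbarron}, which the paper cites at this exact point --- not a formal consequence of having a single irreducible exceptional divisor over a smooth center.

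The more serious gap is in part 1, the $QEL$ statement. From your (correct) observation that secant lines are contracted you only get the inclusion $\Sec(\B)\subseteq\pi(E')$. The reverse inclusion (``$\pi(E')$ is swept out by secant lines''), hence the equality $\dim\Sec(\B)=n-1$ needed for $\delta=2r+2-n$, and the claim that the entry locus is ``the fibre of this contraction meeting $\B$, cut out on its linear span by the quadrics through $\B$, hence a quadric'' all presuppose that the general positive-dimensional fibre of $\varphi$ is a \emph{linear} space $\PP^{n-r'-1}$ meeting $\B$ in a quadric hypersurface. That is the crux, and it is not formal: it is the content of Proposition~2.3(a),(b) of \cite{ein-shepherdbarron}, whose proof the paper explicitly adapts here. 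Without it you obtain only $\delta\geq 2r+2-n$ and no quadratic entry loci in the sense of Definition \ref{def: LQEL QEL CC}. Likewise, your one-line claim of generic reducedness of $\B'$ (``the degree-$d$ forms cutting it with multiplicity one along $E'$'') proves reducedness of the divisor $E'$, but the passage from $E'$ reduced to the base \emph{scheme} $\B'$ being generically reduced again goes through the blow-up identification (the paper invokes the argument of \S 2.2 of \cite{ein-shepherdbarron} for this).
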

\begin{proof} See also \cite[Propositions~2.1 and 2.3]{ein-shepherdbarron}.
Consider the  diagram (\ref{eq: diagram resolving map}), 
where $\pi:X=\Bl_{\B}(\PP^n)\rightarrow\PP^n$
  and $\pi'=\varphi\circ\pi$.     
$X$   
can be identified
 with  the graphic
 $\Graph(\varphi)\subset\PP^n\times\sS$ and the
maps $\pi$,   $\pi'$ 
can be identified
 with the projections onto the factors.
It follows that $(\B)_{\mathrm{red}}$ (resp. $(\B')_{\mathrm{red}}$) is the set 
of the points $x$ such that the fiber $\pi^{-1}(x)$ (resp. ${\pi'}^{-1}(x)$) has 
positive dimension. Denote by $E$ the exceptional divisor of $\pi$, 
  $E'={\pi'}^{-1}(\B')$, 
$H\in|\pi^{\ast}(\O_{\PP^{n}}(1))|$,   
$H'\in|{\pi'}^{\ast}(\O_{\sS}(1))|$ 
and  note that, by the factoriality of 
 $\sS$ and by the proof of
\cite[Proposition~1.3]{ein-shepherdbarron}, it follows
that $E'$ is an irreducible divisor, in particular $\B'$ is irreducible.
Moreover we have the relations
\begin{equation}\label{eq: HH'}
H\sim d\,H'-E',\quad H'\sim 2\,H-E,
\end{equation}
from which we get
\begin{equation}\label{eq: EE'}
E\sim(2\,d-1)\,H'-2\,E',\quad E'\sim (2\,d-1)\,H-d\,E,
\end{equation}
and in particular by (\ref{eq: EE'}) it follows  $E'=(E')_{\mathrm{red}}$.
Put 
\begin{equation}\label{eq: positions}
U=\reg\left(\sS\right)\setminus\sing\left(\left(\B'\right)_{\mathrm{red}}\right),\quad V={\pi'}^{-1}(U),\quad Z=U \cap (\B')_{\mathrm{red}}.
\end{equation}
Observe 
 that, since $X$ is smooth and we have assumed 
  (\ref{eq: hypothesis on sing locus hypersurface}),
we have
$Z\neq\emptyset$. Thus, by \cite[Theorem~1.1]{ein-shepherdbarron}, 
 ${\pi'}|_{V}:V\rightarrow U$ coincides
 with the blow-up  of $U$ along $Z$;
in particular
$$
\Pic(X)=\ZZ\langle H\rangle \oplus\ZZ\langle E\rangle=\ZZ\langle H'\rangle\oplus\ZZ\langle E'\rangle=\Pic(V).
$$
Now, by \cite[\Rmnum{2} Exercise~8.5]{hartshorne-ag} and (\ref{eq: HH'}) and (\ref{eq: EE'}), we have
\begin{eqnarray}\label{eq: K_X}
K_X&\sim&\pi^{\ast}(K_{\PP^n}) + (\mathrm{codim}_{\PP^n}(\B)-1)\,E  \nonumber \\
 & \sim & (-n-1)\,H+(n-r-1)\,E  \nonumber  \\
 & \sim & (-2\,d\,r+r+d\,n-n-3\,d+1)\,H'+(2\,r-n+3)\,E' 
\end{eqnarray}
and also
\begin{eqnarray}\label{eq: K_V}
 K_X|V \sim K_V&\sim&{\pi'}^{\ast}(K_{\reg(\sS)}|U) + (\mathrm{codim}_{U}(Z)-1)\,E' \nonumber\\
   & \sim & (\deg(\sS)-(n+1)-1)\,H' + (\mathrm{codim}_{\sS}(\B')-1)\,E' \nonumber \\
   & \sim & (\Delta-n-2)\,H'+(n-r'-1)\,E'. 
\end{eqnarray}
By the comparison of
 (\ref{eq: K_X}) and (\ref{eq: K_V})
we obtain the expressions of
 $r=\dim(\B)$ and $r'=\dim(\B')$.
Moreover, the proof of \cite[Proposition~2.3(a),(b)]{ein-shepherdbarron} 
adapts to our case,
  producing that the secant  variety  $\Sec(\B)$ is a hypersurface of degree
 $2d-1$ in $\PP^n$ and  $\B$ is a $QEL$-variety of type $\delta=n-r'-2$.
Finally,
 we have  $\B'\cap U=(\B')_{\mathrm{red}}\cap U$ 
by the argument in \cite[\S  2.2]{ein-shepherdbarron}. 
\end{proof}
\begin{remark}\label{remark: dimension formula without hypothesis}
Proceeding as above and
interchanging the rules in (\ref{eq: positions}) by
\begin{equation}
U=\reg\left(\sS\right)\setminus\left(\B'\right)_{\mathrm{red}},\quad V={\pi'}^{-1}(U),\quad Z=U \cap (\B')_{\mathrm{red}}=\emptyset ,
\end{equation}
we deduce that $\pi'|_{V}:V\rightarrow U$ is an isomorphism  and in 
particular $\Pic(V)={\pi'}^{\ast}(\Pic(U))=\ZZ\langle H' \rangle$
and $K_V={\pi'}^{\ast}(K_U)=(\Delta-n-2) H'$.
Now, by (\ref{eq: K_X}),
$K_X|_V = K_{V} \sim (-2\,d\,r+r+d\,n-n-3\,d+1)H'$,
and hence, also without to assume 
 (\ref{eq: hypothesis on sing locus hypersurface}), 
we obtain that
$r=\left(dn-\Delta-3d+3\right)/\left(2d-1\right)$.
\end{remark}
Lemma \ref{prop: cohomology twisted ideal} is slightly stronger than
what is obtained by applying
directly the main result in \cite{bertram-ein-lazarsfeld}. However
it is essential to study $\B$  in the case $\delta=0$.  
Note that the assumptions on $\sS$ are not necessary.
\begin{lemma}[\cite{mella-russo-baselocusleq3}]\label{prop: cohomology twisted ideal}
 For $i>0$ and $t\geq n-2r-1$ we have
$
H^i(\PP^n, \I_{\B,\PP^n}(t))=0
$.
\end{lemma}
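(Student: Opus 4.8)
The plan is to transfer the question to the blow-up $X=\Bl_{\B}(\PP^n)$ and apply Kawamata--Viehweg vanishing, following the Bertram--Ein--Lazarsfeld strategy \cite{bertram-ein-lazarsfeld} but exploiting that the second projection is birational onto an $n$-dimensional image. First I would resolve $\varphi$ as in diagram (\ref{eq: diagram resolving map}), with $\pi\colon X\to\PP^n$ the blow-up along the smooth centre $\B$, exceptional divisor $E$, and $\pi'=\varphi\circ\pi\colon X\to\sS\subset\PP^{n+1}$. Since $\B$ is smooth of codimension $c=n-r\geq2$, the standard computation built on $0\to\O_X(-E)\to\O_X\to\O_E\to0$ (using that $E$ is a $\PP^{c-1}$-bundle over $\B$, so $\pi_{\ast}\O_E=\O_{\B}$ and $R^j\pi_{\ast}\O_E=0$ for $j>0$) gives $\pi_{\ast}\O_X(-E)=\I_{\B,\PP^n}$ and $R^j\pi_{\ast}\O_X(-E)=0$ for $j>0$. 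By the projection formula and the Leray spectral sequence this reduces the assertion to
$$
H^i(X,\ tH-E)=0\qquad(i>0,\ t\geq n-2r-1),
$$
where $H=\pi^{\ast}\O_{\PP^n}(1)$ and, as in (\ref{eq: HH'}), $H'=(\pi')^{\ast}\O_{\sS}(1)=2H-E$.

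Next I would write $tH-E$ as $K_X+A$ and verify that $A$ is nef and big. By the canonical bundle formula (\ref{eq: K_X}), $K_X\sim(-n-1)H+(n-r-1)E$, so
$$
tH-E-K_X\sim(t+n+1)H-(n-r)E.
$$
Substituting $E=2H-H'$, which is just the relation (\ref{eq: HH'}), turns this into
$$
A:=\bigl(t-(n-2r-1)\bigr)H+(n-r)H'.
$$
Here $H$ is nef and big, being the pullback of an ample class by the birational morphism $\pi$, while $H'$ is base-point-free, hence nef, and it is moreover \emph{big} because $\pi'$ is birational onto the $n$-dimensional hypersurface $\sS$. Consequently, for every integer $t\geq n-2r-1$ the coefficient of $H$ is nonnegative and the coefficient $n-r\geq2$ of $H'$ is positive, so $A$ is a sum of nef divisors containing a positive multiple of the big and nef divisor $H'$; therefore $A$ is nef and big. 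Kawamata--Viehweg vanishing on the smooth projective variety $X$ then yields $H^i(X,\,K_X+A)=H^i(X,\,tH-E)=0$ for $i>0$, which is what remained to prove.

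The conceptual heart, and the source of the strengthening over a direct application of \cite{bertram-ein-lazarsfeld}, is the positivity input for $A$: one needs $H'$ to be not merely nef but \emph{big}, and this is exactly the birationality of $\varphi$ onto $\sS$, which is what allows the bound to be pushed down to $t\geq n-2r-1$ rather than the coarser threshold coming only from $\B$ being cut out by quadrics. Accordingly, I expect the step requiring the most care to be checking that $\pi'$ is a birational morphism onto an $n$-dimensional image (so that $H'$ is big and nef) together with $c=n-r\geq2$. Note that neither the degree $d$ nor any factoriality or regularity of $\sS$ enters the argument at any point, in agreement with the remark that the hypotheses on $\sS$ in Assumption \ref{notation: factorial hypersurface} are unnecessary for this lemma.
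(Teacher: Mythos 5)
Your proof is correct and follows essentially the same route as the paper: reduce to $H^i(X,tH-E)=0$ on the blow-up, rewrite $tH-E-K_X$ via $H'=2H-E$ as $(n-r)H'+(t-(n-2r-1))H$, observe this is nef and big since $H'$ is nef and big by birationality of $\pi'$, and conclude by Kawamata--Viehweg vanishing. The only cosmetic difference is that you derive the identification $H^i(\PP^n,\I_{\B,\PP^n}(t))=H^i(X,tH-E)$ directly from $R^j\pi_{\ast}\O_X(-E)=0$ and Leray, where the paper simply cites \cite{bertram-ein-lazarsfeld}.
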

\begin{proof} 
The proof is located in
\cite[page~6]{mella-russo-baselocusleq3}, 
but we report it for the reader's convenience.
We use the notation of the proof of Proposition 
 \ref{prop: dimension formula}. For $i>0$ and $t\geq n-2r-1$ we have
 \begin{eqnarray*}
  H^i(\PP^n,\I_{\B,\PP^n}(t)) & = & H^i(\PP^n, \O_{\PP^n}(t)\otimes\I_{\B,\PP^n}) \\
 \mbox{(\cite[page~592]{bertram-ein-lazarsfeld})} &= & H^i(X, tH-E) \\
                                 &=& H^i(X, K_X+(tH-E-K_X)) \\
                                 &=& H^i(X, K_X+(t+n+1)H+(r-n)E) \\
                                 &=& H^i(X, K_X+(n-r)(2H-E)+(t-n+2r+1) H) \\
 \mbox{(\cite[page~20]{debarre})} &=& H^i(X, K_X+ \underbrace{
\underbrace{(n-r)H'}_{\mathrm{nef\ and\ big}}+\underbrace{(t-(n-2r-1))H}_{\mathrm{nef}}}_{\mathrm{nef\ and\ big}}) \\
 \mbox{(Kodaira Vanishing Theorem)}                 &=& 0.
 \end{eqnarray*}
\end{proof}
\begin{proposition}\label{prop: cohomology properties} The following statements hold:  
\begin{enumerate}
 \item\label{part: B is linearly normal} $\B$ is nondegenerate 
and projectively normal.
 \item\label{part: B is fano} If $\delta>0$, then either $\B$ is a Fano variety 
 of the first species  
of  index $i(\B)=(r+\delta)/2$,  or it is a hyperplane section  of the 
Segre variety $\PP^2\times\PP^2\subset\PP^8$.
 \item\label{part: hilbert polynomial}
If $\delta>0$, 
putting $i=i(\B)$ and $P(t)=P_{\B}(t)$,   we have
$$
\begin{array}{c}
\begin{array}{ccc}
 P(0)=1,& P(1)=n+1, & P(2)=\left({n}^{2}+n-2\right)/2,\\
\end{array}\\
P(-1)=P(-2)=\cdots=P(-i+1)=0,\\
\forall\,t\quad P(t)=(-1)^r P(-t-i).
\end{array}
$$
In particular, if $c(\B)=r+1-i\leq5$, it remains determined $P(t)$.     
\item\label{part: cohomology} Hypothesis as in part  
 \ref{part: hilbert polynomial}. We have 
$$
 h^j(\B,\O_{\B}(t))=\left\{
\begin{array}{cccc}   0 &\mbox{if}&j=0,&t<0\\
                    P(t) &\mbox{if}&j=0,&t\geq0\\     
                     0 &\mbox{if}&0<j<r,&t\in\ZZ\\
                     (-1)^r P(t)&\mbox{if}&j=r,&t<0\\
                     0 &\mbox{if}&j=r,& t\geq0
\end{array} \right.
$$
\end{enumerate}
\end{proposition}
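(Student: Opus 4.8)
The plan is to prove the four assertions in the order given, since each supplies what the next needs: part \ref{part: B is linearly normal} provides the homological control (nondegeneracy and projective normality), part \ref{part: B is fano} identifies $\B$ as a Fano variety with $K_{\B}\sim\O_{\B}(-i)$, $i=i(\B)=(r+\delta)/2$, and then parts \ref{part: hilbert polynomial} and \ref{part: cohomology} become formal consequences of Kodaira vanishing, Serre duality and the projective normality proved in part \ref{part: B is linearly normal}.

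For part \ref{part: B is linearly normal}, nondegeneracy is immediate from Proposition \ref{prop: type 2-1}: since $\sS$ is factorial of degree $\Delta\geq 2$ it is a normal nonlinear hypersurface, so the hypotheses of Lemma \ref{prop: cohomology I2B} and Proposition \ref{prop: type 2-1} hold, and were $\B$ degenerate we would have $h^0(\PP^n,\I_{\B}(1))\neq 0$, forcing $\varphi$ to be of type $(2,1)$ against $d\geq 2$. For projective normality I would combine Lemma \ref{prop: cohomology twisted ideal} with the fact, from Proposition \ref{prop: dimension formula}, that $\Sec(\B)$ is a hypersurface: this gives $\dim\Sec(\B)=2r+1-\delta=n-1$, i.e. $n-2r-1=1-\delta\leq 1$, whence $H^1(\PP^n,\I_{\B}(t))=0$ for all $t\geq 1-\delta$ and in particular for all $t\geq 1$. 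Together with connectedness of $\B$ (so $H^0(\O_{\B})=\CC$), which settles $t=0$, the maps $H^0(\O_{\PP^n}(t))\to H^0(\O_{\B}(t))$ are surjective for all $t\geq 0$, which is projective normality.

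For part \ref{part: B is fano} I assume $\delta>0$. By Definition \ref{def: LQEL QEL CC}, $\B$ is a smooth $QEL$-variety of type $\delta\geq 1$, hence an $LQEL$- and a $CC$-variety, and it is linearly normal by part \ref{part: B is linearly normal}; moreover it is covered by lines, since the positive-dimensional entry-locus quadrics contain and sweep it out. When $\delta\geq 3$ I would just quote Theorem \ref{prop: main qel1}, which gives that $\B$ is Fano of the first species with $i(\B)=(r+\delta)/2$. For $\delta\in\{1,2\}$ I would invoke the classification of $CC$-varieties (Theorem \ref{prop: classification CC-varieties}): $\B$ is either Fano of the first species or one of the four exceptional families. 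The exceptional families I would discard using that $\B$ is genuinely $QEL$ (irreducible entry locus) and that $\Sec(\B)$ is a proper hypersurface of degree $2d-1\geq 3$ subject to $n=2r+2-\delta$ and $\Delta\geq 2$: the Veronese $\nu_2(\PP^r)$ and its projections, and all Segre sections except one, fail either the $\Delta\geq 2$ constraint or the irreducibility of the entry locus, leaving only the hyperplane section of $\PP^2\times\PP^2\subset\PP^8$, which is a $QEL$-variety of type $1$ by Proposition \ref{prop: first properties LQEL}. In the surviving Fano first-species case the index is forced to be $(r+\delta)/2$ through the identity $\dim\L_{x,\B}=i(\B)-2$ coming from (\ref{eq: dim LxX}) applied to a general line, combined with the computation of $\dim\L_{x,\B}=\dim B_{x,\B}$ via the second fundamental form (Propositions \ref{prop: surjective second fundamental form} and \ref{prop: relation between hilbert scheme and second funfamental form}). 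I expect the pinning down of the index for $\delta=1,2$, i.e. the careful elimination of the $CC$-exceptions together with the $QEL$ computation of $\dim\L_{x,\B}$, to be the main obstacle; everything with $\delta\geq 3$ is a direct citation.

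For parts \ref{part: hilbert polynomial} and \ref{part: cohomology} I would work with $K_{\B}\sim\O_{\B}(-i)$. Since $\B$ is Fano, $H^{j}(\O_{\B})=0$ for $j>0$, so $P(0)=\chi(\O_{\B})=1$; linear normality gives $P(1)=h^0(\O_{\B}(1))=n+1$; and projective normality together with $h^0(\PP^n,\I_{\B}(2))=n+2$ (Lemma \ref{prop: cohomology I2B}) gives $P(2)=\binom{n+2}{2}-(n+2)=(n^2+n-2)/2$. For $0<t<i$ the sheaf $\O_{\B}(-t)=K_{\B}\otimes\O_{\B}(i-t)$ is $K_{\B}$ twisted by an ample bundle, so Kodaira vanishing kills its higher cohomology while $h^0=0$ by degree, whence $P(-t)=0$, giving the roots $P(-1)=\dots=P(-i+1)=0$. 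Serre duality $h^{j}(\O_{\B}(t))=h^{r-j}(\O_{\B}(-t-i))$ yields the functional equation $P(t)=(-1)^rP(-t-i)$. The table of part \ref{part: cohomology} then follows by the same bookkeeping: for $0<j<r$ one gets $H^j(\O_{\B}(t))=0$ in each of the ranges $t\geq -i+1$, $t=-i$, $t\leq -i-1$ by Kodaira and Serre duality; for $j=0$ the values are $0$ ($t<0$, by degree) and $P(t)$ ($t\geq 0$, since higher cohomology vanishes so $h^0=\chi$); for $j=r$ Serre duality gives $h^r(\O_{\B}(t))=h^0(\O_{\B}(-t-i))=0$ when $t\geq 0$, while for $t<0$ only $h^r$ survives in $\chi$, so $h^r(\O_{\B}(t))=(-1)^rP(t)$. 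Finally, writing $P(t)=\bigl(\prod_{k=1}^{i-1}(t+k)\bigr)R(t)$ with $\deg R=r-i+1=c(\B)$, the functional equation makes $R$ symmetric about $-i/2$, hence carrying at most $\lfloor c(\B)/2\rfloor+1\leq 3$ free coefficients when $c(\B)\leq 5$; these are determined by the three known values $P(0),P(1),P(2)$, so $P(t)$ is determined.
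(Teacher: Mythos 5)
Your route is the paper's own: part (1) via Proposition \ref{prop: type 2-1} plus Lemma \ref{prop: cohomology twisted ideal} (using $n-2r-1=1-\delta\leq 1$), part (2) by splitting $\delta\geq3$ (Theorem \ref{prop: main qel1}) from $\delta\in\{1,2\}$ (Theorem \ref{prop: classification CC-varieties} and elimination of its exceptional families), and parts (3)--(4) by Kodaira vanishing and Serre duality; your symmetry count showing that $c(\B)\leq 5$ determines $P$ is an equivalent repackaging of the paper's ``at least $i+5$ independent conditions''. Parts (1), (3) and (4) are correct as proposed.

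Part (2), however, has a genuine gap, precisely at the step you yourself flag as the main obstacle. First, the elimination of the exceptions: every candidate compatible with $n=2r+2-\delta$ --- namely $\nu_2(\PP^2)\subset\PP^5$, the scroll $\PP_{\PP^1}(\O(1)^{\oplus r-1}\oplus\O(2))\subset\PP^{2r}$, the hyperplane section of $\PP^2\times\PP^2\subset\PP^8$, $\PP^2\times\PP^2\subset\PP^8$ and $\PP^2\times\PP^3\subset\PP^{11}$ --- has irreducible general entry locus, so ``irreducibility of the entry locus'' excludes nothing, and ``fails the $\Delta\geq2$ constraint'' is a conclusion, not an argument. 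The effective tool, and the one the paper uses, is Lemma \ref{prop: cohomology I2B}: the base locus must satisfy $h^0(\PP^n,\I_{\B}(2))=n+2$, whereas $h^0(\PP^5,\I_{\nu_2(\PP^2)}(2))=6$, $h^0(\PP^8,\I_{\PP^2\times\PP^2}(2))=9$ and $h^0(\PP^{11},\I_{\PP^2\times\PP^3}(2))=18$; the scroll is killed instead because it is nondegenerate in $\PP^{2r}$ while $\delta=1$ forces $n=2r+1$. Second, and more seriously, your mechanism for pinning down $i(\B)=(r+\delta)/2$ when $\delta\in\{1,2\}$ does not work. For $\delta=1$ the general entry locus is a smooth conic (Theorem \ref{prop: scorza lemma}), which contains no lines, so nothing guarantees $\L_{x,\B}\neq\emptyset$ and the identity $\dim\L_{x,\B}=i(\B)-2$ is vacuous; moreover Proposition \ref{prop: surjective second fundamental form} computes the dimension of the linear system $|II_{x,\B}|$, not of its base locus $B_{x,\B}$, so you have no independent evaluation of $\dim\L_{x,\B}$ to compare with. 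The relation $2i(\B)=r+\delta$ in this range comes from conics, not lines: any conic of $\B$ through two general points $x,y$ is contained in the entry locus $\Sigma_p(\B)$ for $p\in\langle x,y\rangle$ general, so these conics form a family of dimension exactly $\delta-1$, and by the deformation theory of conics on conic-connected manifolds with $\Pic(\B)=\ZZ\langle H_{\B}\rangle$ (the content of \cite{ionescu-russo-conicconnected} lying behind Theorem \ref{prop: classification CC-varieties}) this dimension equals $-K_{\B}\cdot C-r-1=2i(\B)-r-1$. Replacing your lines by these conics repairs the step; as written, it fails.
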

\begin{proof}
(\ref{part: B is linearly normal}) $\B$ is nondegenerate 
by Proposition \ref{prop: type 2-1}.
$\B$ is projectively normal if and 
only if  $h^1(\PP^n,\I_{\B,\PP^n}(k))=0$ for every $k\geq1$,    and this,
by Lemma \ref{prop: cohomology twisted ideal},
is true whenever 
$1\geq n-2\,r-1$, 
i.e. whenever
 $\delta\geq0$.  
Note that for   
 $\delta>0$ the thesis follows also from  \cite[Corollary~2]{bertram-ein-lazarsfeld}.

(\ref{part: B is fano}) We know that
 $\B$ is a 
$QEL$-variety of type $\delta>0$.     If $\delta\geq3$ the thesis is 
contained in Theorem \ref{prop: main qel1};  
for $0<\delta\leq2$ we apply  
 Theorem \ref{prop: classification CC-varieties}.
Thus we have either that
$\B$ is a Fano variety with
 $\Pic(\B)\simeq\ZZ\langle\O_{\B}(1)\rangle$,   or
 $\B$ is one of the following:
\begin{enumerate}
 \item\label{item: veronese} $\nu_2(\PP^2)\subset\PP^5$,
 \item\label{item: rational normal scroll} a rational normal scroll $\PP_{\PP^1}(\O(1)^{\oplus r-1}\oplus\O(2))\subset\PP^{2r}$,
 \item a hyperplane section  of $\PP^2\times\PP^2\subset\PP^8$,
 \item\label{item: segre P2xP2} $\PP^2\times\PP^2\subset\PP^8$,
 \item\label{item: P2xP3inP11} $\PP^2\times\PP^3\subset\PP^{11}$.
\end{enumerate}
This follows only by imposing  that the pair $(r,n)$ corresponds to
that of
 a variety listed in 
 Theorem \ref{prop: classification CC-varieties}.
Of course  cases \ref{item: veronese}, \ref{item: segre P2xP2} and \ref{item: P2xP3inP11} are impossible,  
because $h^0(\PP^{n},\I_{\B}(2))=n+2$ but 
$h^0(\PP^{5},\I_{\nu_2(\PP^2)}(2))=6$,
$h^0(\PP^{8},\I_{\PP^2\times\PP^2}(2))=9$ and
 $h^0(\PP^{11},\I_{\PP^2\times\PP^3}(2))=18$.  
Case \ref{item: rational normal scroll} is excluded  because 
such a scroll satisfy\footnote{Actually 
one can easily see that if the base locus $\B\subset\PP^n$ 
is a rational normal scroll, then 
it holds $\delta=0$, $r=1$, $n=4$, $d=2$ and $\Delta=2$.}
 $\delta=1$ 
(see for example \cite[Proposition~1.5.3]{russo-specialvarieties}) 
and then we should have $n=2r+1\neq 2r$.

(\ref{part: hilbert polynomial}) By Lemma \ref{prop: cohomology twisted ideal} 
we have
$h^j(\PP^n,\I_{\B,\PP^n}(k))=0$, for every $j>0$, $k\geq 1-\delta$,
and by the structural sequence we get
$h^j(\B,\O_{\B}(k))=0$, for every $j>0$, $k\geq 1-\delta$.
Hence, by $\delta>0$, it follows
\begin{eqnarray*}
P(0)&=&h^0(\B,\O_{\B})=1, \\
 P(1)&=&h^0(\B,\O_{\B}(1))=h^0(\PP^n,\O_{\PP^n}(1))=n+1, \\
P(2)&=&h^0(\PP^n,\O_{\PP^n}(2))-h^0(\PP^n,\I_{\B,\PP^n}(2))=\left({n}^{2}+n-2\right)/2. 
\end{eqnarray*}
Moreover, if $t<0$, by  Kodaira Vanishing Theorem  
 and  Serre Duality,
$$
P(t)=(-1)^r h^r(\B,\O_{\B}(t))=(-1)^r h^0(\B,\O_{\B}(-t-i)),
$$
and hence for $-i<t<0$ we have $P(t)=0$ 
and for $t\leq -i$ (hence   for every $t$) we have $P(t)=(-1)^r P(-t-i)$.
In particular
$$
P(-i)=(-1)^r,\quad P(-i-1)=(-1)^r (n+1),\quad P(-i-2)=(-1)^r \left({n}^{2}+n-2\right)/2,
$$
 and we have at least $i+5$ independent conditions for $P(t)$.

(\ref{part: cohomology}) As the part (\ref{part: hilbert polynomial}).
\end{proof}

\section{Numerical restrictions}\label{sec: numerical constraints}
\begin{proposition}\label{prop: numerical constraint with Delta=2}
If $\Delta=2$, 
 then exactly one of the following holds:
\begin{enumerate}[(i)]
 \item\label{part: congr_i} $d$ is even and $\delta\in\{0,1,3,7\}$; 
 \item\label{part: congr_ii} $\delta$ is even, $d$ is odd and
  $d=u2^{\delta/2-1}+1$, for some $u\geq1$; 
 \item\label{part: congr_iii} $\delta=s2^e-1$,   $d=t2^{s2^{e-1}-e-1}+1$,   for some
 $e,s,t\geq1$,  $s$ odd.
\end{enumerate}
\end{proposition}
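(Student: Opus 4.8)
The plan is to reduce the whole statement to a purely $2$-adic divisibility condition coming from Theorem~\ref{prop: divisibility theorem}, after first extracting clean arithmetic identities from Proposition~\ref{prop: dimension formula}. Setting $\Delta=2$ in that proposition, I would first record the relations. From $\delta=(n-2d)/(2d-1)$ one gets $n-1=(2d-1)(\delta+1)$, and subtracting the two expressions for $r$ and $\delta$ gives
$$r-\delta=\frac{(d-1)(n-1)}{2d-1}=(d-1)(\delta+1),$$
so that $r+1=d(\delta+1)$. Thus everything is controlled by the single identity $r-\delta=(d-1)(\delta+1)$, and the proposition becomes an assertion about the $2$-adic valuation of this product.

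Next I would settle the parities. For $\delta>0$, Proposition~\ref{prop: cohomology properties}(\ref{part: B is fano}) gives that either $\B$ is a Fano variety of the first species with index $i(\B)=(r+\delta)/2$, or $\B$ is a hyperplane section of $\PP^2\times\PP^2\subset\PP^8$ (for which $r=3$, $\delta=1$). In both cases $r\equiv\delta\pmod 2$: in the first because $(r+\delta)/2\in\ZZ$, in the second directly. Since $r-\delta=(d-1)(\delta+1)$, the congruence $r\equiv\delta\pmod 2$ is equivalent to ``$d$ odd or $\delta$ odd''. This forbids $d$ even together with $\delta$ even and positive, and it is exactly what organizes the trichotomy: case~(i) collects all $d$ even (forcing $\delta=0$ or $\delta$ odd), while the $d$ odd alternatives split according to the parity of $\delta$ into (ii) and (iii); the case $\delta=0$ carries no extra constraint and is absorbed into (i) (or into (ii) with vacuous exponent).

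The heart of the argument is then Theorem~\ref{prop: divisibility theorem} applied to the smooth LQEL-variety $\B$ of type $\delta>0$, which gives $r\equiv\delta\pmod{2^{\lfloor(\delta-1)/2\rfloor}}$, i.e.
$$v_2(d-1)+v_2(\delta+1)\;\geq\;\left\lfloor\tfrac{\delta-1}{2}\right\rfloor,$$
where $v_2$ is the $2$-adic valuation (using $r-\delta=(d-1)(\delta+1)$). I would now run the three cases. If $d$ is even then $v_2(d-1)=0$, and writing $\delta+1=2^a m$ with $m$ odd (so $a\geq1$ since $\delta$ is odd) the inequality becomes $2^{a-1}m\leq a+1$; a short enumeration forces $m=1$ and $a\in\{1,2,3\}$, i.e.\ $\delta\in\{1,3,7\}$, giving (i). If $d$ is odd and $\delta$ even then $v_2(\delta+1)=0$ and the inequality reads $v_2(d-1)\geq\delta/2-1$, equivalently $d=u\,2^{\delta/2-1}+1$ with $u\geq1$, giving (ii). If $d$ and $\delta$ are both odd, writing $\delta+1=s\,2^e$ with $s$ odd and $e\geq1$ (so $\delta=s2^e-1$), the inequality becomes $v_2(d-1)\geq s2^{e-1}-e-1$, i.e.\ $d=t\,2^{s2^{e-1}-e-1}+1$, giving (iii).

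The main obstacle is not a single deep step but the bookkeeping needed to make the three alternatives a genuine partition. One must verify that the Divisibility Theorem really applies (that $\B$ is smooth, connected, nondegenerate and of type $\delta>0$, all guaranteed by the specialty of $\varphi$ together with Propositions~\ref{prop: dimension formula} and~\ref{prop: cohomology properties}), and then carry out the finite enumeration $2^{a-1}m\leq a+1$ carefully so that no even $\delta$ slips into case~(i) and exactly $\{1,3,7\}$ survive. I expect the only subtlety to lie at the boundary values $\delta\in\{0,1,3,7\}$, where the exponents $\delta/2-1$ and $s2^{e-1}-e-1$ become $\leq 0$ and the divisibility conditions turn vacuous; these are precisely the $\delta$ that may also be realized with $d$ even, so the cleanest presentation is to phrase the trichotomy explicitly according to the parities of $d$ and $\delta$.
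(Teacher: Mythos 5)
Your proof is correct and takes essentially the same approach as the paper: both rest on the congruence $(d-1)(\delta+1)\equiv 0 \ \mathrm{mod}\ 2^{\lfloor(\delta-1)/2\rfloor}$ obtained from Theorem \ref{prop: divisibility theorem} via the dimension formula, use the parity of $r+\delta$ forced by the Fano index in Proposition \ref{prop: cohomology properties} to kill the case ($d$ even, $\delta$ even, $\delta>0$), and then split according to the parities of $d$ and $\delta$. The only differences are cosmetic: you phrase the bookkeeping in terms of $2$-adic valuations and apply the parity constraint up front, whereas the paper manipulates the congruence directly (obtaining $\delta\in\{0,1,2,3,7\}$ for $d$ even) and excludes $\delta=2$ afterwards.
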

\begin{proof}
By Theorem \ref{prop: divisibility theorem},
for $\delta\geq3$,   we have
$r\equiv \delta \ \mathrm{mod}\ 2^{\left\lfloor(\delta-1)/2\right\rfloor}$,  i.e.
\begin{equation}\label{eq: divisibility theorem}
\Delta-2+(d-1)(\delta+1)\equiv 0 \ \mathrm{mod}\ 2^{\left\lfloor(\delta-1)/2\right\rfloor}.
\end{equation}
If $d$ is even, 
then (\ref{eq: divisibility theorem}) becomes
$\delta+1\equiv 0 \ \mathrm{mod}\ 2^{\left\lfloor(\delta-1)/2\right\rfloor}$
and we conclude that  $\delta\in\{0,1,2,3,7\}$.
Moreover, if $\delta\geq2$,
by Proposition 
\ref{prop: cohomology properties}
we get that $\B$ is a Fano variety with
\begin{equation}\label{eq: contradiction by conic-connected}
 2i(\B)=r+\delta=\Delta-4+\left(d+1\right)\left(\delta+1\right) \equiv 0\ \mathrm{mod}\ 2 ,
\end{equation}
and hence, if $\delta=2$,  
we would get a contradiction. 
If $d$ is odd and  $\delta$ is even, by (\ref{eq: divisibility theorem}) 
it immediately follows   (\ref{part: congr_ii}). 
Finally, if $d$ and $\delta$ are both odd,  
we write $d=q2^a+1$ and $\delta=s2^e-1$,  with $a,e,q,s\geq1$ and $q,s$ odd.
Then (\ref{eq: divisibility theorem}) is equivalent to
$2^{a+e}\equiv 0 \ \mathrm{mod}\ 2^{s2^{e-1}-1}$,
i.e.  $a\geq s2^{e-1}-e-1$.
Thus, putting $t=q2^{a-(s2^{e-1}-e-1)}$,  we have $d=t2^{s2^{e-1}-e-1}+1$.
\end{proof} 
\begin{corollary}\label{prop: constraint with Delta=2 and d even} 
 If $\Delta=2$ and $d$ is even, then  
the possible values of 
$n$,   $r$,   $r'$ and   $\delta$ 
are contained in Table \ref{tab: value when Delta=2 and d is even}.
\begin{table}[htbp]
\begin{center}
\begin{tabular}{|c|c|c|c|}
\hline
 $n$ & $r$ & $r'$ & $\delta$ \\
\hline \hline
 $2d$ & $d-1$ & $2(d-1)$ & $0$\\
 $4d-1$ & $2d-1$ & $4(d-1)$ & $1$\\
 $8d-3$ & $4d-1$ & $8(d-1)$ & $3$ \\
 $16d-7$ & $8d-1$ & $16(d-1)$ & $7$ \\
\hline
\end{tabular}
\end{center}
\caption{Values of $n$, $r$, $r'$ and $\delta$, when $\Delta=2$ and $d$ is even.}
\label{tab: value when Delta=2 and d is even}
\end{table}
\end{corollary}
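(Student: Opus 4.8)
The plan is to read the table off directly from the two results already in hand: the list of admissible secant defects supplied by Proposition \ref{prop: numerical constraint with Delta=2}, and the dimension formulas of Proposition \ref{prop: dimension formula}. First I would record that, since $\Delta=2$ and $d$ is even, part \ref{part: congr_i} of Proposition \ref{prop: numerical constraint with Delta=2} forces
$$
\delta\in\{0,1,3,7\}.
$$
Thus only four cases need to be examined, and the whole corollary reduces to expressing $n$, $r$ and $r'$ as functions of $\delta$ alone (for each fixed even $d$).

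Next I would specialize the formulas of Proposition \ref{prop: dimension formula} to $\Delta=2$. The expression for the type becomes $\delta=(n-2d)/(2d-1)$, which I invert to obtain
$$
n=(2d-1)\,\delta+2d .
$$
Substituting this value of $n$ into $r=(d\,n-\Delta-3d+3)/(2d-1)=(d\,n-3d+1)/(2d-1)$ and into $r'=2(d\,n-n+\Delta-d-1)/(2d-1)=2(d-1)(n-1)/(2d-1)$ then gives $r$ and $r'$ purely in terms of $d$ and $\delta$. The only point to verify is that each numerator is divisible by $2d-1$; this happens because, after substitution, the numerators factor as $(2d-1)$ times the claimed value.

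Finally I would run through the four admissible defects. For $\delta=0$ one gets $n=2d$, with $d\,n-3d+1=(2d-1)(d-1)$ and $2(d-1)(n-1)=2(d-1)(2d-1)$, yielding $r=d-1$ and $r'=2(d-1)$. For $\delta=1$ one has $n=4d-1$ and $d\,n-3d+1=(2d-1)^2$, so $r=2d-1$ and $r'=4(d-1)$. For $\delta=3$ one has $n=8d-3$ and $d\,n-3d+1=(2d-1)(4d-1)$, so $r=4d-1$ and $r'=8(d-1)$. For $\delta=7$ one has $n=16d-7$ and $d\,n-3d+1=(2d-1)(8d-1)$, so $r=8d-1$ and $r'=16(d-1)$. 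These are exactly the four rows of Table \ref{tab: value when Delta=2 and d is even}, which completes the proof. The computation is entirely routine; the genuine content sits upstream, in the divisibility bound on $\delta$ furnished by the Divisibility Theorem through Proposition \ref{prop: numerical constraint with Delta=2}, so there is no real obstacle here beyond checking the four clean factorizations by $2d-1$.
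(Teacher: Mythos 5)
Your proof is correct and follows exactly the intended route: the paper states this as an immediate corollary of Proposition \ref{prop: numerical constraint with Delta=2} (part (\ref{part: congr_i}) giving $\delta\in\{0,1,3,7\}$ for $d$ even) combined with the dimension formulas of Proposition \ref{prop: dimension formula}, which is precisely your argument. The four factorizations by $2d-1$ all check out, so nothing is missing.
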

\begin{proposition}\label{prop: d Delta both odd} 
 If $d$,   $\Delta$ are both odd, then 
$\delta=0$, $r=\Delta+d-3$, $n=2( \Delta+d-2)$.
\end{proposition}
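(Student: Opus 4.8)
The plan is to turn the two formulas of Proposition \ref{prop: dimension formula} into a single clean relation between $r$ and $\delta$, and then to pit the parity of $d$ and $\Delta$ against the integrality of the Fano index of $\B$. First I would eliminate $n$. Solving $\delta=\frac{n-2\Delta-2d+4}{2d-1}$ for $n$ gives $n=(2d-1)\delta+2\Delta+2d-4$; substituting into the expression for $r$ and using the factorization $2d^2-7d+3=(2d-1)(d-3)$ collapses everything to the identity
$$r=d\,\delta+\Delta+d-3,$$
whence
$$r+\delta=(d+1)\,\delta+\Delta+d-3.$$
This step is purely algebraic and carries no risk. The point of recording it is the following parity count: when $d$ and $\Delta$ are both odd, $(d+1)\delta$ is even while $\Delta+d-3$ is odd, so $r+\delta$ is \emph{odd}. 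Note that this is an identity attached to $\varphi$, valid independently of what $\B$ later turns out to be.

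Next I would argue by contradiction, supposing $\delta>0$. By Proposition \ref{prop: dimension formula} the base locus $\B$ is a $QEL$-variety, so with $\delta>0$ Proposition \ref{prop: cohomology properties}(\ref{part: B is fano}) applies: either $\B$ is a Fano variety of the first species of index $i(\B)=(r+\delta)/2$, or $\B$ is a hyperplane section of $\PP^2\times\PP^2\subset\PP^8$. In the first case the integrality of $i(\B)$ forces $r+\delta$ to be even. In the second case $\B$ is the three-dimensional section of type $\delta=1$ (a general hyperplane section lowers the type by one, by Proposition \ref{prop: first properties LQEL}(2)), so $r+\delta=3+1=4$ is again even. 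In either case $r+\delta$ is even, which clashes with the odd value computed above. Hence the assumption $\delta>0$ is untenable and $\delta=0$.

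Finally, setting $\delta=0$ in the two relations reads off the answer at once: $r=\Delta+d-3$ from the boxed identity, and $n=2\Delta+2d-4=2(\Delta+d-2)$ from $\delta=\frac{n-2\Delta-2d+4}{2d-1}=0$. I do not anticipate a genuine obstacle in the execution; the only delicate point is conceptual rather than computational, namely that the entire dichotomy used to force $r+\delta$ even is the classification content of Proposition \ref{prop: cohomology properties}(\ref{part: B is fano}) (resting in turn on Theorems \ref{prop: main qel1} and \ref{prop: classification CC-varieties}). Once one checks that this dichotomy truly covers \emph{every} $\delta>0$ — which it does, being stated for all positive $\delta$ — the even/odd contradiction is unconditional and the conclusion is immediate.
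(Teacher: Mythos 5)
Your proof is correct and follows essentially the same route as the paper: the identity $r+\delta=\Delta-4+(d+1)(\delta+1)$ (the paper's equation (\ref{eq: contradiction by conic-connected})) is odd when $d$ and $\Delta$ are both odd, contradicting the evenness of $r+\delta=2i(\B)$ forced by Proposition \ref{prop: cohomology properties} once $\delta>0$, after which $\delta=0$ and Proposition \ref{prop: dimension formula} gives $r$ and $n$. Your explicit treatment of the hyperplane-section-of-$\PP^2\times\PP^2$ alternative (where $r+\delta=4$ is still even) is a small extra care that the paper leaves implicit, not a different method.
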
 
\begin{proof}
  If $\delta>0$,  by Proposition 
\ref{prop: cohomology properties}  
we would get  the contradiction  that 
(\ref{eq: contradiction by conic-connected}) holds.
Thus $\delta=0$ and there follow the expressions 
 of $r$ and   $n$ by Proposition \ref{prop: dimension formula}. 
\end{proof}
\begin{table}[htbp]
\tabcolsep=3pt
\begin{center}
\begin{tabular}{|c|c||cccccc|}
\hline 
$\Delta$ & $d$ & \multicolumn{6}{c|}{$(n,r,\delta,c)$} \\
\hline
\hline
\multirow{3}{*}{$2$} &$2$ & $(4,1,0,\_)$,& $(7,3,1,\_)$, & $(13,7,3,3)$, & $(25,15,7,5)$ & & \\ 
\cline{2-8}
& $3$ & $(6,2,0,\_)$, & $(16,8,2,4)$, & $(21,11,3,5)$, & $(26,14,4,6)$, & $(31,17,5,7)$, & $(41,23,7,9)$  \\
\cline{2-8}
&$4$ & $(8,3,0,\_)$, & $(15,7,1,4)$, & $(29,15,3,7)$, & $(57,31,7,13)$ & &   \\
\hline
\hline
\multirow{3}{*}{$3$} &$2$ & $(18,10,4,4)$, & $(24,14,6,5)$ & & & & \\ 
\cline{2-8}
& $3$ & $(8,3,0,\_)$ & & & & &  \\
\cline{2-8}
&$4$ &  $(10,4,0,\_)$, & $(24,12,2,6)$, & $(38,20,4,9)$ & & & \\
\hline
\hline
\multirow{3}{*}{$4$} &$2$ & $(17,9,3,4)$, & $(23,13,5,5)$ & & & & \\ 
\cline{2-8}
& $3$ & $(10,4,0,\_)$, & $(15,7,1,4)$, & $(20,10,2,5)$, & $(25,13,3,6)$, & $(30,16,4,7)$, & $(40,22,6,9)$ \\
\cline{2-8}
&$4$ & $(12,5,0,\_)$, & $(19,9,1,5)$, & $(33,17,3,8)$, & $(47,25,5,11)$, & $(75,41,9,17)$ &  \\
\hline
\end{tabular}
\end{center}
\caption{All cases with $\Delta\leq4$ and $d\leq 4$.}
\label{tab: all cases 4x4}
\end{table}
\begin{remark}
Let us say a few words about the construction of Table \ref{tab: all cases 4x4}.
If $\Delta=2$ and either $d=2$ or $d= 4$, then 
the list of cases is obtained by Corollary \ref{prop: constraint with Delta=2 and d even}.
If $\Delta=2$ and $d=3$, then by (\ref{eq: divisibility theorem}) we obtain 
$(n,r,\delta)\in\{ (6,2,0), (11,5,1), (16,8,2), (21,11,3), (26,14,4), (31,17,5), (41,23,7)  \}$;
the case  $(n,r,\delta)=(11,5,1)$ is impossible,  since otherwise 
by Proposition \ref{prop: cohomology properties} part \ref{part: hilbert polynomial},
we would get incompatible conditions 
for $P_{\B}(t)$;  by the same Proposition, we also get 
that in the case $(n,r,\delta)=(16,8,2)$ (resp. $(n,r,\delta)=(21,11,3)$)
we have $\lambda=36$ (resp. $\lambda=86$).   
For the cases with $d=2$ and either $\Delta=3$ or $\Delta= 4$, 
we refer to Propositions \ref{prop: invariants d=2 Delta=3} 
and \ref{prop: invariants d=2 Delta=4}, below.
The unique case with $\Delta=3$ and $d=3$ is obtained 
by Proposition \ref{prop: d Delta both odd}.
Finally, cases with either $(\Delta,d)=(3,4)$, $(\Delta,d)=(4,3)$ or $(\Delta,d)=(4,4)$
are obtained by (\ref{eq: divisibility theorem}) 
and (\ref{eq: contradiction by conic-connected}).
\end{remark}

\subsection{Sequences of impossible values of \texorpdfstring{$n$}{n}} 
If we fix $\Delta$, then for each $n$
there are only finitely many possible values ​​of $d$.
So we can ask:
given $\Delta$, 
what are the values ​​of $n$ such that no $d$ works?
In other words, 
\emph{What are the values ​​of $n$ such that
there are no special quadratic birational transformations 
$\varphi:\PP^n\dashrightarrow\sS\subset\PP^{n+1}$ 
as in Assumption \ref{notation: factorial hypersurface}?}

In particular, let $\Delta=2$. Then, 
 for $n\leq 60$ all (non-excluded) values of $d$ are 
contained in Table \ref{tab: value of d when Delta=2},
which is constructed via Proposition 
\ref{prop: numerical constraint with Delta=2} and where
an asterisk denotes cases further excluded by  
Proposition \ref{prop: cohomology properties} part \ref{part: hilbert polynomial}.
 \begin{table}[htbp]
 \begin{center}
 \begin{tabular}{||c|c||c|c||c|c||c|c||c|c||c|c||}
 \hline
 $n$ &$d$& $n$ &$d$&$n$&$d$&$n$&$d$&$n$&$d$&$n$&$d$\\
 \hline \hline
  1 & & 11 & $3^{\ast}$ & 21 & 3 & 31 & 3, 8 &41 & 3 & 51 & 13\\
 \hline
  2 & & 12 &6& 22&11&32&16&42&21&52&9, 26\\
 \hline
  3 & & 13&2 &23&6&33&&43&11&53&7\\
 \hline
 4 &2 & 14&7&24&12&34&17&44&22&54&27\\
 \hline
  5 & & 15&4&25&2&35&9&45&6&55&5, 14\\
 \hline
 6 &3 & 16&3, 8&26&3, 13&36&18&46&5, 23&56&28\\
 \hline
 7 &2 & 17&&27&7&37&5&47&12&57&4\\
 \hline
 8 &4 & 18&9&28&5, 14&38&19&48&24&58&29\\
 \hline
 9 & & 19& 5 &29&4&39&10&49&&59&15\\
 \hline
 10 & 5& 20&10&30&15&40&7, 20&50&25&60&30\\
 \hline
 \end{tabular}
 \end{center}
 \caption{Values of $d$, when $\Delta=2$ and $n\leq 60$.}
\label{tab: value of d when Delta=2}
\end{table}
We then see that for $n\in\{3,5,9,17,33\}$ no $d$ is possible.
More generally, if we define  the sequence $\xi_2(k)=1+2^k$, then
we have that 
$n\neq \xi_2(k)$ for any $k$.
In fact,
 if $n=\xi_2(k)$ for some $k$, 
then $(d-1)2^k=(2d-1)(r-\delta)$ and hence 
the contradiction that $(2d-1)$ divides $(d-1)$. 

Obviously, when $n>33$, there are many values ​​of $n$ 
that are not possible and that do not belong to the image of $\xi_2$. So
we define another sequence
 $\xi_2'(k)$, $k\geq0$, as follows: $\xi_2'(0)=33$,
and, for $k\geq1$, 
$$
\xi_2'(k)=\left\{\begin{array}{ll}
                \xi_2'(k-1)+16, & \mbox{if } k\not{\!\!\equiv}\ 0\ \mathrm{mod}\ 15,\\
                \xi_2'(k-1)+32, & \mbox{if } k \equiv 0\ \mathrm{mod}\ 15 ;\\
                \end{array}
         \right.
$$
equivalently, for $k\geq0$, we have
$\xi_2'(k)=33+16\left\lfloor k/15 \right\rfloor+16k$.
 \begin{proposition} Let $\Delta=2$.
\begin{enumerate}
\item\label{part: a - sufficient cond for type 2 1}  $n=\xi_2'(k)$, for some $k\geq0$, if and only if 
 case (\ref{part: congr_iii}) of 
 Proposition \ref{prop: numerical constraint with Delta=2} occurs, with either $e\geq5$   
or $e=4$ and   $s\not{\!\!\equiv} 1\ \mathrm{mod}\ 16$. 
\item\label{part: b - sufficient cond for type 2 1} If  $n=\xi_2'(k)$, then $k\geq 3840$ and $n\geq 65569$.
\item\label{part: c - sufficient cond for type 2 1}  $n\neq\xi_{2}'(k)$, for infinitely many $k$. 
\end{enumerate}
 \end{proposition}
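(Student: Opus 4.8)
The plan is to reduce the whole statement to the single closed formula for $n$ produced by case (iii) of Proposition~\ref{prop: numerical constraint with Delta=2}, and then to run a purely $2$-adic analysis. First I would record that, for $\Delta=2$, Proposition~\ref{prop: dimension formula} gives $n=2d(\delta+1)-\delta$; in particular $n$ is odd precisely when $\delta$ is odd. Since case (ii) forces $\delta$ even, and case (i) with $\delta=0$ forces $n$ even, while case (i) with $\delta\in\{1,3,7\}$ gives $n\equiv 15,\,13,\,9 \pmod{16}$ respectively, any admissible $n\equiv 1\pmod{16}$ (admissible meaning arising from a transformation as in Assumption~\ref{notation: factorial hypersurface}) must come from case (iii). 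Substituting $\delta=s2^{e}-1$ and $d=t2^{s2^{e-1}-e-1}+1$ into $n=2d(\delta+1)-\delta$ and simplifying yields
\begin{equation*}
n=st\,2^{s2^{e-1}}+s2^{e}+1,\qquad\text{so}\qquad n-1=s2^{e}\bigl(1+t2^{s2^{e-1}-e}\bigr).
\end{equation*}

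Next I would extract the arithmetic data, writing $v_2$ for the $2$-adic valuation. For $e\geq 3$ one has $s2^{e-1}-e\geq 1$, so the cofactor $1+t2^{s2^{e-1}-e}$ is odd; as $s$ is odd this gives $v_2(n-1)=e$, whence $n\equiv 1\pmod{16}$ forces $e\geq 4$. Reducing modulo $2^{8}$ and using $s2^{e-1}\geq 8$ for $e\geq 4$, the leading term drops out and $n\equiv s2^{e}+1\pmod{256}$. Thus $n\equiv 1\pmod{16}$ automatically, while $n\equiv 17\pmod{256}$ can hold only for $e=4$, and then iff $16s\equiv 16\pmod{256}$, i.e. $s\equiv 1\pmod{16}$ (for $e\geq 5$ one has $v_2(n-1)\geq 5$, so $n\equiv 1\pmod{32}$, which excludes $17$). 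In parallel I would read off $\mathrm{Im}(\xi_2')$ from the closed form: writing $k=15q+s'$ with $0\leq s'\leq 14$, the quantity $g(k):=k+\lfloor k/15\rfloor=16q+s'$ ranges over all nonnegative integers not congruent to $15\pmod{16}$, so that
\begin{equation*}
\mathrm{Im}(\xi_2')=\{\,n\geq 33:\ n\equiv 1\!\!\pmod{16},\ n\not\equiv 17\!\!\pmod{256}\,\}.
\end{equation*}

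Part~(\ref{part: a - sufficient cond for type 2 1}) then follows by direct comparison: an admissible $n$ lies in $\mathrm{Im}(\xi_2')$ iff $n\geq 33$, $n\equiv 1\pmod{16}$ and $n\not\equiv 17\pmod{256}$, which by the previous paragraph is exactly case (iii) with either $e\geq 5$, or $e=4$ and $s\not\equiv 1\pmod{16}$. For part~(\ref{part: b - sufficient cond for type 2 1}) I would minimize $n=st2^{s2^{e-1}}+s2^{e}+1$ over precisely these parameters: the branch $e\geq 5$ gives $n\geq 2^{16}+2^{5}+1=65569$ (attained at $e=5$, $s=t=1$, which is numerically valid with $\delta=31$, $d=1025$), whereas $e=4$ with $s\geq 3$ already forces $n\geq 3\cdot 2^{24}+49$; hence $n\geq 65569$. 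Solving $33+16\,g(k)=65569$ gives $g(k)=4096=16\cdot 256$, i.e. $q=256$, $s'=0$, $k=3840$, so $n\geq\xi_2'(3840)=65569$ and $k\geq 3840$.

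Finally, for part~(\ref{part: c - sufficient cond for type 2 1}) I would argue by density: since $\xi_2'(k)\sim 16k$, there are $\sim N/16$ indices $k$ with $\xi_2'(k)\leq N$, while the admissible values inside $\mathrm{Im}(\xi_2')$ up to $N$ number at most $\sum_{e\geq 4,\,s}\lfloor N/(s2^{s2^{e-1}})\rfloor=O(N/2^{16})$, which is negligible; hence for infinitely many $k$ there is no admissible $n$ with $n=\xi_2'(k)$, which is the assertion $n\neq\xi_2'(k)$ for infinitely many $k$. The delicate step, and the one I would write out in full, is the $2$-adic bookkeeping of the second paragraph: confirming $v_2(n-1)=e$ (so that the boundary values of small $e$ are correctly discarded) and pinning down the exact equivalence between the parameter condition $s\equiv 1\pmod{16}$ and the residue condition $n\equiv 17\pmod{256}$ that singles out the complement of $\mathrm{Im}(\xi_2')$ within the progression $n\equiv 1\pmod{16}$.
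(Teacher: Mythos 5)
Your reduction to case (iii), your description of $\mathrm{Im}(\xi_2')$ as $\{n\geq 33:\ n\equiv 1\pmod{16},\ n\not\equiv 17\pmod{256}\}$, and your treatments of parts (2) and (3) are correct; in fact your counting proof of part (3) is a genuinely different, and perfectly valid, alternative to the paper's argument, which instead notes $\{\xi_2(k):k\geq 5\}\subset\{\xi_2'(k):k\geq 0\}$ and re-uses the earlier fact that $n\neq\xi_2(k)$ for all $k$. The problem is in the pivotal second paragraph. You prove $v_2(n-1)=e$ only for $e\geq 3$, where $s2^{e-1}-e\geq 1$ makes the cofactor $1+t2^{s2^{e-1}-e}$ odd, and you then conclude that $n\equiv 1\pmod{16}$ ``forces $e\geq 4$''. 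That inference is a non sequitur: nothing you have written excludes a case-(iii) datum with $e\in\{1,2\}$ and $n\equiv 1\pmod{16}$, and the exclusion is not automatic at the level of congruences. Exactly for $(e,s)\in\{(1,1),(2,1)\}$ the exponent $s2^{e-1}-e$ equals $0$, the cofactor is $1+t$, and odd $t$ makes $v_2(n-1)>e$. Concretely, $(e,s,t)=(2,1,11)$ gives $n=11\cdot 2^{2}+2^{2}+1=49=\xi_2'(1)$, which satisfies every condition you use ($n\geq 33$, $n\equiv 1\pmod{16}$, $n\not\equiv 17\pmod{256}$) yet has $e=2$; were such data admissible, the ``only if'' direction of your part (1) and the bounds in part (2) would both fail, since $49<65569$ and $k=1<3840$.

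What kills these corner cases is the integrality of $d$, which you never invoke for small $e$: when $s2^{e-1}-e-1=-1$ one has $d=t/2+1$, so $t$ must be even and the cofactor $1+t$ is odd after all. This is precisely why the paper treats $e=1$ and $e=2$ separately (its $e=1$ step is: $a(n)\in\ZZ$ forces $s=1$ and $t$ odd, contradicting $d\in\ZZ$). The cleanest repair of your write-up is a one-line observation for which you already have all the pieces: the cofactor equals $2d-1$, which is odd for every integer $d$; equivalently $n-1=(2d-1)(\delta+1)$, so $v_2(n-1)=v_2(\delta+1)=e$ for all $e\geq 1$ with no case distinction. With that line inserted, your $2$-adic argument, and hence parts (1) and (2), is complete. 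A last cosmetic point: in case (i) with $\delta=1$ one gets $n=4d-1\equiv 7$ or $15\pmod{16}$ according to $d\bmod 4$, not always $15$, though either residue serves your purpose.
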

\begin{proof}
By straightforward calculation we see that  
$n=\xi_2'(k)$, for some  $k\geq 0$, if and only if 
$$
a\left( n\right) :=\frac{n-33}{16}\in\ZZ\quad \mbox{and}\quad b\left( n\right) :=\frac{a\left( n\right) -15}{16}=\frac{n-273}{256}\notin\ZZ.
$$
It follows that $\{\xi_2(k):k\geq5\}\subset\{\xi_2'(k):k\geq0\}$
 and hence 
we have statement \ref{part: c - sufficient cond for type 2 1}.
Statement \ref{part: b - sufficient cond for type 2 1}  
 follows from statement \ref{part: a - sufficient cond for type 2 1}
and it can also be shown by computer. 
We now show statement \ref{part: a - sufficient cond for type 2 1}.  
 $\delta=0,1,3,7$ respectively implies  $n=d2^i-\delta$, with $i=1,2,3,4$, 
and for these  $n$ and $\delta$, 
$a(n)=\left(d{2}^{i}-\delta-33\right)/16= 
{2}^{i-1}(2d-1)/16-2\notin\ZZ$.
If $\delta$ is even, then $n=(2d-1)(\delta+1)+1$ is even, 
but the values of   $\xi_2'(k)$   
are always odd.
Thus, if  
 $n=\xi_2'(k)$, then  case (\ref{part: congr_iii}) of 
Proposition \ref{prop: numerical constraint with Delta=2} occurs  and we have
\begin{align*}
n&= s\,{2}^{{2}^{e-1}\,s}\,t+{2}^{e}\,s+1 ,& a(n)&= {2}^{e-4}\,s\,\left( {2}^{{2}^{e-1}\,s-e}\,t+1\right) -2 ,\\ 
d&= {2}^{{2}^{e-1}\,s-e-1}\,t+1 ,& b(n)&= s\,{2}^{{2}^{e-1}\,s-8}\,t+{2}^{e-8}\,s-\frac{17}{16}  . 
\end{align*}
If $e=1$,   $$a(n)=\frac{s\,\left( {2}^{s-1}\,t+1\right) }{8}-2\in\ZZ\ \Rightarrow\ s=1 \mbox{ and } t\mbox{ odd}\ \Rightarrow\ d=\frac{t}{2}+1\notin\ZZ,$$
from which we obtain that $a(n)\notin\ZZ$. 
Similarly, we obtain that $a(n)\notin\ZZ$, if $e=2$.     If $e=3$,   since ${2}^{e-1}\,s-e\geq1$, we again obtain that  
$a(n)\notin\ZZ$.   While  for $e\geq4$,   $a(n)$ is always integer. 
Finally, for $e=4$, 
$$
b(n)=s\,{2}^{8\,s-8}\,t+\frac{s-1}{16}-1\notin\ZZ\ \Leftrightarrow\ \frac{s-1}{16}\notin\ZZ,
$$
and for $e\geq5$,   $b(n)$ is never integer. 
\end{proof}
Now we consider the case in which $\Delta\neq2$.
Firstly, we observe that $n\geq2\Delta$,
since otherwise we should have
$\delta=(n-2\Delta-2d+4)/(2d-1)\leq -1+2/(2d-1)<0$.
Define the sequences $n=\xi_{\Delta}(k)$, $k\geq0$, 
 as follows:
$$
\xi_{\Delta}(k):=2\Delta-3+2^{j(\Delta)}+2^{j(\Delta)}k, 
$$
where
$$
j(\Delta)=\left\{\begin{array}{ll}
                2 &\mbox{if } \Delta\equiv 1\ \mathrm{mod}\ 2,\\
                \min\{j\geq3: \Delta\not{\!\!\equiv} 2\ \mathrm{mod}\ 2^j \} & \mbox{if } \Delta \equiv 0\ \mathrm{mod}\ 2 .\\
                \end{array}
         \right.
$$
Then we have the following:
\begin{proposition}\label{prop: sufficient conditions for the nonexistence}
If $\Delta\neq2$, then $n\neq\xi_{\Delta}(k)$, for any $k\geq0$. 
\end{proposition}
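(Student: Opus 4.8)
The plan is to reduce everything to a single $2$-adic divisibility condition on $\delta+1$ and then to quote the Divisibility Theorem (Theorem \ref{prop: divisibility theorem}).

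First I would eliminate $n$ from Proposition \ref{prop: dimension formula}. The formula for $\delta$ rearranges to
\[
n=(2d-1)(\delta+1)+2\Delta-3,
\]
and substituting this into the formula for $r$, using the factorization $2d\Delta-\Delta-6d+3=(\Delta-3)(2d-1)$, gives
\[
r=d(\delta+1)+\Delta-3,\qquad\text{hence}\qquad r-\delta=(d-1)(\delta+1)+(\Delta-2).
\]
These two identities drive the whole argument. Since $\xi_{\Delta}(k)=(2\Delta-3)+2^{j(\Delta)}(k+1)$ and $2d-1$ is odd, the first identity shows that $n=\xi_{\Delta}(k)$ for some $k\geq0$ is equivalent to $(2d-1)(\delta+1)$ being a positive multiple of $2^{j(\Delta)}$, that is, to
\[
2^{j(\Delta)}\mid(\delta+1).
\]
So it suffices to assume $2^{j(\Delta)}\mid(\delta+1)$ and derive a contradiction.

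Recall that $\B$ is a smooth $QEL$- (hence $LQEL$-) variety by Proposition \ref{prop: dimension formula}, so Theorem \ref{prop: divisibility theorem} applies as soon as $\delta>0$. Since $j(\Delta)\geq2$, the assumption forces $\delta+1\geq4$, so $\delta\geq3$ and $\lfloor(\delta-1)/2\rfloor\geq1$. Setting $e:=\min\{j(\Delta),\lfloor(\delta-1)/2\rfloor\}\geq1$, the Divisibility Theorem gives $2^{\lfloor(\delta-1)/2\rfloor}\mid(r-\delta)$, hence $2^{e}\mid(r-\delta)$; combined with $2^{e}\mid(\delta+1)$ and the identity $r-\delta=(d-1)(\delta+1)+(\Delta-2)$ this yields $2^{e}\mid(\Delta-2)$. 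The contradiction will then come from showing $e>v_2(\Delta-2)$ in every case, where $v_2$ denotes the $2$-adic valuation.

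It thus remains to check $e>v_2(\Delta-2)$, and this is exactly where the two branches in the definition of $j(\Delta)$ enter. If $\Delta$ is odd then $v_2(\Delta-2)=0$ while $e\geq1$, so $2^{e}\mid(\Delta-2)$ already contradicts the oddness of $\Delta-2$. If $\Delta$ is even, put $v=v_2(\Delta-2)\geq1$; by definition $j(\Delta)=\max\{3,v+1\}>v$, so $2^{j(\Delta)}\nmid(\Delta-2)$, and it is enough to show $e=j(\Delta)$, i.e.\ $\lfloor(\delta-1)/2\rfloor\geq j(\Delta)$. Writing $\delta+1=2^{j(\Delta)}s$ with $s\geq1$, one computes $\lfloor(\delta-1)/2\rfloor=2^{j(\Delta)-1}s-1\geq 2^{j(\Delta)-1}-1$, so the claim follows from the elementary inequality $2^{j-1}-1\geq j$, valid for all $j\geq3$. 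The main (and essentially the only) obstacle is this bookkeeping with the exponent of the Divisibility Theorem: one must guarantee that $\lfloor(\delta-1)/2\rfloor$ is large enough to register the full $2$-adic content of $\Delta-2$, which is precisely what $2^{j-1}-1\geq j$ ensures. As an alternative in the odd case one could instead invoke Proposition \ref{prop: cohomology properties}: since $\delta\geq3$ rules out the exceptional hyperplane section of $\PP^2\times\PP^2$ (which has $\delta=1$), $\B$ is Fano of integral index $(r+\delta)/2$, forcing $r-\delta$ even, whereas $(d-1)(\delta+1)+(\Delta-2)$ is odd.
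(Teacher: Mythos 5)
Your proof is correct and follows essentially the same route as the paper's: both arguments rest on the identity $r-\delta=(d-1)(\delta+1)+(\Delta-2)$, the congruence from the Divisibility Theorem modulo $2^{\lfloor(\delta-1)/2\rfloor}$, and the inequality $2^{j-1}-1\geq j$ for $j\geq 3$ to push that congruence up to modulus $2^{j(\Delta)}$ in the even case. The only substantive difference is your sharper reduction --- $n=\xi_{\Delta}(k)$ for some $k\geq 0$ \emph{if and only if} $2^{j(\Delta)}\mid(\delta+1)$ --- which forces $\delta\geq 3$ at the outset and thereby dispenses with the paper's separate $\delta=1$ subcase (the $2(d-k)=3$ contradiction) and its appeal to Proposition \ref{prop: d Delta both odd}; beyond that organizational streamlining, the two proofs coincide.
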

\begin{proof}
Suppose, by contradiction, that $n=\xi_{\Delta}(k)$, for some $k\geq0$.
We have 
$n=(2d-1)(\delta+1)+2\Delta-3 $, so that $\delta$ must be odd.
Now suppose that $\Delta\equiv 1\ \mathrm{mod}\ 2$.
By Proposition \ref{prop: d Delta both odd} it follows that $d$ is even.
If $\delta\geq 3$,  by (\ref{eq: divisibility theorem}) we obtain the 
contradiction $0\ \mathrm{mod}\ 2
= (d-1)(\delta+1)+\Delta-2\ \mathrm{mod}\ 2
=1\ \mathrm{mod}\ 2$,
so we must have $\delta=1$. But then $n=2\Delta-5+4d=2\Delta+1+4k$ and we
 obtain another contradiction, namely that  $2(d-k)=3$.
Now suppose that  $\Delta\equiv 0\ \mathrm{mod}\ 2$. 
By the fact that $\xi_{\Delta}(k)=n$, we deduce 
$(2d-1)(\delta+1)=2^{j(\Delta)}(k+1)$, from which
it follows that  $\delta=-1+2^{j(\Delta)}h$, for some $h\geq1$. So
$\lfloor(\delta-1)/2\rfloor=(\delta-1)/2 = 2^{j(\Delta)-1}h-1\geq
 2^{j(\Delta)-1}-1\geq j(\Delta)$ and by (\ref{eq: divisibility theorem})
we obtain 
$0\ \mathrm{mod}\ 2^{j(\Delta)}
= (d-1)(\delta+1)+\Delta-2\ \mathrm{mod}\ 2^{j(\Delta)}
=(d-1)2^{j(\Delta)}h+\Delta-2 \ \mathrm{mod}\ 2^{j(\Delta)}
=\Delta-2 \ \mathrm{mod}\ 2^{j(\Delta)}$, against the definition of $j(\Delta)$.
\end{proof}
\begin{remark}
Although our original hypothesis is $\Delta\geq2$, 
note that also when $\Delta=1$,
Proposition \ref{prop: sufficient conditions for the nonexistence}
 is still valid. In other words,  
\emph{ There are no special quadratic Cremona transformations of $\PP^n$,
whenever $n\in\{3,7,11,15,19,\ldots\}$.}
We should also note that, when  
$j(\Delta)\geq5$, then Proposition \ref{prop: sufficient conditions for the nonexistence}
is very far from being sharp. 
\end{remark}
\begin{table}[htbp]
\tabcolsep=4pt
\small
\begin{center}
\begin{tabular}{|c|cccccccccccccccccccc|}
$\Delta $ \\
 \hline
2 & 3 & 5 & 9 & 17 & 33 & 49 & 65 & 81 & 97 & 113 & 129 & 145 & 161 & 177 & 193 & 209 & 225 & 241 & 257 & 289 \\
\hline
3 & 3 & 4 & 5 & 7 & 11 & 15 & 19 & 23 & 27 & 31 & 35 & 39 & 43 & 47 & 51 & 55 & 59 & 63 & 67 & 71 \\
\hline 
4 & 3 & 4 & 5 & 6 & 7 & 9 & 13 & 21 & 29 & 37 & 45 & 53 & 61 & 69 & 77 & 85 & 93 & 101 & 109 & 117 \\
\hline
5 & 3 & 4 & 5 & 6 & 7 & 8 & 9 & 11 & 15 & 19 & 23 & 27 & 31 & 35 & 39 & 43 & 47 & 51 & 55 & 59 \\
\hline 
6 & 3 & 4 & 5 & 6 & 7 & 8 & 9 & 10 & 11 & 13 & 17 & 25 & 33 & 41 & 49 & 57 & 65 & 73 & 81 & 89 \\
\hline 
7 & 3 & 4 & 5 & 6 & 7 & 8 & 9 & 10 & 11 & 12 & 13 & 15 & 19 & 23 & 27 & 31 & 35 & 39 & 43 & 47 \\
\hline 
8 & 3 & 4 & 5 & 6 & 7 & 8 & 9 & 10 & 11 & 12 & 13 & 14 & 15 & 17 & 21 & 29 & 37 & 45 & 53 & 61 \\
\hline 
9 & 3 & 4 & 5 & 6 & 7 & 8 & 9 & 10 & 11 & 12 & 13 & 14 & 15 & 16 & 17 & 19 & 23 & 27 & 31 & 35 \\
\hline 
10 & 3 & 4 & 5 & 6 & 7 & 8 & 9 & 10 & 11 & 12 & 13 & 14 & 15 & 16 & 17 & 18 & 19 & 21 & 25 & 33 \\
\hline 
 11 & 3 & 4 & 5 & 6 & 7 & 8 & 9 & 10 & 11 & 12 & 13 & 14 & 15 & 16 & 17 & 18 & 19 & 20 & 21 & 23 \\
\hline
\end{tabular}
\end{center}
\caption{Some impossible values of $n$, when $2\leq\Delta\leq 11$.}
\label{tab: value of xi}
\end{table}

\section{Examples}\label{sec: examples hypersurface}
The calculations in the following examples can be verified 
 with the aid of  the computer algebra systems: \cite{macaulay2} and \cite{sagemath}.
\begin{example}[$\Delta=2$, $d=2$]\label{example: d=2 Delta=2}
Let $\psi:\PP^{n+1}\dashrightarrow\PP^{n+1}$ be a Cremona  transformation  of 
type $(2,2)$. 
If $H\simeq\PP^{n}\subset\PP^{n+1}$ is a general hyperplane, 
then $\Q:=\overline{\psi(H)}\subset\PP^{n+1}$ is a quadric hypersurface 
 and the restriction  
$\psi|_{H}:\PP^n\dashrightarrow\Q\subset\PP^{n+1}$ is 
a birational transformation
of type $(2,2)$, with base locus 
 $
\mathrm{Bs}(\psi|_H)=\mathrm{Bs}(\psi)\cap H$.
If $\psi$ is special, i.e. if its base locus is a Severi variety, 
then also $\psi|_H$ is special and moreover
it is possible to verify that
the quadric $\Q$ is smooth, for example by determining explicitly the equation. 
\end{example}

\begin{example}[$\Delta=2$, $d=3$, $\delta=0$]\label{example: d=3 Delta=2}
We construct the Edge variety
 $\mathcal{X}$ of 
dimension $3$ and degree $7$ (see also \cite{edge} and \cite[Example~2.4]{ciliberto-mella-russo}).
Consider the Segre variety
$S^{1,3}=\PP^1\times\PP^3\subset\PP^7$ and  choose 
coordinates $x_0,\ldots,x_7$ on $\PP^7$ such that the equations 
of $S^{1,3}$ in $\PP^7$ are given by
$$
\begin{array}{ccc}Q_0=- x_{1} x_{4} + x_{0} x_{5},&Q_1= - x_{2} x_{4} + x_{0} x_{6},&Q_2= - x_{3} x_{4} + x_{0} x_{7},\\ 
                  Q_3= - x_{2} x_{5} + x_{1} x_{6},&Q_4= - x_{3} x_{5} + x_{1} x_{7},&Q_5= - x_{3} x_{6} + x_{2} x_{7}.
\end{array}
$$
Take a general quadric 
 $V(Q)$ containing the linear space 
$P=V(x_0,x_1,x_2,x_3)\subset S^{1,3},$
i.e. $Q=\sum b_{ij}x_i x_j$, for suitable coefficients $b_{ij}\in\CC$, with 
$i\leq j$,  $0\leq i\leq 3$ and  $0\leq j \leq 7$.
The intersection $\mathcal{Y}=S^{1,3}\cap V(Q)$ is an equidimensional variety
of dimension $3$ that has $P$ as irreducible component of multiplicity $1$.
Hence it  defines a variety
 $\mathcal{X}$ of dimension $3$ and degree $7$
 such that
$\mathcal{Y}=P\cup \mathcal{X}$ and  $P\nsubseteq \mathcal{X}$.
Since we are interested in
 $\mathcal{Y}$ and not in $Q$,
we can suppose 
$
b_{14}=b_{24}=b_{34}=b_{25}=b_{35}=b_{36}=0
$.
Thus it is easy to verify that
 $\mathcal{X}$ is the scheme-theoretic intersection of  $\mathcal{Y}$ with  the quadric $V(Q')$,   where
 \begin{eqnarray*}
  Q'&=&b_{37}x_7^2+b_{27}x_6x_7+b_{17}x_5x_7+b_{07}x_4x_7+b_{33}x_3x_7  +  b_{26}x_6^2+b_{16}x_5x_6 + \\ &+& 
       b_{06}x_4x_6+b_{23}x_3x_6+b_{22}x_2x_6 +b_{15}x_5^2+b_{05}x_4x_5+b_{13}x_3x_5+b_{12}x_2x_5 + \\ &+&
       b_{11}x_1x_5  +  b_{04}x_4^2+b_{03}x_3x_4+b_{02}x_2x_4+b_{01}x_1x_4+b_{00}x_0x_4.
 \end{eqnarray*}
The rational map
$\psi:\PP^7\dashrightarrow\PP^7$,
defined by $\psi([x_0,\ldots,x_7])=[Q_0,\ldots,Q_5,Q,Q']$,
has as image the 
 rank $6$ quadric
$\Q=V(y_{0} y_{5} - y_{1} y_{4} + y_{2} y_{3})$
and the closure of its general fiber is a $\PP^1\subset\PP^7$. 
Hence restricting 
$\psi$ to a general $\PP^6\subset\PP^7$, we get a birational transformation  
$
\PP^6\dashrightarrow\Q\subset\PP^7$.  
\end{example}
\begin{example}\label{example: d=3 Delta=2 continuing} Continuing with the Example
 \ref{example: d=3 Delta=2},
we set
$b_{00}=b_{07}=b_{15}=b_{16}=b_{22}=b_{33}=1$ and for all the other indices
 $b_{ij}=0$.
Substitute in the quadrics
 $Q_0,\ldots,Q_5,Q,Q'$ instead of the variable $x_7$ 
the variable $x_0$, i.e. we consider the intersection of $\mathcal{X}$ 
with the hyperplane $V(x_7-x_0)$.
Denote by 
 $X\subset\PP^6$ the scheme so obtained.
$X$ is irreducible, smooth, it is defined by the $8$ independent quadrics:
\begin{equation}\label{eq: equations of B, d=3 Delta=2}
\begin{array}{c}
x_3x_6-x_0x_2,\ x_5x_6+x_2x_6+x_5^2+2x_0x_4+x_0x_3,\ x_3x_5-x_0x_1,\ x_2x_5-x_1x_6,\\
x_3x_4-x_0^2,\ x_2x_4-x_0x_6,\ x_1x_4-x_0x_5,\ x_1x_6+x_1x_5+x_3^2+x_2^2+2x_0^2,
\end{array}
\end{equation}
and its Hilbert polynomial is
 $P_{X}(t)=(7t^2+5t+2)/2$.
The quadrics (\ref{eq: equations of B, d=3 Delta=2}) 
define a birational map 
 $\psi:\PP^6\dashrightarrow\Q\subset\PP^7$ into the quadric 
 $\Q=V(y_0y_6-y_2y_5+y_3y_4)$ and the inverse of $\psi$
 is defined by the cubics:
\begin{equation}\label{eq: inverse, d=3 Delta=2}
\begin{array}{c}
y_0y_5y_7-y_1y_4y_7+y_1y_2y_6+y_0y_1y_6+y_2y_3y_5 + y_0y_3y_5+y_2^2y_4+y_0y_2y_4, \\
-y_6y_7^2-2y_4y_6y_7-y_3y_6y_7+y_0y_3y_7-y_1y_2y_7+2y_2y_6^2+2y_0y_6^2+y_2y_3^2  +y_0y_3^2+y_2^3+y_0y_2^2, \\
-y_5y_7^2-2y_4y_5y_7-y_3y_5y_7-y_0y_1y_7+2y_2y_5y_6+2y_0y_5y_6-y_1y_2y_3-y_0y_1y_3+y_0y_2^2+y_0^2y_2, \\
-y_4y_7^2+y_0y_6y_7-y_2y_5y_7-2y_4^2y_7-y_0^2y_7+2y_2y_4y_6+2y_0y_4y_6-y_0y_2y_3-y_0^2y_3-y_1y_2^2   -y_0y_1y_2, \\
-y_5^2y_7-y_4^2y_7-y_1y_6^2-y_3y_5y_6-y_1y_5y_6-y_2y_4y_6-2y_4y_5^2-y_3y_5^2-y_2y_4y_5-2y_4^3-y_1^2y_4   -y_0^2y_4, \\
-y_1y_6y_7-y_3y_5y_7-y_2y_4y_7-y_1y_3y_6+y_0y_2y_6-2y_2y_5^2-y_3^2y_5-y_2^2y_5-2y_2y_4^2-y_1^2y_2-y_0^2y_2, \\
-y_1y_5y_7-y_0y_4y_7+y_1y_3y_6-y_0y_2y_6-2y_0y_5^2+y_3^2y_5+y_2^2y_5 -2y_0y_4^2-y_0y_1^2-y_0^3 .
\end{array}
\end{equation}
The base locus  
 $Y\subset\Q\subset\PP^7$ of $\psi^{-1}$ 
is obtained by intersecting the scheme defined by
 (\ref{eq: inverse, d=3 Delta=2}) with the quadric  $\Q$.
$Y$ is irreducible with Hilbert polynomial  
$$P_{Y}(t)=\frac{9t^4+38t^3+63t^2+58t+24}{4!}$$
and its singular locus has dimension  $0$.
\end{example}

\begin{example}[$\Delta=2$, $d=4$, $\delta=0$]\label{example: d=4 Delta=2} 
The $10$ quadrics (\ref{eq: equations S10})  define a rational map  
 $\psi:\PP^{15}\dashrightarrow\PP^{9}$, with base locus  
 $S^{10}$ and image in $\PP^9$
the smooth quadric 
$$\Q=V({y_8}{ y_9}-{ y_6}{ y_7}-{y_0}{ y_5}+ { y_2}{ y_4}+{ y_1}{ y_3}).$$
By \cite[page~798]{ein-shepherdbarron} the closure of 
the general fiber of  $\psi$ is
 a $\PP^7\subset\PP^{15}$   and hence,
by restricting 
 $\psi$ to a general  $\PP^8\subset\PP^{15}$, 
 we get a special birational transformation 
$
\PP^8\dashrightarrow\Q 
$
(necessarily of type $(2,4)$, by Remark \ref{remark: dimension formula without hypothesis}).
\end{example}
\begin{example}[$\Delta=3$, $d=3$]\label{example: d=3 Delta=3}
Let
$u:\PP^3\stackrel{\simeq}{\dashrightarrow}Q=V(u_0\,u_4-u_1^2-u_2^2-u_3^2)\subset\PP^4$
be defined by
$u([z_0,z_1,z_2,z_3]) =[{z}_{0}^{2},{z}_{0}{z}_{1},{z}_{0}{z}_{2},{z}_{0}{z}_{3},{z}_{1}^{2}+{z}_{2}^{2}+{z}_{3}^{2}]$.
Consider the composition
$ v^0:\PP^3\stackrel{u}{\dashrightarrow}\PP^4\stackrel{\nu_2}{\longrightarrow}\PP^{14}\dashrightarrow\PP^{13} $,
where $\nu_2$ is the Veronese map (with lexicographic order) 
and the last map is the projection onto the hyperplane
 $V(v_4)\simeq\PP^{13}\subset\PP^{14}$. 
The map $v^0$ parameterizes a nondegenerate variety in $\PP^{13}$, of degree 
 $16$ and isomorphic to the quadric $Q$. 
Take the point  $p_1=[1,0,0,0]\in\PP^3$ and  consider the composition
$v^1:\PP^3\stackrel{v^0}{\dashrightarrow}\PP^{13}\stackrel{\pi_1}{\dashrightarrow}\PP^{12} 
$,
where $\pi_1$ is the projection from the point $v^0(p_1)$ (precisely,
 if $j$ is  the index of the last nonzero coordinate
of $v^0(p_1)$, 
 we exchange the coordinates 
 $v_j$,   $v_{13}$ and 
 we project onto the hyperplane $V(v_{13})$ from the point $v^0(p_1)$).
Repeat the construction with the points $p_2=[0,0,0,1]$, $p_3=[1,0,0,1]$, $p_4=[0,1,0,1]$, $p_5=[0,0,1,1]$, 
obtaining the maps  
$v^2:\PP^3\dashrightarrow\PP^{11}$, $v^3:\PP^3\dashrightarrow\PP^{10}$, $v^4:\PP^3\dashrightarrow\PP^{9}$, $v^5:\PP^3\dashrightarrow\PP^{8}$.
       The map $v^5$ is given by\footnote{Note that $v^5:\PP^3\dashrightarrow\PP^8$ 
can be obtained by choosing a suitable basis 
of $H^0(\PP^3, \I_{Z}(3))$, where $Z$ is the union 
of the conic $V(z_0,z_1^2+z_2^2+z_3^2)$ and the four points: 
$[1,0,0,0]$, $[1,1,0,0]$, $[1,0,1,0]$, $[1,0,0,1]$.}
\begin{eqnarray*}
v^5([z_0,z_1,z_2,z_3])&=&[z_3^3-z_0\,z_3^2+z_2^2\,z_3+z_1^2\,z_3, \  
                          -z_1\,z_3^2-z_1\,z_2^2-z_1^3+z_0^2\,z_1, \\  
                        &&  -z_2\,z_3^2-z_2^3-z_1^2\,z_2+z_0^2\,z_2, \   
                          z_0^2\,z_3-z_0\,z_3^2, \  
                          -z_1\,z_3^2-z_1\,z_2^2-z_1^3+z_0\,z_1^2, \\  
                       &&   z_0\,z_1\,z_2, \  
                            z_0\,z_1\,z_3,  \   
                            z_0\,z_2\,z_3, \  
                          -z_2\,z_3^2-z_2^3+z_0\,z_2^2-z_1^2\,z_2]
\end{eqnarray*}
       and parameterizes the smooth variety $X\subset\PP^8$ defined by the $10$ independent quadrics:
       \begin{equation}\label{eq: equations of B, d=3 Delta=3}
       \begin{array}{c}
       x_5\,x_8-x_4\,x_8+x_1\,x_8-x_4\,x_7+x_5\,x_6+x_2\,x_6+x_5^2+x_4\,x_5-x_3\,x_5-x_2\,x_5,  \\
       x_5\,x_8-x_4\,x_8-x_4\,x_7+x_5\,x_6+x_2\,x_6+x_5^2+x_4\,x_5-x_3\,x_5-x_1\,x_5+x_2\,x_4,\\
       -x_7\,x_8-x_0\,x_8-x_7^2+x_3\,x_7-x_5\,x_6+x_0\,x_2,\\
        x_6\,x_8-x_4\,x_7+x_6^2+x_5\,x_6+x_4\,x_6-x_1\,x_6-x_0\,x_6+x_3\,x_4,\\
        x_6\,x_7+x_4\,x_7-x_5\,x_6-x_2\,x_6+x_3\,x_5,\\
        x_7\,x_8+x_3\,x_8+x_7^2-x_2\,x_7-x_0\,x_7+x_5\,x_6,\\
        x_1\,x_7-x_2\,x_6,\\
        x_5\,x_7+x_6^2+x_4\,x_6-x_1\,x_6-x_0\,x_6+x_3\,x_4,\\
       x_2\,x_6-x_3\,x_5+x_0\,x_5,\\
       x_3\,x_6-x_1\,x_6-x_0\,x_6+x_3\,x_4-x_0\,x_4+x_0\,x_1. 
       \end{array}
       \end{equation}
The quadrics (\ref{eq: equations of B, d=3 Delta=3}) 
define a special birational 
transformation 
 $\psi:\PP^8\dashrightarrow\sS\subset\PP^9$ of type $(2,3)$ 
   into the cubic $\sS\subset\PP^9$
defined by
\begin{equation}
\begin{array}{l}
 - y_7\,y_8\,y_9 + y_6\,y_8\,y_9 + y_3\,y_8\,y_9 + y_7^2\,y_9 - y_6\,y_7\,y_9 - 2\,y_3\,y_7\,y_9 - y_2\,y_7\,y_9 +\\
+ y_0\,y_7\,y_9 + y_4\,y_6\,y_9 + y_3\,y_6\,y_9 - y_4\,y_5\,y_9 - y_1\,y_5\,y_9 - y_4^2\,y_9 - y_0\,y_4\,y_9  +\\
+ y_3^2\,y_9 + y_2\,y_3\,y_9 - y_0\,y_3\,y_9 + y_7\,y_8^2 - y_3\,y_8^2 - y_6\,y_7\,y_8 + 2\,y_4\,y_7\,y_8  +\\
+ y_3\,y_7\,y_8 - y_2\,y_7\,y_8 - y_4\,y_6\,y_8 + y_2\,y_6\,y_8 - y_0\,y_6\,y_8 + y_4\,y_5\,y_8 + y_0\,y_5\,y_8 +\\
- 3\,y_3\,y_4\,y_8 - y_1\,y_4\,y_8 + y_0\,y_4\,y_8 - y_3^2\,y_8 + y_2\,y_3\,y_8 - y_1\,y_3\,y_8 + y_5\,y_7^2  +\\
+ y_2\,y_7^2 + y_1\,y_7^2 - y_0\,y_7^2 + y_6^2\,y_7 + y_3\,y_6\,y_7 + y_0\,y_6\,y_7 - y_3\,y_5\,y_7 +\\
+ y_1\,y_5\,y_7 - y_2\,y_3\,y_7 - y_1\,y_3\,y_7 + y_0\,y_3\,y_7 + y_0\,y_2\,y_7 - y_4\,y_6^2 - y_3\,y_6^2 +\\
- y_3\,y_5\,y_6 - y_3\,y_4\,y_6 - y_0\,y_4\,y_6 - y_3^2\,y_6 - y_0\,y_3\,y_6 - y_1\,y_2\,y_6 + y_4^2\,y_5 +\\
+ y_3\,y_4\,y_5 + y_0\,y_4\,y_5 + y_2\,y_4^2 - y_1\,y_4^2 + y_0\,y_4^2 + y_2\,y_3\,y_4 - y_1\,y_3\,y_4 +\\
+ y_0\,y_3\,y_4 + y_1\,y_2\,y_4 .
\end{array}
\end{equation}
The singular locus of $\sS$ has dimension $3$, 
from which it follows the factoriality of $\sS$ 
(see  \cite[\Rmnum{11} Corollaire~3.14]{sga2} and Remark \ref{remark: samuel conjecture} below).
\end{example}
\begin{example}[$\Delta=4$, $d=2$, $\delta=0$]\label{example: B2=singSred-3fold} 
This is an example for which (\ref{eq: hypothesis on sing locus hypersurface}) is not satisfied;
essentially, it gives an example for case (\ref{part: case 0.5, 3-fold}) of 
Proposition \ref{prop: 3-fold in P8 - S nonlinear}.
Consider the irreducible
smooth $3$-fold $X\subset\PP^8$ defined as the intersection of
 $\PP^2\times\PP^2\subset\PP^8$, which is defined by (\ref{eq: P2xP2inP8}),
 with the quadric hypersurface defined by:
 \begin{equation}
2x_0^2+3x_1^2+5x_2^2+x_3^2+x_4^2+x_5^2+5x_6^2+3x_7^2+2x_8^2.
 \end{equation}
Thus $X$ is defined by $10$ quadrics and we can consider the associated 
rational map $\psi:\PP^8\dashrightarrow\sS=\overline{\psi(\PP^8)}\subset\PP^9$.
We have that $\sS$ is the quartic hypersurface defined by:
 \begin{equation}
\begin{array}{l}
  2y_1^2y_2^2+3y_1^2y_3^2-4y_0y_1y_2y_4+2y_0^2y_4^2+5y_3^2y_4^2-6y_0y_1y_3y_5-10y_2y_3y_4y_5+3y_0^2y_5^2 \\ +5y_2^2y_5^2+y_2^2y_6^2+y_3^2y_6^2+5y_4^2y_6^2+3y_5^2y_6^2-2y_0y_2y_6y_7-10y_1y_4y_6y_7+y_0^2y_7^2 \\ +5y_1^2y_7^2+y_3^2y_7^2+2y_5^2y_7^2-2y_0y_3y_6y_8-6y_1y_5y_6y_8-2y_2y_3y_7y_8-4y_4y_5y_7y_8+y_0^2y_8^2 \\ +3y_1^2y_8^2+y_2^2y_8^2+2y_4^2y_8^2-y_3y_4y_6y_9+y_2y_5y_6y_9+y_1y_3y_7y_9-y_0y_5y_7y_9-y_1y_2y_8y_9+y_0y_4y_8y_9,  
\end{array}
 \end{equation}
and $\psi$ is birational with inverse defined by:
\begin{equation}
\begin{array}{c}
 y_5y_7-y_4y_8,\ 
y_5y_6-y_1y_8,\  
y_4y_6-y_1y_7,\  
y_3y_7-y_2y_8,\  
y_3y_6-y_0y_8, \\
y_2y_6-y_0y_7,\  
y_3y_4-y_2y_5,\  
y_1y_3-y_0y_5,\  
y_1y_2-y_0y_4.
\end{array}
\end{equation}
Moreover $X$ is a Mukai variety of degree $12$, sectional genus $7$ and Betti numbers
$b_2=2$, $b_3=18$. 
The secant variety $\Sec(X)$ 
is a cubic hypersurface and for the general point $p\in\Sec(X)$
there are two secant lines of $X$ through $p$.
Finally, denoting by $Y\subset\sS$ the base locus of $\psi^{-1}$, we have 
$\dim(Y)=5$, $\dim(\sing(Y))=0$ and
$
Y=(Y)_{\mathrm{red}}=(\sing(\sS))_{\mathrm{red}}
$.
\end{example}
\section{Transformations of type \texorpdfstring{$(2,2)$}{(2,2)} into a quadric}\label{sec: type 2-2 into quadric} 
\begin{theorem}\label{prop: classification of type 2-2 into quadric}
If $\Delta=2$, $\varphi:\PP^n\dashrightarrow \Q:=\sS\subset\PP^{n+1}$
is of type $(2,2)$ and $\Q$ is smooth,
then 
$\B$ is a hyperplane section of a Severi variety.
\end{theorem}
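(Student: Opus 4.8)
The plan is to exhibit $\varphi$ as the restriction to a general hyperplane of a special quadro--quadric Cremona transformation of $\PP^{n+1}$; since by Theorem~\ref{prop: classification quadro-quadric special cremona} the base locus of such a transformation is a Severi variety, the base locus $\B$ will then be a hyperplane section of a Severi variety, as desired. The existence of examples of this shape is already recorded in Example~\ref{example: d=2 Delta=2}, so only the converse requires work.

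First I would collect the numerical data. By Proposition~\ref{prop: dimension formula} and Corollary~\ref{prop: constraint with Delta=2 and d even}, $\B$ is a smooth $QEL$-variety with $(n,r,\delta)$ in the finite list $(4,1,0)$, $(7,3,1)$, $(13,7,3)$, $(25,15,7)$, so that $r=2\delta+1$ and $n=\tfrac32 r+\tfrac52$; moreover $\B'=\mathrm{Bs}(\varphi^{-1})\subset\Q\subset\PP^{n+1}$ is irreducible and generically reduced of dimension $r'=r+1$. The crucial observation is that $n+1=\tfrac32(r+1)+2$, which is exactly Zak's bound (\ref{eq: inequality zak}) for a Severi variety of dimension $r+1$; thus $\B'$ is a candidate Severi variety, and the whole problem reduces to showing that $\B'$ really is one and that $\B=\B'\cap\PP^n$.

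Next I would build the extension. Writing $\Q=V(G)$ and $\varphi^{-1}=[G_0:\cdots:G_n]$ with the $G_i$ quadratic forms on $\PP^{n+1}$, consider the map $\psi=[G_0:\cdots:G_n:G]\colon\PP^{n+1}\dashrightarrow\PP^{n+1}$. By construction $\psi$ restricts on $\Q$ to $\varphi^{-1}$ (landing in the hyperplane $H=V(y_{n+1})\cong\PP^n$, where $G$ vanishes), and its base scheme is $V(G_0,\dots,G_n)\cap\Q=\B'$. The key step is to prove that $\psi$ is birational---in fact an involution restricting to $\varphi$ on $H$---using the birationality of $\varphi^{-1}$ on $\Q$ together with the quadric fibration structure of $\PP^{n+1}$ relative to the smooth $\Q$. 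Granting this, $\psi$ is a quadro--quadric Cremona transformation with base locus $\B'$.

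The heart of the argument, and the main obstacle, is to show that $\varphi^{-1}$ is \emph{special}, i.e. that $\B'$ is smooth (connectedness being already available from irreducibility and generic reducedness). For this I would check that $\B'$ is not a cone and that $\widetilde{\gamma}(\B')=0$; Proposition~\ref{prop: criterion R1 property} then yields the $R_1$-property, and since $n+1=\tfrac32(r+1)+2$, Theorem~\ref{prop: R1 varieties and severi varieties} forces $\B'$ to be smooth, hence a Severi variety. With $\B'$ smooth and connected, $\psi$ is a special quadro--quadric Cremona transformation, so by Theorem~\ref{prop: classification quadro-quadric special cremona} $\B'$ is one of the four varieties of Theorem~\ref{prop: classification severi varieties} and $\psi$ is an involution. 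Restricting the involution to $H$ recovers $\varphi$ and gives $\B=\mathrm{Bs}(\psi)\cap H=\B'\cap H$, a hyperplane section of a Severi variety. The delicate points throughout are the birationality of $\psi$ and the vanishing $\widetilde{\gamma}(\B')=0$ (equivalently, the direct smoothness of $\B'$): this is precisely where the several steps establishing that ``the inverse transformation is still special'' are needed.
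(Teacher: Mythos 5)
Your plan has the right architecture, and in fact it coincides with the shortcut the paper itself records: in the remark following its proof the author observes that Claim \ref{step: gammaY=deltaY} plus Proposition \ref{prop: criterion R1 property} and Theorem \ref{prop: R1 varieties and severi varieties} yield Claim \ref{step: Y smooth}, and the appendix states explicitly that the proof of Theorem \ref{prop: classification of type 2-2 into quadric} ``could be simplified if one had shown a priori that Conjecture \ref{conjecture: liftable 2-2} holds in the special case.'' But that is precisely the problem: the two steps you flag as ``delicate'' and then grant yourself are the entire content of the theorem, so what you have is a plan, not a proof. Concretely: (a) birationality of the lift $\psi=[G_0:\cdots:G_n:G]$, proved \emph{before} identifying $\B'$, is essentially Conjecture \ref{conjecture: liftable 2-2} (in the special case), which the paper poses as open; the paper obtains the lifting only \emph{a posteriori}, after proving $(\B')_{\mathrm{red}}=E_6$, at which point the quadrics through a Severi variety are already known to define the Cremona involution (Theorem \ref{prop: classification quadro-quadric special cremona}). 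Your appeal to ``the quadric fibration structure'' does not give injectivity: on a general fiber curve $F$ of $[G_0:\cdots:G_n]:\PP^{n+1}\dashrightarrow\PP^n$ the $G_i$ are proportional, $\psi|_F$ is a single rational function, and knowing that $F\cap\Q$ has a single point off $\B'$ (which is all that birationality of $\varphi^{-1}$ gives) does not force that function to have degree one. (b) The vanishing $\widetilde{\gamma}(\B')=0$, together with $\Sec(\B')\subsetneq\PP^{n+1}$, the not-a-cone condition, and the passage from the possibly non-reduced scheme $\B'$ to $Y=(\B')_{\mathrm{red}}$ followed by the quadric count $27\leq 27$ showing $\B'=Y$, is exactly what occupies the paper's Claims: $\pi'(E)=\left(\Sec(Y)\cap\Q\right)_{\mathrm{red}}$, the identification of entry loci with tangential projections (Claim \ref{step: tang proj and entry locus}), the smoothness of $W_{x,\B}$ via $\L_{x,\B}$ being a hyperplane section of $S^{10}$ (Claim \ref{step: smooth image tangential projection}, resting on Theorems \ref{prop: main qel1} and \ref{prop: LQEL of higher type}), the $QEL$ property of $Y$ (Claim \ref{step: Y is QEL}), and the two-case smoothness analysis of the general entry locus in which the smoothness of $\Q$ is actually used. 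None of these arguments appears in your proposal.

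A further inaccuracy is your claim that the argument runs uniformly over all four cases $(n,r,\delta)\in\{(4,1,0),(7,3,1),(13,7,3),(25,15,7)\}$. The machinery needed for $\widetilde{\gamma}(\B')=0$ relies on Theorem \ref{prop: main qel1}, which requires $\delta\geq 3$; for $\delta\in\{0,1\}$ this route is simply unavailable as stated. This is why the paper reserves the Severi-type argument for $\delta=7$ alone and disposes of the small cases directly: Proposition \ref{prop: P4} for $\delta=0$, Proposition \ref{prop: cohomology properties} together with the del Pezzo classification (Theorem \ref{prop: classification del pezzo varieties}) for $\delta=1$, and the Mukai classification (Theorem \ref{prop: classification first species mukai varieties}, after computing $g=8$) for $\delta=3$. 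If you want to salvage your uniform strategy, note that once (b) is established the lifting (a) comes for free, so the genuine missing ingredient is (b) — and filling it is, in substance, reproducing the hard half of the paper's proof.
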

\begin{proof}
By Corollary \ref{prop: constraint with Delta=2 and d even} we get
 $\delta\in\{0,1,3,7\}$.  
If $\delta=0$,
we have $r=1$,   $n=4$  and the 
thesis follows from Proposition \ref{prop: P4}. 
Alternatively, we can apply 
 Lemma \ref{prop: cohomology twisted ideal} 
to  determine the Hilbert polynomial of the curve  $\B$.
If $\delta=1$,
we have $r=3$,   $n=7$ and,
by  Proposition \ref{prop: cohomology properties} 
part \ref{part: B is fano},
 $\B$ is a hyperplane section of the Segre variety 
 $\PP^2\times\PP^2\subset\PP^8$,  or 
it is a Fano variety of the first species of     
index $i(\B)=r-1$.
The latter case cannot occur
by the classification of del Pezzo varieties
in Theorem \ref{prop: classification del pezzo varieties}.  
If $\delta=3$,
we have $r=7$,   $n=13$  
and $\B$ is a Mukai variety of the first specie.
   By Proposition
 \ref{prop: cohomology properties} part \ref{part: hilbert polynomial}, 
we can 
determine the Hilbert polynomial  of $\B$ and in particular  
to get that the sectional genus is $g=8$.     
Hence, applying
 the classification of Mukai varieties in Theorem \ref{prop: classification first species mukai varieties},
we get that
$\B$ is 
a hyperplane section 
 of the Grassmannian $\GG(1,5)\subset\PP^{14}$.  
Now, suppose
 $\delta=7$ and hence 
$r=15$, $n=25$, $r'=16$. 
Keep the notation as
 in the proof of Proposition \ref{prop: dimension formula} and  put
$Y=(\B')_{\mathrm{red}}$. We shall show that $\B\subset\PP^{25}$ 
is a hyperplane section of $E_6\subset\PP^{26}$ in several steps.
\begin{claim}
$\pi'(E)=\left(\Sec(\B')\cap\Q\right)_{\mathrm{red}}=\left(\Sec(Y)\cap\Q\right)_{\mathrm{red}}$ and for the general point $p\in\pi'(E)$ 
 the entry locus $\Sigma_p(\B')$ is a quadric of dimension $8$.
\end{claim}
\begin{proof}[Proof of the Claim]
See also \cite[Proposition~2.3]{ein-shepherdbarron}.
If $x\in\B$ is a point, then $\pi'(\pi^{-1}(x))\simeq\PP^9$ and by the relations (\ref{eq: EE'}) 
we get that
$\pi'(\pi^{-1}(x))\cap\B'$  is a quadric hypersurface in $\PP^9$. 
Then, for every point $p\in \pi'(\pi^{-1}(x))\setminus \B'$, every line contained
in $\pi'(\pi^{-1}(x))$ and through  
 $p$ is a secant line of $\B'$  and therefore
 it is contained in $L_p(\B')$. Thus 
$$\pi'(\pi^{-1}(x))\subseteq L_p(\B')\subseteq \Sec(\B')\cap\Q$$
and varying $x\in\B$ we get
\begin{equation}\label{eq: first inclusion, type 2-2}
\pi'(E)=\bigcup_{x\in \B} \pi'(\pi^{-1}(x))\subseteq \Sec(\B')\cap \Q.
\end{equation}
Conversely,
if
 $p\in\Sec(\B')\cap\Q\setminus\B'$, 
then every secant line  
 $l$ of $\B'$ through  $p$ is contained 
 in $\Q$ and $\varphi^{-1}(l)=\varphi^{-1}(p)=:x$. 
Hence the strict transform $\widetilde{l}=\overline{\pi'^{-1}(l\setminus\B')}$ 
is contained
in $\pi^{-1}(x)\subseteq E$ 
 and then $l$ is contained in $\pi'(\pi^{-1}(x))\subseteq\pi'(E)$.
Thus 
$$L_p(\B')\subseteq \pi'(\pi^{-1}(x))\subseteq\pi'(E)$$
 and varying $p$ 
(since $p$ lies on at least a secant line of $\B'$)
we get 
\begin{equation}\label{eq: second inclusion, type 2-2}
\left(\Sec(\B')\cap\Q\right)_{\mathrm{red}} = \overline{\left(\Sec(\B')\cap\Q\setminus\B'\right)_{\mathrm{red}}}\subseteq \pi'(E).
\end{equation}
The conclusion follows from (\ref{eq: first inclusion, type 2-2}) and (\ref{eq: second inclusion, type 2-2}) and 
by observing that, since $\B'$ is irreducible and generically reduced, we have
 $(\Sec(\B'))_{\mathrm{red}}=\Sec(Y)$.
\end{proof} 
\begin{claim}\label{step: tang proj and entry locus}
For the general point $x\in\B$, 
$\overline{\tau_{x,\B}(\B)}=W_{x,\B}\simeq \Sigma_{p}(\B')$, 
with $p$ general point in $\pi'(\pi^{-1}(x))$.
\end{claim}
\begin{proof}[Proof of the Claim]
See also \cite[Theorem~1.4]{mella-russo-baselocusleq3}.
The projective space $\PP^9\subset\PP^{25}$ skew to $T_x(\B)$ (i.e.
the codomain of $\tau_{x,\B}$) identifies with
 $\pi^{-1}(x)=\PP({\mathcal{N}}_{x,\B}^{\ast})$.
For $z\in\B\setminus T_x(\B)$, the line 
$l=\langle z,x \rangle$ 
corresponds to a point $w\in \pi^{-1}(x)=\PP({\mathcal{N}}_{x,\B}^{\ast})$.
Moreover $\pi'(w)=\varphi(l)\in \B'$ and 
 $\tau_{x,\B}(z)=\tau_{x,\B}(l)=w$, so that 
$\pi'(w)\in \pi'(\pi^{-1}(x))\cap \B'$
and hence $W_{x,\B}$ identifies with an irreducible component of
 $\pi'(\pi^{-1}(x))\cap \B'$.
Since $W_{x,\B}$ is a nondegenerate hypersurface  
in $\PP({\mathcal{N}}_{x,\B}^{\ast})$ (see Proposition \ref{prop: nondegenerate tang projection}),   
we have 
\begin{equation}
W_{x,\B}\simeq\pi'(\pi^{-1}(x))\cap \B'\subset\pi'(\pi^{-1}(x))\simeq\PP^9.
\end{equation}
\end{proof} 
\begin{claim}\label{step: smooth image tangential projection}
For the general point $x\in\B$, $W_{x,\B}\subset\PP^9$ is 
a smooth quadric hypersurface.
\end{claim}
\begin{proof}[Proof of the Claim]
By Theorem \ref{prop: main qel1}
it follows that 
$\L_{x,\B}\subset\PP^{14}$ is a smooth 
$QEL$-variety of dimension $\dim(\L_{x,\B})=9$ and of
type $\delta(\L_{x,\B})=5$.
So, applying  Theorem \ref{prop: LQEL of higher type}, we get that
$\L_{x,\B}\subset\PP^{14}$ is projectively equivalent to a hyperplane section 
 of the spinorial variety $S^{10}$.
 Hence,
by  Proposition \ref{prop: surjective second fundamental form}
and Example \ref{example: d=4 Delta=2},
it follows that 
 $W_{x,\B}\subset\PP^9$ is
a smooth quadric 
 of dimension $r-\delta=8$. 
\end{proof} 
Summing up the previous claims, we obtain that for the general point
$p\in\pi'(E)\setminus Y$ the entry locus $\Sigma_p(\B')$ is a smooth quadric;
moreover,   one can be easily convinced that we also have
$\Sigma_{p}(\B')=L_p(\B')\cap \B'=L_p(\B')\cap Y=L_p(Y)\cap Y = \Sigma_p(Y)$.
Now, by the proof of  Proposition \ref{prop: dimension formula}, 
it follows that $\pi'(E)$ is a reduced and irreducible divisor 
in $|\O_{\Q}(3)|$ and,
since $\pi'(E)=\left(\Sec(Y)\cap \Q\right)_{\mathrm{red}}$, 
we have that $\Sec(Y)$ is a hypersurface of degree a multiple of $3$. In particular
$Y\subset\PP^{26}$ is an irreducible nondegenerate variety
 with secant defect $\delta(Y)=8$.
Note also that $Y$ is different from a cone: if
 $z_0$ is a vertex of $Y$,
since $I_{\B',\PP^{26}}\subseteq I_{Y,\PP^{26}}$, 
we have that $z_0$ is a vertex of every quadric defining
 $\B'$, in particular $z_0$ is a vertex of $\B'$ and $\Q$;
then, for a general point $z\in\Q$, we have 
$\langle z, z_0 \rangle\subseteq\overline{(\varphi^{-1})^{-1}(\varphi^{-1}(z))}$, 
against the birationality  of $\varphi^{-1}$. 
\begin{claim}\label{step: Y is QEL}
$Y$ is a $QEL$-variety. 
\end{claim}
\begin{proof}[Proof of the Claim]
Consider the rational map
$\psi:\Sec(Y)\dashrightarrow \PP^{26}$, 
defined by the linear system
 $|\I_{\B',\PP^{26}}(2)|$ restricted to $\Sec(Y)$. 
Since $\overline{\psi(\Sec(Y)\cap \Q)}=\overline{\varphi^{-1}(\pi'(E))}=\B$ and 
the image of $\psi$ is 
of course nondegenerate, it  follows that the dimension
of the image of
 $\psi$ is at least $16$ and hence that the dimension 
of its general fiber is at most $9$. 
Now, if $q\in \Sec(Y)\setminus Y$ is a general point, denote by
 $\widetilde{\psi^{-1}(\psi(q))}$ the irreducible component
 of $\overline{\psi^{-1}(\psi(q))}$ through
  $q$ and by $\widetilde{\Sigma_q(Y)}$ an any irreducible component
 of $\Sigma_q(Y)$; note
 that, from generic smoothness \cite[\Rmnum{3} Corollary~10.7]{hartshorne-ag}, it follows 
that $\overline{\psi^{-1}(\psi(q))}$ 
(and at fortiori $\widetilde{\psi^{-1}(\psi(q))}$) is smooth in its general point  $q$.  
We have $S(q,\widetilde{\Sigma_q(Y)})\subseteq \widetilde{\psi^{-1}(\psi(q))}$, and since 
$\dim(\widetilde{\Sigma_q(Y)})=8$, 
it follows $S(q,\widetilde{\Sigma_q(Y)})= \widetilde{\psi^{-1}(\psi(q))}$. 
Thus the cone $S(q,\widetilde{\Sigma_q(Y)})$ is smooth in its vertex and  
necessarily it follows  
$S(q,\widetilde{\Sigma_q(Y)})=\PP^9$ and $\langle \Sigma_q(Y) \rangle=\PP^9$.
Finally, by  Trisecant Lemma \cite[Proposition~1.3.3]{russo-specialvarieties}, it
follows that $\Sigma_q(Y)\subset\PP^9$ is a quadric hypersurface.
\end{proof} 
\begin{claim}\label{step: gammaY=deltaY} 
$\widetilde{\gamma}(Y)=0$.
\end{claim}  
\begin{proof}[Proof of the Claim]
We first show that 
for the general point $q\in\Sec(Y)\setminus Y$ 
the entry locus $\Sigma_q(Y)$ is smooth,  by discussing two cases:
\begin{case}[Suppose $\pi'(E)\nsubseteq \sing(\Sec(Y))$]
Denote by
 $\Hilb(Y)$ the Hilbert scheme of  $8$-dimensional quadrics contained in $Y$
and by $V$ a nonempty open set of $\Sec(Y)\setminus Y$ such that for every  
$q\in V$ we have $\Sigma_q(Y)\in \Hilb(Y)$. 
If $\varrho:Y\times \left(Y\times\PP^{26}\right)\longrightarrow Y\times\PP^{26}$ is the projection, 
at the closed subscheme of $Y\times V$,
\begin{displaymath}
 \varrho\left(\overline{\{(w,z,q)\in Y\times Y\times \PP^{26}: 
w\neq z\mbox{ and } q\in\langle w,z\rangle\}}\right)\cap Y\times V    
=  \{ (z,q)\in Y\times V : z\in \Sigma_q(Y) \},
\end{displaymath}
corresponds a rational map 
$\nu:\Sec(Y)\dashrightarrow \Hilb(Y)$ which sends the point
 $q\in V$ to the quadric $\nu(q)=\Sigma_q(Y)$;
denote by  
$\mathrm{Dom}(\nu)$ the largest open set  of $\Sec(Y)$ where
 $\nu$ can be defined.
By assumption we have
 $D':=\pi'(E)\cap\reg(\Sec(Y))\neq\emptyset$.
It follows that the rational map 
$\nu':=\nu|_{\reg(\Sec(Y))}:\reg(\Sec(Y))\dashrightarrow \Hilb(Y)$,
having indeterminacy locus of codimension $\geq 2$, is defined in the general point 
 of $D'$, i.e.
\begin{equation}\label{eq: Dom intersects divisor}
\emptyset\neq\mathrm{Dom}(\nu')\cap D'=\mathrm{Dom}(\nu)\cap\pi'(E)\cap \reg(\Sec(Y))\subseteq \mathrm{Dom}(\nu)\cap\pi'(E).
\end{equation}
Now, consider the natural map
$\rho:\Hilb(Y)\longrightarrow\GG(9,26)$, defined by $\rho(Q)=\langle Q \rangle$
and  the closed subset 
$C:=\{ (q,L)\in \PP^{26}\times \GG(9,26): q\in L  \}$ 
of $\PP^{26}\times \GG(9,26)$.
We have
\begin{eqnarray*}
&& \left(\left(\mathrm{Id}_{\PP^{26}}\times\rho\right)^{-1}(C)\cap \mathrm{Dom}(\nu)\times \Hilb(Y)\right)\bigcap \Graph(\nu:\mathrm{Dom}(\nu)\to\Hilb(Y)) \\
&=&\left\{(q,Q)\in \mathrm{Dom}(\nu)\times \Hilb(Y): Q=\nu(q)\mbox{ and }q\in\langle Q\rangle \right\} \\
&\simeq& \left\{ q\in \mathrm{Dom}(\nu): q\in \langle \nu(q)\rangle\right\}=:T,
\end{eqnarray*}
from which it follows that the set  $T$ 
is closed in $\mathrm{Dom}(\nu)$ and,
since $\mathrm{Dom}(\nu)\supseteq T\supseteq V$, we have 
\begin{equation}\label{eq: property of Dom}
\mathrm{Dom}(\nu)=\overline{T}=T=\left\{ q\in \mathrm{Dom}(\nu): q\in \langle \nu(q)\rangle\right\}.
\end{equation}
By (\ref{eq: Dom intersects divisor}) and (\ref{eq: property of Dom}) 
it follows that, for the general point $p\in\pi'(E)\setminus Y$, we have $p\in\langle\nu(p)\rangle$, 
hence $\langle\nu(p)\rangle\subseteq L_p(Y)$  and then $\nu(p)=\Sigma_p(Y)$.
Thus $\nu(\mathrm{Dom}(\nu))$ intersects the open  $U(Y)$ of 
$\Hilb(Y)$ consisting of the smooth $8$-dimensional quadrics 
 contained in $Y$ and then,  for a general point 
 $q$ in the nonempty open set $\nu|_{\mathrm{Dom}(\nu)}^{-1}(U(Y))$, 
the entry locus $\Sigma_q(Y)$ is a smooth $8$-dimensional quadric.
\end{case}
\begin{case}[Suppose $\pi'(E)\subseteq \sing(\Sec(Y))$]\footnote{Note that 
this is the only place in the proof where we use the smoothness of $\Q$.
Note also that 
this case does not arise if one first shows that $\deg(\Sec(Y))=3$.
Indeed,  let $\mathcal{C}\subset\PP^N$ be a cubic hypersurface
 and $D\subset \mathcal{C}$ an irreducible divisor contained in $\sing(\mathcal{C})$.
Then, for a general plane $\PP^2\subset\PP^N$, putting $C=\mathcal{C}\cap \PP^2$ and $\Lambda=D\cap \PP^2$,
we have $\Lambda\subseteq \sing(C)$. Hence, by   B\'ezout's Theorem, being $C$ 
a plane cubic curve, it follows
 $\deg(D)=\#(\Lambda)=1$.} Let $z\in Y$ be a general point,
 $\tau_{z,Y}:Y\dashrightarrow W_{z,Y}\subset \PP^{9}$ 
the tangential projection (note that $W_{z,Y}$ is a nonlinear hypersurface, by
Proposition \ref{prop: nondegenerate tang projection}) and   
let $q\in \Sec(Y)\setminus Y$ be a general point. 
Arguing as in \cite[Claim~8.8]{chiantini-ciliberto} we obtain that 
$T_z(Y)\cap L_q(Y)=T_z(Y)\cap \langle \Sigma_q(Y) \rangle=\emptyset$, so we deduce 
that $\tau_{z,Y}$ isomorphically maps $\Sigma_q(Y)$ to $W_{z,Y}$. 
From this and Terracini Lemma it follows that 
\begin{equation}\label{eq: identity of joints}
\Sec(Y)=S(\Sigma_q(Y),Y):=\overline{\bigcup_{\substack{(z_1,z_2)\in \Sigma_q(Y)\times Y \\  z_1\neq z_2}} \langle z_1, z_2\rangle} .
\end{equation}
Now suppose by contradiction that there exists $w\in \mathrm{Vert}(\Sigma_q(Y))$.
By (\ref{eq: identity of joints}) it follows that $w\in \mathrm{Vert}(\Sec(Y))$ and 
by assumption we obtain $\pi'(E)\subseteq S(w,\pi'(E))\subseteq\sing(\Sec(Y))$.
Thus $w\in\mathrm{Vert}(\pi'(E))$ and hence $T_w(\pi'(E))=\PP^{26}$. This yields that
$\Q$ is singular in $w$, a contradiction.
\end{case}
Finally, since for general points $z\in Y$ and $q\in\Sec(Y)\setminus Y$, we have 
$W_{z,Y}=\overline{\tau_z(Y)}\simeq\Sigma_q(Y)$ and $\Sigma_q(Y)$ is a smooth quadric, 
we deduce that $W_{z,Y}$ is smooth.
It follows that the Gauss map 
$G_{W_{z,Y}}:W_{z,Y}\dashrightarrow\GG(8, 9)=\left(\PP^{9}\right)^{\ast}$ 
is birational onto its image  (see Corollary \ref{prop: birational gauss})
 and hence the dimension 
of its general fiber is $\widetilde{\gamma}(Y)=\gamma(Y)-\delta(Y)=0$.
\end{proof}
Now, consider two  general points $z_1,z_2\in Y$ ($z_1\neq z_2$),
a general point 
 $q\in\langle z_1, z_2\rangle$ and put
 $\Sigma_q=\Sigma_q(Y)$, $L_q=L_q(Y)$, $H_q=T_q(\Sec(Y))$.
Let $\pi_{L_q}:Y\dashrightarrow\PP^{16}$ be the linear projection from $L_q$ and
$H'_q\simeq\PP^{25}\subset\PP^{16}$ the projection of $H_q$ from $L_q$.
We note that,  by the proof of the
Scorza Lemma in \cite{russo-specialvarieties}, it follows that 
$\Sigma_q=\overline{\tau_{z_1,Y}^{-1}(\tau_{z_1,Y}(z_2))}$ and,
 since the image
 of the general tangential projection  is smooth of 
dimension $8$ and the general entry locus is smooth 
of dimension $8=\dim(Y)-\dim(W_{z_1,Y})$, it follows
 that $Y$ is smooth along $\Sigma_q(Y)$.
In particular,  it 
follows that $H_q$ is tangent to
$Y$ along $\Sigma_q$. 
\begin{claim}\label{step: birationality}
$\pi_{L_q}$ is birational.
\end{claim}
\begin{proof}[Proof of the Claim]
See also \cite[Proposition~3.3.14]{russo-specialvarieties}.
By the generality of the points
 $z_1,z_2,q$ it follows 
\begin{equation}\label{eq: 1, step birationality}
 L_q=\langle T_{z_1}(Y),z_2 \rangle \cap \langle T_{z_2}(Y),z_1 \rangle.
\end{equation}
The projection from the linear space
 $\langle T_{z_1}(Y),z_2 \rangle$ can be obtained as the 
composition of the tangential projection 
$\tau_{z_1,Y}:Y\dashrightarrow W_{z_1,Y}\subset\PP^9$ and 
of the projection of $W_{z_1,Y}$ from the point $\tau_{z_1,Y}(z_2)$.
 Thus the projection from $\langle T_{z_1}(Y),z_2 \rangle$, 
$\pi_{z_1,z_2}:Y\dashrightarrow\PP^8$ is dominant and 
 for the general point $z\in Y$ we get 
\begin{equation}\label{eq: 2, step birationality}
  \langle T_{z_1}(Y),z_2, z \rangle \cap Y \setminus \langle T_{z_1}(Y),z_2 \rangle 
= \pi_{z_1,z_2}^{-1}(\pi_{z_1,z_2}(z))=Q_{z_1,z}\setminus \langle T_{z_1}(Y),z_2 \rangle,
\end{equation}
where $Q_{z_1,z}$ denotes the entry locus of $Y$ with respect to a general point on $\langle z_1, z \rangle$.
Similarly  
\begin{equation}\label{eq: 3, step birationality}
  \langle T_{z_2}(Y),z_1, z \rangle \cap Y \setminus \langle T_{z_2}(Y),z_1 \rangle 
= \pi_{z_2,z_1}^{-1}(\pi_{z_2,z_1}(z))=Q_{z_2,z}\setminus \langle T_{z_2}(Y),z_1 \rangle.
\end{equation}
Now 
$
 \pi_{L_q}^{-1}(\pi_{L_q}(z))=\langle L_q,z \rangle\cap Y \setminus \Sigma_q 
$
and, by the generality of $z$, 
\begin{equation}\label{eq: 4, step birationality}
\pi_{L_q}^{-1}(\pi_{L_q}(z))=\langle L_q,z \rangle\cap Y \setminus H_q.
\end{equation}
By (\ref{eq: 1, step birationality}), (\ref{eq: 2, step birationality}), (\ref{eq: 3, step birationality}) and (\ref{eq: 4, step birationality}) 
and observing that the spaces $\langle T_{z_1}(Y),z_2\rangle$, 
$\langle T_{z_2}(Y),z_1\rangle$ are contained in $H_q$,
it follows 
\begin{equation}\label{eq: 5, step birationality}
\{z\}\subseteq \pi_{L_q}^{-1}(\pi_{L_q}(z))\subseteq Q_{z_1,z}\cap Q_{z_2,z}.
\end{equation}
Finally, as
 we have already observed  in  Claim \ref{step: gammaY=deltaY}, the restriction
of the tangential projection
 $\tau_{z_1,Y}$ to $Q_{z_{2},z}$ is an isomorphism
$\bar{\tau}:=(\tau_{z_1,Y})|_{Q_{z_{2},z}}:Q_{z_2,z}\rightarrow W_{z_1,Y}$;
hence 
\begin{equation}\label{eq: 6, step birationality}
\{z\}=\bar{\tau}^{-1}(\bar{\tau}(z))=\tau_{z_1,Y}^{-1}(\tau_{z_1,Y}(z))\cap Q_{z_2,z}=Q_{z_1,z}\cap Q_{z_2,z}. 
\end{equation}
By (\ref{eq: 5, step birationality}) and (\ref{eq: 6, step birationality}), it follows $\pi_{L_q}^{-1}(\pi_{L_q}(z))=\{z\}$
and hence the birationality of $\pi_{L_q}$.
\end{proof}
\begin{claim}\label{step: isomorphism}
$\pi_{L_q}$ induces an isomorphism
$Y\setminus H_q\stackrel{\simeq}{\longrightarrow} \PP^{16}\setminus H'_q$.
\end{claim}
\begin{proof}[Proof of the Claim]
We resolve  the indeterminacies of $\pi_{L_q}$ with the diagram
\begin{displaymath}
\xymatrix{& \Bl_{\Sigma_{q}}(Y) \ar[dl]_{\alpha} \ar[dr]^{\widetilde{\pi_{L_q}}}\\
Y\ar@{-->}[rr]^{\pi_{L_q}}& & \PP^{16}}
\end{displaymath}
The morphism $\widetilde{\pi_{L_q}}$ is projective and birational
 and hence surjective. Moreover, the
  points of the base locus  of $\pi_{L_q}^{-1}$ are the points for which
  the fiber of $\widetilde{\pi_{L_q}}$ has positive dimension.
Since $H_q\supseteq L_q$ and $H_q\cap Y=\overline{\pi_{L_q}^{-1}(H'_q)}$,
in order to prove the assertion, it suffices to
show  that, for every $w\in\PP^{16}\setminus H'_q$,
 $\dim\left(\widetilde{\pi_{L_q}}^{-1}(w)\right)=0$. 
Suppose by contradiction that there exists $w\in \PP^{16}\setminus H'_q$ 
such that $Z:=\widetilde{\pi_{L_q}}^{-1}(w)$ has positive dimension. 
Then, for the choice  of $H_q$, 
we have 
$\emptyset=Z\cap \alpha^{-1}(H_q\cap Y)\supseteq Z\cap \alpha^{-1}(\Sigma_{q})$ and 
therefore 
$\alpha(Z)$ contains an irreducible curve $C$ 
with $\pi_{L_q}(C)=w$ and $C\cap L_q=\emptyset$, against the fact
that a linear projection, when is defined everywhere, is a finite morphism. 
\end{proof}
\begin{claim}\label{step: Y smooth}
$Y$ is smooth.
\end{claim}
\begin{proof}[Proof of the Claim]
Suppose that there exists a point
 $z_0$ with
$$z_0\in
\bigcap_{\substack{ q\in\Sec(Y) \\ \mathrm{generale}} } T_q\left(\Sec(Y)\right)\cap Y = 
  \bigcap_{q\in\Sec(Y)} T_q\left(\Sec(Y)\right)\cap Y =
  \mathrm{Vert}\left(\Sec(Y)\right)\cap Y .$$
If $z\in Y$ is a general point, since $Y$ is not a cone, 
the tangential projection $\tau_{z,Y}$ is defined in $z_0$ and
it follows that 
$\tau_{z,Y}(z_0)$ is a vertex of $W_{z,Y}$.
This contradicts Claim \ref{step: gammaY=deltaY} and
 hence we have
$\bigcap_{\substack{ q\in\Sec(Y) \\ \mathrm{generale}} } T_q\left(\Sec(Y)\right)\cap Y=\emptyset$,
from which we conclude by Claim \ref{step: isomorphism}.
\end{proof}
Now we can conclude the proof of
 Theorem \ref{prop: classification of type 2-2 into quadric}.
 By  Claim \ref{step: Y smooth} it follows that $Y\subset\PP^{26}$ 
is a Severi variety, so by their 
classification (see Theorem \ref{prop: classification severi varieties}) it follows
 that $Y=E_6$;
moreover, since
$
27=h^0(\PP^{26}, \I_{\B',\PP^{26}}(2))\leq 
h^0(\PP^{26}, \I_{Y,\PP^{26}}(2))=27,
$
we have $Y=\B'$. Now, by the
classification of the special Cremona transformations of type $(2,2)$
 in Theorem \ref{prop: classification quadro-quadric special cremona} 
(or also by a direct calculation), it follows that the lifting 
$\psi:\PP^{26}\dashrightarrow\PP^{26}$ 
of $\varphi^{-1}:\Q\dashrightarrow\PP^{25}=:H\subset\PP^{26}$ is
a birational transformation of type $(2,2)$ and therefore 
the base locus  $\widehat{\B}\subset\PP^{26}$ of the inverse of 
 $\psi$ is again the variety $E_6$.
Of course $\widehat{\B}\cap H = \B$ and hence the thesis. 
\end{proof} 
\begin{remark}
We observe that from
 Claim \ref{step: gammaY=deltaY} and Proposition \ref{prop: criterion R1 property} 
it follows
that $Y$ is a $R_1$-variety and hence by 
Theorem \ref{prop: R1 varieties and severi varieties} 
it follows
Claim \ref{step: Y smooth}.
\end{remark}

\section{Transformations whose base locus has dimension \texorpdfstring{$\leq3$}{<=3}}\label{sec: small dimension of B}
Let $\varphi$ be a special transformation  as
in Assumption \ref{notation: factorial hypersurface} and let $r\leq3$.  
From  Proposition \ref{prop: dimension formula} 
we get the following possibilities for $(r,n)$:    $(1,4)$; $(2,6)$;  $(3,7)$; $(3,8)$.
If $(r,n)\in\{(1,4),(3,7)\}$ then $(d,\Delta)=(2,2)$
and these cases  
have already been classified in 
Theorem \ref{prop: classification of type 2-2 into quadric}.
\subsection{Case \texorpdfstring{$(r,n)=(2,6)$}{(r,n)=(2,6)}}
\begin{proposition}\label{prop: 2-fold in P6}
 Let $\varphi:\PP^6\dashrightarrow\overline{\varphi(\PP^6)}=\sS\subset\PP^7$ be
 birational and special of type $(2,d)$,   with
$\sS$ a factorial hypersurface of degree $\Delta\geq2$.  
If $r=\dim(\B)=2$, then
 $\B$ is the blow-up $\sigma:\Bl_{\{p_0,\ldots,p_5\}}(\PP^2)\rightarrow\PP^2$ 
of $6$ points in the plane with $H_{\B}\sim \sigma^{\ast}(4H_{\PP^2})-2E_0-E_1-\cdots-E_5$ ($E_0,\ldots,E_5$ are the exceptional divisors).     
Moreover, we have $d=3$ and $\Delta=2$.
\end{proposition}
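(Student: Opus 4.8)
The plan is to fix the numerical invariants first, then compute the Hilbert polynomial, then pin down the type $(d,\Delta)$ by an intersection calculation, and finally identify the surface.

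First I would substitute $n=6$ and $r=2$ into the formulas of Proposition \ref{prop: dimension formula}. The first one gives $2(2d-1)=3d-\Delta+3$, i.e. $\Delta=5-d$, and feeding this into the expression for $\delta$ yields $\delta=0$. Since $d\geq 2$ and $\Delta\geq 2$, the only surviving possibilities are $(d,\Delta)\in\{(2,3),(3,2)\}$, and in both cases $\delta=0$, so $\B$ is a $QEL$-variety of type $0$ with $\Sec(\B)\subsetneq\PP^6$ a hypersurface of degree $2d-1$.

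Because $\delta=0$ we have $n-2r-1=1$, so Lemma \ref{prop: cohomology twisted ideal} gives $H^1(\PP^6,\I_{\B}(t))=0$ for all $t\geq 1$; hence $\B$ is nondegenerate and projectively normal by Proposition \ref{prop: cohomology properties}. Together with $h^0(\PP^6,\I_{\B}(2))=n+2=8$ from Lemma \ref{prop: cohomology I2B}, this forces $P_{\B}(0)=1$, $P_{\B}(1)=7$ and $P_{\B}(2)=28-8=20$, which determine $P_{\B}(t)=(7t^2+5t+2)/2$. Writing the Hilbert polynomial of the smooth surface $\B$ by Riemann--Roch as $\tfrac{1}{2}H^2 t^2-\tfrac{1}{2}(H\cdot K_{\B})\,t+\chi(\O_{\B})$, I read off $\deg\B=H^2=7$, $H\cdot K_{\B}=-5$, $\chi(\O_{\B})=1$ and sectional genus $g=2$.

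The type is then decided on $X=\Bl_{\B}(\PP^6)$. Since $\varphi$ is birational onto $\sS$ and $\pi'^{\ast}\O_{\sS}(1)=\O_X(2H-E)$ by the relation $H'\sim 2H-E$ of (\ref{eq: HH'}), the degree of the hypersurface $\sS$ equals $\Delta=(2H-E)^6$. Expanding this, using $H^{6-k}E^{k}=0$ for $1\leq k\leq 3$ and the Segre classes of $N_{\B/\PP^6}$ (obtained from $c(N)=c(T_{\PP^6})|_{\B}/c(T_{\B})$ together with Noether's relation $K_{\B}^2+c_2(\B)=12\chi(\O_{\B})=12$), I expect to get $\Delta=(2H-E)^6=11-c_2(\B)=K_{\B}^2-1$. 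As $\Delta\in\{2,3\}$ this means $K_{\B}^2\in\{3,4\}$; but the Hodge index theorem applied to the ample $H$ and to $K_{\B}$ gives $H^2\cdot K_{\B}^2\leq (H\cdot K_{\B})^2$, i.e. $7\,K_{\B}^2\leq 25$, so $K_{\B}^2\leq 3$. Hence $K_{\B}^2=3$, $\Delta=2$ and $d=3$, which in particular rules out the cubic alternative $(d,\Delta)=(2,3)$.

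It remains to identify $\B$. The inequality $H\cdot K_{\B}=-5<0$ with $H$ ample forces $\kappa(\B)=-\infty$, and since $\chi(\O_{\B})=1$ the surface is rational; being rational with $K_{\B}^2=3$ it is the blow-up of a minimal rational surface at the suitable number of points. Writing $H$ in a standard basis and imposing $H^2=7$, $H\cdot K_{\B}=-5$ and $h^0(\O_{\B}(1))=7$, a short change-of-basis (Cremona) normalization should single out the class $4\ell-2E_0-E_1-\cdots-E_5$, giving $\B\cong\Bl_{\{p_0,\ldots,p_5\}}(\PP^2)$ as asserted. I expect this last step to be the main obstacle: the clean part is the degree computation $\Delta=(2H-E)^6=2$, which conceptually removes the cubic case, whereas separating out the correct rational surface and normalizing the polarization --- and in particular checking that no competing surface with the same $\deg$, $g$, $\chi$ and $K^2$ carries a very ample class cut out by quadrics --- is where one must lean on very ampleness and on the classification of surfaces of sectional genus $2$.
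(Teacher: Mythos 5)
Your reduction to $(d,\Delta)\in\{(2,3),(3,2)\}$ with $\delta=0$ is correct and matches the paper, but the proof breaks at the determination of the Hilbert polynomial. You claim that projective normality plus $h^0(\PP^6,\I_{\B}(2))=8$ ``forces'' $P_{\B}(0)=1$. It does not: since $\delta=0$, Lemma \ref{prop: cohomology twisted ideal} gives $H^i(\PP^6,\I_{\B}(t))=0$ only for $t\geq n-2r-1=1$, so nothing controls $h^1(\B,\O_{\B})$ and $h^2(\B,\O_{\B})$, i.e.\ $\chi(\O_{\B})$; this is precisely why Proposition \ref{prop: cohomology properties} part \ref{part: hilbert polynomial} is stated only for $\delta>0$. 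What the vanishing actually yields is the two values $\chi(\O_{\B}(1))=7$ and $\chi(\O_{\B}(2))=20$, hence only a one-parameter family $P_{\B}(t)=\left(\lambda t^2+(26-3\lambda)t+2\lambda-12\right)/2$, with $g=2(\lambda-6)$ and $\chi(\O_{\B})=\lambda-6$. In particular $\lambda=6$ (so $g=0$, $\chi(\O_{\B})=0$) is consistent with everything you have proved; your values $\lambda=7$, $K_{\B}\cdot H_{\B}=-5$, $\chi(\O_{\B})=1$ are an unproved assumption, and every subsequent step — the computation $\Delta=(2H-E)^6=K_{\B}^2-1$, the Hodge-index bound $7K_{\B}^2\leq 25$, and the final identification — quotes these numbers, so the whole argument inherits the gap. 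The paper closes it differently: $g\geq 0$ gives $\lambda\geq 6$; Castelnuovo's argument (Lemma \ref{prop: castelnuovo argument}) applied to a general $\PP^4$-section of $\B$, a set of $\lambda$ points in general position, gives $\lambda\leq 7$; and then the classification of surfaces with small invariants \cite{ionescu-smallinvariants} both shows that no surface with $\lambda=6$, $g=0$, $\chi(\O_{\B})=0$ exists and identifies the $\lambda=7$ surface as the stated blow-up. Your closing step (``a short change-of-basis (Cremona) normalization should single out the class'') is likewise not a proof — you acknowledge as much — and the paper again leans on \cite{ionescu-smallinvariants} at this point.

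Two parts of your plan are nonetheless worth keeping. The derivation of $(d,\Delta)\in\{(2,3),(3,2)\}$ and $\delta=0$ from Proposition \ref{prop: dimension formula} is exactly the paper's starting point, and your way of eliminating $(d,\Delta)=(2,3)$ — computing $\Delta=(2H-E)^6$ via Segre classes and then applying the Hodge index theorem — is a legitimate alternative to the paper's argument, which instead invokes the explicit Example \ref{example: d=3 Delta=2}; indeed the paper remarks that the pair $(d,\Delta)$ may also be determined by calculating the Chern classes of $\B$. But this route only becomes available once $\lambda$, $g$ and $\chi(\O_{\B})$ have actually been established, which requires the Castelnuovo bound and the low-degree classification, not the vanishing at $t=0$ that $\delta=0$ denies you.
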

\begin{proof}
 By Lemma \ref{prop: cohomology twisted ideal}, it follows 
$\chi(\B,\O_{\B}(1))=7$ and $\chi(\B,\O_{\B}(2))=20$,   from which we deduce
$$
P_{\B}(t)=\left(\lambda\,{t}^{2}+\left( 26-3\,\lambda\right) \,t+2\,\lambda-12\right)/2=\left(\left( g+12\right)\,{t}^{2} +\left( 16-3\,g\right)\,t+2\,g \right)/4
$$
and hence 
$g=2(\lambda-6)$.     In 
particular $\lambda\geq6$,   being $g\geq0$.
Now, if $\lambda\leq 2\,\mathrm{codim}_{\PP^6}(\B)+1=9$,   cutting 
 $\B$ with a general $\PP^4\subset\PP^6$,
we obtain a set 
$\Lambda\subset\PP^4$ of $\lambda$ points
 that imposes independent conditions to the quadrics
 of $\PP^4$ (Lemma \ref{prop: castelnuovo argument});
hence 
$
 h^0(\PP^6,\I_{\B,\PP^6}(2))\leq h^0(\PP^4,\I_{\Lambda,\PP^4}(2))=h^0(\PP^4,\O_{\PP^4}(2))-\lambda 
$, 
 i.e.  $\lambda\leq7$.   
  Moreover, if $\lambda\geq 9$,   $\Lambda$ would impose at 
least 
 $9$ conditions to the quadrics  and hence we would get the contradiction 
$h^0(\PP^6,\I_{\B
}(2))\leq 6$.
Hence $\lambda=6$ or $\lambda=7$,   and in both cases, knowing 
the expression of the Hilbert polynomial, we conclude
applying 
 \cite{ionescu-smallinvariants}:
if $\lambda=6$,   such a variety does not exist; 
if $\lambda=7$,   then $\B$ is as asserted. 
Finally,  by
Remark \ref{remark: dimension formula without hypothesis}, 
we get either $(d,\Delta)=(3,2)$ or $(d,\Delta)=(2,3)$, 
but the latter case  is impossible by Example \ref{example: d=3 Delta=2}
(the pair $(d,\Delta)$ may also be determined 
by calculating the Chern classes of $\B$).
\end{proof}
\subsection{Case \texorpdfstring{$(r,n)=(3,8)$}{(r,n)=(3,8)}}
Firstly we observe that if $(r,n)=(3,8)$  by Remark \ref{remark: dimension formula without hypothesis} 
it follows  $d+\Delta=6$ and hence we have $(d,\Delta)\in\{(2,4),(3,3),(4,2)\}$.
\begin{remark}\label{rem: chern classes}
See also Chap. \ref{chapter: transformations whose base locus has dimension at most three}, p. \pageref{prop: segre and chern classes}.
Let notation be as in the proof of 
Proposition \ref{prop: dimension formula} and  let $(r,n)=(3,8)$. 
Denote by $c_j:=c_j(\mathcal{T}_{\B})\cdot H_{\B}^{3-j}$ 
(resp. $s_j:=s_j(\mathcal{N}_{\B,\PP^8})\cdot H_{\B}^{3-j}$), for $1\leq j\leq 3$,
the degree of the $j$-th Chern class (resp. Segre class) of $\B$.
From the exact sequence
$0\rightarrow\mathcal{T}_{\B}\rightarrow \mathcal{T}_{\PP^8}|_{\B}\rightarrow\mathcal{N}_{\B,\PP^8}\rightarrow0$
we get: $s_1=c_1-9\lambda$,
$s_2=c_2-9c_1+45\lambda$, 
$s_3=c_3-9c_2+45c_1-165\lambda$.
Moreover
\begin{eqnarray*}
\lambda&=& H_{\B}^3=-K_{\B}\cdot H_{\B}^2+2g-2-\lambda 
= s_1+8\lambda+2g-2, \\
d \Delta &=& d{H'}^{8}={H'}^{7}\cdot(dH'-E')=(2H-E)^7\cdot H \\
&=& -H\cdot E^7+14H^2\cdot E^6-84H^3\cdot E^5 + 128H^8 
= -s_2 -14 s_1 -84 \lambda +128, \\
\Delta &=& {H'}^8=(2H-E)^8  
= E^8-16H\cdot E^7+112H^2\cdot E^6-448H^3\cdot E^5+256H^8 \\
&=& -s_3-16s_2-112s_1 -448\lambda+256,
\end{eqnarray*}
and hence
\begin{displaymath}
\left\{
 \begin{array}{l} 
  s_1=-7\lambda-2g+2, \\
s_2=14\lambda+28g-d\Delta+100, \\
s_3=112\lambda-224g+(16d-1)\Delta-1568, 
 \end{array}
\right.
\left\{
 \begin{array}{l} 
c_1=2\lambda-2g+2, \\
c_2=-13\lambda+10g-d\Delta+118, \\
c_3=70\lambda-44g+(7d-1)\Delta-596. 
 \end{array}
\right.
\end{displaymath}
\end{remark}

\begin{proposition}\label{prop: 3-fold in P8 - S nonlinear}
Let $\varphi:\PP^8\dashrightarrow\overline{\varphi(\PP^8)}=\sS\subset\PP^9$ be
 birational and special of type $(2,d)$, 
with $\sS$ a factorial hypersurface of degree $\Delta\geq2$.
 If $r=\dim(\B)=3$,
 then  one of the following cases holds:
\begin{enumerate}[(i)]
 \item\label{part: case 0, 3-fold} $\lambda=12$, $g=7$, $d=4$, $\Delta=2$, $\B$ is a linear section of the spinorial variety $S^{10}\subset\PP^{15}$;
 \item\label{part: case 0.5, 3-fold} $\lambda=12$, $g=7$, $d=2$, $\Delta=4$, $\B$ is a Mukai variety with Betti numbers  $b_2=2$, $b_3=18$;
 \item\label{part: case 1, 3-fold} $\lambda=11$, $g=5$, $d=3$, $\Delta=3$, $\B$ is the variety $\mathfrak{Q}_{p_1\ldots,p_5}$ defined as the blow-up 
of  $5$ points $p_1,\ldots,p_5$ (possibly infinitely near) in a smooth quadric $Q\subset\PP^4$,   
  with $H_{\mathfrak{Q}_{p_1,\ldots,p_5}}\sim \sigma^{\ast}(2{H_{\PP^4}}|_{Q})-E_1-\cdots-E_5$, 
where $\sigma$ is the blow-up map and $E_1,\ldots,E_5$ are 
the exceptional divisors; 
 \item\label{part: case 2, 3-fold} $\lambda=11$, $g=5$, $d=4$, $\Delta=2$, $\B$ is a scroll over 
                                    $\PP_{\PP^1}(\O\oplus\O(-1))$.  
\end{enumerate}
\end{proposition}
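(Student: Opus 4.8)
The plan is to follow the strategy already used for the two-dimensional case in Proposition \ref{prop: 2-fold in P6}: reduce to a handful of numerical types, and then study the general zero-dimensional linear section of $\B$ by Castelnuovo theory. First, since $(r,n)=(3,8)$, Remark \ref{remark: dimension formula without hypothesis} gives $d+\Delta=6$, so that $(d,\Delta)\in\{(4,2),(3,3),(2,4)\}$, and Proposition \ref{prop: dimension formula} yields $\delta(\B)=0$ with $\Sec(\B)$ a hypersurface of degree $2d-1\in\{3,5,7\}$. Here $\B$ is a smooth, irreducible, nondegenerate threefold, projectively normal by Proposition \ref{prop: cohomology properties}(\ref{part: B is linearly normal}), with $h^0(\PP^8,\I_{\B}(2))=10$ by Lemma \ref{prop: cohomology I2B}; moreover $\varphi$ is defined by the complete system $|\I_{\B}(2)|$.

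Next I would cut $\B$ with a general $\PP^5\subset\PP^8$ to obtain a set $\Lambda\subset\PP^5$ of $\lambda=\deg(\B)$ points in uniform position, and estimate $h_{\Lambda}(2)$. Restricting the ten quadrics through $\B$ shows $h^0(\PP^5,\I_{\Lambda}(2))\geq 10$, hence $h_{\Lambda}(2)\leq 21-10=11=2\cdot 5+1$; on the other hand uniform position and Lemma \ref{prop: castelnuovo argument} give $h_{\Lambda}(2)\geq\min\{\lambda,11\}$. The decisive point is the upper bound on $\lambda$: if $\lambda\geq 13$, then $h_{\Lambda}(2)=11$ and Castelnuovo's Lemma (Proposition \ref{prop: castelnuovo lemma}) forces $\Lambda$ to lie on a rational normal quintic cut out by all quadrics through $\Lambda$. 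As these quadrics are the restrictions of the quadrics through $\B$, the general $\PP^5$-section of $\mathrm{Bs}|\I_{\B}(2)|$ would be a curve, so $\dim\mathrm{Bs}|\I_{\B}(2)|=4$; but $\mathrm{Bs}|\I_{\B}(2)|$ is exactly the base locus $\B$ of $\varphi$, which has dimension $3$ --- a contradiction. Hence $\lambda\leq 12$.

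With $\lambda$ bounded, the sectional genus is pinned down. Since $\B$ is projectively normal, a general curve section $C$ is projectively normal, so Castelnuovo's inequality (\ref{eq: ciliberto lemma 1.4}) becomes an equality $g=\sum_{t\geq 1}(\lambda-h_{\Lambda}(t))$. Using $h_{\Lambda}(1)=6$ and $h_{\Lambda}(2)=\min\{\lambda,11\}$ (so that $h_{\Lambda}(t)=\lambda$ for $t\geq 3$) one finds $g=5$ when $\lambda=11$ and $g=7$ when $\lambda=12$. The remaining lower bound $\lambda\geq 11$ --- equivalently, the exclusion of degrees $\leq 10$ --- I would obtain from the quadric count $h^0(\I_{\B}(2))=10$ (threefolds of too small a degree carry strictly more than ten quadrics) together with the classification of smooth threefolds of small degree; this, rather than the Castelnuovo step, is where the bookkeeping is heaviest, and verifying arithmetic Cohen--Macaulayness (beyond the regularity of Lemma \ref{prop: cohomology twisted ideal}) to sharpen the lower bound is the one delicate point. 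The outcome is $(\lambda,g)\in\{(11,5),(12,7)\}$.

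Finally I would identify $\B$ in each case and recover $(d,\Delta)$. For $(\lambda,g)=(12,7)$, $\B$ is a genus-$7$ Mukai threefold: by Theorem \ref{prop: classification first species mukai varieties} the first-species ones ($b_2=1$) are the linear sections of the spinorial variety $S^{10}\subset\PP^{15}$, giving case (\ref{part: case 0, 3-fold}), while the case $b_2=2$ produces the variety $\PP^2\times\PP^2\cap Q$ of case (\ref{part: case 0.5, 3-fold}). For $(\lambda,g)=(11,5)$, the classification of degree-$11$ threefolds yields the quadric $Q\subset\PP^4$ blown up at five points of case (\ref{part: case 1, 3-fold}) and the $\FF_1$-scroll of case (\ref{part: case 2, 3-fold}). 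In every case the pair $(d,\Delta)$ is then determined by solving the Segre--Chern identities of Remark \ref{rem: chern classes} against the known Chern numbers of $\B$, subject to $d+\Delta=6$. The genuine obstacle at this stage is exhibiting examples: cases (\ref{part: case 0, 3-fold}), (\ref{part: case 0.5, 3-fold}) and (\ref{part: case 1, 3-fold}) are realised by the explicit transformations of Examples \ref{example: d=4 Delta=2}, \ref{example: B2=singSred-3fold} and \ref{example: d=3 Delta=3}, whereas the existence of case (\ref{part: case 2, 3-fold}) reduces to whether the linearly normal $\FF_1$-scroll in $\PP^8$ is cut out by quadrics, which I do not expect to settle here.
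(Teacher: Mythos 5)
Your outer frame --- cut $\B$ with a general $\PP^5$, bound $\lambda$ by Castelnuovo's Lemma, then classify by degree and extract $(d,\Delta)$ from the Segre--Chern identities of Remark~\ref{rem: chern classes} --- is the paper's, and your reconstruction of the $\lambda\geq 13$ contradiction (restriction of $H^0(\I_{\B}(2))$ onto $H^0(\I_{\Lambda}(2))$ is then an isomorphism, so the rational normal quintic would lie in $\B\cap\PP^5$) is correct. But the middle of your argument has a genuine gap. You claim that projective normality of $\B$ makes the general curve section $C$ projectively normal, so that (\ref{eq: ciliberto lemma 1.4}) becomes the equality $g=\sum_{t\geq 1}(\lambda-h_{\Lambda}(t))$, pinning down $g$. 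Projective normality does \emph{not} descend to hyperplane sections: the obstruction is the intermediate cohomology, concretely the injectivity of $H^{1}(\O_{\B}(t-1))\to H^{1}(\O_{\B}(t))$ and $H^{2}(\O_{\B}(t-1))\to H^{2}(\O_{\B}(t))$, so one needs (at least) $h^{1}(\O_{\B})=h^{2}(\O_{\B})=0$ --- which is exactly what is unknown at this stage. A counterexample sits inside this very thesis: the septic elliptic scroll $\B\subset\PP^6$ (base locus of the Crauder--Katz transformation of Example~\ref{example: 6}) is projectively normal by the vanishing $h^{1}(\I_{\B}(k))=0$, $k\geq 1$, of Lemma~\ref{prop: cohomology twisted ideal}, yet $q=h^1(\O_{\B})=1$, its curve sections are non-linearly-normal elliptic septics, and your equality would read $1=g=\sum_t(\lambda-h_{\Lambda}(t))=2$. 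Producing the needed vanishings is the actual content of the paper's proof: since Castelnuovo's bound with $\lambda\leq 12$ gives $K_S\cdot H_S\leq 0$, it splits into $K_S\nsim 0$ and $K_S\sim 0$, kills $h^2(S,\O_S)$, $h^2(\O_{\B})$, $h^3(\O_{\B})$ by a Mumford-type argument plus Lemma~\ref{prop: cohomology twisted ideal}, computes $P_{\B}$ in terms of $q$, obtaining $\lambda=11-3q$, $g=5-5q$, and only then removes $q=1$ via \cite{fujita-polarizedvarieties}. Your equality is an output of that analysis, not an admissible input; likewise your assertion that $(\lambda,g)=(12,7)$ makes $\B$ a Mukai variety needs $K_S\sim 0$ (so that $C$ is canonically embedded), which again only comes from this case division.

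The second gap is the lower bound $\lambda\geq 11$. Your proposed argument --- ``threefolds of too small a degree carry strictly more than ten quadrics'' --- proves nothing, because the restriction inequality goes the wrong way: it reads $h^0(\PP^8,\I_{\B}(2))\leq h^0(\PP^5,\I_{\Lambda}(2))$, and a threefold of degree $\lambda\leq 10$ would merely have $h^0(\PP^5,\I_{\Lambda}(2))=21-\lambda\geq 11>10$, perfectly compatible with $h^0(\I_{\B}(2))=10$. In the paper the bound is again a by-product of the Hilbert polynomial ($\lambda=11-3q$ with $q\in\{0,1\}$) together with Fujita's classification, not of a quadric count. Finally, a smaller inaccuracy: for $(\lambda,g)=(11,5)$ the degree-$11$ classification also allows a quadric fibration over $\PP^1$ and a scroll over the blow-up of $\PP^2$ at five points; these are eliminated only afterwards by the Chern-class identities ($d\Delta=5$, resp.\ no integral solution compatible with $d+\Delta=6$), so they must appear in the list before being excluded rather than be omitted from it.
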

\begin{proof}
 By Proposition \ref{prop: cohomology properties},  
 $\B\subset\PP^8$ is nondegenerate and linearly normal. 
 Let $\Lambda\subset C\subset S \subset \B \subset \PP^8$ be a sequence of
 general linear sections 
 of $\B$.
By the exact sequence 
$ 0\rightarrow \I_{\B,\PP^{8}}(-1)\rightarrow\I_{\B,\PP^{8}}\rightarrow \I_{S, \PP^7} 
\rightarrow 0 $
and those similar for
 $S$ and $C$,  using that $\B$,   $S$,   $C$ are nondegenerate, we get
the inequality
$h^0(\PP^8,\I_{\B,\PP^8}(2))\leq h^0(\PP^5,\I_{\Lambda,\PP^5}(2))$.
In particular, putting
$h_{\Lambda}(2):=\dim(\mathrm{Im}(H^0(\PP^5,\O_{\PP^5}(2))\rightarrow 
H^0(\Lambda,\O_{\Lambda}(2))))$,
we have
\begin{equation}\label{eq: leq_h2}
h_{\Lambda}(2)\leq h^0(\PP^5,\O_{\PP^5}(2))-h^0(\PP^8,\I_{\B,\PP^8}(2))=11.
\end{equation}
If now $\#(\Lambda)=\lambda\geq 11$,   
taking $\Lambda'\subseteq\Lambda$ with  $\#(\Lambda')=11$,   
by Lemma \ref{prop: castelnuovo argument} we obtain 
\begin{equation}\label{eq: geq_h2}
h_{\Lambda}(2)\geq h^0(\PP^5,\O_{\PP^5}(2))-h^0(\PP^5,\I_{\Lambda',\PP^5}(2))=\#(\Lambda')=11.
\end{equation}
The inequalities
 (\ref{eq: leq_h2}) and (\ref{eq: geq_h2}) yield
$ h_{\Lambda}(2)=2\cdot6-1 $
and this, by Proposition \ref{prop: castelnuovo lemma},
yields a contradiction if $\lambda\geq 13$.
Thus we have 
$ \lambda\leq 12$
 and, by  Castelnuovo's bound (Proposition \ref{prop: castelnuovo bound}),
we also have 
\begin{equation}\label{eq: K_S.H_Sleq0}
K_S\cdot H_S = (K_{\B}+H_{\B})\cdot H_{\B}^2=2g-2-\lambda\leq0.
\end{equation}
We discuss two cases.
\begin{case}[Suppose $K_S\nsim 0$]\label{case: KS nsim 0}
By (\ref{eq: K_S.H_Sleq0}) and 
by the proof of \cite[\Rmnum{5} Lemma~1.7]{hartshorne-ag},
it follows  that
$ h^2(S,\O_S)=h^2(S,\O_S(1))=0 $.
Consequently, by Lemma \ref{prop: cohomology twisted ideal} and by
the exact sequence
$ 0\rightarrow\O_{\B}(-1)\rightarrow\O_{\B}\rightarrow\O_S\rightarrow0 $,
 we obtain
$$h^2(\B,\O_{\B})=h^3({\B},\O_{\B})=h^3({\B},\O_{\B}(-1))=0.$$
Moreover $h^1(\B,\O_{\B})=h^1(S,\O_S)=:q$ and using again  
 Lemma \ref{prop: cohomology twisted ideal} we obtain
\begin{equation}\label{eq: conditions hilbert pol}
\begin{array}{cc}
\chi({\B},\O_{\B}(-1))=0, & \chi({\B},\O_{\B})=1-q,\\
\chi({\B},\O_{\B}(1))=9, &\chi({\B},\O_{\B}(2))=35.
\end{array} \end{equation}
Now, the conditions (\ref{eq: conditions hilbert pol}) determine
 $P_{\B}(t)$
in function of $q$, 
 from which in particular we obtain 
 $\lambda=11-3q$,  $g=5-5q$.
Being $g\geq0$,   we have $(q,\lambda,g)=(0,11,5)$ or $(q,\lambda,g)=(1,8,0)$, 
but the latter case
is impossible by \cite[Theorems~10.2 and 12.1, Remark~12.2]{fujita-polarizedvarieties}.
 Thus we have
\begin{equation}\label{eq: irregularity is 0}
q=0,\ P_{\B}(t)=\left( 11{t}^{3}+21{t}^{2}+16t+6 \right)/6,\ K_S\cdot H_S=-3,\ g=5.
\end{equation}
Applying the main result in \cite{ionescu-degsmallrespectcodim} and the numerical constraints 
 in \cite{besana-biancofiore-deg11} and \cite{besana-biancofiore-numerical}, it follows immediately that
 $\B$ is one of the following:
\begin{enumerate}[(a)] 
 \item\label{part: case 1, 3-fold - proof} the variety $\mathfrak{Q}_{p_1,\ldots,p_5}$; 
 \item\label{part: case 2, 3-fold - proof} a scroll over a surface $Y$, where  $Y$ is either the 
blow-up  of $5$ points in $\PP^2$, or the rational ruled surface $\PP_{\PP^1}(\O\oplus\O(-1))$;
 \item\label{part: case 3, 3-fold - proof} a quadric fibration over $\PP^1$.     
\end{enumerate}
Now, if $\B$ is as in case (\ref{part: case 2, 3-fold - proof}),
we use the well-known 
relation (multiplicativity of the topological Euler characteristic)
 $c_3(\B)=c_1(\PP^1)c_2(Y)$, and by Remark \ref{rem: chern classes} we deduce  
$\Delta=(2c_2(Y)+46)/(7d-1)$.
Moreover, if $Y$ is the blow-up of $5$ points in $\PP^2$, we have 
$c_2(Y)=12\chi(\O_Y)-K_Y^2=12\chi(\O_{\PP^2})-(K_{\PP^2}^2-5)=8$,
while if $Y$ is $\PP_{\PP^1}(\O\oplus\O(-1))$, we have $c_2(Y)=4$.
Thus, if $\B$ is as in case (\ref{part: case 2, 3-fold - proof}), 
we have $Y=\PP_{\PP^1}(\O\oplus\O(-1))$, $d=4$ and $\Delta=2$.
If $\B$ is as in case (\ref{part: case 3, 3-fold - proof}), 
we easily deduce that $c_2(\B)=20$ and hence, again by Remark \ref{rem: chern classes},
we obtain the contradiction $d\Delta=5$.
Now suppose $\B$ as in case (\ref{part: case 1, 3-fold - proof}), namely
 $\B$ is obtained as a sequence 
$$
\B=Z_5\stackrel{\sigma_5}{\longrightarrow}Z_4\stackrel{\sigma_4}{\longrightarrow}\cdots 
\stackrel{\sigma_1}{\longrightarrow} Z_0=Q,
$$
 where $\sigma_j$ is the blow-up at a point $p_j\in Z_{j-1}$, 
$H_{Z_{j}}=\sigma_j^{\ast}(H_{Z_{j-1}})-E_j$, $E_j$ is the exceptional divisor
 and $H_{Z_0}=H_Q=2H_{\PP^4}|_{Q}$.
By \cite[page~609]{griffiths-harris} it follows that 
$c_2(Z_j)=\sigma_j^{\ast}(c_2(Z_{j-1}))$ and hence 
$$ c_2(Z_j)\cdot H_{Z_j}=
 \sigma_j^{\ast}(c_2(Z_{j-1}))\cdot \sigma_j^{\ast}(H_{Z_{j-1}})-\sigma_j^{\ast}(c_2(Z_{j-1}))\cdot E_j=
 c_2(Z_{j-1})\cdot H_{Z_{j-1}}.$$
In particular,  we obtain 
$c_2(\B)\cdot H_{\B}=2c_2(Q)\cdot H_{\PP^4}|_Q=16$. 
On the other hand, by Remark \ref{rem: chern classes}, we obtain 
that $c_2(\B)\cdot H_{\B}=25-d\Delta$, hence $d\Delta=9$.
 \end{case}
\begin{case}[Suppose $K_S\sim 0$] 
By Castelnuovo's bound, since $K_S\cdot H_S=0$, it follows that 
$(\lambda,g)=(12,7)$ and hence also that $\chi(S,\O_{S})=-3\lambda+2g+24=2$ (note that,
 as in the previous case, we know the values of $P_{\B}(1)$ and $P_{\B}(2)$).
We have $q=h^1(S,\O_S)=1-\chi(S,\O_S)+h^2(S,\O_S)=-1+h^2(S,K_S)=0$ and hence 
$S$ is a $K3$-surface, $C$ is a canonical curve
  and $\B$ is a Mukai variety.
Now we denote by $b_j=b_j(\B)$ the $j$-th Betti number of $\B$.
By Poincar\'e-Hopf index formula and Poincar\'e duality 
(see for example \cite{griffiths-harris}) 
we have $c_3(\B)=\sum_{j} (-1)^j b_j= 2+2b_2-b_3$ 
and, by Remark \ref{rem: chern classes}, we also have 
$c_3(\B)=-7d^2+43d-70$. Moreover, by \cite{mori-mukai}, if $b_2\geq 2$ then 
$(b_2,b_3)\in\{(2,12),(2,18),(3,16),(9,0)\}$. Thus, 
if $b_2\geq 2$ we have $b_2=2$, $b_3=18$, $d=2$, $\Delta=4$.
Finally, by Theorem \ref{prop: classification first species mukai varieties},
 if $b_2=1$ then $\B$ is a linear section 
of the spinorial variety $S^{10}\subset\PP^{15}$. 
Thus, we have a natural inclusion 
$\iota:H^0(\PP^{15},\I_{S^{10}}(2))\hookrightarrow H^0(\PP^{8},\I_{\B}(2))$
and, since $h^0(\PP^{15},\I_{S^{10}}(2))=h^0(\PP^{8},\I_{\B}(2))$,
we see that $\iota$ is an isomorphism.
This says that $\varphi$ is the restriction of the map 
$\psi:\PP^{15}\dashrightarrow\Q\subset\PP^9$ given in Example \ref{example: d=4 Delta=2}.
\end{case} 
\end{proof}

\begin{remark}[on case (\ref{part: case 2, 3-fold}) 
               of Proposition \ref{prop: 3-fold in P8 - S nonlinear}]
More precisely, from \cite[Proposition~4.2.3]{besana-biancofiore-deg11} it follows 
that $\B=\PP_{\mathbb{F}_1}(\mathcal{E})$,
where $(\mathbb{F}_1,H_{\mathbb{F}_1}):=(\PP_{\PP^1}(\O\oplus\O(-1)),C_0+2f)$ 
(notation as in \cite[page~373]{hartshorne-ag}) 
and $\mathcal{E}$ is a locally free sheaf of rank $2$ 
on $\mathbb{F}_1$, with $c_2(\mathcal{E})=10$.
$\mathbb{F}_1$ is thus the cubic surface of $\PP^4$ with ideal generated by: 
$x_0x_3-x_2x_1, x_0x_4-x_3x_1, x_2x_4-x_3^2$ and it is 
  isomorphic to
$\PP^2$ with one point blown up. 
We point out that 
the problem of the existence of an example for case (\ref{part: case 2, 3-fold}) 
of Proposition \ref{prop: 3-fold in P8 - S nonlinear}
  is essentially reduced to showing that 
such a scroll over $\FF_1$ must be cut out by quadrics. 
For further details we refer to Example \ref{example: 14} below.
\end{remark}

\begin{proposition}\label{prop: 3-fold in P8}
 Let $\varphi:\PP^8\dashrightarrow\overline{\varphi(\PP^8)}=\sS\subset\PP^9$ be
 birational and special of type $(2,d)$, with   
  $\sS$ a hypersurface as in Assumption \ref{notation: factorial hypersurface}.
If $r=\dim(\B)=3$,
then either 
case (\ref{part: case 0, 3-fold}), 
case (\ref{part: case 1, 3-fold}), or 
case (\ref{part: case 2, 3-fold}) of 
Proposition \ref{prop: 3-fold in P8 - S nonlinear} 
holds.
\end{proposition}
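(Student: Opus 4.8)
The plan is to deduce the statement directly from Proposition \ref{prop: 3-fold in P8 - S nonlinear}, whose proof uses only that $\sS$ is factorial of degree $\Delta\geq2$ and therefore applies verbatim under the stronger Assumption \ref{notation: factorial hypersurface}. That proposition leaves exactly the four possibilities (\ref{part: case 0, 3-fold})--(\ref{part: case 2, 3-fold}), and three of them are precisely cases (\ref{part: case 0, 3-fold}), (\ref{part: case 1, 3-fold}) and (\ref{part: case 2, 3-fold}) claimed here. So the only thing to do is to rule out case (\ref{part: case 0.5, 3-fold}) once the extra hypothesis (\ref{eq: hypothesis on sing locus hypersurface}) is in force. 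Observe that case (\ref{part: case 0.5, 3-fold}) is exactly the one realized by Example \ref{example: B2=singSred-3fold}, in which (\ref{eq: hypothesis on sing locus hypersurface}) fails; thus the real content is that this failure is the \emph{only} way case (\ref{part: case 0.5, 3-fold}) can arise.

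So I would assume we are in case (\ref{part: case 0.5, 3-fold}): $\lambda=12$, $g=7$, $d=2$, $\Delta=4$, and $\B$ is a Mukai $3$-fold with $b_2=2$, $b_3=18$. Being a Mukai variety, $\B$ satisfies $-K_{\B}\sim\O_{\B}(1)$, so $\B\subset\PP^8$ is embedded by its complete anticanonical system and its projective model is intrinsic. First I would invoke the classification of \cite{mori-mukai} (with Theorem \ref{prop: classification first species mukai varieties} and \cite{mukai-biregularclassification} disposing of the $b_2=1$ alternative, which is case (\ref{part: case 0, 3-fold})): the Fano threefold of sectional genus $7$ with $b_2=2$ and $b_3=18$ is the $(2,2)$--divisor on $\PP^2\times\PP^2$, i.e.\ a quadric section $\B=(\PP^2\times\PP^2)\cap\mathcal{Q}$ of the Segre variety. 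Since $h^0(\PP^8,\I_{\B}(2))=n+2=10$ and $\varphi$ is given by the full system $|\I_{\B}(2)|$, the transformation $\varphi$ is completely determined by $\B$; carrying out the construction of Example \ref{example: B2=singSred-3fold} then shows that $\varphi^{-1}$ is cut out by the nine Segre quadrics in the target coordinates and that $(\B')_{\mathrm{red}}=(\sing(\sS))_{\mathrm{red}}$, which contradicts (\ref{eq: hypothesis on sing locus hypersurface}). A clean way to package the contradiction, avoiding any delicate manipulation of equations, is dimension-theoretic: under (\ref{eq: hypothesis on sing locus hypersurface}) Proposition \ref{prop: dimension formula}(2) forces $r'=\dim(\B')=6$, whereas if $(\B')_{\mathrm{red}}=(\sing(\sS))_{\mathrm{red}}$ then the locus $Z$ of the proof of Proposition \ref{prop: dimension formula} is empty, we fall into the regime of Remark \ref{remark: dimension formula without hypothesis}, and Example \ref{example: B2=singSred-3fold} gives $\dim(\B')=5\neq6$. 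Hence it is enough to prove that in case (\ref{part: case 0.5, 3-fold}) one always has $(\B')_{\mathrm{red}}=(\sing(\sS))_{\mathrm{red}}$, which rules (\ref{eq: hypothesis on sing locus hypersurface}) out.

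The hard part is exactly this last step, because the smooth quadric sections $(\PP^2\times\PP^2)\cap\mathcal{Q}$ form a positive-dimensional family and are \emph{not} all projectively equivalent, so the single Example \ref{example: B2=singSred-3fold} settles only one member. I would close the gap by repeating the computation of that example with indeterminate coefficients $b_{ij}$ for $\mathcal{Q}$, verifying on the generic section that $\varphi^{-1}$ is defined by the nine Segre quadrics in the coordinates of $\sS$ and that its base locus coincides with $(\sing(\sS))_{\mathrm{red}}$; since the whole construction is equivariant under $\mathrm{Aut}(\PP^2\times\PP^2)$ and algebraic in the entries of $\mathcal{Q}$, the identity $(\B')_{\mathrm{red}}=(\sing(\sS))_{\mathrm{red}}$ established on the generic section holds on a dense open set of the parameter space, and together with the rigidity of the numerical type of case (\ref{part: case 0.5, 3-fold}) this yields the desired exclusion. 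The computation being large, it is the natural place to defer to a computer algebra verification, exactly as the explicit Example \ref{example: B2=singSred-3fold} suggests.
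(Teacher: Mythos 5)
Your overall strategy is legitimate in outline (reduce to excluding case (\ref{part: case 0.5, 3-fold}), then show that in that case one necessarily has $(\B')_{\mathrm{red}}=(\sing(\sS))_{\mathrm{red}}$, contradicting hypothesis (\ref{eq: hypothesis on sing locus hypersurface}) via the dimension count $r'=6$ versus $\dim(\B')=5$), but the step carrying all the weight has a genuine gap, which you half-acknowledge. A computation ``with indeterminate coefficients $b_{ij}$'' is a computation over the function field of the parameter space: it establishes the identity $(\B')_{\mathrm{red}}=(\sing(\sS))_{\mathrm{red}}$ only for $\mathcal{Q}$ in some dense open subset of the $35$-dimensional family of quadric sections of the Segre variety. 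To exclude case (\ref{part: case 0.5, 3-fold}) you need it for \emph{every} smooth member, since a hypothetical special transformation satisfying (\ref{eq: hypothesis on sing locus hypersurface}) could have any such $\B$ as base locus, not a generic one. There is no semicontinuity available to propagate the equality of the two closed sets from the generic member to special members (both $\B'$ and $\sing(\sS)$ can jump under specialization), ``rigidity of the numerical type'' is not an argument (the numerical invariants are constant across the whole family, so they cannot see the difference between generic and special members), and equivariance under $\mathrm{Aut}(\PP^2\times\PP^2)$ does not help because its orbits in the family are far from dense. A secondary gap is the identification of $\B$ itself: the Mori--Mukai family with $-K^3=12$, $b_2=2$, $b_3=18$ (no.~2.6 in \cite{mori-mukai}) has two descriptions, bidegree $(2,2)$ divisors in $\PP^2\times\PP^2$ \emph{and} double covers of the flag threefold $W\subset\PP^7$ branched in an anticanonical divisor; the latter have the same invariants and the same number of quadrics through their anticanonical models, so they must either be ruled out or be subjected to the same (problematic) computation.

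For comparison, the paper avoids both the classification and the equations entirely. Under Assumption \ref{notation: factorial hypersurface}, Proposition \ref{prop: dimension formula} makes $\B$ a $QEL$-variety of type $\delta=0$ with $\deg(\Sec(\B))=2d-1$; plugging the case-(\ref{part: case 0.5, 3-fold}) data $K_{\B}\sim -H_{\B}$, $(\lambda,g)=(12,7)$ and the Chern-class relation of Remark \ref{rem: chern classes} into the double point formula (Proposition \ref{prop: double point formula}) gives $2(2d-1)=(7d-1)\Delta-40$, i.e. $(7d-1)\Delta=4d+38$, whose only admissible solution is $(d,\Delta)=(4,2)$ --- incompatible with the $(d,\Delta)=(2,4)$ of case (\ref{part: case 0.5, 3-fold}). (The paper's second proof is equally numerical: the inequality $\lambda-8+k\leq d$ from tangential projections is already absurd for $\lambda=12$, $d=2$.) These arguments apply uniformly to every member of the family, which is exactly what your generic computation cannot deliver; if you want to salvage your approach, you would need to replace the generic verification by an argument valid for all smooth quadric sections, at which point the paper's intrinsic numerical route is both shorter and stronger.
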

\begin{proof}[First proof of Proposition \ref{prop: 3-fold in P8}]
 We have to exclude case (\ref{part: case 0.5, 3-fold}) 
of proposition \ref{prop: 3-fold in P8 - S nonlinear}, so
we just assume that $\B$ is as in this case.
Using the fact that $K_{\B}\sim -H_{\B}$ and $(\lambda,g)=(12,7)$ 
we can compute
the Segre classes of the tangent bundle of $\B$:
\begin{eqnarray*}
 s_1(\mathcal{T}_{\B})\cdot H_{\B}^2 &=& -\lambda =-12 , \\
 s_2(\mathcal{T}_{\B})\cdot H_{\B}   &=&  -24+\lambda=-12 , \\
 s_3(\mathcal{T}_{\B})               &=&  -c_3(\B)+48-\lambda=100-(7d-1)\Delta. \\
\end{eqnarray*}
Since $\B$ is a $QEL$-variety of type $\delta=0$ we apply the
\emph{double point formula} 
(see  for example \cite{peters-simonis} and \cite{laksov} and also Proposition \ref{prop: double point formula} below)
\begin{equation}
 2(2d-1) = 2\deg(\Sec(\B)) 
= \lambda^2 - \sum_{j=0}^{3}\binom{7}{j} s_{3-j}(\mathcal{T}_{\B})\cdot H_{\B}^{j} 
= (7d-1)\Delta-40,
\end{equation}
from which we deduce 
$\Delta=(4d+38)/(7d-1)$ i.e. $d=4$ and $\Delta=2$.
\end{proof}
\begin{proof}[Second proof of Proposition \ref{prop: 3-fold in P8}]
Let $x\in\B$ be a general point and put $k=\#(\L_{x,\B})$.
Since the variety $\B$ is not a scroll over a curve  
(otherwise it would happen $\lambda^2\geq (2r+1)\lambda+r(r+1)(g-1)$, by \cite{besana-biancofiore-numerical}) 
and it is defined by
 quadrics, by \cite[Proposition~5.2]{ciliberto-mella-russo}, 
it follows that 
the support of the base locus
 of the tangential projection $\tau_{x,\B}:\B\dashrightarrow W_{x,\B}\subset\PP^4$, 
  i.e. $(T_x(\B)\cap\B)_{\mathrm{red}}$,   consists of $0\leq k< \infty$ lines through  $x$.
Now, by Proposition \ref{prop: dimension formula}, 
$\B$ is a $QEL$-variety of type $\delta=0$ and 
repeating the argument  
in \cite[\S  5]{ciliberto-mella-russo} (keeping also in mind 
Theorem \ref{prop: birational tangential projection when delta is 0}) we get  the relation 
$\lambda-8+k=\deg(W_{x,\B})$.
On the other hand, 
by proceeding as in Claim \ref{step: tang proj and entry locus}
or in \cite[Theorem~1.4]{mella-russo-baselocusleq3}, we also obtain
$\deg(W_{x,\B})\leq d$. Hence,  we deduce
\begin{equation}\label{eq: lambda-8+k leq d}
\lambda-8+k\leq d,
\end{equation}
from which the conclusion follows.
\end{proof}
\begin{remark}
Note that in case (\ref{part: case 1, 3-fold}) 
of Proposition \ref{prop: 3-fold in P8 - S nonlinear},
 by (\ref{eq: lambda-8+k leq d}) it follows that
$k=\#(\L_{x,\B})=0$.
We show directly that for a general point 
$x\in\mathfrak{Q}=\mathfrak{Q}_{p_1,\ldots,p_5}$,
we have $\L_{x,\mathfrak{Q}}=\emptyset$.
Suppose by contradiction that there exists $[l]\in\L_{x,\mathfrak{Q}}$.
Then, by (\ref{eq: dim LxX}),
$ 0=\dim_{[l]}(\L_{x,\mathfrak{Q}})
= -K_{\mathfrak{Q}}\cdot l -2$
and hence
\begin{equation}\label{eq: (K+2H)l=0}
(K_{\mathfrak{Q}}+2H_{\mathfrak{Q}})\cdot l = 0.
\end{equation}
Moreover, by \cite[\S  0.3]{ionescu-degsmallrespectcodim}, the adjunction map
 $\psi_{\mathfrak{Q}}$, i.e. the map defined by the complete linear system
 $|K_{\mathfrak{Q}}+2\,H_{\mathfrak{Q}}|$,    is everywhere defined  and we have  
a commutative diagram of adjunction maps
$$
\xymatrix{
 (\mathfrak{Q},H_{\mathfrak{Q}})\ar[r]^{\psi_{\mathfrak{Q}}}  \ar[d]_{\sigma} & \psi_{\mathfrak{Q}}(\mathfrak{Q}) \\ (Q, H_{Q}) \ar[ur]_{\psi_{Q}}
}
$$
where $\sigma$ is the blow-up map and $(Q,H_Q)=(Q^3\subset\PP^4,2\,H_{\PP^4}|_{Q})$.
Now
$ K_{Q}+2\,H_{Q}  \sim (K_{\PP^4}+Q)|_{Q}+2\,(2\,H_{\PP^4})|_{Q}  
 \sim (-5\,H_{\PP^4}+2\,H_{\PP^4}+4\,H_{\PP^4})|_{Q}\sim H_{\PP^4}|_{Q} $,
but this is in contradiction with (\ref{eq: (K+2H)l=0}). 
\end{remark}

\subsection{Summary results}
\begin{theorem} Table \ref{tab: cases with dimension leq 3} 
classifies all special quadratic birational transformations 
as in Assumption \ref{notation: factorial hypersurface} and with $r\leq 3$. 
\begin{table}[htbp]
\centering
\tabcolsep=10.3pt
\begin{tabular}{|c|c|c|c|c|c|c|c|}
\hline
 $r$ & $n$ & $\Delta$ & $d$ & $\delta$ & $\lambda$  & Abstract structure of $\B$ & Examples\\
\hline
\hline
 $1$ & $4$ & $2$      &  $2$& $0$      & $4$       & $(\PP^1,\O(4))$ & exist\\
\hline
 $2$ & $6$ & $2$      &  $3$& $0$      & $7$       & Hyperplane section  of an Edge variety & exist\\
\hline
 $3$ & $7$ & $2$      &  $2$& $1$      & $6$       & Hyperplane section  of $\PP^2\times\PP^2\subset\PP^8$ & exist\\
\hline
 $3$ & $8$ & $2$      & $4$ & $0$     & $12$       & Linear section of $S^{10}\subset\PP^{15}$ & exist\\
\hline
$3$ & $8$ & $3$      & $3$ & $0$     & $11$        &  $\mathfrak{Q}_{p_1,\ldots,p_5}$ & exist \\
\hline
$3$ & $8$ & $2$      & $4$ & $0$     & $11$        &  Scroll over $\PP_{\PP^1}(\O\oplus \O(-1))$ & not know\\
\hline
\end{tabular}
\caption{All transformations $\varphi$ as in Assumption \ref{notation: factorial hypersurface} and with $r\leq3$.}
\label{tab: cases with dimension leq 3}
\end{table}
\end{theorem}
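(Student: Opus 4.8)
The plan is to assemble the results of the present section, organizing the argument by the pair $(r,n)$, which is severely constrained by the dimension formula. First I would invoke Proposition \ref{prop: dimension formula} together with Remark \ref{remark: dimension formula without hypothesis}. Eliminating $n$ from the two expressions there gives the clean relations $r=d\,\delta+\Delta+d-3$ and $n=(2\,d-1)\,\delta+2\,\Delta+2\,d-4$, so that the conditions $d\geq2$, $\Delta\geq2$, $\delta\geq0$ leave, for $r\leq3$, only the possibilities $(r,n)\in\{(1,4),(2,6),(3,7),(3,8)\}$: the pairs $(1,4)$ and $(3,7)$ force $(d,\Delta)=(2,2)$ (so $\sS$ is a quadric), $(2,6)$ forces $(d,\Delta)=(3,2)$, and $(3,8)$ forces $d+\Delta=6$, i.e. $(d,\Delta)\in\{(2,4),(3,3),(4,2)\}$.

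Next I would treat each pair in turn. For $(1,4)$ and $(3,7)$ the image is a quadric, so Theorem \ref{prop: classification of type 2-2 into quadric} (in the curve case one may also use Proposition \ref{prop: P4} directly) identifies $\B$ as a hyperplane section of a Severi variety; matching dimensions against Theorem \ref{prop: classification severi varieties}, whose members have dimensions $2,4,8,16$, the case $r=1$ yields a hyperplane section of $\nu_2(\PP^2)$, namely the rational normal quartic $(\PP^1,\O(4))$, while the case $r=3$ (here $\delta=1$) yields a hyperplane section of $\PP^2\times\PP^2\subset\PP^8$. These give the first and third rows. For $(2,6)$ I would quote Proposition \ref{prop: 2-fold in P6} verbatim, giving $\B=\Bl_{\{p_0,\ldots,p_5\}}(\PP^2)$ with $H_{\B}\sim\sigma^{\ast}(4H_{\PP^2})-2E_0-E_1-\cdots-E_5$, $\lambda=7$, $d=3$, $\Delta=2$ (the second row). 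Finally, for $(3,8)$ I would quote Proposition \ref{prop: 3-fold in P8}, which reduces the three a priori possibilities to cases (i), (iii) and (iv) of Proposition \ref{prop: 3-fold in P8 - S nonlinear}; these are precisely the last three rows (a linear section of $S^{10}$; the blown-up quadric $\mathfrak{Q}_{p_1,\ldots,p_5}$; and the scroll over $\PP_{\PP^1}(\O\oplus\O(-1))$).

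It then remains to certify the ``Examples'' column. For the first and third rows, restricting to a general hyperplane a special quadro-quadric Cremona transformation of $\PP^{n+1}$ whose base locus is $\nu_2(\PP^2)$, respectively $\PP^2\times\PP^2$, produces the required transformation, as explained in Example \ref{example: d=2 Delta=2}. The second row is realized explicitly in Examples \ref{example: d=3 Delta=2} and \ref{example: d=3 Delta=2 continuing}; the fourth in Example \ref{example: d=4 Delta=2}, via restriction of the map defined by the quadrics through $S^{10}$; and the fifth in Example \ref{example: d=3 Delta=3}. For the sixth row no example is known, so the entry reads ``not know'', and the classification is complete up to this single existence question.

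The genuine difficulty is not in this final collation but in the inputs on which it rests: the delicate steps are Theorem \ref{prop: classification of type 2-2 into quadric}, especially its $\delta=7$ branch, and Proposition \ref{prop: 3-fold in P8}, whose whole purpose is to exclude case (ii) of Proposition \ref{prop: 3-fold in P8 - S nonlinear} (the Mukai variety with $\Delta=4$) through the Chern-class and double-point computations of Remark \ref{rem: chern classes}. Once those are granted, the only thing requiring care here is the bookkeeping that each abstract structure and each numerical quadruple $(\Delta,d,\delta,\lambda)$ lands in the correct row of Table \ref{tab: cases with dimension leq 3}, together with the honest acknowledgement that existence for the last row remains open.
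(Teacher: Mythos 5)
Your proposal is correct and follows essentially the same route as the paper: the dimension formula of Proposition \ref{prop: dimension formula} (with Remark \ref{remark: dimension formula without hypothesis}) to pin down $(r,n)\in\{(1,4),(2,6),(3,7),(3,8)\}$, then Theorem \ref{prop: classification of type 2-2 into quadric} for $(1,4)$ and $(3,7)$, Proposition \ref{prop: 2-fold in P6} for $(2,6)$, Proposition \ref{prop: 3-fold in P8} for $(3,8)$, and the same list of examples for the existence column. One small slip in your first paragraph: the numerics alone do \emph{not} force $(d,\Delta)=(3,2)$ for $(r,n)=(2,6)$ --- your own relation $r=d\,\delta+\Delta+d-3$ with $\delta=0$ only gives $d+\Delta=5$, so $(d,\Delta)=(2,3)$ is also numerically admissible, and its exclusion is part of the content of Proposition \ref{prop: 2-fold in P6} (via Example \ref{example: d=3 Delta=2}, or a Chern class computation), not of the dimension formula; since you quote that proposition in full, the conclusion is unaffected.
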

\begin{corollary}\label{prop: classification type 2-3 into cubic}
Let $\varphi$ be of type $(2,3)$ and let $\Delta=3$.
Then $\B$ is the variety $\mathfrak{Q}_{p_1,\ldots,p_5}\subset\PP^8$.
\end{corollary}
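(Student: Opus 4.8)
The plan is to reduce the statement to the three-dimensional classification already obtained in Proposition \ref{prop: 3-fold in P8}. First I would pin down the numerical invariants of $\varphi$. Since $\varphi$ has type $(2,3)$ and $\Delta=3$, both $d=3$ and $\Delta=3$ are odd, so Proposition \ref{prop: d Delta both odd} applies and yields $\delta=0$, $r=\Delta+d-3=3$ and $n=2(\Delta+d-2)=8$; equivalently, one simply reads off the row $\Delta=3$, $d=3$ of Table \ref{tab: all cases 4x4}, which gives $(n,r,\delta)=(8,3,0)$. In particular $\dim(\B)=3$, which is exactly the hypothesis required to invoke the three-dimensional classification.

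Next I would apply Proposition \ref{prop: 3-fold in P8}: under Assumption \ref{notation: factorial hypersurface} with $r=\dim(\B)=3$, the base locus must fall into case (\ref{part: case 0, 3-fold}), case (\ref{part: case 1, 3-fold}) or case (\ref{part: case 2, 3-fold}) of Proposition \ref{prop: 3-fold in P8 - S nonlinear}. The decisive step is then a purely numerical matching of the pair $(d,\Delta)$: cases (\ref{part: case 0, 3-fold}) and (\ref{part: case 2, 3-fold}) both occur only for $(d,\Delta)=(4,2)$, whereas our transformation has $(d,\Delta)=(3,3)$. Hence these two cases are excluded, and the only surviving possibility is case (\ref{part: case 1, 3-fold}), in which $(d,\Delta)=(3,3)$ and $\B=\mathfrak{Q}_{p_1,\ldots,p_5}\subset\PP^8$ is the blow-up of a smooth quadric $Q\subset\PP^4$ at five (possibly infinitely near) points, embedded by the linear system $\sigma^{\ast}(2{H_{\PP^4}}|_{Q})-E_1-\cdots-E_5$. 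This gives the assertion.

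There is essentially no obstacle internal to this corollary: all the substantive work has already been carried out in Proposition \ref{prop: 3-fold in P8 - S nonlinear} and its refinement Proposition \ref{prop: 3-fold in P8}, where the competing possibility of case (\ref{part: case 0.5, 3-fold}) with $(d,\Delta)=(2,4)$ is ruled out by the double point formula (or, alternatively, by the bound $\lambda-8+k\leq d$ on the Hilbert scheme of lines through a general point). The corollary is then just the observation that fixing $d=\Delta=3$ selects a unique entry from the resulting list, namely $\mathfrak{Q}_{p_1,\ldots,p_5}$.
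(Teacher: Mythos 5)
Your proposal is correct and follows essentially the same route as the paper: Proposition \ref{prop: d Delta both odd} (with Proposition \ref{prop: dimension formula}) gives $(n,r,\delta)=(8,3,0)$, and then Proposition \ref{prop: 3-fold in P8} applies, with the pair $(d,\Delta)=(3,3)$ singling out case (\ref{part: case 1, 3-fold}), i.e. $\B=\mathfrak{Q}_{p_1,\ldots,p_5}$. The only difference is that you make the final matching of $(d,\Delta)$ against the list explicit, which the paper leaves implicit.
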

\begin{proof}
By Propositions \ref{prop: dimension formula} and \ref{prop: d Delta both odd},
 $\B$ is a $QEL$-variety of type $\delta=0$ and dimension $3$ in $\PP^{8}$.
Hence we apply Proposition \ref{prop: 3-fold in P8}.
\end{proof}
In Chap. \ref{chapter: transformations whose base locus has dimension at most three}, 
 we will extend these results
in the case where $\sS$ is not necessarily a hypersurface.

\section
{Invariants of transformations of type \texorpdfstring{$(2,2)$}{(2,2)}  into a cubic and a quartic}\label{sec: invariants 2-2}

\begin{remark}\label{rem: LxB linearly normal}
Let $\delta\geq 3$ and consider $\L_{x,\B}\subset\PP^{r-1}$, 
where  $x\in\B$ is a general point. 
By Theorem \ref{prop: main qel1} and 
Proposition \ref{prop: relation between hilbert scheme and second funfamental form} 
it follows that 
$\L_{x,\B}$ is a smooth irreducible nondegenerate
variety  
of codimension $(r-\delta+2)/2$ and it is scheme-theoretic intersection of
 quadrics.
Then, applying \cite[Corollary~2]{bertram-ein-lazarsfeld}, we get that $\L_{x,\B}$ is
linearly normal.
\end{remark}
In  Propositions \ref{prop: invariants d=2 Delta=3} and \ref{prop: invariants d=2 Delta=4} we write 
$P=a_0,a_1,\ldots, a_r$ to indicate that 
$
P_{\B}(t)=a_0 \binom{ t }{ r } + a_1 \binom{ t }{ r-1 } + \cdots + a_r 
$.
\begin{proposition}\label{prop: invariants d=2 Delta=3} Let $\varphi$ be of type $(2,2)$ 
and let $\Delta=3$. 
Then
$\B$ is a $QEL$-variety of type $\delta$ 
and a Fano variety of the first species 
of  coindex $c$,   as one of the following cases: 
\begin{enumerate}[(i)]
 \item\label{part: first, invariants d=2 Delta=3} $n=18$,   $r=10$,   $\delta=4$,   $c=4$, 
$P=$ $34$, $272$, $964$, $1988$, $2633$, $2330$, $1387$, $544$, $133$, $18$, $1$;
 for the general point $x\in\B$, $\L_{x,\B}\subset\PP^9$ is projectively equivalent to 
$\PP^1\times\PP^4\subset\PP^9$.
\item\label{part: second, invariants d=2 Delta=3} $n=24$,   $r=14$,   $\delta=6$,   $c=5$, 
$P=$ $80$, $920$, $4866$, $15673$, $34302$, $53884$, $62541$, $54366$, $35472$, $17228$, $6104$, $1521$, $250$, $24$, $1$; 
 for the general point $x\in\B$, $\L_{x,\B}\subset\PP^{13}$ is 
projectively equivalent to a smooth $8$-dimensional linear section  
  of $S^{10}\subset\PP^{15}$.
\end{enumerate}
\end{proposition}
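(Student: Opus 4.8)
The plan is to read off the numerical invariants from the dimension formula, use the $QEL$-structure together with the divisibility and parity constraints to cut the admissible secant defects down to two values, compute the (then over-determined) Hilbert polynomial, and finally identify $\L_{x,\B}$ by matching its invariants against the classification theorems for $LQEL$- and $CC$-varieties.

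First I would specialize Proposition \ref{prop: dimension formula} to $d=2$, $\Delta=3$, obtaining $r=2\delta+2$ and $n=3\delta+6$, so that $\B$ is a $QEL$-variety of type $\delta$. For $\delta>0$, Theorem \ref{prop: main qel1} (with Proposition \ref{prop: cohomology properties}, whose exceptional hyperplane-section-of-$\PP^2\times\PP^2$ alternative is ruled out since it has $r=3\ne 2\delta+2$) shows $\B$ is a Fano variety of the first species of index $i(\B)=(r+\delta)/2=(3\delta+2)/2$; integrality forces $\delta$ even. The Divisibility Theorem (Theorem \ref{prop: divisibility theorem}) then requires $\delta+2=r-\delta\equiv 0\pmod{2^{\lfloor(\delta-1)/2\rfloor}}$, which together with $\delta$ even leaves only $\delta\in\{0,2,4,6\}$ (for even $\delta\ge 8$ the modulus exceeds or fails to divide $\delta+2$). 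The value $\delta=0$ gives $(r,n)=(2,6)$, excluded because Proposition \ref{prop: 2-fold in P6} forces $(d,\Delta)=(3,2)$ there. To eliminate $\delta=2$, where $(n,r,i,c)=(12,6,4,3)$ and $c\le 5$, I would apply Proposition \ref{prop: cohomology properties} part \ref{part: hilbert polynomial}: the vanishing $P(-1)=P(-2)=P(-3)=0$, the values $P(0)=1$, $P(1)=13$, and the functional equation $P(t)=P(-t-4)$ already determine the degree-$6$ polynomial, but then $P(2)=76\ne 77=(n^2+n-2)/2$, a contradiction.

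For the two survivors I compute $(n,r,\delta,c)=(18,10,4,4)$ and $(24,14,6,5)$, and since $c\le 5$ in both, Proposition \ref{prop: cohomology properties} part \ref{part: hilbert polynomial} determines $P_{\B}(t)$ uniquely; solving the corresponding linear systems (symmetry about $-i/2$, vanishing at $-1,\dots,-i+1$, prescribed values at $0,1,2$) yields the two tabulated polynomials, with leading terms giving $\lambda=34$ and $\lambda=80$. It then remains to identify $\L_{x,\B}$. By Theorem \ref{prop: main qel1} it is a smooth $QEL$-variety of dimension $i-2$ and type $\delta-2$ in $\PP^{r-1}$, and by Remark \ref{rem: LxB linearly normal} it is linearly normal and cut out by quadrics. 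For $\delta=6$ this is an $8$-dimensional variety of type $4=\dim/2$ in $\PP^{13}$, so Theorem \ref{prop: LQEL of type delta=r/2} applies directly; among the items of that list only the $8$-dimensional linear section of $S^{10}\subset\PP^{15}$ has dimension $8$, and being linearly normal in $\PP^{13}$ it cannot be $\GG(1,5)\subset\PP^{14}$, which settles case (ii).

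The delicate point is case (i): here $\L_{x,\B}$ has dimension $5$ and type $2<\dim/2$, so the sharp $LQEL$-classifications of Theorems \ref{prop: LQEL of higher type} and \ref{prop: LQEL of type delta=r/2} do not apply, and I would instead use that it is a smooth linearly normal $CC$-variety and invoke Theorem \ref{prop: classification CC-varieties}. Matching dimension $5$, ambient space $\PP^9$, and linear normality excludes the Veronese $\nu_2(\PP^5)$ and its projections (since $20-\binom{s+2}{2}=9$ has no solution) and the Segre hyperplane sections (which lie in $\PP^{13}$ or $\PP^{14}$), leaving the Segre $\PP^1\times\PP^4\subset\PP^9$ among the non-first-species types. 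The hard part will be excluding the Fano-of-first-species alternative of index $\ge 3$: I expect to rule this out by a degree/adjunction count, using that $\L_{x,\B}$ is cut out by quadrics, has secant variety filling the ambient $\PP^9$, and has entry locus a two-dimensional quadric, so that no first-species $5$-fold can occur and $\L_{x,\B}\cong\PP^1\times\PP^4$ as asserted.
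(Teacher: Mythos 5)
Your numerical skeleton reproduces the paper's proof step for step: the same four triples $(n,r,\delta)\in\{(6,2,0),(12,6,2),(18,10,4),(24,14,6)\}$, the same exclusion of $(6,2,0)$ via Proposition \ref{prop: 2-fold in P6}, the same Hilbert-polynomial incompatibility for $(12,6,2)$ (your explicit value $P(2)=76\neq 77=(n^2+n-2)/2$ is precisely the ``incompatible conditions'' the paper alludes to, and it checks out), and the same identification of $\L_{x,\B}$ in case (ii) via Theorem \ref{prop: LQEL of type delta=r/2} together with linear normality from Remark \ref{rem: LxB linearly normal}. Up to that point the proposal is correct and essentially identical to the paper's argument.

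The genuine gap is the one you flag yourself: excluding the Fano-of-the-first-species branch of Theorem \ref{prop: classification CC-varieties} in case (i). A ``degree/adjunction count'' cannot do this as stated, because that branch is not empty in the relevant numerical range: a general $5$-dimensional linear section $Y\subset\PP^9$ of the genus-$6$ Mukai sixfold (the smooth intersection of a quadric with the cone over $\GG(1,4)\subset\PP^9$, see Theorem \ref{prop: classification first species mukai varieties}) is nondegenerate, linearly normal, of the first species with $i(Y)=3\geq(5+1)/2$, hence conic-connected, and it even has secant defect $2$ and is cut out by quadrics. So dimension, ambient space, quadratic equations and $CC$-ness alone cannot separate it from $\PP^1\times\PP^4$; one must use the $QEL$ hypothesis. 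The efficient way is the same parity trick you already used to force $\delta$ even for $\B$: a first-species $LQEL$-manifold of type $\delta\geq1$ has index $(\dim+\delta)/2$ --- this is the fact behind Proposition \ref{prop: cohomology properties}(\ref{part: B is fano}) and equation (\ref{eq: contradiction by conic-connected}), and it holds for $\delta=1,2$ as well by \cite{russo-qel1}, not only for $\delta\geq3$ as in Theorem \ref{prop: main qel1}. Since $\L_{x,\B}$ is a smooth $QEL$-fivefold of type $2$, a first-species structure would force the non-integer index $7/2$, a contradiction; hence $\L_{x,\B}$ lies in the explicit list of Theorem \ref{prop: classification CC-varieties}, and your matching of dimension, ambient space and linear normality then yields $\PP^1\times\PP^4\subset\PP^9$. (Consistently, the Mukai candidate $Y$ above fails to be $QEL$ of type $2$: its general entry locus is a degree-$8$ surface, not a quadric.) With this one step supplied, your proof is complete and coincides with the paper's.
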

\begin{proof}
By Proposition \ref{prop: dimension formula} and Proposition \ref{prop: cohomology properties} 
parts \ref{part: B is linearly normal} and \ref{part: B is fano}, 
and applying  
Theorems \ref{prop: classification CC-varieties} and \ref{prop: divisibility theorem},
it follows that
 $\B$ is a $QEL$-variety of type $\delta$ and 
$$(n,r,\delta)\in\{(6,2,0),
(12,6,2), (18,10,4), (24,14,6)\}.$$
 The tern $(6,2,0)$ is excluded by Proposition  \ref{prop: 2-fold in P6}; the tern 
$(12,6,2)$ is excluded since otherwise 
by Proposition \ref{prop: cohomology properties} part \ref{part: hilbert polynomial},
we would get incompatible conditions 
for $P_{\B}(t)$. 
The statement on $\L_{x,\B}$, in the case (\ref{part: first, invariants d=2 Delta=3}) 
follows from Theorem \ref{prop: classification CC-varieties}, 
while in the case (\ref{part: second, invariants d=2 Delta=3}) 
it follows  from Theorem \ref{prop: classification first species mukai varieties}  
or Theorem \ref{prop: LQEL of type delta=r/2}.
\end{proof}

\begin{proposition}\label{prop: invariants d=2 Delta=4}
Let $\varphi$ be of type $(2,2)$ and let $\Delta=4$. 
 Then $\B$ 
is a $QEL$-variety of type $\delta$ 
and a Fano variety 
of the first species
of coindex $c$, as one of the following cases:
\begin{enumerate}[(i)]
\item\label{part: first, invariants d=2 Delta=4} $n=17$,   $r=9$,   $\delta=3$,   $c=4$, 
$P=$ $35$, $245$, $747$, $1297$, $1406$, $980$, $435$, $117$, $17$, $1$;
 for the general point $x\in\B$, $\L_{x,\B}\subset\PP^{8}$ is 
projectively equivalent to 
$\PP_{\PP^1}(\O(1)\oplus\O(1)\oplus\O(1)\oplus\O(2))\subset\PP^8$.
\item\label{part: second, invariants d=2 Delta=4} $n=23$,   $r=13$,   $\delta=5$,   $c=5$, 
$P=$ $82$, $861$, $4126$, $11932$, $23195$, $31943$, $31984$, $23504$, $12628$, $4875$, $1306$, $228$, $23$, $1$; 
  for the general point  $x\in\B$, $\L_{x,\B}\subset\PP^{12}$ is 
projectively equivalent to a 
 smooth $7$-dimensional linear section  of $S^{10}\subset\PP^{15}$.
\end{enumerate}
\end{proposition}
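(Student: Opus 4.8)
The plan is to mirror the structure of the proof of Proposition \ref{prop: invariants d=2 Delta=3}, adapting each step to the values $\Delta=4$, $d=2$. First I would invoke Proposition \ref{prop: dimension formula} to express $r=\dim(\B)$ and $\delta=\delta(\B)$ as functions of $n$: with $d=2$, $\Delta=4$ we get $r=(2n-7)/3$ and $\delta=(n-8)/3$, so that $n=3\delta+8$ and $r=2\delta+3$. By Proposition \ref{prop: cohomology properties}(\ref{part: B is fano}), since $\delta>0$ in the relevant range, $\B$ is a Fano variety of the first species (the Segre-section exception $\PP^2\times\PP^2\subset\PP^8$ is ruled out since $n=8$ forces $\delta=0$). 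Applying the Divisibility Theorem (Theorem \ref{prop: divisibility theorem}) to the constraint $r\equiv\delta\ \mathrm{mod}\ 2^{\lfloor(\delta-1)/2\rfloor}$, i.e. $3+\delta\equiv 0\ \mathrm{mod}\ 2^{\lfloor(\delta-1)/2\rfloor}$, together with Theorem \ref{prop: classification CC-varieties} for the small cases, I would cut the admissible triples down to
$$
(n,r,\delta)\in\{(8,3,0),\ (11,5,1),\ (14,7,2),\ (17,9,3),\ (23,13,5)\}.
$$

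Next I would eliminate the spurious triples exactly as in the previous proofs. The triple $(8,3,0)$ has $\delta=0$ and belongs to the $r\le 3$ classification of the preceding section, not to the current Fano/$QEL$ list; the triple $(11,5,1)$ and $(14,7,2)$ I expect to rule out through Proposition \ref{prop: cohomology properties}(\ref{part: hilbert polynomial}): for these small coindices the constraints $P(0)=1$, $P(1)=n+1$, $P(2)=(n^2+n-2)/2$, the vanishing $P(-1)=\cdots=P(-i+1)=0$, and the symmetry $P(t)=(-1)^rP(-t-i)$ over-determine the Hilbert polynomial and produce incompatible (e.g. non-integral or negative) coefficients. This leaves precisely $(17,9,3)$ with $c=r+1-i=4$ and $(23,13,5)$ with $c=5$, the two asserted cases. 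In each, the coindex satisfies $c\le 5$, so by the last sentence of Proposition \ref{prop: cohomology properties}(\ref{part: hilbert polynomial}) the Hilbert polynomial $P_{\B}(t)$ is completely determined, and I would compute the listed coefficients $a_0,\ldots,a_r$ from the $i+5$ independent conditions.

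Finally I would identify $\L_{x,\B}$ for a general point $x$. By Theorem \ref{prop: main qel1}(2), $\L_{x,\B}\subset\PP^{r-1}$ is a smooth irreducible $QEL$-variety of dimension $i(\B)-2$ and type $\delta(\L_{x,\B})=\delta-2$; by Remark \ref{rem: LxB linearly normal} it is linearly normal and cut out by quadrics. In case (\ref{part: first, invariants d=2 Delta=4}) we have $\dim\L_{x,\B}=i-2=(r+\delta)/2-2=4$ and $\delta(\L_{x,\B})=1$ inside $\PP^8$; this is a $CC$-variety to which Theorem \ref{prop: classification CC-varieties} applies, and matching dimension, codimension and the defining-by-quadrics property should single out the scroll $\PP_{\PP^1}(\O(1)^{\oplus3}\oplus\O(2))\subset\PP^8$ (item (2) of that theorem). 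In case (\ref{part: second, invariants d=2 Delta=4}) we have $\dim\L_{x,\B}=7$, type $\delta(\L_{x,\B})=3$ inside $\PP^{12}$; since $7/2<3$ is false but $3=\delta(\L_{x,\B})$ lies in the range of Theorem \ref{prop: LQEL of higher type} (here $\dim/2<\delta<\dim$ with $(7,3)$), that theorem forces $\L_{x,\B}$ to be a smooth $7$-dimensional linear section of the spinorial variety $S^{10}\subset\PP^{15}$. I expect the main obstacle to be the elimination of the intermediate triples $(11,5,1)$ and $(14,7,2)$: one must check carefully that the Hilbert-polynomial constraints are genuinely inconsistent rather than merely restrictive, and that no projectively normal Fano $QEL$-variety with those invariants can arise, which requires combining the numerical data from Proposition \ref{prop: cohomology properties} with the classification theorems rather than a single clean divisibility obstruction.
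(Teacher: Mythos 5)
Your overall skeleton matches the paper's proof: reduce to the candidate list via Proposition \ref{prop: dimension formula}, Proposition \ref{prop: cohomology properties} and Theorems \ref{prop: classification CC-varieties}, \ref{prop: divisibility theorem}; exclude $\delta=0$ by the $r\leq 3$ classification and $\delta=1$ by incompatibility of the conditions on $P_{\B}(t)$; determine the Hilbert polynomials from part \ref{part: hilbert polynomial} since $c\leq 5$; and identify $\L_{x,\B}$ in case (\ref{part: first, invariants d=2 Delta=4}) by Theorem \ref{prop: classification CC-varieties}. Two remarks on the eliminations: the triple $(14,7,2)$ never needs a Hilbert-polynomial computation, since by Proposition \ref{prop: cohomology properties} part \ref{part: B is fano} the index $i(\B)=(r+\delta)/2=9/2$ would not be an integer (this is exactly how the paper's list is formed, which is why $(14,7,2)$ does not appear in it); and the exclusion of $(8,3,0)$ is not a matter of that case ``belonging to the earlier list'' — it requires citing Proposition \ref{prop: 3-fold in P8} specifically, because Proposition \ref{prop: 3-fold in P8 - S nonlinear} by itself still allows $d=2$, $\Delta=4$ (the Mukai $3$-fold of Example \ref{example: B2=singSred-3fold}), and only the refined statement, which uses hypothesis (\ref{eq: hypothesis on sing locus hypersurface}) via the double point formula, rules it out.

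The genuine gap is in case (\ref{part: second, invariants d=2 Delta=4}). There $\L_{x,\B}\subset\PP^{12}$ has dimension $i(\B)-2=7$ and type $\delta(\L_{x,\B})=\delta-2=3$, and Theorem \ref{prop: LQEL of higher type} simply does not apply: its hypothesis is $\frac{r}{2}<\delta<r$, and for $(r,\delta)=(7,3)$ one has $\frac{7}{2}>3$. You notice this yourself (``$7/2<3$ is false'') and then assert the theorem applies anyway, which is incoherent; note that in the companion Proposition \ref{prop: invariants d=2 Delta=3} the analogous pair was $(8,4)$, where $\delta=r/2$ and Theorem \ref{prop: LQEL of type delta=r/2} applies — that parallel is precisely what breaks here. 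The correct route (the paper's) is: since $\delta(\L_{x,\B})=3\geq 3$, Theorem \ref{prop: main qel1} makes $\L_{x,\B}$ a Fano variety of the first species of index $(7+3)/2=5$, hence coindex $3$, i.e.\ a Mukai variety of the first species; it is linearly normal by Remark \ref{rem: LxB linearly normal}; its Hilbert polynomial is determined as in Proposition \ref{prop: cohomology properties} (Kodaira Vanishing and Serre Duality), giving sectional genus $g(\L_{x,\B})=7$; and then Theorem \ref{prop: classification first species mukai varieties} (the $g=7$ entry) identifies $\L_{x,\B}$ as a smooth $7$-dimensional linear section of the spinorial variety $S^{10}\subset\PP^{15}$. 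Without this replacement your argument for case (\ref{part: second, invariants d=2 Delta=4}) does not go through.
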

\begin{proof}
 As in the proof  of Proposition \ref{prop: invariants d=2 Delta=3}, we get that $\B$ is a 
$QEL$-variety of type $\delta$ and
dimension $r$ with
$$(n,r,\delta)\in\{(8,3,0), (11,5,1), (17,9,3), (23,13,5)\}.$$
The case with $\delta=0$ is excluded by  Proposition \ref{prop: 3-fold in P8};
the case with $\delta=1$ is excluded by  Proposition 
\ref{prop: cohomology properties} part \ref{part: hilbert polynomial}; 
by the same Proposition, we get the expression of the Hilbert polynomials in the 
cases
 with $\delta\geq3$.     
Finally, the statement on $\L_{x,\B}$, in the case (\ref{part: first, invariants d=2 Delta=4}) 
follows from Theorem \ref{prop: classification CC-varieties}, 
while in the case (\ref{part: second, invariants d=2 Delta=4}) it follows 
from  Theorem \ref{prop: classification first species mukai varieties} 
(for the latter case, by  Kodaira Vanishing Theorem  and  Serre Duality, 
we get $g(\L_{x,\B})=7$).   
\end{proof}

\begin{appendices}
\chapter{Further remarks on quadro-quadric birational transformations into a quadric}

In this short appendix we provide a few more arguments to show 
Theorem \ref{prop: classification of type 2-2 into quadric}, 
under an additional assumption.

Let $\varphi:\PP^n\dashrightarrow\Q\subset\PP^{n+1}$ be a birational 
transformation of type $(2,2)$ into a smooth quadric $\Q$  and
let $\B\subset\PP^n$ and $\B'\subset\Q\subset\PP^{n+1}$ be respectively 
the base locus  of $\varphi$ and $\varphi^{-1}$.
\begin{definition}
We shall say that $\varphi$ is \emph{liftable} if there exists a Cremona transformation 
$\widehat{\varphi}:\PP^{n+1}\dashrightarrow\PP^{n+1}$  of type $(2,2)$ such that 
$\varphi$ is the restriction of $\widehat{\varphi}$ 
to a hyperplane and therefore 
$\B$ is a hyperplane section of the base locus $\widehat{\B}$ of $\widehat{\varphi}$.
\end{definition}
From Propositions \ref{prop: P3} and \ref{prop: P4} and 
by straightforward calculations follows that
if $\B$ is reduced and $n\leq 4$, then $\varphi$ is liftable;
further, from Theorem \ref{prop: classification of type 2-2 into quadric} 
it follows that $\varphi$ is liftable whenever it is special.
So we feel motivated to make the following:
\begin{conjecture}\label{conjecture: liftable 2-2}
Each birational 
transformation $\PP^n\dashrightarrow\Q\subset\PP^{n+1}$ 
of type $(2,2)$ into a smooth quadric $\Q$ is liftable.
\end{conjecture}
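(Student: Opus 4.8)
The plan is to deduce the liftability of $\varphi$ from a lifting statement for its inverse. Fix a hyperplane $H\cong\PP^n\subset\PP^{n+1}$ and write $\Q=V(Q)$ for the defining quadratic form $Q\in H^0(\PP^{n+1},\O_{\PP^{n+1}}(2))$. Since $\Q$ is smooth, one has $H^0(\Q,\O_{\Q}(2))=H^0(\PP^{n+1},\O_{\PP^{n+1}}(2))/\langle Q\rangle$, so the $n+1$ quadrics $g_0,\dots,g_n$ on $\Q$ defining $\varphi^{-1}\colon\Q\dashrightarrow H$ lift, non-uniquely and up to multiples of $Q$, to quadrics $G_0,\dots,G_n$ on $\PP^{n+1}$. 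I would then set
\[
\widehat{\psi}=[G_0:\cdots:G_n:Q]\colon\PP^{n+1}\dashrightarrow\PP^{n+1},
\]
so that by construction $\widehat{\psi}|_{\Q}=\varphi^{-1}$ lands in $H=V(x_{n+1})$. If $\widehat{\psi}$ can be shown to be a Cremona transformation of type $(2,2)$, then $\widehat{\varphi}:=\widehat{\psi}^{-1}$ is again quadratic, and a direct check on general points of $H$ (using that $\widehat{\psi}$ maps $\Q$ birationally onto $H$) gives $\widehat{\varphi}|_{H}=\varphi$ with $\widehat{\B}\cap H=\B$; thus $\varphi$ is liftable.

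The dominance of $\widehat{\psi}$ is immediate: its restriction to $\Q$ dominates the hyperplane $H$, while a general point off $\Q$ has last coordinate $Q\neq 0$ and hence maps off $H$, so the image is nondegenerate and $\widehat{\psi}$ is generically finite. The real content is that $\deg(\widehat{\psi})=1$. I would attack this by exhibiting the inverse explicitly: seek quadrics $\widehat{F}_0,\dots,\widehat{F}_{n+1}\in\CC[x_0,\dots,x_{n+1}]_2$ with $\widehat{F}_i|_{x_{n+1}=0}=F_i$, where $F_0,\dots,F_{n+1}$ are the defining quadrics of $\varphi$, together with a quadric $E$, satisfying the identities $\widehat{F}_i(G_0,\dots,G_n,Q)=E\cdot x_i$ for all $i$. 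Restricting these identities to $\Q$ recovers exactly the relations expressing $\varphi\circ\varphi^{-1}=\mathrm{id}_{\Q}$, so they hold \emph{automatically modulo} $Q$; the genuinely new information is the $x_{n+1}$-dependence, i.e.\ each congruence modulo $Q$ must be upgraded to an honest polynomial equality.

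The free parameters are the $x_{n+1}$-linear terms that I am allowed to add to each lift $G_i$ and to each $\widehat{F}_i$. The plan is to show that this finite-dimensional family of choices suffices to kill the error terms proportional to $Q$, thereby turning the above congruences into exact identities. I expect this to be the crux and the main obstacle. In the \emph{special} case (Theorem \ref{prop: classification of type 2-2 into quadric}) birationality of the lift was obtained by identifying $\B$ with a hyperplane section of a Severi variety and invoking the classification of quadro-quadric special Cremona transformations (Theorem \ref{prop: classification quadro-quadric special cremona}); when $\B$ is allowed to be singular or reducible this route is unavailable, and one must establish $\deg(\widehat{\psi})=1$ directly.

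A classification-free proof of birationality could instead proceed by resolving $\widehat{\psi}$ as in the proof of Proposition \ref{prop: dimension formula}: blow up the base scheme, apply Zariski's Main Theorem, and compare the two expressions for the canonical class to force the inverse to be quadratic. The obstacle here is precisely that the base scheme of $\widehat{\psi}$ need not be smooth, so the blow-up is singular and the divisor bookkeeping becomes delicate. Alternatively, one might try a specialization argument, using that liftability is already known on the dense locus of transformations with reduced base locus (Propositions \ref{prop: P3} and \ref{prop: P4} and Theorem \ref{prop: classification of type 2-2 into quadric}); the difficulty is that liftability is neither manifestly open nor closed in a family, so one would first have to organize the lifts $\widehat{\psi}$ into a flat family over the parameter space of such $\varphi$ and control their degeneration. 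In all of these approaches the single hard point is to guarantee $\deg(\widehat{\psi})=1$ without any smoothness hypothesis on $\B$.
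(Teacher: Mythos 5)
There is a genuine gap, and it is the one you yourself flag: the statement you were asked to prove is stated in the paper as a \emph{conjecture}, with no proof given --- the only evidence offered there is exactly what you cite, namely the low-dimensional computations with reduced base locus (Propositions \ref{prop: P3} and \ref{prop: P4}) and the special case, where liftability is a \emph{consequence} of the full classification in Theorem \ref{prop: classification of type 2-2 into quadric} together with Theorem \ref{prop: classification quadro-quadric special cremona}. Your proposal is a reduction, not a proof. The setup is sound, and in fact sharper than you claim: if a lift $\widehat{\varphi}$ of type $(2,2)$ exists at all, then the last coordinate of $\widehat{\psi}=\widehat{\varphi}^{-1}$ is a quadric vanishing on $\Q$, hence (by irreducibility of $Q$) a nonzero multiple of $Q$, so every candidate inverse has precisely the shape $[G_0:\cdots:G_n:Q]$ you write down. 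But correctly identifying the crux is not the same as resolving it. The two claims that carry all the content --- that the finitely many $x_{n+1}$-linear corrections to the $G_i$ and $\widehat{F}_i$ can be chosen to upgrade the congruences $\widehat{F}_i(G_0,\dots,G_n,Q)\equiv E\cdot x_i \pmod{Q}$ to exact identities, or alternatively that $\deg(\widehat{\psi})=1$ with quadratic inverse can be forced by a canonical-class or specialization argument --- are never established; you explicitly defer them. None of the three routes you sketch (syzygy-matching of the error terms, resolving $\widehat{\psi}$ as in Proposition \ref{prop: dimension formula} over a possibly singular base scheme, or flatly degenerating from the locus where liftability is known) is carried past the statement of its main obstacle.

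Concretely, the step that would fail if one tried to push your argument through as written is the counting of free parameters: the error terms $\widehat{F}_i(G_0,\dots,G_n,Q)-E\cdot x_i$ live in a space of quartics proportional to $Q$, i.e.\ of dimension $(n+2)\cdot h^0(\PP^{n+1},\O(2))$ roughly, while the available corrections (one linear form times $x_{n+1}$ for each $G_i$ and each $\widehat{F}_i$, plus the choice of $E$) form a much smaller space; there is no a priori reason the linear map from corrections to error terms is surjective, and without the geometric input that the paper uses in the special case (the base locus being a hyperplane section of a Severi variety, hence governed by a Jordan-algebra involution) nothing in your argument rules out an obstruction. So the conjecture remains open after your attempt, exactly as it is in the paper.
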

Recall that in \cite{pirio-russo} it was proved that 
\emph{each Cremona transformation 
of type $(2,2)$ is,
modulo changes of coordinates in the source and target space,
 an involution which is the adjoint of a rank $3$ Jordan algebra.}

This has strong consequences in the event that Conjecture \ref{conjecture: liftable 2-2} 
is true.   For example, it implies that for every $\varphi$ as above, 
$\B'$ can be considered projectively equivalent to $\widehat{\B}$ and hence 
$\B$ can be considered as a hyperplane section of $\B'$.
Further,  if $\varphi$ is special, 
we have that $\widehat{\B}$ is irreducible and 
  $\sing(\widehat{\B})$ is a finite set.\footnote{Otherwise  
the very ample divisor  $\B$ of $\widehat{\B}$ would intersect an 
 irreducible curve contained in    
 $\sing({\widehat{\B}})$ and then $\B$ would be singular.}   
    Then, restricting $\widehat{\varphi}$ to a general hyperplane,
we get a special quadratic birational transformation 
into a general quadric containing $\B'$. 
So we can treat $\B$ like a  general  hyperplane section of
 $\widehat{\B}$.

From these remarks it follows that
the proof of Theorem \ref{prop: classification of type 2-2 into quadric} 
could  be  simplified
if one had shown  a priori  that Conjecture \ref{conjecture: liftable 2-2}
holds in the special case.
In fact, if it were the  case, one could deduce that
$\widetilde{\gamma}(\B)\geq\widetilde{\gamma}(\widehat{\B})\geq 0$ and,
as in Claim \ref{step: smooth image tangential projection}, 
one would deduce that $\widetilde{\gamma}(\B)=0$. 
Hence $\widehat{\B}$ would be a Severi variety, by
Theorem \ref{prop: R1 varieties and severi varieties}.

Assuming again that 
Conjecture \ref{conjecture: liftable 2-2}
holds in the special case,
we can deduce Theorem \ref{prop: classification of type 2-2 into quadric} also by 
Proposition 
\ref{prop: alternate proof classification type 2-2 into quadric}, below. In fact, 
 cases with $\delta\neq7$ are clear and we can assume $\delta=7$ and 
$\varphi:\PP^{25}\dashrightarrow\PP^{26}$;
thus we obtain $\widehat{\B}_{\mathrm{red}}=E_6\subset\PP^{26}$ and 
then $\widehat{\B}=E_6$, since
$h^0(\PP^{26},\I_{\widehat{\B},\PP^{26}}(2))=h^0(\PP^{26},\I_{E_6,\PP^{26}}(2))$.    
\begin{remark}\label{remark: LxX like hyperplane section E6}
Let $Y\subset\PP^n$ be a smooth $LQEL$-variety, with  $\dim(Y)=15$, 
$\dim(\mathrm{Sec}(Y))=24<n$ and
$\delta(Y)=7$. 
Let $\widetilde{Y}\subset\PP(H^0(Y,\O_Y(1)))=\PP^{\widetilde{n}}$ be 
the linear normalization of $Y$. 
We have $\dim(\widetilde{Y})=\dim(Y)$, 
$\dim(\mathrm{Sec}(\widetilde{Y}))=\dim(\mathrm{Sec}(Y))$,   
$\delta(\widetilde{Y})=\delta(Y)$ and $\widetilde{Y}$ is an $LQEL$-variety. 
As in Claim \ref{step: smooth image tangential projection} we obtain that,
for the general point $x\in \widetilde{Y}$, 
$\L_{x,\widetilde{Y}}\subset\PP^{14}$ is  a hyperplane section 
 of the spinorial variety $S^{10}\subset\PP^{15}$;
so, by Propositions \ref{prop: surjective second fundamental form} and 
 \ref{prop: relation between hilbert scheme and second funfamental form},
we deduce that 
$\widetilde{n}-16 = \dim(|II_{x,\widetilde{Y}}|)\leq 
h^0(\PP^{15},\I_{S^{10},\PP^{15}}(2))-1 = 9$,
from which $\widetilde{n}\leq 25$. Thus $n=\widetilde{n}=25$, $Y=\widetilde{Y}$ and 
$|II_{x,Y}|$ coincides with the second fundamental form of 
a  hyperplane section of the Cartan variety $E_6\subset\PP^{26}$.
\end{remark}
\begin{proposition}\label{prop: alternate proof classification type 2-2 into quadric}
 Let $X\subset\PP^{26}$ be an irreducible closed subscheme with 
 $\mathrm{Vert}(X)
=\emptyset$.
Suppose that the intersection 
 $Y=X\cap H\subset H$ of $X$ 
with a general hyperplane 
 $H\subset\PP^{26}$ is a smooth nondegenerate  $LQEL$-variety   
of dimension $15$ and of type $\delta=7$.
Then $X_{\mathrm{red}}=E_6\subset\PP^{26}$.
\end{proposition}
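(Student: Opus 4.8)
The plan is to prove that $X_{\mathrm{red}}\subset\PP^{26}$ is a Severi variety and then invoke their classification. Since $Y=X\cap H$ is a general hyperplane section of dimension $15$, we have $\dim X=16$; since $Y$ is nondegenerate in $H\simeq\PP^{25}$, also $X_{\mathrm{red}}$ is nondegenerate in $\PP^{26}$; and since $Y=X_{\mathrm{red}}\cap H$ is smooth, a general point $x$ of $Y$ is a smooth point of $X_{\mathrm{red}}$. Because $26=\frac32\cdot 16+2$, Theorem \ref{prop: R1 varieties and severi varieties} reduces everything to showing that $X_{\mathrm{red}}$ is an $R_1$-variety, and by Proposition \ref{prop: criterion R1 property} it suffices to check that $\mathrm{Sec}(X_{\mathrm{red}})\subsetneq\PP^{26}$, that $\widetilde\gamma(X_{\mathrm{red}})=0$, and that $X_{\mathrm{red}}$ is not a cone; the last holds since $\mathrm{Vert}(X)=\emptyset$. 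Once $X_{\mathrm{red}}$ is known to be a Severi variety, Theorem \ref{prop: classification severi varieties} leaves only the $16$-dimensional case $X_{\mathrm{red}}=E_6\subset\PP^{26}$.

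First I would analyse $Y$. As $Y$ is smooth, secant defective, and $\mathrm{Sec}(Y)\subsetneq\PP^{25}$ (indeed $\dim\mathrm{Sec}(Y)=2\cdot15+1-7=24$), Proposition \ref{prop: surjective second fundamental form} gives $\dim|II_{x,Y}|=9$, while Remark \ref{remark: LxX like hyperplane section E6} together with the computation of Claim \ref{step: smooth image tangential projection} gives that $W_{x,Y}\subset\PP^9$ is a smooth $8$-dimensional quadric. Next I would pin down $\delta(X_{\mathrm{red}})$. Take general smooth points $y_1,y_2\in X_{\mathrm{red}}$ and set $\Lambda=T_{y_1}(X_{\mathrm{red}})\cap T_{y_2}(X_{\mathrm{red}})$, so that $\delta(X_{\mathrm{red}})=\dim\Lambda+1$ by Terracini's Lemma \ref{prop: terracini lemma}. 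Computing the general section through the pair amounts to taking a general $H\supseteq\langle y_1,y_2\rangle$; then $T_{y_i}(Y)=T_{y_i}(X_{\mathrm{red}})\cap H$ and $T_{y_1}(Y)\cap T_{y_2}(Y)=\Lambda\cap H$, whence $\dim(\Lambda\cap H)=\delta(Y)-1=6$. In particular $\dim\Lambda\geq6$, so $\Lambda\not\subseteq\langle y_1,y_2\rangle$ and a general $H\supseteq\langle y_1,y_2\rangle$ does not contain $\Lambda$; therefore $\dim(\Lambda\cap H)=\dim\Lambda-1$, giving $\dim\Lambda=7$ and $\delta(X_{\mathrm{red}})=8$. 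Hence $\mathrm{Sec}(X_{\mathrm{red}})$ is a hypersurface ($\dim=25<26$) and $\dim W_{x,X_{\mathrm{red}}}=16-8=8$ by Proposition \ref{prop: nondegenerate tang projection}.

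To obtain $\widetilde\gamma(X_{\mathrm{red}})=0$ I would compare second fundamental forms at $x$. Since $T_x(Y)=T_x(X_{\mathrm{red}})\cap H$ is a general hyperplane of $T_x(X_{\mathrm{red}})$ and the normal spaces are canonically identified, $|II_{x,Y}|$ is the restriction of $|II_{x,X_{\mathrm{red}}}|$ to the hyperplane $E_Y=\PP^{14}\subset E_{X_{\mathrm{red}}}=\PP^{15}$; hence $9=\dim|II_{x,Y}|\leq\dim|II_{x,X_{\mathrm{red}}}|\leq 26-16-1=9$, forcing equality. Thus the restriction identifies $|II_{x,X_{\mathrm{red}}}|$ with $|II_{x,Y}|$ over the common target $\PP^9$, so $\phi_{|II_{x,Y}|}=\phi_{|II_{x,X_{\mathrm{red}}}|}|_{E_Y}$ and $W_{x,Y}=\overline{\phi_{|II_{x,X_{\mathrm{red}}}|}(E_Y)}\subseteq\overline{\phi_{|II_{x,X_{\mathrm{red}}}|}(E_{X_{\mathrm{red}}})}\subseteq W_{x,X_{\mathrm{red}}}$. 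As all three are irreducible of dimension $8$, they coincide, so $W_{x,X_{\mathrm{red}}}=W_{x,Y}$ is a smooth quadric; its Gauss map is then finite by Corollary \ref{prop: birational gauss}, i.e. $\widetilde\gamma(X_{\mathrm{red}})=0$. With $\mathrm{Sec}(X_{\mathrm{red}})\subsetneq\PP^{26}$, $\widetilde\gamma(X_{\mathrm{red}})=0$, and $X_{\mathrm{red}}$ not a cone, Proposition \ref{prop: criterion R1 property} yields the $R_1$-property, and the reduction of the first paragraph completes the proof.

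The main obstacle is the determination $\delta(X_{\mathrm{red}})=8$: a priori a general hyperplane section need not lower the secant defect by exactly one, and the dangerous alternative $\delta(X_{\mathrm{red}})=7$ would force $\mathrm{Sec}(X_{\mathrm{red}})=\PP^{26}$, where $\widetilde\gamma$ is not even defined and the whole strategy collapses. The elementary argument above — that a general hyperplane through a general secant line misses the $(\geq6)$-dimensional tangent-space intersection $\Lambda$ — is exactly what excludes this. A secondary technical point, standard but worth spelling out via the local graph of $X_{\mathrm{red}}$ at $x$, is the identification of $|II_{x,Y}|$ with the restriction of $|II_{x,X_{\mathrm{red}}}|$, together with the canonical isomorphism of the two normal spaces; granting these, everything else is bookkeeping with the cited results.
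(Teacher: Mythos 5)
Your proof is correct, but it takes a genuinely different route from the paper's. The paper reduces to $X$ reduced, notes that $\sing(X)$ is finite, and then works with the Hilbert scheme of lines: using $\mathrm{Vert}(X)=\emptyset$ and Proposition \ref{prop: sing LxX} it shows $\L_{x,X}$ is smooth for general $x$, identifies $\L_{x,Y}$ with a smooth hyperplane section of $S^{10}$ (Remark \ref{remark: LxX like hyperplane section E6}), deduces from the classification of Mukai varieties (Theorem \ref{prop: classification first species mukai varieties}) that $S^{10}$ is an irreducible component of $\L_{x,X}$, hence by a dimension count that $|II_{x,X}|=|II_{x,E_6}|$, and concludes $X=E_6$ from Landsberg's rigidity theorem for the Hermitian symmetric space $E_6$ \cite{landsberg-rigidity}. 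You instead pin down $\delta(X_{\mathrm{red}})=8$ (so that $\Sec(X_{\mathrm{red}})$ is a hypersurface), show $W_{x,X_{\mathrm{red}}}=W_{x,Y}$ is a smooth quadric so that $\widetilde{\gamma}(X_{\mathrm{red}})=0$, and then run the tangency machinery: Proposition \ref{prop: criterion R1 property} and Theorem \ref{prop: R1 varieties and severi varieties} make $X_{\mathrm{red}}$ a Severi variety, and Theorem \ref{prop: classification severi varieties} finishes. This endgame is essentially the one the paper itself sketches in the discussion of Conjecture \ref{conjecture: liftable 2-2} just before the proposition (there applied to $\widehat{\B}$), and it is closer in spirit to the proof of Theorem \ref{prop: classification of type 2-2 into quadric}; what it buys is independence from the external rigidity result of \cite{landsberg-rigidity} and from the Mukai-extension step, at the price of the extra step $\delta(X_{\mathrm{red}})=8$, which is exactly where your argument needs the most care. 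Your Terracini computation uses hyperplanes through a fixed general pair $y_1,y_2$, whereas the hypothesis concerns general hyperplanes; to transfer it you need the standard incidence-variety interpolation, namely that the triples $(y_1,y_2,H)$ with $H$ in the good open set of hyperplanes and $(y_1,y_2)$ Terracini-general in $X\cap H$ dominate $X_{\mathrm{red}}\times X_{\mathrm{red}}$ with fibers dense in the $\PP^{24}$ of hyperplanes through the pair (a dimension count: the bad locus has dimension at most $55$ inside the $56$-dimensional incidence variety, so its fibers over a general pair have dimension at most $23<24$). Granting that routine interpolation, together with the equally standard identifications $T_{y_i}(Y)=T_{y_i}(X_{\mathrm{red}})\cap H$ and of $|II_{x,Y}|$ with the restriction of $|II_{x,X_{\mathrm{red}}}|$ to $E_Y$, your argument is complete.
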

\begin{proof} 
Since
$X\cap H= (X_{\mathrm{red}}\cap H)_{\mathrm{red}}=X_{\mathrm{red}}\cap H$ and 
$\mathrm{Vert}(X_{\mathrm{red}})\subseteq\mathrm{Vert}(X)$,
we can reduce to the case where 
 $X$ is a reduced scheme.
Hence $X\subset\PP^{26}$ is an irreducible nondegenerate $16$-dimensional variety
which is
singular at most in finitely many points, and  
$\Sec(X)$ is a nonlinear hypersurface with $\Sec(X)\cap H=\Sec(Y)$.
For $x\in\reg(X)$, put
$S_{x}=\{ [L]\in \L_{x,X}: L\cap \sing(X)\neq\emptyset\}$ and consider 
the set $U=\{x\in \reg(X): S_{x}=\emptyset\}$.
We have
\begin{eqnarray*}
 \reg(X)\setminus U &=&\left\{x\in\reg(X):\exists \, p\in\sing(X)\mbox{ with }\langle x,p \rangle\subseteq X  \right\}\\
 &=&\reg(X)\cap \bigcup_{p\in\sing(X)} \left( \bigcup_{[L]\in \L_{p,X}} L  \right),
\end{eqnarray*}
and $\bigcup_{[L]\in \L_{p,X}} L$ is the cone $S(p,\L_{p,X})\subseteq\PP^{26}$ over the closed 
$\L_{p,X}\subseteq \L_{p,\PP^{26}}\simeq \PP^{25}$ with vertex $p$.
Hence $U\subseteq X$ is an open set and moreover $U\neq\emptyset$, by the hypothesis $\mathrm{Vert}(X)=\emptyset$.
Thus, applying 
Proposition \ref{prop: sing LxX},    we get that,    for the general point $x\in X$, 
$\sing(\L_{x,X})=\emptyset$.
Now,  let $x\in Y$ be a general point (and hence also a general point of $X$).
By Remark \ref{remark: LxX like hyperplane section E6}, 
we obtain that $\L_{x,Y}\subset\PP^{14}$ is 
a smooth hyperplane section 
 of $S^{10}\subset\PP^{15}$. Since
$\L_{x,X}$ is a smooth extension  of $\L_{x,Y}$, applying 
Theorem \ref{prop: classification first species mukai varieties},   
it follows that   $S^{10}$ is an irreducible component  of $\L_{x,X}$.
Now,  by Proposition \ref{prop: relation between hilbert scheme and second funfamental form},
it follows   that $\L_{x,X}$ is a closed subscheme of  
$B_{x,X}=\mathrm{Bs}(|II_{x,X}|)$ and hence
 $$
|II_{x,X}|\subseteq \PP(H^0(\PP^{15},\I_{B_{x,X}}(2)))\subseteq \PP(H^0(\PP^{15},\I_{S^{10}}(2)))=|II_{x,E_6}|.
$$
Moreover, by the commutativity of the diagram 
$$
\xymatrix{
E_X\ar@{^{(}->}[r] \ar@{-->}@/^3.5pc/[rrrrd]^{\phi_{|II_{x,X}|}} & \Bl_{x}(X)\ar[r] & X \ar@{-->}[rrd]^{\tau_{x,X}} \\
E_Y\ar@{^{(}->}[r] \ar@{-->}@/_1.5pc/[rrrr]_{\phi_{|II_{x,Y}|}} \ar@{^{(}->}[u] & \Bl_{x}(Y)\ar[r] \ar@{^{(}->}[u] & Y \ar@{-->}[rr]^{\tau_{x,Y}} \ar@{^{(}->}[u] && \PP^9
}
$$
and by 
Proposition \ref{prop: surjective second fundamental form}, 
it follows
$$
 \PP^9 \supseteq \left\langle\overline{\phi_{|II_{x,X}|}(E_X)}\right\rangle\supseteq 
 \left\langle\overline{\phi_{|II_{x,X}|}(E_Y)}\right\rangle = 
\left\langle\overline{\phi_{|II_{x,Y}|}(E_Y)}\right\rangle=\PP^9,
$$
from which $\dim(|II_{x,X}|)=9$ and then
$|II_{x,X}|=|II_{x,E_6}|$.
Finally, by \cite[p.~57, case~E]{zak-tangent}, 
$E_6$ is a  Hermitian symmetric space 
 and it follows $X=E_6$
by the main result in \cite{landsberg-rigidity}.
\end{proof}

Proposition \ref{prop: alternate proof classification type 2-2 into quadric} and 
Theorem \ref{prop: classification of type 2-2 into quadric} are
 in connection with a fairly well-known open problem, namely 
to classify all smooth $(L)QEL$-varieties $X$ of type $\delta(X)=(\dim(X)-1)/2$.

The following result is contained in 
\cite{fujita-3-fold} and \cite{ohno},
but it can also be inferred from 
Theorems \ref{prop: divisibility theorem}, 
\ref{prop: classification CC-varieties}, 
\ref{prop: classification del pezzo varieties},  
\ref{prop: classification first species mukai varieties}.  
\begin{proposition}\label{prop: QEL type delta-1 mezzi}
 Let $X\subset\PP^n$ be a smooth linearly normal $r$-dimensional $QEL$-variety 
of type $\delta=(r-1)/2>0$ with $\Sec(X)\subsetneq \PP^n$. 
Then either $n=25$, $r=15$, $\delta=7$  
  or $X$ is projectively equivalent to one of the following:
\begin{enumerate}
 \item the Veronese $3$-fold $\nu_2(\PP^3)\subset\PP^9$;
 \item the blow-up of $\PP^3$ at a point $\mathrm{Bl}_p(\PP^3)\subset\PP^8$;
 \item a hyperplane section of the Segre $4$-fold $\PP^2\times\PP^2\subset\PP^8$;
 \item the Segre $5$-fold $\PP^2\times\PP^3\subset\PP^{11}$;
 \item a hyperplane section of the Grassmannian $\GG(1,5)\subset\PP^{14}$.
\end{enumerate}
\end{proposition}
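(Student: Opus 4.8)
The plan is to reduce the classification of these smooth linearly normal $QEL$-varieties of type $\delta=(r-1)/2$ to the already-established classification results for Fano varieties of low coindex, exactly as the excerpt suggests by citing Theorems \ref{prop: divisibility theorem}, \ref{prop: classification CC-varieties}, \ref{prop: classification del pezzo varieties} and \ref{prop: classification first species mukai varieties}. First I would apply the Divisibility Theorem (Theorem \ref{prop: divisibility theorem}) with $\delta=(r-1)/2$, so that $r=2\delta+1$ and the congruence $r\equiv\delta\ \mathrm{mod}\ 2^{\lfloor(\delta-1)/2\rfloor}$ reads $\delta+1\equiv 0\ \mathrm{mod}\ 2^{\lfloor(\delta-1)/2\rfloor}$. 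A short arithmetic check shows this forces $\delta\in\{1,2,3,7\}$ (for larger $\delta$ the power of two exceeds $\delta+1$), hence $r\in\{3,5,7,15\}$, and the case $r=15$, $\delta=7$, $n=25$ is precisely the exceptional case left unresolved in the statement.

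Next, for the remaining values $\delta\in\{1,2,3\}$ I would invoke the structure of smooth $LQEL$-varieties. Since $\delta\geq 1$, $X$ is in particular a $CC$-variety, and because $\delta\geq 1$ one has $i(X)=(r+\delta)/2$ by Theorem \ref{prop: main qel1} when $\delta\geq 3$, while for $\delta=1,2$ one uses Theorem \ref{prop: classification CC-varieties} together with the linear normality hypothesis to conclude that $X$ is a Fano variety of the first species of the appropriate index (or appears explicitly in the $CC$-classification list). Computing the coindex $c(X)=r+1-i(X)=(r-\delta+2)/2=\delta+1+1-\delta=2$ for $\delta=1$, and more generally $c(X)=(r+2-\delta)/2$, one finds small coindex: $\delta=1$ gives a del Pezzo variety ($c=2$), $\delta=2$ gives $c=2$ (del Pezzo), and $\delta=3$ gives a Mukai variety ($c=3$). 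I would then read off the candidate varieties from Theorems \ref{prop: classification del pezzo varieties} and \ref{prop: classification first species mukai varieties}, matching dimensions $r=3,5,7$ and discarding those whose secant variety fills the ambient space (which would violate $\Sec(X)\subsetneq\PP^n$) or whose secant defect does not equal $(r-1)/2$.

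The main obstacle will be pinning down which varieties on these classification lists actually have $\delta=(r-1)/2$ rather than merely being Fano of the right index, and ensuring the secant-defect bookkeeping is consistent. Concretely, for each candidate I would verify $\delta(X)=(r-1)/2$ using the numerical relation $\delta=2r+1-\dim(\Sec(X))$ together with the index formula and the known secant defects of homogeneous and rational varieties; the delicate point is that some low-coindex Fano varieties (e.g. quadric hypersurfaces, or those with $\Sec(X)=\PP^n$) must be excluded because they fail $\Sec(X)\subsetneq\PP^n$ or have the wrong defect. This winnowing produces exactly the five listed varieties: $\nu_2(\PP^3)\subset\PP^9$ and $\Bl_p(\PP^3)\subset\PP^8$ (the two del Pezzo threefolds with $\delta=1$), the hyperplane section of $\PP^2\times\PP^2\subset\PP^8$ and $\PP^2\times\PP^3\subset\PP^{11}$ (with $\delta=2$), and the hyperplane section of $\GG(1,5)\subset\PP^{14}$ (with $\delta=3$, $r=7$), while the genuinely open case $r=15$, $\delta=7$, $n=25$ is set aside as the alternative in the statement.

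Finally, I would cross-check the conclusion against the independently obtained classification of $LQEL$-varieties of intermediate type in Theorem \ref{prop: LQEL of higher type} and of type $\delta=r/2$ in Theorem \ref{prop: LQEL of type delta=r/2}; since $(r-1)/2$ lies strictly between $r/2$ and the lower regime, the present case is complementary to those, and the cited references \cite{fujita-3-fold} and \cite{ohno} provide the direct verification that no further varieties arise. This consistency check also confirms that the exceptional $(n,r,\delta)=(25,15,7)$ case is precisely the one whose smoothness is equivalent, via Theorem \ref{prop: R1 varieties and severi varieties} and the $R_1$-property, to $X$ being a hyperplane section of the Cartan variety $E_6\subset\PP^{26}$, which is exactly the situation analyzed in Proposition \ref{prop: alternate proof classification type 2-2 into quadric}.
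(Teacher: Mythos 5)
Your route is the one the paper itself indicates: the paper offers no proof of this proposition beyond the remark that it is contained in \cite{fujita-3-fold} and \cite{ohno} and can be inferred from Theorems \ref{prop: divisibility theorem}, \ref{prop: classification CC-varieties}, \ref{prop: classification del pezzo varieties} and \ref{prop: classification first species mukai varieties}, and your divisibility step is correct ($\delta\in\{1,2,3,7\}$, hence $r\in\{3,5,7,15\}$). The execution, however, has a genuine gap in the case $\delta=2$, $r=5$. You claim this case has coindex $2$, so that only del Pezzo fivefolds need inspection; this is false. The index formula $i(X)=(r+\delta)/2$ of Theorem \ref{prop: main qel1} is available only for $\delta\geq 3$ (for $\delta=2$ it would give the non-integer $7/2$), and what Theorem \ref{prop: classification CC-varieties} actually yields is $i(X)\geq(r+1)/2=3$, i.e.\ $c(X)\leq 3$. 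Hence the Mukai fivefolds of the first species are legitimate candidates: the genus-$10$ fivefold in $\PP^{13}$ of Theorem \ref{prop: classification first species mukai varieties} and the $5$-dimensional linear sections of the genus $6,\dots,9$ varieties, in particular of $S^{10}\subset\PP^{15}$ and $\GG(1,5)\subset\PP^{14}$. These can all be discarded --- $\delta=2$ together with $\Sec(X)\subsetneq\PP^n$ forces $n\geq\dim(\Sec(X))+1=10$, the sections of $S^{10}$ and $\GG(1,5)$ have secant defect $1$ (their defects $6$ and $4$ drop by one with each linear section), and the genus-$10$ fivefold has $\delta=1$ --- but this winnowing is precisely the nontrivial content of the $\delta=2$ case, and your sketch, having reduced incorrectly to del Pezzo varieties, never confronts it.

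A second gap: you never prove that $n=25$ in the exceptional case, you treat it as given. The divisibility argument yields only $(r,\delta)=(15,7)$, whence $n\geq\dim(\Sec(X))+1=25$; the reverse inequality $n\leq 25$ must be proved, and in the paper it is the content of Remark \ref{remark: LxX like hyperplane section E6}: Theorems \ref{prop: main qel1} and \ref{prop: LQEL of higher type} identify $\L_{x,X}\subset\PP^{14}$ with a hyperplane section of $S^{10}$, and then linear normality together with Propositions \ref{prop: surjective second fundamental form} and \ref{prop: relation between hilbert scheme and second funfamental form} gives $n-16=\dim(|II_{x,X}|)\leq h^0(\PP^{15},\I_{S^{10},\PP^{15}}(2))-1=9$. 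This step lies outside the four theorems you invoke and is absent from your proposal. Finally, your closing cross-check carries no weight: Theorems \ref{prop: LQEL of higher type} and \ref{prop: LQEL of type delta=r/2} concern $\delta\geq r/2$, a range disjoint from $\delta=(r-1)/2$, so consistency with them confirms nothing about the present classification.
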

We feel motivated to make the following: 
\begin{conjecture}
Let $X\subset\PP^n$ be as in Proposition \ref{prop: QEL type delta-1 mezzi} with 
$n=25$, $r=15$ and $\delta=7$. Then 
 $X$ is a hyperplane section of $E_6\subset\PP^{26}$.
\end{conjecture}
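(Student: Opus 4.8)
The plan is to reduce the statement to Proposition~\ref{prop: alternate proof classification type 2-2 into quadric}, whose purpose is precisely to recognize a variety in $\PP^{26}$ from the datum that its general hyperplane section is a smooth $15$-dimensional $LQEL$-variety of type $7$. In these terms the conjecture becomes an \emph{extendability} statement: it suffices to produce an irreducible nondegenerate $\hat X\subset\PP^{26}$, with $\mathrm{Vert}(\hat X)=\emptyset$ (i.e. not a cone), admitting our $X\subset\PP^{25}$ as a general hyperplane section. Granting such an $\hat X$, its general hyperplane section is smooth by Bertini and, sharing the invariants of $X$, is a nondegenerate $LQEL$-variety of type $7$; Proposition~\ref{prop: alternate proof classification type 2-2 into quadric} then gives $\hat X_{\mathrm{red}}=E_6$, whence $X=\hat X\cap H$ is a hyperplane section of $E_6$, as wanted.

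First I would record the rigid structure already imposed on $X$. By Theorem~\ref{prop: main qel1}, $X$ is a Fano variety of the first species of index $i(X)=(r+\delta)/2=11$ and coindex $5$, its general entry locus is a smooth $7$-dimensional quadric, and for general $x\in X$ the scheme $\L_{x,X}\subset\PP^{14}$ is a smooth $QEL$-variety of dimension $9$ and type $5$; by Theorem~\ref{prop: LQEL of higher type} it is projectively equivalent to a hyperplane section of the spinorial variety $S^{10}\subset\PP^{15}$. Since smooth $QEL$-varieties of type $\delta\geq3$ are cut out by quadrics, Proposition~\ref{prop: relation between hilbert scheme and second funfamental form} identifies $\L_{x,X}$ with $B_{x,X}=\mathrm{Bs}(|II_{x,X}|)$; then, arguing verbatim as in Remark~\ref{remark: LxX like hyperplane section E6} through Propositions~\ref{prop: surjective second fundamental form} and~\ref{prop: relation between hilbert scheme and second funfamental form}, the equalities $\dim|II_{x,X}|=n-r-1=9=h^0(\PP^{14},\I_{\L_{x,X}}(2))-1$ force $|II_{x,X}|$ to coincide, at a general point, with the second fundamental form of a hyperplane section of the Cartan variety $E_6\subset\PP^{26}$. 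This is the infinitesimal shadow of the conclusion we are after.

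The hard part --- and the genuine obstacle --- is to pass from this second-order information to the global extendability of $X$, that is, to the construction of $\hat X$ together with the exclusion of cones. I would attempt this along either of two parallel routes. The intrinsic route applies an L'vovsky/Zak type extension criterion: one estimates $H^{0}(\N_{X,\PP^{25}}(-1))$, shows it exceeds the bound that would force every extension to be a cone, and integrates a first-order extension, using that the prescribed symbol $|II_{x,X}|$ (that of a section of the Hermitian symmetric space $E_6$) governs the prolongation. The constructive route instead uses that $X$ is projectively normal with $h^0(\PP^{25},\I_X(2))=27$ (\cite{bertram-ein-lazarsfeld}), so that $|\I_X(2)|$ defines a map $\varphi\colon\PP^{25}\dashrightarrow\PP^{26}$, and aims to prove that $\varphi$ is birational onto a smooth quadric with base locus $X$, i.e. a special transformation of type $(2,2)$ into a quadric; Theorem~\ref{prop: classification of type 2-2 into quadric} would then exhibit $X$ as a hyperplane section of a Severi variety, necessarily $E_6$ since, by Theorem~\ref{prop: classification severi varieties}, it is the only Severi variety whose hyperplane sections are $15$-dimensional. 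Both routes condense to the same difficulty: transferring the rigidity of $E_6$ (ultimately \cite{landsberg-rigidity}) from $\L_{x,X}$ and $|II_{x,X}|$ to the global projective geometry of $X$ while ruling out degenerations to a cone. Since exactly this transfer is what is missing, the statement is recorded as a conjecture; the scheme above is how I would try to settle it.
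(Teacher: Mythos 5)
The statement you were asked to prove is recorded in the paper as a \emph{conjecture}: the paper offers no proof of it, and you have correctly diagnosed why. Your reduction --- produce an irreducible, nondegenerate extension $\widehat{X}\subset\PP^{26}$ of $X$ with $\mathrm{Vert}(\widehat{X})=\emptyset$ and then invoke Proposition \ref{prop: alternate proof classification type 2-2 into quadric} --- is exactly the route the paper's own machinery suggests, and your infinitesimal analysis (index $i(X)=11$ via Theorem \ref{prop: main qel1}, $\L_{x,X}$ a hyperplane section of $S^{10}$ via Theorem \ref{prop: LQEL of higher type}, and $|II_{x,X}|$ equal to the second fundamental form of a hyperplane section of $E_6$) reproduces Remark \ref{remark: LxX like hyperplane section E6} faithfully. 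So as a research plan your proposal is sound and consistent with the paper; but it is not a proof, and you say so yourself.

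It is worth being precise about why the gap cannot be closed with the paper's tools. The rigidity theorem of \cite{landsberg-rigidity}, which is the engine of Proposition \ref{prop: alternate proof classification type 2-2 into quadric}, applies to the Hermitian symmetric space $E_6\subset\PP^{26}$ itself: a $16$-dimensional variety whose second fundamental form at a general point is that of $E_6$ must be $E_6$. No analogous rigidity is known for the \emph{hyperplane section} of $E_6$ --- sections of homogeneous spaces are not in general determined by their second fundamental forms --- and this asymmetry is precisely why the paper can prove the proposition in $\PP^{26}$ while only conjecturing the statement in $\PP^{25}$. Both of your routes stall at this same point. The L'vovsky/Zak route needs a bound on $h^0(\N_{X,\PP^{25}}(-1))$ that nothing in the paper (or in your structural data on $X$) supplies, and such a bound would essentially encode the extendability you are trying to establish. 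The constructive route silently assumes that $h^0(\PP^{25},\I_{X}(2))=27$ (which does not follow from \cite{bertram-ein-lazarsfeld} alone for the abstract $X$) and, more seriously, that the resulting map is a \emph{special birational transformation of type $(2,2)$ onto a smooth quadric}; these are exactly the hypotheses of Theorem \ref{prop: classification of type 2-2 into quadric}, so the entire difficulty of the conjecture is relocated, not resolved. Your proposal is therefore a correct reduction of one open problem to another, accompanied by an accurate account of the obstruction --- which is the honest state of the art, and the reason the paper leaves the statement as a conjecture.
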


\end{appendices}
\chapter{On special quadratic birational transformations whose base locus has dimension at most three}\label{chapter: transformations whose base locus has dimension at most three}
In this chapter we continue the study of special quadratic birational transformations 
$\varphi:\PP^n\dashrightarrow\sS:=\overline{\varphi(\PP^n)}\subseteq\PP^{N}$
started in Chapter \ref{sec: transformations into a hypersurface},  
by reinterpreting techniques and well-known results on 
special Cremona transformations. 
While in Chapter \ref{sec: transformations into a hypersurface} we required that $\sS$ was a hypersurface,
here we allow more freedom in the choice of $\sS$, 
but we only treat the case in which 
the dimension of the base 
locus $\B$ is $r=\dim(\B)\leq3$.

Note that for every closed subscheme $X\subset\PP^{n-1}$ 
cut out by the quadrics containing it, we can 
consider $\PP^{n-1}$ as a hyperplane  in $\PP^n$
and hence $X$ as a subscheme of $\PP^n$. So 
the linear system $|\I_{X,\PP^n}(2)|$ of all quadrics in $\PP^n$
containing $X$ defines a quadratic rational map
 $\psi:\PP^n\dashrightarrow\PP^N$ 
($N=h^0(\I_{X,\PP^n}(2))-1=n+h^0(\I_{X,\PP^{n-1}}(2))$),
which is birational onto the image 
and whose inverse is defined by linear forms, 
i.e. $\psi$ is of type $(2,1)$.
Conversely, every birational transformation 
$\psi:\PP^n\dashrightarrow\overline{\psi(\PP^n)}\subseteq\PP^N$
of type $(2,1)$ whose image is nondegenerate, normal and linearly normal 
arise in this way.
From this it follows that there are many (special) quadratic transformations. 
However, 
when the image $\sS$ of the 
 transformation $\varphi$
 is sufficiently regular,
by straightforward generalization of Proposition \ref{prop: dimension formula},
we obtain  
strong numerical and geometric restrictions on the base locus $\B$. 
For example, as soon as $\sS$ is not too much singular,
 the secant variety $\Sec(\B)\subset\PP^n$ has to be a hypersurface and $\B$
has to be a  $QEL$-variety of type $\delta=\delta(\B)=2\dim(\B)+2-n$; in particular 
$n\leq 2\dim(\B)+2$ and $\Sec(\B)$ is a hyperplane if and only if 
 $\varphi$ is of type $(2,1)$.
So the classification of 
 transformations $\varphi$ of type $(2,1)$ whose base locus 
has dimension $\leq 3$ 
essentially follows  
from classification results on
 $QEL$-manifold:
 Proposition \ref{prop: LQEL with delta=r and delta=r-1}, 
 Theorem \ref{prop: classification CC-varieties} and
\cite[Theorems~4.10 and 7.1]{ciliberto-mella-russo}.

When $\varphi$ is of type $(2,d)$ with $d\geq2$, then $\Sec(\B)$ 
is a nonlinear hypersurface 
and it is not so easy to exhibit examples. 
The most difficult cases of this kind 
are those for which $n=2r+2$ i.e. $\delta=0$.
In order to classify these transformations, 
we proceed as in Propositions 
\ref{prop: 2-fold in P6} and \ref{prop: 3-fold in P8 - S nonlinear} 
(see also \cite{mella-russo-baselocusleq3}). 
That is, we first determine the Hilbert
polynomial of $\B$ in Lemmas \ref{lemma: r=2 B nondegenerate} and 
\ref{lemma: r=3 B nondegenerate}, 
by using  
the usual Castelnuovo's argument, Castelnuovo's bound and 
some refinement of Castelnuovo's bound 
(see Chap. \ref{cap: castelnuovo theory}); 
consequently we deduce
 Propositions \ref{prop: r=2 B nondegenerate}
and \ref{prop: r=3 B nondegenerate} by applying the classification 
of smooth varieties of low degree: 
\cite{ionescu-smallinvariants},
\cite{ionescu-smallinvariantsII},
\cite{ionescu-smallinvariantsIII},
\cite{fania-livorni-nine},
\cite{fania-livorni-ten},
\cite{besana-biancofiore-deg11},
\cite{ionescu-degsmallrespectcodim}.
We also apply the 
double point formula in Lemmas:
\ref{lemma: double point formula r=2},
\ref{lemma: double point formula}, 
\ref{lemma: quadric fibration},
\ref{lemma: scroll over surface} and
\ref{lemma: scroll over curve},
 in order to obtain  additional informations on
 $d$ and $\Delta=\deg(\sS)$.  

We summarize our classification 
results in Table \ref{tabella: all cases 3-fold}.
In particular, we provide 
an answer to a question left open
 in the recent preprint \cite{alzati-sierra}. 
\section{Notation and general results}\label{sec: notation}
Throughout the chapter we work over $\CC$ and keep the following setting.
\begin{assumption}\label{assumption: base}
Let $\varphi:\PP^n\dashrightarrow\sS:=\overline{\varphi(\PP^n)}\subseteq\PP^{n+a}$ be
a quadratic birational transformation 
with smooth connected base locus $\B$  
and with  $\sS$ nondegenerate, 
linearly normal and factorial.
\end{assumption}
Recall that we can resolve the
 indeterminacies of
 $\varphi$ with the diagram 
\begin{equation} 
\xymatrix{ & \widetilde{\PP^n} \ar[dl]_{\pi} \ar[dr]^{\pi'}\\ \PP^n\ar@{-->}[rr]^{\varphi}& & \sS } 
\end{equation}
where $\pi:\widetilde{\PP^n}=\Bl_{\B}(\PP^n)\rightarrow\PP^n$ is the blow-up of
 $\PP^n$ along $\B$ and  
$\pi'=\varphi\circ\pi:\widetilde{\PP^n}\rightarrow\sS$.   
Denote by $\B'$ the base locus of $\varphi^{-1}$, 
 $E$ the exceptional divisor of $\pi$, 
 $E'=\pi'^{-1}(\B')$, 
$H=\pi^{\ast}(H_{\PP^n})$,
$H'={\pi'}^{\ast}(H_{\sS})$, and note that, since
$\pi'|_{\widetilde{\PP^n}\setminus E'}:\widetilde{\PP^n}\setminus E'\rightarrow \sS\setminus\B'$
is an isomorphism,
we have $(\sing(\sS))_{\mathrm{red}}\subseteq (\B')_{\mathrm{red}}$.
We also put 
$r=\dim(\B)$, 
$r'=\dim(\B')$,
$\lambda=\deg(\B)$, 
$g=g(\B)$ the sectional genus of $\B$,
$c_j=c_j(\T_{\B})\cdot H_{\B}^{r-j}$ (resp. $s_j=s_j(\N_{\B,\PP^n})\cdot H_{\B}^{r-j}$) 
the degree of the $j$-th Chern class (resp. Segre class) of $\B$, 
$\Delta=\deg(\sS)$,
$c=c(\sS)$ the \emph{coindex} of $\sS$ 
(the last of which is defined 
by $-K_{\reg(\sS)}\sim (n+1-c)H_{\reg(\sS)}$, 
whenever $\Pic(\sS)=\ZZ\langle H_{\sS}\rangle$).
\begin{assumption}\label{assumption: liftable}
 We suppose that  there exists a rational map 
$\widehat{\varphi}:\PP^{n+a}\dashrightarrow\PP^n$ 
defined by a sublinear system of $|\O_{\PP^{n+a}}(d)|$ and having 
base locus $\widehat{\B}$ 
such that $\varphi^{-1}=\widehat{\varphi}|_{\sS}$ and $\B'=\widehat{\B}\cap\sS$.
We then will say that $\varphi^{-1}$ is \emph{liftable}\footnote{If $a\geq 2$ 
and $\psi:\PP^n\dashrightarrow\mathbf{Z}:=\overline{\psi(\PP^n)}\subset\PP^{n+a}$ is
a birational transformation with $\mathbf{Z}$ factorial,
 from \cite{mella-polastri} it follows that there exists a Cremona transformation
$\widetilde{\psi}:\PP^{n+a}\dashrightarrow\PP^{n+a}$ 
such that $\overline{\widetilde{\psi}(\mathbf{Z})} \simeq\PP^n\subset\PP^{n+a}$ and 
$\psi^{-1}=\widetilde{\psi}|_{\mathbf{Z}}$; in particular, 
if $\varpi$ denotes the linear projection
 of $\PP^{n+a}$ onto $\overline{\widetilde{\psi}(\mathbf{Z})}$, we have 
  $\psi^{-1}=(\varpi\circ\widetilde{\psi})|_{\mathbf{Z}}$. But this 
in general  does not ensure the liftability of $\psi^{-1}$, because 
we only have that $\mathrm{Bs}(\psi^{-1})\subseteq \mathrm{Bs}(\varpi\circ\widetilde{\psi}) \cap \mathbf{Z}$.}
and that $\varphi$ is \emph{of type} $(2,d)$.
\end{assumption}
   The above assumption 
yields the relations:  
\begin{equation}\label{eq: lift}
\begin{array}{ll}
 H' \sim  2H-E,   &  H  \sim  dH'-E',  \\
 E'\sim  (2d-1)H-dE, & E \sim  (2d-1)H'-2E' ,
\end{array}
\end{equation}
and hence also
$ \Pic(\widetilde{\PP^n})\simeq \ZZ\langle H \rangle\oplus \ZZ\langle E \rangle
\simeq \ZZ\langle H'\rangle\oplus \ZZ\langle E'\rangle $.
Note that,  by the proofs of
\cite[Proposition~1.3 and 2.1(a)]{ein-shepherdbarron} and
by factoriality of $\sS$, we obtain that $E'$ 
is a reduced and irreducible divisor.
Moreover we have
$\Pic(\sS)\simeq \Pic(\sS\setminus\B')\simeq \Pic(\widetilde{\PP^n}\setminus E')
\simeq \ZZ\langle H'\rangle\simeq \ZZ\langle H_{\sS}\rangle$.
Finally, we require the following: 
\begin{assumption}\label{assumption: ipotesi}
$(\sing(\sS))_{\mathrm{red}}\neq (\B')_{\mathrm{red}}$.
\end{assumption}
Now we point out that, just as in Proposition \ref{prop: dimension formula}, 
 since $E'$ is irreducible, 
by Assumption \ref{assumption: ipotesi} and \cite[Theorem~1.1]{ein-shepherdbarron},
 we deduce that 
$\pi'|_V:V\rightarrow U$ coincides with the blow-up of $U$ along $Z$,
where $U=\reg(\sS)\setminus\sing((\B')_{\mathrm{red}})$, 
$V=\pi'^{-1}(U)$ and  $Z=U\cap (\B')_{\mathrm{red}}$.
It follows that
$K_{\widetilde{\PP^n}} \sim (-n-1)H+(n-r-1)E \sim (c-n-1)H'+(n-r'-1)E'$, 
from which, together with (\ref{eq: lift}), we obtain
$2r+3-n=n-r'-1$ and 
$c=\left( 1-2d\right) r+dn-3d+2$.
One can also easily see that,
for the general point 
$x\in\Sec(\B)\setminus \B$,  
$\overline{\varphi^{-1}\left(\varphi\left(x\right)\right)}$
is a linear space of dimension $n-r'-1$   and
$\overline{\varphi^{-1}\left(\varphi\left(x\right)\right)}\cap \B$ 
is a quadric hypersurface,  which coincides with 
the entry locus $\Sigma_{x}(\B)$ of $\B$ with respect to $x$.
So we can generalize Proposition \ref{prop: dimension formula}, 
obtaining one of the main results useful for purposes of this chapter:
\begin{proposition}\label{prop: B is QEL}
 $\Sec(\B)\subset\PP^n$ is a hypersurface of degree $2d-1$ and 
       $\B$ is a $QEL$-variety of type $\delta=2r+2-n$.
\end{proposition}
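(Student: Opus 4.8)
The plan is to reduce Proposition \ref{prop: B is QEL} to the hypersurface case already treated in Proposition \ref{prop: dimension formula}, by exploiting the machinery set up in Assumptions \ref{assumption: base}--\ref{assumption: ipotesi}. The essential point is that the dimension computation via the canonical class, and the geometric description of the generic fibres of $\varphi$, carry over verbatim once we know that $E'$ is an irreducible reduced divisor and that $\pi'|_V$ is a genuine blow-up; both facts have already been recorded in the discussion preceding the statement.

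First I would establish the degree of $\Sec(\B)$. The relations (\ref{eq: lift}) give $E'\sim (2d-1)H-dE$, and since $\pi(E')=\pi(\{x:\dim\pi'^{-1}(x)>0\})$ is exactly the locus swept out by the positive-dimensional fibres of $\varphi$, one identifies $\pi(E')$ set-theoretically with $\Sec(\B)$: a point $x\in\PP^n\setminus\B$ lies on $\pi(E')$ precisely when $\varphi(x)\in\B'$, equivalently when the line $\overline{\varphi^{-1}(\varphi(x))}$ is nontrivial, equivalently when $x$ lies on a secant line of $\B$. Then I would push forward the divisor class: writing $\pi_{\ast}(E')$ in terms of $H$ and using that $\pi_{\ast}(E)=0$ and $\pi_{\ast}(H)=H_{\PP^n}$, the class $E'\sim(2d-1)H-dE$ descends to $(2d-1)H_{\PP^n}$ on $\PP^n$, so $\Sec(\B)=\pi(E')$ is a hypersurface of degree $2d-1$. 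The irreducibility of $\Sec(\B)$ follows from the irreducibility of $E'$, which was granted by factoriality of $\sS$ and \cite[Proposition~1.3]{ein-shepherdbarron}.

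Next I would pin down $\delta=\delta(\B)$. The computation of the canonical class already carried out before the statement yields $2r+3-n=n-r'-1$, i.e. $r'=n-r-2$. On the other hand, the generic fibre description states that for general $x\in\Sec(\B)\setminus\B$ the intersection $\overline{\varphi^{-1}(\varphi(x))}\cap\B$ equals the entry locus $\Sigma_x(\B)$ and is a quadric hypersurface inside the linear space $\overline{\varphi^{-1}(\varphi(x))}\simeq\PP^{n-r'-1}$. Since a quadric hypersurface in $\PP^{n-r'-1}$ has dimension $n-r'-2$, and since $\Sigma_x(\B)$ for general $x$ has dimension $\delta(\B)$ by the remarks following Definition \ref{def tangential contact locus}, I obtain $\delta=n-r'-2=n-(n-r-2)-2=2r+2-n$. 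The fact that $\Sigma_x(\B)$ is an irreducible quadric (not merely that its components are quadrics) is what upgrades $\B$ from an $LQEL$- to a genuine $QEL$-variety, and this is exactly the content of the generic-fibre statement quoted above.

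The main obstacle is the verification that the generic fibre $\overline{\varphi^{-1}(\varphi(x))}$ is a linear space of the expected dimension and that it meets $\B$ in an honest quadric hypersurface. This is where Assumption \ref{assumption: ipotesi} is indispensable: it guarantees $Z\neq\emptyset$, so that \cite[Theorem~1.1]{ein-shepherdbarron} applies and $\pi'|_V$ is literally the blow-up of $U$ along $Z$, giving $\Pic(\widetilde{\PP^n})=\ZZ\langle H'\rangle\oplus\ZZ\langle E'\rangle$ and hence the clean comparison of canonical classes. I would handle it by transcribing the argument of \cite[\S2.2 and Proposition~2.3]{ein-shepherdbarron}, checking that the factoriality of $\sS$ replaces the smoothness hypotheses used there and that nothing in their reasoning used $a=1$; the only genuinely new bookkeeping is that $\sS$ now sits in $\PP^{n+a}$ rather than $\PP^{n+1}$, which affects the coindex formula $c=(1-2d)r+dn-3d+2$ but leaves the secant-variety and entry-locus analysis unchanged.
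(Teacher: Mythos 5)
Your proposal is correct and takes essentially the same route as the paper: there, too, the proposition is deduced from the relations (\ref{eq: lift}), the irreducibility and reducedness of $E'$ (via factoriality of $\sS$ and \cite[Proposition~1.3]{ein-shepherdbarron}), Assumption \ref{assumption: ipotesi} together with \cite[Theorem~1.1]{ein-shepherdbarron} to realize $\pi'|_V$ as an honest blow-up, the comparison of the two expressions for $K_{\widetilde{\PP^n}}$, and an adaptation of \cite[\S 2.2, Proposition~2.3]{ein-shepherdbarron} for the linear-fibre and entry-locus description; your pushforward computation $\pi_{\ast}(E')\sim(2d-1)H_{\PP^n}$ is simply an explicit rendering of the degree argument.

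Do correct one arithmetic slip, which happens to be self-cancelling: solving $2r+3-n=n-r'-1$ gives $r'=2n-2r-4$, not $r'=n-r-2$, and with your value the substitution $\delta=n-r'-2$ would yield $\delta=n-(n-r-2)-2=r$ rather than $2r+2-n$. With the correct $r'$ the chain $\delta=n-r'-2=2r+2-n$ goes through exactly as intended, so the statement and the structure of your argument stand.
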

In many cases, $\B$ has a much stronger property of being $QEL$-variety.
Recall that a subscheme $X\subset\PP^n$ is said to have the $K_2$ property if 
$X$ is cut out by quadratic forms $F_0,\ldots,F_N$ 
such that the  Koszul relations among the $F_i$ are generated by linear syzygies.
We have the following fact (see \cite{vermeire} and \cite{alzati-syz}):
\begin{fact}\label{fact: K2 property}
Let $X\subset\PP^n$ be 
 a smooth  variety 
cut out by quadratic forms $F_0,\ldots,F_N$
satisfying $K_2$ property and let 
$F=[F_0,\ldots,F_N]:\PP^n\dashrightarrow\PP^N$ be the 
induced rational map. Then 
for every $x\in\PP^n\setminus X$, 
$\overline{F^{-1}\left(F\left(x\right)\right)}$ is 
a linear space of dimension 
$n+1-
\mathrm{rank}\left(\left({\partial F_i}/{\partial x_j}(x)\right)_{i,j}\right)$;
moreover,
$\dim(\overline{F^{-1}\left(F\left(x\right)\right)})>0$ 
if and only if $x\in\Sec(X)\setminus X$ and in this case  
$\overline{F^{-1}\left(F\left(x\right)\right)}\cap X$ is a quadric hypersurface,
which coincides with 
the entry locus $\Sigma_{x}(X)$ of $X$ with respect to $x$.
\end{fact}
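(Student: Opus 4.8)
The plan is to combine three ingredients: an infinitesimal (Jacobian) computation that produces the asserted dimension, the $K_2$ property to upgrade this to genuine \emph{linearity} of the fibre, and the elementary geometry of lines under a map given by quadrics. Throughout, fix $x\in\PP^n\setminus X$, write each $F_i$ with associated symmetric bilinear form $b_i$ (so $b_i(x,x)=F_i(x)$), and record the expansion $F_i(sx+ty)=s^2F_i(x)+2st\,b_i(x,y)+t^2F_i(y)$ along the line $\langle x,y\rangle$. By Euler's relation the Jacobian $J(x):=\left(\partial F_i/\partial x_j(x)\right)_{i,j}$ sends $x$ to $2F(x)\neq 0$, so $x$ does \emph{not} lie in $\Pi_x:=\mathbb{P}(\ker J(x))$, the intersection of the polar hyperplanes of $x$ with respect to all quadrics of the system, which has dimension $n-\rk J(x)$.

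A direct computation of the differential of $F$ at $x$ (on the affine cone, quotienting by $\CC x$ in the source and $\CC F(x)$ in the target) shows that the condition $J(x)v\in\CC F(x)$ is equivalent to $v\in\CC x+\ker J(x)$; hence the projective tangent space to the fibre $\Phi:=\overline{F^{-1}(F(x))}$ at its point $x$ equals $\langle x,\Pi_x\rangle$, of dimension $n+1-\rk J(x)$. This already yields the dimension formula \emph{provided} $\Phi$ is reduced and linear at $x$. The linearity of $\Phi$ is the heart of the matter and is exactly where the $K_2$ hypothesis enters; this is the content of \cite{vermeire} and \cite{alzati-syz}, whose argument I would recall. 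After replacing $F_0,\dots,F_N$ by a basis of the linear system adapted to $x$ (so $F_0(x)\neq0$ and $F_i(x)=0$ for $i\ge1$), the fibre $\Phi$ is the component through $x$ of $V(F_1,\dots,F_N)$ not contained in $X$, and the $K_2$ property --- that the Koszul relations among the $F_i$ are generated by linear syzygies --- forces this component to be reduced and cut out near $x$ by the linear forms $b_i(x,\cdot)$, so that $\Phi=\langle x,\Pi_x\rangle$. I expect this step to be the main obstacle, since it is the genuinely nontrivial input, coming from syzygies rather than from elementary projective geometry.

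Granting linearity, the equivalence with membership in the secant variety follows from the behaviour of a quadratic map along lines. If $x\in\Sec(X)\setminus X$, pick $z,z'\in X$ with $x\in\langle z,z'\rangle=:\ell$; each $F_i|_\ell$ is a binary quadric vanishing at $z,z'$, hence a common multiple $c_i\,q_\ell$ of a single binary form $q_\ell$, so $F$ is constant on $\ell\setminus\{z,z'\}$, giving $\ell\subseteq\Phi$ and $\dim\Phi>0$. Conversely, if $\dim\Phi>0$ then, $\Phi$ being linear, it contains a line $\ell$ through $x$ on which $F\equiv F(x)\neq0$; writing again $F_i|_\ell=c_i\,q_\ell$, the two zeros of $q_\ell$ lie on $X$, so $\ell$ is a secant (or tangent) line and $x\in\Sec(X)$. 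This proves $\dim\Phi>0\Leftrightarrow x\in\Sec(X)\setminus X$.

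Finally, on the linear space $\Phi$ one has $F\equiv F(x)=[c_0:\dots:c_N]$, so the identities $F_a|_\Phi\,c_b=F_b|_\Phi\,c_a$ hold as polynomials on $\Phi$; fixing $b$ with $c_b\neq0$ shows every $F_a|_\Phi$ is proportional to the single quadric $q:=F_b|_\Phi$, which is nonzero since $q(x)=c_b\neq0$. Hence $\Phi\cap X=V(q)\subset\Phi$ is a quadric hypersurface. Comparing with the line analysis above identifies $\Phi\cap X$ with the union of the points of $X$ lying on secant lines of $X$ through $x$, that is, with the entry locus $\Sigma_x(X)$, which completes the proof.
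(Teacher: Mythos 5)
The paper contains no proof of this statement at all: it is labelled a \emph{Fact} and attributed outright to \cite{vermeire} and \cite{alzati-syz}, so there is no internal argument to compare yours against. Measured against that, your proposal is a reasonable reconstruction: the self-contained parts check out. Euler's relation does give $J(x)\,x=2F(x)\neq 0$, and your computation that $J(x)v\in\CC F(x)$ iff $v\in\CC x+\ker J(x)$ is correct, so the Zariski tangent space of the scheme-theoretic fibre at $x$ is indeed $\langle x,\Pi_x\rangle$ of dimension $n+1-\rk J(x)$ (equivalently, in an adapted basis with $F_i(x)=0$ for $i\geq 1$, the space $V(b_1(x,\cdot),\ldots,b_N(x,\cdot))$). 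The restriction-to-lines arguments (a binary quadric vanishing on a length-two scheme is a scalar multiple of a fixed $q_\ell$, so $F$ is constant off $V(q_\ell)$; conversely constancy of $F$ on a line forces $F_a|_\ell=c_a\,q_\ell$ with the length-two zero scheme of $q_\ell$ inside $X$) and the observation that on the linear fibre $\Phi$ all $F_a|_\Phi$ are proportional to a single nonzero quadric $q$, so $\Phi\cap X=V(q)$, are all correct. The one genuinely nontrivial input --- that the $K_2$ hypothesis makes the closed fibre through $x$ reduced and equal to this linear space --- you explicitly delegate to the very references the paper cites, which is exactly what the paper itself does; so nothing is lost relative to the paper, though as a standalone proof the heart of the matter remains a black box.

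Two small points worth tightening. First, with the paper's definition of $\Sec(X)$ as the union of lines $\ell$ with $\mathrm{length}(X\cap\ell)\geq 2$, your forward implication should allow the tangential case $z'=z$: the same computation goes through verbatim with $q_\ell$ having a double root, and likewise your converse already covers it since the zero scheme of $q_\ell$ may be non-reduced. Second, the final identification $\Phi\cap X=\Sigma_x(X)$ deserves one more sentence for the inclusion $\Phi\cap X\subseteq\Sigma_x(X)$: for $z\in V(q)$ the line $\langle x,z\rangle$ meets $V(q)$ residually in a point distinct from $z$ exactly away from the polar hyperplane of $x$ with respect to $q$; when $q$ is reduced, no component of $V(q)$ can lie in that polar (this would force a linear factor of $q$ vanishing at $x$, contradicting $q(x)\neq 0$), so honest entry points are dense in each component and one concludes by taking closures --- the degenerate case $q$ a double hyperplane must be ruled out or absorbed into the cited results, and your write-up passes over this silently.
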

We have a simple sufficient 
condition for the $K_2$ property (see \cite{saint-donat}, \cite{green-lazarsfeld-1988} and \cite[Proposition~2]{alzati-russo-subhomaloidal}):
\begin{fact}\label{fact: test K2}
 Let $X\subset\PP^n$ be a smooth linearly normal variety and suppose $h^1(\O_X)=0$
if $\dim(X)\geq2$. Putting $\lambda=\deg(X)$ and $s=\mathrm{codim}_{\PP^n}(X)$ we have:
\begin{itemize}
\item if $\lambda\leq 2s+1$,  then $X$ is arithmetically Cohen-Macaulay;
\item if $\lambda\leq 2s$,  then the homogeneous ideal of $X$ 
is generated by quadratic forms;
\item if $\lambda\leq2s-1$,  then the syzygies of the generators 
of the homogeneous ideal of $X$ are generated by the linear ones.
\end{itemize}
\end{fact}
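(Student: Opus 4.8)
The plan is to reduce all three assertions to statements about the general zero-dimensional linear section of $X$ and then to invoke the Castelnuovo theory of Chapter \ref{cap: castelnuovo theory}. Set $s=\mathrm{codim}_{\PP^n}(X)$ and cut $X$ with a general linear subspace $\PP^s\subseteq\PP^n$ of complementary dimension, obtaining a set $\Gamma\subset\PP^s$ of $\lambda=\deg(X)$ points. By the uniform position principle $\Gamma$ is in uniform position, so the estimate (\ref{eq: ciliberto 1.6}), $h_{\Gamma}(t)\geq\min\{\lambda,\,ts+1\}$, is available. The three degree hypotheses feed increasingly strong information into the Hilbert function and the syzygies of $\Gamma$, which I would then transport back to $X$ using linear normality and the vanishing $h^1(\O_X)=0$.

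First I would settle arithmetic Cohen--Macaulayness under $\lambda\leq 2s+1$. Lemma \ref{prop: castelnuovo argument} gives $h_{\Gamma}(2)=\lambda$, and since $\lambda\leq 2s+1\leq ts+1$ for $t\geq 2$, the bound (\ref{eq: ciliberto 1.6}) forces $h_{\Gamma}(t)=\lambda$ for every $t\geq 2$; thus the Hilbert function of $\Gamma$ is completely determined. Running this through the restriction sequences $0\to\I_{X',\PP^{m}}(t-1)\to\I_{X',\PP^{m}}(t)\to\I_{X'\cap H,\,H}(t)\to 0$ for the intermediate general linear sections $X'$, and using $h^1(\O_X)=0$ to start the induction, yields $H^1(\I_X(t))=0$ for all $t$ (projective normality) together with the vanishing of the intermediate cohomology, that is, arithmetic Cohen--Macaulayness of $X$.

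Then, for $\lambda\leq 2s$, I would upgrade this to generation of the homogeneous ideal by quadrics. At the level of points this is a Castelnuovo-type multiplication argument: one shows that the maps $H^0(\I_{\Gamma}(2))\otimes H^0(\O_{\PP^s}(t-2))\to H^0(\I_{\Gamma}(t))$ are surjective, so $I_\Gamma$ is generated in degree $2$, the sharper bound $\lambda\leq 2s$ being exactly what excludes the degenerate configurations permitted by Proposition \ref{prop: castelnuovo lemma}. Finally, for $\lambda\leq 2s-1$ I would prove that the first syzygies among these quadrics are linear (property $N_2$). This last step is the genuinely hard one: Hilbert-function estimates no longer suffice and one must pass to Koszul cohomology, establishing the vanishing of the relevant groups $\mathcal{K}_{1,q}$ for $q\geq2$ in the spirit of Green and Lazarsfeld. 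I expect this syzygy vanishing --- together with the verification that $N_2$ descends correctly from $\Gamma$ to $X$ without eroding the degree bound --- to be the main obstacle, while the arithmetic Cohen--Macaulayness and quadric-generation statements drop out fairly directly from the point-theoretic estimates already assembled in Chapter \ref{cap: castelnuovo theory}.
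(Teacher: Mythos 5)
The first thing to note is that the paper does not prove Fact \ref{fact: test K2} at all: it is quoted as a known result, with references to \cite{saint-donat}, \cite{green-lazarsfeld-1988} and \cite[Proposition~2]{alzati-russo-subhomaloidal}, so the only possible comparison is with the arguments in those sources --- whose architecture is indeed the one you describe. Your first bullet is the part of the sketch that genuinely works: for $\lambda\le 2s+1$, Lemma \ref{prop: castelnuovo argument} together with (\ref{eq: ciliberto 1.6}) gives $h_{\Gamma}(t)=\lambda$ for all $t\ge 2$, and the climb back up (linear normality of the intermediate sections, which uses $h^1(\O_X)=0$, the vanishing $H^2(\I_X)=0$ and Kodaira vanishing to push irregularity zero down to the surface section, followed by the surjections $H^1(\I(t-1))\twoheadrightarrow H^1(\I(t))$ coming from $h^1(\I_{\Gamma}(t))=0$) does yield that $X$ is arithmetically Cohen--Macaulay. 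Moreover, ACM-ness is exactly what legitimizes the rest of the reduction: a general linear form is then a non-zerodivisor on the homogeneous coordinate ring, so the graded Betti numbers of $X$ and of $\Gamma$ coincide, and generation and syzygy statements ascend from $\Gamma$ to $X$.

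The genuine gap is in the second and third bullets, and in your assessment that the second ``drops out fairly directly'' from Chapter \ref{cap: castelnuovo theory}. Castelnuovo theory controls Hilbert functions only, and Hilbert functions cannot distinguish quadric generation from its failure: $2s+1$ points in general (even uniform) position lying on a rational normal curve have the same Hilbert function as any other such configuration and are ACM, yet every quadric through them contains the curve (the two linear systems of quadrics have the same dimension), so their ideal is not generated by quadrics. Your appeal to Proposition \ref{prop: castelnuovo lemma} is also misplaced: that statement requires $\lambda\ge 2s+3$ and $h_{\Gamma}(2)=2s+1<\lambda$, hence is vacuous in the range $\lambda\le 2s+1$ where the points impose independent conditions on quadrics, so it ``excludes'' nothing here. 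The surjectivity of the maps $H^0(\I_{\Gamma}(2))\otimes H^0(\O_{\PP^s}(t-2))\to H^0(\I_{\Gamma}(t))$ for $\lambda\le 2s$, and the linearity of the first syzygies for $\lambda\le 2s-1$, are precisely the cases $p=1,2$ of Green--Lazarsfeld's theorem that $\lambda\le 2s+1-p$ points in general position satisfy property $N_p$; both require a genuine Koszul-cohomology argument, not Hilbert-function estimates. So your proposal correctly reconstructs the skeleton of the argument in the cited literature, but as a self-contained proof it establishes only the first bullet and defers the other two to exactly the theorem of \cite{green-lazarsfeld-1988} that the paper's citation invokes.
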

\begin{remark}\label{remark: samuel conjecture}
Let $\psi:\PP^n\dashrightarrow\mathbf{Z}:=\overline{\psi(\PP^n)}\subseteq\PP^{n+a}$ be
a birational transformation ($n\geq3$). 

We point out that, from Grothendieck's Theorem on parafactoriality (Samuel's Conjecture) 
\cite[\Rmnum{11} Corollaire~3.14]{sga2} it follows that $\mathbf{Z}$ is factorial 
whenever it is a local complete intersection 
 with $\dim(\sing(\mathbf{Z}))<\dim(\mathbf{Z})-3$.
Of course, every complete intersection in a smooth variety 
is a local complete intersection.

Moreover, $\psi^{-1}$ is liftable whenever
  $\Pic(\mathbf{Z})=\ZZ\langle H_{\mathbf{Z}}\rangle$ 
and $\mathbf{Z}$ is factorial and projectively normal. 
So, from \cite{larsen-coomology} and \cite[\Rmnum{4} Corollary~3.2]{hartshorne-ample}, 
$\psi^{-1}$ is liftable whenever
$\mathbf{Z}$ is either 
  smooth and projectively normal with $n\geq a+2$ 
or a factorial 
complete intersection. 
\end{remark}
\section{Numerical restrictions}
Proposition \ref{prop: B is QEL} already provides 
a restriction on the invariants of the transformation $\varphi$;
 here we give further restrictions of this kind.
\begin{proposition}\label{prop: hilbert polynomial}
 Let $\epsilon=0$ if $\langle \B \rangle =\PP^n$ and let  $\epsilon=1$ otherwise.  
\begin{itemize} 
\item   If $r=1$ we have: 
  \begin{eqnarray*}
   \lambda &=&  \frac{n^2-n+2\,\epsilon-2\,a-2}{2}  ,  \\
         g &=&  \frac{n^2-3\,n+4\,\epsilon-2\,a-2}{2}  .
  \end{eqnarray*}
\item If $r=2$ we have: 
 \begin{eqnarray*}
   \chi(\O_{\B}) &=&  \frac{2\,a-n^2+5\,n+2\,g-6\,\epsilon+4}{4} , \\
         \lambda &=&  \frac{n^2-n+2\,g+2\,\epsilon-2\,a-4}{4} . \\
 \end{eqnarray*}
\item If $r=3$ we have: 
 \begin{eqnarray*}
   \chi(\O_{\B}) &=& \frac{4\lambda-n^2+3n-2g-4\epsilon+2a+6}{2}  .
 \end{eqnarray*}
\end{itemize}
\end{proposition}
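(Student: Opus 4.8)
The plan is to derive each formula from Riemann--Roch for $\B$ combined with the cohomological vanishing provided by the $QEL$-structure and Lemma~\ref{prop: cohomology twisted ideal}. The starting point is that, by Proposition~\ref{prop: B is QEL}, $\B$ is a $QEL$-variety of type $\delta=2r+2-n$, so $n=2r+2-\delta$ and in particular $n-2r-1=1-\delta$. The key input is the cohomology of the ideal sheaf: by the same argument as in Lemma~\ref{prop: cohomology twisted ideal} (which only used the resolution diagram and Kodaira vanishing, and did not require $\sS$ to be a hypersurface), one has $H^i(\PP^n,\I_{\B}(t))=0$ for $i>0$ and $t\geq n-2r-1=1-\delta$. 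From the structure sequence $0\to\I_{\B}(t)\to\O_{\PP^n}(t)\to\O_{\B}(t)\to0$ this yields $h^i(\B,\O_{\B}(t))=0$ for $i>0$ and $t\geq\max\{1,1-\delta\}$, so that $\chi(\O_{\B}(t))=h^0(\O_{\B}(t))$ equals the image dimension $h^0(\O_{\PP^n}(t))-h^0(\I_{\B}(t))$ for such $t$. The role of $\epsilon$ is precisely to record $h^0(\I_{\B}(1))$: since $\B$ is degenerate iff $\epsilon=1$, we have $h^0(\PP^n,\I_{\B}(1))=\epsilon$, giving $\chi(\O_{\B}(1))=n+1-\epsilon$.

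First I would assemble the three evaluations $\chi(\O_{\B}(t))$ for $t=0,1,2$. For $t=1$ we just computed $\chi(\O_{\B}(1))=n+1-\epsilon$. For $t=2$ the count is $\chi(\O_{\B}(2))=h^0(\O_{\PP^n}(2))-h^0(\I_{\B}(2))=\binom{n+2}{2}-(n+a+1)$, because the quadrics through $\B$ are exactly the $N+1=n+a+1$ forms defining $\varphi$ (the image lies in $\PP^{n+a}$ and is linearly normal, so $h^0(\I_{\B}(2))=n+a+1$). For $t=0$, $\chi(\O_{\B})=h^0(\O_{\B})=1$ since $\B$ is connected; this value is needed in the $r=1$ and $r=2$ cases but becomes an unknown quantity $\chi(\O_{\B})$ to be solved for in the $r=3$ case. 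The caveat is that when $\delta=0$ the vanishing only holds for $t\geq1$, so the evaluation at $t=2$ is always safe but the identity $\chi(\O_{\B}(1))=h^0(\O_{\B}(1))$ is what one genuinely uses.

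Next I would write the Hilbert polynomial $P_{\B}(t)=\chi(\O_{\B}(t))$ in terms of the geometric invariants $\lambda=\deg(\B)$, $g=g(\B)$, and $\chi(\O_{\B})$ using Riemann--Roch on the $r$-dimensional variety $\B$. For $r=1$, $P_{\B}(t)=\lambda t+1-g$, so evaluating at $t=1,2$ (equivalently using $\chi(\O_{\B}(2))-\chi(\O_{\B}(1))=\lambda$ and $\chi(\O_{\B}(1))=\lambda+1-g$) yields a linear system in $\lambda,g$ whose solution is the asserted pair. For $r=2$, $P_{\B}(t)=\tfrac{\lambda}{2}t^2-\tfrac{1}{2}(\lambda+2g-2-2)t+\chi(\O_{\B})$ — more cleanly written via $P_{\B}(t)=\binom{t}{2}\lambda-\binom{t}{1}(g-1+\tfrac{\lambda}{2}\cdot\!\ldots)$; the practical route is to use the three values at $t=0,1,2$ together with the surface Riemann--Roch relation $\chi(\O_{\B}(1))-\chi(\O_{\B})=\tfrac12(H_{\B}^2-K_{\B}\cdot H_{\B})+\ldots$, i.e. $g=1+\tfrac12(K_{\B}+H_{\B})\cdot H_{\B}$, to eliminate variables and solve the linear system for $\chi(\O_{\B})$ and $\lambda$. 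For $r=3$, one expands $P_{\B}(t)$ through degree three and feeds in the evaluations at $t=0,1,2$ together with the definition of sectional genus; since there are more unknowns ($\lambda,g,\chi(\O_{\B})$, and higher invariants), the single relation extractable is the stated expression for $\chi(\O_{\B})$.

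I expect the main obstacle to be purely bookkeeping: correctly expressing Riemann--Roch on $\B$ so that the sectional genus $g$ and the Euler characteristic $\chi(\O_{\B})$ enter with the right coefficients, and tracking the $\epsilon$ and $a$ contributions consistently across the three evaluations. The conceptual content — the vanishing $H^i(\I_{\B}(t))=0$ for $t\geq 1-\delta$ and the identification $h^0(\I_{\B}(2))=n+a+1$ — is immediate from the earlier results, so no genuinely hard step arises; the danger is only in a miscount of binomial coefficients when reading off $\chi(\O_{\B}(2))=\binom{n+2}{2}-(n+a+1)$ and in solving the resulting small linear systems. One should also double-check the degenerate case $\delta=0$ (relevant when $n=2r+2$) to confirm that the $t=1$ evaluation remains valid, which it does since $1\geq 1-\delta$ forces only $\delta\geq0$.
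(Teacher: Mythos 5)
Your overall strategy is the paper's own: establish $h^0(\PP^n,\I_{\B}(1))=\epsilon$ and $h^0(\PP^n,\I_{\B}(2))=n+1+a$ from linear normality of $\sS$, extend the vanishing of Lemma \ref{prop: cohomology twisted ideal} to the present setting (using $n\leq 2r+2$) so that $h^j(\PP^n,\I_{\B}(k))=0$ for $j,k\geq 1$, deduce $\chi(\O_{\B}(1))=n+1-\epsilon$ and $\chi(\O_{\B}(2))=(n+1)(n+2)/2-(n+1+a)$, and then solve the resulting linear conditions on the Hilbert polynomial via Riemann--Roch. Your $r=1$ computation, which uses only the evaluations at $t=1,2$, is correct and yields exactly the stated formulas, and your treatment of $r=3$ (keeping $\chi(\O_{\B})$ as an unknown) is the right one.

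There is, however, one genuinely wrong step: the claim that ``for $t=0$, $\chi(\O_{\B})=h^0(\O_{\B})=1$ since $\B$ is connected,'' which you moreover say is \emph{needed} in the $r=1$ and $r=2$ cases. Connectedness gives only $h^0(\O_{\B})=1$ and says nothing about $h^1(\O_{\B}),\ldots,h^r(\O_{\B})$; and your own vanishing statement covers only twists $t\geq 1-\delta$, hence does not reach $t=0$ precisely when $\delta=0$, i.e. $n=2r+2$ --- the boundary case the proposition must cover and the one of main interest later in the chapter. Concretely, the quintic elliptic curve in $\PP^4$ ($r=1$, $g=1$) has $\chi(\O_{\B})=1-g=0$, and the elliptic scroll of degree $7$ in $\PP^6$ (case (IX) of Proposition \ref{prop: r=2 B nondegenerate}) has $\chi(\O_{\B})=0$. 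Imposing $\chi(\O_{\B})=1$ would force $g=0$ when $r=1$ and would pin down $g$ when $r=2$, contradicting these examples and the statement itself, in which $g$ is a free parameter and $\chi(\O_{\B})$ is an output. The repair is what you in fact do in your explicit computations: use only the two evaluations at $t=1,2$, treat $P_{\B}(0)=\chi(\O_{\B})$ as an unknown constant term (never as the number $1$), and solve for $(\lambda,g)$ when $r=1$, for $(\lambda,\chi(\O_{\B}))$ in terms of $g$ when $r=2$, and for $\chi(\O_{\B})$ in terms of $(\lambda,g)$ when $r=3$. A smaller point: ``$\B$ degenerate iff $\epsilon=1$'' a priori only gives $h^0(\I_{\B}(1))\geq \epsilon$; the equality uses Proposition \ref{prop: B is QEL}, since $\Sec(\B)$ being a hypersurface prevents $\B$ from lying in a codimension-two linear space --- this is also how the paper argues.
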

\begin{proof}
By Proposition \ref{prop: B is QEL} we have
$h^0(\PP^n,\I_{\B}(1))=\epsilon$.
Since $\sS$ is normal and linearly normal, 
we have $h^0(\PP^n,\I_{\B}(2))=n+1+a$ (see Lemma \ref{prop: cohomology I2B}).
Moreover,
 since $n\leq 2r+2$ (being $\delta\geq0$), 
proceeding as in  Lemma \ref{prop: cohomology twisted ideal}
 (or applying
 \cite[Proposition~1.8]{mella-russo-baselocusleq3}), 
 we obtain
$h^j(\PP^n,\I_{\B}(k))=0$ for every $j,k\geq1$.
So we obtain
$\chi(\O_{\B}(1))=n+1-\epsilon$ and 
$\chi(\O_{\B}(2))= (n+1)(n+2)/2 - (n+1+a)$.
\end{proof}
\begin{proposition}\label{prop: segre and chern classes}  \hspace{1pt}
\begin{itemize}
\item  If $r=1$ we have:
\begin{eqnarray*}
{c}_{1} &=&  2-2\,g , \\  
{s}_{1} &=&  -n\,\lambda-\lambda-2\,g+2 , \\  
d &=& \frac{ 2\,\lambda-{2}^{n} }{ 2\,n\,\lambda-2\,\lambda-{2}^{n+1}-4\,g+4 } , \\  
\Delta &=&  -n\,\lambda+\lambda+{2}^{n}+2\,g-2 . 
\end{eqnarray*}
\item If $r=2$ we have:
\begin{eqnarray*}
c_1 &=& \lambda-2\,g+2 , \\ 
c_2 &=&  -\frac{{n}^{2}\,\lambda}{2}+\frac{3\,n\,\lambda}{2}+{2}^{n}+2\,g\,n-2\,n-2\,g-\Delta+2 , \\ 
s_1 &=&  -n\,\lambda-2\,g+2 , \\ 
s_2 &=&  2\,n\,\lambda+{2}^{n}+4\,g\,n-4\,n-\Delta , \\ 
d\,\Delta &=& -n\,\lambda+2\,\lambda+{2}^{n-1}+2\,g-2 .
\end{eqnarray*}
\item If $r=3$ we have: 
\begin{eqnarray*}
c_1 &=& 2\,\lambda-2\,g+2 , \\
c_2 &=&  -\frac{{n}^{2}\,\lambda}{2}+\frac{5\,n\,\lambda}{2}-\lambda+{2}^{n-1}+2\,g\,n-2\,n-6\,g-d\,\Delta+6 , \\
c_3 &=&  \frac{{n}^{3}\,\lambda}{3}-2\,{n}^{2}\,\lambda+\frac{11\,n\,\lambda}{3}-2\,\lambda-n\,{2}^{n-1}+3\,{2}^{n-1}-g\,{n}^{2}+{n}^{2}+3\,g\,n+  \\ &&  + d\,\Delta\,n-3\,n-4\,g-d\,\Delta-\Delta+4 , \\ 
s_1 &=&  -n\,\lambda+\lambda-2\,g+2 , \\
s_2 &=&  2\,n\,\lambda-2\,\lambda+{2}^{n-1}+4\,g\,n-4\,n-4\,g-d\,\Delta+4 , \\
s_3 &=&  \frac{2\,{n}^{3}\,\lambda}{3}-4\,{n}^{2}\,\lambda+\frac{10\,n\,\lambda}{3}-n\,{2}^{n}+{2}^{n}-4\,g\,{n}^{2}+4\,{n}^{2}+4\,g\,n+2\,d\,\Delta\,n-4\,n-\Delta .
\end{eqnarray*}
\end{itemize}
\end{proposition}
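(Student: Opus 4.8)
The plan is to derive every stated formula from the two fundamental inputs already established: the intersection-theoretic relations \eqref{eq: lift} linking $H,E,H',E'$ in $\Pic(\widetilde{\PP^n})$, and the Hilbert-polynomial data of Proposition \ref{prop: hilbert polynomial}, together with the standard exact sequence
\begin{equation*}
0\rightarrow\T_{\B}\rightarrow\T_{\PP^n}|_{\B}\rightarrow\N_{\B,\PP^n}\rightarrow0
\end{equation*}
relating Chern and Segre classes. First I would fix notation: write $c_j=c_j(\T_{\B})\cdot H_{\B}^{r-j}$ and $s_j=s_j(\N_{\B,\PP^n})\cdot H_{\B}^{r-j}$ as in Assumption \ref{assumption: base}. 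The Whitney formula applied to the sequence above, using $c(\T_{\PP^n}|_{\B})=(1+H_{\B})^{n+1}$ and the fact that total Segre class is inverse to total Chern class of the normal bundle, yields for each $r\in\{1,2,3\}$ a triangular system expressing the $s_j$ in terms of the $c_j$ and $\lambda=H_{\B}^r$; this is exactly the mechanism used in Remark \ref{rem: chern classes} for the case $(r,n)=(3,8)$, and I would simply carry it out for general $n$.

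The low-order Chern numbers come directly from adjunction and Proposition \ref{prop: hilbert polynomial}. The value $c_1=-K_{\B}\cdot H_{\B}^{r-1}$ is obtained from the sectional genus formula $2g-2=(K_{\B}+(r-1)H_{\B})\cdot H_{\B}^{r-1}$, giving $c_1=(r-1)\lambda-2g+2$, which specializes to the three displayed values $2-2g$, $\lambda-2g+2$, $2\lambda-2g+2$. The higher $c_j$ are then pinned down by comparing the Riemann--Roch expression for $\chi(\O_{\B})$ (and, where needed, $\chi(\O_{\B}(k))$) against the explicit values of $\chi(\O_{\B})$, $\lambda$, $g$ furnished by Proposition \ref{prop: hilbert polynomial}; for $r=3$ this recovers $c_2$ via Riemann--Roch for surfaces restricted to a general hyperplane section and $c_3$ via the Euler characteristic $\chi(\O_{\B})=\frac{1}{24}c_1c_2$ supplemented by the degree data.

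The genuinely new ingredient, and the step I expect to be the main obstacle, is extracting $d$ and $\Delta$. These do not come from intrinsic invariants of $\B$ alone; they are read off from the self-intersection numbers of the divisors $H'$ and $dH'-E'=H$ on $\widetilde{\PP^n}$. Concretely $\Delta=\deg(\sS)=H'^{\,n}$ and $d\,\Delta=d\,H'^{\,n}=H'^{\,n-1}\cdot(dH')=H'^{\,n-1}\cdot(H+E')$, which I would expand using \eqref{eq: lift} into a combination of monomials $H^i\cdot E^{n-i}$ on the blow-up $\widetilde{\PP^n}=\Bl_{\B}(\PP^n)$. The intersection numbers $H^i\cdot E^{n-i}$ are computed by the standard Segre-class formula for a blow-up along a smooth center: $H^{n-j}\cdot(-E)^{j}$ equals the degree of the appropriate Segre class $s_{j-\mathrm{codim}+r}(\N_{\B,\PP^n})$ of the center (with $H^n=2^{0}\cdot$ the relevant binomial contributions and all terms with too few $E$'s vanishing). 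This is precisely the bookkeeping done in Remark \ref{rem: chern classes}; the subtlety is handling it for arbitrary $n$ and for each $r\in\{1,2,3\}$ uniformly, keeping track of the factor $2^{n}$ coming from $H'^{\,n}=(2H-E)^{n}$ and $H'^{\,n}=\sum_k\binom{n}{k}2^{n-k}H^{n-k}\cdot(-E)^{k}$, where only finitely many terms survive because $E$ restricted to fibers has bounded degree. Once these monomials are substituted and the already-derived Segre numbers $s_1,s_2,s_3$ inserted, each of the formulas for $\Delta$, $d$, and $d\Delta$ follows by linear algebra. I would organize the computation by first listing the surviving intersection numbers $H^{n-j}\cdot E^{j}$ in terms of $\lambda$ and the $s_i$, then substituting into $H'^{\,n}$ and $H'^{\,n-1}\cdot H$, and finally solving; the only real care needed is ensuring the dimension count $n=2r+2-\delta$ is respected so that the truncation of the binomial expansion is correct.
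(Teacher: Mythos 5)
Your toolkit is the right one---the Whitney relations coming from $0\to\T_{\B}\to\T_{\PP^n}|_{\B}\to\N_{\B,\PP^n}\to 0$, adjunction for $c_1$, and the expansion of ${H'}^{n}$ and ${H'}^{n-1}\cdot H$ on the blow-up---but the way you chain them together contains a step that is not merely unjustified but false. You claim that, after $c_1$, the higher Chern numbers are ``pinned down'' by comparing Riemann--Roch with the data of Proposition \ref{prop: hilbert polynomial}, i.e.\ that $c_2$ (and, for $r=3$, $c_3$) are functions of $n,\lambda,g$ alone. They are not, and the statement you are proving says so: for $r=2$ the formula for $c_2$ contains $\Delta$, and for $r=3$ the formulas for $c_2,c_3$ contain $d\,\Delta$ and $\Delta$. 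Riemann--Roch cannot deliver what you ask of it: for a threefold
\begin{displaymath}
\chi(\O_{\B}(t))=\frac{\lambda}{6}\,t^3+\frac{c_1}{4}\,t^2
+\frac{1}{12}\bigl(K_{\B}^2\cdot H_{\B}+c_2\bigr)\,t
+\frac{1}{24}\,c_1(\T_{\B})\cdot c_2(\T_{\B}),
\end{displaymath}
so the Hilbert polynomial determines only the combinations $K_{\B}^2\cdot H_{\B}+c_2$ and the full product $c_1(\T_{\B})\cdot c_2(\T_{\B})$; these introduce the new unknowns $K_{\B}^2\cdot H_{\B}$ and $c_1(\T_{\B})\cdot c_2(\T_{\B})$ (the latter is \emph{not} the product of the degrees $c_1$ and $c_2$), and $c_3$ does not occur in $\chi(\O_{\B}(t))$ at all. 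Your surface argument has the same defect: Noether's formula yields $K_S^2+c_2(S)$, never $c_2(S)$ by itself. Concretely, cases (\ref{part: case 1, 3-fold}) and (\ref{part: case 2, 3-fold}) of Proposition \ref{prop: 3-fold in P8 - S nonlinear} share $(n,\lambda,g)=(8,11,5)$ but have $d\Delta=9$ and $d\Delta=8$, hence different $c_2$ (namely $16$ and $17$, by the specialization $c_2=25-d\Delta$ of Remark \ref{rem: chern classes}); so no formula $c_2=f(n,\lambda,g)$ can exist.

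This makes your final step circular: you propose to insert ``the already-derived Segre numbers $s_1,s_2,s_3$'' into the expansions of $(2H-E)^{n}$ and $(2H-E)^{n-1}\cdot H$ and solve for $d,\Delta$ by linear algebra, but for $r\geq 2$ the numbers $s_2,s_3$ are exactly the quantities that cannot be derived before those expansions are used. The paper runs the dependency the other way: adjunction gives $c_1$, the first Whitney relation gives $s_1$, and then the two blow-up identities $\Delta=(2H-E)^{n}$ and $d\Delta=(2H-E)^{n-1}\cdot H$---expanded via $H^j\cdot E^{n-j}=1$ for $j=n$, $=0$ for $r+1\leq j\leq n-1$, and $=(-1)^{n-j-1}s_{r-j}$ for $j\leq r$---serve as the equations that \emph{define} $s_2$ and $s_3$ in terms of $d,\Delta,n,\lambda,g$, after which the remaining Whitney relations return $c_2$ and $c_3$. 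One thus has exactly $r+3$ equations in the $2r+5$ quantities $c_1,\ldots,c_r,s_1,\ldots,s_r,d,\Delta,\lambda,g,n$, and the displayed formulas are the solution of this joint system with $n,\lambda,g,d,\Delta$ as free parameters; no intrinsic determination of $c_2,c_3$ enters, and none is possible. (Two smaller remarks: your scheme does work verbatim for $r=1$, where $c_1=2-2g$ is genuinely intrinsic and the system then yields both $d$ and $\Delta$, which is precisely why only the $r=1$ formulas have no $d$ or $\Delta$ on their right-hand sides; and your identity $d\Delta={H'}^{n-1}\cdot(H+E')$ silently uses ${H'}^{n-1}\cdot E'=0$, which holds because $\pi'(E')=\B'$ has dimension $r'<n-1$.)
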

\begin{proof}
See also \cite{crauder-katz-1989} and \cite{crauder-katz-1991}.
By \cite[page~291]{crauder-katz-1989} we see that 
\begin{displaymath}
 H^j\cdot E^{n-j}=
\left\{ 
\begin{array}{ll} 
1, & \mbox{if } j= n ; \\
0, & \mbox{if } r+1\leq j \leq n-1 ; \\
(-1)^{n-j-1} s_{r-j}, & \mbox{if } j \leq r .
\end{array} 
\right.
\end{displaymath}
Since $H'=2H-E$ and $H=dH'-E'$ we have
\begin{eqnarray}
\Delta &=& {H'}^n=(2H-E)^n,  \\
d \Delta &=& d{H'}^{n}={H'}^{n-1}\cdot(dH'-E')=(2H-E)^{n-1}\cdot H.  
\end{eqnarray}
From the exact sequence
$0\rightarrow\mathcal{T}_{\B}\rightarrow \mathcal{T}_{\PP^n}|_{\B}\rightarrow\mathcal{N}_{\B,\PP^n}\rightarrow0$
we get: 
\begin{eqnarray}
s_1 &=& - \lambda \left( n+1\right) + {c}_{1} ,\\
s_2 &=&  \lambda \begin{pmatrix}n+2\cr 2\end{pmatrix}-{c}_{1} \left( n+1\right) +{c}_{2} ,\\ 
s_3 &=& -\lambda \begin{pmatrix}n+3\cr 3\end{pmatrix}+{c}_{1} \begin{pmatrix}n+2\cr 2\end{pmatrix}-{c}_{2} \left( n+1\right) +{c}_{3} ,\\ 
  & \vdots & \nonumber
\end{eqnarray}
Moreover $c_1=-K_{\B}\cdot H_{\B}^{r-1}$ 
and it can be expressed as a function of $\lambda$ and $g$.
Thus we found $r+3$ 
 independent equations on the $2r+5$ variables: 
$c_1,\ldots,c_r,s_1,\ldots,s_r,d,\Delta,\lambda,g,n$.
\end{proof}
\begin{remark}
Proposition \ref{prop: segre and chern classes} 
holds under less restrictive assumptions, as shown in the above proof.
Here we treat the special case:
let $\psi:\PP^8\dashrightarrow\mathbf{Z}:=\overline{\psi(\PP^8)}\subseteq\PP^{8+a}$ be a quadratic 
rational map whose base locus 
is a smooth irreducible $3$-dimensional variety $X$.
Without any other restriction on $\psi$, denoting
with $\pi:\Bl_X(\PP^8)\rightarrow \PP^8$ 
the blow-up of $\PP^8$ along $X$ and with $s_i(X)=s_i(\N_{X,\PP^8})$, we have
\begin{equation}\label{eq: grado mappa razionale}
\deg(\psi)\deg(\mathbf{Z}) = (2\pi^{\ast}(H_{\PP^8})-E_X)^8 =
-s_3(X)-16s_2(X)-112s_1(X)-448\deg(X)+256.
\end{equation}
 Moreover, if $\psi$ is birational with liftable inverse and 
$\dim(\sing(\mathbf{Z}))\leq 6$,
 we also have
\begin{equation}\label{eq: sollevabile}
d \deg(\mathbf{Z}) =(2\pi^{\ast}(H_{\PP^8})-E_X)^7\cdot \pi^{\ast}(H_{\PP^8}) = -s_2(X) -14 s_1(X) -84 \deg(X) +128,
\end{equation}
where $d$ denotes the degree of the linear system defining $\psi^{-1}$.
%
%
\end{remark}
Proposition \ref{prop: double point formula}
is a translation of the well-known 
\emph{double point formula} (see for example \cite{peters-simonis} and \cite{laksov}),
taking into account Proposition \ref{prop: B is QEL}.
\begin{proposition}\label{prop: double point formula}
If $\delta=0$ 
then 
$$
2(2d-1)=
\lambda^2 - 
\sum_{j=0}^{r}\begin{pmatrix} 2r+1 \cr j \end{pmatrix} s_{r-j}(\T_{\B})\cdot H_{\B}^{j}.
$$
\end{proposition}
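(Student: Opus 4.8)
The plan is to identify the two sides of the formula with twice the number of secant lines of $\B$ through a general point, computing this number first geometrically from $\Sec(\B)$ and then by intersection theory through the classical double point formula. First I would unpack the hypothesis $\delta=0$. By Proposition \ref{prop: B is QEL} it forces $n=2r+2$ and makes $\Sec(\B)\subset\PP^{2r+2}$ a hypersurface of degree $2d-1$. Since $\B$ is a $QEL$-variety of type $\delta=0$, the general entry locus $\Sigma_p(\B)$ is a $0$-dimensional quadric, i.e.\ a pair of points, so $L_p(\B)$ is a single line and through a general point $p\in\Sec(\B)$ there passes exactly one secant line of $\B$.

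Next I would realize $\B$ in $\PP^{2r+1}$ by projecting from a general point $q\in\PP^{2r+2}\setminus\Sec(\B)$, obtaining an isomorphism onto a smooth nondegenerate $X'=\pi_q(\B)\subset\PP^{2r+1}$; as $q\notin\Sec(\B)$, no secant line of $\B$ passes through $q$ and secant pairs of $\B$ correspond bijectively to those of $X'$. I would then count the apparent double points of $X'$, namely the secant lines through a general point $p\in\PP^{2r+1}$. These are precisely the images of the secant lines of $\B$ meeting the general line $\ell_p=\overline{\pi_q^{-1}(p)}$ through $q$. Such a line meets the hypersurface $\Sec(\B)$ in its $2d-1$ points, each general on $\Sec(\B)$ and hence lying on a unique secant line by the first step; so $X'$ has exactly $2d-1$ apparent double points.

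Finally I would apply the classical double point formula to the smooth $r$-fold $X'\subset\PP^{2r+1}$ (as in \cite{peters-simonis} and \cite{laksov}), which expresses twice the number of apparent double points as $\lambda^2-\sum_{j=0}^{r}\binom{2r+1}{j} s_{r-j}(\T_{X'})\cdot H_{X'}^{j}$. Because the projection is an isomorphism pulling $\O_{X'}(1)$ back to $\O_{\B}(1)$, the degree $\lambda$, the Segre classes $s_{r-j}(\T)$ and the powers $H^{j}$ coincide with those of $(\B,\O_{\B}(1))$, so this is exactly the right-hand side of the statement. Combining it with the count $2d-1$ of apparent double points gives $2(2d-1)=\lambda^2-\sum_{j=0}^{r}\binom{2r+1}{j} s_{r-j}(\T_{\B})\cdot H_{\B}^{j}$, as desired.

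The double point formula will simply be quoted, so the one step needing real care — and the main obstacle — is the geometric identification of the number of apparent double points of the general inner projection with $\deg(\Sec(\B))=2d-1$. This is exactly where the $QEL$-type-$0$ hypothesis (a unique secant line through a general point of the secant hypersurface) is indispensable, and one must additionally verify the genericity needed to guarantee that the $2d-1$ secant lines meeting $\ell_p$ are distinct and exhaust all apparent double points of $X'$.
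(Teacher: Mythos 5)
Your proof is correct and follows essentially the same route as the paper, which simply declares the proposition to be ``a translation of the well-known double point formula'' of \cite{peters-simonis} and \cite{laksov} combined with Proposition \ref{prop: B is QEL}: your inner projection from $q\notin\Sec(\B)$ and the identification of the apparent double points of $X'\subset\PP^{2r+1}$ with $\deg(\Sec(\B))=2d-1$ (via the uniqueness of the secant line through a general point of $\Sec(\B)$, i.e.\ the $QEL$-type-$0$ hypothesis) is exactly the translation the paper has in mind, just written out in full.
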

\section{Case of dimension 1}\label{sec: dim 1}
Lemma \ref{lemma: numerical 1-fold}  
directly follows from  
Propositions \ref{prop: hilbert polynomial} 
and \ref{prop: segre and chern classes}.
\begin{lemma}\label{lemma: numerical 1-fold}
If $r=1$, then 
one of the following cases holds:
\begin{enumerate}[(A)]
 \item $n=3$, $a=1$, $\lambda=2$, $g=0$, $d=1$, $\Delta=2$; 
 \item $n=4$, $a=0$, $\lambda=5$, $g=1$, $d=3$, $\Delta=1$; 
 \item $n=4$, $a=1$, $\lambda=4$, $g=0$, $d=2$, $\Delta=2$; 
 \item\label{case: escluso 1-fold} $n=4$, $a=2$, $\lambda=4$, $g=1$, $d=1$, $\Delta=4$; 
 \item $n=4$, $a=3$, $\lambda=3$, $g=0$, $d=1$, $\Delta=5$.
\end{enumerate}
\end{lemma}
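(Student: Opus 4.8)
The plan is to derive Lemma \ref{lemma: numerical 1-fold} as a purely arithmetic consequence of the formulas already collected in Propositions \ref{prop: hilbert polynomial} and \ref{prop: segre and chern classes}, together with the geometric constraints coming from Proposition \ref{prop: B is QEL} and the fact that $\B$ is a smooth irreducible curve. First I would specialize all the relevant $r=1$ formulas. From Proposition \ref{prop: B is QEL} we know $\delta=2r+2-n=4-n\geq0$, so immediately $n\leq 4$; combined with the standing hypothesis $n\geq3$ (implicit in Assumption \ref{assumption: base} via Remark \ref{remark: samuel conjecture}) this leaves only $n\in\{3,4\}$. The $r=1$ lines of Proposition \ref{prop: hilbert polynomial} give $\lambda$ and $g$ as explicit functions of $n,a,\epsilon$, and the $r=1$ lines of Proposition \ref{prop: segre and chern classes} give $d$ and $\Delta$ once $\lambda,g,n$ are known. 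So the whole problem reduces to determining the finitely many admissible tuples $(n,a,\epsilon)$.

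The key constraints I would impose are the following. Since $\B$ is a smooth curve, $g\geq0$ and $\lambda\geq1$, which bounds $a$ from above for each fixed $n$. Since $d\geq1$ is a positive integer (it is the degree of the linear system defining $\varphi^{-1}$), the expression for $d$ in Proposition \ref{prop: segre and chern classes} must evaluate to a positive integer; similarly $\Delta=\deg(\sS)\geq1$ must be a positive integer, and in fact $\Delta\geq2$ unless $\sS$ is linear. The parameter $\epsilon\in\{0,1\}$ records whether $\B$ is degenerate, and here I would invoke the characterization from Proposition \ref{prop: type 2-1}: the transformation is of type $(2,1)$ (i.e.\ $d=1$) precisely when $\B$ is degenerate and a complete intersection quadric of codimension $2$, which pins down the degenerate cases. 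Running through $n=3$ (where $\delta=1$, $\lambda=(n^2-n+2\epsilon-2a-2)/2$) and $n=4$ (where $\delta=0$) and discarding every tuple that makes $d$ or $\Delta$ non-integral, non-positive, or that forces $g<0$, I expect exactly the five surviving cases (A)--(E).

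The main obstacle, such as it is, will be organizing the case analysis cleanly rather than any single hard computation: one must be careful to use $\epsilon$ consistently (degenerate versus nondegenerate), to check that each candidate $(n,a,\epsilon)$ yields simultaneously integral $\lambda,g,d,\Delta$, and to confirm that the value of $\delta$ forced by $n$ is compatible with $\B$ being a curve (for instance, $\delta=0$ when $n=4$ means $\Sec(\B)$ is a genuine hypersurface, consistent with $d\geq2$ in cases (C)--(E), whereas $\delta=1$ when $n=3$ and $d=1$ corresponds to the stereographic picture of Proposition \ref{prop: type 2-1}). I would present the argument as a short table-driven elimination: substitute, test integrality and positivity, and read off the five tuples. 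No deep geometry beyond Proposition \ref{prop: B is QEL} is needed, since the statement is entirely numerical once those two propositions are in hand.
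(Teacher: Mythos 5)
Your overall strategy---specialize Propositions \ref{prop: hilbert polynomial} and \ref{prop: segre and chern classes} to $r=1$, bound $n$ via $\delta=4-n\geq 0$ from Proposition \ref{prop: B is QEL}, and then sieve the tuples $(n,a,\epsilon)$ by $g\geq 0$ and integrality/positivity of $d$ and $\Delta$---is exactly the paper's (its proof is the one-line remark that the lemma ``directly follows'' from those two propositions). But the sieve you describe does not close. Carrying it out, the surviving tuples are not five but seven: besides (A)--(E), for $n=4$, $\epsilon=1$ one also gets $a=0$, $(\lambda,g)=(6,3)$, $d=1$, $\Delta=2$, and $a=1$, $(\lambda,g)=(5,2)$, $d=1$, $\Delta=3$; in both, $d$ and $\Delta$ come out positive integers and $g\geq0$, so none of your stated tests removes them. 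Eliminating them requires two observations absent from your proposal: for $a=0$ the image $\sS\subseteq\PP^{n}$ is an irreducible $n$-dimensional subvariety of $\PP^n$, hence $\sS=\PP^n$ and $\Delta=1$, contradicting the value $\Delta=2$ produced by the formulas; and for $a=1$ with $d=1$ the image is a factorial (hence normal) nonlinear hypersurface, so Proposition \ref{prop: type 2-1} legitimately applies and forces $\Delta=2$ (and $\lambda=2$), contradicting $\Delta=3$, $\lambda=5$. Equivalently, both spurious tuples die because a degenerate base curve $\B\subset\PP^3\subset\PP^4$ must be cut out by the quadrics of $\PP^3$ through it (otherwise the base locus of the complete system $|\I_{\B}(2)|$ is strictly larger than $\B$), and B\'ezout then gives $\lambda\leq 4$.

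The second, more serious problem is how you invoke Proposition \ref{prop: type 2-1}. That proposition is proved only when the image is a hypersurface, i.e.\ $a=1$ in the present notation; it is not a characterization of $d=1$ for arbitrary $a$. If you apply your stated biconditional ``$d=1$ precisely when $\B$ is a degenerate complete-intersection quadric of codimension $2$'' across all values of $a$, you will wrongly discard cases (D) and (E): both have $d=1$ (their secant variety is the hyperplane $\PP^3$, of degree $2d-1=1$), yet their base loci are an elliptic quartic and a twisted cubic, neither of which is a codimension-$2$ quadric. Indeed your closing ``consistency check'' that $d\geq 2$ holds in cases (C)--(E) contradicts the very statement you are proving, since (D) and (E) are listed with $d=1$; this signals the same confusion between degeneracy of $\B$ and the value of $n$. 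So, as written, the argument outputs either seven cases (pure numerical sieve) or three cases (sieve plus your reading of Proposition \ref{prop: type 2-1}); in neither form does it yield the lemma. The fix is to use Proposition \ref{prop: type 2-1} only in the $a=1$ branch and to add the $a=0\Rightarrow\Delta=1$ consistency check (or, alternatively, the B\'ezout argument above for all degenerate branches).
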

\begin{proposition}\label{prop: possibili casi 1-fold}
If $r=1$, then 
one of the following cases holds:
\begin{enumerate}[(I)]
 \item $n=3$, $a=1$,  $\B$ is a conic;
 \item $n=4$, $a=0$,  $\B$ is an elliptic curve of degree $5$;
 \item $n=4$, $a=1$,  $\B$ is the rational normal quartic curve;
 \item $n=4$, $a=3$,  $\B$ is the twisted cubic curve.
\end{enumerate}
\end{proposition}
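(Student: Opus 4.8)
The plan is to start from the five numerical possibilities (A)--(E) furnished by Lemma \ref{lemma: numerical 1-fold}, and in each of them recognize the smooth connected curve $\B$ from its degree $\lambda$, its genus $g$ (which here coincides with the sectional genus, as $r=1$), and its linear span, the latter being encoded by the invariant $\epsilon$ of Proposition \ref{prop: hilbert polynomial}. The structural fact that will do the real work is Proposition \ref{prop: B is QEL}: under our assumptions $\B$ must be a $QEL$-variety of type $\delta=2r+2-n=4-n$, with $\Sec(\B)$ a hypersurface of degree $2d-1$.

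First I would dispose of the four surviving cases by direct recognition. In case (A) one has $\lambda=2$, $g=0$, so $\B$ is a smooth conic; here $\delta=1$ and $\Sec(\B)$ is the spanned plane, a hyperplane of $\PP^3$ as predicted by $2d-1=1$. In cases (B) and (C) the relations of Proposition \ref{prop: hilbert polynomial} force $\epsilon=0$, so $\B$ is nondegenerate in $\PP^4$: a smooth curve of degree $5$ and genus $1$ in case (B), i.e.\ an elliptic normal quintic, and a smooth rational curve of degree $4$ in case (C), i.e.\ the rational normal quartic (its degree equals the dimension of the ambient space), invoking the classification of low-degree curves as in \cite{harris-firstcourse}. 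In case (E) the numerics give $\epsilon=1$, so $\B$ spans a hyperplane $\PP^3\subset\PP^4$; being a nondegenerate smooth rational cubic in that $\PP^3$ it is the twisted cubic. These identifications are routine.

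The crux is to rule out case (\ref{case: escluso 1-fold}). Here $\lambda=4$, $g=1$, and the relations of Proposition \ref{prop: hilbert polynomial} give $\epsilon=1$, so $\B$ would be a smooth curve of degree $4$ and genus $1$ spanning a hyperplane $\PP^3\subset\PP^4$, that is, an elliptic quartic, a complete intersection of two quadrics in $\PP^3$. Since $d=1$ in this case, Proposition \ref{prop: B is QEL} forces $\Sec(\B)$ to have degree $1$, i.e.\ $\Sec(\B)=\PP^3$ is exactly the spanned hyperplane, and $\B$ to be a $QEL$-variety of type $\delta=0$. The plan is to contradict this last property: a $QEL$-curve of type $\delta=0$ with $\Sec(\B)=\PP^3$ has, through a general point of $\PP^3$, entry locus a single reduced $0$-dimensional quadric, i.e.\ exactly one secant line. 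But the number of apparent double points of a smooth curve of degree $\lambda$ and genus $g$ in $\PP^3$ equals $\binom{\lambda-1}{2}-g$, which for the elliptic quartic is $\binom{3}{2}-1=2$; hence two secant lines pass through a general point of $\PP^3$, and the general entry locus consists of four points rather than two. This contradicts Proposition \ref{prop: B is QEL}, so case (\ref{case: escluso 1-fold}) cannot occur, and only (I)--(IV) remain.

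I expect this last step to be the only genuinely delicate one, and it can be cross-checked through the tangential projection: by Theorem \ref{prop: birational tangential projection when delta is 0} the general tangential projection of a smooth $QEL$-variety of type $\delta=0$ is birational, whereas projecting the elliptic quartic from a general tangent line $T_x(\B)\cong\PP^1$ onto $\PP^{1}$ is a morphism of degree $\lambda-2=2$, again a contradiction. Either formulation yields the exclusion and completes the classification.
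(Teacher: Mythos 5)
Your proof is correct and follows essentially the same route as the paper: both reduce, via Lemma \ref{lemma: numerical 1-fold}, to excluding case (\ref{case: escluso 1-fold}), where $\B$ is an elliptic quartic spanning a hyperplane $\PP^3\subset\PP^4$, and both derive the contradiction from the fact that Proposition \ref{prop: B is QEL} (with $d=1$, $\delta=0$) forces $\B$ to be an $OADP$-curve in that $\PP^3$. The only difference is that where the paper cites the classification result that the twisted cubic is the unique $OADP$-curve, you verify the incompatibility directly by the apparent-double-point count $\binom{\lambda-1}{2}-g=2$ (or, alternatively, by the degree-$2$ tangential projection against Theorem \ref{prop: birational tangential projection when delta is 0}), which makes the exclusion self-contained.
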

\begin{proof}
 From Lemma \ref{lemma: numerical 1-fold}
 it remains only to exclude case (\ref{case: escluso 1-fold}).
In this case $\B$ is a complete intersection of two quadrics in $\PP^3$ 
and also 
it is an $OADP$-curve. 
This is absurd because the only $OADP$-curve is the twisted cubic curve.
\end{proof}
\section{Case of dimension 2}\label{sec: dim 2}
Proposition \ref{prop: possibili casi 2-fold} follows from 
Proposition \ref{prop: LQEL with delta=r and delta=r-1} and
            \cite[Theorem~4.10]{ciliberto-mella-russo}.
\begin{proposition}\label{prop: possibili casi 2-fold} 
If $r=2$, then either $n=6$, $d\geq2$, $\langle \B \rangle = \PP^6$, or 
one of the following cases holds:
\begin{enumerate}[(I)] 
  \setcounter{enumi}{4} 
 \item\label{case 2-fold a}  $n=4$,  $d=1$, $\delta=2$, 
              $\B=\PP^1\times\PP^1\subset\PP^3\subset\PP^4$;
 \item\label{case 2-fold b}  $n=5$,  $d=1$, $\delta=1$, 
              $\B$ is a hyperplane section of $\PP^1\times\PP^2\subset\PP^5$; 
 \item\label{case 2-fold c}  $n=5$,  $d=2$, $\delta=1$, 
              $\B=\nu_2(\PP^2)\subset\PP^5$ is the Veronese surface;
 \item\label{case 2-fold d}  $n=6$,  $d=1$, $\delta=0$, $\B\subset\PP^5$ is an $OADP$-surface, i.e. $\B$ is as in one of the following cases: 
         \begin{enumerate}[($\ref{case 2-fold d}_1$)]
         \item\label{case 2-fold d1} $\PP_{\PP^1}(\O(1)\oplus\O(3))$ or
                                     $\PP_{\PP^1}(\O(2)\oplus\O(2))$;        
         \item\label{case 2-fold d2} del Pezzo surface of degree  $5$ 
                              (hence the blow-up of  
                 $\PP^2$ at $4$ points 
                 $p_1,\ldots,p_4$ and $|H_{\B}|=|3H_{\PP^2}-p_1-\cdots-p_4|$).
         \end{enumerate}
\end{enumerate}
\end{proposition}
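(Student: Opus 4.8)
The plan is to translate the numerical classification from Proposition \ref{prop: B is QEL} into a list of candidate surfaces and then match each candidate against the known structure of low-dimensional $QEL$-varieties. By Proposition \ref{prop: B is QEL}, whenever $r=2$ the base locus $\B\subset\PP^n$ is a smooth $QEL$-variety of type $\delta=2r+2-n=6-n$, so that $n\leq6$; the cases $n=4,5,6$ correspond to $\delta=2,1,0$ respectively. The values $\delta=2$ and $\delta=1$ are the two extremal situations $\delta=r$ and $\delta=r-1$, which are completely described in Proposition \ref{prop: LQEL with delta=r and delta=r-1}, while $\delta=0$ with $\Sec(\B)=\PP^n$ means precisely that $\B$ is an $OADP$-surface, whose classification is exactly \cite[Theorem~4.10]{ciliberto-mella-russo}. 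So the whole statement should follow by running through these three values of $\delta$ and importing the appropriate classification.

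First I would dispose of $\delta=2$. Here $n=4$ and $\delta=r$, so by Proposition \ref{prop: LQEL with delta=r and delta=r-1}(1) the surface $\B$ is a quadric hypersurface in its linear span. Since $\B$ is nondegenerate in $\PP^n$ only after allowing a possibly degenerate span, and since it must be cut out by the quadrics through it with the right cohomology, I would identify it as $\PP^1\times\PP^1\subset\PP^3\subset\PP^4$, a smooth quadric surface sitting in a hyperplane; this forces $d=1$, i.e. $\varphi$ of type $(2,1)$, giving case (\ref{case 2-fold a}). Next, for $\delta=1$ we have $n=5$ and $\delta=r-1$, so Proposition \ref{prop: LQEL with delta=r and delta=r-1}(2) tells us $\B$ is either a hyperplane section of $\PP^1\times\PP^2\subset\PP^5$ or the Veronese surface $\nu_2(\PP^2)$ (or an isomorphic projection thereof in $\PP^4$). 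The first gives case (\ref{case 2-fold b}) with $d=1$; the second is $\nu_2(\PP^2)\subset\PP^5$, which is cut out by quadrics and yields the transformation of type $(2,2)$ recorded in case (\ref{case 2-fold c}). Determining $d$ in each instance is a short computation using the relations (\ref{eq: lift}) together with the invariants from Propositions \ref{prop: hilbert polynomial} and \ref{prop: segre and chern classes}, or directly from whether $\Sec(\B)$ is a hyperplane (equivalently $d=1$) as noted before Proposition \ref{prop: B is QEL}.

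For $\delta=0$ with $n=6$, the condition $\Sec(\B)=\PP^{n}$ would contradict Proposition \ref{prop: B is QEL} (which gives $\Sec(\B)$ a hypersurface), so I would separate two subcases: either $\Sec(\B)=\PP^5$, meaning $\B$ is a nondegenerate $OADP$-surface in $\PP^5$ and $\varphi$ is of type $(2,1)$, or $\langle\B\rangle=\PP^6$ and $\Sec(\B)$ is a genuine nonlinear hypersurface of degree $2d-1$ with $d\geq2$. The former is handled verbatim by \cite[Theorem~4.10]{ciliberto-mella-russo}, producing the rational normal scrolls of case (\ref{case 2-fold d1}) and the degree-$5$ del Pezzo surface of case (\ref{case 2-fold d2}); the latter is exactly the nondegenerate situation $n=6$, $d\geq2$, $\langle\B\rangle=\PP^6$ left open in the statement, to be analyzed later via the Hilbert-polynomial computation of Proposition \ref{prop: 2-fold in P6} and its analogues. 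The main obstacle, as the phrasing ``either $n=6$, $d\geq2$'' signals, is precisely this nondegenerate $\delta=0$ case: here the classification of $OADP$-surfaces no longer applies directly, one must instead pin down the Hilbert polynomial by Castelnuovo-type arguments and then invoke the classification of smooth surfaces of small degree, which is deferred. The genuinely routine part is matching the extremal $\delta=1,2$ surfaces, where the $LQEL$-dichotomy of Proposition \ref{prop: LQEL with delta=r and delta=r-1} leaves essentially no freedom.
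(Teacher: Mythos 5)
Your proposal is correct and follows essentially the same route as the paper, whose proof is precisely the observation that the statement follows from Proposition \ref{prop: LQEL with delta=r and delta=r-1} (covering the extremal cases $\delta=2=r$ and $\delta=1=r-1$ coming from Proposition \ref{prop: B is QEL}) together with the classification of $OADP$-surfaces in \cite[Theorem~4.10]{ciliberto-mella-russo} for $\delta=0$, $d=1$, with the nondegenerate case $n=6$, $d\geq2$ deliberately left open. Your additional remarks — reading $d$ off the degree $2d-1$ of $\Sec(\B)$, and noting that $d\geq2$ forces $\langle\B\rangle=\PP^6$ since otherwise $\Sec(\B)$ would be a hyperplane — are exactly the implicit glue the paper's one-line proof relies on.
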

\begin{lemma}\label{lemma: r=2 B nondegenerate}
If $r=2$, $n=6$ and $\langle \B \rangle = \PP^6$, 
then  one of the following cases holds:
\begin{enumerate}[(A)]
\item \label{case 2-fold a=0 lambda=7}
 $a=0$,  $\lambda=7$, $g=1$, $\chi(\O_{\B})=0$;
\item \label{case 2-fold a leq 3}
 $0\leq a \leq 3$, $\lambda=8-a$, $g=3-a$, $\chi(\O_{\B})=1$.
\end{enumerate}
\end{lemma}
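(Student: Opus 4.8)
The plan is to combine the numerical relations already available from Proposition \ref{prop: hilbert polynomial} with two bounds — an upper bound on the degree and a lower bound on the holomorphic Euler characteristic — and then to enumerate the finitely many surviving quadruples $(a,\lambda,g,\chi(\O_\B))$. First I would record the standing data: since $\langle\B\rangle=\PP^6$ we have $\epsilon=0$, and by Proposition \ref{prop: B is QEL} the base locus is a $QEL$-surface of type $\delta=2r+2-n=0$, so the cohomology vanishing underlying Proposition \ref{prop: hilbert polynomial} applies. Substituting $n=6$ and $\epsilon=0$ into the two formulas of that proposition for $r=2$ yields the single-parameter description
\[
g=2\lambda+a-13,\qquad \chi(\O_\B)=a+\lambda-7,
\]
so everything is controlled once $a$ and $\lambda$ are bounded.

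Next I would bound the degree from above by the usual Castelnuovo argument. Let $\B\supset C\supset\Lambda$ be a chain of general linear sections, with $C\subset\PP^5$ an irreducible nondegenerate curve and $\Lambda\subset\PP^4$ a set of $\lambda$ points in uniform position. Exactly as in the proof of Proposition \ref{prop: 3-fold in P8 - S nonlinear}, the restriction sequences together with nondegeneracy give $h^0(\PP^6,\I_\B(2))\le h^0(\PP^4,\I_\Lambda(2))$, while $h^0(\PP^6,\I_\B(2))=n+1+a=7+a$ because $\sS$ is normal and linearly normal (Lemma \ref{prop: cohomology I2B}). Hence $h_\Lambda(2)\le 8-a$. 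If $\lambda\ge 9$, then any nine of the points impose nine independent conditions on quadrics by Lemma \ref{prop: castelnuovo argument}, forcing $h_\Lambda(2)\ge 9>8$, a contradiction; so $\lambda\le 8$, and Lemma \ref{prop: castelnuovo argument} then gives $h_\Lambda(2)=\lambda$. Therefore $\lambda\le 8-a$.

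The subtle point — and the one place where the Riemann--Roch data and Castelnuovo's argument alone are insufficient — is a lower bound on $\chi(\O_\B)$, needed to discard the spurious sectional-genus-zero quadruples (for instance $a=1$, $\lambda=6$, $g=0$). Here I would invoke Kodaira vanishing: for a general smooth very ample section $C\in|H|$ the sequence $0\to\O_\B(-C)\to\O_\B\to\O_C\to0$, together with $H^1(\O_\B(-C))\cong H^1(K_\B+C)^{\vee}=0$, shows $q:=h^1(\O_\B)\le h^1(\O_C)=g$, whence $\chi(\O_\B)=1-q+p_g\ge 1-g$. Translated through the relations above this reads $3\lambda+2a\ge 21$.

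Finally I would intersect $\lambda\le 8-a$ with $3\lambda+2a\ge 21$ (and $a\ge 0$): comparing $3\lambda\le 24-3a$ with $3\lambda\ge 21-2a$ forces $a\le 3$, and for each $a\in\{0,1,2,3\}$ the two inequalities pin $\lambda$ down to $\{7,8\}$ when $a=0$ and to $8-a$ when $a\ge 1$. Reading off $g$ and $\chi(\O_\B)$ from the displayed formulas then produces exactly the quadruple of case (A), namely $a=0$, $\lambda=7$, $g=1$, $\chi(\O_\B)=0$, and those of case (B), namely $\lambda=8-a$, $g=3-a$, $\chi(\O_\B)=1$ for $0\le a\le 3$. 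The main obstacle is recognizing that the geometric input $q\le g$ (equivalently $\chi(\O_\B)\ge 1-g$) is indispensable: without it the purely numerical constraints would admit the extra sectional-genus-zero cases that must be ruled out.
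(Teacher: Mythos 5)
Your proof is correct, and it differs from the paper's own argument in one essential point. Both start identically: the relations $g=2\lambda+a-13$ and $\chi(\O_{\B})=\lambda+a-7$ from Proposition \ref{prop: hilbert polynomial}, and the upper bound $\lambda\leq 8-a$ from the usual Castelnuovo argument (Lemma \ref{prop: castelnuovo argument}) applied to a general zero-dimensional linear section. The difference is the lower bound. The paper invokes only $g\geq 0$, obtaining $(13-a)/2\leq\lambda\leq 8-a$; these inequalities still admit the quadruple $(a,\lambda,g,\chi(\O_{\B}))=(1,6,0,0)$, which is absent from the statement, and the paper disposes of it only through the clause ``proceeding as in Proposition \ref{prop: 2-fold in P6}'', where the possibility $\lambda=6$, $g=0$ is excluded by appealing to the classification of surfaces with small invariants \cite{ionescu-smallinvariants} (no linearly normal smooth surface of degree $6$ and sectional genus $0$ spans $\PP^6$). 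You instead prove the sharper bound $\chi(\O_{\B})\geq 1-g$, via Kodaira vanishing and the restriction sequence giving $h^1(\O_{\B})\leq h^1(\O_C)=g$, which translates into $3\lambda+2a\geq 21$; this removes $(1,6,0,0)$ purely numerically (it would require $0\geq 1$) while retaining the genuine genus-zero case $(a,\lambda,g,\chi(\O_{\B}))=(3,5,0,1)$, and your enumeration then lands exactly on the list (A)--(B). What each approach buys: yours is self-contained and arithmetic, replacing the appeal to an external classification by a standard vanishing argument, and it makes explicit why the stated list is complete --- you are right that $g\geq 0$ alone is not enough; the paper's version is shorter on the page and parallel to how the same exclusion is handled in the hypersurface case, at the cost of leaving the elimination of the sixth numerically admissible quadruple implicit in a cross-reference.
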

\begin{proof}
By Proposition \ref{prop: hilbert polynomial}
 it follows that
$g=2\lambda+a-13$ and $\chi(\O_{\B})=\lambda+a-7$.
By Lemma \ref{prop: castelnuovo argument} and using that $g\geq0$
(proceeding as in Proposition \ref{prop: 2-fold in P6}),
we obtain $(13-a)/2 \leq \lambda \leq 8-a$.
\end{proof}
\begin{lemma}\label{lemma: double point formula r=2}
 If $r=2$, $n=6$ and $\langle \B \rangle = \PP^6$, then
one of the following cases holds:
\begin{itemize}
 \item $a=0$, $d=4$, $\Delta=1$;
 \item $a=1$, $d=3$, $\Delta=2$;
 \item $a=2$, $d=2$, $\Delta=4$;
 \item $a=3$, $d=2$, $\Delta=5$.
\end{itemize}
\end{lemma}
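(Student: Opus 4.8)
The plan is to combine the numerical data from Lemma \ref{lemma: r=2 B nondegenerate} with the double point formula of Proposition \ref{prop: double point formula}, exactly as was done for the three-dimensional case in the first proof of Proposition \ref{prop: 3-fold in P8}. Since $r=2$, $n=6$ and $\langle\B\rangle=\PP^6$, Proposition \ref{prop: B is QEL} gives $\delta=2r+2-n=0$, so the double point formula applies. First I would write out the formula for $r=2$ explicitly, namely
\begin{equation*}
2(2d-1)=\lambda^2-\sum_{j=0}^{2}\binom{5}{j}s_{2-j}(\T_{\B})\cdot H_{\B}^{j}
=\lambda^2-s_2-5\,s_1-10\,\lambda,
\end{equation*}
using that $s_0(\T_\B)\cdot H_\B^2=\deg(\B)=\lambda$ (here I adopt the abbreviations $s_j=s_j(\N_{\B,\PP^6})\cdot H_\B^{2-j}$ as in Notation/Assumption \ref{assumption: base}, and recall the Segre classes of $\T_\B$ differ by sign conventions that I would fix against Proposition \ref{prop: segre and chern classes}).

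Next I would substitute the expressions for $s_1$ and $s_2$ from Proposition \ref{prop: segre and chern classes} in the case $r=2$, with $n=6$ and $\epsilon=0$ (so $2^n=64$), obtaining $s_1$ and $s_2$ as affine-linear functions of $\lambda$, $g$ and $\Delta$. This turns the double point formula into a single linear relation among $d$, $\Delta$, $\lambda$ and $g$. Combined with the relation $d\,\Delta=-n\lambda+2\lambda+2^{n-1}+2g-2$ from the same proposition (which at $n=6$ reads $d\,\Delta=-4\lambda+2g+30$), I get two equations linking the pair $(d,\Delta)$ to $(\lambda,g)$. The point is that each of the two numerical possibilities of Lemma \ref{lemma: r=2 B nondegenerate} fixes $\lambda$ and $g$ (and via the table, the admissible $a$), so I can solve for $d$ and $\Delta$ directly.

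Concretely, I would run through the cases: for case \ref{case 2-fold a=0 lambda=7} with $a=0$, $\lambda=7$, $g=1$, substitution should force $d=4$, $\Delta=1$; and for case \ref{case 2-fold a leq 3} I would substitute $\lambda=8-a$, $g=3-a$ for each $a\in\{0,1,2,3\}$, which should yield successively $(d,\Delta)=(4,1),(3,2),(2,4),(2,5)$ matching the claimed four lines of the statement. The only subtlety is to check that the $a=0$ entry of case \ref{case 2-fold a leq 3} does not produce a spurious value distinct from case \ref{case 2-fold a=0 lambda=7}; here one notes that the two numerical cases have different $g$ (and the first proof should show they give the same $(d,\Delta)=(4,1)$, consistent with $a=0$ appearing once in the conclusion), so no genuinely new case arises.

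The main obstacle I anticipate is purely bookkeeping: getting the sign conventions and the index shifts in the Segre/Chern classes right, so that the substitution of Proposition \ref{prop: segre and chern classes} into Proposition \ref{prop: double point formula} is consistent. Once the two linear equations are correctly assembled, solving them for $(d,\Delta)$ in each of the finitely many $(\lambda,g,a)$ cases is immediate arithmetic, and the conclusion is exactly the tabulated list. As a sanity check I would verify that the resulting values satisfy the coindex relation $c=(1-2d)r+dn-3d+2$ and, where relevant, compare against the known examples (Edge variety for $a=1$, and the hyperplane sections of Severi varieties) to confirm the transformations of type $(2,d)$ actually occur.
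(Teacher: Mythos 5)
Your overall route---feeding the numerical cases of Lemma \ref{lemma: r=2 B nondegenerate} into the double point formula of Proposition \ref{prop: double point formula} together with the relations of Proposition \ref{prop: segre and chern classes}, then solving for $(d,\Delta)$ case by case---is exactly the paper's proof. However, two points in your execution need repair before the arithmetic actually closes.

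First, what you wave off as ``sign conventions'' is a genuine discrepancy: the double point formula involves the Segre classes of the \emph{tangent} bundle $\T_{\B}$, whereas the $s_j$ of Proposition \ref{prop: segre and chern classes} are Segre classes of the \emph{normal} bundle $\N_{\B,\PP^6}$, and these differ by far more than a sign (for instance $s_1(\T_{\B})\cdot H_{\B}=-c_1$ while $s_1(\N_{\B,\PP^6})\cdot H_{\B}=c_1-7\lambda$). If you substitute the normal-bundle classes into $\lambda^2-s_2-5s_1-10\lambda$ as written, the relation becomes $4d-2=\lambda^2+8\lambda-14g+\Delta-50$, which in the case $a=0$, $\lambda=7$, $g=1$ has no integer solution compatible with $d\Delta=4$; the computation simply fails. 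The correct conversion---and this is precisely what the paper does---is $s_1(\T_{\B})\cdot H_{\B}=-c_1$ and $s_2(\T_{\B})=c_1(\T_{\B})^2-c_2=12\chi(\O_{\B})-2c_2$, where the last equality is Noether's formula; note that this brings in $\chi(\O_{\B})$ (supplied by Lemma \ref{lemma: r=2 B nondegenerate}), which is unavoidable because Proposition \ref{prop: segre and chern classes} records $c_1=c_1(\T_{\B})\cdot H_{\B}$ and $c_2$ but not the self-intersection $K_{\B}^2$.

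Second, the system ($d\Delta=2a+4$ together with the corrected double point relation) does not always have a unique admissible solution: for $a=1$ (so $\lambda=7$, $g=2$) one gets $4d^2-16d+12=0$, with the two integer roots $d=1$ and $d=3$. You must discard $d=1$, and the reason is not arithmetic: by Proposition \ref{prop: B is QEL}, $\Sec(\B)$ is a hypersurface of degree $2d-1$ containing $\B$, so $d=1$ would force $\B$ to be degenerate, against $\langle\B\rangle=\PP^6$ (this is the $d\geq2$ clause of Proposition \ref{prop: possibili casi 2-fold}). The spurious roots in the other cases ($d=1/2$ for $a=0$, $d=5/2$ for $a=3$) are non-integral and harmless. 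With these two fixes your case-by-case substitution does reproduce the four lines of the statement.
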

\begin{proof}
 We have $s_1(\T_{\B})\cdot H_{\B} = -c_1 $ and 
 $ s_2(\T_{\B}) = c_1^2-c_2=12\chi(\O_{\B}) -2c_2 $. 
So, by Proposition \ref{prop: double point formula}, we obtain 
\begin{equation}
2(2d-1) = \lambda^2-10\lambda-12\chi(\O_{\B})+2c_2+5c_1 .
\end{equation}
Now, by Propositions \ref{prop: hilbert polynomial} and
 \ref{prop: segre and chern classes}, 
we obtain
\begin{equation}
d\Delta = 2a+4,\quad  
 \Delta = (g^2+(-2a-4)g-16d+a^2-4a+75)/8 ,
\end{equation}
and then we conclude by Lemma \ref{lemma: r=2 B nondegenerate}.
\end{proof}
\begin{proposition}\label{prop: r=2 B nondegenerate}
If $r=2$,  $n=6$ and $\langle \B\rangle=\PP^6$ 
 then  one of the following cases holds:
\begin{enumerate}[(I)]
\setcounter{enumi}{8} 
\item $a=0$,  $\lambda=7$,  $g=1$, $\B$ is an elliptic scroll $\PP_{C}(\E)$ with $e(\E)=-1$;
\item $a=0$,  $\lambda=8$,  $g=3$, $\B$ is the blow-up of $\PP^2$ at $8$ points $p_1\ldots,p_8$, $|H_{\B}|=|4H_{\PP^2}-p_1-\cdots-p_8|$;
\item $a=1$,  $\lambda=7$,  $g=2$, $\B$ is the blow-up of $\PP^2$ at $6$ points $p_0\ldots,p_5$, $|H_{\B}|=|4H_{\PP^2}-2p_0-p_1-\cdots-p_5|$;
\item $a=2$,  $\lambda=6$,  $g=1$, $\B$ is the blow-up of $\PP^2$ at $3$ points $p_1,p_2,p_3$, $|H_{\B}|=|3H_{\PP^2}-p_1-p_2-p_3|$;
\item $a=3$,  $\lambda=5$,  $g=0$, $\B$ is a rational normal scroll.
\end{enumerate}
\end{proposition}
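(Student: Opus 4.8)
The plan is to combine the numerical constraints already established with the classification of smooth surfaces of low degree and small codimension in $\PP^6$. By Lemma \ref{lemma: r=2 B nondegenerate}, the possible numerical types are collected into the two families (\ref{case 2-fold a=0 lambda=7}) and (\ref{case 2-fold a leq 3}); and by Lemma \ref{lemma: double point formula r=2} the auxiliary data $(a,d,\Delta)$ is pinned down in each case. First I would note that, by Proposition \ref{prop: B is QEL}, $\B\subset\PP^6$ is a smooth nondegenerate $QEL$-surface of type $\delta=2r+2-n=0$, hence in particular it is linearly normal (Assumption \ref{assumption: base}) and an $OADP$-surface is excluded here since $d\geq2$ forces $\Sec(\B)$ to be a genuinely nonlinear hypersurface. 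So in every surviving case $\B$ is a smooth linearly normal surface of degree $\lambda\in\{5,6,7,8\}$ spanning $\PP^6$, and the task reduces to identifying which surfaces occur.

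The main engine is the classification of smooth surfaces of low degree in projective space, as used already in Proposition \ref{prop: 2-fold in P6}; the relevant references are \cite{ionescu-smallinvariants}, \cite{ionescu-smallinvariantsII}, \cite{ionescu-smallinvariantsIII} together with the sectional-genus data. For each pair $(\lambda,g)$ forced by Lemmas \ref{lemma: r=2 B nondegenerate} and \ref{lemma: double point formula r=2}, I would match $\B$ against this classification. Concretely, when $\lambda=7$, $g=1$, $\chi(\O_{\B})=0$ one gets an elliptic scroll $\PP_C(\E)$, and the invariant $e(\E)=-1$ is determined by requiring the scroll to be nondegenerate of the correct degree in $\PP^6$; when $\lambda=8$, $g=3$ the surface is a rational surface realized as the blow-up of $\PP^2$ at eight points embedded by $|4H_{\PP^2}-p_1-\cdots-p_8|$ (this is exactly the Crauder–Katz octic recalled in the Introduction). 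The cases with $a=1,2,3$ give, respectively, $(\lambda,g)=(7,2)$, $(6,1)$, $(5,0)$, and for each the classification of linearly normal surfaces of that degree and sectional genus leaves only the stated blow-up of $\PP^2$ (or rational normal scroll for $\lambda=5$, $g=0$). In each instance I would confirm that the prescribed linear system $|H_{\B}|$ has the right dimension $h^0(H_{\B})=n+1=7$ and degree, which follows from Riemann–Roch on the blown-up plane together with the genus formula.

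The key verification throughout is consistency: I must check that each abstract surface from the classification actually embeds nondegenerately in $\PP^6$ with the exact invariants $(\lambda,g,\chi(\O_{\B}))$ dictated by the lemmas, and that no competing surface with the same numerical invariants exists. For the rational cases this is a Riemann–Roch and very-ampleness computation for the chosen linear system on $\Bl_{\{p_i\}}(\PP^2)$; for the elliptic scroll it is the standard numerical description of scrolls over an elliptic curve. The hardest point I expect is eliminating spurious candidates and fixing the invariant $e(\E)=-1$ in case (IX): the degree and span determine $\deg(\E)$, but showing the scroll is the genuinely indecomposable (or correctly normalized) one of invariant $-1$ rather than another elliptic scroll of the same degree requires care, and is where I would lean most heavily on the detailed scroll classification in \cite{ionescu-smallinvariants} and the Castelnuovo-type bounds of Chapter \ref{cap: castelnuovo theory}. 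Once each numerical case is matched to a unique surface, the enumeration (IX)--(XIII) follows directly.
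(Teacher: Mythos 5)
Your overall strategy --- pin down $(a,\lambda,g,\chi(\O_{\B}))$ via Lemmas \ref{lemma: r=2 B nondegenerate} and \ref{lemma: double point formula r=2}, then identify $\B$ through the classification of smooth surfaces of low degree --- has the same skeleton as the paper, and for $a\geq1$ it is essentially what the paper does (the paper cites Proposition \ref{prop: 2-fold in P6} for $a=1$ and \cite{ionescu-smallinvariants} for $a\in\{2,3\}$). The divergence is at $a=0$: there the paper does not argue surface-by-surface at all, but invokes Crauder--Katz \cite{crauder-katz-1989}, i.e.\ the classification of special quadro-quartic \emph{Cremona transformations} of $\PP^6$, which is a transformation-level result, whereas you propose to recover cases (IX) and (X) from the numerical invariants and the surface classification alone.

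That replacement has a genuine gap, and it is exactly the point you flag but do not resolve. The invariants $(\lambda,g,\chi(\O_{\B}))=(7,1,0)$ do not determine $e(\E)=-1$: the decomposable elliptic scroll $\PP_{C}(\O_C\oplus\L)$ with $\deg\L=-1$, embedded by $|C_0+\mathfrak{b}f|$ with $\deg\mathfrak{b}=4$, is also a smooth, nondegenerate, linearly normal surface of degree $7$ and sectional genus $1$ in $\PP^6$ (indeed $H^2=-1+8=7$, $h^0(H)=h^0(\mathfrak{b})+h^0(\L\otimes\mathfrak{b})=4+3=7$, and very ampleness holds by \cite[\Rmnum{5} Exercise~2.12]{hartshorne-ag}). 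So Ionescu's lists and the Castelnuovo-type bounds, which see only these invariants, cannot exclude it; leaning on them "most heavily" cannot close this case. What excludes it is the transformation structure: its minimal section $C_0$ has genus $1$ and $C_0\cdot H=-1+4=3$, hence is a plane cubic, and any quadric containing the scroll meets the plane $\langle C_0\rangle$ in a degree-$2$ curve containing that cubic, hence contains the whole plane; therefore the base locus scheme of $|\I_{\B}(2)|$ would contain the scroll together with this plane and could not be the smooth surface $\B$, contradicting speciality. Either this quadric argument or the citation of \cite{crauder-katz-1989} is needed. The same caveat applies in principle to case (X): rational surfaces with $(\lambda,g,\chi)=(8,3,1)$ other than $\Bl_{\{p_1,\ldots,p_8\}}(\PP^2)$ embedded by quartics (e.g.\ conic bundles with $K^2=0$) are not eliminated by the numerics, and it is again the Cremona-level analysis of Crauder--Katz, or an argument using the quadrics through $\B$, that rules them out.
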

\begin{proof}
For $a=0$, $a=1$ and $a\in\{2,3\}$ 
the statement follows, respectively, from \cite{crauder-katz-1989},
 Proposition \ref{prop: 2-fold in P6} and \cite{ionescu-smallinvariants}. 
\end{proof}
\section{Case of dimension 3}\label{sec: dim 3}
Proposition \ref{prop: possibili casi C1} follows from: 
            Proposition \ref{prop: LQEL with delta=r and delta=r-1},
            \cite{fujita-3-fold},
             Theorems \ref{prop: classification CC-varieties} and
             \ref{prop: classification del pezzo varieties}
            and
            \cite{ciliberto-mella-russo}.
\begin{proposition}\label{prop: possibili casi C1} 
If $r=3$, then either $n=8$, $d\geq2$, $\langle \B \rangle = \PP^8$, or  
one of the following cases holds:
\begin{enumerate}[(I)]
 \setcounter{enumi}{13} 
 \item\label{case C1 a}  $n=5$, $d=1$, $\delta=3$, $\B=Q^3\subset\PP^4\subset\PP^5$ is a quadric;
 \item\label{case C1 b}  $n=6$, $d=1$, $\delta=2$, $\B=\PP^1\times\PP^2\subset\PP^5\subset\PP^6$;
 \item\label{case C1 c}  $n=7$, $d=1$, $\delta=1$, $\B\subset\PP^6$ is as 
in one of the following cases:
         \begin{enumerate}[($\ref{case C1 c}_1$)]
         \item\label{case C1 c1} $\PP_{\PP^1}(\O(1)\oplus\O(1)\oplus\O(2))$;
         \item\label{case C1 c2} linear section of $\GG(1,4)\subset\PP^9$; 
         \end{enumerate}
 \item\label{case C1 d}  $n=7$, $d=2$, $\delta=1$, $\B$ is a hyperplane section of $\PP^2\times\PP^2\subset\PP^8$;
 \item\label{case C1 e}  $n=8$,  $d=1$, $\delta=0$, $\B\subset\PP^7$ is an $OADP$-variety, 
 i.e. $\B$ is as in one of the following cases:
 \begin{enumerate}[($\ref{case C1 e}_1$)]
 \item\label{case C1 e1} $\PP_{\PP^1}(\O(1)\oplus\O(1)\oplus\O(3))$ or
                         $\PP_{\PP^1}(\O(1)\oplus\O(2)\oplus\O(2))$; 
 \item\label{case C1 e2} Edge variety of degree $6$ (i.e. $\PP^1\times\PP^1\times\PP^1$) or 
                         Edge variety of degree $7$;
 \item\label{case C1 e3} $\PP_{\PP^2}(\E)$, where $\E$ 
is a vector bundle with  $c_1(\E)=4$ and $c_2(\E)=8$, given as an extension by
the following exact sequence
$0\rightarrow\O_{\PP^2}\rightarrow\E\rightarrow 
\I_{\{p_1,\ldots,p_8\},\PP^2}(4)\rightarrow0$. 
 \end{enumerate}
\end{enumerate}
\end{proposition}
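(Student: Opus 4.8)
The plan is to argue by cases on the secant defect $\delta=\delta(\B)$. By Proposition \ref{prop: B is QEL} we have $\delta=2r+2-n=8-n$, and $\B$ is a $QEL$-variety whose secant variety $\Sec(\B)\subset\PP^n$ is a hypersurface of degree $2d-1$ and of dimension $2r+1-\delta=n-1$. Since $\B$ is the base locus of a quadratic transformation it has codimension at least $2$, so $n\geq5$; together with $\delta\geq0$ this gives $n\in\{5,6,7,8\}$ and $\delta\in\{3,2,1,0\}$. The value of $d$ will be read off from a single dichotomy: because $\dim\Sec(\B)=n-1$, if $\B$ is degenerate then $\langle\B\rangle$ is forced to be a hyperplane, $\Sec(\B)=\langle\B\rangle$ is a hyperplane, and $d=1$; if $\B$ is nondegenerate then $\Sec(\B)$ is a proper hypersurface of degree $\geq3$, so $d\geq2$. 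Throughout I will use that $\B$ is projectively normal, hence linearly normal — this follows from the vanishing $h^j(\PP^n,\I_{\B}(k))=0$ for $j,k\geq1$ established in the proof of Proposition \ref{prop: hilbert polynomial} — so that the classification theorems for linearly normal $(L)QEL$-varieties apply directly.

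The top two defects are settled at once. For $\delta=3=r$, Proposition \ref{prop: LQEL with delta=r and delta=r-1}(1) forces $\B$ to be a quadric hypersurface; as a $3$-fold it spans a $\PP^4$ and is thus degenerate in $\PP^5$, giving a $Q^3\subset\PP^4\subset\PP^5$ with $d=1$, i.e.\ case (XIV). For $\delta=2=r-1$, Proposition \ref{prop: LQEL with delta=r and delta=r-1}(2) leaves only the three-dimensional member $\PP^1\times\PP^2\subset\PP^5$; it is degenerate in $\PP^6$, so $d=1$ and we land in case (XV).

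The genuine content is in $\delta\in\{1,0\}$, where I split along the degenerate/nondegenerate (equivalently $d=1$ versus $d\geq2$) dichotomy above. When $\B$ is degenerate it fills its linear span with secant lines, and the transformation is of type $(2,1)$: I will invoke the classification of such $QEL$-varieties — Proposition \ref{prop: LQEL with delta=r and delta=r-1}, Theorem \ref{prop: classification CC-varieties}, and \cite[Theorems~4.10 and 7.1]{ciliberto-mella-russo} — to obtain the scroll/linear-section list of case (XVI) for $\delta=1$ and the $OADP$-list of case (XVIII) for $\delta=0$. When $\B$ is nondegenerate and $\delta=1$, observe that $\delta=(r-1)/2$ and $\Sec(\B)\subsetneq\PP^7$, so Proposition \ref{prop: QEL type delta-1 mezzi} (equivalently \cite{fujita-3-fold}) applies; of its members only the hyperplane section of $\PP^2\times\PP^2\subset\PP^8$ lies in $\PP^7$, and since its secant variety has degree $3$ we get $d=2$, i.e.\ case (XVII). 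Finally, when $\B$ is nondegenerate and $\delta=0$ we are precisely in the remaining alternative $n=8$, $d\geq2$, $\langle\B\rangle=\PP^8$ of the statement, which is intentionally left open here.

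The main obstacle is not a computation but the disciplined bookkeeping of hypotheses: in each regime I must check that smoothness, linear normality, and the correct position of the secant variety ($\Sec(\B)=\langle\B\rangle$ versus $\Sec(\B)\subsetneq\langle\B\rangle$), together with the $QEL$ (rather than merely $LQEL$) property, are exactly those demanded by the theorem I quote, and that the numerical triple $(n,r,\delta)$ isolates a unique entry of each classification list. The two places where the argument rests on genuinely deep external input are the nondegenerate $\delta=1$ case (Fujita's classification of $3$-folds, via Proposition \ref{prop: QEL type delta-1 mezzi}) and the degenerate $\delta=0$ case ($OADP$ $3$-folds of Ciliberto--Mella--Russo); everything else is an organized consequence of Proposition \ref{prop: B is QEL}.
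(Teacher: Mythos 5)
Your proposal is correct and takes essentially the same route as the paper, whose proof is precisely the list of citations you have organized by secant defect: Proposition \ref{prop: LQEL with delta=r and delta=r-1} for $\delta=3,2$, Theorem \ref{prop: classification CC-varieties} for the degenerate $\delta=1$ case, Fujita's classification \cite{fujita-3-fold} (packaged in the paper as Proposition \ref{prop: QEL type delta-1 mezzi}) for the nondegenerate $\delta=1$ case, and \cite{ciliberto-mella-russo} for the $OADP$ case $\delta=0$, $d=1$. The one citation you should add: in the degenerate $\delta=1$ branch, Theorem \ref{prop: classification CC-varieties} leaves a residual alternative ``Fano variety of the first species of index $\geq(r+1)/2=2$'', and identifying such a $3$-fold spanning $\PP^6$ as the quintic del Pezzo $3$-fold, i.e.\ a linear section of $\GG(1,4)\subset\PP^9$, requires Theorem \ref{prop: classification del pezzo varieties}, which the paper cites and your list of invoked results omits.
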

In the following we denote by 
$\Lambda\subsetneq C\subsetneq S\subsetneq \B$ 
a sequence of general linear sections of $\B$.
\begin{lemma}\label{lemma: r=3 B nondegenerate}
If $r=3$, $n=8$ and $\langle \B \rangle = \PP^8$,
 then one of the following cases holds:
\begin{enumerate}[(A)]
\item \label{a=0,lambda=13}
 $a=0$, $\lambda=13$, $g=8$, $K_S\cdot H_S=1$, $K_S^2=-1$; 
\item \label{a=1,lambda=12}
 $a=1$, $\lambda=12$, $g=7$, $K_S\cdot H_S=0$, $K_S^2=0$; 
\item \label{a geq2}
 $0\leq a\leq6$, $\lambda=12-a$, $g=6-a$, $K_S\cdot H_S=-2-a$.
\end{enumerate}
\end{lemma}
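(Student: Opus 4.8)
The plan is to pin down the Hilbert polynomial of $\B$ numerically, exactly as in the proof of Proposition~\ref{prop: 3-fold in P8 - S nonlinear}, and then to read off the three cases according to the value of $K_S\cdot H_S=(K_{\B}+H_{\B})\cdot H_{\B}^2=2g-2-\lambda$. Throughout I use the general linear sections $\Lambda\subsetneq C\subsetneq S\subsetneq\B$, where $S\subset\PP^7$ is a surface, $C\subset\PP^6$ a curve of degree $\lambda$ and genus $g$, and $\Lambda\subset\PP^5$ a set of $\lambda$ points in symmetric (hence uniform) position. Since $\sS$ is normal and linearly normal, $h^0(\PP^8,\I_{\B}(2))=9+a$ (as in the proof of Proposition~\ref{prop: hilbert polynomial}), and the nondegeneracy of $\B,S,C$ together with the hyperplane-section sequences gives $h^0(\PP^8,\I_{\B}(2))\le h^0(\PP^5,\I_{\Lambda}(2))$, whence $h_{\Lambda}(2)\le 21-(9+a)=12-a$. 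On the other hand uniform position yields $h_{\Lambda}(2)\ge\min\{\lambda,11\}$, and Lemma~\ref{prop: castelnuovo argument} gives $h_{\Lambda}(2)=\lambda$ when $\lambda\le 11$. Comparing these I get $a\le 1$ as soon as $\lambda\ge 11$, and $\lambda\le 12-a$ when $a\ge 2$; inserting the remaining values $h_{\Lambda}(2)\in\{11,12\}$ into Castelnuovo's Lemma (Proposition~\ref{prop: castelnuovo lemma}) and the Fano--Harris Theorem~\ref{prop: extension castelnuovo lemma by fano harris} gives a first crude bound $\lambda\le 14$.

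Next I would control $g$ by Castelnuovo's bound (Proposition~\ref{prop: castelnuovo bound}) applied to $C\subset\PP^6$ and split on the sign of $K_S\cdot H_S=2g-2-\lambda$. When $K_S\cdot H_S\le 0$ the argument of Proposition~\ref{prop: 3-fold in P8 - S nonlinear} carries over. If $K_S\sim 0$, then by adjunction $C$ is a linearly normal canonical curve (using projective normality of $\B$), so $g-1=6$, i.e. $g=7$, $\lambda=12$; the relation $\chi(\O_S)=25-3\lambda+2g-a$ (got by comparing $P_{\B}(1)=9$ with the sectional-genus relation) forces $\chi(\O_S)=3-a$, and $S$ being a $K3$ surface gives $\chi(\O_S)=2$, whence $a=1$ and $K_S^2=0$: this is case~(B). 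If instead $K_S\not\sim 0$ with $K_S\cdot H_S\le 0$, then $K_S$ is non-effective, so by \cite[\Rmnum{5} Lemma~1.7]{hartshorne-ag} one has $h^2(\O_S)=h^2(\O_S(1))=0$; together with $\chi(\O_{\B}(-1))=0$, $\chi(\O_{\B}(1))=9$, $\chi(\O_{\B}(2))=36-a$ and $\chi(\O_{\B})=1-q$ (with $q=h^1(\O_S)$, the vanishings obtained as in Proposition~\ref{prop: 3-fold in P8 - S nonlinear}) this determines the cubic $P_{\B}$ and yields $\lambda=12-a-3q$, $g=6-a-5q$. The constraint $g\ge 0$ forces $q=0$ and $a\le 6$, giving case~(C) with $K_S\cdot H_S=-2-a$; the possibilities $q\ge 1$ are discarded by Fujita's classification of polarized manifolds of small $\Delta$-genus, just as $(q,\lambda,g)=(1,8,0)$ was discarded there.

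The case $K_S\cdot H_S>0$ produces case~(A). Here $2g-2-\lambda\ge 1$ combined with Castelnuovo's bound leaves only $\lambda=13$ with $g\in\{8,9\}$, together with the a priori admissible range $\lambda=14$ (with $g\in\{9,10,11\}$). The extremal genera, where $g$ meets the Castelnuovo bound and $\Lambda$ is forced onto a rational normal curve by Proposition~\ref{prop: castelnuovo lemma}, and the whole range $\lambda\ge 14$, are to be eliminated by the double point formula (Proposition~\ref{prop: double point formula}, available since $\delta=2r+2-n=0$) fed with the Segre classes of Proposition~\ref{prop: segre and chern classes}: this produces a relation $2(2d-1)=\lambda^2-\sum_{j=0}^{3}\binom{7}{j}s_{3-j}(\T_{\B})\cdot H_{\B}^{j}$ whose right-hand side depends on $\lambda,g,a,d,\Delta$, and I expect it to admit no integral solution with $d\ge 1$ outside $\lambda=13$, $g=8$. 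What survives is $\lambda=13$, $g=8$; since $g<\pi_0(13,5)=9$ the points $\Lambda$ are not extremal, so $h_{\Lambda}(2)=12$ and hence $a=0$. The value $K_S^2=-1$ then follows from Noether's formula $K_S^2=12\chi(\O_S)-c_2(S)$, with $c_2(S)=c_2(\T_{\B})\cdot H_{\B}-c_1+\lambda$ computed from Proposition~\ref{prop: segre and chern classes}.

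The hard part will be precisely the elimination carried out in the last paragraph: ruling out $\lambda\ge 14$ and the Castelnuovo-extremal curves. The proven Castelnuovo-type refinements (Proposition~\ref{prop: castelnuovo lemma}, Theorem~\ref{prop: extension castelnuovo lemma by fano harris}) are just short of applicable in the borderline degree $\lambda=14$ (Fano--Harris needs $\lambda\ge 2N+5=15$), so I cannot close this range by the point-scheme estimates alone and must instead extract a numerical contradiction from the double point formula together with the Chern-class identities and the relation $c(\sS)=5-d$, checking that no admissible pair $(d,\Delta)$ with $d\ge 1$ remains; this, rather than the determination of the three admissible numerical profiles themselves, is where the real work lies.
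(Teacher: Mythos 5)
Your skeleton --- general linear sections, the bound $\min\{\lambda,11\}\le h_{\Lambda}(2)\le 12-a$, the trichotomy on $K_S$, the vanishing argument giving $\lambda=12-a-3q$, $g=6-a-5q$ when $K_S\cdot H_S\le 0$ and $K_S\nsim 0$, and the canonical-curve argument when $K_S\sim 0$ --- matches the paper's proof, and the branches producing cases (B) and (C) are essentially sound (one small point: $K_S\sim 0$ only says the minimal surface $S$ is K3 \emph{or abelian}; the abelian case $\chi(\O_S)=0$ must be excluded, e.g.\ because your own relation $\chi(\O_S)=25-3\lambda+2g-a$ would then give $a=3$, impossible once $\lambda=12$ forces $a\le 1$). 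The genuine gap is the branch $K_S\cdot H_S>0$, where everything except $(\lambda,g)=(13,8)$ must be eliminated. Your proposed tool, the double point formula, cannot do this. For $a=0$ the pair $(d,\Delta)=(5,1)$ is forced, and the double point formula combined with the identities of Proposition \ref{prop: segre and chern classes} is a single equation in which the only quantity not already determined by $(\lambda,g,a,d,\Delta)$ is $K_{\B}^3$ (note that $K_{\B}^2\cdot H_{\B}$, equivalently $K_S^2$, is already pinned down by Riemann--Roch: $K_S^2=-22\lambda+12g+d\Delta-12a+184$). So the formula merely \emph{evaluates} $K_{\B}^3$ --- this is exactly the content of Lemma \ref{lemma: double point formula} --- and yields no sign or integrality contradiction: for instance $(\lambda,g)=(14,9)$ gives $K_{\B}^3=20$ and $(14,10)$ gives $K_{\B}^3=-4$, perfectly admissible values. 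This is precisely why the paper has to bring in three further, genuinely geometric inputs for $a=0$: Ciliberto's refinement of Castelnuovo's bound (Theorem \ref{prop: refinement castelnuovo bound by ciliberto} with $\vartheta=1$, legitimate because $h_{\Lambda}(2)=12$), which gives $g\le 2\lambda-18$ and kills $(13,9)$ and $(14,11)$; an adjunction-theoretic argument ($K_{\B}+4H_{\B}$ fails to be nef when $h^1(S,\O_S)>0$, while $h^1(S,\O_S)=0$ contradicts $\chi(\O_{\B})=2$), which kills $(14,9)$; and a hands-on analysis of the mobile part of $|K_S|$, exploiting that $S$ is cut out by quadrics and so contains no plane curves of degree $\ge 3$, which kills $(14,10)$. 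None of these can be replaced by the numerical identity you invoke.

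There is also a secondary flaw that infects both your bound $\lambda\le 14$ and your deduction of $a=0$: the inference ``$g<\pi_0(13,5)$, hence $\Lambda$ is not extremal, hence $h_{\Lambda}(2)=12$'' is a non sequitur, since a curve of non-maximal genus can perfectly well have its general hyperplane section lying on a rational normal curve (any curve on a quintic scroll in $\PP^6$ does). The mechanism that actually makes Proposition \ref{prop: castelnuovo lemma} and Theorem \ref{prop: extension castelnuovo lemma by fano harris} bite --- and the one the paper uses --- is that $\Lambda$, being a general linear section of the base locus $\B$ of a quadratic transformation, is itself scheme-theoretically cut out by quadrics of $\PP^5$; if one had $h_{\Lambda}(2)=11$ (resp.\ $h_{\Lambda}(2)=12$ with $\lambda\ge 15$), those results would force every quadric through $\Lambda$ to contain a rational normal (resp.\ elliptic normal) curve, contradicting $\dim\Lambda=0$. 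With this fix your first paragraph and the conclusion $a=0$ at $(\lambda,g)=(13,8)$ go through, but the elimination described above remains the missing core of the proof.
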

\begin{proof}
Firstly we note that,
from the exact sequence
 $0\rightarrow\mathcal{T}_{S}\rightarrow\mathcal{T}_{\B}|_{S}\rightarrow\O_{S}(1)\rightarrow0$,
 we deduce 
$c_2=c_2(S)+c_1(S)=12\chi(\O_{S})-K_{S}^2-K_{S}\cdot H_{S}$
and hence
\begin{equation}
K_S^2=14\lambda+12\chi(\O_S)-12g+d\Delta-116 = -22\lambda+12g+d\Delta-12a+184.
\end{equation}
Secondly we note that (see Lemma \ref{prop: castelnuovo argument}), putting 
 $h_{\Lambda}(2):=h^0(\PP^5,\O(2))-h^0(\PP^5,\I_{\Lambda}(2))$,
we have  
\begin{equation}\label{hilbert-function}
 \mathrm{min}\{\lambda,11\} \leq h_{\Lambda}(2)\leq 
         21-h^0(\PP^8,\I_{\B}(2))=12-a.
\end{equation}
Now we establish the following:
\begin{claim}\label{claim: KsHs<0} 
If $K_S\cdot H_S\leq0$ and $K_S\nsim 0$, then $\lambda=12-a$ and $g=6-a$.
\end{claim}
\begin{proof}[Proof of the Claim]
Similarly to Case \ref{case: KS nsim 0}, we obtain that
$P_{\B}(-1)=0$ and $P_{\B}(0)=1-q$, where 
$q:=h^1(S,\O_S)=h^1(\B,\O_{\B})$;  in particular $g=-5q-a+6$ and  $\lambda=-3q-a+12$.
Since $g\geq0$  we have $5q\leq 6-a$ and
the possibilities are:
if $a\leq1$ then $q\leq1$; if $a\geq 2$ then $q=0$.
If $(a,q)=(0,1)$ then $(g,\lambda)=(1,9)$ 
and the case is excluded by 
 \cite[Theorem~12.3]{fujita-polarizedvarieties}\footnote{Note that 
$\B$ cannot be a scroll over a curve 
(this follows from (\ref{eq: relation scroll}) and (\ref{eq: second relation scroll}) below
   and also it follows from \cite[Proposition~3.2(i)]{mella-russo-baselocusleq3}).}; 
if $(a,q)=(1,1)$ then $(g,\lambda)=(0,8)$ 
and the case is excluded by 
\cite[Theorem~12.1]{fujita-polarizedvarieties}.
Thus we have $q=0$ and hence
$g=6-a$ and $\lambda=12-a$;
in  particular we have $a\leq 6$.
\end{proof}
Now we discuss the cases according 
to the value of  $a$.
\begin{case}[$a=0$]
It is clear that $\varphi$ must be of type $(2,5)$ and hence 
$K_S^2=-22\lambda+12g+189$. 
By Claim \ref{claim: KsHs<0}, if $K_S\cdot H_S=2g-2-\lambda<0$,
we fall into case (\ref{a geq2}). 
So we suppose that $K_S\cdot H_S\geq0$, namely that $g\geq\lambda/2+1$.
From Castelnuovo's bound it follows that
 $\lambda\geq12$  and if $\lambda=12$ then  
$K_S\cdot H_S=0$,  $g=7$ and hence $K_S^2=9$.
Since this is impossible by Claim \ref{claim: KsHs<0}, we conclude that
 $\lambda\geq 13$.
Now by (\ref{hilbert-function}) it follows that
 $11\leq h_{\Lambda}(2)\leq12$, but
if $h_{\Lambda}(2)=11$ 
from Castelnuovo Lemma (Proposition \ref{prop: castelnuovo lemma})  
we obtain a contradiction.
Thus we have $h_{\Lambda}(2)=12$ and
$h^0(\PP^5,\I_{\Lambda}(2))=h^0(\PP^8,\I_{\B}(2))=9$.
So from Theorem \ref{prop: extension castelnuovo lemma by fano harris} we deduce 
that $\lambda\leq 14$ and furthermore, 
 by the refinement of Castelnuovo's bound contained in 
 Theorem \ref{prop: refinement castelnuovo bound by ciliberto}, 
we obtain $g\leq 2\lambda-18$.
In summary we have the following possibilities:
\begin{enumerate}[(i)]
 \item\label{case: T1} $\lambda=13$, $g=8$, $K_S\cdot H_S=1$, $\chi(\O_S)=2$, $K_S^2=-1$;
 \item\label{case: T2} $\lambda=14$, $g=8$, $K_S\cdot H_S=0$, $\chi(\O_S)=-1$, $K_S^2=-23$;
 \item\label{case: T3} $\lambda=14$, $g=9$, $K_S\cdot H_S=2$, $\chi(\O_S)=1$, $K_S^2=-11$;
 \item\label{case: T4} $\lambda=14$, $g=10$, $K_S\cdot H_S=4$, $\chi(\O_S)=3$, $K_S^2=1$.
\end{enumerate}
Case (\ref{case: T1}) coincides with case (\ref{a=0,lambda=13}). 
Case (\ref{case: T2}) is excluded by Claim \ref{claim: KsHs<0}.
In the circumstances of case (\ref{case: T3}), we have $h^1(S,\O_S)=h^2(S,\O_S)=h^0(S,K_S)$.
If $h^1(S,\O_S)>0$, since 
$(K_{\B}+4H_{\B})\cdot K_S=K_S^2+3 K_S\cdot H_S=-5<0$,
we see that $K_{\B}+4H_{\B}$ is not nef and then 
we obtain a contradiction by \cite{ionescu-adjunction}.
If $h^1(S,\O_S)=0$, then we also have $h^1(\B,\O_{\B})=h^2(\B,\O_{\B})=0$ and hence 
$\chi(\O_{\B})=1-h^3(\B,\O_{\B})\leq 1$, against the fact that $\chi(\O_{\B})=2\lambda-g-17=2$. 
Thus case (\ref{case: T3}) does not occur.
Finally,  in the circumstances of case (\ref{case: T4}), note 
that $h^0(S,K_S)=2+h^1(S,\O_S)\geq2$ and we  write 
$|K_S|=|M|+F$, where $|M|$ is the mobile  part of the linear system $|K_S|$ 
and $F$ is the fixed part. If $M_1=M$ 
 is a general member of $|M|$, there exists $M_2\in|M|$
having no common irreducible components with $M_1$ and 
so $M^2=M_1\cdot M_2=\sum_{p}\left(M_1\cdot M_2\right)_{p}\geq0$;
furthermore, by using Bertini Theorem, we see that
 $\sing(M_1)$ consists of points $p$   such that 
the intersection multiplicity $\left(M_1\cdot M_2\right)_{p}$ of $M_1$ and $M_2$ in $p$ is at least $2$.
By definition, we also have $M\cdot F\geq0$ and so we deduce 
$2p_a(M)-2=M\cdot (M+K_S)= 2 M^2+ M\cdot F\geq 0$, from which 
$p_a(M)\geq 1$ and $p_a(M)=2$ if $F=0$.
On the other hand,  we have $M\cdot H_S\leq K_S\cdot H_S=4$ and, 
since $S$ is cut out by quadrics,
 $M$ does not contain  planar curves of degree $\geq3$.
If  $M\cdot H_S=4$, then  $F=0$,
 $M^2=1$ and $M$ is  a (possibly disconnected) smooth curve;
since $p_a(M)=2$, $M$ is actually disconnected 
 and so it is a disjoint union  of twisted cubics, conics and lines.  
 But then we obtain  the contradiction that
$p_a(M)=1-\#\{\mbox{connected components of }M\}<0$.
If $M\cdot H_S\leq3$,  then $M$ must be either 
a twisted cubic or a union of conics and lines.  In all these cases
we again obtain the contradiction that 
$p_a(M)=1-\#\{\mbox{connected components of }M\}\leq 0$.
Thus case (\ref{case: T4}) does not occur.
\end{case}
\begin{case}[$a=1$] By  Proposition \ref{prop: 3-fold in P8 - S nonlinear}  
we fall into case (\ref{a=1,lambda=12}) or (\ref{a geq2}).
 \end{case}
\begin{case}[$a\geq2$] By (\ref{hilbert-function}) it follows that
$\lambda\leq 10$ and by  Castelnuovo's bound  it follows that  
$K_S\cdot H_S\leq -4<0$. Thus, by Claim \ref{claim: KsHs<0}
we fall into case (\ref{a geq2}).
\end{case}
\end{proof}
Now we apply the double point formula 
(Proposition \ref{prop: double point formula}) 
in order to obtain additional numerical restrictions 
under the hypothesis of Lemma \ref{lemma: r=3 B nondegenerate}. 
\begin{lemma}\label{lemma: double point formula}
If $r=3$, $n=8$ and $\langle \B\rangle=\PP^8$, then
$$
K_{\B}^3=\lambda^2+23\lambda-24g-(7d+1)\Delta-4d+36a-226 .
$$
\end{lemma}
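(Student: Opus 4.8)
The plan is to use that $\delta=2r+2-n=0$, which holds here because $n=8=2\cdot 3+2$ (Proposition \ref{prop: B is QEL}), so that the double point formula of Proposition \ref{prop: double point formula} applies. For $r=3$ it reads
\[
2(2d-1)=\lambda^2-s_3(\T_{\B})-7\,s_2(\T_{\B})\cdot H_{\B}-21\,s_1(\T_{\B})\cdot H_{\B}^2-35\lambda,
\]
since $\binom{7}{0},\binom{7}{1},\binom{7}{2},\binom{7}{3}=1,7,21,35$ and $s_0(\T_{\B})\cdot H_{\B}^3=H_{\B}^3=\lambda$. First I would rewrite the Segre classes via $s(\T_{\B})=c(\T_{\B})^{-1}$ and $c_1(\T_{\B})=-K_{\B}$, obtaining
\[
s_1(\T_{\B})\cdot H_{\B}^2=-c_1,\quad s_2(\T_{\B})\cdot H_{\B}=K_{\B}^2\cdot H_{\B}-c_2,\quad s_3(\T_{\B})=K_{\B}^3+2\,c_1(\T_{\B})c_2(\T_{\B})-c_3,
\]
where $c_1,c_2,c_3$ are the numbers of Proposition \ref{prop: segre and chern classes} evaluated at $n=8$. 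Substituting and solving for $K_{\B}^3$ expresses the target in terms of $\lambda$, $c_1$, $c_2$, $c_3$, $d$, and the two remaining intersection numbers $c_1(\T_{\B})c_2(\T_{\B})$ and $K_{\B}^2\cdot H_{\B}$.

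The crux is to evaluate these last two numbers, since neither is directly controlled by Proposition \ref{prop: segre and chern classes} (which only handles the classes $c_j(\T_{\B})\cdot H_{\B}^{\,r-j}$). For the first, Hirzebruch--Riemann--Roch on the threefold $\B$ gives $c_1(\T_{\B})c_2(\T_{\B})=24\,\chi(\O_{\B})$, and $\chi(\O_{\B})=2\lambda-g+a-17$ by Proposition \ref{prop: hilbert polynomial}. For the second, I would descend to the general surface section $S\subset\B$: adjunction $K_S=(K_{\B}+H_{\B})|_S$ gives
\[
K_S^2=(K_{\B}+H_{\B})^2\cdot H_{\B}=K_{\B}^2\cdot H_{\B}+2\,K_{\B}\cdot H_{\B}^2+\lambda,
\]
hence $K_{\B}^2\cdot H_{\B}=K_S^2-2\,K_{\B}\cdot H_{\B}^2-\lambda$. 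Here $K_{\B}\cdot H_{\B}^2=2g-2-2\lambda$ by the sectional genus formula, while $K_S^2=-22\lambda+12g+d\Delta-12a+184$ was already obtained in the proof of Lemma \ref{lemma: r=3 B nondegenerate} (from Noether's formula $c_2(S)=12\chi(\O_S)-K_S^2-K_S\cdot H_S$ and the value $\chi(\O_S)=25-3\lambda+2g-a$, itself a consequence of the vanishing $h^j(\PP^8,\I_{\B}(k))=0$ for $j,k\ge1$). This yields $K_{\B}^2\cdot H_{\B}=-19\lambda+8g+d\Delta-12a+188$.

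It then remains to substitute $c_1=2\lambda-2g+2$, $c_2=-13\lambda+10g-d\Delta+118$ and $c_3=70\lambda-44g+(7d-1)\Delta-596$ from Proposition \ref{prop: segre and chern classes} (with $n=8$), together with the two values just computed, into
\[
K_{\B}^3=\lambda^2-2\,c_1(\T_{\B})c_2(\T_{\B})+c_3-7\,K_{\B}^2\cdot H_{\B}+7\,c_2+21\,c_1-35\lambda-2(2d-1).
\]
Collecting the coefficients of $\lambda^2,\lambda,g,a,d\Delta,\Delta,d$ and the constant term should produce exactly $K_{\B}^3=\lambda^2+23\lambda-24g-(7d+1)\Delta-4d+36a-226$.

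The one genuine obstacle is the computation of $K_{\B}^2\cdot H_{\B}$: it is the single Chern number of $\B$ not supplied by Proposition \ref{prop: segre and chern classes}, so it must be recovered from the auxiliary surface--section analysis (the cohomological vanishing, Noether's formula, and adjunction) rather than by a formal manipulation of the double point formula. Everything else is a mechanical substitution.
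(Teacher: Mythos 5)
Your proof is correct and is essentially the paper's own argument: you invoke the double point formula of Proposition \ref{prop: double point formula}, convert Segre classes to Chern classes, evaluate $c_1(\T_{\B})c_2(\T_{\B})=24\chi(\O_{\B})$ with $\chi(\O_{\B})=2\lambda-g+a-17$ from Proposition \ref{prop: hilbert polynomial}, substitute $c_1,c_2,c_3$ from Proposition \ref{prop: segre and chern classes}, and the resulting collection of terms (with $K_{\B}^2\cdot H_{\B}=-19\lambda+8g+d\Delta-12a+188$) does yield exactly $\lambda^2+23\lambda-24g-(7d+1)\Delta-4d+36a-226$. The only divergence is bookkeeping: where you recover $K_{\B}^2\cdot H_{\B}$ from $K_S^2$ via adjunction and the formula already derived in Lemma \ref{lemma: r=3 B nondegenerate}, the paper eliminates it directly by Riemann--Roch for $\O_{\B}(H_{\B})$ on the threefold, writing $s_2(\T_{\B})\cdot H_{\B}=3K_{\B}\cdot H_{\B}^2-2\lambda-2c_2(\B)\cdot H_{\B}+12\left(\chi(\O_{\B}(H_{\B}))-\chi(\O_{\B})\right)$ and using $\chi(\O_{\B}(H_{\B}))=9$; both eliminations rest on the same Riemann--Roch data, so this is a cosmetic, not a substantive, difference.
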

\begin{proof}
We have (see \cite[App. A, Exercise~6.7]{hartshorne-ag}):
\begin{eqnarray*}
 s_1(\T_{\B})\cdot H_{\B}^2 &=& -c_1(\B)\cdot H_{\B}^2=K_{\B}\cdot H_{\B}^2 , \\
 s_2(\T_{\B})\cdot H_{\B} &=& c_1(\B)^2\cdot H_{\B}-c_2(\B)\cdot H_{\B} 
 = K_{\B}^2\cdot H_{\B}-c_2(\B)\cdot H_{\B} \\
 &=& 3K_{\B}\cdot H_{\B}^2-2H_{\B}^3-2c_2(\B)\cdot H_{\B}+12\left(\chi(\O_{\B}(H_{\B}))-\chi(\O_{\B})\right), \\
 s_3(\T_{\B}) &=& -c_1(\B)^3+2c_1(\B)\cdot c_2(\B)-c_3(\B)
=K_{\B}^3+48\chi(\O_{\B})-c_3(\B).
\end{eqnarray*}
Hence, applying the double point formula and using 
 the relations 
$\chi(\O_{\B})=2\lambda-g+a-17$, $\chi(\O_{\B}(H_{\B}))=9$, we obtain:
\begin{eqnarray*}
4d-2 &=& 2\,\deg(\Sec(\B))\\
&=& \deg(\B)^2-s_3(\T_{\B})-7\,s_2(\T_{\B})\cdot H_{\B}-21\,s_1(\T_{\B})\cdot H_{\B}^2-35\,H_{\B}^3 \\
 &=& \deg(\B)^2-21\,\deg(\B)-42\,K_{\B}\cdot H_{\B}^2+14\,c_2(\B)\cdot H_{\B}-K_{\B}^3 \\
 && +c_3(\B)-84\,\chi(\O_{\B}(H_{\B}))+36\,\chi(\O_{\B}) \\
 &=& -K_{\B}^3+\lambda^2+23\lambda-24g-(7d+1)\Delta+36a-228.
\end{eqnarray*} 
\end{proof}
\begin{lemma}\label{lemma: quadric fibration}
If $r=3$, $n=8$, $\langle \B\rangle=\PP^8$ and
 $\B$ is a quadric fibration over a curve,
then  one of the following cases holds: 
\begin{itemize}
\item $a=3$, $\lambda=9$, $g=3$, $d=3$, $\Delta=5$; 
\item $a=4$, $\lambda=8$, $g=2$, $d=2$, $\Delta=10$. 
\end{itemize}
\end{lemma}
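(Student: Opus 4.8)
The plan is to combine the general numerical list already obtained in Lemma~\ref{lemma: r=3 B nondegenerate} with a single extra relation that is special to quadric fibrations, and then to close the argument with the double point formula of Lemma~\ref{lemma: double point formula} together with a short Diophantine analysis.

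First I would extract the structural input. Writing $\phi\colon\B\to B$ for the fibration over the smooth base curve $B$, the general fibre $F$ is a smooth two-dimensional quadric, so $(K_{\B}+2H_{\B})|_F=K_F+2H_F=0$; hence $K_{\B}+2H_{\B}$ is trivial on every fibre and is therefore numerically a pull-back $\phi^{\ast}D$ from the curve $B$, whence $(K_{\B}+2H_{\B})^2$ vanishes numerically. Intersecting with $H_{\B}$ and using $K_{\B}\cdot H_{\B}^2=2g-2-2\lambda$ (which is $-c_1$ from Proposition~\ref{prop: segre and chern classes}) and $H_{\B}^3=\lambda$ gives $K_{\B}^2\cdot H_{\B}=4\lambda-8g+8$, and expanding $(K_{\B}+2H_{\B})^3=0$ gives $K_{\B}^3=-8\lambda+24g-24$. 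Translating the first identity to the general surface section $S$ via $K_S^2=K_{\B}^2\cdot H_{\B}+2K_{\B}\cdot H_{\B}^2+\lambda$ yields the key equality $K_S^2=\lambda-4g+4$.

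Next I would use this value to locate ourselves in Lemma~\ref{lemma: r=3 B nondegenerate}. Substituting the data of cases (A) and (B) into $K_S^2=\lambda-4g+4$ gives $-15$ and $-12$, contradicting the respective values $-1$ and $0$; hence we are in case (C), where $\lambda=12-a$, $g=6-a$, $K_S\cdot H_S=-2-a$ and $0\le a\le 6$. There $\chi(\O_{\B})=1$ and $\chi(\O_{\B}(-1))=0$ by Proposition~\ref{prop: hilbert polynomial}, so $\chi(\O_S)=1$, and the quadric-fibration value $K_S^2=3a-8$ gives, through $c_2=12\chi(\O_S)-K_S^2-K_S\cdot H_S$, the number $c_2(\T_{\B})\cdot H_{\B}=22-2a$. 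Comparing with the expression for $c_2$ in Proposition~\ref{prop: segre and chern classes} (which reads $22+3a-d\Delta$ in case (C)) forces $d\Delta=5a$. On the other hand, inserting $K_{\B}^3=-8\lambda+24g-24$ into Lemma~\ref{lemma: double point formula} gives $(7d+1)\Delta+4d=a^2+29a+26$, and eliminating $d\Delta=5a$ leaves the clean system
\[
d\,\Delta=5a,\qquad \Delta+4d=a^2-6a+26 .
\]

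Finally I would solve this over the integers. Since $d\ge 2$ and, because $\sS\subset\PP^{n+a}$ is nondegenerate with $a\ge1$, $\Delta\ge2$, the quadratic $4d^2-(a^2-6a+26)\,d+5a=0$ admits an admissible root only for $a\in\{3,4,6\}$, giving $(d,\Delta)=(3,5),(2,10),(5,6)$ respectively; the values $a=0,1,2,5$ are killed by $d\Delta=0$, by $\Delta=1$, or by a non-square discriminant. The first two solutions are exactly the asserted cases, with $(\lambda,g)=(9,3)$ and $(8,2)$. The main obstacle is ruling out the spurious solution $a=6$: here $\lambda=6=\mathrm{codim}_{\PP^8}(\B)+1$, so $\B$ would be a smooth nondegenerate variety of \emph{minimal degree} and hence a rational normal scroll by the classical classification, contradicting the hypothesis that $\B$ is a genuine quadric fibration (this case is governed by the scroll lemma, not the present one); alternatively, $a=6$ forces $d=5$ and thus coindex $c=5-d=0$ for $\sS$ (using the relation $c=(1-2d)r+dn-3d+2$ of \S\ref{sec: notation}), which is incompatible with $\Delta=6$. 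Either observation excludes $a=6$ and completes the proof.
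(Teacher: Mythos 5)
Your proof is correct and follows essentially the same route as the paper's: the quadric-fibration identities give $K_{\B}^3=-8\lambda+24g-24$ and the value of $c_2(\B)\cdot H_{\B}$, which combined with Proposition~\ref{prop: segre and chern classes} and Lemma~\ref{lemma: double point formula} produce exactly the paper's system $d\Delta=5a$, $\Delta+4d=a^2-6a+26$ on case (C) of Lemma~\ref{lemma: r=3 B nondegenerate}, with the spurious solution $a=6$ excluded, as in the paper, because degree $6$ equals $\mathrm{codim}+1$ and so forces $\B$ to be a rational normal scroll rather than a quadric fibration. The only cosmetic differences are that you discard cases (A) and (B) by comparing $K_S^2$ instead of observing that they make $d\Delta$ negative, and you compute $c_2(\B)\cdot H_{\B}$ via Noether's formula on the surface section rather than via Riemann--Roch on $\B$ itself (your "alternative" coindex-zero exclusion of $a=6$ would need a Kobayashi--Ochiai statement for the singular image $\sS$, but it is not needed since your primary argument stands).
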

\begin{proof}
Denote by $\beta:(\B,H_{\B})\rightarrow (Y,H_Y)$ 
the projection over the curve $Y$
such that $\beta^{\ast}(H_Y)=K_{\B}+2H_{\B}$.
We have 
\begin{eqnarray*}
 0&=&\beta^{\ast}(H_Y)^2\cdot H_{\B} 
  = K_{\B}^2\cdot H_{\B}+4K_{\B}\cdot H_{\B}^2+4H_{\B}^3, \\
 0&=& \beta^{\ast}(H_Y)^3= K_{\B}^3+6K_{\B}^2\cdot H_{\B}+12 K_{\B}\cdot H_{\B}^2+8 H_{\B}^3, \\ 
\chi(\O_{\B}(H_{\B})) &=& \frac{1}{12} K_{\B}^2\cdot H_{\B}-\frac{1}{4}K_{\B}\cdot H_{\B}^2+\frac{1}{6}H_{\B}^3+\frac{1}{12}c_2(\B)\cdot H_{\B}+\chi(\O_{\B}), \\ 
\end{eqnarray*}
from which it follows that 
\begin{eqnarray}
K_{\B}^3 &=& -8\lambda+24g-24, \\
c_2(\B)\cdot H_{\B} 
 &=& -36\lambda+26g-12a+298.
\end{eqnarray}
Hence, by Lemma \ref{lemma: double point formula} and 
Proposition \ref{prop: segre and chern classes}, 
 we obtain
\begin{eqnarray}
 d\Delta&=& 23\lambda-16g+12a-180 , \\ 
\Delta+4d&=&\lambda^2-130\lambda+64g-48a+1058 .
\end{eqnarray}
Now the conclusion follows from Lemma 
\ref{lemma: r=3 B nondegenerate},
by observing that 
the case $a=6$ cannot occur. In fact,
if $a=6$, by \cite{ionescu-smallinvariants} 
it follows that $\B$ is a rational normal scroll 
and by a direct calculation (or by Lemma \ref{lemma: scroll over curve}) we see that 
$d=2$ and $\Delta=14$.
\end{proof}
\begin{lemma}\label{lemma: scroll over surface}
If $r=3$, $n=8$, $\langle \B\rangle=\PP^8$ and
 $\B$ is a scroll over a smooth surface $Y$,
then we have:
\begin{eqnarray*}
c_2\left(Y\right) &=& \left(\left(7d-1\right)\lambda^2+\left(177-679d\right)\lambda+\left(292d-92\right)g-28d^2 \right. \\ 
                  && \left. +\left(5554-252a\right)d+36a-1474\right)/\left(2d+2\right), \\ 
\Delta &=& \left(\lambda^2-107\lambda+48g-4d-36a+878\right)/\left(d+1\right) .
\end{eqnarray*}
\end{lemma}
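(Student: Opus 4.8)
The plan is to mirror the proof of Lemma \ref{lemma: quadric fibration}, exploiting the scroll structure to produce the intersection-theoretic relations I need and then solving a linear system for $\Delta$ and $c_2(Y)$. First I would record that, since $\B$ is a scroll over the smooth surface $Y$, adjunction theory gives a morphism $p\colon\B\to Y$ with $\B=\PP_Y(\E)$ for a rank-two bundle $\E$, and the adjoint bundle is a pullback from $Y$: writing $L=K_Y+\det\E$ one has $K_{\B}+2H_{\B}\sim p^{\ast}L$. Because $L$ is a divisor class on the \emph{surface} $Y$ we have $L^{3}=0$, hence $(K_{\B}+2H_{\B})^{3}=p^{\ast}(L^{3})=0$. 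Expanding and substituting the sectional genus relation $K_{\B}\cdot H_{\B}^{2}=2g-2-2\lambda$ together with $H_{\B}^{3}=\lambda$ yields the single relation $K_{\B}^{3}+6\,K_{\B}^{2}\cdot H_{\B}-24g+24+16\lambda=0$. Note that, in contrast with the quadric-fibration case, the pullback does \emph{not} force $(p^{\ast}L)^{2}\cdot H_{\B}=0$, so here I obtain one fewer vanishing condition; this is exactly why the final answer is a family of formulas rather than a finite list.

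Next I would assemble the remaining numerical inputs, all already available. From Proposition \ref{prop: segre and chern classes} (with $n=8$) one has $c_2(\B)\cdot H_{\B}=-13\lambda+10g-d\Delta+118$ and $c_3(\B)=70\lambda-44g+(7d-1)\Delta-596$; from Proposition \ref{prop: hilbert polynomial} (with $\epsilon=0$, since $\langle\B\rangle=\PP^{8}$) one has $\chi(\O_{\B})=2\lambda-g+a-17$, while $\chi(\O_{\B}(H_{\B}))=9$. Applying Hirzebruch--Riemann--Roch on the threefold $\B$, namely $\chi(\O_{\B}(H_{\B}))=\tfrac{1}{6}H_{\B}^{3}-\tfrac14 K_{\B}\cdot H_{\B}^{2}+\tfrac{1}{12}K_{\B}^{2}\cdot H_{\B}+\tfrac{1}{12}c_2(\B)\cdot H_{\B}+\chi(\O_{\B})$, and clearing denominators, produces a first linear relation $K_{\B}^{2}\cdot H_{\B}=-19\lambda+8g+d\Delta-12a+188$.

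Then I would bring in the double point formula. Since $\delta=2r+2-n=0$ here by Proposition \ref{prop: B is QEL}, Lemma \ref{lemma: double point formula} gives $K_{\B}^{3}=\lambda^{2}+23\lambda-24g-(7d+1)\Delta-4d+36a-226$. Substituting this into the scroll relation from the first step produces a second expression $K_{\B}^{2}\cdot H_{\B}=\tfrac{1}{6}\bigl(-\lambda^{2}-7\lambda+(7d+1)\Delta+4d-36a+250\bigr)$. Equating the two expressions for $K_{\B}^{2}\cdot H_{\B}$ eliminates that unknown and leaves a single equation in $\Delta$, in which the coefficient of $\Delta$ turns out to be exactly $-(d+1)$; solving gives $\Delta=(\lambda^{2}-107\lambda+48g-4d-36a+878)/(d+1)$, the asserted value.

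Finally, for $c_2(Y)$ I would use the multiplicativity of the topological Euler characteristic for the $\PP^{1}$-bundle $p\colon\B\to Y$, which gives $c_3(\B)=2\,c_2(Y)$, whence $c_2(Y)=\tfrac12\bigl(70\lambda-44g+(7d-1)\Delta-596\bigr)$; substituting the value of $\Delta$ just found and clearing denominators yields the stated formula for $c_2(Y)$. The whole argument is elementary linear algebra once the geometric inputs are fixed, so I do not anticipate any deep obstacle. The only real pitfalls I expect are bookkeeping ones: using the correct threefold normalization of the sectional genus, $K_{\B}\cdot H_{\B}^{2}=2g-2-2\lambda$, and the correct scroll adjunction relation $K_{\B}+2H_{\B}\sim p^{\ast}L$, since an error in either propagates a spurious multiple of $\lambda$ into $\Delta$ and thence into $c_2(Y)$.
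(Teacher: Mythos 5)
Your proof is correct and follows essentially the same route as the paper's: scroll adjunction giving $(K_{\B}+2H_{\B})^{3}=0$, the Riemann--Roch expression for $\chi(\O_{\B}(H_{\B}))$, the double point formula of Lemma \ref{lemma: double point formula}, and the Euler-characteristic relation $c_3(\B)=2c_2(Y)$, differing only in which intersection number ($K_{\B}^{2}\cdot H_{\B}$ versus $K_{\B}^{3}$) gets eliminated first. One small caveat: the scroll relation displayed in your first step has its signs flipped (it should read $K_{\B}^{3}+6K_{\B}^{2}\cdot H_{\B}+24g-24-16\lambda=0$), but your subsequent formulas are consistent with the correct version, so the slip is a transcription typo and does not propagate.
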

\begin{proof}
Similarly to Lemma \ref{lemma: quadric fibration}, 
denote by $\beta:(\B,H_{\B})\rightarrow (Y,H_Y)$ 
the projection over the surface  $Y$ such that
 $\beta^{\ast}(H_Y)=K_{\B}+2H_{\B}$. Since $\beta^{\ast}(H_Y)^3=0$ we obtain
\begin{eqnarray*}
K_{\B}^3 &=&-8H_{\B}^3-12K_{\B}\cdot H_{\B}^2-6K_{\B}^2\cdot H_{\B} \\
 &=& -30K_{\B}\cdot H_{\B}^2+4H_{\B}^3+6c_2(\B)\cdot H_{\B}-72\chi(\O_{\B}(H_{\B}))+72\chi(\O_{\B}) \\
&=& 130\lambda-72g-6d\Delta+72a-1104.
\end{eqnarray*}
Now  we conclude comparing the last formula
with Lemma \ref{lemma: double point formula} and
using the relation
\begin{equation}
70\lambda-44g+(7d-1)\Delta-596=c_3(\B)=c_1(\PP^1)c_2(Y)=2c_2(Y).
\end{equation}
\end{proof}
\begin{lemma}\label{lemma: scroll over curve}
If $r=3$, $n=8$, $\langle \B\rangle=\PP^8$ and
 $\B$ is a scroll over a smooth curve,
then we have:
 $a=6$, $\lambda=6$, $g=0$, $d=2$, $\Delta=14$.
\end{lemma}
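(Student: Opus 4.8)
The plan is to follow the pattern of Lemmas~\ref{lemma: quadric fibration} and \ref{lemma: scroll over surface}: first pin down $a$, $\lambda$, $g$ by combining the scroll structure with the already-established list of numerical cases in Lemma~\ref{lemma: r=3 B nondegenerate}, and then extract $d$ and $\Delta$ from the scroll-theoretic intersection numbers. Write $\beta\colon\B\to C$ for the projection onto the smooth base curve $C$ and put $q=g(C)$. Since a general hyperplane section $S=\B\cap H$ is a geometrically ruled surface over $C$, the sectional genus of $\B$ equals the base genus, $g=g(C)=q$; moreover $\beta_{\ast}\O_{\B}=\O_C$ with vanishing higher direct images, so $\chi(\O_{\B})=\chi(\O_C)=1-q=1-g$. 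On the other hand, Proposition~\ref{prop: hilbert polynomial} (evaluated at $n=8$, $\epsilon=0$) gives the general identity $\chi(\O_{\B})=2\lambda-g+a-17$. Comparing the two expressions the term $g$ cancels and leaves $2\lambda+a=18$, and I would then test this against the three possibilities of Lemma~\ref{lemma: r=3 B nondegenerate}: cases (A) and (B) give $2\lambda+a=26$ and $25$ and are excluded, while case (C), where $\lambda=12-a$, forces $24-a=18$. Hence $a=6$, $\lambda=6$ and $q=g=0$, so that $C\simeq\PP^1$.

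Next I would compute the needed intersection numbers from the adjunction relation of a scroll over a curve, namely $K_{\B}+3H_{\B}=\beta^{\ast}L$ for a divisor $L$ on $C$. As $\beta^{\ast}L$ is pulled back from a curve we have $(K_{\B}+3H_{\B})^2=0$ as a cycle, so expanding $(K_{\B}+3H_{\B})^2\cdot H_{\B}=0$ and $(K_{\B}+3H_{\B})^3=0$ together with the sectional genus relation $K_{\B}\cdot H_{\B}^2=2g-2-2\lambda$ yields $K_{\B}^2\cdot H_{\B}=3\lambda-12g+12$ and $K_{\B}^3=54(g-1)$; with $g=0$ this gives $K_{\B}^3=-54$. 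In addition $\B$ is a $\PP^2$-bundle over $\PP^1$, so its topological Euler characteristic, which equals $c_3(\B)$ by Poincar\'e--Hopf, is $\chi_{\mathrm{top}}(\PP^2)\,\chi_{\mathrm{top}}(\PP^1)=6$.

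Finally I would solve for $d$ and $\Delta$ using two independent relations that already encode the transformation. Lemma~\ref{lemma: double point formula}, evaluated at $a=6$, $\lambda=6$, $g=0$, reads $K_{\B}^3=164-(7d+1)\Delta-4d$, so $K_{\B}^3=-54$ gives $(7d+1)\Delta+4d=218$. The expression for $c_3$ in Remark~\ref{rem: chern classes} (equivalently Proposition~\ref{prop: segre and chern classes}), namely $c_3=70\lambda-44g+(7d-1)\Delta-596$, combined with $c_3=6$ gives $(7d-1)\Delta=182$. Subtracting the two relations yields $\Delta+2d=18$, and substituting back produces the quadratic $7d^2-64d+100=0$, whose only positive integral root is $d=2$, whence $\Delta=14$; this matches the value announced in Lemma~\ref{lemma: quadric fibration}. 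The main point requiring care is not the arithmetic but the two geometric inputs feeding it: the identification $g=g(C)=0$, which is what narrows Lemma~\ref{lemma: r=3 B nondegenerate} to case (C), and the Euler characteristic $c_3(\B)=6$. Once these are in place, the integrality of $(d,\Delta)$ forces the conclusion uniquely.
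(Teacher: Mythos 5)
Your proof is correct, and it reaches the conclusion by a genuinely different route within the same numerical framework. The paper keeps $(\lambda,g,a)$ symbolic throughout: from the scroll adjunction $\beta^{\ast}(H_Y)=K_{\B}+3H_{\B}$ it derives $K_{\B}^2\cdot H_{\B}=3\lambda-12g+12$ and $K_{\B}^3=54(g-1)$ exactly as you do, but it then produces two diophantine relations in $(\lambda,g,a,d,\Delta)$ --- one from the double point formula (Lemma \ref{lemma: double point formula}), the other from expanding $\chi(\O_{\B}(H_{\B}))=9$ to get $c_2\cdot H_{\B}$ and feeding it into Proposition \ref{prop: segre and chern classes}, namely $22\lambda-20g-d\Delta+12a-176=0$ --- and finally runs through the cases of Lemma \ref{lemma: r=3 B nondegenerate}, keeping the unique one admitting positive integer solutions $(d,\Delta)$. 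You instead pin down $(a,\lambda,g)=(6,6,0)$ \emph{first}, via the structural facts that the sectional genus of a scroll over a curve equals the base genus and $\chi(\O_{\B})=\chi(\O_C)=1-g$; the cancellation of $g$ against the identity $\chi(\O_{\B})=2\lambda-g+a-17$ from Proposition \ref{prop: hilbert polynomial} leaves $2\lambda+a=18$, which eliminates cases (A) and (B) of Lemma \ref{lemma: r=3 B nondegenerate} and forces $a=6$ inside case (C) with no integrality analysis at all. In the second step you replace the paper's $c_2$-based relation by a $c_3$-based one, using multiplicativity of the topological Euler characteristic ($c_3(\B)=\chi_{\mathrm{top}}(\PP^2)\,\chi_{\mathrm{top}}(\PP^1)=6$) matched against $c_3=70\lambda-44g+(7d-1)\Delta-596$; combined with the double point formula this gives $7d^2-64d+100=0$, whose only integral root is $d=2$, whence $\Delta=14$. (For the record, the paper's pair of equations at $(a,\lambda,g)=(6,6,0)$ reads $(7d+1)\Delta+4d=218$ and $d\Delta=28$, consistent with yours.) What each buys: your first step is cleaner and avoids case-by-case filtering, but it exploits a cancellation specific to scrolls over curves ($g=g(C)$ and $\chi(\O_{\B})=1-g$ simultaneously), which has no analogue in the neighbouring Lemmas \ref{lemma: quadric fibration} and \ref{lemma: scroll over surface}; the paper's method is less elegant here but uniform across all three fibration-type lemmas.
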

\begin{proof}
We have a projection $\beta:(\B,H_{\B})\rightarrow (Y,H_Y)$ 
 over a curve  $Y$ such that
 $\beta^{\ast}(H_Y)=K_{\B}+3H_{\B}$. By expanding the expressions 
$\beta^{\ast}(H_Y)^2\cdot H_{\B}=0$ and  $\beta^{\ast}(H_Y)^3=0$ we obtain 
$K_{\B}^2\cdot H_{\B}=3\lambda-12g+12$ and 
$K_{\B}^3=54(g-1)$,
and hence by Lemma \ref{lemma: double point formula} we get
\begin{equation}\label{eq: relation scroll}
 \lambda^2+23\lambda-78g-(7d+1)\Delta-4d+36a-172 = 0.
\end{equation}
Also, by expanding the expression $\chi(\O_{\B}(H_{\B}))=9$ we obtain 
$c_2=-35\lambda+30g-12a+294 $
and hence by Proposition \ref{prop: segre and chern classes} we get
\begin{equation}\label{eq: second relation scroll}
 22\lambda-20g-d\Delta+12a-176 = 0.
\end{equation}
Now the conclusion follows from Lemma \ref{lemma: r=3 B nondegenerate}. 
\end{proof}
Finally we conclude our discussion about classification with the following:
\begin{proposition}\label{prop: r=3 B nondegenerate}
If $r=3$, $n=8$ and $\langle \B\rangle=\PP^8$, 
then  one of the following cases holds:
\begin{enumerate}[(I)]
\setcounter{enumi}{18} 
\item $a=0$,  $\lambda=12$,  $g=6$,  $\B$ is a scroll $\PP_{Y}(\E)$ over a rational surface $Y$ with $K_Y^2=5$, $c_2(\E)=8$ and $c_1^2(\E)=20$; 
\item $a=0$,  $\lambda=13$,  $g=8$,  $\B$ is  obtained as the blow-up of a Fano variety $X$ at a point $p\in X$,  $|H_{\B}|=|H_{X}-p|$;  
\item\label{case: cubic hypersurface} $a=1$,  $\lambda=11$,  $g=5$,  $\B$ is the blow-up of $Q^3$ at $5$ points $p_1,\ldots,p_5$, $|H_{\B}|=|2H_{Q^3}-p_1-\cdots-p_5|$;
\item $a=1$,  $\lambda=11$,  $g=5$,  $\B$ is a scroll over $\PP_{\PP^1}(\O\oplus\O(-1))$; 
\item $a=1$,  $\lambda=12$,  $g=7$,  $\B$ is a linear section of $S^{10}\subset\PP^{15}$;
\item\label{case: scroll over Q2 or quadric fibration} $a=2$,  $\lambda=10$,  $g=4$,  $\B$ is a scroll over $Q^2$;
\item $a=3$,  $\lambda=9$,   $g=3$,  $\B$ is a scroll over $\PP^2$ or a quadric fibration over $\PP^1$; 
\item $a=4$,  $\lambda=8$,   $g=2$,  $\B$ is a hyperplane section of $\PP^1\times Q^3$; 
\item $a=6$,  $\lambda=6$,   $g=0$,  $\B$ is a rational normal scroll.
\end{enumerate}
\end{proposition}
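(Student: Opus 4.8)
The plan is to build directly on the numerical reduction already carried out in Lemma~\ref{lemma: r=3 B nondegenerate}, which, under the standing hypotheses $r=3$, $n=8$, $\langle\B\rangle=\PP^8$, leaves only the families $(a,\lambda,g)=(0,13,8)$, $(a,\lambda,g)=(1,12,7)$, and the one-parameter family $(\lambda,g)=(12-a,6-a)$ with $0\le a\le 6$, together with the value of $K_S\cdot H_S$ in each instance. Since $\B\subset\PP^8$ is smooth, nondegenerate, linearly normal of degree $\lambda\le 13$, and by Proposition~\ref{prop: B is QEL} a $QEL$-variety of type $\delta=2r+2-n=0$, the idea is to feed these invariants into the classification of smooth projective threefolds of small degree, namely \cite{ionescu-smallinvariants,ionescu-smallinvariantsII,ionescu-smallinvariantsIII,fania-livorni-nine,fania-livorni-ten,besana-biancofiore-deg11,ionescu-degsmallrespectcodim}, in order to read off, for each admissible pair $(\lambda,g)$, the finite list of possible abstract structures of $\B$. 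The value of $d$ and $\Delta$ is then forced in each surviving case by a double point count.

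First I would dispose of the two cases with $a=1$, where $\sS\subset\PP^9$ is a hypersurface and Assumption~\ref{assumption: ipotesi} is in force: these are exactly the situations of the previous chapter. The family $(a,\lambda,g)=(1,12,7)$ is the Mukai case, and Proposition~\ref{prop: 3-fold in P8} forces $\B$ to be a linear section of $S^{10}\subset\PP^{15}$; the family $(a,\lambda,g)=(1,11,5)$ reproduces, via Proposition~\ref{prop: 3-fold in P8 - S nonlinear}, both the variety $\mathfrak{Q}_{p_1,\ldots,p_5}$ (case~\ref{case: cubic hypersurface}) and the scroll over $\PP_{\PP^1}(\O\oplus\O(-1))$. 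Thus the genuinely new work concerns $a\in\{0,2,3,4,6\}$, together with the exclusion of $a=5$.

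For the remaining values I would proceed degree by degree. Once the low-degree classification pins down that $\B$ is a scroll over a curve, a scroll over a surface, a quadric fibration, or a linearly normal variety of the listed type, the determination of $(d,\Delta)$ and the elimination of spurious possibilities becomes purely numerical: one combines the double point formula of Lemma~\ref{lemma: double point formula} (available precisely because $\delta=0$) with the Chern and Segre relations of Proposition~\ref{prop: segre and chern classes} and the fibration-specific identities of Lemmas~\ref{lemma: quadric fibration}, \ref{lemma: scroll over surface}, and \ref{lemma: scroll over curve}. Any structure that forces a non-integral, negative, or otherwise impossible pair $(d,\Delta)$, or that contradicts $\delta=0$, is discarded. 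This is how I expect the scroll-versus-quadric-fibration alternatives to be separated in degrees $8$, $9$, and $10$ (cases $a=4,3,2$), how the rational normal scroll is singled out at $a=6$, and, most importantly, how $a=5$, i.e.\ $\lambda=7$, $g=1$, is ruled out, since no consistent $(d,\Delta)$ survives the double point constraint there.

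The hard part will be the case $a=0$, where $\sS=\PP^8$ and $\varphi$ is a special Cremona transformation, necessarily of type $(2,5)$, with the two subcases $(\lambda,g)=(12,6)$ and $(\lambda,g)=(13,8)$. The degree-$12$ case is handled as a scroll $\PP_Y(\E)$ over a rational surface by Lemma~\ref{lemma: scroll over surface}; the delicate one is the degree-$13$, sectional-genus-$8$ threefold, the Hulek--Katz--Schreyer example, which must be identified as the blow-up of a Fano threefold at a single point. Here a mere degree count does not suffice: one needs the full adjunction-theoretic data $K_S\cdot H_S=1$, $K_S^2=-1$, $\chi(\O_S)=2$ produced inside Lemma~\ref{lemma: r=3 B nondegenerate}, and the competing numerical solutions at $a=0$ are eliminated using the refinements of Castelnuovo's bound (Theorems~\ref{prop: extension castelnuovo lemma by fano harris} and \ref{prop: refinement castelnuovo bound by ciliberto}) together with the non-nefness obstruction of \cite{ionescu-adjunction}. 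I expect this subcase to absorb the bulk of the argument.
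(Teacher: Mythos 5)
Your plan is correct and follows the paper's proof almost step for step: the same reduction via Lemma~\ref{lemma: r=3 B nondegenerate}, the same appeal to Proposition~\ref{prop: 3-fold in P8} for $a=1$, the same low-degree classification references for $a\in\{2,3,4,6\}$ with Lemma~\ref{lemma: quadric fibration} eliminating the quadric fibration at $a=2$, the same use of \cite[Theorem~1]{ionescu-degsmallrespectcodim} together with Lemmas~\ref{lemma: quadric fibration}, \ref{lemma: scroll over surface}, \ref{lemma: scroll over curve} at $(a,\lambda)=(0,12)$, and the same adjunction-theoretic endgame at $(a,\lambda)=(0,13)$. The one genuinely different step is your exclusion of $a=5$: the paper rules it out by \cite{ionescu-smallinvariants} combined with the explicit computations of Example~\ref{example: a=5} (the unique candidate $\Bl_p(\PP^3)\subset\PP^8$ has $\dim(\Sec)=6$ and non-liftable inverse), whereas you propose pure numerics. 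Your route does work: the classification leaves $\Bl_p(\PP^3)$ embedded by $|2H_{\PP^3}-p|$ as the only abstract structure with $\lambda=7$, $g=1$, and its invariants ($K_{\B}^3=-56$, $c_2=12$, $\chi(\O_{\B})=1$) fed into Lemma~\ref{lemma: double point formula} and Proposition~\ref{prop: segre and chern classes} give $d\Delta=25$ and $\Delta+4d=21$, which admits no integral solution; equivalently, the double-point class of this secant-defective threefold vanishes, so Proposition~\ref{prop: double point formula} would force $2(2d-1)=0$. This is arguably cleaner than the paper's example-based exclusion. One caution: at $(a,\lambda)=(0,13)$ you correctly name the required data $K_S\cdot H_S=1$, $K_S^2=-1$, $\chi(\O_S)=2$, but not the mechanism converting it into the stated structure — one must still observe that $K_S$ is a $(-1)$-curve, that $\phi_{|K_{\B}+2H_{\B}|}$ is generically finite (the three fibration lemmas excluding the alternatives) yet not finite because $(K_{\B}+2H_{\B})\cdot K_S=0$, and hence contracts a $(\PP^2,\O_{\PP^2}(-1))$ whose blow-down is the smooth Fano threefold of degree $14$ and sectional genus $8$ in $\PP^9$; the paper, to be fair, also only sketches this, deferring to \cite{mella-russo-baselocusleq3}.
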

\begin{proof}
For $a=6$ the statement follows from \cite{ionescu-smallinvariants}.
The case with $a=5$ is excluded by  \cite{ionescu-smallinvariants} and Example
\ref{example: a=5}.
For $a=4$ the statement follows from \cite{ionescu-smallinvariantsIII}.  
 For $a\in\{2,3\}$,
by \cite{fania-livorni-nine}, 
   \cite{fania-livorni-ten} and
   \cite{ionescu-smallinvariantsII} 
it follows that the abstract structure of
$\B$  is as asserted, or $a=2$ and $\B$ is a quadric 
fibration over $\PP^1$; the last case is 
excluded by  
Lemma \ref{lemma: quadric fibration}.
 For $a=1$  the statement is just Proposition \ref{prop: 3-fold in P8}. 
Now we treat the cases with $a=0$.
\begin{case}[$a=0, \lambda=12$]
 Since $\deg(\B)\leq 2\mathrm{codim}_{\PP^8}(\B)+2$, it follows that 
$(\B,H_{\B})$  must be as in one of the cases
 (a),\ldots,(h) of \cite[Theorem~1]{ionescu-degsmallrespectcodim}.
Cases (a), (d), (e), (g), (h) are of course impossible 
and case (c) is excluded by Lemma \ref{lemma: quadric fibration}. 
If $\B$ is as in case (b), 
by Lemma \ref{lemma: scroll over curve}
we obtain that
$\B$ is a scroll over a birationally ruled surface 
(hence over a rational surface since $q=0$). 
Now suppose that   $(\B,H_{\B})$ is as in case (f). 
Thus there is a reduction $(X,H_X)$ as in one of the cases:
\begin{enumerate}[(f1)]
 \item\label{case1} $X=\PP^3$, $H_X\in|\O(3)|$;
 \item\label{case2} $X=Q^3$, $H_X\in|\O(2)|$;
 \item\label{case3} $X$ is a $\PP^2$-bundle over a smooth curve such that 
                    $\O_X(H_X)$ induces $\O(2)$ on each fiber.
\end{enumerate}
By definition of reduction we have $X\subset\PP^{N}$,
where $N=8+s$, 
$\deg(X)=\lambda+s=12+s$ and
 $s$ is the number of points blown up on $X$ to get $\B$. 
Case (f\ref{case1}) and (f\ref{case2}) 
are impossible because
they force $\lambda$ to be
respectively $16$ and $11$.
In case (f\ref{case3}), 
we have a projection $\beta:(X,H_{X})\rightarrow (Y,H_Y)$ 
 over a curve  $Y$ such that
 $\beta^{\ast}(H_Y)=2K_{X}+3H_{X}$. 
Hence we get 
\begin{displaymath}
 K_X H_X^2= (2K_X+3H_X)^2\cdot H_X/12 -K_X^2\cdot H_X/3 - 3H_X^3/4 = -K_X^2\cdot H_X/3 - 3H_X^3/4 ,
\end{displaymath}
from which we deduce that
\begin{eqnarray*}
 0&=&(2K_X+3H_X)^3 
   = 8K_X^3+36K_X^2\cdot H_X+54K_X\cdot H_X^2+27H_X^3 \\
  &=& 8K_X^3+18K_X^2\cdot H_X-27 H_X^3/2 \\
 &=& 8( K_{\B}^3 - 8s )+18K_X^2\cdot H_X-27 (\deg(\B)+s)/2 \\ 
 &=& 18 K_X^2\cdot H_X-155s/2-210.
\end{eqnarray*} 
Since $s\leq 12$ (see \cite[Lemma~8.1]{besana-biancofiore-numerical}),  
we conclude that case (f)
does not occur. 
Thus,  $\B=\PP_{Y}(\E)$ is a scroll over a surface $Y$; 
moreover,
by Lemma \ref{lemma: scroll over surface} and 
\cite[Theorem~11.1.2]{beltrametti-sommese}, 
we obtain $K_Y^2=5$, $c_2(\E)=K_Y^2-K_S^2=8$ and $c_1^2(\E)=\lambda+c_2(\E)=20$.
\end{case}
\begin{case}[$a=0, \lambda=13$]
The proof is located in  \cite[page~16]{mella-russo-baselocusleq3},
but we sketch it for the reader's convenience.
By Lemma \ref{lemma: r=3 B nondegenerate} we know that
$\chi(\O_S)=2$ and  $K_S$ is an exceptional curve of the first kind.
Thus, if we blow-down the divisor $K_S$,  we obtain 
a $K3$-surface. 
By using adjunction theory 
(see for instance \cite{beltrametti-sommese} or  Ionescu's papers cited in the 
bibliography)  
and  by Lemmas \ref{lemma: quadric fibration}, 
\ref{lemma: scroll over surface} and \ref{lemma: scroll over curve} it follows that 
the adjunction map $\phi_{|K_{\B}+2H_{\B}|}$ is a generically finite morphism;
moreover, since $(K_{\B}+2H_{\B})\cdot K_S=0$, we see that
$\phi_{|K_{\B}+2H_{\B}|}$ is not a finite morphism.
So, we deduce that 
there is a $(\PP^2,\O_{\PP^2}(-1))$ inside $\B$  
and, after the blow-down of this divisor, we get a smooth Fano $3$-fold 
$X\subset\PP^9$  of sectional genus $8$ and degree $14$. 
\end{case}
\end{proof}
\section{Examples}\label{sec: examples}
As in \S \ref{sec: examples hypersurface}, 
in order to verify the calculations in the following examples,
we suggest the use of \cite{macaulay2} or \cite{sagemath}.
\begin{example}[$r=1,2,3; n=3,4,5; a=1; d=1$]\label{example: 1}
As already said in \S \ref{sec: type 2-1},
 if $Q\subset\PP^{n-1}\subset\PP^n$ is a smooth quadric, then
the linear system $|\I_{Q,\PP^n}(2)|$ defines  a birational transformation
$\psi:\PP^n\dashrightarrow\sS\subset\PP^{n+1}$ of type $(2,1)$ whose image
 is a smooth quadric.
\end{example}
\begin{example}[$r=1; n=4; a=0; d=3$]\label{example: 2}
See also \cite{crauder-katz-1989}. If $X\subset\PP^4$ 
is a nondegenerate curve of genus $1$ and degree $5$, then $X$ is 
the scheme-theoretic intersection of the quadrics (of rank $3$) containing $X$ and 
$|\I_{X,\PP^4}(2)|$ defines a Cremona transformation 
$\PP^4\dashrightarrow\PP^4$ of type $(2,3)$.
\end{example}
\begin{example}[$r=1,2,3; n=4,5,7; a=1,0,1; d=2$]\label{example: 3}
As already said in Example \ref{example: d=2 Delta=2},
if $X\subset\PP^n$
is either $\nu_2(\PP^2)\subset\PP^5$ or $\PP^2\times\PP^2\subset\PP^8$,
%
 then $|\I_{X,\PP^n}(2)|$ defines
a birational transformation
 $\psi:\PP^n\dashrightarrow\PP^n$ of type $(2,2)$. 
The restriction of  $\psi$
to a general hyperplane is a birational transformation 
 $\PP^{n-1}\dashrightarrow\sS\subset\PP^n$ of type $(2,2)$ and
 $\sS$ is a smooth quadric.
\end{example}
\begin{example}[$r=1; n=4; a=2; d=1$ - not satisfying \ref{assumption: ipotesi}]\label{example: B2=singSred}
This essentially gives an example for case (\ref{case: escluso 1-fold}) of Lemma \ref{lemma: numerical 1-fold}. 
We have a special birational transformation
$\psi:\PP^4\dashrightarrow\sS\subset\PP^6$ of type $(2,1)$
with base locus $X$, image  $\sS$
and base locus of the inverse $Y$, as follows:
\begin{eqnarray*}
X &=& V(x_0x_1-x_2^2-x_3^2,-x_0^2-x_1^2+x_2x_3,x_4), \\
\sS &= & V(y_2y_3-y_4^2-y_5^2-y_0y_6,y_2^2+y_3^2-y_4y_5+y_1y_6), \\
 P_{\sS}(t) &=& (4t^4+24t^3+56t^2+60t+24)/4!, \\ 
\sing(\sS) &=& V(y_6,y_5^2,y_4y_5,y_3y_5,y_2y_5,y_4^2,y_3y_4,y_2y_4,2y_1y_4+y_0y_5, \\
           && y_0y_4+2y_1y_5,y_3^2,y_2y_3,y_2^2,y_1y_2+2y_0y_3,2y_0y_2+y_1y_3), \\
P_{\sing(\sS)}(t) &=& t + 5, \\
(\sing(\sS))_{\mathrm{red}} &=& V(y_6,y_5,y_4,y_3,y_2), \\
Y=(Y)_{\mathrm{red}}&=&(\sing(\sS))_{\mathrm{red}}= V(y_6,y_5,y_4,y_3,y_2).
\end{eqnarray*}
\end{example}
\begin{example}[$r=1,2,3; n=4,5,6; a=3; d=1$]\label{example: 5}
See also \cite{russo-simis} and \cite{semple}.
If $X=\PP^1\times\PP^2\subset\PP^5\subset\PP^{6}$, 
then $|\I_{X,\PP^6}(2)|$ defines 
a birational transformation 
$\psi:\PP^{6}\dashrightarrow \sS\subset\PP^{9}$ of type $(2,1)$
whose base locus is $X$ and whose image is $\sS=\GG(1,4)$.
Restricting $\psi$ to a general $\PP^5\subset\PP^{6}$ 
(resp. $\PP^4\subset\PP^{6}$)
we obtain a birational transformation
 $\PP^5\dashrightarrow\sS\subset\PP^{8}$
(resp. $\PP^4\dashrightarrow\sS\subset\PP^{7}$) 
whose  image is a smooth linear section of $\GG(1,4)\subset\PP^{9}$.
\end{example}
\begin{example}[$r=2; n=6; a=0; d=4$]\label{example: 6}
See also \cite{crauder-katz-1989} and \cite{hulek-katz-schreyer}. 
Let $Z=\{p_1,\ldots,p_8\}$ be 
a set of $8$ points in $\PP^2$
such that 
no $4$ of the $p_i$ are collinear and no $7$ of the $p_i$ lie on a conic and
consider the blow-up $X=\Bl_Z(\PP^2)$ 
embedded in $\PP^6$ by $|4H_{\PP^2}-p_1-\cdots-p_8|$. 
Then the homogeneous ideal of $X$ is
generated by quadrics and $|\I_{X,\PP^6}(2)|$ 
defines a Cremona transformation $\PP^6\dashrightarrow\PP^6$ of type $(2,4)$.
The same happens when
 $X\subset\PP^6$ is a septic elliptic scroll with $e=-1$.
\end{example}
\begin{example}[$r=2; n=6; a=1; d=3$]\label{example: 7}
As already said in Examples \ref{example: d=3 Delta=2} and \ref{example: d=3 Delta=2 continuing}, 
if $X\subset\PP^6$ is a general hyperplane section of an
Edge variety  of dimension $3$ and degree $7$ in $\PP^7$,
then $|\I_{X,\PP^6}(2)|$ defines a birational transformation
$\psi:\PP^6\dashrightarrow\sS\subset\PP^7$ of type $(2,3)$ whose
image is a rank $6$ quadric.
\end{example}
\begin{example}[$r=2; n=6;a=2;d=2$]\label{example: 8}
If $X\subset\PP^6$ 
is the blow-up of $\PP^2$ at $3$ general points $p_1,p_2,p_3$ with
 $|H_{X}|=|3H_{\PP^2}-p_1-p_2-p_3|$, then 
$\Sec(X)$ is a cubic hypersurface. 
By Fact \ref{fact: K2 property} and \ref{fact: test K2} we deduce  
that $|\I_{X,\PP^6}(2)|$ defines a birational transformation 
$\psi:\PP^6\dashrightarrow\sS\subset\PP^8$ and its type is $(2,2)$.
The image $\sS$ is a complete intersection of two quadrics, $\dim(\sing(\sS))=1$ and 
the base locus of the inverse is $\PP^2\times\PP^2\subset\PP^8$.
Alternatively, we can obtain the transformation 
$\psi:\PP^6\dashrightarrow\sS\subset\PP^8$
by restriction to a general $\PP^6\subset\PP^8$
of the special Cremona transformation $\PP^8\dashrightarrow\PP^8$ of type $(2,2)$.
\end{example}
\begin{example}[$r=2; n=6; a=3; d=2$]\label{example: 9}
See also \cite{russo-simis} and \cite{semple}.
If $X=\PP_{\PP^1}(\O(1)\oplus\O(4))$ or $X=\PP_{\PP^1}(\O(2)\oplus\O(3))$, then
$|\I_{X,\PP^6}(2)|$ defines a birational
 transformations 
 $\psi:\PP^6\dashrightarrow\sS\subset\PP^9$
of type $(2,2)$ whose base locus is $X$ and whose image is
$\sS=\GG(1,4)$.
\end{example}
\begin{example}[$r=2,3;n=6,7;a=5; d=1$]\label{example: 10}
See also \cite[\Rmnum{3} Theorem~3.8]{zak-tangent}.
If $X=\GG(1,4)\subset\PP^9\subset\PP^{10}$, then 
$|\I_{X,\PP^{10}}(2)|$ defines a birational transformation 
$\psi:\PP^{10}\dashrightarrow \sS\subset\PP^{15}$ of type $(2,1)$
whose base locus is $X$ and whose image is the spinorial variety $\sS=S^{10}\subset\PP^{15}$.
Restricting $\psi$
  to a general 
 $\PP^7\subset\PP^{10}$ 
 (resp. $\PP^6\subset\PP^{10}$) 
we obtain a special birational transformation
 $\PP^7\dashrightarrow\sS\subset\PP^{12}$
(resp. $\PP^6\dashrightarrow\sS\subset\PP^{11}$) 
whose dimension of the base locus is
$r=3$ (resp. $r=2$) 
and whose image is a linear section of $S^{10}\subset\PP^{15}$.
In the first case $\sS=\overline{\psi(\PP^7)}$ is smooth while in the second case 
the singular locus of $\sS=\overline{\psi(\PP^6)}$ consists of 
$5$ lines, image of the $5$ Segre $3$-folds 
containing del Pezzo surface of degree $5$ 
and spanned by its pencils of conics.
\end{example}
\begin{example}[$r=2,3; n=6,7; a=6; d=1$]\label{example: 11}
See also \cite{russo-simis}, \cite{semple} and \cite[\Rmnum{3} Theorem~3.8]{zak-tangent}.
We have a birational transformation
$\psi:\PP^{8}\dashrightarrow\GG(1,5)\subset\PP^{14}$ of type $(2,1)$
whose base locus 
is $\PP^1\times\PP^3\subset\PP^7\subset\PP^{8}$ and whose image is $\GG(1,5)$.
Restricting $\psi$
  to a general  $\PP^7\subset\PP^{8}$ 
 we obtain a birational transformation 
 $\PP^7\dashrightarrow\sS\subset\PP^{13}$
whose base locus $X$ is
a rational normal scroll 
and whose image $\sS$ is  a smooth linear section of
 $\GG(1,5)\subset\PP^{14}$. 
Restricting $\psi$
  to a general  $\PP^6\subset\PP^{8}$ 
 we obtain a birational transformation 
 $\psi=\psi|_{\PP^6}:\PP^6\dashrightarrow\sS\subset\PP^{12}$
whose base locus $X$ is
a rational normal scroll 
(hence either $X=\PP_{\PP^1}(\O(1)\oplus\O(3))$ 
or  $X=\PP_{\PP^1}(\O(2)\oplus\O(2))$)
and whose image $\sS$ is  a singular linear section of
 $\GG(1,5)\subset\PP^{14}$. In this case,
we denote by $Y\subset\sS$ the base locus of the inverse of $\psi$ and by 
$F=(F_0,\ldots,F_5):\PP^5\dashrightarrow\PP^5$ 
the restriction of $\psi$ to $\PP^5=\Sec(X)$. 
We have
\begin{eqnarray*}
Y&=&\overline{\psi(\PP^5)}=\overline{F(\PP^5)}=\GG(1,3)\subset\PP^5\subset\PP^{12} , \\
J_4&:=&\left\{x=[x_0,...,x_5]\in\PP^5\setminus X: 
\rk\left(\left({\partial F_i}/{\partial x_j}(x)\right)_{i,j}\right)\leq 4   \right\}_{\mathrm{red}}\\
    &=& \left\{x=[x_0,...,x_5]\in\PP^5\setminus X: \dim\left(\overline{F^{-1}\left(F(x)\right)}\right)\geq2   \right\}_{\mathrm{red}}\mbox{ and }\dim\left(J_4\right) = 3,\\
\overline{\psi\left(J_4\right)} &=& \left(\sing\left(\sS\right)\right)_{\mathrm{red}} =\PP_{\PP^1}(\O(2)) \subset Y. \\
\end{eqnarray*}
\end{example}
\begin{example}\label{example: parzialecremona}
We have a good candidate for the restriction to a general $\PP^6\subset\PP^8$ 
of a quadratic Cremona transformation 
of $\PP^8$ whose base locus is a $3$-fold of 
degree $12$ and sectional genus $6$.
In fact, there exists a  smooth nondegenerate curve 
 $C\subset\PP^6$ of degree $12$, genus $6$, 
and having homogeneous ideal generated 
by $9$ quadrics. 
Moreover, if
$\psi:\PP^6\dashrightarrow\mathbf{Z}\subset\PP^{8}$ is 
the rational map 
defined by $|\I_{C,\PP^6}(2)|$, then the image $\mathbf{Z}$ has degree $14$ and 
so $\psi$ is birational. We also have that the homogeneous ideal of $\mathbf{Z}$ 
is generated by quintics and sextics.

The curve $C\subset\PP^6$ 
can be constructed as follows:
let $Y\subset\PP^5$ be a del Pezzo surface of degree $5$. Consider 
the Veronese embedding $\nu_2:\PP^5\rightarrow\PP^{20}$ and let $Y'=\nu_2(Y)$. 
$Y'$ is a surface of degree $20$, sectional genus $6$ and its linear span is a 
$\PP^{15}\subset\PP^{20}$. 
Thus, via a sequence of $8$ inner projections, we obtain 
a surface $S\subset\PP^7$ of degree $12$ and sectional genus $6$. 
Now we take a smooth hyperplane section of $S$.
This is what the following Macaulay2 code does.
{\footnotesize  
\begin{verbatim}
i1 : ringP2=QQ[z_0..z_2];
i2 : ringP5=QQ[t_0..t_5];
i3 : g=map(ringP2,ringP5,gens image basis(3,intersect(ideal(z_0,z_1),ideal(z_1,z_2),
                ideal(z_2,z_0),ideal(z_0-z_1,z_2-z_0))));
i4 : ringP20=QQ[t_0..t_20];
i5 : v=map(ringP5,ringP20,gens (ideal vars ringP5)^2);
i6 : ringP15=ringP20/(ideal image basis(1,saturate kernel(g*v)));
i7 : p=map(ringP20,ringP15,vars ringP20); 
i8 : f=g*v*p;
i9 : L=ideal image basis(1,intersect(preimage(f,ideal(z_0,z_1-z_2)),
                preimage(f,ideal(z_1,z_2-z_0)),preimage(f,ideal(z_2,z_0-z_1)),
                preimage(f,ideal(z_0,z_1+z_2)),preimage(f,ideal(z_1,z_2+z_0)),
                preimage(f,ideal(z_2,z_0+z_1)),preimage(f,ideal(z_0-z_1,z_2+z_0)),
                preimage(f,ideal(z_0+z_1,z_2-z_0))));
i10 : ringP7=QQ[x_0..x_7];
i11 : h=f*map(ringP15,ringP7,gens(L));
i12 : idealS=saturate kernel h;
i13 : H=sub(idealS, {x_7=>x_0+x_1+x_2+x_3+x_4+x_5+x_6});
i14 : ringP6=QQ[x_0..x_6]; 
i15 : idealC=saturate sub(H,ringP6);
i16 : C=Proj(ringP6/idealC);
i17 : dim singularLocus C
o17 = -infinity
i18 : dim C
o18 = 1
i19 : degree C
o19 = 12
i20 : genus C
o20 = 6
i21 : numgens idealC
o21 = 9
i22 : ringP8=QQ[y_0..y_8];
i23 : psi=map(ringP6,ringP8,gens idealC);
i24 : Z=Proj(coimage(psi));
i25 : dim Z
o25 = 6
i26 : degree Z
o26 = 14
i27 : DegreeOfpsi=(-5*degree(C)+2*genus(C)+62)/degree(Z) 
o27 = 1 
\end{verbatim} 
}
\end{example}
\begin{example}[$r=3;n=8;a=0;d=5$]\label{example: 12}
 See also \cite{hulek-katz-schreyer}. If $\mathcal{X}\subset\PP^9$ is a 
general $3$-dimensional 
 linear section of $\GG(1,5)\subset \PP^{14}$,
 $p\in \mathcal{X}$ is a general point and 
 $X\subset\PP^8$ is the image of $\mathcal{X}$ under the projection from $p$,
then the homogeneous ideal of $X$ is generated by quadrics 
and $|\I_{X,\PP^8}(2)|$ defines a Cremona transformation $\PP^8\dashrightarrow\PP^8$
of type $(2,5)$.
\end{example}
\begin{example}[$r=3;n=8;a=1;d=3$]\label{example: 13} 
As already said in Example \ref{example: d=3 Delta=3},
if $X\subset\PP^8$ 
is the blow-up 
of the smooth quadric $Q^3\subset\PP^4$ 
at $5$ general points $p_1,\ldots,p_5$ with
 $|H_{X}|=|2H_{Q^3}-p_1-\cdots-p_5|$, then 
$|\I_{X,\PP^8}(2)|$ defines a birational transformation 
$\psi:\PP^8\dashrightarrow\sS\subset\PP^9$ of type $(2,3)$ whose 
image is a cubic hypersurface with singular locus of dimension $3$.
\end{example}
\begin{example}[$r=3;n=8;a=1;d=4$ - incomplete]\label{example: 14}
By \cite{alzati-fania-ruled} (see also \cite{besana-fania-flamini-f1}) there exists
a smooth irreducible nondegenerate linearly normal $3$-dimensional variety 
 $X\subset\PP^8$  with
 $h^1(X,\O_X)=0$, 
 degree $\lambda=11$, sectional genus $g=5$, having the structure of a 
 scroll $\PP_{\FF^1}(\E)$  with $c_1(\E)=3C_0+5f$ and $c_2(\E)=10$ 
and hence having degrees of the Segre classes 
 $s_1(X)=-85$, $s_2(X)=386$, $s_3(X)=-1330$.
Now, by Fact \ref{fact: test K2}, $X\subset\PP^8$ is 
arithmetically Cohen-Macaulay
and by Riemann-Roch Theorem, denoting with $C$ a general curve section of $X$,
 we obtain
\begin{equation}\label{eq: riemann-rock}
h^0(\PP^8,\I_X(2))=h^0(\PP^6,\I_C(2))
= h^0(\PP^6,\O_{\PP^6}(2))-h^0(C,\O_C(2))
=28-(2\lambda+1-g),
\end{equation}
hence $h^0(\PP^8,\I_X(2))=10$.
If the homogeneous ideal of $X$ is generated 
by quadratic forms\footnote{One says that 
a subvariety $X\subset\PP^n$ is $2$-regular (in the sense of Castelnuovo-Mumford)
if  $h^j(\PP^n,\I_{X}(2-j))=0$ for all $j>0$.
If $X$ is $2$-regular, 
then its homogeneous ideal is generated 
by forms of degrees $\leq2$ (see \cite{mumford-curves}).
Now, if $X\subset\PP^8$ is a scroll over $\FF_1$ as above, we have 
$h^j(\PP^8,\I_{X}(2-j))=0$ for $j>0$ and $j\neq 4$, 
but unfortunately we also have
$h^4(\PP^8,\I_{X}(-2))=h^3(X,\O_{X}(-2))=-P_{X}(-2)=5$.} 
or at least if  
$X=V(H^0(\I_X(2)))$,
the linear system $|\I_X(2)|$ defines a rational map 
$\psi:\PP^8\dashrightarrow\sS=\overline{\psi(\PP^8)}\subset\PP^{9}$ 
whose base locus is
 $X$ and whose image $\sS$ is nondegenerate.
Now, by (\ref{eq: grado mappa razionale}) we deduce   $\deg(\psi)\deg(\sS)=2$, 
from which 
$\deg(\psi)=1$ and $\deg(\sS)=2$.

We have a good candidate for the restriction of $\psi$ to a general $\PP^6\subset\PP^8$.
In fact, there exists a smooth irreducible nondegenerate linearly normal curve 
 $C\subset\PP^6$ of degree $\lambda=11$, genus $g=5$, 
and having homogeneous ideal generated 
by the $10$ quadrics:
\begin{equation}\label{eq: 10 quadrics generating C}
\begin{array}{c}
x_5^2-x_4 x_6 ,  \    
-x_3 x_6+x_2 x_6+x_4 x_5-x_0 x_1 , \    
x_3 x_5-x_2 x_6 , \     
x_2 x_5-x_1 x_6 , \\   
x_3 x_4-x_1 x_6 , \  
x_2 x_4-x_1 x_5 , \  
x_2 x_6-x_1 x_6+x_1 x_5-x_4^2-x_3^2+x_2 x_3+x_0^2 , \\      
x_0 x_6-x_1 x_5+x_1 x_3 , \   
x_0 x_6-x_1 x_5+x_2^2 ,  \   
x_0 x_5-x_1 x_4+x_1 x_2 .
\end{array}
\end{equation}
The quadrics (\ref{eq: 10 quadrics generating C}) give a rational map 
$\psi':\mathbb{P}^6\dashrightarrow\sS'\subset\mathbb{P}^9$ and since
$\deg(\psi')\deg(\sS')=-5\lambda+2g+62=17$, we have that 
$\psi'$ is birational.  Moreover, 
the homogeneous ideal of $\sS'$ is generated 
by  $6$ quartics and a rank $6$ quadric defined by:
$$
y_3^2-y_3y_4+y_2y_5+y_0y_7-y_0y_8 .
$$
\end{example}
\begin{example}[$r=3;n=8;a=1;d=4$]\label{example: 15} 
As already said in Example \ref{example: d=4 Delta=2},
if $X\subset\PP^8$ is
a general linear $3$-dimensional section 
of the spinorial variety $S^{10}\subset\PP^{15}$, then
$|\I_{X,\PP^8}(2)|$ defines a
 birational transformation 
 $\psi:\PP^8\dashrightarrow\sS\subset\PP^9$
of type $(2,4)$ 
whose image is a smooth quadric.
\end{example}
\begin{example}[$r=3;n=8;a=2;d=3$]\label{example: 16}
By \cite{fania-livorni-ten} (see also \cite{besana-fania-threefolds}) there exists
a smooth irreducible nondegenerate linearly normal $3$-dimensional variety 
 $X\subset\PP^8$  with $h^1(X,\O_X)=0$,  
 degree $\lambda=10$, sectional genus $g=4$, 
having the structure of a  scroll 
$\PP_{Q^2}(\E)$ with $c_1(\E)=\O_Q(3,3)$ and $c_2(\E)=8$ 
and hence having degrees of the Segre classes 
$s_1(X)=-76$, $s_2(X)=340$, $s_3(X)=-1156$.
By Fact \ref{fact: test K2}, $X\subset\PP^8$ is
arithmetically Cohen-Macaulay and its  homogeneous ideal is generated by quadratic forms. 
So by (\ref{eq: riemann-rock}) we have $h^0(\PP^8,\I_X(2))=11$ and 
the linear system $|\I_X(2)|$ defines a rational map 
$\psi:\PP^8\dashrightarrow\sS\subset\PP^{10}$ 
whose base locus is
 $X$ and whose image $\sS$ is nondegenerate.
By (\ref{eq: grado mappa razionale}) it follows that $\deg(\psi)\deg(\sS)=4$ 
and hence $\deg(\psi)=1$ and $\deg(\sS)=4$.
\end{example}
\begin{example}[$r=3;n=8;a=3;d=2$]\label{example: 17}
By \cite{fania-livorni-nine} (see also \cite{besana-fania-threefolds}) there exists
a smooth irreducible nondegenerate linearly normal $3$-dimensional variety 
 $X\subset\PP^8$  with $h^1(X,\O_X)=0$,  
 degree $\lambda=9$, sectional genus $g=3$, 
having the structure of a  scroll $\PP_{\PP^2}(\E)$ with $c_1(\E)=4$ and $c_2(\E)=7$ 
and hence having degrees of the Segre classes 
$s_1(X)=-67$, $s_2(X)=294$, $s_3(X)=-984$. 
By Fact \ref{fact: test K2}, $X\subset\PP^8$ is
arithmetically Cohen-Macaulay and its homogeneous ideal is generated by quadratic forms. 
So by (\ref{eq: riemann-rock}) we have $h^0(\PP^8,\I_X(2))=12$ and 
the linear system $|\I_X(2)|$ defines a rational map 
$\psi:\PP^8\dashrightarrow\sS\subset\PP^{11}$ 
whose base locus is
 $X$  and whose image $\sS$ is nondegenerate.
By (\ref{eq: grado mappa razionale}) it follows that 
$\deg(\psi)\deg(\sS)=8$ 
and  in particular 
$\deg(\psi)\neq 0$ i.e. 
$\psi:\PP^8\dashrightarrow\sS$ is 
generically quasi-finite.
Again by Fact \ref{fact: test K2} and Fact \ref{fact: K2 property} it follows
that $\psi$ is birational and hence $\deg(\sS)=8$.  

Now we find the equations for such an $X$.
 Consider the set of $7$ points in $\PP^2$,  
$$T:=\{[1,0,0],[0,1,0],[0,0,1],[1,1,0],[1,0,1],[0,1,1],[1,1,1]\}.$$
The homogeneous ideal of $T$ is generated by $3$ cubics; we have
$\dim |\I_{T,\PP^2}(4)|=7$ and let  $v=v_{|\I_{T,\PP^2}(4)|}:\PP^2\dashrightarrow\PP^7$. 
The image $S=\overline{v(\PP^2)}\subset\PP^7$ is a smooth nondegenerate surface, 
of degree  $9$, 
sectional genus $3$ and with homogeneous ideal generated by $12$ quadrics.
These $12$ quadrics define a 
special quadratic birational map $\psi':\PP^7\dashrightarrow\PP^{11}$ 
whose image is a complete intersection of $4$ quadrics and whose inverse can be defined 
by quadrics.
We then consider the $8$ quadrics defining 
$\psi'^{-1}$ and the $4$ quadrics defining $\overline{\psi'(\PP^7)}$.
These $12$ quadrics give a Cremona transformation of $\PP^{11}$.
Explicitly, there is a 
Cremona transformation $\phi:\PP^{11}\dashrightarrow\PP^{11}$ defined  
by:
\begin{equation}
\begin{array}{c}
x_6x_{10}-x_5x_{11} , \  
 x_1x_{10}-x_4x_{10}+x_3x_{11} , \  
 x_6x_8-x_2x_{11} , \  
 x_5x_8-x_2x_{10} , \\
 x_3x_8-x_0x_{10} , \  
 x_1x_8-x_4x_8+x_0x_{11} , \  
 x_6x_7-x_1x_9+x_4x_9-x_4x_{11} , \  
 x_5x_7+x_3x_9-x_4x_{10} , \\
 x_2x_7-x_4x_8+x_0x_9 , \  
 x_1x_5-x_4x_5+x_3x_6 , \  
 x_2x_3-x_0x_5 , \  
 x_1x_2-x_2x_4+x_0x_6 .
\end{array}
\end{equation}
The inverse of $\phi$ is defined by:
\begin{equation}
\begin{array}{c}
-y_5y_{10}+y_4y_{11} , \  
 y_5y_9-y_8y_9+y_6y_{10}-y_1y_{11}+y_7y_{11} , \  
 y_2y_{10}+y_3y_{11} , \  
 y_4y_9-y_1y_{10} , \\
 -y_8y_9+y_6y_{10}+y_7y_{11} , \  
 y_3y_9+y_0y_{10} , \  
 y_2y_9-y_0y_{11} , \  
 y_4y_6+y_5y_7-y_1y_8 , \\
 y_2y_4+y_3y_5 , \  
 -y_3y_6+y_2y_7-y_0y_8 , \  
 y_1y_3+y_0y_4 , \  
 y_1y_2-y_0y_5 .
\end{array}
\end{equation}
The base locus of $\phi$ (resp. $\phi^{-1}$) is a variety 
of dimension $6$, 
degree $9$,
sectional genus $3$,
and the support of the singular locus is a plane $\PP^2\subset\PP^{11}$.
In particular,  by restricting the above Cremona transformation 
to a general $\PP^8\subset\PP^{11}$, we obtain an explicit example 
of special quadratic birational transformation 
$\psi:\PP^8\dashrightarrow\mathbf{S}\subset\PP^{11}$; 
 that is we obtain explicit equations 
for a $3$-fold scroll $X\subset\PP^8$ over $\PP^2$, of degree 
$9$ and sectional genus $3$. (For example, by restricting to the subspace 
$V(x_0+x_1+x_2+x_5+x_8-x_{11}, x_1+x_2+x_3+x_4+x_7-x_{10}, x_0+x_3+x_4+x_5+x_6-x_9)$, 
everything works fine and the image is a complete intersection 
with singular locus of dimension $3$.)  

 The following Macaulay2 code computes the maps $\phi$ and $\phi^{-1}$. 
{\footnotesize  
\begin{verbatim}
i1 : installPackage("AdjointIdeal");
i2 : installPackage("Parametrization");
i3 : ringP2=QQ[z_0..z_2];
i4 : points=intersect(ideal(z_0,z_1),ideal(z_0,z_2),ideal(z_1,z_2),ideal(z_0,z_1-z_2),
                ideal(z_1,z_0-z_2),ideal(z_2,z_0-z_1),ideal(z_0-z_1,z_0-z_2));
i5 : parametr=gens(image(basis(4,points)));
i6 : ringP7=QQ[t_0..t_7];
i7 : idealS=saturate(kernel(map(ringP2,ringP7,parametr)));
i8 : -- You could work with the surface S, 
     -- but we prefer to work with a sectional curve for efficiency reasons.
     ringP6=QQ[t_0..t_6]; 
i9 : idealC=sub(sub(idealS, {t_7=>0}),ringP6);
i10 : ImInv=invertBirationalMap(ideal(ringP6),gens(idealC));
i11 : ringP11=QQ[x_0..x_11];
i12 : idealX=sub(ideal(image(basis(2,saturate(ideal(ImInv#0)+ImInv#1)))),vars(ringP11));
i13 : X=Proj(ringP11/idealX);
i14 : ImInv2=invertBirationalMap(ideal(ringP11),gens(idealX));
i15 : ringP11'=QQ[y_0..y_11];
i16 : Phi=map(ringP11,ringP11',gens(idealX));
i17 : InversePhi=map(ringP11',ringP11,sub(transpose(ImInv2#0),vars(ringP11'))); 
\end{verbatim} 
}

\end{example}
\begin{example}[$r=3;n=8;a=3;d=3$]\label{example: 17nuovo}
By \cite{fania-livorni-nine} (see also \cite{besana-fania-threefolds}) there exists
a smooth irreducible nondegenerate linearly normal $3$-dimensional variety 
 $X\subset\PP^8$  with $h^1(X,\O_X)=0$,  
 degree $\lambda=9$, sectional genus $g=3$, 
having the structure  
of a quadric fibration over $\PP^1$    
and hence having degrees of the Segre classes 
$s_1(X)=-67$, $s_2(X)=295$, $s_3(X)=-997$.  
By Fact \ref{fact: test K2}, $X\subset\PP^8$ is
arithmetically Cohen-Macaulay and its homogeneous ideal is generated by quadratic forms. 
So by (\ref{eq: riemann-rock}) we have $h^0(\PP^8,\I_X(2))=12$ and 
the linear system $|\I_X(2)|$ defines a rational map 
$\psi:\PP^8\dashrightarrow\sS\subset\PP^{11}$ 
whose base locus is
 $X$  and whose image $\sS$ is nondegenerate.
By (\ref{eq: grado mappa razionale}) it follows that   
$\deg(\psi)\deg(\sS)=5$  
and hence $\deg(\psi)=1$ and $\deg(\sS)=5$. 

Now we find the equations for such an $X$. 
 Consider the rational normal scroll 
$S(1,1,1,2)=\PP_{\PP^1}(\O(1)\oplus\O(1)\oplus\O(1)\oplus\O(2))\subset\PP^8$, 
which is defined by the $2\times2$ minors of the matrix:
\begin{displaymath}
 \left(\begin{array}{ccccc} x_0 & x_2 & x_4 & x_6 & x_7 \\ x_1 & x_3 & x_5 & x_7 & x_8   \end{array}\right)
\end{displaymath}
Intersect $S(1,1,1,2)$ with the quadric $Q\subset\PP^8$ defined by:
\begin{displaymath}
 x_3^2+x_3x_4+x_0x_5+x_1x_5+x_2x_5+x_3x_5+x_1x_6+x_1x_7+x_6x_7+x_7^2+x_1x_8+x_7x_8 .
\end{displaymath}
We have $S(1,1,1,2)\cap Q=X\cup P$, where 
$P$ is the linear variety $V(x_8,x_7,x_5,x_3,x_1)$, while $X\subset\PP^8$ 
is a nondegenerate smooth variety of degree $9$, sectional genus $3$  
 and  having homogeneous ideal generated by $12$ quadrics. 
These $12$ quadrics give the birational map 
$\psi:\PP^8\dashrightarrow\sS\subset\PP^{11}$ defined by:
\begin{equation}
 \begin{array}{c}
x_7^2-x_6x_8 , \     
x_5x_7-x_4x_8 , \     
x_3x_7-x_2x_8 , \   
x_1x_7-x_0x_8 , \\
x_5x_6-x_4x_7 , \  
x_3x_6-x_2x_7 , \  
x_1x_6-x_0x_7 , \  
x_3x_4-x_2x_5 , \  
x_1x_4-x_0x_5 , \\
x_3^2+x_0x_5+x_1x_5+2x_2x_5+x_3x_5+x_0x_7+x_6x_7+x_0x_8+x_1x_8+x_6x_8+x_7x_8 , \\
x_2x_3+x_0x_4+2x_2x_4+x_0x_5+x_2x_5+x_0x_6+x_6^2+x_0x_7+x_6x_7+x_0x_8+x_6x_8 , \\
x_1x_2-x_0x_3  .
\end{array}
\end{equation}
 The image $\sS$ 
is defined by the quadrics:
\begin{equation}
\begin{array}{c}
y_6y_7-y_5y_8+y_4y_{11} , \ 
y_3y_7-y_2y_8+y_1y_{11} , \ 
y_3y_5-y_2y_6+y_0y_{11} , \\ 
y_3y_4-y_1y_6+y_0y_8 , \ 
y_2y_4-y_1y_5+y_0y_7 .
\end{array}
\end{equation}  
It coincides with the cone over $\GG(1,4)\subset V(y_9,y_{10})\simeq\PP^9\subset\PP^{11}$.  
\end{example} 
\begin{example}[$r=3;n=8;a=4;d=2$]\label{example: 18}
Consider the composition
$$
f:\PP^1\times\PP^3\longrightarrow\PP^1\times Q^3\subset \PP^1\times\PP^4\longrightarrow\PP^9,
$$
where the first map is the identity of $\PP^1$ 
multiplied by
$[z_0,z_1,z_2,z_3]\mapsto [z_0^2,z_0z_1,z_0z_2,z_0z_3,z_1^2+z_2^2+z_3^2]$,
and the last map is
$([t_0,t_1],[y_0,\ldots,y_4])\mapsto [t_0y_0,\ldots,t_0y_4,t_1y_0,\ldots,t_1y_4]
=[x_0,\ldots,x_9]$.
In the equations defining 
$\overline{f(\PP^1\times\PP^3)}\subset\PP^{9}$, 
by replacing 
 $x_9$ with $x_0$, we obtain the quadrics:
\begin{equation}\label{equazioni-di-B-a=4}
 \begin{array}{c}
-x_0x_3 + x_4x_8, \ 
-x_0x_2 + x_4x_7, \ 
 x_3x_7 - x_2x_8, \ 
-x_0x_5 + x_6^2 + x_7^2 + x_8^2, \ 
-x_0x_1 + x_4x_6, \\ 
x_3x_6 - x_1x_8, \ 
x_2x_6 - x_1x_7, \
-x_0^2 + x_1x_6 + x_2x_7 + x_3x_8, \ 
-x_0^2 + x_4x_5, \ 
x_3x_5 - x_0x_8, \\ 
x_2x_5 - x_0x_7, \ 
x_1x_5 - x_0x_6, \ 
x_1^2 + x_2^2 + x_3^2 - x_0x_4.
\end{array}
\end{equation}
Denoting with $I$ the ideal generated by the
quadrics (\ref{equazioni-di-B-a=4}) and $X=V(I)$, 
we have that $I$ is saturated 
and $X$ is smooth.
The linear system $|\I_{X,\PP^8}(2)|$ defines 
a birational transformation 
 $\psi:\PP^8\dashrightarrow\sS\subset \PP^{12}$ 
whose base locus is $X$ and whose image is the variety $\sS$ 
with homogeneous ideal generated by:
\begin{equation}
\begin{array}{c}
 y_6y_9-y_5y_{10}+y_2y_{11}, \ 
y_6y_8-y_4y_{10}+y_1y_{11}, \ 
y_5y_8-y_4y_9+y_0y_{11}, \ 
y_2y_8-y_1y_9+y_0y_{10}, \\ 
y_2y_4-y_1y_5+y_0y_6, \ 
y_2^2+y_5^2+y_6^2+y_7^2-y_7y_8+y_0y_9+y_1y_{10}+y_4y_{11}-y_3y_{12}.
\end{array}
\end{equation}
We have $\deg(\sS)=10$ and  $\dim(\sing(\sS))=3$. 
The inverse of $\psi:\PP^8\dashrightarrow\sS$ 
is defined by:
\begin{equation}
 \begin{array}{c}
-y_7y_8+y_0y_9+y_1y_{10}+y_4y_{11}, \ 
y_0y_5+y_1y_6-y_4y_7-y_{11}y_{12}, \ 
y_0y_2-y_4y_6-y_1y_7-y_{10}y_{12}, \\
-y_1y_2-y_4y_5-y_0y_7-y_9y_{12}, \ 
-y_0^2-y_1^2-y_4^2-y_8y_{12}, \  
-y_3y_8-y_9^2-y_{10}^2-y_{11}^2, \\
-y_3y_4-y_5y_9-y_6y_{10}-y_7y_{11}, \ 
-y_1y_3-y_2y_9-y_7y_{10}+y_6y_{11}, \ 
-y_0y_3-y_7y_9+y_2y_{10}+y_5y_{11}.
\end{array}
\end{equation}
Note that  $\sS\subset\PP^{12}$ 
is the intersection of a quadric hypersurface in $\PP^{12}$ 
with the cone over $\GG(1,4)\subset\PP^9\subset\PP^{12}$.
\end{example}
\begin{example}[$r=3;n=8;a=5$ - with non liftable inverse]\label{example: a=5}
If $X\subset\PP^8$ is the blow-up  of $\PP^3$  at a point $p$ with
 $|H_{X}|=|2H_{\PP^3}-p|$, then (modulo a change of coordinates)
the homogeneous ideal of $X$ is generated by the quadrics: 
\begin{equation}
 \begin{array}{c}
x_6x_7-x_5x_8,\ 
x_3x_7-x_2x_8,\ 
x_5x_6-x_4x_8,\ 
x_2x_6-x_1x_8,\ 
x_5^2-x_4x_7,\ 
x_3x_5-x_1x_8,\ 
x_2x_5-x_1x_7,\\
x_3x_4-x_1x_6,\ 
x_2x_4-x_1x_5,\ 
x_2x_3-x_0x_8,\ 
x_1x_3-x_0x_6,\ 
x_2^2-x_0x_7,\ 
x_1x_2-x_0x_5,\ 
x_1^2-x_0x_4 .
\end{array}
\end{equation}
The linear system $|\I_{X,\PP^8}(2)|$ defines a birational transformation 
$\psi:\PP^{8}\dashrightarrow\PP^{13}$ whose base locus is $X$ and whose 
image is the variety $\sS$ with homogeneous ideal generated by:
\begin{equation}
 \begin{array}{c}
y_8y_{10}-y_7y_{12}-y_3y_{13}+y_5y_{13},\ 
 y_8y_9+y_6y_{10}-y_7y_{11}-y_3y_{12}+y_1y_{13},\ 
 y_6y_9-y_5y_{11}+y_1y_{12},\\ 
 y_6y_7-y_5y_8-y_4y_{10}+y_2y_{12}-y_0y_{13},\  
 y_3y_6-y_5y_6+y_1y_8+y_4y_9-y_2y_{11}+y_0y_{12},\\ 
 y_3y_4-y_2y_6+y_0y_8,\ 
y_3^2y_5-y_3y_5^2+y_1y_3y_7-y_2y_3y_9+y_2y_5y_9-y_0y_7y_9-y_1y_2y_{10}+y_0y_5y_{10}. 
\end{array}
\end{equation}
We have $\deg(\sS)=19$, $\dim(\sing(\sS))=4$ and
the degrees of Segre classes of $X$ are: $s_1=-49$, $s_2=201$, $s_3=-627$. So,
by (\ref{eq: sollevabile}), we deduce that
 the inverse of 
 $\psi:\PP^8\dashrightarrow\sS$ is not liftable; 
however, a representative of the equivalence class of $\psi^{-1}$ is defined by:
\begin{equation}
 \begin{array}{c}
y_{12}^2-y_{11}y_{13},\  
y_8y_{12}-y_6y_{13},\  
y_8y_{11}-y_6y_{12},\  
-y_6y_{10}+y_7y_{11}+y_3y_{12}-y_5y_{12},\  
y_8^2-y_4y_{13},\\  
y_6y_8-y_4y_{12},\   
y_3y_8-y_2y_{12}+y_0y_{13},\  
y_6^2-y_4y_{11},\  
y_5y_6-y_1y_8-y_4y_9.
\end{array}
\end{equation}
We also point out that
  $\Sec(X)$ has dimension 
 $6$ and degree $6$ (against Proposition \ref{prop: B is QEL}).
\end{example}
\begin{example}[$r=3;n=8;a=6;d=2$]\label{example: 20}
 See also \cite{russo-simis} and \cite{semple}.
If 
$X=\PP_{\PP^1}(\O(1)\oplus\O(1)\oplus\O(4))$ or 
   $X=\PP_{\PP^1}(\O(1)\oplus\O(2)\oplus\O(3))$ or
   $X=\PP_{\PP^1}(\O(2)\oplus\O(2)\oplus\O(2))$, then 
the linear system 
$|\I_{X,\PP^8}(2)|$ defines a birational transformation
$\PP^8\dashrightarrow\sS\subset\PP^{14}$ of type $(2,2)$ 
whose base locus is $X$ and whose image is $\sS=\GG(1,5)$.
\end{example}
\begin{example}[$r=3; n=8; a=7; d=1$]\label{example: oadpDegree8}
See also \cite[Example~2.7]{ciliberto-mella-russo} and \cite{ionescu-smallinvariantsIII}.
Let $Z=\{p_1,\ldots,p_8\}\subset\PP^2$ be such that 
no $4$ of the $p_i$ are collinear and no $7$ of the $p_i$ lie on a conic and
consider the scroll $\PP_{\PP^2}(\mathcal{E})\subset\PP^7$ associated 
to the very ample vector bundle $\mathcal{E}$ of rank $2$,
given as an extension by the following exact sequence
$0\rightarrow\O_{\PP^2}\rightarrow\E\rightarrow 
\I_{Z,\PP^2}(4)\rightarrow0.$
The homogeneous ideal of $X\subset\PP^7$ is
generated by $7$ quadrics and so
the linear system $|\I_{X,\PP^8}(2)|$  defines a birational transformation 
$\psi:\PP^8\dashrightarrow\sS\subset\PP^{15}$ of type $(2,1)$.
Since we
have 
$c_1(X)=12$,
$c_2(X)=15$,
$c_3(X)=6$, we deduce
$s_1(\mathcal{N}_{X,\PP^8})=-60$,
$s_2(\mathcal{N}_{X,\PP^8})=267$,
$s_3(\mathcal{N}_{X,\PP^8})=-909$,
and hence $\deg(\sS)=29$, by (\ref{eq: grado mappa razionale}).
The base locus  of the inverse of 
$\psi$ is $\psi(\PP^7)\simeq \PP^6\subset\sS\subset\PP^{15}$.
We also observe that the restriction of $\psi|_{\PP^7}:\PP^7\dashrightarrow\PP^6$
to a general hyperplane $H\simeq\PP^6\subset\PP^7$
gives rise to a transformation as in Example \ref{example: 6}.
\end{example}
\begin{example}[$r=3; n=8; a=8,9; d=1$]\label{example: edge}
If $X\subset\PP^7\subset\PP^8$ is a $3$-dimensional 
Edge variety of degree $7$ (resp. degree $6$), then 
$|\I_{X,\PP^8}(2)|$ defines a birational transformation
$\PP^8\dashrightarrow\sS\subset\PP^{16}$ 
(resp. $\PP^8\dashrightarrow\sS\subset\PP^{17}$) of type $(2,1)$ 
whose base locus is $X$ and 
whose degree of the image is $\deg(\sS)=33$ (resp. $\deg(\sS)=38$).
For memory overflow problems, 
we were not able to calculate the scheme $\sing(\sS)$;
however, it is easy to obtain that 
$1\leq \dim(\sing(\sS))<\dim(Y)=6$
and $\dim(\sing(Y))=1$, where $Y$ denotes the base locus of the inverse.
\end{example}
\begin{example}[$r=3; n=8; a=10; d=1$]\label{example: oadp10}
See also \cite{russo-simis}, \cite{semple} and \cite[\Rmnum{3} Theorem~3.8]{zak-tangent}.
We have a birational transformation
$\PP^{10}\dashrightarrow\GG(1,6)\subset\PP^{20}$ of type $(2,1)$
whose base locus 
is $\PP^1\times\PP^4\subset\PP^9\subset\PP^{10}$ and whose image is $\GG(1,6)$.
Restricting it
  to a general  $\PP^8\subset\PP^{10}$ 
 we obtain a birational transformation 
 $\psi:\PP^8\dashrightarrow\sS\subset\PP^{18}$
whose base locus $X$ is
a rational normal scroll 
(hence either $X=\PP_{\PP^1}(\O(1)\oplus\O(1)\oplus\O(3))$ 
or  $X=\PP_{\PP^1}(\O(1)\oplus\O(2)\oplus\O(2))$)
and whose image $\sS$ is  a linear section of
 $\GG(1,6)\subset\PP^{20}$. 
We denote by $Y\subset\sS$ the base locus of the inverse of $\psi$ and by  
$F=(F_0,\ldots,F_9):\PP^7\dashrightarrow\PP^9$ 
the restriction of $\psi$ to $\PP^7=\Sec(X)$. 
We have
\begin{eqnarray*}
Y&=&\overline{\psi(\PP^7)}=\overline{F(\PP^7)}=\GG(1,4)\subset\PP^9\subset\PP^{18} , \\
J_6&:=&\left\{x=[x_0,...,x_7]\in\PP^7\setminus X: 
\rk\left(\left({\partial F_i}/{\partial x_j}(x)\right)_{i,j}\right)\leq 6   \right\}_{\mathrm{red}}\\
    &=& \left\{x=[x_0,...,x_7]\in\PP^7\setminus X: \dim\left(\overline{F^{-1}\left(F(x)\right)}\right)\geq2   \right\}_{\mathrm{red}}\mbox{ and }\dim\left(J_6\right) = 5,\\
\overline{\psi\left(J_6\right)} &=& \left(\sing\left(\sS\right)\right)_{\mathrm{red}} \subset Y\mbox{ and }\dim\left(\overline{\psi\left(J_6\right)}\right) = 3. \\
\end{eqnarray*}
\end{example}
\section{Summary results}\label{sec: table}
\begin{theorem}\label{theorem: classification}
Table \ref{tabella: all cases 3-fold}
classifies all special quadratic transformations $\varphi$ 
as in \S \ref{sec: notation} and with $r\leq3$.
\end{theorem}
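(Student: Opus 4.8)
The plan is to assemble Table~\ref{tabella: all cases 3-fold} by a systematic case analysis organized by the dimension $r=\dim(\B)\in\{1,2,3\}$, drawing together all the partial classifications established earlier in the chapter. The overall strategy is that Proposition~\ref{prop: B is QEL} already forces $\B$ to be a $QEL$-variety of type $\delta=2r+2-n\geq0$, so in particular $n\leq 2r+2$; this bounds $n$ for each fixed $r$, and the numerical restrictions of Propositions~\ref{prop: hilbert polynomial} and \ref{prop: segre and chern classes} together with the double point formula (Proposition~\ref{prop: double point formula}) then pin down the discrete invariants $(a,\lambda,g,d,\Delta)$ in each case. First I would dispose of the two ``generic'' subcases where $\langle\B\rangle=\PP^n$ is nondegenerate of maximal embedding dimension ($n=2r+2$), namely $r=2,n=6$ and $r=3,n=8$, since these are the delicate $\delta=0$ situations where examples are hardest to produce.

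For $r=1$ the classification is immediate from Proposition~\ref{prop: possibili casi 1-fold}, which already lists all four surviving cases (the numerical case (\ref{case: escluso 1-fold}) having been excluded via the $OADP$-curve argument). For $r=2$, I would combine Proposition~\ref{prop: possibili casi 2-fold}, which handles the degenerate cases $n\leq5$ and reduces the nondegenerate case to $n=6$, $\langle\B\rangle=\PP^6$, with Lemmas~\ref{lemma: r=2 B nondegenerate} and \ref{lemma: double point formula r=2} and Proposition~\ref{prop: r=2 B nondegenerate}; the latter invokes the low-degree surface classifications of \cite{crauder-katz-1989}, \cite{ionescu-smallinvariants} and Proposition~\ref{prop: 2-fold in P6}. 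For $r=3$ the degenerate cases $n\leq7$ follow from Proposition~\ref{prop: possibili casi C1}, and the hard nondegenerate case $n=8$, $\langle\B\rangle=\PP^8$ is exactly the content of Lemma~\ref{lemma: r=3 B nondegenerate} (which determines $(\lambda,g,K_S\cdot H_S)$ for each $a$), refined by Lemmas~\ref{lemma: double point formula}, \ref{lemma: quadric fibration}, \ref{lemma: scroll over surface}, \ref{lemma: scroll over curve} and culminating in Proposition~\ref{prop: r=3 B nondegenerate}. Once every abstract structure for $\B$ is fixed, I would verify that the corresponding entry in Table~\ref{tabella: all cases 3-fold} matches, with the examples of \S\ref{sec: examples} certifying existence.

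The proof therefore reduces to checking that the union of all these propositions exhausts precisely the rows of the table and that no row is omitted or duplicated. The main obstacle I anticipate is \emph{existence} rather than numerical exclusion: the classification results only guarantee the abstract structure of $\B$, and one must still confirm, for each putative $\B$, that it is genuinely cut out by quadrics and that the induced quadratic map is birational with the claimed image $\sS$ satisfying the factoriality, normality and liftability hypotheses of Assumptions~\ref{assumption: base}--\ref{assumption: ipotesi}. For most rows this is supplied by the explicit \texttt{Macaulay2} computations in Examples~\ref{example: 1}--\ref{example: oadp10}; the genuinely open case is the scroll over $\FF_1$ of degree $11$ in $\PP^8$ (Example~\ref{example: 14}), whose existence hinges on whether its homogeneous ideal is generated by quadrics, a point the chapter explicitly flags as unresolved. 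Accordingly the theorem should be stated as a classification ``up to this existence question,'' exactly as the incomplete status of that single row in Table~\ref{tabella: all cases 3-fold} reflects.
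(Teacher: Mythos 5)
Your proposal is correct and follows essentially the same route as the paper: Theorem \ref{theorem: classification} is a summary statement whose proof is precisely the union of the dimension-by-dimension results you cite — Propositions \ref{prop: possibili casi 1-fold}, \ref{prop: possibili casi 2-fold} and \ref{prop: possibili casi C1} for the degenerate/low-$n$ cases, the Castelnuovo-type lemmas and double point formula arguments culminating in Propositions \ref{prop: r=2 B nondegenerate} and \ref{prop: r=3 B nondegenerate} for the nondegenerate cases, and the examples of \S \ref{sec: examples} for existence. One minor imprecision: the unresolved existence status is not confined to the single $\FF_1$-scroll row of Example \ref{example: 14} — Table \ref{tabella: all cases 3-fold} flags several rows with $\exists^{\ast}$, $\exists^{\ast\ast}$ and $?$ (e.g. Examples \ref{example: parzialecremona}, \ref{example: 16}, \ref{example: 17nuovo}, \ref{example: 18}, \ref{example: oadpDegree8}, \ref{example: edge}) — but this does not affect the completeness half of the classification, which is what your argument establishes.
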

As a consequence, 
we generalize Corollary \ref{prop: classification type 2-3 into cubic}.
\begin{corollary}\label{corollary: coindex 2}
 Let $\varphi:\PP^n\dashrightarrow\sS\subseteq\PP^{n+a}$ be 
as in \S \ref{sec: notation}. If $\varphi$ is of type $(2,3)$ and 
$\sS$ has coindex $c=2$,
then $n=8$, $r=3$ and one of the following cases holds: 
\begin{itemize}
\item $\Delta=3$, $a=1$, $\lambda=11$, $g=5$, $\B$ is the blow-up of $Q^3$ at $5$ points; 
\item $\Delta=4$, $a=2$, $\lambda=10$, $g=4$, $\B$ is a scroll over $Q^2$;  
\item $\Delta=5$, $a=3$, $\lambda=9$,  $g=3$, $\B$ is a quadric fibration over $\PP^1$. 
\end{itemize}
\end{corollary}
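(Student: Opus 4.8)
The plan is to reduce the statement to the three-dimensional classification already obtained in Proposition \ref{prop: r=3 B nondegenerate}, exactly as Corollary \ref{prop: classification type 2-3 into cubic} was deduced from Proposition \ref{prop: 3-fold in P8}. First I would fix the numerical invariants. Since $\varphi$ is of type $(2,3)$ we have $d=3$, and the coindex relation $c=(1-2d)r+dn-3d+2$ derived in \S\ref{sec: notation} becomes $c=-5r+3n-7$; imposing $c=2$ gives $3n=5r+9$. By Proposition \ref{prop: B is QEL} the base locus $\B$ is a $QEL$-variety of type $\delta=2r+2-n$, and substituting $n=(5r+9)/3$ yields $\delta=(r-3)/3$. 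The inequality $\delta\geq0$ forces $r\geq3$, so that under the standing hypothesis $r\leq3$ of the chapter we obtain $r=3$, $n=8$ and $\delta=0$. Moreover, since $\varphi$ is not of type $(2,1)$, the secant variety $\Sec(\B)$ is a hypersurface of degree $2d-1=5$, hence it is not a hyperplane (\S\ref{sec: notation}); were $\B$ degenerate, $\Sec(\B)$ would be forced into, and thus coincide with, a hyperplane, a contradiction. Therefore $\langle\B\rangle=\PP^8$.

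With $r=3$, $n=8$ and $\langle\B\rangle=\PP^8$ in hand, I would invoke Proposition \ref{prop: r=3 B nondegenerate}, which lists all possible abstract structures of $\B$ together with the invariants $a$, $\lambda$ and $g$. The remaining task is to single out those cases compatible with $d=3$. For each structure the pair $(d,\Delta)$ is pinned down by Proposition \ref{prop: segre and chern classes} together with the structure-specific relations of Lemmas \ref{lemma: quadric fibration}, \ref{lemma: scroll over surface} and \ref{lemma: scroll over curve} (using the known value of the relevant $c_2$ of the base of the fibration). Running through the list, the cases with $a=0$ are of type $(2,5)$, the linear section of $S^{10}\subset\PP^{15}$ and the scroll over $\PP_{\PP^1}(\O\oplus\O(-1))$ are of type $(2,4)$, and the scroll over $\PP^2$ as well as the structures with $a\in\{4,6\}$ are of type $(2,2)$; all of these are discarded. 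Exactly three cases survive with $d=3$: the blow-up of $Q^3$ at five points (case (\ref{case: cubic hypersurface}), $a=1$, $\lambda=11$, $g=5$, $\Delta=3$), the scroll over $Q^2$ (case (\ref{case: scroll over Q2 or quadric fibration}), $a=2$, $\lambda=10$, $g=4$, $\Delta=4$), and the quadric fibration over $\PP^1$ (the corresponding branch of the $a=3$ case, $\lambda=9$, $g=3$, $\Delta=5$). As a consistency check, in each of these $\sS$ has coindex $2$, so its $\Delta$-genus equals $1$ and $\Delta=a+2$, matching $3,4,5$ against $a=1,2,3$.

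The bookkeeping in the second paragraph is the only genuine work: one must compute, case by case, the value of $d$ from the Segre classes and the fibration relations, and verify that precisely the three asserted structures give $d=3$. The main obstacle is that within the $a=3$ case of Proposition \ref{prop: r=3 B nondegenerate} two structures occur, a scroll over $\PP^2$ and a quadric fibration over $\PP^1$, so one has to use the distinct value of $c_2$ entering Lemma \ref{lemma: scroll over surface} (respectively Lemma \ref{lemma: quadric fibration}) to see that only the quadric fibration yields $d=3$, while the scroll over $\PP^2$ gives $d=2$. No further input is needed, since the degrees $\Delta=3,4,5$ then identify $\sS$, by Theorem \ref{prop: classification del pezzo varieties}, with a cubic hypersurface, a complete intersection of two quadrics, and a linear section of $\GG(1,4)\subset\PP^9$, respectively.
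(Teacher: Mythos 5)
The second half of your argument---invoking Proposition \ref{prop: r=3 B nondegenerate} once $r=3$, $n=8$, $\langle\B\rangle=\PP^8$ are known, and then pinning down $d$ case by case via Proposition \ref{prop: segre and chern classes} and Lemmas \ref{lemma: quadric fibration}, \ref{lemma: scroll over surface}, \ref{lemma: scroll over curve} (including the point that within $a=3$ only the quadric fibration, not the scroll over $\PP^2$, gives $d=3$)---is exactly what the paper does and is fine, as is your nondegeneracy argument. The gap is in the first half: you dispose of all $r>3$ by appealing to a ``standing hypothesis $r\leq3$ of the chapter''. No such hypothesis is part of the corollary. Its hypotheses are precisely Assumptions \ref{assumption: base}, \ref{assumption: liftable} and \ref{assumption: ipotesi} (that is what ``as in \S \ref{sec: notation}'' refers to), together with type $(2,3)$ and $c=2$; the conclusion ``$n=8$, $r=3$'' is something to be \emph{proved}. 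This is exactly how the corollary generalizes Corollary \ref{prop: classification type 2-3 into cubic}, where $r=3$ was likewise derived (via Proposition \ref{prop: d Delta both odd}) rather than assumed; if you build $r\leq3$ into the hypotheses, the statement collapses to a near-trivial consequence of Theorem \ref{theorem: classification} and loses its content. The paper's own proof makes the intended scope unmistakable, since it spends its effort ruling out higher-dimensional solutions.

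Concretely, what is missing is the argument that your relations $\delta=(r-3)/3$, $n=(5r+9)/3$ admit no solution with $r>3$. Integrality forces $r=3k$, so $\delta=k-1$, $n=5k+3$, and the candidate triples are $(r,n,\delta)=(3,8,0),(6,13,1),(9,18,2),(12,23,3),\ldots$. For $\delta\geq3$ note that $r-\delta=2k+1$ is odd, while the Divisibility Theorem (Theorem \ref{prop: divisibility theorem}) requires $r\equiv\delta \pmod{2^{\lfloor(\delta-1)/2\rfloor}}$ with an even modulus; so all triples with $\delta\geq3$ are excluded. The two survivors $(6,13,1)$ and $(9,18,2)$ are then excluded by the classification of $CC$-manifolds (Theorem \ref{prop: classification CC-varieties}): since $\delta>0$, $\B$ is conic-connected, one checks that none of the exceptional varieties in that list has the required invariants, and the remaining alternative (Fano of the first species) is incompatible with parity, since it forces $i(\B)=(r+\delta)/2$, which is not an integer in either case. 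Only after this reduction to $(r,n,\delta)=(3,8,0)$ may you invoke the three-dimensional classification as you do.
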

\begin{proof}
 We have that $\B\subset\PP^n$ is a $QEL$-variety of type
$\delta=(r-d-c+2)/d=(r-3)/3$ and $n=((2d-1)r+3d+c-2)/d=(5r+9)/3$.
From Divisibility Theorem (Theorem \ref{prop: divisibility theorem}), we deduce 
$(r,n,\delta)\in\{(3,8,0),(6,13,1),(9,18,2)\}$ and 
from the classification 
of $CC$-manifolds (Theorem \ref{prop: classification CC-varieties}), 
we obtain  $(r,n,\delta)=(3,8,0)$. Now we apply the results in \S \ref{sec: dim 3}.
\end{proof}
In the same fashion, one can prove the following:
\begin{proposition}
 Let $\varphi$ be as in \S \ref{sec: notation} and of type $(2,1)$. If
$c=2$, then $r\geq1$ and $\B$ is $\PP^1\times\PP^2\subset\PP^5$ 
or one of its linear sections.
If  $c=3$, then $r\geq2$ and $\B$ is either 
   $\PP^1\times\PP^3\subset\PP^7$ 
or $\GG(1,4)\subset\PP^9$
or one of their linear sections.
If $c=4$, then $r\geq3$ and $\B$ is either an $OADP$ $3$-fold in $\PP^7$ 
or $\PP^1\times\PP^4\subset\PP^{9}$ 
or one of its hyperplane sections.
\end{proposition}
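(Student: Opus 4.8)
The plan is to mirror the structure of Corollary \ref{corollary: coindex 2} and reduce each case to the classification of $CC$-varieties together with the low-dimensional results of \S \ref{sec: dim 1}, \S \ref{sec: dim 2}, \S \ref{sec: dim 3}. Since $\varphi$ is of type $(2,1)$, we have $d=1$, and the coindex relation $c=(1-2d)r+dn-3d+2$ derived in \S \ref{sec: notation} simplifies to $c=n-r-1$, i.e. the codimension of $\B$ is exactly $c$. By Proposition \ref{prop: B is QEL}, $\B\subset\PP^n$ is a $QEL$-variety of type $\delta=2r+2-n=r+1-c$, and $\Sec(\B)$ is a hyperplane (degree $2d-1=1$), so $\B$ is degenerate, spanning a $\PP^{n-1}$. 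This is the key structural simplification: the type $(2,1)$ hypothesis forces $\langle\B\rangle=\PP^{n-1}$ and $\delta=r+1-c$.

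First I would treat the three coindex values separately by pinning down $(r,n,\delta)$. For $c=2$ we get $\delta=r-1$; for $c=3$, $\delta=r-2$; for $c=4$, $\delta=r-3$. In each case I would invoke the Divisibility Theorem (Theorem \ref{prop: divisibility theorem}) and the classification of $CC$-varieties (Theorem \ref{prop: classification CC-varieties}) to bound $r$ from below and list the admissible $\delta$. For instance, when $c=2$ the relation $\delta=r-1$ combined with Proposition \ref{prop: LQEL with delta=r and delta=r-1} immediately identifies $\B$ (up to linear section) as $\PP^1\times\PP^2\subset\PP^5$, since a smooth $LQEL$-variety with $\delta=r-1$ is either $\PP^1\times\PP^2$ or the Veronese surface, and the latter is excluded because it is not defined by the right number of quadrics (its span and degree are incompatible with a type $(2,1)$ transformation, equivalently $h^0(\I_{\nu_2(\PP^2)}(2))$ does not match). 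Hence $r\ge1$ (allowing hyperplane/linear sections to drop the dimension) and $\B$ is $\PP^1\times\PP^2$ or a linear section of it.

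Next I would handle $c=3$ and $c=4$ analogously. For $c=3$ the condition $\delta=r-2$ with $r$ large places $\B$ among the $LQEL$-varieties of type $r/2<\delta<r$ or $\delta=r/2$; applying Theorem \ref{prop: LQEL of higher type} and Theorem \ref{prop: LQEL of type delta=r/2}, together with the requirement that $\B$ be defined by quadrics inducing a type $(2,1)$ map, singles out $\PP^1\times\PP^3\subset\PP^7$ and $\GG(1,4)\subset\PP^9$ (and their linear sections), exactly the two Semple-type examples recorded in Example \ref{example: 5} and Example \ref{example: 10}. For $c=4$, $\delta=r-3$ forces $r\ge3$; the $\delta=0$ borderline gives an $OADP$ $3$-fold in $\PP^7$ (the list of Proposition \ref{prop: possibili casi C1}(\ref{case C1 e})), while the higher-$\delta$ members are again Grassmannian-type, namely $\PP^1\times\PP^4\subset\PP^9$ and its hyperplane sections, consistent with Example \ref{example: oadp10}. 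Throughout, the matching of examples is supplied by the explicit constructions in \S \ref{sec: examples}.

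The hard part will be the bookkeeping that guarantees no spurious candidate survives and that every listed variety genuinely arises. Concretely, after the Divisibility Theorem narrows $(r,\delta)$ to a short list, I must verify for each surviving $QEL$-variety $X$ that $X$ is \emph{cut out by quadrics} and that the induced map $|\I_{X}(2)|$ has the correct target dimension so that it is genuinely of type $(2,1)$; this uses the $K_2$ criteria of Fact \ref{fact: K2 property} and Fact \ref{fact: test K2} together with linear normality (Proposition \ref{prop: cohomology properties}), and is where the main obstacle lies, since one must exclude varieties like the Veronese surface or higher Veroneseans that are $CC$ but do not produce type $(2,1)$ transformations. The liftability of $\varphi^{-1}$ (Assumption \ref{assumption: liftable}), which holds automatically here by Remark \ref{remark: samuel conjecture} in the relevant smooth projectively normal cases, is what ties the abstract $QEL$-classification back to the concrete transformations, closing the argument.
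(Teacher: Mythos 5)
Your proposal is correct and follows essentially the same route the paper intends: the paper offers no separate argument for this proposition but says it follows \emph{in the same fashion} as Corollary \ref{corollary: coindex 2}, i.e.\ use $d=1$ to get $c=n-r-1$ and $\delta=r+1-c$, then combine the Divisibility Theorem with the $QEL$/$LQEL$/$CC$ classification theorems and the dimension-by-dimension results (\S \ref{sec: dim 1}--\S \ref{sec: dim 3}, and Proposition \ref{prop: delta mag0 4fold} or the $LQEL$ theorems for the four- and higher-dimensional cases). Your outline reproduces exactly this reduction, with the key structural point — that type $(2,1)$ forces $\Sec(\B)$ to be a hyperplane, hence $\B$ degenerate — playing the same role as in the paper of discarding nondegenerate candidates such as $\nu_2(\PP^2)\subset\PP^5$ and $\PP^2\times\PP^2\subset\PP^8$.
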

\begin{remark}
Imitating the proof of  Proposition \ref{prop: invariants d=2 Delta=3}
(resp. Proposition \ref{prop: invariants d=2 Delta=4}),
one can also compute all possible Hilbert polynomials and 
Hilbert schemes of lines through a general point 
of the base locus of a
special quadratic birational 
transformation 
of type $(2,2)$ into 
a complete intersection of two quadrics
(resp. three quadrics).
\end{remark}
In Table \ref{tabella: all cases 3-fold} we use the following shortcuts:
\begin{description}
\item[$\exists^{\ast}$] flags cases for which
is known a transformation $\varphi$ with base locus $\B$ as required,
but we do not know if the image $\sS$
satisfies all the assumptions in \S \ref{sec: notation};
\item[$\exists^{\ast\ast}$] flags cases for which
 is known that there is a smooth irreducible variety $X\subset\PP^n$ 
such that, if $X=V(H^0(\I_X(2)))$, then the linear system $|\I_X(2)|$ defines 
a birational  transformation 
$\varphi:\PP^n\dashrightarrow\sS=\overline{\varphi(\PP^n)}\subset\PP^{n+a}$ 
as stated;
\item[$?$] flags cases for which we do not know if there exists at least
an abstract variety
$\B$ having the structure and the invariants required;
\item[$\exists$] flags cases for which everything works fine.
\end{description}
\begin{table}  
\centering
\tabcolsep=5.4pt 
\begin{tabular}{|c||c|c|c|c|c|c|c|c||ll|}
\hline
$r$ & $n$ & $a$ & $\lambda$ & $g$  & Abstract structure of $\B$ & $d$ & $\Delta$ & $c$ & \multicolumn{2}{|c|}{Existence}   \\  
\hline
\hline
\multirow{4}{*}{$1$} & $3$ & $1$ & $2$ & $0$ &  $\nu_2(\PP^1)\subset\PP^2$ & $1$ & $2$ & $1$ & $\exists$ & Ex. \ref{example: 1} \\
\cline{2-11}
 & $4$ & $0$ & $5$ & $1$ & Elliptic curve & $3$ & $1$ & $0$ & $\exists$ & Ex. \ref{example: 2} \\
\cline{2-11}
 & $4$ & $1$ & $4$ & $0$ & $\nu_4(\PP^1)\subset\PP^4$ & $2$ & $2$ & $1$ & $\exists$ & Ex. \ref{example: 3} \\ 
\cline{2-11}
 & $4$ & $3$ & $3$ & $0$ & $\nu_3(\PP^1)\subset\PP^3$ & $1$ & $5$ & $2$ & $\exists$ & Ex. \ref{example: 5} \\
\hline
\hline
\multirow{14}{*}{$2$} & $4$ & $1$ & $2$ & $0$  &  $\PP^1\times\PP^1\subset\PP^3$ & $1$ & $2$ & $1$ & $\exists$ & Ex. \ref{example: 1} \\ 
\cline{2-11}
 & $5$ & $0$ & $4$ & $0$  &  $\nu_2(\PP^2)\subset\PP^5$ & $2$ & $1$ & $0$ & $\exists$ & Ex. \ref{example: 3} \\ 
\cline{2-11}
 & $5$ & $3$ & $3$ & $0$  & Hyperplane section of $\PP^1\times\PP^2\subset\PP^5$ & $1$ & $5$ & $2$ & $\exists$ & Ex. \ref{example: 5} \\ 
\cline{2-11}
 & $6$ & $0$ & $7$ & $1$  & Elliptic scroll $\PP_{C}(\E)$ with $e(\E)=-1$ & $4$ & $1$ & $0$& $\exists$ & Ex. \ref{example: 6} \\ 
\cline{2-11}
 & $6$ & $0$ & $8$ & $3$  & \begin{tabular}{c} Blow-up of $\PP^2$ at $8$ points $p_1,\ldots,p_8$,\\ $|H_{\B}|=|4H_{\PP^2}-p_1-\cdots-p_8|$ \end{tabular} & $4$ & $1$ & $0$&$\exists$ & Ex. \ref{example: 6} \\ 
\cline{2-11}
 & $6$ & $1$ & $7$ & $2$  & \begin{tabular}{c} Blow-up of $\PP^2$ at $6$ points $p_0,\ldots,p_5$,\\ $|H_{\B}|=|4H_{\PP^2}-2p_0-p_1-\cdots-p_5|$ \end{tabular} & $3$ & $2$ & $1$& $\exists$ & Ex. \ref{example: 7} \\ 
\cline{2-11}
 & $6$ & $2$ & $6$ & $1$  & \begin{tabular}{c} Blow-up of $\PP^2$ at $3$ points $p_1,p_2,p_3$,\\ $|H_{\B}|=|3H_{\PP^2}-p_1-p_2-p_3|$ \end{tabular} & $2$ & $4$ & $2$& $\exists$ & Ex. \ref{example: 8} \\ 
\cline{2-11}
 & $6$ & $3$ & $5$ & $0$  & $\PP_{\PP^1}(\O(1)\oplus\O(4))$ or $\PP_{\PP^1}(\O(2)\oplus\O(3))$   & $2$ & $5$ & $2$ & $\exists$ & Ex. \ref{example: 9} \\ 
\cline{2-11}
 & $6$ & $5$ & $5$ & $1$  & \begin{tabular}{c} Blow-up of $\PP^2$ at $4$ points $p_1\ldots,p_4$,\\ $|H_{\B}|=|3H_{\PP^2}-p_1-\cdots-p_4|$ \end{tabular} & $1$ & $12$ & $3$ & $\exists$ & Ex. \ref{example: 10} \\ 
\cline{2-11}
 & $6$ & $6$ & $4$ & $0$  & $\PP_{\PP^1}(\O(1)\oplus\O(3))$ or $\PP_{\PP^1}(\O(2)\oplus\O(2))$ & $1$ & $14$ & $3$ & $\exists$ & Ex. \ref{example: 11} \\ 
\hline
\hline
\multirow{23}{*}{$3$} & $5$ & $1$ & $2$ & $0$  & $Q^3\subset\PP^4$ & $1$ & $2$ & $1$&$\exists$ & Ex. \ref{example: 1} \\
\cline{2-11}
 & $6$ & $3$ & $3$ & $0$  & $\PP^1\times\PP^2\subset\PP^5$ & $1$ & $5$ & $2$& $\exists$ & Ex. \ref{example: 5} \\
\cline{2-11}
 & $7$ & $1$ & $6$ & $1$  & Hyperplane section of $\PP^2\times\PP^2\subset\PP^8$ & $2$ & $2$ & $1$& $\exists$ & Ex. \ref{example: 3} \\
\cline{2-11}
 & $7$ & $5$ & $5$ & $1$  & Linear section of $\GG(1,4)\subset\PP^9$ & $1$ & $12$ &$3$ & $\exists$ & Ex. \ref{example: 10} \\ 
\cline{2-11}
 & $7$ & $6$ & $4$ & $0$  & $\PP_{\PP^1}(\O(1)\oplus\O(1)\oplus\O(2))$ & $1$ & $14$ &$3$ & $\exists$ & Ex. \ref{example: 11} \\
\cline{2-11}
 & $8$ & $0$ & $12$ & $6$    & \begin{tabular}{c} Scroll $\PP_{Y}(\E)$, $Y$ rational surface, \\ $K_Y^2=5$, $c_2(\E)=8$, $c_1^2(\E)=20$ \end{tabular} & $5$ & $1$ & $0$ & $?$ & Ex. \ref{example: parzialecremona} \\
\cline{2-11}
 & $8$ & $0$ & $13$ & $8$    & \begin{tabular}{c} Variety obtained as the projection \\ of a Fano variety $X$ from a point $p\in X$ \end{tabular} & $5$ & $1$ & $0$ & $\exists$ & Ex. \ref{example: 12} \\
\cline{2-11}
 & $8$ & $1$ &  $11$   &  $5$  & \begin{tabular}{c} Blow-up of $Q^3$ at $5$ points $p_1,\ldots,p_5$, \\ $|H_{\B}|=|2H_{Q^3}-p_1-\cdots-p_5|$  \end{tabular} & $3$ &  $3$ & $2$ & $\exists$ &  Ex. \ref{example: 13} \\
\cline{2-11}
 & $8$ & $1$ &  $11$   &  $5$  & Scroll over $\PP_{\PP^1}(\O\oplus\O(-1))$  & $4$ &  $2$& $1$ & $\exists^{\ast\ast}$ & Ex. \ref{example: 14} \\
\cline{2-11}
 & $8$ & $1$ &  $12$   &  $7$ & Linear section of $S^{10}\subset\PP^{15}$  & $4$ & $2$& $1$ &$\exists$ & Ex. \ref{example: 15} \\
\cline{2-11}
 & $8$ & $2$ & $10$ & $4$ & Scroll over $Q^2$ & $3$ & $4$ & $2$ & $\exists^{\ast}$ & Ex. \ref{example: 16} \\
\cline{2-11}
 & $8$ & $3$ & $9$ & $3$ & Scroll over $\PP^2$ & $2$ & $8$ & $3$ & $\exists$ & Ex. \ref{example: 17} \\
\cline{2-11}
 & $8$ & $3$ & $9$ & $3$ & Quadric fibration over $\PP^1$ & $3$ & $5$ & $2$ & $\exists^{\ast}$ & Ex. \ref{example: 17nuovo} \\
\cline{2-11}
 & $8$ &$4$ & $8$ & $2$ & Hyperplane section of $\PP^1\times Q^3$ & $2$ & $10$ & $3$ & $\exists^{\ast}$ & Ex. \ref{example: 18}  \\
\cline{2-11}
 & $8$ &$6$ & $6$ & $0$ & Rational normal scroll & $2$ & $14$ & $3$ & $\exists$ &  Ex. \ref{example: 20} \\
\cline{2-11}
 & $8$ & $7$ & $8$ & $3$  & \begin{tabular}{c} $\PP_{\PP^2}(\E)$, where  $0\rightarrow\O_{\PP^2}\rightarrow$ \\$\rightarrow\E\rightarrow \I_{\{p_1,\ldots,p_8\},\PP^2}(4)\rightarrow0$ \end{tabular}  & $1$ & $29$ & $4$ & $\exists^{\ast}$ &  Ex. \ref{example: oadpDegree8} \\
\cline{2-11}
 & $8$ & $8$ & $7$ & $2$  & Edge variety & $1$ & $33$ & $4$ & $\exists^{\ast}$ & Ex. \ref{example: edge} \\
\cline{2-11}
 & $8$ & $9$ & $6$ & $1$  & $\PP^1\times\PP^1\times\PP^1\subset\PP^7$ & $1$ & $38$ & $4$ & $\exists^{\ast}$ & Ex. \ref{example: edge} \\
\cline{2-11}
 & $8$ & $10$ & $5$ & $0$  & Rational normal scroll & $1$ & $42$ & $4$ & $\exists$ & Ex. \ref{example: oadp10} \\ 
\hline
\end{tabular}
 \caption{All transformations $\varphi$ as in \S \ref{sec: notation} and with $r\leq3$}
\label{tabella: all cases 3-fold} 
\end{table}

\begin{appendices}
\chapter{Towards the study of special quadratic birational transformations whose base locus has dimension four}\label{app: towards the case of dimension four}
In this 
 appendix we shall keep the notation of 
Chap. \ref{chapter: transformations whose base locus has dimension at most three}. 
We treat the case in which
the dimension of the base locus is
 $r=4$, although,
 when $\delta=0$, 
we are well away from having an exhaustive classification.
\section{Easy cases}
Similarly to Propositions \ref{prop: possibili casi 2-fold} and
\ref{prop: possibili casi C1} we deduce Proposition \ref{prop: delta mag0 4fold}
from the theory of $LQEL$-varieties; specifically, from  
Proposition \ref{prop: LQEL with delta=r and delta=r-1}, 
  Theorems \ref{prop: LQEL of type delta=r/2}
and \ref{prop: classification CC-varieties}.
\begin{proposition}\label{prop: delta mag0 4fold} 
Let Assumptions \ref{assumption: base}, \ref{assumption: liftable} and \ref{assumption: ipotesi} be valid.
If $r=4$, then either $n=10$, $d\geq2$, $\langle \B \rangle = \PP^{10}$, or  
one of the following cases holds:
\begin{enumerate}[(I)]
 \item  $n=6$,  $d=1$, $\delta=4$, $\B=Q^4\subset\PP^5$ is a quadric;
 \item  $n=8$,  $d=1$, $\delta=2$, $\B\subset\PP^7$ is either $\PP^1\times\PP^3\subset\PP^7$ or a linear section of $\GG(1,4)\subset\PP^9$;
 \item  $n=8$,  $d=2$, $\delta=2$, $\B$ is $\PP^2\times\PP^2\subset\PP^8$;
 \item  $n=9$,  $d=1$, $\delta=1$, $\B$ is a hyperplane section of $\PP^1\times\PP^4\subset\PP^9$;
 \item  $n=10$, $d=1$, $\delta=0$, $\B\subset\PP^9$ is an $OADP$-variety. 
\end{enumerate}
\end{proposition}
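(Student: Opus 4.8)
The plan is to mimic the structure of the proofs of Propositions~\ref{prop: possibili casi 2-fold} and \ref{prop: possibili casi C1}, reducing everything to the already-established theory of $LQEL$-varieties. By Proposition~\ref{prop: B is QEL}, the base locus $\B\subset\PP^n$ is a $QEL$-variety (hence an $LQEL$-variety) of type $\delta=2r+2-n=10-n$. Since $\delta\geq0$ we get $n\leq 10$, and since $\B$ is nondegenerate with $h^0(\I_{\B}(2))=n+1+a\geq n+1$, the cases of interest are $6\leq n\leq 10$. The goal is to run through each value of $\delta\in\{0,1,2,3,4\}$ and invoke the corresponding classification theorem, so the first thing I would do is set up the dictionary $\delta=10-n$ and dispose of the extremal values.

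The hard part—or rather, the subtlety that forces a case analysis—is that the relevant classification theorems apply under slightly different hypotheses depending on $\delta$. For $\delta=r=4$ (i.e.\ $n=6$), Proposition~\ref{prop: LQEL with delta=r and delta=r-1}(1) immediately gives that $\B$ is a quadric $Q^4\subset\PP^5$, so $\epsilon=1$ and the image sits one dimension up. For $\delta=r/2=2$ (i.e.\ $n=8$), Theorem~\ref{prop: LQEL of type delta=r/2} supplies the complete list of linearly normal $LQEL$-varieties of type $r/2$; intersecting that list with the constraint $r=4$ yields $\nu_2(\PP^2)$ (excluded, wrong dimension), $\PP^1\times\PP^3\subset\PP^7$, the $4$-dimensional linear section of $\GG(1,4)\subset\PP^9$, and $\PP^2\times\PP^2\subset\PP^8$. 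One then separates these according to whether $\B$ is degenerate: the Segre $\PP^2\times\PP^2$ is nondegenerate in $\PP^8$ and forces $d=2$, whereas $\PP^1\times\PP^3$ and the linear section of $\GG(1,4)$ are realized as hyperplane/linear sections giving $d=1$. For the intermediate case $\frac{r}{2}<\delta<r$, that is $\delta=3$ and $n=7$, Theorem~\ref{prop: LQEL of higher type} lists all such varieties, and one checks that none of them has dimension $r=4$; this rules out $n=7$ entirely.

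For $\delta=1$ (i.e.\ $n=9$) I would appeal to the classification of $CC$-varieties: by Theorem~\ref{prop: classification CC-varieties}, after reducing to the linearly normal case via Proposition~\ref{prop: first properties LQEL}, a smooth $CC$-variety of dimension $4$ with $\delta=1$ must be a hyperplane section of a Segre embedding, and matching the invariants $(r,n)=(4,9)$ singles out the hyperplane section of $\PP^1\times\PP^4\subset\PP^9$. (Here one uses, as in Proposition~\ref{prop: first properties LQEL}(2), that a general hyperplane section of such a $\B$ lowers $\delta$ by one, connecting back to the $\delta=0$ list in $\PP^8$ from the previous chapter.) Finally, the case $\delta=0$, $n=10$ is the genuinely difficult one: here $\B\subset\PP^9$ is a smooth $QEL$-variety of type $\delta=0$, i.e.\ an $OADP$-variety, and no finite classification is available—this is why the proposition merely records ``$\B$ is an $OADP$-variety'' without further specification, and why the appendix treats this case only partially. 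The main obstacle, then, is not in any individual deduction but in the fact that the $\delta=0$ stratum genuinely escapes the $LQEL$-machinery; for $\delta>0$ the proof is a bookkeeping exercise in matching the arithmetic constraint $n=2r+2-\delta$ against the finite lists of Theorems~\ref{prop: LQEL with delta=r and delta=r-1}, \ref{prop: LQEL of type delta=r/2}, \ref{prop: LQEL of higher type} and \ref{prop: classification CC-varieties}, together with the degeneracy/normalization normalizations coming from Proposition~\ref{prop: first properties LQEL} and the determination of $d$ via the relation between $\Sec(\B)$ and the type of the transformation established in Proposition~\ref{prop: B is QEL}.
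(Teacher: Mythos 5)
Your proposal is correct and takes essentially the same route as the paper, whose entire proof is the one-line deduction from Proposition~\ref{prop: B is QEL} together with Proposition~\ref{prop: LQEL with delta=r and delta=r-1}, Theorem~\ref{prop: LQEL of type delta=r/2} and Theorem~\ref{prop: classification CC-varieties} (your appeal to Theorem~\ref{prop: LQEL of higher type} for $\delta=3$ is interchangeable with Proposition~\ref{prop: LQEL with delta=r and delta=r-1}(2)). The only loose point is your $\delta=1$ step: a smooth $CC$ fourfold with $\delta=1$ need not be a hyperplane section of a Segre embedding (e.g.\ $\nu_2(\PP^4)\subset\PP^{13}$), and in Theorem~\ref{prop: classification CC-varieties} the relevant variety occurs as case (2), namely $\Bl_{\PP^2}(\PP^4)\subset\PP^8$ embedded by the quadrics through $\PP^2$ (which is projectively equivalent to a hyperplane section of $\PP^1\times\PP^4\subset\PP^9$), not as case (3), which requires $a,b\geq 2$; since the remaining case-(2) varieties span $\PP^{13}$ or $\PP^{11}$, case (3) spans $\PP^{10}$, and no first-species Fano fourfold of index $\geq 3$ spans $\PP^8$ or $\PP^9$, the invariant-matching you invoke still singles out exactly the variety in (IV), so the conclusion is unaffected.
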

\begin{remark}
 Note that 
in Proposition \ref{prop: delta mag0 4fold}, all cases  with $\delta>0$ 
really occur (see \S \ref{sec: examples});
when $\delta=0$,  
an example is obtained by taking a
general $4$-dimensional linear section of 
$\PP^1\times\PP^5\subset\PP^{11}\subset\PP^{12}$.
Unfortunately, the classification of $OADP$ $4$-folds in $\PP^{10}$ is not known;
so we cannot be more precise.
\end{remark}

\section{Hard cases}
In this section we keep Assumption \ref{assumption: base}, but  
more generally assume that the image $\sS$ is  
nondegenerate, normal and linearly normal (not necessarily factorial); 
we do not require 
Assumptions 
\ref{assumption: liftable} and \ref{assumption: ipotesi}.
As noted earlier, we have $P_{\B}(1)=11$ and $P_{\B}(2)=55-a$ and hence 
\begin{displaymath}
 P_{\B}(t) = \lambda\begin{pmatrix}t\cr 4\end{pmatrix}+\left( 3\lambda+1-g\right) \begin{pmatrix}t\cr 3\end{pmatrix}+\left( \chi(\O_{\B})-a+33\right) \begin{pmatrix}t\cr 2\end{pmatrix}  
           +\left( 11-\chi(\O_{\B})\right) t+\chi(\O_{\B}) . 
\end{displaymath} 
\begin{proposition}\label{prop: 4foldnondegenerate}
If $r=4$, $n=10$ and $\langle \B\rangle=\PP^{10}$, 
then  one of the following cases holds: 
\begin{enumerate}[(I)]
\setcounter{enumi}{5} 
 \item $a=10$, $\lambda=7$,             $g=0$,      $\chi(\O_{\B})=1$,                 $\B$ is a rational normal scroll;
 \item $a=7$,  $\lambda=10$,            $g=3$,      $\chi(\O_{\B})=1$,                 $\B$  is either
  \begin{itemize}
  \item a hyperplane section of $\PP^1\times Q^4\subset\PP^{11}$ or
  \item $\PP(\T_{\PP^2}\oplus \O_{\PP^2}(1))\subset\PP^{10}$;
  \end{itemize}
 \item $a=6$,  $\lambda=11$,            $g=4$,      $\chi(\O_{\B})=1$,                 $\B$ is a quadric fibration over $\PP^1$;
 \item $a=5$,  $\lambda=12$,            $g=5$,      $\chi(\O_{\B})=1$,                 $\B$ is one of the following:
\begin{itemize}
      \item $\PP^4$ blown up at $4$ points $p_1\ldots,p_4$ embedded by $|2H_{\PP^4}-p_1-\cdots-p_4|$,
      \item a scroll over a ruled surface,
      \item a quadric fibration over $\PP^1$; 
\end{itemize}         
 \item $a=4$,  $\lambda=14$,            $g=8$,      $\chi(\O_{\B})=1$,                 $\B$ is either
 \begin{itemize}
 \item a linear section of $\GG(1,5)\subset\PP^{14}$ or 
 \item the product of $\PP^1$ with a Fano variety of even index;  
 \end{itemize}
 \item $a=4$,  $\lambda=13$,            $g=6$,      $\chi(\O_{\B})=1$,                 $\B$ is either  
\begin{itemize}
      \item a scroll over a birationally ruled surface or
      \item a quadric fibration over $\PP^1$; 
\end{itemize} 
 \item $a=3$,  $14\leq\lambda\leq 16$,  $g\leq 11$, $\chi(\O_{\B})=(-g+2\lambda-18)/3$;
 \item $a=2$,  $15\leq \lambda\leq 18$, $g\leq 14$, $\chi(\O_{\B})=(-g+2\lambda-19)/3$; 
 \item $a=1$, $15\leq\lambda\leq 20$, $g\leq17$, $\chi(\O_{\B})=(-g+2\lambda-20)/3$;
 \item $a=0$, $15\leq\lambda$.
\end{enumerate}
\end{proposition}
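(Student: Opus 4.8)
The plan is to follow the same strategy used in the case $r=3$ (cf. Proposition \ref{prop: r=3 B nondegenerate} and the preliminary Lemma \ref{lemma: r=3 B nondegenerate}), adapting each step to the higher-dimensional setting. The starting point is the expression for $P_{\B}(t)$ recorded just before the statement, which already encodes $P_{\B}(1)=11$ and $P_{\B}(2)=55-a$. First I would pass to a general linear section: take $\Lambda\subsetneq C\subsetneq S\subsetneq T\subsetneq \B$ a chain of general hyperplane sections, with $\Lambda\subset\PP^6$ a finite set of $\lambda$ points, $C$ a curve, $S$ a surface and $T$ a threefold. Since $\sS$ is normal and linearly normal, $h^0(\PP^{10},\I_{\B}(2))=11+a$, and the analogue of Lemma \ref{prop: cohomology twisted ideal} (using $n\le 2r+2$ exactly as in the proof of Proposition \ref{prop: hilbert polynomial}) forces $h^j(\PP^{10},\I_{\B}(k))=0$ for $j,k\ge1$, which gives the displayed Hilbert polynomial in terms of $\lambda$, $g$ and $\chi(\O_{\B})$.

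The numerical heart of the argument is to bound $\lambda$ from above and below. The lower bound comes from the usual Castelnuovo argument (Lemma \ref{prop: castelnuovo argument}): since $\Lambda$ lies in uniform position, $h_{\Lambda}(2)\ge\min\{\lambda,2\cdot6+1\}$, while the surjectivity in the section sequences gives $h_{\Lambda}(2)\le h^0(\PP^6,\O(2))-h^0(\PP^{10},\I_{\B}(2))=28-(11+a)=17-a$. For large $a$ this pins down $h_{\Lambda}(2)$ and, via Castelnuovo's Lemma (Proposition \ref{prop: castelnuovo lemma}) and its refinements (Theorems \ref{prop: extension castelnuovo lemma by fano harris} and \ref{prop: refinement castelnuovo bound by ciliberto}, and Petrakiev's Theorem \ref{prop: petrakiev} in symmetric position, legitimate here since $\Lambda$ is a general hyperplane section of the curve $C$), forces $\lambda\le 17-a$ or nearby. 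The upper bound on $g$ in each row is then Castelnuovo's bound (Proposition \ref{prop: castelnuovo bound}) applied to $C\subset\PP^7$, together with the sign constraint $g\ge0$ and the inequality $K_S\cdot H_S=2g-2-\lambda$, exactly as in Claim \ref{claim: KsHs<0}. This is what produces the explicit windows $15\le\lambda$ for $a=0$, $15\le\lambda\le20$ for $a=1$, and the sharp values for $a\ge4$.

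Once $(\lambda,g,\chi(\O_{\B}))$ are constrained, I would invoke the classification of smooth projective varieties of low degree: \cite{ionescu-smallinvariants}, \cite{ionescu-smallinvariantsII}, \cite{ionescu-smallinvariantsIII}, \cite{fania-livorni-nine}, \cite{fania-livorni-ten}, \cite{besana-biancofiore-deg11}, \cite{ionescu-degsmallrespectcodim}, suitably extended to fourfolds via adjunction theory (the reduction $(X,H_X)$ and the adjunction map $\phi_{|K_{\B}+3H_{\B}|}$), to read off the abstract structure of $\B$ in each case: rational normal scroll, scroll over a surface, quadric fibration over $\PP^1$, linear section of $\GG(1,5)$, product of $\PP^1$ with a Fano variety, and so on. To separate genuinely distinct structures sharing the same $(\lambda,g)$ I would bring in the double point formula (Proposition \ref{prop: double point formula}, since $\delta=0$) and the Chern/Segre class relations of Proposition \ref{prop: segre and chern classes}, computing $c_2(\B)\cdot H_{\B}^2$, $c_3(\B)\cdot H_{\B}$, $c_4(\B)$ and using fibration identities such as $\beta^{\ast}(H_Y)^k=0$ to eliminate impossible configurations and determine $d$ and $\Delta$ where available.

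The main obstacle will be the small-$a$ tail, especially $a\in\{0,1,2,3\}$. Here the degree window is wide, Castelnuovo's bound is far from sharp, and the refined bounds of Ciliberto and Petrakiev only give inequalities rather than exact values; moreover the fourfold low-degree classification is incomplete in this range (the statement itself only asserts $15\le\lambda$ for $a=0$, with no structural description). So I expect the proof to deliver \emph{complete} structural results only for $a\ge4$, and for $a\le3$ merely the numerical bounds on $\lambda$, $g$ and $\chi(\O_{\B})$ that follow formally from the Hilbert-function estimates. A secondary difficulty is verifying the hypotheses needed to apply Petrakiev's result (symmetric position) and the adjunction-theoretic reductions in dimension four, where the finer results of \cite{ionescu-adjunction} and the Beltrametti--Sommese machinery must be checked to hold; I would flag any case where the abstract variety exists numerically but its realizability as a base locus (i.e.\ whether it is cut out by quadrics and whether $\sS$ is sufficiently regular) remains open.
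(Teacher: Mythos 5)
Your proposal follows essentially the same route as the paper's proof: the same chain of general linear sections, the same Castelnuovo-theoretic estimates ($\min\{\lambda,13\}\le h_{\Lambda}(2)\le 17-a$, then the Castelnuovo Lemma, Fano--Harris, Ciliberto and Petrakiev refinements applied in exactly that order to trade upper bounds on $\lambda$ against $a$), the same appeal to the low-degree classification literature plus Chern-class/double-point computations for the structural and $(d,\Delta)$ statements, and the same split between complete structure for $a\ge4$ and purely numerical windows for $a\le3$. The one point you should make explicit when adapting Claim~\ref{claim: KsHs<0} to dimension four is that, because $\chi(\O_{\B})$ is now an independent unknown in $P_{\B}(t)$, a single vanishing $P_{\B}(-1)=0$ no longer suffices: one must play the effectivity of $K_{\B}+tH_{\B}$ against the bound $K_{\B}\cdot H_{\B}^3\le 2\theta(\lambda)-2-3\lambda$ for $t=1,2,3$ simultaneously, obtaining either $P_{\B}(-1)=P_{\B}(-2)=0$ (resp.\ also $P_{\B}(-3)=0$) or the alternatives $K_{\B}\sim -tH_{\B}$, which is precisely how the paper pins down $\chi(\O_{\B})=1$ and the sharp values $\lambda=17-a$, $g=10-a$ for $5\le a\le 8$, and how the Mukai case $a=4$, $\lambda=14$, $g=8$ enters.
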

\begin{proof}
Denote by $\Lambda\subsetneq C\subsetneq S\subsetneq X\subsetneq \B$ a sequence of 
general linear sections of $\B$ and put  
$h_{\Lambda}(2):=h^0(\PP^6,\O(2))-h^0(\PP^6,\I_{\Lambda}(2))$.
Since $C$ is a nondegenerate curve in $\PP^7$, 
 we have $\lambda\geq 7$. 
By Castelnuovo's argument (Lemma \ref{prop: castelnuovo argument}), 
it follows that
\begin{equation}\label{eq: castelnuovo-argument-4fold}
7\leq \min\{\lambda,13\}\leq h_{\Lambda}(2)\leq 28 - h^0(\PP^{10},\I_{\B}(2))=17-a 
\end{equation}
and in particular we have $a\leq 10$.
Moreover
\begin{itemize}
 \item if $\lambda\geq 13$, then $h_{\Lambda}(2)\geq 13$ and $a\leq4$, by (\ref{eq: castelnuovo-argument-4fold});
 \item if $\lambda\geq 15$, then $h_{\Lambda}(2)\geq 14$ and $a\leq3$, by Proposition \ref{prop: castelnuovo lemma}; 
 \item if $\lambda\geq 17$, then $h_{\Lambda}(2)\geq 15$ and $a\leq2$, by Theorem \ref{prop: extension castelnuovo lemma by fano harris}; 
 \item if $\lambda\geq 19$, then $h_{\Lambda}(2)\geq 16$ and $a\leq1$, by Theorem \ref{prop: extension castelnuovo lemma by ciliberto}; 
 \item if $\lambda\geq 21$, then $h_{\Lambda}(2)\geq 17$ and $a=0$, by Theorem \ref{prop: petrakiev}(\ref{part: petrakiev vartheta=3}). 
\end{itemize}
According to the above statements,
we consider the refinement 
$\theta=\theta(\lambda)$ of Castelnuovo's bound $\rho=\rho(\lambda)$, contained in 
Theorem \ref{prop: refinement castelnuovo bound by ciliberto}.
So, we have
\begin{equation}\label{eq: KBHB3}
K_{\B}\cdot H_{\B}^3=2g-2-3\lambda \leq 2\theta(\lambda)-2-3\lambda\leq 2\rho(\lambda)-2-3\lambda .
\end{equation}
Now, if $t\geq1$, by Kodaira Vanishing Theorem and Serre Duality,
it follows that $P_{\B}(-t)=h^4(\B,\O_{\B}(-t))=h^0(\B,K_{\B}+tH_{\B}) $;
hence, if $P_{\B}(-t)\neq0$, then $K_{\B}+tH_{\B}$ is an effective divisor and 
 we have either $K_{\B}\cdot H_{\B}^3 > -tH_{\B}^4=-t \lambda$ or $K_{\B}\sim -tH_{\B}$.
Thus, by (\ref{eq: KBHB3}) and  
 straightforward calculation, we deduce (see Figure \ref{fig: upperbounds}):
\begin{figure}[htb] 
\centering
\includegraphics[width=0.7\textwidth]{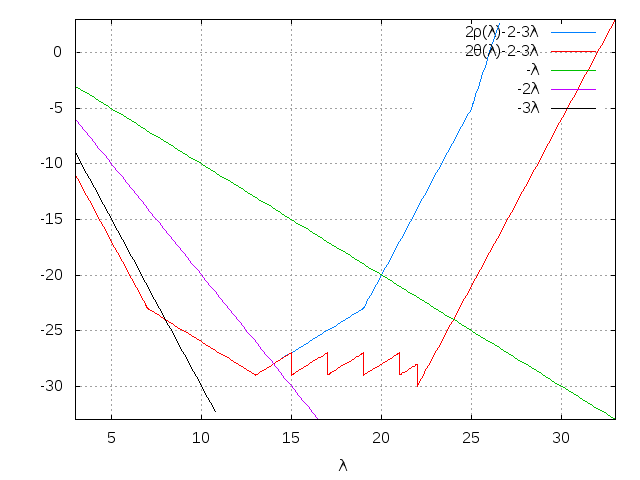} 
\caption{Upper bounds of $K_{\B}\cdot H_{\B}^3$}
\label{fig: upperbounds}
\end{figure}
\begin{enumerate}[(\ref{prop: 4foldnondegenerate}.a)]
 \item\label{case: lambda8}  if $\lambda\leq8$,  then either $P_{\B}(-3)=P_{\B}(-2)=P_{\B}(-1)=0$ or $\lambda=8$ and $K_{\B}\sim -3H_{\B}$;
 \item\label{case: lambda14} if $\lambda\leq14$, then either $P_{\B}(-2)=P_{\B}(-1)=0$ or $\lambda=14$ and $K_{\B}\sim -2H_{\B}$;
 \item\label{case: lambda20} if $\lambda\leq24$, then either $P_{\B}(-1)=0$ or $\lambda=24$ and $K_{\B}\sim -H_{\B}$.
\end{enumerate}
In the same way, one also sees that $h^4(\B,\O_{\B})=0$ 
whenever $\lambda\leq 31$.
Now we discuss the cases according to the value of $a$.
\begin{case}[$9\leq a\leq 10$] We have $\lambda\leq 8$.
From the classification of del Pezzo varieties 
in Theorem \ref{prop: classification del pezzo varieties},
we see that the case $\lambda=8$ with $K_{\B}\sim -3H_{\B}$  is impossible and so
we obtain $\lambda=11-2a/5$, $g=1-a/10$, 
by (\ref{prop: 4foldnondegenerate}.\ref{case: lambda8}).
Hence $a=10$, $\lambda=7$, $g=0$ and $\B$ is a rational normal scroll.
\end{case}
\begin{case}[$5\leq a\leq 8$] We have $\lambda\leq 12$.
  By (\ref{prop: 4foldnondegenerate}.\ref{case: lambda14}) we obtain 
 $g=(3\lambda+a-31)/2$ and $\chi(\O_{\B})=(\lambda+a-11)/6$ and,
since  $\chi(\O_{\B})\in \ZZ$,
we obtain $\lambda=17-a$, $g=10-a$, $\chi(\O_{\B})=1$. So,
we can determine the abstract structure of $\B$ by
\cite{fania-livorni-ten},
\cite{besana-biancofiore-deg11},
\cite[Theorem~2]{ionescu-smallinvariantsII},
\cite[Lemmas~4.1 and 6.1]{besana-biancofiore-numerical}
 and  we also deduce that the case $a=8$ does not occur, by \cite{fania-livorni-nine}.
\end{case}
\begin{case}[$a = 4$] We have $\lambda\leq 14$. 
 Again by (\ref{prop: 4foldnondegenerate}.\ref{case: lambda14}), we deduce that either
 $g=(3\lambda-27)/2$ and $\chi(\O_{\B})=(\lambda-7)/6$ or
 $\B$ is a Mukai variety with 
$\lambda=14$ ($g=8$ and $\chi(\O_{\B})=1$).
In the first case, since
$\chi(\O_{\B})\in \ZZ$ and $g\geq 0$,
we obtain $\lambda=13$, $g=6$, $\chi(\O_{\B})=1$ and 
then we can determine the abstract structure of $\B$ by 
\cite[Theorem~1]{ionescu-degsmallrespectcodim} and
\cite[Lemmas~4.1 and 6.1]{besana-biancofiore-numerical}.
In the second case, if $b_2=b_2(\B)=1$ then $\B$ is a linear section 
of $\GG(1,5)\subset\PP^{14}$, 
otherwise $\B$ is a Fano variety of product type, see
 \cite[Theorems~2 and 7]{mukai-biregularclassification}.
\end{case}
\begin{case}[$a=3$] We have $\lambda\leq 16$ and 
$\chi(\O_{\B})=(-g+2\lambda-18)/3$, 
by (\ref{prop: 4foldnondegenerate}.\ref{case: lambda20}).
Moreover, if $\lambda\leq14$, 
by (\ref{prop: 4foldnondegenerate}.\ref{case: lambda14}) it follows
that $\lambda=14$, $g=7$ and $\chi(\O_{\B})=1$.
\end{case}
\begin{case}[$a=2$] We have $\lambda\leq 18$   
and $\chi(\O_{\B})=(-g+2\lambda-19)/3$, 
by (\ref{prop: 4foldnondegenerate}.\ref{case: lambda20}).
Moreover, by (\ref{prop: 4foldnondegenerate}.\ref{case: lambda14}) it follows that
$\lambda\geq 15$.
\end{case}
\begin{case}[$a=1$] 
We have $\lambda\leq 20$ and 
$\chi(\O_{\B})=(-g+2\lambda-20)/3$, 
by (\ref{prop: 4foldnondegenerate}.\ref{case: lambda20}).
Moreover, if $\lambda\leq14$, 
by (\ref{prop: 4foldnondegenerate}.\ref{case: lambda14}) it follows
that  $\lambda=10$, $g=0$, $\chi(\O_{\B})=0$,
which is of course impossible.
\end{case}
\begin{case}[$a=0$]\label{case: 4-fold cremona}
If $\lambda\leq 14$,   
by (\ref{prop: 4foldnondegenerate}.\ref{case: lambda14})
and (\ref{prop: 4foldnondegenerate}.\ref{case: lambda20}) 
it follows that  $\lambda=11$, $g=1$, $\chi(\O_{\B})=0$. 
Thus, $\B$ must be an elliptic scroll  
and $\varphi$ must be of type $(2,6)$; so, 
by (\ref{eq: c2 4-fold}) we obtain the contradiction
$c_2(\B)\cdot H_{\B}^2=(990+c_4(\B))/37=990/37\notin\ZZ$.
\end{case}
\end{proof}
\begin{remark}
 Reasoning as in Proposition \ref{prop: segre and chern classes}, 
we obtain that
if $\varphi$ is of type $(2,d)$, then
 \begin{eqnarray}
\label{eq: c2 4-fold}  37c_2(\B)\cdot H_{\B}^2-c_4(\B) &=&  -231\lambda+188g+(1-9d)\Delta+3396 ,\\
  37c_3(\B)\cdot H_{\B}+7c_4(\B) &=& 655\lambda-428g+(26d-7)\Delta-5716 .
 \end{eqnarray}
\end{remark}
\begin{remark}
If Conjecture \ref{conjecture: on upper bound of the degree} (with $N=6$ and $m=11$) 
holds, then we have that $\lambda\leq 24$,
even in the case with $a=0$.
If $a=0$ and $\lambda\leq 24$, we have $g\leq \theta(24)=25$ and 
one of the following cases holds:
\begin{itemize}
 \item $\lambda=24$, $g=25$, $\chi(\O_{\B})=1$ and $\B$ is a Fano variety of coindex $4$; 
 \item $g\leq 24$ and $\chi(\O_{\B})=(-g+2\lambda-21)/3$.
\end{itemize}
\end{remark}
\begin{example} 
Below we collect some examples of 
special quadratic birational transformations appearing in Proposition
\ref{prop: 4foldnondegenerate}.
\begin{description}
\item[($a=10$)] If $X\subset\PP^{10}$ is a (smooth) $4$-dimensional rational normal scroll, 
then $|\I_{X,\PP^{10}}(2)|$
defines a birational transformation 
$\psi:\PP^{10}\dashrightarrow\GG(1,6)\subset\PP^{20}$ of type $(2,2)$.
\item[($a=7$)] If $X\subset\PP^{10}$ is a general hyperplane section 
of $\PP^1\times Q^4\subset\PP^{11}$,
then $|\I_{X,\PP^{10}}(2)|$ defines a birational transformation 
$\psi:\PP^{10}\dashrightarrow \overline{\psi(\PP^{10})}\subset\PP^{17}$
of type $(2,2)$ whose image has degree $28$.
\item[($a=7$)] If $X=\PP(\T_{\PP^2}\oplus\O_{\PP^2}(1))\subset\PP^{10}$, since 
$h^1(X,\O_X)=h^1(\PP^2,\O_{\PP^2})=0$, $|\I_{X,\PP^{10}}(2)|$ defines a birational 
transformation 
$\psi:\PP^{10}\dashrightarrow \overline{\psi(\PP^{10})}\subset\PP^{17}$ 
(see Facts \ref{fact: test K2} and \ref{fact: K2 property}).
\item[($a=6$)] There exists a smooth linearly normal $4$-dimensional
variety $X\subset\PP^{10}$ with $h^1(X,\O_X)=0$, degree $11$, sectional genus $4$,
having the structure of a quadric fibration over $\PP^1$ 
(see \cite[Remark~3.2.5]{besana-biancofiore-deg11});
thus $|\I_{X,\PP^{10}}(2)|$ defines a birational 
transformation 
$\psi:\PP^{10}\dashrightarrow \overline{\psi(\PP^{10})}\subset\PP^{16}$
(see Facts \ref{fact: test K2} and \ref{fact: K2 property}).
\item[($a=5$)] If $X\subset\PP^{10}$ is the blow-up of $\PP^4$ at $4$ 
general points $p_1,\ldots,p_4$,
embedded by $|2H_{\PP^4}-p_1-\cdots-p_4|$, then
$|\I_{X,\PP^{10}}(2)|$ defines a birational transformation 
$\psi:\PP^{10}\dashrightarrow \overline{\psi(\PP^{10})}\subset\PP^{15}$
whose image has degree $29$; in this case $\Sec(X)$ is a
complete intersection of two cubics.
\item[($a=4$)] If $X\subset\PP^{10}$ is a general $4$-dimensional linear section of 
$\GG(1,5)\subset\PP^{14}$, then $|\I_{X,\PP^{10}}(2)|$ defines 
a birational transformation 
$\psi:\PP^{10}\dashrightarrow \overline{\psi(\PP^{10})}\subset\PP^{14}$
of type $(2,2)$ whose image is a complete intersection of quadrics.
\end{description}
\end{example}
\begin{remark} Note that in Case \ref{case: 4-fold cremona} and Claim \ref{claim: KsHs<0} 
we excluded the case where the base locus  
of a Cremona transformation is 
an elliptic scroll of dimension $r$ and degree $2r+3$ in $\PP^{2r+2}$.
Actually, in \cite{semple-tyrrell} is stated that 
the quadrics through a generic such a scroll give 
a Cremona transformation;
although, for $r\geq3$ the scroll is not cut out by quadrics and 
so the Cremona transformation is not special (see \cite[\S 4.3]{ein-shepherdbarron}).

Note also that in \cite[\S 5]{hulek-katz-schreyer} a series 
of quadratic Cremona transformations $\psi_r:\PP^{2r+2}\dashrightarrow \PP^{2r+2}$
 having $r$-dimensional base locus is built.
For $r=2$ and $r=3$, $\psi_r$ is respectively as in Examples \ref{example: 6} and \ref{example: 12},
but $\psi_4$ is not special.
\end{remark}

\section
{Numerical invariants of transformations of type \texorpdfstring{$(2,2)$}{(2,2)} into a quintic hypersurface}
Just as in Propositions \ref{prop: invariants d=2 Delta=3} and 
\ref{prop: invariants d=2 Delta=4}
(and also keeping the same notation)
we can determine the possible numerical invariants 
for special birational transformations of type $(2,2)$ into a quintic
hypersurface.

Indeed, for such a transformation,
from Theorem \ref{prop: divisibility theorem} 
and Proposition \ref{prop: cohomology properties},
we obtain either $(n,r,\delta)=(10,4,0)$ or $\B$ is
a Fano variety of the first species 
of  coindex $c(\B)$ and Hilbert polynomial $P$,   as one of the two following cases:
\begin{enumerate}[(i)]
\item $n=16$,   $r=8$,   $\delta=2$,   $c(\B)=4$, 
$P=$ $36$, $216$, $552$, $780$, $661$, $340$, $102$, $16$, $1$;
\item $n=22$,   $r=12$,   $\delta=4$,   $c(\B)=5$,
$P=$ $84$, $798$, $3428$, $8789$, $14946$, $17711$, $14945$, $9009$, $3829$, $1111$, $207$, $22$, $1$. 
\end{enumerate}

In the case in which $\B$ is a $4$-fold in $\PP^{10}$, we apply 
Proposition \ref{prop: 4foldnondegenerate}.
So the Hilbert polynomial of $\B$ 
can be expressed as a function of $\lambda$ and $g$.  
Moreover $g\equiv 1-\lambda\ \mathrm{mod}\ 3$ 
and one of the following holds:
$\lambda=15$ and $g\leq 9$;
$\lambda=16$ and $g\leq 11$;
$\lambda=17$ and $g\leq 12$;
$\lambda=18$ and $g\leq 14$;
$\lambda=19$ and $g\leq 15$;
$\lambda=20$ and $g\leq 17$.

\end{appendices}

\backmatter
\cleardoublepage  \phantomsection 
\addcontentsline{toc}{chapter}{Bibliography} 
\bibliographystyle{amsalpha}
\bibliography{bibliography.bib}

\end{document}